\documentclass[10pt]{article}         



\usepackage{amsmath,amsfonts,amssymb}   
\usepackage{amsthm}
\usepackage{mathrsfs}
\usepackage{dsfont}
\usepackage{url}

\newtheorem{thm}{Theorem}[section]
\newtheorem{defn}{Definition}[section]
\newtheorem{lem}[thm]{Lemma}
\newtheorem{prop}[thm]{Proposition}
\newtheorem{cor}[thm]{Corollary}
\newtheorem{rem}{Remark}
\title{Noether Theorems for Lagrangians involving fractional Laplacians}
\author{Filippo Gaia}


\begin{document}
\allowdisplaybreaks
%
\maketitle
\begin{abstract}
In this work we derive Noether Theorems for energies of the form
\begin{equation*}
E(u)=\int_\Omega L\left(x,u(x),(-\Delta)^\frac{1}{4}u(x)\right)dx
\end{equation*}
for Lagrangians exhibiting invariance under a group of transformations acting either on the target or on the domain of the admissible functions $u$, in terms of fractional gradients and fractional divergences. Here $\Omega$ stays either for an Euclidean space $\mathbb{R}^n$ or for the circle $\mathbb{S}^1$. We then discuss some applications of these results and related techniques to the study of nonlocal geometric equations and to the study of stationary points of the half Dirichlet energy on $\mathbb{S}^1$. In particular we introduce the $\frac{1}{2}$-fractional Hopf differential as a simple tool to characterize stationary point of the half Dirichlet energy in $H^\frac{1}{2}(\mathbb{S}^1,\mathbb{R}^m)$ and study their properties.
Finally we show how the invariance properties of the half Dirichlet energy on $\mathbb{R}$ can be used to obtain Pohozaev identities. 
\end{abstract}
\tableofcontents


\section{Introduction}
In the study of problems involving nonlocal operators, a common strategy consists in looking for analogies with familiar problems for local operators, and trying to adapt the concepts, the results and the strategies known for the local problems to the nonlocal framework. When working with problems involving fractional Laplacians, in particular, it may be useful to think about similar problems for the classical Laplacian.
For instance, the ellipticity of $(-\Delta)^s$ (where $s\in (0,1)$) is an essential element in many arguments, whose local analogous exploit the ellipticity of $-\Delta$;
as a specific example, one can think about the regularity theory for Cauchy problems for the fractional Laplacian (see \cite{SilvestreHolder}).
In this spirit, studying a minimization problem for energies of the kind
\begin{equation}\label{eq_intro_eqmodel_1}
\int_{\Omega} L\left(x, u(x), (-\Delta)^su(x)\right)dx
\end{equation}
(where $u$ is a function in an appropriate function space defined on a manifold $\Omega$), one might be interested in parallels to the classical tools used in the Calculus of Variations.
The goal of this thesis is to give an analogous to Noether's Theorem for some Lagrangians of the form (\ref{eq_intro_eqmodel_1}) exhibiting invariance under continuous symmetries and discuss applications of these results and related methods.\\
The interest for this kind of results is strongly related to the field of integro-differential equations; indeed, as we will also see later, many integro-differential equations have a variational formulation in terms of an energy of the form (\ref{eq_intro_eqmodel_1}). Integro-differential equations arise naturally in the study of L\'{e}vy processes, i.e. a stochastic process with independent, stationary increments, of which Brownian motion constitues a particular case; such processes have interesting applications in modelling market fluctuations. Other applications of integro-differential equations can be found in Ecology, Fluid Mechanics, Elasticity and in various other fields where nonlocal interactions are studied\footnote{For an overview of the applications of integro-differential equations see the Introduction to Part I in \cite{RosOtonPhD}, the Introduction to \cite{Garofalo} and the references therein.}.\\

Noether's Theorem is a powerful tool in the study of Lagrangians of the form
\begin{equation}\label{eq_intro_eqmodel_2}
E(u)=\int_{\Omega} L\left(x, u(x), Du(x),..., D^ku(x)\right)dx,
\end{equation}
where $k\in \mathbb{N}$ and $u$ is a function in an appropriate function space defined on a domain $\Omega\subset \mathbb{R}^n$ and taking values in $\mathbb{R}^m$, for some $n, m\in\mathbb{N}$.
In its original formulation, published by E. Noether in 1918 (see \cite{Noether}), the Theorem states that if the energy (\ref{eq_intro_eqmodel_2}) is invariant under the action of a continuous group of transformations of the domain and the target of the admissible functions, generated by $r$ independent parameters, then there exist $r$ linearly independent combinations of the Lagrange expressions of $E$ which are equal to divergences. Here by Lagrange expressions are meant the variations of the energy induced by variations $\delta u_i$, for $i\in \lbrace 1,...,m\rbrace$, i.e. the terms appearing in the Euler-Lagrange equation of the corresponding variational problem $\delta E=0$. In particular, if $u$ is a critical point of $E$, the Theorem yields $r$ divergence-free quantities involving $u$.
For the sake of simplicity, in this document we consider separately transformations acting on the target and transformations acting on the domain, and we neglect transformations acting non-trivially on both.\\

\paragraph{Symmetries in the target}
We first considered the case of Lagrangians exhibiting invariance under a group of transformations acting on the target of the admissible functions. In this case we adapted to the nonlocal framework the proof of Noether Theorem as reported in Theorem 1.3.1 in \cite{Helein} (Theorem \ref{thm_noether_thm} in this document), substituting gradients and divergences by the following fractional counterparts, introduced by K. Mazowiecka and A. Schikorra in \cite{divcurl}.
\begin{defn}\label{def_introduction_fractional_gradients_divergences}\noindent
\begin{enumerate}
\item Let $s\in [0,1]$. Let $f:\mathbb{R}^n\to\mathbb{R}^n$. For any $(x,y)\in \mathbb{R}^n\times\mathbb{R}^n$ let
\begin{equation}
\text{d}_sf(x,y):=\frac{f(x)-f(y)}{\lvert x-y\rvert^s};
\end{equation}
$\text{d}_s f$ is called the \textbf{$s$-gradient of $f$}.
\item Let $s\in (0,1)$. Let $F: \mathbb{R}^n\times\mathbb{R}^n\to\mathbb{R}$ so that that for any $\phi\in C^\infty_c(\mathbb{R}^n)$,
\begin{equation}
\int_{\mathbb{R}^n}\int_{\mathbb{R}^n}\left\lvert F(x,y)\text{d}_s\phi(x,y)\right\rvert\frac{dxdy}{\lvert x-y\rvert^n}<\infty.
\end{equation}
The \textbf{$s$-divergence} of $F$ is the distribution defined by
\begin{equation}
\text{div}_sF[\phi]:=\int_{\mathbb{R}^n}\int_{\mathbb{R}^n}F(x,y)\text{d}_s\phi(x,y)\frac{dxdy}{\lvert x-y\rvert^n}\quad\text{for any }\phi\in C^\infty_c(\mathbb{R}^n)
\end{equation}
for any $\phi\in C^\infty_c(\mathbb{R}^n)$.
\end{enumerate}
\end{defn}
For a discussion of the previous definitions and some of their properties, see Section \ref{ssec: Fractional gradients and fractional divergences}.
One of the main difficulties arising in the process of adapting the arguments known for the local problems to the present setting is given, of course, by the nonlocal nature of the operators. By means of the previous definitions, as well as the pointwise characterization of the fractional Laplacians in terms of integrals (Equation (\ref{eq_prel_pointwise_definition_fraclap})), we were often able to reformulate nonlocal expressions as local ones for an additional variable.\newline 
We obtained the following result (Theorem \ref{prop_noether_thm_variaton_target_cor}):
\begin{thm}\label{prop_1}
Let $L:\mathbb{R}^n\times\mathbb{R}^m\times \mathbb{R}^m\to \mathbb{R}$ be a function in $C^2_{loc}\cap\dot{W}^{2,\infty}$ and assume that there exist constants $C$, $a$, $b$, $c$ such that $\frac{n}{n+\frac{1}{2}}<a\leq b\leq 2\leq c$ and
\begin{equation}\label{eq_prop_noeth_thm_vartar_condition_on_L_introduction}
\left\lvert L(x,y,p)-L(x,y,p')\right\rvert\leq C\left( \lvert y-y'\rvert^2+\lvert y-y'\rvert^c+\lvert p-p'\rvert^a+\lvert p-p'\rvert^b \right)
\end{equation}
for any $x\in\mathbb{R}^n$, $y,y', p,p'\in \mathbb{R}^m$. Let $\mathscr{M}$ be a smooth manifold embedded in $\mathbb{R}^m$.
For $u\in H^\frac{1}{2}(\mathbb{R}^n,\mathscr{M})$ let
\begin{equation}
E(u):=\int_{\mathbb{R}^n}L\left(x,u(x),(-\Delta)^\frac{1}{4}u(x)\right)dx
\end{equation}
whenever the integral is well defined.
Let $X$ be a $C^4$ vector field on $\mathscr{M}$ which is an infinitesimal symmetry for $L$, in the sense that for its flow $\phi_t$, for $t$ in a neighbourhood of $0$
\begin{equation}
L\left(x,\phi_t\circ u(x), (-\Delta)^\frac{1}{4}(\phi_t\circ u)(x)\right)=L\left(x,u(x),(-\Delta)^\frac{1}{4}u(x)\right)
\end{equation}
for a.e. $x\in \mathbb{R}^n$.
Assume that the function
\begin{equation}
L_u^i:\mathbb{R}^n\to \mathbb{R}, \quad x\mapsto D_{p_i}L\left(x,u(x),(-\Delta)^\frac{1}{4}u(x)\right)
\end{equation}
(where $p_i$ denotes the $(n+i)^{th}$ argument of $L$) belongs to $L^2(\mathbb{R}^n)$ for any $i\in \lbrace 1,...,n\rbrace$. Then for any critical point $u$ of $E$ in $H^\frac{1}{2}(\mathbb{R}^n, \mathscr{M})$ there holds
\begin{equation}
\text{div}_\frac{1}{2}\left(L_u^i(x)X^i(u(y))-L_u^i(y)X^i(u(x))\right)=0.
\end{equation}
(we use implicit summation over repeated indices).
\end{thm}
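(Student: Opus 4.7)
The plan is to translate the classical proof of Noether's theorem for target symmetries into the present nonlocal framework, with the self-adjoint operator $(-\Delta)^{1/4}$ playing the role that $\nabla$ plays in the local case and the $1/2$-gradient/$1/2$-divergence pair of Definition \ref{def_introduction_fractional_gradients_divergences} encoding the analogue of integration by parts. First, differentiating the symmetry assumption at $t=0$ and using the linearity of $(-\Delta)^{1/4}$ to commute it with $\partial_t$ yields the pointwise \emph{symmetry identity}
\[
D_yL\bigl(x,u,(-\Delta)^{1/4}u\bigr)\cdot X(u(x)) + L_u(x)\cdot(-\Delta)^{1/4}(X\circ u)(x)=0 \qquad \text{a.e.\ } x\in\mathbb{R}^n.
\]
On the other hand, for a scalar test function $\eta\in C^\infty_c(\mathbb{R}^n)$, the field $\eta\,X(u)$ is tangent to $\mathscr{M}$ along $u$, so projecting $u+t\,\eta X(u)$ onto $\mathscr{M}$ via its nearest-point projection provides an admissible one-parameter family in $H^{1/2}(\mathbb{R}^n,\mathscr{M})$ with initial velocity $\eta X(u)$; the hypothesis that $u$ is critical, together with the growth condition (\ref{eq_prop_noeth_thm_vartar_condition_on_L_introduction}) (which legitimates differentiation under the integral), gives
\[
\int_{\mathbb{R}^n}\Bigl[D_yL\cdot\bigl(\eta X(u)\bigr)+L_u\cdot(-\Delta)^{1/4}\bigl(\eta X(u)\bigr)\Bigr]dx=0.
\]

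Multiplying the pointwise symmetry identity by $\eta$, integrating, and subtracting from the critical-point equation eliminates the $D_yL$ contribution and leaves the \emph{commutator expression}
\[
\int_{\mathbb{R}^n}L_u(x)\cdot\Bigl[(-\Delta)^{1/4}\bigl(\eta X(u)\bigr)(x)-\eta(x)(-\Delta)^{1/4}(X\circ u)(x)\Bigr]dx=0.
\]
Expanding via the pointwise representation (Equation (\ref{eq_prel_pointwise_definition_fraclap})), direct cancellation rewrites the bracketed quantity as $c_n\int_{\mathbb{R}^n} X(u(y))(\eta(x)-\eta(y))\,|x-y|^{-(n+1/2)}\,dy$. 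Applying Fubini and recognizing $(\eta(x)-\eta(y))\,|x-y|^{-1/2}=\text{d}_{1/2}\eta(x,y)$ converts the identity into
\[
c_n\int_{\mathbb{R}^n}\int_{\mathbb{R}^n}L_u^i(x)\,X^i(u(y))\,\text{d}_{1/2}\eta(x,y)\,\frac{dx\,dy}{|x-y|^n}=0,
\]
which is precisely $\text{div}_{1/2}\bigl[L_u^i(x)X^i(u(y))\bigr][\eta]=0$. Since the measure $|x-y|^{-n}dx\,dy$ is symmetric while $\text{d}_{1/2}\eta$ is antisymmetric in $(x,y)$, only the antisymmetric part of the kernel contributes; antisymmetrizing produces exactly $L_u^i(x)X^i(u(y))-L_u^i(y)X^i(u(x))$, and the arbitrariness of $\eta$ then yields the distributional identity claimed by the theorem.

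The algebraic skeleton above is clean, and the main obstacles are functional-analytic rather than structural. The first delicate step is checking that the nearest-point projection of $u+t\eta X(u)$ onto $\mathscr{M}$ genuinely defines an $H^{1/2}(\mathbb{R}^n,\mathscr{M})$ family differentiable at $t=0$, and that the composition of $H^{1/2}$ with this smooth nonlinearity is regular enough for the first variation to equal $\int[D_yL\cdot v+L_u\cdot(-\Delta)^{1/4}v]\,dx$; this is where the $C^4$ regularity of $X$ and the growth hypothesis (\ref{eq_prop_noeth_thm_vartar_condition_on_L_introduction}) enter, via Sobolev embeddings and composition estimates. The second and harder point is justifying the Fubini swap in Step~3: the double integral must be shown absolutely integrable before one may split off the principal-value singularity, which uses $L_u^i\in L^2(\mathbb{R}^n)$, the $L^\infty$ bound on $X\circ u$, and standard fractional Hardy/Sobolev controls to trade the singularity $|x-y|^{-(n+1/2)}$ against $\text{d}_{1/2}\eta$. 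Once those two technical points are secured, the conclusion follows by the short algebraic chain above.
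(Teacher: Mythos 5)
Your proposal is correct in its algebraic skeleton and follows essentially the same strategy as the paper's proof: both perturb $u$ along $\psi\,X(u)$, invoke the infinitesimal symmetry to cancel the $D_yL$ contribution, and manipulate the resulting commutator of $(-\Delta)^{1/4}$ with multiplication by the test function into a $\text{div}_{1/2}$ statement using the pointwise kernel and the antisymmetry of $\text{d}_{1/2}\psi$. Your decomposition into two identities (a pointwise symmetry identity obtained by differentiating the invariance at $t=0$, and the first-variation equation from criticality) is a slightly cleaner way to package the argument than the paper's, which folds the symmetry cancellation into a second-order Taylor expansion of $E(u_t)$ around a comparison term built from the flow $\phi_t$; the paper also passes through an intermediate $\text{div}_{1/4}$ form (equation~\eqref{eq_thm_1_result_Noeththm_vartar_div_free_quantity}) before reaching $\text{div}_{1/2}$, which your commutator manipulation bypasses.

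There is, however, a gap you gloss over. You say the symmetry identity follows by ``differentiating the symmetry assumption at $t=0$ and using the linearity of $(-\Delta)^{1/4}$ to commute it with $\partial_t$.'' Linearity is not enough: commuting the $t$-derivative with the singular integral defining $(-\Delta)^{1/4}$ requires a uniform integrable dominating function for the difference quotients $\frac{1}{t}\bigl[(\phi_t\circ u)(x)-(\phi_t\circ u)(y) - (u(x)-u(y))\bigr]|x-y|^{-(n+1/2)}$, and this is precisely what the paper's Lemma~\ref{lem_derivative_of_composition_fraclap_14} establishes by exploiting the $C^4$ regularity of $X$, the Gagliardo seminorm bound on $u$, and a second-order Taylor remainder estimate. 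Without this lemma, the pointwise differentiation of the symmetry condition that your first step rests on is not justified. In the same vein, the two technical obstacles you \emph{do} flag — the regularity of the projected family and the Fubini/absolute-integrability justification — are genuine and nontrivial: the paper spends roughly half of the proof controlling the error term $b_t=u_t-u-t\psi X(u)$ in $H^{1/2}\cap L^\infty$ (via Gronwall, Taylor, and a tail estimate outside the support of $\psi$) before the growth condition on $L$ can be invoked, and it proves the required absolute convergence for the Fubini swap using the hypothesis $L_u^i\in L^2$ together with a split of the kernel near and away from the diagonal. So while your route is sound and even somewhat more transparent, the assessment that ``the algebraic skeleton is clean and the main obstacles are functional-analytic rather than structural'' understates how much of the work, including a piece you did not identify, lives in those functional-analytic obstacles.
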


In this framework, a model problem is given by the half Dirichlet energy
\begin{equation}\label{eq_intro_half_Dirichlet_energy}
E(u)=\int_{\mathbb{R}}\left\lvert (-\Delta)^\frac{1}{4}u(x)\right\rvert^2 dx
\end{equation}
for functions $u\in H^\frac{1}{2}(\mathbb{R}, \mathbb{S}^m)$ taking values in the $m$-dimensional sphere $\mathbb{S}^m$ for some $m\in\mathbb{N}$. In fact the energy $E$ is invariant under rotations in the target, thus Theorem \ref{prop_1} yields the following result.
\begin{lem}\label{lem_intro_div12Omega}
Let $u\in H^\frac{1}{2}(\mathbb{R}^n, \mathbb{S}^{m})$ be a critical point of the energy $E$ in $H^\frac{1}{2}(\mathbb{R}^n, \mathbb{S}^{m})$. For any $i, k\in \lbrace 1,..., m+1\rbrace$, $i\neq k$ let
\begin{equation}
\begin{split}
\Omega_{ik}(x,y):=& u^k(x)\text{d}_\frac{1}{2}u^i(x,y)-u^i(x)\text{d}_\frac{1}{2}u^k(x,y)
\end{split}
\end{equation}
for any $x,y\in \mathbb{R}^n$. Then
\begin{equation}\label{eq_intro_div_1_2_of_Omega_vanishes2}
\text{div}_\frac{1}{2} \left(\Omega_{ik}\right)=0.
\end{equation}
\end{lem}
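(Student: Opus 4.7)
The plan is to apply Theorem \ref{prop_1} to the Lagrangian $L(x,y,p)=|p|^2$, with target manifold $\mathscr{M}=\mathbb{S}^m\subset\mathbb{R}^{m+1}$ and, for each ordered pair $i\neq k$, the infinitesimal rotation $X_{ik}(y)=y^k e_i - y^i e_k$. First I would verify the hypotheses. The Lagrangian is smooth and depends only on $p$, so condition (\ref{eq_prop_noeth_thm_vartar_condition_on_L_introduction}) is essentially $||p|^2-|p'|^2|\le C(|p-p'|^a+|p-p'|^b)$ with the usual power splitting (or through a truncation argument away from compact sets in $p$). The vector field $X_{ik}$ is smooth and tangent to $\mathbb{S}^m$, and its flow consists of the rotations in the $(i,k)$-plane, which are linear isometries constant in $x$. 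Since $(-\Delta)^{1/4}$ commutes with any constant linear map, $(-\Delta)^{1/4}(\phi_t\circ u)=\phi_t\,(-\Delta)^{1/4}u$, and the Euclidean norm is rotation-invariant, so the invariance hypothesis of Theorem \ref{prop_1} holds. Finally, $L_u^j(x)=2(-\Delta)^{1/4}u^j(x)\in L^2(\mathbb{R}^n)$ because $u\in H^{1/2}(\mathbb{R}^n,\mathbb{S}^m)$.

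Applying Theorem \ref{prop_1} then yields $\mathrm{div}_{\frac12}F_{ik}=0$, where
\begin{equation*}
F_{ik}(x,y)=2\bigl[(-\Delta)^{\frac14}u^i(x)\,u^k(y)-(-\Delta)^{\frac14}u^k(x)\,u^i(y)-(-\Delta)^{\frac14}u^i(y)\,u^k(x)+(-\Delta)^{\frac14}u^k(y)\,u^i(x)\bigr].
\end{equation*}
The remaining task is to convert this into the claimed identity for $\Omega_{ik}$. Observing that $\Omega_{ik}(x,y)=\bigl(u^i(x)u^k(y)-u^k(x)u^i(y)\bigr)/|x-y|^{1/2}$ is antisymmetric in $x\leftrightarrow y$, I would test $\mathrm{div}_{\frac12}\Omega_{ik}$ against an arbitrary $\phi\in C^\infty_c(\mathbb{R}^n)$, obtaining
\begin{equation*}
\mathrm{div}_{\frac12}(\Omega_{ik})[\phi]=\int_{\mathbb{R}^n}\!\!\int_{\mathbb{R}^n}\frac{\bigl(u^i(x)u^k(y)-u^k(x)u^i(y)\bigr)(\phi(x)-\phi(y))}{|x-y|^{n+1}}\,dx\,dy,
\end{equation*}
and compare with $\mathrm{div}_{\frac12}(F_{ik})[\phi]$ after using the duality identity relating $(-\Delta)^{1/4}$ and $\mathrm{d}_{1/2}$, namely $\int(-\Delta)^{1/4}f\,g\,dx=c_n\iint \mathrm{d}_{1/2}f\cdot \mathrm{d}_{1/2}g\,|x-y|^{-n}dx\,dy$, together with $x\leftrightarrow y$ symmetrization. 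The four $(-\Delta)^{1/4}$ terms in $F_{ik}$ collapse, after regrouping, into (a multiple of) the kernel above acting on $\mathrm{d}_{1/2}\phi$, so $\mathrm{div}_{\frac12}F_{ik}=0$ is equivalent to $\mathrm{div}_{\frac12}\Omega_{ik}=0$.

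The main obstacle is this last algebraic and distributional reorganization: one must carefully split the single-variable factors $u^j(x)$ and $(-\Delta)^{1/4}u^j(x)$ into $xy$-symmetric and antisymmetric pieces, justify the interchange of integration orders and principal-value regularizations, and check that the integrability conditions from Definition \ref{def_introduction_fractional_gradients_divergences} hold along the way. Once this bookkeeping is done, the conclusion $\mathrm{div}_{\frac12}\Omega_{ik}=0$ is a direct consequence of Theorem \ref{prop_1}, and the lemma follows for every pair $i\neq k\in\{1,\dots,m+1\}$.
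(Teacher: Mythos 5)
Your overall strategy---apply Theorem~\ref{prop_1} to $L(p)=|p|^2$ with the infinitesimal rotations $X_{ik}$, then convert the resulting divergence-free $F_{ik}$ into $\Omega_{ik}$---is the paper's indirect route (Lemmas~\ref{lem_1_2_unexpected_divergence_free_quantity} and~\ref{lem_alternative_argument_div12_Omega_ik=0}); your $F_{ik}$ is the $x\leftrightarrow y$ antisymmetrization of the paper's $\Lambda_{ik}$, and since $\text{div}_{\frac{1}{2}}$ only sees the antisymmetric part of its argument they carry the same information. However, the conversion step is a genuine gap as you have described it. The duality identity you quote is incorrect: Lemma~\ref{lem_div_d_=_fraclap} gives $\int(-\Delta)^{\frac{1}{4}}f\,(-\Delta)^{\frac{1}{4}}g\,dx=\frac{C_{n,\frac{1}{2}}}{2}\iint \text{d}_{\frac{1}{2}}f\,\text{d}_{\frac{1}{2}}g\,\frac{dxdy}{|x-y|^n}$, equivalently $\int(-\Delta)^{\frac{1}{2}}f\cdot g\,dx$, not $\int(-\Delta)^{\frac{1}{4}}f\cdot g\,dx$. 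More substantively, $F_{ik}$ is bilinear in $(-\Delta)^{\frac{1}{4}}u$ and $u$, while $\Omega_{ik}$ is bilinear in $u$ and $u$; passing from one to the other is not a reorganization of a single double integral but requires the fractional Leibnitz rule (\ref{eq_discussion_introduction_Leibnitz_rule_fraclaps}) and self-adjointness of $(-\Delta)^{\frac{1}{4}}$, both of which demand more regularity than $H^{\frac{1}{2}}$ supplies. The paper therefore first shows, for smooth $v$, that $\text{div}_{\frac{1}{2}}\Lambda_{ik}^v[\phi]$ and $\text{div}_{\frac{1}{2}}\Omega_{ik}^v[\phi]$ both reduce, up to a constant factor, to $\int\bigl(v^i(-\Delta)^{\frac{1}{2}}v^k-v^k(-\Delta)^{\frac{1}{2}}v^i\bigr)\phi\,dx$, and then passes to general $u\in H^{\frac{1}{2}}(\mathbb{R}^n,\mathbb{S}^m)$ by a density argument in which $\phi u_n\rightharpoondown\phi u$ weakly in $H^{\frac{1}{2}}$. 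That limiting argument is an essential missing ingredient in your sketch.

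One further caution: the growth hypothesis (\ref{eq_prop_noeth_thm_vartar_condition_on_L_introduction}) of Theorem~\ref{prop_1} is not actually satisfied by $L(p)=|p|^2$, since $\bigl||p|^2-|p'|^2\bigr|$ can grow like $|p'|\,|p-p'|$, and your suggested truncation in $p$ is not obviously available because $(-\Delta)^{\frac{1}{4}}u$ is only in $L^2$, not $L^\infty$. Lemma~\ref{lem_1_2_unexpected_divergence_free_quantity} verifies conditions (\ref{eq_prop_condition_infinitesimal_symmetry}) and (\ref{eq_prop_assumption_integrability_of_p_derivative}) but is silent on this one, so the issue is shared with the paper; still, you should flag it rather than dismiss it. The cleaner alternative the paper offers (Lemma~\ref{lem_direct_proof_divOmega=0}) rewrites the energy as $\langle(-\Delta)^{\frac{1}{2}}u,u\rangle$ and runs the Noether argument directly for this bilinear form, producing $\text{div}_{\frac{1}{2}}\Omega_{ik}=0$ without an intermediate $\Lambda_{ik}$ and without invoking Theorem~\ref{prop_1} at all.
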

The proof of Lemma \ref{lem_intro_div12Omega} follows indirectly from Theorem \ref{prop_1} (see Lemmas \ref{lem_1_2_unexpected_divergence_free_quantity} and \ref{lem_1_2_div_of_Omega_vanishes} or \ref{lem_alternative_argument_div12_Omega_ik=0}), or directly from the same argument used to prove it (see Lemma \ref{lem_direct_proof_divOmega=0}).\\
Equation (\ref{eq_intro_div_1_2_of_Omega_vanishes2}) has been used by K. Mazowiecka and A. Schikorra (who derived it by direct calculations) in combination with an analogous of Wente's Lemma for fractional quantities (Corollary \ref{cor_fractional_Wente_Lemma}) to obtain an alternative proof of the following regularity result (Theorem 3.2 in \cite{divcurl}), similar to the one developed by F. H\'{e}lein in \cite{Heleinarticle} for the corresponding local problem.
\begin{thm}\label{thm_intro_continuity_critical_pts} (F. Da Lio, T. Rivi\`{e}re, \cite{3Termscomm})
Let $m\in \mathbb{N}$, let $u\in \dot{H}^\frac{1}{2}(\mathbb{R},\mathbb{S}^m)$ be a critical point of the energy (\ref{eq_definition_fractional_Dirichlet_energy_2}), then $u$ is continuous.
\end{thm}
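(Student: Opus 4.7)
The plan is to mimic H\'elein's proof of continuity of harmonic maps into spheres, with the fractional $\Omega_{ik}$ playing the role of the classical divergence-free antisymmetric potential.

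First I would write down the Euler–Lagrange equation for a critical point $u\in\dot H^{1/2}(\mathbb{R},\mathbb{S}^m)$ of the half Dirichlet energy. Using the pointwise constraint $|u|^2\equiv 1$ and varying in the tangent directions $v-\langle v,u\rangle u$, one obtains that $(-\Delta)^{1/2}u$ is pointwise parallel to $u$, so the equation can be written in the antisymmetric form
\begin{equation*}
u^k(-\Delta)^{1/2}u^i - u^i(-\Delta)^{1/2}u^k = 0 \qquad \text{for all } i,k.
\end{equation*}
I would then rewrite the left-hand side as a $1/2$-divergence of the quantity $\Omega_{ik}(x,y)=u^k(x)\mathrm{d}_{1/2}u^i(x,y)-u^i(x)\mathrm{d}_{1/2}u^k(x,y)$ up to a remainder which is a ``three-term commutator'' of the type $[u^k, (-\Delta)^{1/4}, u^i]$. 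The vanishing of the principal part is exactly Lemma \ref{lem_intro_div12Omega}, and it is this structural identity that the Noether machinery of Theorem \ref{prop_1} has provided.

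Next, I would close the estimate by feeding the divergence-free structure into the fractional Wente-type inequality (Corollary \ref{cor_fractional_Wente_Lemma}). Since $\Omega_{ik}$ is built from products of $H^{1/2}$ functions and is $1/2$-divergence free, one gets a gain of integrability: the bilinear expressions $u^k(-\Delta)^{1/4}u^i - u^i(-\Delta)^{1/4}u^k$, which a priori only live in $L^2$, can be bounded in a Lorentz space $L^{2,1}$ (or an analogous Hardy-type space) with norm controlled by $\|u\|_{\dot H^{1/2}}^2$. Combined with the pointwise relation $(-\Delta)^{1/2}u^i = u^i\,u^k(-\Delta)^{1/2}u^k$ coming from $|u|^2=1$, and the three-commutator estimates of Da Lio–Rivi\`ere, this upgrades $(-\Delta)^{1/4}u$ to $L^{2,1}_{\mathrm{loc}}$.

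The final step is standard: once $(-\Delta)^{1/4}u\in L^{2,1}_{\mathrm{loc}}$, Riesz potential / Sobolev embedding in Lorentz spaces gives $u\in C^0$.

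The main obstacle is the step where one passes from the antisymmetric pointwise identity to an actual $1/2$-divergence plus a controlled remainder: the nonlocality prevents Leibniz-type manipulations, so one must carefully identify the difference between $u^k(-\Delta)^{1/2}u^i - u^i(-\Delta)^{1/2}u^k$ and $\mathrm{div}_{1/2}\Omega_{ik}$, and show that the commutator remainder is estimable in a space compatible with the Wente/Lorentz improvement. This is the technical heart of the argument and is precisely where the three-term commutator estimate of \cite{3Termscomm} enters; without it, the algebraic cancellation provided by Lemma \ref{lem_intro_div12Omega} would not translate into an analytic gain.
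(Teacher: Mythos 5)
Your proposal shares the key ingredients with the paper's proof (the antisymmetric Euler--Lagrange structure $u^k(-\Delta)^{1/2}u^i - u^i(-\Delta)^{1/2}u^k = 0$, the divergence-free quantity $\Omega_{ik}$ coming from Noether, and the fractional Wente machinery), but the structural step that actually makes the argument close is missing. You propose to compare the antisymmetric EL expression with $\mathrm{div}_{1/2}\Omega_{ik}$ up to a three-term-commutator remainder, and then to upgrade $(-\Delta)^{1/4}u$ to $L^{2,1}_{\mathrm{loc}}$ and embed. Two points. First, for regular functions $v$ one computes (Lemma~\ref{lem_alternative_argument_div12_Omega_ik=0}) that $\mathrm{div}_{1/2}\Omega_{ik}^v$ equals, exactly and with no leftover commutator, $2(v^i(-\Delta)^{1/2}v^k - v^k(-\Delta)^{1/2}v^i)$, so the expected remainder you describe does not arise at that step. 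Second, and more importantly, you never say which equation of the form $(-\Delta)^{1/2}u = F\cdot\mathrm{d}_{1/2}g + T$ is to be fed into the Wente-type estimate (Corollary~\ref{cor_fractional_Wente_Lemma}): the hypothesis of that lemma is a specific structural decomposition of $(-\Delta)^{1/2}u$, not merely the knowledge that some bilinear quantity is divergence-free. Producing that decomposition is precisely what the paper's algebraic projection identity
\begin{equation*}
\mathrm{d}_{1/2}u(x,y) \;=\; \sum_{i<k}\bigl(\mathrm{d}_{1/2}u(x,y)\cdot A_{ik}u(x)\bigr)\,A_{ik}u(x) + \bigl(\mathrm{d}_{1/2}u(x,y)\cdot u(x)\bigr)\,u(x)
\end{equation*}
achieves: applying $\mathrm{div}_{1/2}$ to both sides gives $(-\Delta)^{1/2}u = \sum_{i<k} F_{ik}\cdot_{\bigwedge^1_{od}\mathbb{R}}\mathrm{d}_{1/2}(A_{ik}u) + T$, where $F_{ik}(x,y)=\mathrm{d}_{1/2}u(x,y)\cdot A_{ik}u(x)$ is exactly the $\frac{1}{2}$-divergence-free quantity from Noether, and $T$ (the normal component) satisfies $\lvert T[\phi]\rvert\le C\lVert(-\Delta)^{1/4}\phi\rVert_{L^{2,\infty}}$ by the estimate of \cite{divcurl}, Section 3.2. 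At that point Corollary~\ref{cor_fractional_Wente_Lemma} gives continuity directly, with no intermediate pass through $L^{2,1}$.

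To be fair, the route you sketch (commutator estimates plus a Lorentz-space improvement) is essentially the original Da Lio--Rivi\`ere proof from \cite{3Termscomm}, which does exist and works; but the paper deliberately gives an alternative, H\'elein/Mazowiecka--Schikorra-style argument in which the projection identity replaces the commutator estimates. If you want to stay on your route you must actually supply the three-term-commutator bounds (which is nontrivial and is the content of \cite{3Termscomm}); if you want to reproduce the paper's argument you need the projection identity, the verification that $T$ obeys the $L^{2,\infty}$ dual bound, and the observation that the tangential coefficients $F_{ik}$ are exactly the Noether quantities whose $\frac{1}{2}$-divergence vanishes.
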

This example illustrates how, also in the context of nonlocal operators, Noether Theorem finds applications in the study of geometric equations.

\paragraph{Symmetries in the domain}
Next we considered the case of Lagrangians exhibiting invariance under a group of transformations acting on the domain. In this case we adapted to the nonlocal framework the proof of Theorem 2.1 in \cite{DaLio} (Theorem \ref{thm_Noether_thm_vardom} in this document). We chose to work with this article as it draws interesting connections between Noether's Thoerem and Pohozaev-type identities\footnote{By this are meant "identities produced by the conformal invariance of the highest order derivative term in the Lagrangian from which the Euler Lagrange is issued".}. In this article, Noether Theorem is formulated for functions defined on the disc $D^2$, thus we tried to find a nonlocal parallel for functions defined on the boundary $\partial D^2$ of the disc, with the idea of regarding the nonlocal problem as a sort of "trace" of the local problem for functions on $D^2$.
In this context we used the following definition for $\frac{1}{4}$-fractional divergence:
\begin{defn}
Let $a\in L^{2,1}(\partial D^2)$, $b\in L^2(\partial D^2)$. Let
\begin{equation}\label{eq_definition_introduction_fracdiv_circle_mapF}
F: \partial D^2\times\partial D^2\to \mathbb{R}, \quad (x,y)\mapsto a(x)\cdot b(y),
\end{equation}
then the \textbf{$\frac{1}{4}$-fractional divergence} of $F$ is the distribution defined by
\begin{equation}\label{eq_definition_introduction_fractional_divergence_circle}
\text{div}_\frac{1}{4}[a(x)\cdot b(y)][\phi]=\int_{\partial D^2}b(y)\cdot\int_{\partial D^2}a(x)(\phi(x)-\phi(y))K^\frac{1}{4}(x-y)dydx,
\end{equation}
where $K^\frac{1}{4}$ denotes the Kernel of the $\frac{1}{4}$-fractional Laplacian on $\partial D^2$, defined in (\ref{def_Kernel_Ks_0_12}).
\end{defn}
By computation (\ref{eq_proof_lem_l2_norm_bounded_to_show_integrable_ae}), for any $\phi\in C^\infty(\partial D^2)$ the integral in (\ref{eq_definition_introduction_fractional_divergence_circle}) converges absolutely, and by Lemma (\ref{lemma_approximation_identity_14}) (and the discussion thereafter) there holds
\begin{equation}
\begin{split}
\left\lvert \text{div}_\frac{1}{4}\left( a(x)\cdot b(y)\right)[\phi]\right\rvert\leq  C[\phi]_{\mathbb{A}^1}\lVert a\rVert_{H^{-\frac{1}{2}}} \lVert b\rVert_{L^2}
\end{split}
\end{equation}
for some constant $C$, where
\begin{equation}
[\phi]_{\mathbb{A}^1}:=\sum_{\substack{k\in\mathbb{Z}\\ k\neq 0}}\lvert k\rvert\lvert \widehat{\phi}(k)\rvert.
\end{equation}
Therefore the definition of $\frac{1}{4}$-divergence can be extended by continuity to functions of the form (\ref{eq_definition_introduction_fracdiv_circle_mapF}), where $a\in H^{-\frac{1}{2}}(\partial D^2)$, $b\in L^2(\partial D^2)$\footnote{It is possible to define the $s$-divergence of functions $F$ of different forms and for different values of $s$, see for instance (\ref{eq_definition_frac_div_14}) and (\ref{eq_discussion_extension_definition_fracdiv_12}).}.\newline
We can now proceed to state the Noether theorem for variations in the domain (see Theorem \ref{prop_final_version_Noether_thm_vardom} and Lemma \ref{lem_condition_for_prop_def_Noeth_vardom}):
\begin{thm}\label{prop_2}
Let $L\in C^2(\mathbb{R}^m\times\mathbb{R}^m, \mathbb{R})$ so that for any $x,p\in\mathbb{R}^m$
\begin{equation}
\lvert L(x,p)\rvert\leq C(1+\lvert p\rvert^2),\quad\lvert D_pL(x,p)\rvert\leq C(1+\lvert p\rvert)
\end{equation}
for some constant $C>0$. For any $u\in H^\frac{1}{2}(\partial D^2,\mathbb{R}^m)$ we set
\begin{equation}
E(u):=\int_{\partial D^2}L\left(u(x),(-\Delta)^\frac{1}{4}u(x)\right)dx.
\end{equation}
Let $X$ be a smooth vector field on $\partial D^2$ and assume that its flow $\phi_t$ satisfies
\begin{equation}\label{eq_prop_2_in_intro_invariance_flow}
L\left(u\circ\phi_t(x),(-\Delta)^\frac{1}{4}(u\circ\phi_t)(x)\right)=L\left(u(\phi_t(x)),(-\Delta)^\frac{1}{4}u(\phi_t(x))\right)\phi_t'(x)
\end{equation}
for a.e. $x\in \partial D^2$, for $t$ in a neighbourhood of the origin.
Assume that $u\in H^\frac{1}{2}(\partial D^2, \mathbb{R}^m)$ satisfies the following stationarity equation
\begin{equation}
\frac{d}{dx}L\left(u, (-\Delta)^\frac{1}{4}u\right)X+\text{div}_\frac{1}{4}\left( D_pL\left(u(x), (-\Delta)^\frac{1}{4}u(x) \right)u'(y)\right)X=0
\end{equation}
as distributions.
Then, in the sense of distributions,
\begin{equation}
\frac{d}{dx}\left[L\left(u, (-\Delta)^\frac{1}{4}u\right)X\right](x)+\text{div}_\frac{1}{4}\left[D_pL\left(u(x), (-\Delta)^\frac{1}{4}u(x)\right)u'(y)X(y)\right]=0.
\end{equation}
\end{thm}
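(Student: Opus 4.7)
The strategy is to differentiate the invariance (\ref{eq_prop_2_in_intro_invariance_flow}) in $t$ at $t=0$ and compare the resulting pointwise identity with the stationarity hypothesis, both tested against an arbitrary $\phi\in C^\infty(\partial D^2)$. Since $\phi_0=\mathrm{id}$ gives $\partial_t|_{t=0}\phi_t=X$ and $\partial_t|_{t=0}\phi_t'=X'$, differentiating the right-hand side of (\ref{eq_prop_2_in_intro_invariance_flow}) yields $\frac{d}{dx}\bigl[L(u,(-\Delta)^{1/4}u)\,X\bigr](x)$ by the chain and product rules. Using the kernel representation of $(-\Delta)^{1/4}$ on $\partial D^2$, the $t$-derivative of the left-hand side equals $D_yL\cdot u'(x)X(x)+D_pL(x)\,V(x)$, where
\[ V(x)=\mathrm{P.V.}\!\int_{\partial D^2}\bigl(u'(x)X(x)-u'(y)X(y)\bigr)\,K^{1/4}(x-y)\,dy. \]

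The key algebraic step is to rewrite $V$ by adding and subtracting $X(x)u'(x)K^{1/4}(x-y)$ inside the integral and using the commutation $((-\Delta)^{1/4}u)'=(-\Delta)^{1/4}(u')$, which gives
\[ V(x)=((-\Delta)^{1/4}u)'(x)\,X(x)+\int_{\partial D^2}u'(y)\bigl(X(x)-X(y)\bigr)K^{1/4}(x-y)\,dy. \]
Substituting into the differentiated invariance and collecting the $D_yL\cdot u'\,X$ and $D_pL\cdot((-\Delta)^{1/4}u)'\,X$ terms into $\frac{d}{dx}L\cdot X$ via the chain rule, the identity reduces to the pointwise equation
\begin{equation}\label{eq_plan_key_identity_prop2}
L\,X'(x)=D_pL(x)\int_{\partial D^2}u'(y)\bigl(X(x)-X(y)\bigr)K^{1/4}(x-y)\,dy.
\end{equation}
I would then pair the target distributional identity with $\phi$ and subtract the pairing of the stationarity hypothesis with $\phi$: the two $\frac{d}{dx}$-contributions collapse, via $(X\phi)'=X'\phi+X\phi'$, into the single term $\int LX'\phi\,dx$, while the two $\text{div}_{1/4}$-contributions combine, after expanding $\phi(x)-\phi(y)$ and $X(x)\phi(x)-X(y)\phi(y)$, into
\[ \iint_{\partial D^2\times\partial D^2} D_pL(x)\,u'(y)\bigl(X(y)-X(x)\bigr)\phi(x)\,K^{1/4}(x-y)\,dy\,dx, \]
which by (\ref{eq_plan_key_identity_prop2}) equals $-\int LX'\phi\,dx$ and thus cancels the first contribution.

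The main technical difficulty will be justifying the pointwise differentiation in $t$ under the singular kernel defining $V(x)$, together with the Fubini manipulations recombining the two $\text{div}_{1/4}$ expressions. Both steps require absolute convergence of the relevant double integrals, which should follow from the integrability estimates leading to (\ref{eq_proof_lem_l2_norm_bounded_to_show_integrable_ae}) and Lemma \ref{lemma_approximation_identity_14}, together with the growth bounds $|L|\leq C(1+|p|^2)$ and $|D_pL|\leq C(1+|p|)$, under the assumed regularity $u\in H^{1/2}(\partial D^2,\mathbb{R}^m)$ and $X$ smooth.
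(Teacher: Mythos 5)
Your algebraic analysis is exactly the right one, and the pointwise identity you isolate in (\ref{eq_plan_key_identity_prop2}) is indeed the crux: it is equivalent, after the Leibniz-rule simplification $u'(x)(-\Delta)^{1/4}X(x)-\int(u'(x)-u'(y))(X(x)-X(y))K^{1/4}(x-y)\,dy=\int u'(y)(X(x)-X(y))K^{1/4}(x-y)\,dy$, to the identity the paper establishes (for smooth $u$) by comparing (\ref{eq_derivative_diffeo_is_conformal14}) with (\ref{eq_derivative_of_f14}), and your recombination of the two $\text{div}_{1/4}$ pairings reproduces computation (\ref{eq_proof_prop_before_from_EL_to Ncurrent}).

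However, the step where you actually differentiate the invariance (\ref{eq_prop_2_in_intro_invariance_flow}) at $t=0$ \emph{for $u\in H^{1/2}(\partial D^2,\mathbb{R}^m)$} is where the argument genuinely breaks. You need $t\mapsto(-\Delta)^{1/4}(u\circ\phi_t)(x)$ to be differentiable pointwise a.e., and you need $V(x)$ to be a well-defined, pointwise function of $x$, but for $u\in H^{1/2}$ the distributional derivative $u'$ lives only in $H^{-1/2}$, so $\int u'(y)(X(x)-X(y))K^{1/4}(x-y)\,dy$ has no a priori pointwise meaning, and the $t$-differentiation under the singular kernel cannot be carried out. This is not a routine Fubini/absolute-convergence issue that the estimates behind (\ref{eq_proof_lem_l2_norm_bounded_to_show_integrable_ae}) resolve: those control $G_{w,\phi}$ in $L^2$ for $w\in L^{2,1}$ (or, by duality extension, measure $\text{div}_{1/4}$ against test functions), but they do not give you the pointwise identity (\ref{eq_plan_key_identity_prop2}) for rough $u$, nor the interchange of $\partial_t$ with the principal value.

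The paper sidesteps this entirely: it proves the equivalence of the two distributional expressions only for $u_n\in C^\infty(\mathbb{S}^1)$ (Proposition \ref{prop_first_version_Noether_vardom_14}, via the computation (\ref{eq_proof_prop_before_from_EL_to Ncurrent}) and the smooth-case pointwise identity), and then passes to the limit. The passage to the limit is the real content of the proof of Theorem \ref{prop_final_version_Noether_thm_vardom}: the $\text{div}_{1/4}$ terms converge because Lemma \ref{lemma_approximation_identity_14} makes $w\mapsto G_{w,\phi}$ a bounded operator $H^{-1/2}\to L^2$ (so $G_{u_n'X,\phi}\to G_{u'X,\phi}$ strongly in $L^2$), while $D_pL(u_n,(-\Delta)^{1/4}u_n)\rightharpoondown D_pL(u,(-\Delta)^{1/4}u)$ weakly in $L^2$ — and this weak convergence is supplied precisely by the growth bound $|D_pL|\leq C(1+|p|)$ via Lemma \ref{lem_condition_for_prop_def_Noeth_vardom}. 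Your proof needs to be restructured around this approximate-then-pass-to-the-limit scheme; without it, the pointwise identity you rely on is not available for the class of $u$ the theorem actually concerns.
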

Here again a model problem is given by the half Dirichlet energy
\begin{equation}\label{eq_intro_half_Dirichlet_energy_for_the_circle}
E(u)=\int_{\partial D^2}\left\lvert(-\Delta)^\frac{1}{4}u(x)\right\rvert^2 dx
\end{equation}
for functions $u\in H^\frac{1}{2}(\partial D^2, \mathbb{R}^m)$, where the flow $\phi_t$ in (\ref{eq_prop_2_in_intro_invariance_flow}) consists of traces of conformal diffeomorphisms of $D^2$.\\
As an application of the idea of Noether theorem we will see how the invariance properties of the half Dirichlet energy can be exploited to derive Pohozaev identities (Proposition \ref{prop_Pohozaev_identities_on_the_circle_intro}):
\begin{prop}\label{prop_Pohozaev_identities_on_the_circle_intro}
Let $u\in H^1(\partial D^2, \mathbb{R})$. Then, for any vector field $X$ on $\partial D^2$ whose flow is a family of traces of automorphisms of $D^2$, there holds
\begin{equation}
u\frac{d}{dx}\left[(-\Delta)^\frac{1}{2}uX\right]=u(-\Delta)^\frac{1}{2}\left(u'X\right)
\end{equation}
in the sense of distributions. In particular
\begin{equation}
\int_{\partial D^2}u'(x)(-\Delta)^\frac{1}{2}u(x)dx=0,
\end{equation}
and for any $\delta\in \partial D^2$
\begin{equation}
\int_{\partial D^2}u'(x)(-\Delta)^\frac{1}{2}u(x)\sin(x-\delta)dx=0.
\end{equation}
\end{prop}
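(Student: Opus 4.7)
The plan is to exploit the conformal covariance of $(-\Delta)^{\frac{1}{2}}$ on the circle. The key input is that for any boundary trace $\phi$ of a M\"obius automorphism of $D^2$ one has
\begin{equation*}
(-\Delta)^{\frac{1}{2}}(v\circ\phi)(x) = \phi'(x)\,[(-\Delta)^{\frac{1}{2}}v](\phi(x))
\end{equation*}
for sufficiently regular $v$ on $\partial D^2$. This is the infinitesimal form of the conformal invariance hypothesis (\ref{eq_prop_2_in_intro_invariance_flow}) of Theorem \ref{prop_2} applied to the half Dirichlet energy on $\partial D^2$, and it can be read off from the fact that $(-\Delta)^{\frac{1}{2}}$ is the Dirichlet-to-Neumann operator of harmonic extension to $D^2$, together with the two-dimensional conformal invariance of the Dirichlet energy and the behaviour of the outward normal derivative under conformal maps preserving $\partial D^2$.

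Taking a one-parameter family $\phi_t$ of traces of M\"obius automorphisms generated by $X$ (so $\phi_0=\mathrm{id}$ and $\partial_t\phi_t|_{t=0}=X$), I would differentiate the covariance formula at $t=0$ in two distinct ways. On one hand, commuting $(-\Delta)^{\frac{1}{2}}$ with the $t$-derivative yields $\frac{d}{dt}\big|_{t=0}(-\Delta)^{\frac{1}{2}}(u\circ\phi_t)=(-\Delta)^{\frac{1}{2}}(u' X)$, while on the other hand, Leibniz applied to $\phi_t'(x)\,((-\Delta)^{\frac{1}{2}}u)(\phi_t(x))$ gives
\begin{equation*}
X'(x)(-\Delta)^{\frac{1}{2}}u(x)+X(x)\,\tfrac{d}{dx}(-\Delta)^{\frac{1}{2}}u(x)=\tfrac{d}{dx}\bigl[(-\Delta)^{\frac{1}{2}}u\cdot X\bigr](x).
\end{equation*}
Equating the two expressions and multiplying by $u$, which is bounded and continuous since $u\in H^1(\partial D^2)$, yields the claimed pointwise identity in the sense of distributions.

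For the integral consequences, I would then integrate the distributional identity over the compact circle. Self-adjointness of $(-\Delta)^{\frac{1}{2}}$ turns the left-hand side into $\int u'X\,(-\Delta)^{\frac{1}{2}}u\,dx$, while integration by parts on $\partial D^2$ (no boundary terms) turns the right-hand side into $-\int u'X\,(-\Delta)^{\frac{1}{2}}u\,dx$, giving $\int u'X(-\Delta)^{\frac{1}{2}}u\,dx=0$. Setting $X\equiv 1$ (the generator of rotations) produces the first integral identity, and taking $X(x)=\sin(x-\delta)$, which is a linear combination of the generators $\sin x$ and $\cos x$ of the two non-rotational one-parameter subgroups of the M\"obius group of $D^2$, produces the second.

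The main obstacle is justifying the covariance formula and the interchange of $\partial_t$ with $(-\Delta)^{\frac{1}{2}}$ at the hypothesised regularity $u\in H^1(\partial D^2)$. Both sides of the distributional identity are well-defined under this assumption since $u$ is continuous and $(-\Delta)^{\frac{1}{2}}u\in L^2$, and a clean route is to prove the identity first for smooth $u$ by the classical derivations sketched above and then extend by density in $H^1$ using continuity of all operations involved. A conceptual alternative is to apply Theorem \ref{prop_2} directly to $E(u)=\int_{\partial D^2}|(-\Delta)^{\frac{1}{4}}u|^2\,dx$ and then translate the $\mathrm{div}_{\frac{1}{4}}$ form of the Noether identity into the $(-\Delta)^{\frac{1}{2}}$ form of the proposition, but this merely trades one subtle step for another of similar difficulty.
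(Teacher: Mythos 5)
Your proposal is correct but takes a genuinely more direct route than the paper's own proof. The paper derives the pointwise identity by first running the Noether-theorem machinery for the energy $E(v)=\int_{\partial D^2}(-\Delta)^{\frac{1}{2}}v\cdot v\,dx$ (the argument preceding Proposition \ref{prop_first_version_Noether_vardom_14} specialised to this Lagrangian), which yields equation (\ref{eq_discussion_modified_result_proof_Noeththm_for_smooth_14}) in terms of $\mathrm{div}_{\frac{1}{2}}$, and then converts that equation into $(-\Delta)^{\frac{1}{2}}$-language via Lemma \ref{lem_explicit_form_fracdiv} before passing to the $H^1$ limit by approximation. You instead differentiate the conformal covariance formula $(-\Delta)^{\frac{1}{2}}(u\circ\phi_t)=\phi_t'\cdot\bigl[(-\Delta)^{\frac{1}{2}}u\bigr]\circ\phi_t$ (the paper's equation (\ref{eq_fraclap_of_precomposition})) directly at $t=0$ and equate the two resulting expressions. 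This is shorter, bypasses the $\mathrm{div}_{\frac{1}{2}}$ formalism entirely, and in fact yields the \emph{stronger} unweighted pointwise identity $\frac{d}{dx}\bigl[(-\Delta)^{\frac{1}{2}}u\,X\bigr]=(-\Delta)^{\frac{1}{2}}(u'X)$, which the paper only records in the Remark following Proposition \ref{prop_Pohozaev_identities_on_the_circle} (and verifies there by Fourier coefficients for $X(x)=2\sin(x-\delta)$; the case $X\equiv 1$ is trivial). What the paper's route buys is integration into the general Noether framework built up in the preceding sections, making the Pohozaev identity a visible instance of the theory; what your route buys is economy and the sharper pointwise statement. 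Your handling of the two technical caveats — the density argument in $H^1$ to justify the covariance formula and the interchange of $\partial_t$ with $(-\Delta)^{\frac{1}{2}}$ for smooth $u$ first — matches what the paper in fact does. One small clarification you might add: the three-parameter family of admissible $X$ is exactly $\mathrm{span}\{1,\cos x,\sin x\}$, and the unweighted identity is linear in $X$, so it suffices to check $X\equiv 1$ (trivial, since $\partial_x$ and $(-\Delta)^{\frac{1}{2}}$ commute) and $X(x)=\sin(x-\delta)$ (the paper's Fourier computation); this makes the covariance-differentiation step rigorous without ever invoking the flow $\phi_t$ for general $u\in H^1$.
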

After that, in Section \ref{ssec:Functions on the real line} we will give a brief overview of corresponding results for functions defined on the real line, in particular of Noether's Theorem for variations in the domain (Proposition \ref{prop_first_noeth_vardom_14_whole_R}) and of Pohozaev identities (Proposition \ref{prop_Pohozaev_identity_for_the_whole_R}).\\
In Section \ref{ssec: Stationary points of the energy} will then return to the study of the half Dirichlet energy for functions defined on the circle.
We will see that for sufficiently regular functions $u$ on $\partial D^2$, $u$ is a stationary point of the half Dirichlet energy if and only if 
\begin{equation}\label{eq_introduction_discussion_first_criterion_stationarity}
(-\Delta)^\frac{1}{2}u(x)\cdot \partial_\theta u(x)=0
\end{equation}
for any $x\in\partial D^2$ (see Equations (\ref{eq_discussion_Noether_formula_for_rotations_from_14}) (\ref{eq_discussion_Noether_formula_for_rotations_from_12})) or equivalently, if we set $v:=(-\Delta)^\frac{1}{2}u$ and $v_+:=\sum_{k\in \mathbb{Z}_{>0}} \widehat{v}(k)e^{ik\cdot}$, if and only if
\begin{equation}\label{eq_intro_definition_frac_Hopf_for_regular_fcts}
v_+(x)\cdot_{\mathbb{C}^m}v_+(x)=0
\end{equation}
for any $x\in\partial D^2$, where $\cdot_{\mathbb{C}^m}$ denotes the $\mathbb{C}$-scalar product in $\mathbb{C}^m$ (see Lemma \ref{lem_indentity_Fourier_coefficients}).
For functions $u$ in $H^\frac{1}{2}(\partial D^2,\mathbb{R}^m)$, the product in (\ref{eq_introduction_discussion_first_criterion_stationarity}) might not even be defined as a distribution. Nevertheless, by means of the techniques developed to adapt Noether Theorem to the nonlocal context, we will see that the product in (\ref{eq_intro_definition_frac_Hopf_for_regular_fcts}) is a well defined distribution, providing simple characterization of stationary points of the half Dirichlet energy in $H^\frac{1}{2}(\partial D^2,\mathbb{R}^m)$ (see Lemma \ref{lem_fractional_Hopf_differential_well_def} and Theorem \ref{lem_equivalent_conditions_vanishing_H(u)}):
\begin{prop}
The map
\begin{equation}
H^\frac{1}{2}(\partial D^2)\to H^{-3}(\partial D^2),\quad u\mapsto \mathscr{H}_\frac{1}{2}(u):=v_+\cdot_{\mathbb{C}^m}v_+
\end{equation}
is well defined. Here $v_+\cdot_{\mathbb{C}^m}v_+$ denotes the distribution whose Fourier coefficients are defined by\footnote{Notice that if $u$ is sufficiently regular, this definition coincides with the actual $\mathbb{C}$-scalar product of $v_+$, where $v_+$ is defined as above.}
\begin{equation}
\mathscr{F}\left(v_+\cdot_{\mathbb{C}^m}v_+\right)(k):=\begin{cases}
\sum_{\substack{a,b\in\mathbb{N}_{>0},\\ a+b=k}} a\widehat{u}(a)\cdot_{\mathbb{C}^m} b \widehat{u}(b)& \text{ if }k>0\\
0 &\text{ if }k\leq 0
\end{cases}
\end{equation}
Moreover, $u\in H^\frac{1}{2}(\partial D^2)$ is a stationary point of the half Dirichlet energy if and only if $\mathscr{H}_\frac{1}{2}(u)=0$ as distribution.
\end{prop}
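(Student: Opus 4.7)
The proof splits into two parts: first, establishing that $u\mapsto\mathscr{H}_{1/2}(u)$ is a well-defined continuous map from $H^{1/2}(\partial D^2,\mathbb{R}^m)$ into $H^{-3}(\partial D^2)$; second, matching on the Fourier side the stationarity condition coming from Theorem \ref{prop_2} with the vanishing of $\mathscr{H}_{1/2}(u)$.

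\textbf{Well-definedness.} For $u\in H^{1/2}(\partial D^2,\mathbb{R}^m)$, set $\alpha(k):=\sqrt{k}\,\lvert\widehat{u}(k)\rvert$ for $k\geq 1$, so that $\alpha\in\ell^2(\mathbb{N}_{>0})$ with $\lVert\alpha\rVert_{\ell^2}^2\leq\lVert u\rVert_{H^{1/2}}^2$. Using $\sqrt{ab}\leq (a+b)/2=k/2$ when $a+b=k$,
\begin{equation*}
\lvert\mathscr{F}(\mathscr{H}_{1/2}(u))(k)\rvert\leq\sum_{\substack{a,b\in\mathbb{N}_{>0}\\ a+b=k}}\sqrt{ab}\,\alpha(a)\alpha(b)\leq\frac{k}{2}(\alpha*\alpha)(k),
\end{equation*}
and Young's convolution inequality gives $\lVert\alpha*\alpha\rVert_{\ell^\infty}\leq\lVert\alpha\rVert_{\ell^2}^2$, whence
\begin{equation*}
\lVert\mathscr{H}_{1/2}(u)\rVert_{H^{-3}}^2\leq\frac{\lVert u\rVert_{H^{1/2}}^4}{4}\sum_{k\geq 1}\frac{k^2}{(1+k^2)^3}<\infty.
\end{equation*}
In particular $\mathscr{H}_{1/2}$ is continuous from $H^{1/2}$-bounded sets into $H^{-3}$.

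\textbf{Characterization of stationarity.} For a smooth $u$, the first variation of the half Dirichlet energy along the reparametrization $u\circ\phi_t$ generated by a vector field $X$ on $\partial D^2$ equals $2\int_{\partial D^2} v\cdot\partial_\theta u\,X\,d\theta$ with $v=(-\Delta)^{1/2}u$, so stationarity is equivalent to $v\cdot\partial_\theta u=0$ pointwise, hence by Lemma \ref{lem_indentity_Fourier_coefficients} to $\mathscr{H}_{1/2}(u)=0$. When $u$ lies only in $H^{1/2}$, the product $v\cdot\partial_\theta u$ is no longer defined (both factors live in $H^{-1/2}$), and stationarity is instead the distributional Noether identity of Theorem \ref{prop_2} applied to $L(y,p)=\lvert p\rvert^2$. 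The plan is to test that identity against the trigonometric vector fields $X_n:=e^{in\theta}\partial_\theta$ for $n\in\mathbb{Z}$, expand both the local term $\frac{d}{dx}[\lvert(-\Delta)^{1/4}u\rvert^2 X_n]$ and the nonlocal term $\text{div}_{1/4}[2(-\Delta)^{1/4}u(x)\cdot u'(y)X_n(y)]$ in Fourier series, and verify that for each $n$ the resulting scalar equation reduces to $\mathscr{F}(\mathscr{H}_{1/2}(u))(n)=0$. Running over $n\in\mathbb{Z}$ then yields one direction of the equivalence; the converse follows by reversing the same computation and invoking density of trigonometric vector fields in $C^\infty(\partial D^2)$ together with the continuity of $\mathscr{H}_{1/2}$ from the previous step.

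\textbf{Main obstacle.} The hardest part is the Fourier-side evaluation of the nonlocal divergence $\text{div}_{1/4}$ acting on the bilinear object $(-\Delta)^{1/4}u(x)\cdot u'(y)X_n(y)$ through the circle kernel $K^{1/4}$: the computation mixes the frequencies of the two factors, and isolation of the holomorphic product $v_+\cdot_{\mathbb{C}^m}v_+$ hinges on delicate cancellations between positive and negative Fourier modes of $v$ and $u'$. The estimate from the well-definedness step is essential here, since it supplies the absolute convergence of the rearranged Fourier series for $u\in H^{1/2}$ where no pointwise product is available.
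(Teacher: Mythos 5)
Your well-definedness argument is correct and in fact slightly sharper than the paper's: by setting $\alpha(k)=\sqrt{k}\,\lvert\widehat{u}(k)\rvert$ and using $\sqrt{ab}\le k/2$ together with Young's inequality for $\alpha\ast\alpha$, you obtain $\lvert\mathscr{F}(\mathscr{H}_{1/2}(u))(k)\rvert\lesssim k$, whereas the paper estimates each factor separately as $\lvert\widehat{v}(a)\rvert\le[u]_{\dot H^{1/2}}a^{1/2}$ and counts the $k-1$ summands, getting the cruder $O(k^2)$; both land comfortably in $H^{-3}$, so the difference is cosmetic.

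The second half is where a genuine gap appears. You explicitly defer ``the hardest part'' — the Fourier-side evaluation of $\text{div}_{1/4}$ applied to the bilinear object for non-smooth $u$ — so the equivalence is only sketched, not proved. More importantly, the route you propose is unnecessarily hard and bypasses a point that actually requires work. The paper does \emph{not} try to compute the Fourier coefficients of $\text{div}_{1/4}[(-\Delta)^{1/4}u(x)\cdot u'(y)]$ directly for $u\in H^{1/2}$. It introduces the distribution $A_u$ of Lemma \ref{lem_distribution_description_for_derivative}, shows via the approximation estimate of Lemma \ref{lemma_approximation_identity_14} that $A_u$ is well defined on $\mathbb{A}^1(\mathbb{S}^1)$, and proves via the Fatou-type argument of Lemma \ref{lem_continuity_derivative_energy} that $A_u[X]=\frac{d}{dt}\big\vert_{t=0}E(u\circ\phi_t)$ — i.e.\ that the distributional Euler--Lagrange equation \emph{is} the variational stationarity condition for all $u\in H^{1/2}$. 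This equivalence is not automatic and is exactly what your ``stationarity is instead the distributional Noether identity'' step silently assumes. With that established, the Fourier computation is then done \emph{only for smooth approximants} $u_n$, where Lemma \ref{lem_explicit_form_fracdiv} collapses $A_{u_n}$ to $2(-\Delta)^{1/2}u_n\cdot u_n'=-2i\bigl({v_n}_+\cdot_{\mathbb{C}^m}{v_n}_+-\overline{{v_n}_+}\cdot_{\mathbb{C}^m}\overline{{v_n}_+}\bigr)$, after which reading off the positive-frequency coefficients is elementary; the result then passes to the limit because $A_{u_n}\to A_u$. None of the ``delicate cancellations between positive and negative Fourier modes'' you flag as the main obstacle actually need to be confronted on the $\text{div}_{1/4}$ object for rough $u$.

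Finally, note that your invocation of Lemma \ref{lem_indentity_Fourier_coefficients} in the smooth case is circular as written: that lemma already states the $H^{1/2}$ result, so either you cite it and the second half of the proposition becomes a one-line comparison of Fourier coefficients (making your subsequent Noether/Fourier discussion redundant), or you intend to prove it, in which case the continuity issue and the explicit Fourier computation are precisely what is missing.
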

For any $u\in H^\frac{1}{2}(\partial D^2)$ we call the distribution $\mathscr{H}_\frac{1}{2}(u)$ the \textbf{$\frac{1}{2}$-fractional Hopf differential} of $u$.\\
The characterization in terms of the $\frac{1}{2}$-fractional Hopf differential allows to deduce in a simple way several properties of stationary points of the half Dirichlet energy in $H^\frac{1}{2}(\partial D^2,\mathbb{R}^m)$; in particular we will obtain a characterization of stationary points in terms of their Fourier coefficients and one in terms of their harmonic extensions in $D^2$ (following the idea of regarding the present problem as a "trace" of the local problem in $D^2$). We summarize these results in the following Theorem (see Lemma \ref{lem_indentity_Fourier_coefficients} and Theorem \ref{lem_equivalent_conditions_vanishing_H(u)}):

\begin{thm}
Let $u\in H^\frac{1}{2}(\partial D^2,\mathbb{R}^m)$ and let $\tilde{u}$ be its harmonic extension in $D^2$.
Then the following statements are equivalent:
\begin{enumerate}
\item $u$ is a stationary point of the half Dirichlet energy,
\item for any $k\in \mathbb{N}_{>0}$
\begin{equation}
\sum_{\substack{n,m\in \mathbb{N}_{>0},\\ n+m=k}} n\widehat{u}(n)\cdot_{\mathbb{C}^m} m \widehat{u}(m)=0,
\end{equation}
\item $\frac{1}{r}\lvert\partial_\theta \tilde{u}\rvert=\lvert \partial_r\tilde{u}\rvert$ in $D^2$,
\item $\partial_\theta \tilde{u}\cdot \partial_r \tilde{u}=0$ in $D^2$.
\end{enumerate}
In particular, $u$ is a stationary point of the half Dirichlet energy if and only if $\tilde{u}$ is conformal in $D^2$.
\end{thm}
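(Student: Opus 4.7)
The plan is to build a chain of equivalences via an auxiliary holomorphic function on the interior of the disc. Step one, $(1)\iff(2)$, is essentially contained in the preceding proposition: $u$ is stationary iff $\mathscr{H}_\frac{1}{2}(u)=0$ as a distribution on $\partial D^2$, and since the Fourier coefficients of $\mathscr{H}_\frac{1}{2}(u)$ are supported on $k\geq 1$ and given explicitly by the sums in (2), distributional vanishing translates exactly into condition (2).

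Next I would pass to the harmonic extension via the Poisson formula, $\tilde{u}(r,\theta)=\sum_{k\in\mathbb{Z}}\widehat{u}(k)\,r^{|k|}e^{ik\theta}$, which converges smoothly on every compact subset of $D^2$. Applying the Wirtinger operator $\partial_z=\tfrac{1}{2}e^{-i\theta}(\partial_r-\tfrac{i}{r}\partial_\theta)$ termwise annihilates the non-positive modes and yields the holomorphic series $\partial_z\tilde{u}(z)=\sum_{k>0}k\widehat{u}(k)z^{k-1}$. Hence
\[
F(z):=\partial_z\tilde{u}(z)\cdot_{\mathbb{C}^m}\partial_z\tilde{u}(z)=\sum_{k\geq 2}c_k\,z^{k-2},\qquad c_k:=\sum_{\substack{n+m=k\\ n,m>0}}n\,\widehat{u}(n)\cdot_{\mathbb{C}^m}m\,\widehat{u}(m),
\]
is holomorphic in $D^2$, and condition (2) is equivalent to $F\equiv 0$ on $D^2$. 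On the other hand, expanding the Wirtinger square in polar coordinates and using that $\tilde{u}$ is real-valued gives
\[
4e^{2i\theta}F(re^{i\theta})=|\partial_r\tilde{u}|^2-\tfrac{1}{r^2}|\partial_\theta\tilde{u}|^2-\tfrac{2i}{r}\,\partial_r\tilde{u}\cdot\partial_\theta\tilde{u},
\]
so (3) amounts to the vanishing of the real part of $4e^{2i\theta}F$ and (4) to the vanishing of its imaginary part.

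Both (3) and (4) clearly follow from $F\equiv 0$. For the converses, if on $D^2$ only the real (respectively imaginary) part of $4e^{2i\theta}F$ vanishes, then restricting to a circle $\{r=r_0\}$ with $0<r_0<1$ produces a trigonometric series $\sum_{k\geq 2}r_0^{k-2}(\alpha_k\cos k\theta-\beta_k\sin k\theta)=0$ (or with $\sin,\cos$ interchanged), where $c_k=\alpha_k+i\beta_k$; Fourier uniqueness forces every $c_k$ to vanish, hence $F\equiv 0$. This closes the chain $(1)\iff(2)\iff(3)\iff(4)$, and the final ``in particular'' clause is immediate since (3) together with (4) are precisely the conformality conditions $|\partial_x\tilde{u}|=|\partial_y\tilde{u}|$ and $\partial_x\tilde{u}\cdot\partial_y\tilde{u}=0$ expressed in polar coordinates. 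The subtle point I would expect to require most care is matching the boundary distribution $\mathscr{H}_\frac{1}{2}(u)$, which for $u\in H^\frac{1}{2}$ exists only in a weak sense, with the pointwise-defined interior function $F$; this matching is effected by identifying the Fourier modes of the former with the Taylor coefficients $c_k$ of the latter, which the preceding proposition makes rigorous.
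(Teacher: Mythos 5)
Your proof is correct and rests on the same central observation as the paper's: the Hopf differential is holomorphic in the interior, hence supported on non-negative frequencies, so vanishing of its real part (resp.\ imaginary part) already forces it to vanish identically. The paper organizes the argument differently: it works with the product $v_{r+}\cdot_{\mathbb{C}^m}v_{r+}$ of the positive-frequency part of $(-\Delta)^{1/2}u_r$ on each circle $\{r=\mathrm{const}\}$, computes its Fourier coefficients as $r^k\mathscr{F}(\mathscr{H}_{1/2}(u))(k)$, and then invokes the positive-frequency rigidity circle by circle (the equivalence with ``stationary'' passes through Lemma \ref{lem_harmonic_extension_conformal_first_condition} and Lemma \ref{lem_indentity_Fourier_coefficients}). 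You instead assemble the same data into the single interior holomorphic function $F(z)=\partial_z\tilde{u}\cdot_{\mathbb{C}^m}\partial_z\tilde{u}$ with Taylor coefficients $c_k$ and let the equivalences run through $F\equiv 0$, using the Wirtinger identity to read (3) and (4) as vanishing of $\mathrm{Re}$ and $\mathrm{Im}$ of $4e^{2i\theta}F$ respectively, and then Fourier uniqueness on a fixed circle. This is a cleaner packaging of the same idea and arguably more transparent; the absolute convergence of $\sum c_k z^{k-2}$ on compact subsets of $D^2$, which your argument tacitly needs, follows from the bound $|c_k|\lesssim k^2[u]_{\dot H^{1/2}}^2$ established in Lemma \ref{lem_fractional_Hopf_differential_well_def}. (Minor: you write the polar formula with prefactor $4e^{2i\theta}$ where the paper's Equation \eqref{eq_comm_Hopf_diff_in_polar_coordinates} has $\overline{z}/(4r^2)=e^{-i\theta}/(4r)$; your version is the correct one, though the discrepancy is an overall nonvanishing factor and so affects neither proof.)
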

We remark that the the implication $1.\Rightarrow 2.$ already appeared in \cite{DaLio} and was also obtained in the work of preparation of \cite{Cabre} (see Remark \ref{rem_DaLio_Cabre}), while the fact that a function $u\in H^\frac{1}{2}(\partial D^2,\mathbb{R}^m)$ is a stationary point of $E$ if and only if its harmonic extension in $D^2$ is conformal was first obtained in \cite{CompBubble} and \cite{MillotSire}.\\

In the last section of this document, we will show how the invariance properties of the half Dirichlet energy on $\mathbb{R}$ can be used to prove identities analogous to the classical Pohozaev identity (Equation (\ref{eq_Pohozaev_local})). From the invariance of the half Dirichlet energy on $\mathbb{R}$ under translations and dilations in the domain we will deduce that if a function $u:[a,b]\to \mathbb{R}$ is sufficiently regular (where $[a,b]$ is a bounded interval of $\mathbb{R}$), then the integrals
\begin{equation}\label{eq_discussion_introduction_integral_to_evaluate_for_Pohozaev}
\int_a^b u'(x)(-\Delta)^\frac{1}{2}u(x)dx,\quad \int_a^b xu'(x)(-\Delta)^\frac{1}{2}u(x)dx
\end{equation}
only depend on the values of $u$ and $(-\Delta)^\frac{1}{2}u$ in an arbitrary small neighbourhood of $a$ and $b$. Therefore we will first study the asymptotic behaviour of $u(x)$ and $(-\Delta)^\frac{1}{2}u(x)$ as $x$ approaches $a$ and $b$ (Section \ref{ssec:Asymptotics}), and we will then use it to obtain an explicit expression for the integrals in (\ref{eq_discussion_introduction_integral_to_evaluate_for_Pohozaev}).
We will formulate the results for solutions $u\in H^\frac{1}{2}(\mathbb{R}, \mathbb{R})$ of the problem
\begin{equation}\label{eq_discussion_introduction_Cauchy_problem_Pohozaev_domains}
\begin{cases}
(-\Delta)^\frac{1}{2} u=f(u)&\text{ weakly in }(a,b)\\
u=0&\text{ in }(a,b)^c,
\end{cases}
\end{equation}
where  $f:\mathbb{R}\to \mathbb{R}$ is a Lipschitz function. This will ensure that $u$ has all the necessary properties to carry out the argument outlined above in a rigorous way, and will also allow to compare directly the present results to the classical Pohozaev identity and the existing literature for nonlocal results. Nevertheless we remark that these argument do not use directly the fact that $u$ is a solution of (\ref{eq_discussion_introduction_Cauchy_problem_Pohozaev_domains}), therefore the results are valid for more general functions (see Remark \ref{rem_regularity_Pohozaev}).
We obtained the following explicit expressions for the integrals in (\ref{eq_discussion_introduction_integral_to_evaluate_for_Pohozaev}) (Theorem \ref{prop_Pohozaev_identity_for_open_bounded_intervals_final}).
\begin{thm}\label{prop_3}
Assume that $u\in H^\frac{1}{2}(\mathbb{R}, \mathbb{R})$ is a solution of (\ref{eq_discussion_introduction_Cauchy_problem_Pohozaev_domains}). Moreover assume that $u^2\in C^2([a,b])$.
Then
\begin{equation}\label{eq_prop_statement_Pohozaev_proof_asymptotic_translations_into}
\int_a^bu'(x)(-\Delta)^\frac{1}{2}u(x)dx=\frac{\pi}{8}\left[ \lim_{x\to a^+}\frac{u^2(x)}{x-a}-\lim_{x\to b^-}\frac{u^2(x)}{b-x}\right]
\end{equation}
and
\begin{equation}\label{eq_prop_statement_Pohozaev_proof_asymptotic_intro}
\int_a^b u'(x)x(-\Delta)^\frac{1}{2}u(x)dx=\frac{\pi}{8}\left[\lim_{x\to a^+}\frac{u^2(x)}{x-a}a-\lim_{x\to b^-}\frac{u^2(x)}{b-x}b\right].
\end{equation}
\end{thm}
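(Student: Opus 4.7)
The strategy combines the asymptotic behavior of $u$ near the endpoints of $(a,b)$ with the translation and dilation Noether identities for the half Dirichlet energy on $\mathbb{R}$ provided by Proposition \ref{prop_Pohozaev_identity_for_the_whole_R}, which assert that for every smooth compactly supported $v$
\[
\int_{\mathbb{R}} v'\,(-\Delta)^{1/2}v\,dx=0\qquad\text{and}\qquad \int_{\mathbb{R}} x\,v'\,(-\Delta)^{1/2}v\,dx=0
\]
(both follow formally from the self-adjointness of $(-\Delta)^{1/2}$ and the commutation $[(-\Delta)^{1/2},d/dx]=0$; the second also uses the scaling $(-\Delta)^{1/2}[v(\lambda\,\cdot)]=\lambda(-\Delta)^{1/2}v(\lambda\,\cdot)$). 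The function $u$ of the theorem does not satisfy the regularity hypotheses of these identities, because it has a $(x-a)^{-1/2}$ blow-up in its derivative at each endpoint; the boundary terms in \eqref{eq_prop_statement_Pohozaev_proof_asymptotic_translations_into}--\eqref{eq_prop_statement_Pohozaev_proof_asymptotic_intro} arise precisely as the obstruction produced by this lack of regularity.

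To make this rigorous I would proceed in three steps. First, from $u^{2}\in C^{2}([a,b])$, $u\equiv 0$ on $[a,b]^{c}$ and the fact that $(-\Delta)^{1/2}u=f(u)$ is Lipschitz up to the boundary, Section \ref{ssec:Asymptotics} yields the expansion
\[
u(x)=U_{a}\sqrt{x-a}+O((x-a)^{3/2})\quad\text{as }x\to a^{+},\qquad U_{a}^{2}=\lim_{x\to a^{+}}\frac{u^{2}(x)}{x-a},
\]
and a symmetric expansion near $b$, together with continuity of $(-\Delta)^{1/2}u$ up to $\{a,b\}$. Second, apply the two vanishing identities above to the cut-off $u_{\rho}:=u\chi_{\rho}$, with $\chi_{\rho}\in C^{\infty}_{c}(a,b)$ equal to $1$ on $[a+2\rho,b-2\rho]$: writing $u=u_{\rho}+(u-u_{\rho})$ and subtracting the vanishing identity for $u_{\rho}$, both integrals in the theorem reduce to remainders concentrated in the $\rho$-neighborhoods of $a$ and $b$. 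Third, insert the profile of Step 1 into these remainders: the leading contribution is the action of $(-\Delta)^{1/2}$ on the model $\sqrt{(x-a)_{+}}\eta(x)$, a classical computation producing the universal constant $\pi/8=\tfrac{1}{2}\Gamma(3/2)^{2}$ after pairing with $(-\Delta)^{1/2}u(a)$. This yields \eqref{eq_prop_statement_Pohozaev_proof_asymptotic_translations_into}; for \eqref{eq_prop_statement_Pohozaev_proof_asymptotic_intro} the same computation carries through with the weight $x$, split as $x=a+(x-a)$ near $a$ and $x=b-(b-x)$ near $b$, so that only the endpoint values $a$ and $b$ survive in the limit $\rho\to 0$ (the $(x-a)$ and $(b-x)$ pieces contribute an $O(\rho^{1/2})$ error against the Lipschitz factor $(-\Delta)^{1/2}u$).

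The main obstacle is the localization step: the fractional Laplacian has no Leibniz rule, so $(-\Delta)^{1/2}(u\chi_{\rho})$ differs from $\chi_{\rho}(-\Delta)^{1/2}u+u(-\Delta)^{1/2}\chi_{\rho}$ by a nonlocal commutator that a priori sees the entire function $u$. Showing that this commutator does localize near $\{a,b\}$ in the limit $\rho\to 0$, and that it produces precisely the constants in \eqref{eq_prop_statement_Pohozaev_proof_asymptotic_translations_into}--\eqref{eq_prop_statement_Pohozaev_proof_asymptotic_intro}, is where the sharp boundary asymptotic from Step 1 and the explicit kernel computation for $(-\Delta)^{1/2}\sqrt{(x-a)_{+}}$ must be combined carefully. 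Once this is done, the two identities are obtained by collecting the contributions at $a$ and $b$.
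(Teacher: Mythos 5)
Your proposal takes a genuinely different route from the paper, and it contains a gap you yourself flag but do not close.

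The paper never introduces a smooth cut-off $u_\rho = u\chi_\rho$. Instead it exploits the scaling $u_\lambda(x)=u(\lambda x)$ directly on $u$. The key identity is that since the half Dirichlet energy is invariant under dilations,
\begin{equation*}
\int_{\mathbb{R}}(u_\lambda-u)(-\Delta)^{1/2}u\,dx
=-\tfrac12\int_{\mathbb{R}}\bigl((-\Delta)^{1/4}(u_\lambda-u)\bigr)^2\,dx,
\end{equation*}
and the target integral $\int_a^b xu'(-\Delta)^{1/2}u\,dx$ is recognized as $\frac{d}{d\lambda}\big|_{\lambda=1^-}\int_a^b u(\lambda x)(-\Delta)^{1/2}u(x)\,dx$. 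Because $(-\Delta)^{1/2}$ commutes \emph{exactly} with the dilation (there is no cut-off, hence no Leibniz failure to control), the bulk contribution cancels and the $\lambda$-derivative localizes automatically to the annular regions $(\tfrac{a}{\lambda},a)\cup(b,\tfrac{b}{\lambda})$ where $u_\lambda\neq 0$ but $u=0$. The boundary asymptotics of Section \ref{ssec:Asymptotics} are then fed into Lemma \ref{lem_derivative_on_the_boundary}, which computes the resulting limit by a change of variables and a residue calculation. The translation identity \eqref{eq_prop_statement_Pohozaev_proof_asymptotic_translations_into} is recovered afterwards by shifting the interval by $d$ and differentiating in $d$, rather than by a separate cut-off argument.

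Your cut-off approach is the natural first try, but the obstacle you name in the last paragraph is real and you do not dispose of it. For the unweighted identity \eqref{eq_prop_statement_Pohozaev_proof_asymptotic_translations_into} the cross terms $\int u_\rho'(-\Delta)^{1/2}w_\rho+\int w_\rho'(-\Delta)^{1/2}u_\rho$ do cancel by self-adjointness, commutation of $(-\Delta)^{1/2}$ with $d/dx$, and one integration by parts, so only the concentrated term $\int w_\rho'(-\Delta)^{1/2}w_\rho$ survives. But for the dilation identity \eqref{eq_prop_statement_Pohozaev_proof_asymptotic_intro} this mechanism breaks: the weight $x$ does not commute with $(-\Delta)^{1/2}$. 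Writing $(-\Delta)^{1/2}(x\,f)=x\,(-\Delta)^{1/2}f+Hf$ for the Hilbert transform $H$, the cross terms $\int x\,u_\rho'(-\Delta)^{1/2}w_\rho+\int x\,w_\rho'(-\Delta)^{1/2}u_\rho$ leave behind a genuine nonlocal remainder $\int Hu_\rho'\,w_\rho$ that sees $u_\rho$ on all of $(a,b)$, not just near the boundary. You would have to show this term tends to $0$ as $\rho\to 0^+$, which is precisely the point you label as ``where the sharp boundary asymptotic \dots must be combined carefully'' and then leave open. As written, the argument is therefore incomplete. The explicit kernel computation of $(-\Delta)^{1/2}\sqrt{(x-a)_+}$ you invoke as the final step would, if carried out, play the role of Lemma \ref{lem_derivative_on_the_boundary} in the paper; the constant $\pi/8=\tfrac12\Gamma(3/2)^2$ does check out, but again you do not produce the computation. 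The advantage of the paper's route is that the commutator never appears: the dilation $u\mapsto u_\lambda$ intertwines perfectly with the fractional Laplacian, so the only work left is the explicit boundary asymptotics.
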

We remark that the second part of Theorem (\ref{prop_3}) was proved first by X. Ros-Oton and J. Serra in \cite{RosOtonSerra} (Theorem 1.1 in the reference, Theorem \ref{thm_Pohozaev_identity_RosOton_Serra} in this document) with a different argument, in the much more general setting of domains $\Omega$ in $\mathbb{R}^n$, with $(-\Delta)^s$ for $s\in (0,1)$ instead of $(-\Delta)^\frac{1}{2}$, for $f$ locally Lipschitz and without assumption on $u^2$.\\

This document is the result of a Master's thesis written at the University of Zurich under the supervision of prof. Tristan Rivi\`{e}re (ETH Zurich) and prof. Xavier Ros-Oton (UZH). In some parts, this document doesn't follow the form of an expository paper but rather the form of a journal where I registered the progresses I made, with many remarks and many arguments one might find trivial, but that I wanted to mark down. Sometimes I gave more than one proof for the same statement, maybe repeating some of the arguments, sometimes I wrote a Lemma that I didn't use later, but I think this form better reflects the work I did in the past months.\\

I would like to sincerely thank prof. Tristan Rivi\`{e}re for introducing me to this subject, for all the time he dedicated to me, for his brilliant ideas and his enthusiasm.
I would also like to thank prof. Xavier Ros-Oton for accepting to be internal advisor for this thesis, and for his valuable advices, and prof. Francesca Da Lio, for her corrections and useful remarks.
Finally I would like to thank Alessandro Pigati for all the time he spent discussing with me the problems arising during this work and many others, I'm really grateful for his help and his patience towards me.

\section{Preliminaries}
In this section, we recall some important definitions and we fix some conventions that will be used throughout this document.

\subsection{On Euclidean spaces}
For the following, let $n$ be a positive natural number. We let $\mathscr{S}(\mathbb{R}^n)$ denote the \textbf{Schwartz space} on $\mathbb{R}^n$ (for either scalar valued function or vector valued functions) (see \cite{Grafakos1}, Section 2.2.1) and we let $\mathscr{S}'(\mathbb{R}^n)$ denote its topological dual, i.e. the \textbf{space of tempered distributions}.
For any $\phi\in \mathscr{S}(\mathbb{R}^n)$ we define the \textbf{Fourier transform} of $\phi$ to be
\begin{equation}
\widehat{\phi}(\xi)=\frac{1}{\left(\sqrt{2\pi}\right)^n}\int_{\mathbb{R}^n}e^{-ix\cdot \xi}\phi(x)dx.
\end{equation}
for any $\xi\in\mathbb{R}$,
and the \textbf{inverse Fourier transform} of $\phi$ to be
\begin{equation}
\check{\phi}(x)=\frac{1}{\left(\sqrt{2\pi}\right)^n}\int_{\mathbb{R}^n}e^{ix\cdot \xi}\phi(\xi)d\xi.
\end{equation}
for any $x\in\mathbb{R}$. We will sometimes denote the Fourier transform of $\phi$ and the inverse transform of $\phi$ respectively by $\mathscr{F}[\phi]$ and $\mathscr{F}^{-1}[\phi]$.\\
As usual, we extend the definitions of Fourier transform and inverse Fourier transform to temperate distribution as follows:
for any temperate distribution $f$ and any Schwartz function $\phi$ we define
\begin{equation}
\langle \widehat{f},\phi\rangle=\langle f,\widehat{\phi}\rangle
\end{equation}
and
\begin{equation}
\langle \check{f},\phi\rangle=\langle f,\check{\phi}\rangle,
\end{equation}
where $\langle\cdot,\cdot\rangle$ denote the action of a temperate distribution on a Schwartz function.\\
With these conventions, \textbf{Parseval's Relation} takes the following form:
\begin{equation}
\int_{\mathbb{R}^n} f(x)\overline{h}(x)dx=\int_{\mathbb{R}^n}\widehat{f}(\xi)\overline{\check{h}}(\xi)d\xi
\end{equation}
for any $f, h\in L^2(\mathbb{R}^n)$, while \textbf{Plancherel's Identity} takes the form
\begin{equation}
\lVert f\rVert_{L^2}=\lVert \widehat{f}\rVert_{L^2}=\lVert \check{f}\rVert_{L^2}
\end{equation}
for any $f\in L^2(\mathbb{R}^n)$ (for the proofs, see \cite{Grafakos1}, Theorem 2.2.14).\\
We also recall the definition of Hilbert transform for functions defined on $\mathbb{R}$: let $p\in [1,\infty)$ and let $p\in L^p$.
Then the \textbf{Hilbert transform} of $f$ is given by
\begin{equation}
Hf(x)=\frac{1}{\pi} PV\int_\mathbb{R}\frac{f(y)}{x-y}dy
\end{equation}
for a.e. $x\in \mathbb{R}$ (in fact, the limit in the definition of principal value exists for a.e. $x\in\mathbb{R}$, see \cite{Grafakos1}, Theorem 5.1.3).
For a Schwartz function $\phi$, the Hilbert transform can also be defined in terms of its Fourier transform: for any $\xi\in \mathbb{R}$
\begin{equation}
\widehat{H\phi}(\xi)=-i\text{sgn}(\xi)\widehat{\phi}(\xi)
\end{equation}
(see \cite{Grafakos1}, pp. 316-317). This characterization can be extended to any $f\in L^2$ by Plancherel identity.
The Hilbert transform defines an operator $(p,p)$ for any $p\in (1,\infty)$ (see \cite{Grafakos1}, Theorem 5.1.7).\\
We now proceed to the definitions of Sobolev spaces and the fractional Laplacian.
Let $n\in\mathbb{N}_{>0}$, $s\in (0,1)$, $p\in [1,\infty)$, then 
\begin{equation}
W^{s,p}(\mathbb{R}^n):=\left\lbrace u\in L^p(\mathbb{R}^n)\bigg\vert [u]_{\dot{W}^{s,p}(\mathbb{R}^n)}:=\left(\int_{\mathbb{R}^n}\int_{\mathbb{R}^n}\frac{\lvert u(x)-u(y)\rvert^p}{\lvert x-y\rvert^{n+sp}} dxdy\right)^\frac{1}{p}<\infty \right\rbrace
\end{equation}
The $W^{s,p}(\mathbb{R}^n)$ are called \textbf{fractional Sobolev spaces}, they are Banach spaces with respect to the norm
\begin{equation}
\lVert u\rVert_{W^{s,p}(\mathbb{R}^n)}=\lVert u\rVert_{L^p(\mathbb{R}^n)}+[u]_{\dot{W}^{s,p}(\mathbb{R}^n)},
\end{equation}
while $[\cdot]_{W^{s,p}(\mathbb{R}^n)}$ is called \textbf{Gagliardo seminorm} (see \cite{Hitchhiker}, Section 2).
If in the definition of $W^{s,p}(\mathbb{R}^n)$ we drop the requirement that $u$ belongs to $L^p$, we obtain the \textbf{homogeneous Sobolev space}
\begin{equation}\label{def_hom_sobolev_w}
\dot{W}^{s,p}(\mathbb{R}^n)=\left\lbrace u\in L^1_{loc}\bigg\vert [u]_{\dot{W}^{s,p}}<\infty\right\rbrace.
\end{equation}
This is a vector space endowed with the seminorm $[\cdot]_{W^{s,p}(\mathbb{R}^n)}$. Clearly, $W^{s,p}(\mathbb{R}^n)\subset\dot{W^{s,p}}(\mathbb{R}^n)$. As usual we can define an equivalence relation on $\dot{W}^{s,p}(\mathbb{R}^n)$ by
\begin{equation}
u\sim v\Longleftrightarrow [u-v]_{\dot{W}^{s,p}(\mathbb{R}^n)}=0.
\end{equation}
for any $u,v\in \dot{W}^{s,p}(\mathbb{R}^n)$.
The set of the $\sim$-equivalence classes has then an induced normed space structure, where the norm is given by $\lVert [u]\rVert=[u]_{\dot{H}^s(\mathbb{R}^n)}$ for any equivalence class $[u]$. By an abuse of notation, we will denote also the normed space of equivalence classes by $\dot{H}^s(\mathbb{R}^n)$.\\
When $s\in (0,1)$, $p=2$, we can give an equivalent definition of Sobolev spaces by means of the Fourier transform.
Indeed, for $n\in\mathbb{N}$, $s\in \mathbb{R}$ and $p\in [1,\infty)$ we define
\begin{equation}
H^s(\mathbb{R}^n)=\left\lbrace u\in \mathscr{S}'(\mathbb{R}^n)\bigg\vert \lVert u\rVert_{H^s(\mathbb{R}^n)}:=\left\lVert \mathscr{F}^{-1}\left((1+\lvert \xi\rvert^2)^\frac{s}{2}\widehat{u}(\xi)\right) \right\rVert_{L^2(\mathbb{R}^n)}<\infty\right\rbrace.
\end{equation}
Then $H^s$ is a normed space with norm $\lVert\cdot\rVert_{H^s(\mathbb{R})}$.
By Plancherel's identity, in order to prove that the previous definitions of non-homogeneous Sobolev spaces coincide when $s\in (0,1)$, $p=2$, it is enough to show that the seminorms are equivalent. In fact they only differ by a multiplicative constant (see \cite{Hitchhiker}, Proposition 3.4).
Similarly, we can define the homogeneous Sobolev spaces as follows.
Let $\mathscr{S}'_{(0)}$ denote the set of tempered distributions of order $0$. Then
\begin{equation}\label{def_hom_sobolev_h}
\dot{H}^s(\mathbb{R}^n)=\left\lbrace u\in \mathscr{S}'_{(0)}(\mathbb{R}^n)\bigg\vert \widehat{u}\in L^2_{loc}(\mathbb{R}\smallsetminus\lbrace0\rbrace), [u]_{H^s(\mathbb{R}^n)}:=\left\lVert \lvert \xi\rvert^s\widehat{u}(\xi) \right\rVert_{L^2(\mathbb{R}^n)}<\infty\right\rbrace.
\end{equation}
As before, $\dot{H^s}$ is a vector space endowed with the seminorm $[\cdot]_{H^s(\mathbb{R}^n)}$, and if $0\leq s$, $H^s(\mathbb{R}^n)\subset \dot{H}^s(\mathbb{R}^n)$, and we can make $\dot{H}^s$ into a normed vector space by identifying elements whose difference has seminorm equal to $0$.
To prove that the previous definitions of non-homogeneous Sobolev spaces coincide when $s\in (0,1)$, $p=2$, one can use an argument similar to the one used for the non-homogeneous case: one can check that the seminorms are equivalents, therefore $\dot{H^s}(\mathbb{R}^n)$ and $\dot{W}^{s,2}(\mathbb{R}^n)$ coincide as sets, and the induced normed vector spaces are isomorphic (see Lemma \ref{lem_equivalent_definitions_homogeneous_sobolev}).
From definition (\ref{def_hom_sobolev_h}), we see that two elements of $\dot{H}^s(\mathbb{R}^n)$ (or $\dot{W}^{s,2}(\mathbb{R}^n)$) lie in the same equivalence classes if and only if their difference is a constant. Indeed, $u,v\in \dot{H}^s(\mathbb{R}^n)$ lie in the same equivalence class if and only if
\begin{equation}\label{eq_intro_difference_is_constant}
\int_{\mathbb{R}^n} \lvert \xi\rvert^{2s}\lvert \mathscr{F}[u-v](\xi)\rvert^2d\xi=0.
\end{equation}
Now by definition of $\dot{H}^s(\mathbb{R}^n)$, $\mathscr{F}[u-v]$ can be written as the sum of a measurable function $f\in L^2_{loc}(\mathbb{R}^n\smallsetminus\lbrace0\rbrace)$ and a tempered distribution $g$ of order $0$ supported in $\lbrace 0\rbrace$, i.e. a multiple of a Dirac delta in $0$. Equation (\ref{eq_intro_difference_is_constant}) implies that $f\equiv 0$, therefore $\mathscr{F}[u-v]$ is the multiple of a Dirac delta in $0$. It follows that $u-v$ is a constant function.

We also remark that for any $s\in \mathbb{R}$, we may identify the topological dual space $H^s(\mathbb{R}^n)^\ast$ with $H^{-s}(\mathbb{R}^n)$. Indeed, by definition of $H^s(\mathbb{R}^n)$, the function
\begin{equation}
\mathscr{I}_s: H^s(\mathbb{R}^n)\to L^2(\mathbb{R}^n), \quad u\mapsto \mathscr{F}^{-1}\left[\left(1+\lvert \xi\rvert^2\right)^{\frac{s}{2}}\widehat{u}\right].
\end{equation}
is an isometric isomorphism between vector spaces.
Now, for any functional $F\in H^s(\mathbb{R}^n)^\ast$, any $\phi\in\mathscr{S}(\mathbb{R}^n)$,
\begin{equation}
\left\langle F, \phi\right\rangle=\left\langle \check{F}, \widehat{\phi}\right\rangle=\left\langle \left(1+\lvert \xi\rvert^2\right)^{-\frac{s}{2}}\check{F}, \left(1+\lvert \xi\rvert^2\right)^{\frac{s}{2}}\widehat{\phi} \right\rangle.
\end{equation}
Therefore a temperate distribution $F$ belongs to $H^s(\mathbb{R}^n)^\ast$ if and only if \newline $\left(1+\lvert \xi\rvert^2\right)^{-\frac{s}{2}}\widehat{F}$ belongs to $L^2(\mathbb{R}^n)$, and in this case
\begin{equation}
\lVert F\rVert_{{H^s}^\ast}=\left\lVert\left(1+\lvert \xi\rvert^2\right)^{-\frac{s}{2}}\widehat{F}  \right\rVert_{L^2}=\lVert F\rVert_{H^{-s}}.
\end{equation}
Finally, for any smooth manifold $\mathscr{M}$ embedded in $\mathbb{R}^m$ for some $m\in \mathbb{N}_{>0}$, for any $s\in \mathbb{R}$ we define
\begin{equation}
H^s(\mathbb{R}^n,\mathscr{M}):=\left\lbrace u: \mathbb{R}^n\to \mathbb{R}^m\bigg\vert u\in H^s(\mathbb{R}^n)\text{ and }u(x)\in \mathscr{M}\text{ for a.e. }x\in\mathbb{R}^n \right\rbrace.
\end{equation}
In a similar way we define $\dot{H}^s(\mathbb{R}^n,\mathscr{M})$, $W^{s,p}(\mathbb{R}^n,\mathscr{M})$ and $\dot{W}^{s,p}(\mathbb{R}^n,\mathscr{M})$.\\
Having defined Sobolev spaces, we can pass to the definition of fractional Laplacians: let $s\in (0,1)$, $\sigma\in \mathbb{R}$. Then for any $u\in \dot{H^\sigma}(\mathbb{R}^n)$, the \textbf{$s$-fractional Laplacian} of $u$ is defined as
\begin{equation}\label{eq_prel_Fourier_definition_fraclap}
(-\Delta)^s u:=\mathscr{F}^{-1}\left[\lvert \xi\rvert^{2s} \widehat{u}(\xi)\right].
\end{equation}
Then, by definition, $u\in \dot{H}^{\sigma-2s}(\mathbb{R}^n)$, and $(-\Delta)^s$ defines a continuous operator from $\dot{H}^\sigma(\mathbb{R}^n)$ to $\dot{H}^{\sigma-2s}(\mathbb{R}^n)$ (the continuity is to be understood with respect to the topologies induced by the norms on the quotient spaces described above).\\
If $s\in (0,1)$, $\phi\in\mathscr{S}$, $(-\Delta)^su$ can be also defined by means of the pointwise formula
\begin{equation}\label{eq_prel_pointwise_definition_fraclap}
(-\Delta)^s\phi(x):=C_{n,s} PV\int_{\mathbb{R}^n}\frac{\phi(x)-\phi(y)}{\lvert x-y\rvert^{n+2s}}dy,
\end{equation}
where $C_{n,s}$ is a constant depending on $n$ and $s$ (see \cite{Hitchhiker}, Proposition 3.3).
In fact, this definition can be extended to functions in $H^{2s}(\mathbb{R}^n)$ (see \cite{ten_eq_def}, Lemmas 4.2 and 5.2) (a.e. in $\mathbb{R}^n$), and therefore also to functions in $\dot{H}^{2s}\cap L^\infty(\mathbb{R}^n)$ (a.e. in $\mathbb{R}^n$). Indeed, if $u\in \dot{H}^{2s}\cap L^\infty(\mathbb{R}^n)$ and $z\in \mathbb{R}^n$, let $\eta$ be a cut-off function equal to $1$ in $B_1(z)$ and equal to $0$ in $B_2(z)^c$. Then $u\eta\in H^{2s}(\mathbb{R}^n)$ (by Lemma \ref{lem_Hs_cap_Linfty_is_an_algebra}) and Equation (\ref{eq_prel_pointwise_definition_fraclap}) holds for $u\eta$. On the other hand, by Lemma \ref{lem_approx_sobolev} there exists a sequence $(\phi_k)_k$ in $\mathscr{S}(\mathbb{R}^n)$ such that $\phi_n\to u(1-\eta)$ in $\dot{H}^{2s}(\mathbb{R}^n)$ as $k\to \infty$, and therefore $(-\Delta)^s\phi_n\to (-\Delta)^s[u(1-\eta)]$ in $L^2(\mathbb{R}^n)$. Thus, by substituting the original sequence with a subsequence if necessary, we have that for a.e. $x\in\mathbb{R}^n$
\begin{equation}
\frac{\phi_k(x)-\phi_k(y)}{\lvert x-y\rvert^{n+2s}}\to \frac{u(1-\eta)(x)-u(1-\eta)(y)}{\lvert x-y\rvert^{n+2s}}\quad \text{for a.e. $y\in\mathbb{R}^n$}
\end{equation}
as $k\to \infty$. In particular for a.e. $x$ in $B_\frac{1}{2}(z)$ by Lebesgue's Dominated convergence Theorem one obtains that 
\begin{equation}\label{eq_discussion_introduction_pointwise_formula_fraclap_for_homogeneous_Sobolev_second_part}
(-\Delta)^\frac{s}{2}[u(1-\eta)](x)=C_{n,s}\int_{\mathbb{R}^n}\frac{u(1-\eta)(x)-u(1-\eta)(y)}{\lvert x-y\rvert^{n+2s}}dy,
\end{equation}
in fact, for our choice of $x$, $(1-\eta)(x)=0$ and the denominator of the integrand in (\ref{eq_discussion_introduction_pointwise_formula_fraclap_for_homogeneous_Sobolev_second_part}) is bounded from below on the support of $(1-\eta)$.
By the linearity of the $s$-fractional Laplacian and of formula on the right hand side of (\ref{eq_prel_pointwise_definition_fraclap}), we conclude that (\ref{eq_prel_pointwise_definition_fraclap}) holds for a.e. $x\in B_\frac{1}{2}(z)$, and thus for a.e. in $\mathbb{R}^n$, even if $u\in\dot{H}^{2s}\cap L^\infty(\mathbb{R}^n)$.\\
We remark that if $n=1$,
\begin{equation}\label{eq_introduction_definition_constant_C1s}
C_1,s=-2^{2s}\pi^{-\frac{1}{2}}\frac{\Gamma(\frac{1}{2}+2s)}{\Gamma(-s)}
\end{equation}
(see \cite{Grafakos2}, p. 9)\footnote{The constant in the reference differs by the multiplicative constant $(2\pi)^{2s}$ due to a different choice of convention for the Fourier transform.}.
In particular, if $s=\frac{1}{2}$, $C_{1,\frac{1}{2}}=\frac{1}{\pi}$.\\
We remark that if $s\in (0, \frac{1}{2})$, the principal value in (\ref{eq_prel_pointwise_definition_fraclap}) can be substituted by a converging integral.\\
Both from expression (\ref{eq_prel_Fourier_definition_fraclap}) and expression (\ref{eq_prel_pointwise_definition_fraclap}) follows that for any $s\in (0,1)$, the $s$-fractional Laplacian is homogeneous of degree $2s$: for any $u\in \dot{H}^{2s}(\mathbb{R}^n)$, for any $\lambda>0$ let $u_\lambda$ be the function defined by $u_\lambda(x)=u(\lambda x)$ for any $x\in\mathbb{R}^n$. Then $u_\lambda\in \dot{H}^{2s}(\mathbb{R}^n)$ and
\begin{equation}
(-\Delta)^s u_\lambda(x)=\lvert\lambda\rvert^{2s}u(\lambda x)
\end{equation}
for a.e. $x\in \mathbb{R}^n$.
We also observe that, by definition, for any $s\in (0,1)$, $(-\Delta)^s$ is self-adjoint, in the sense that for any $u, v\in \mathscr{S}(\mathbb{R}^n, \mathbb{R}^m)$
\begin{equation}\label{eq_rem_eq_def_fraclap_for_Schartz}
\int_{\mathbb{R}^n}(-\Delta)^s u(x)\cdot v(x)dx=\int_{\mathbb{R}^n}u(x)\cdot (-\Delta)^s v(x)dx.
\end{equation}
Next we remark that for any $s\in (0,1)$, for any $u,v\in \dot{H}^s\cap L^\infty(\mathbb{R}^n)$, formula (\ref{eq_prel_pointwise_definition_fraclap}) yields the following "Leibnitz rule": for a.e. $x\in \mathbb{R}^n$
\begin{equation}\label{eq_discussion_introduction_Leibnitz_rule_fraclaps}
\begin{split}
(-\Delta)^s(uv)(x)=&PV\int_{\mathbb{R}^n}\frac{uv(x)-uv(y)}{\lvert x-y\rvert^{n+2s}}dy\\
=&u(x)PV\int_{\mathbb{R}^n}\frac{v(x)-v(y)}{\lvert x-y\rvert^{n+2s}}dy+v(x)\int_{\mathbb{R}^n}\frac{u(x)-u(y)}{\lvert x-y\rvert^{n+2s}}dy\\&-PV\int_{\mathbb{R}^n}\frac{(u(x)-u(y))(v(x)-v(y)}{\lvert x-y\rvert^{n+2s}}dy\\
=&u(x)(-\Delta)^sv(x)+v(x)(-\Delta)^su(x)\\&-\int_{\mathbb{R}^n}\frac{(u(x)-u(y))(v(x)-v(y)}{\lvert x-y\rvert^{n+2s}}dy,
\end{split}
\end{equation}
where in the last step we used the fact that since $u,v\in \dot{H}^{2s}(\mathbb{R}^n)$, the last integral converges absolutely for a.e. $x\in\mathbb{R}^n$.

We conclude this section with some result about the action of fractional Laplacians on functions spaces. First we fix the following convention, which will be used throughout this document: for $s\in \mathbb{R}_{+}\smallsetminus \mathbb{N}$, let
\begin{equation}
C^{s}(\mathbb{R}^n):=C^{\lfloor s\rfloor, s-\lfloor s\rfloor}(\mathbb{R}^n).
\end{equation}
\begin{prop} (Propositions 2.5 and 2.6 in \cite{Silvestre})
Let $u\in C^{\alpha}$ for $\alpha\in (0,1]$ and $\alpha>2\sigma>0$; then $(-\Delta)^\sigma u\in C^{\alpha-2\sigma}$ and
\begin{equation}
\left[ (-\Delta)^\sigma u\right]_{C^{\alpha-2\sigma}}\leq C[u]_{C^{\alpha}},
\end{equation}
where $C$ depends only on $\alpha$, $\sigma$ and $n$.
\end{prop}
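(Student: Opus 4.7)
The plan is to start from the symmetric pointwise representation obtained from (\ref{eq_prel_pointwise_definition_fraclap}) after symmetrizing $y\mapsto -y$:
\begin{equation*}
(-\Delta)^\sigma u(x) = -\frac{C_{n,\sigma}}{2}\int_{\mathbb{R}^n}\frac{u(x+y)+u(x-y)-2u(x)}{|y|^{n+2\sigma}}\,dy.
\end{equation*}
The assumption $\alpha>2\sigma$ ensures integrability of the integrand near $0$, since Hölder continuity bounds the numerator by $2[u]_{C^\alpha}|y|^\alpha$, while the tail is controlled by the boundedness of $u$ that is implicit in Silvestre's convention for $C^\alpha$. I would verify this representation first on Schwartz functions via the Fourier definition (\ref{eq_prel_Fourier_definition_fraclap}) and then extend it to the class at hand by an approximation argument analogous to the one indicated in the discussion around (\ref{eq_discussion_introduction_pointwise_formula_fraclap_for_homogeneous_Sobolev_second_part}).

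Next I fix two points $x,z\in\mathbb{R}^n$, set $r:=|x-z|$, denote the second difference $\Delta_y^2 u(w):=u(w+y)+u(w-y)-2u(w)$, and write
\begin{equation*}
(-\Delta)^\sigma u(x) - (-\Delta)^\sigma u(z) = -\frac{C_{n,\sigma}}{2}\int_{\mathbb{R}^n}\frac{\Delta_y^2 u(x)-\Delta_y^2 u(z)}{|y|^{n+2\sigma}}\,dy,
\end{equation*}
then split the $y$-integration along the dichotomy $|y|<2r$ versus $|y|\ge 2r$. On the inner region $\{|y|<2r\}$ I would estimate each of the two second differences separately by $2[u]_{C^\alpha}|y|^\alpha$, so the integrand is bounded by $4[u]_{C^\alpha}|y|^{\alpha-n-2\sigma}$; integrating in spherical coordinates yields a contribution of order $[u]_{C^\alpha}\,r^{\alpha-2\sigma}$, where the assumption $\alpha-2\sigma>0$ is crucial both for convergence and for producing exactly that power of $r$.

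On the outer region $\{|y|\ge 2r\}$ I would instead exploit the cancellation between the two second differences by expanding
\begin{equation*}
\Delta_y^2 u(x)-\Delta_y^2 u(z) = \bigl[u(x+y)-u(z+y)\bigr] + \bigl[u(x-y)-u(z-y)\bigr] - 2\bigl[u(x)-u(z)\bigr],
\end{equation*}
each term being bounded by $[u]_{C^\alpha}r^\alpha$. Integrating the kernel $r^\alpha/|y|^{n+2\sigma}$ over $|y|\ge 2r$ produces a factor $r^{-2\sigma}$, this time using $2\sigma>0$, and thus a further contribution of the same order $[u]_{C^\alpha}\,r^{\alpha-2\sigma}$. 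Summing the two regional estimates gives $|(-\Delta)^\sigma u(x)-(-\Delta)^\sigma u(z)|\le C[u]_{C^\alpha}|x-z|^{\alpha-2\sigma}$ with a constant $C=C(n,\alpha,\sigma)$ coming from the normalization $C_{n,\sigma}$ and the two elementary kernel integrals; taking the supremum over $x\neq z$ yields the claimed seminorm bound.

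The main obstacle is the legitimacy of the pointwise symmetric formula on the sole assumption $u\in C^\alpha$: one must take care that the integral converges absolutely (which the splitting above justifies only once one knows the definitions agree) and that the equality with $\mathscr{F}^{-1}[|\xi|^{2\sigma}\widehat{u}]$ truly holds. Beyond this, the argument is a purely elementary kernel estimate, and the whole strength of the conclusion $[(-\Delta)^\sigma u]_{C^{\alpha-2\sigma}}\le C[u]_{C^\alpha}$ is dictated by the homogeneity of $(-\Delta)^\sigma$ together with the single scaling relation balancing the inner and outer contributions at the threshold $|y|\sim r$.
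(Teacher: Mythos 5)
This Proposition is imported from Silvestre (Propositions 2.5 and 2.6) and the paper gives no proof of its own, so the only basis for comparison is the cited source. Your near/far kernel splitting at scale $|y|\sim |x-z|$, with the Hölder second-difference bound yielding $r^{\alpha-2\sigma}$ on the inner region (convergence needing $\alpha>2\sigma$) and the Hölder first-difference bound in the $x$-variable yielding the same power on the outer region (convergence needing $2\sigma>0$), is exactly the standard argument used in that reference and is correct as written, including your caveat that the $L^\infty$ part of $\|u\|_{C^\alpha}$ is what legitimizes the pointwise symmetric formula in the first place.
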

The following result describes fractional Laplacian of Schwartz function:
\begin{prop}\label{prop_fraclap_of_Schwartz}
(Proposition 2.9 in \cite{Garofalo})\footnote{The cited Proposition gives a bounded, integrable bound for $(-\Delta)^s u$ outside $B_1(0)$. To obtain the Proposition we stated, it is enough to observe that $(-\Delta)^s u$ is smooth: in fact, by means of a cut-off function, one can write the Fourier transform of $(-\Delta)^s u$ as the sum of a Schwartz function and a function supported on a neighbourhood of the origin; the inverse Fourier transform of both summands are smooth.}
Let $s\in (0,1)$ $p\in [1,\infty]$, let $u\in \mathscr{S}(\mathbb{R}^n)$; then $(-\Delta)^s u \in C^\infty_{loc}\cap L^p(\mathbb{R}^n)$.
\end{prop}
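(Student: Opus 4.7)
The plan is to decompose the proof into two parts: smoothness of $(-\Delta)^s u$, and integrability in $L^p$ for every $p\in[1,\infty]$. For smoothness, I will work on the Fourier side. The inconvenient feature of $|\xi|^{2s}\widehat{u}(\xi)$ is that the multiplier $|\xi|^{2s}$ is only H\"older continuous at the origin, but it is smooth away from $0$; since $\widehat{u}\in\mathscr{S}(\mathbb{R}^n)$, the obstruction to $(-\Delta)^s u$ being Schwartz is entirely concentrated near $\xi=0$. Accordingly, I will fix a cutoff $\chi\in C^\infty_c(\mathbb{R}^n)$ with $\chi\equiv 1$ on $B_1(0)$ and write
\begin{equation*}
|\xi|^{2s}\widehat{u}(\xi)=\chi(\xi)|\xi|^{2s}\widehat{u}(\xi)+(1-\chi(\xi))|\xi|^{2s}\widehat{u}(\xi).
\end{equation*}

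The first summand is compactly supported and belongs to $L^1(\mathbb{R}^n)$ (as $2s>0$ controls the singularity at the origin in every dimension), so its inverse Fourier transform is bounded and in fact real-analytic by the Paley--Wiener theorem. The second summand is smooth, coincides with $|\xi|^{2s}\widehat{u}(\xi)$ for $|\xi|\geq 2$, and inherits rapid decay from $\widehat{u}\in\mathscr{S}$; a routine Leibniz calculation shows that it and all its derivatives decay faster than any polynomial, so it lies in $\mathscr{S}(\mathbb{R}^n)$ and its inverse Fourier transform is again Schwartz, hence smooth. Summing the two contributions yields $(-\Delta)^s u\in C^\infty(\mathbb{R}^n)$, which in particular gives $(-\Delta)^s u\in L^\infty_{loc}(\mathbb{R}^n)$.

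For the $L^p$ control I will invoke the bound stated in Proposition 2.9 of \cite{Garofalo} cited in the footnote, which provides a pointwise estimate of the form $|(-\Delta)^s u(x)|\leq C(1+|x|)^{-n-2s}$ for $|x|\geq 1$; this quantity lies in $L^p(\mathbb{R}^n\setminus B_1(0))$ for every $p\in[1,\infty]$ since $n+2s>n$. On the complementary region $B_1(0)$, the smoothness established above, combined with the compactness of $\overline{B_1(0)}$, gives $(-\Delta)^s u\in L^\infty(B_1(0))\subset L^p(B_1(0))$ for every $p$. Combining the two regions yields $(-\Delta)^s u\in L^p(\mathbb{R}^n)$ for every $p\in[1,\infty]$, as claimed.

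The only mildly delicate step is the first one, where the nonsmooth multiplier $|\xi|^{2s}$ must be handled near the origin; but the splitting via $\chi$ reduces it to two standard facts (inverse Fourier transform of a compactly supported $L^1$ function is smooth, inverse Fourier transform of a Schwartz function is Schwartz), so no genuine obstacle arises. Everything else is bookkeeping between the interior smoothness and the decay estimate at infinity provided by the cited reference.
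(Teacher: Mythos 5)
Your proof is correct and follows essentially the same route as the paper's footnote: split the Fourier multiplier $|\xi|^{2s}\widehat{u}(\xi)$ with a cut-off into a compactly supported piece near the origin (whose inverse Fourier transform is smooth) plus a Schwartz piece, and invoke Proposition 2.9 of \cite{Garofalo} for the decay outside $B_1(0)$. The only cosmetic slip is the phrase ``$2s>0$ controls the singularity at the origin'': $|\xi|^{2s}$ has no singularity there for $s>0$ (it is bounded and continuous, merely not smooth), so the $L^1$ membership of the first summand needs no such control; this does not affect the argument.
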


\subsection{On the circle}
Let $\mathbb{S}^1:= {\mathbb{R}}/ {2\pi \mathbb{Z}}$, with the metric induced by $\mathbb{R}$. Its points can be thought of as equivalence classes of points of $\mathbb{R}$, which we will sometimes denote as $[x]$ for $x\in \mathbb{R}$. Mostly, by an abuse of notation, we will simply write $x$ for $[x]$. Sometimes it will be useful to identify $\mathbb{S}^1$ with $\partial D^2\subset \mathbb{C}$ through the embedding
\begin{equation}\label{eq_definition_identification_map_i}
i: \mathbb{S}^1\to \partial D^2,\quad [x]\mapsto e^{ix}.
\end{equation}
Let $\mathscr{D}(\mathbb{S}^1):=C^\infty(\mathbb{S}^1)$ (for either scalar or vector valued functions) and let $\mathscr{D}'(\mathbb{S}^1)$ denote its topological dual.
For any element $f$ of $\mathscr{D}'(\mathbb{S}^1)$, for any $k\in\mathbb{Z}$, we define the \textbf{$k^{th}$-Fourier coefficient} of $f$ by
\begin{equation}
\widehat{f}(k):=\frac{1}{2\pi}\langle f, e^{-ik\cdot}\rangle,
\end{equation}
where the brackets denote the action of an element of $\mathscr{D}'(\mathbb{S}^1,\mathbb{R}^m)$ on an element of $\mathscr{D}(\mathbb{S}^1,\mathbb{R}^m)$.
We observe that with these conventions,
\textbf{Parseval's Relation} takes the form
\begin{equation}
\int_{\mathbb{S}^1}f(x)\overline{g(x)}dx=2\pi\sum_{n\in\mathbb{Z}}\widehat{f}\overline{\widehat{g}(n)}
\end{equation}
for any $f,g\in L^2(\mathbb{S}^1)$, while
\textbf{Plancherel's Identity} takes the form
\begin{equation}
\lVert u\rVert_{L^2(\mathbb{S}^1)}^2=2\pi\sum_{n\in\mathbb{Z}}\lvert \widehat{u}(n)\vert^2.
\end{equation}
for any $u\in L^2(\mathbb{S}^1)$.
We now proceed with definition of Sobolev spaces on the circle.
For $s\in \mathbb{R}$ we define the \textbf{$s$-inhomogeneous Sobolev space} by
\begin{equation}
H^s(\mathbb{S}^1)=\left\lbrace u\in \mathscr{D}'(\mathbb{S}^1)\bigg\vert \lVert u\rVert_{H^s}^2:=\sum_{n\in\mathbb{Z}}(1+\lvert n\rvert^{2})^s \hat{u}(n)^2<\infty\right\rbrace.
\end{equation}
By Plancherel's identity, $H^0(\mathbb{S}^1)=L^2(\mathbb{S}^1)$.
We also introduce the homogeneous Sobolev seminorm
\begin{equation}
[v]_{\dot{H}^s}:=\left(\sum_{n\in\mathbb{Z}\smallsetminus\lbrace0\rbrace}\lvert n\rvert^{2s} \widehat{u}(n)^2\right)^\frac{1}{2}
\end{equation}
for any $v\in H^s(\mathbb{S}^1)$.
When $s=\frac{1}{2}$ the homogeneous Sobolev seminorm can be characterized by a double integral (see Lemma \ref{lem_appendix_circle_eqivalent_Sobolev_seminorm}).
Since $n\leq 1+\lvert n\rvert^2\leq 2\lvert n\rvert^2$, for any $n\in\mathbb{Z}\smallsetminus\lbrace0\rbrace$,
\begin{equation}
[u]_{\dot{H}^s}\leq C_s \lVert u\rVert_{H^s},
\end{equation}
for any $u\in H^s(\mathbb{S}^1)$ for some constant $C_s$ depending only on $s$.
We observe that for $s>0$, $u\in H^s(\mathbb{S}^1)$,
\begin{equation}\label{eq_discussion_preliminaries_circle_bound_Sobolev_norm_through_seminorm}
\lVert u\rVert_{H^s}^2=\sum_{n\in\mathbb{Z}}(1+\lvert n\rvert^2)^s\widehat{u}(n)^2\leq \sum_{n\in\mathbb{Z}}\left(2^\frac{s}{2}\lvert n\rvert+1\right)\widehat{u}(n)^2=2^\frac{s}{2}[u]_{\dot{H}^s}^2+\lVert u\rVert_{L^2}^2.
\end{equation}
Moreover we recall that $\mathscr{D}'\left(\mathbb{S}^1\right)=\bigcup_{s\in\mathbb{R}}H^s(\mathbb{S}^1)$.
We also remark that for any $s\in \mathbb{R}$, we may identify the topological dual space $H^s(\mathbb{S}^1)^\ast$ with $H^{-s}(\mathbb{S}^1)$. Indeed, by definition of $H^s(\mathbb{S}^1)$, the function
\begin{equation}
\mathscr{I}_s: H^s(\mathbb{S}^1)\to l^2, \quad u\mapsto \left((1+\lvert n\rvert^2)^\frac{s}{2}\widehat{u}(n) \right)_n.
\end{equation}
is an isometric isomorphism between vector spaces.
Now, for any functional $F\in H^s(\mathbb{S}^1)^\ast$, any $\phi\in C^\infty(\mathbb{S}^1)$,
\begin{equation}
\left\langle F, \phi\right\rangle=\sum_{k\in\mathbb{Z}}\widehat{F}(k)\overline{\widehat{\phi}(k)}=\sum_{k\in\mathbb{Z}}\left[(1+\lvert k\rvert^2)^{-\frac{s}{2}}\widehat{F}(k)\right]\left[(1+\lvert k\rvert^2)^\frac{s}{2}\overline{\widehat{\phi}(k)}\right]
\end{equation}
Therefore a distribution $F$ in $\mathscr{D}'(\mathbb{S}^1)$ belongs to $H^s(\mathbb{R}^n)^\ast$ if and only if \newline $\left((1+\lvert n\rvert^2)^\frac{s}{2}\widehat{u}(n) \right)_n$ belongs to $l^2$, and in this case
\begin{equation}
\lVert F\rVert_{{H^s}^\ast}=\left\lVert\left((1+\lvert n\rvert^2)^\frac{s}{2}\widehat{u}(n) \right)_n  \right\rVert_{l^2}=\lVert F\rVert_{H^{-s}}.
\end{equation}
Next we define fractional Laplacians for functions on the circle. Let $\sigma, s\in\mathbb{R}$ and let $u\in H^{\sigma}(\mathbb{S}^1)$. Then the \textbf{$s$-fractional Laplacian} of $u$ is 
the element of $\mathscr{D}'(\mathbb{S}^1)$ characterized by
\begin{equation}
\widehat{(-\Delta)^su}(n)=\lvert n\rvert^{2s}\hat{u}(n)
\end{equation}
for any $n\in \mathbb{Z}$. Thus $(-\Delta)^\frac{1}{2}u\in H^{\sigma-2s}(\mathbb{S}^1)$.\\
We also recall the definition of Hilbert transform for functions defined on $\mathbb{S}^1$:
for $u\in L^1(\mathbb{S}^1)$, let
\begin{equation}
Hu(x):=\frac{1}{2\pi}PV\int_{\mathbb{S}^1}\cot\left(\frac{1}{2}(x-y)\right)u(y)dy.
\end{equation}
for all $x\in \mathbb{S}^1$. $Hu$ is called the Hilbert transform of $u$. Equivalently, $Hu$ is the function on $\mathbb{S}^1$ characterized by
\begin{equation}
\widehat{Hu}(n)=-i\text{sgn}(n)\hat{f}(n)
\end{equation}
for all $n\in\mathbb{Z}$ (see \cite{Schlag}, proof of Proposition 3.18).
Therefore, for any $s\in\mathbb{R}^+$, if $u\in H^s(\mathbb{S}^1)$ $Hu\in H^s(\mathbb{S}^1)$. Moreover, if we denote by $'$ the derivative of a distribution,
\begin{equation}\label{eq_disc_intro_comparison_H_and_fraclap}
Hu'=(Hu)'=(-\Delta)^\frac{1}{2}u\text{ in }H^{s-1}(\mathbb{S}^1),
\end{equation}
as the corresponding Fourier coefficients coincide.
This yields a pointwise formula for $(-\Delta)^\frac{1}{2}u$ when $u\in H^1(\mathbb{S}^1)$: for a.e. $x\in \mathbb{S}^1$,
\begin{equation}\label{eq_proof_pointwise_formula_ibp}
\begin{split}
(-\Delta)^\frac{1}{2}u(x)=&\frac{1}{2\pi}\lim_{\varepsilon\to 0}\int_{\lvert x-y\rvert>\varepsilon}u'(y)\cot\left(\frac{1}{2}(x-y)\right)dy\\
=&\frac{1}{2\pi}\lim_{\varepsilon\to 0}\int_{\lvert x-y\rvert>\varepsilon}(u(x)-u(y))\frac{\pi}{\sin^2\left(\frac{1}{2}(x-y)\right)}dy\\
&+\frac{1}{2\pi}(u(x+\varepsilon)-u(x))\cot\left(\frac{1}{2}\varepsilon\right)\\
&-\frac{1}{2\pi}(u(x-\varepsilon)-u(x))\cot\left(-\frac{1}{2}\varepsilon\right),
\end{split}
\end{equation}
Now we observe that by Plancherel's identity

\begin{equation}
\begin{split}
&\left\lVert \left(u(\cdot+\varepsilon)-u\right)\cot\left(\frac{1}{2}\varepsilon\right)-(u(\cdot-\varepsilon)-u)\cot\left(-\frac{1}{2}\varepsilon\right)\right\rVert_{L^2}^2\\
=&\left\lVert[(u(\cdot+\varepsilon)-u)-(u-u(\cdot-\varepsilon))]\cot\left(\frac{1}{2}\varepsilon\right)\right\rVert_{L^2}^2\\
=&\cot^2\left(\frac{1}{2}\varepsilon\right)2\pi\sum_{n\in\mathbb{Z}}\left\lvert[(e^{i\varepsilon n}-1)-(1-e^{-i\varepsilon n})]\widehat{u}(n)\right\rvert^2\\
\leq&\frac{4}{\varepsilon^2}2\pi\sum_{n\in\mathbb{Z}}\left\lvert2(\cos(n\varepsilon)-1)\widehat{u}(n)\right\rvert^2\leq 16\pi\sum_{n\in\mathbb{Z}}\left\lvert\frac{(\cos(n\varepsilon)-1)}{\varepsilon n}n\widehat{u}(n)\right\rvert^2
\end{split}
\end{equation}
Since the map $x\mapsto \frac{\cos(x)-1}{x}$ is bounded on $\mathbb{R}^\ast$, and since $u\in H^1(\mathbb{S}^1)$ we conclude by dominated convergence that
\begin{equation}\label{eq_proof_pointwise_formula_convergence}
\left(u(\cdot+\varepsilon)-u\right)\cot\left(\frac{1}{2}\varepsilon\right)-(u(\cdot-\varepsilon)-u)\cot\left(-\frac{1}{2}\varepsilon\right)\to 0
\end{equation} 
in $L^2(\mathbb{S}^1)$ as $\varepsilon\to 0$. Therefore there exists a subsequence of $(\varepsilon_n)_n$ along which convergence (\ref{eq_proof_pointwise_formula_convergence}) takes place a.e. in $\mathbb{S}^1$.
As the limits in (\ref{eq_proof_pointwise_formula_ibp}) are well defined (by \cite{Schlag}, Proposition 3.18), convergence (\ref{eq_proof_pointwise_formula_convergence}) takes place a.e. in $\mathbb{S}^1$ along the original sequence. Therefore we conclude that for a.e. $x\in \mathbb{S}^1$
\begin{equation}\label{eq_pointwise_formula_fraclap_circle}
(-\Delta)^\frac{1}{2}u(x)=\frac{1}{2\pi} PV\int_{\mathbb{S}^1}\frac{u(x)-u(y)}{\sin^2\left(\frac{1}{2}(x-y)\right)}dy.
\end{equation}
\begin{rem}
Several authors define $\mathbb{S}^1$ as the manifold $\partial D^2\subset \mathbb{C}$, endowed with the distance induced by $\mathbb{C}$. We observe that for $x, y\in [0, 2\pi)$
\begin{equation}
\lvert e^{ix}-e^{iy}\rvert^2=2(1-\cos(x-y))=4\sin^2\left(\frac{1}{2}(x-y)\right)
\end{equation}
Therefore $\left\lvert \sin\left(\frac{1}{2}(x-y)\right)\right\rvert$ actually corresponds (up to a multiplicative constant) to the $\mathbb{C}$-distance between $i([x])$ and $i([y])$ (where i is the identification map defined in (\ref{eq_definition_identification_map_i})). Authors defining  $\mathbb{S}^1$ as the manifold $\partial D^2\subset \mathbb{C}$ endowed with the $\mathbb{C}$-distance usually define the $\frac{1}{2}$-fractional Laplacian of a function $u\in C^\infty(\partial D^2)$ (up to multiplicative constants) as
\begin{equation}
(-\Delta)^\frac{1}{2}u(x)= PV\int_{\partial D^2}\frac{u(x)-u(y)}{\lvert x-y\rvert^2}dy
\end{equation}
for any $x\in \partial D^2$ . By the argument above, this definition coincides (up to multiplicative constants) with the one given in (\ref{eq_pointwise_formula_fraclap_circle}), upon precomposing functions $u$ on $\partial D^2$ with the identification map $i$.
\end{rem}
Following \cite{RoncalStinga}, Theorem 3.6, we set
\begin{equation}\label{eq_explit_formula_Kernel_12}
K^\frac{1}{2}(x):=\frac{1}{2\pi}\frac{1}{\sin^2\left(\frac{1}{2}x\right)}
\end{equation}
for any $x\in \mathbb{S}^1\smallsetminus\lbrace 0\rbrace$.\\
On the other hand, according to \cite{RoncalStinga}, Theorem 3.4, if $0<s<\frac{1}{2}$, $u\in C^{0, 2s+\varepsilon}(\mathbb{S}^1)$ for some $\varepsilon>0$, then $(-\Delta)^su$ is continuous and, for any $x\in \mathbb{S}^1$
\begin{equation}\label{eq_pointwise_formula_fraclap_circle_s}
(-\Delta)^su(x)=\int_{\mathbb{S}^1}(u(x)-u(y))K^s(x-y)dy,
\end{equation}
where
\begin{equation}\label{def_Kernel_Ks_0_12}
K^s(x)=c_s\int_0^\infty P_t(x)\frac{dt}{t^{1+2s}},
\end{equation}
and $c_s$ is a constant depending on $s$ and $P_t$ is the Poisson kernel for $\mathbb{S}^1$, defined as
\begin{equation}\label{eq_intro_circle_definition_Poisson_kernel}
P_r(x)=\sum_{n\in \mathbb{Z}}r^{\lvert n\rvert}e^{inx}=\frac{1-r^2}{1-2r\cos(x)+r^2}
\end{equation}
for $r\in (0,1)$, $x\in\mathbb{S}^1$ (for more informations abot the Poisson's Kernel for $\mathbb{S}^1$, see \cite{Schlag}, Ch. 2).
In particular we observe that $K^s(x)=K^s(-x)$ for any $x\in\mathbb{S}^1$, $s\in(1, \frac{1}{2})$. Moreover there exist positive constants $b_s$ and $B_s$ such that
\begin{equation}\label{eq_estimates_kernel_fraclap_s}
\frac{b_s}{\left\lvert \sin\left(\frac{1}{2}x\right)\right\rvert^{1+2s}}\leq K^s(x)\leq\frac{B_s}{\left\lvert \sin\left(\frac{1}{2}x\right)\right\rvert^{1+2s}}
\end{equation}
for any $x\in\mathbb{S}^1\smallsetminus\lbrace 0\rbrace$.\newline
We observe that by definition, for any $s\in (0,1)$ the fractional Laplacian is self-adjoint, in the sense that for any $u, v\in C^\infty(\mathbb{S}^1, \mathbb{R}^m)$
\begin{equation}
\int_{S^1}(-\Delta)^s u(x)\cdot v(x) dx=\int_{S^1}u(x)\cdot (-\Delta)^s v(x) dx.
\end{equation}
Finally we remark that also in the case of the circle, for any $s\in (0,\frac{1}{2}]$ the pointwise formulas (\ref{eq_pointwise_formula_fraclap_circle_s}) and (\ref{eq_pointwise_formula_fraclap_circle}) imply the following "Leibnitz rule": for any $u,v\in C^1(\mathbb{S}^1)$ there holds
\begin{equation}\label{eq_discussion_intro_Leibnitz_rule_fraclap_circle}
\begin{split}
(-\Delta)^s(uv)(x)=&PV\int_{\mathbb{S}^1}(uv(x)-uv(y))K^s(x-y)dy\\
=&u(x)PV\int_{\mathbb{R}^n}(v(x)-v(y))K^s(x-y)dy\\
&+v(x)\int_{\mathbb{R}^n}(u(x)-u(y))K^s(x-y)dy\\
&-PV\int_{\mathbb{R}^n}(u(x)-u(y))(v(x)-v(y)K^s(x-y)dy\\
=&u(x)(-\Delta)^sv(x)+v(x)(-\Delta)^su(x)\\&-\int_{\mathbb{R}^n}(u(x)-u(y))(v(x)-v(y)K^s(x-y)dy,
\end{split}
\end{equation}
where in the last step we used the fact that since $u,v\in H^{2s}(\mathbb{S}^1)$, the last integral converges absolutely for a.e. $x\in\mathbb{S}^1$.

\section{Variations in the target}
In this section we consider variational problems whose Lagrangians involving a $\frac{1}{4}$-fractional Laplacian and exhibiting symmetries under variations in the target. First we derive a Noether theorem for such Lagrangian (Theorem \ref{prop_noether_thm_variaton_target_cor}), then we will consider the particular case of harmonic map into sphere, and we try to recast a regularity result by K. Mazowiecka and A. Schikorra in the framework of Noether theorems (Section \ref{ssec: harmonic maps into spheres}).

\subsection{Fractional gradients and fractional divergences}\label{ssec: Fractional gradients and fractional divergences}
The Noether theorem involving a $\frac{1}{4}$-fractional Laplacian will be formulated in terms of fractional gradients and divergences. This were introduced by by K. Mazowiecka and A. Schikorra in \cite{divcurl}. In this section we recall their definition and some of their basic properties.

\begin{defn}
Let $s\in [0,1]$. Let $f:\mathbb{R}^n\to\mathbb{R}^n$. For any $(x,y)\in \mathbb{R}^n\times\mathbb{R}^n$ let
\begin{equation}
\text{d}_sf(x,y):=\frac{f(x)-f(y)}{\lvert x-y\rvert^s}
\end{equation}
$\text{d}_s f$ is called the \textbf{$s$-gradient of $f$}.
\end{defn}
\begin{defn}\label{def_fractional_divergence_Rn}
Let $s\in (0,1)$. Let $F: \mathbb{R}^n\times\mathbb{R}^n\to\mathbb{R}$ and assume that for any $\phi\in C^\infty_c(\mathbb{R}^n)$
\begin{equation}\label{eq_def_condition_def_div}
\int_{\mathbb{R}^n}\int_{\mathbb{R}^n} \left\lvert F(x,y)\text{d}_s\phi(x,y)\right\rvert\frac{dxdy}{\lvert x-y\rvert^n}<\infty.
\end{equation}
The \textbf{$s$-divergence} of $F$ is the distribution defined by
\begin{equation}
\text{div}_sF[\phi]:=\int_{\mathbb{R}^n}\int_{\mathbb{R}^n}F(x,y)\text{d}_s\phi(x,y)\frac{dxdy}{\lvert x-y\rvert^n}\quad\text{for any }\phi\in C^\infty_c(\mathbb{R}^n).
\end{equation}
\end{defn}

We also recall the definition of some function spaces which interact naturally with fractional gradients and derivatives (see Lemma \ref{lem_cont_grad_div}).\\
Let's denote by $\mathcal{M}$ the set of of measurable functions on $\mathbb{R}^n\times\mathbb{R}^n$ (either scalar valued or vector valued).

\begin{defn}\noindent
\begin{enumerate}
\item Let $p\in [1,\infty)$. We define the \textbf{off-diagonal $L^p$-space} as
\begin{equation}
\begin{split}
L^p\left(\bigwedge^1_{od}\mathbb{R}^n\right)=&\left\lbrace F\in \mathcal{M}\bigg\vert\right.\\
&\left.\phantom{\lbrace} \lVert F\rVert_{L^p(\bigwedge^1_{od}\mathbb{R}^n)}:=\left(\int_{\mathbb{R}^n}\int_{\mathbb{R}^n}\left\lvert F(x,y)\right\rvert^p\frac{dxdy}{\lvert x-y\rvert^n}\right)^\frac{1}{p}<\infty\right\rbrace.
\end{split}
\end{equation}
\item Let $s\in [0,1)$. We define the homogeneous, off-diagonal $s$-Sobolev space by
\begin{equation}
\dot{H}^s\left(\bigwedge^1_{od}\mathbb{R}^n\right):=\left\lbrace F\in \mathcal{M} \bigg\vert  [F]_{\dot{H}^s(\bigwedge^1_{od}\mathbb{R}^n)}:= \left\lVert \frac{F(x,y)}{\lvert x-y\rvert^s}\right\rVert_{L^2(\bigwedge^1_{od}\mathbb{R}^n)}<   \infty \right\rbrace.
\end{equation}
In particular, $\dot{H}^0\left(\bigwedge^1_{od}\mathbb{R}^n\right)=L^2\left(\bigwedge^1_{od}\mathbb{R}^n\right)$.
\end{enumerate}
\end{defn}
Then, for any $p\in [1,\infty)$, $L^p\left(\bigwedge^1_{od}\mathbb{R}^n\right)$ is a normed space with norm \newline $\lVert \cdot\rVert_{L^p(\bigwedge^1_{od}\mathbb{R}^n)}$, and for any $s\in (0,1)$, $\dot{H}^s\left(\bigwedge^1_{od}\mathbb{R}^n\right)$ is a vector space endowed with the seminorm $[\cdot]_{\dot{H}(\bigwedge^1_{od}\mathbb{R}^n)}$.\\
For $F$, $G$ we also introduce the notation
\begin{equation}\label{eq_definition_product_in_one_var}
 F\cdot_{\bigwedge_{od}^1\mathbb{R}^n} G(x)=PV\int_{\mathbb{R}^n}F(x,y)G(x,y)\frac{dy}{\lvert x-y\rvert^n}
\end{equation}
whenever the principal value is well defined.
Next we recall some basic results about fractional gradients and fractional divergences.

\begin{lem}\label{lem_cont_grad_div}
Let $\sigma\in(0,1)$, $s\in(0,\sigma]$,
\begin{enumerate}
\item if $s\in(0,\sigma]$, $\text{d}_s$ is a bounded linear operator from $\dot{H}^{\sigma,2}(\mathbb{R}^n)$ to $\dot{H}^{\sigma-s}(\bigwedge^1_{od}\mathbb{R}^n)$ and for any $u\in \dot{H}^s(\mathbb{R}^n)$, 
\begin{equation}\label{eq_lem_eq_norm_and_continuity}
[\text{d}_s u]_{\dot{H}^{\sigma-s}(\bigwedge^1_{od}\mathbb{R}^n)}=[u]_{\dot{W}^{\sigma,2}(\mathbb{R}^n)}.
\end{equation}
In particular, if $u\in \dot{H}^s(\mathbb{R}^n)$, $\text{d}_su\in L^2\left(\bigwedge^1_{od}\mathbb{R}^n\right)$.
\item if $s\in(\sigma,1)$, $\text{div}_s$ is a bounded linear operator from $\dot{H}^{\sigma}(\bigwedge^1_{od}\mathbb{R}^n)$ to $\dot{H}^{\sigma-s}(\mathbb{R}^n)^\ast$, the topological dual of $\dot{H}^{\sigma-s}(\mathbb{R}^n)$.
\end{enumerate}
\end{lem}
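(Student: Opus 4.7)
Both parts of the lemma reduce to unravelling the definitions, and the computation in each case is essentially a change of exponents plus a Cauchy--Schwarz argument. I outline the plan below.

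For part 1, the plan is to verify the equality $[\text{d}_s u]_{\dot{H}^{\sigma-s}(\bigwedge^1_{od}\mathbb{R}^n)} = [u]_{\dot{W}^{\sigma,2}(\mathbb{R}^n)}$ directly; the boundedness of $\text{d}_s$ then follows at once, since the right-hand side equals (up to the standard constant) $[u]_{\dot{H}^\sigma(\mathbb{R}^n)}$ by the equivalence of Gagliardo and Fourier-based homogeneous seminorms recalled in the preliminaries. Substituting the definition of $\text{d}_s u$ into the defining integral for $[\cdot]_{\dot{H}^{\sigma-s}(\bigwedge^1_{od}\mathbb{R}^n)}$ one gets
\begin{equation*}
[\text{d}_s u]_{\dot{H}^{\sigma-s}(\bigwedge^1_{od}\mathbb{R}^n)}^2 = \int_{\mathbb{R}^n}\int_{\mathbb{R}^n} \frac{|u(x)-u(y)|^2}{|x-y|^{2s}\,|x-y|^{2(\sigma-s)}}\,\frac{dx\,dy}{|x-y|^n},
\end{equation*}
and the three powers of $|x-y|$ combine to $|x-y|^{n+2\sigma}$, which is exactly the Gagliardo weight defining $[u]_{\dot{W}^{\sigma,2}(\mathbb{R}^n)}^2$. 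The particular case $s=\sigma$ gives $\text{d}_s u\in L^2(\bigwedge^1_{od}\mathbb{R}^n)$, as claimed.

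For part 2, the plan is to test $\text{div}_s F$ against $\phi\in C_c^\infty(\mathbb{R}^n)$ and bound the resulting expression by a product of two weighted $L^2$-norms. First I would observe the algebraic identity
\begin{equation*}
F(x,y)\,\text{d}_s\phi(x,y)\,\frac{1}{|x-y|^n} = \frac{F(x,y)}{|x-y|^\sigma}\cdot\frac{\phi(x)-\phi(y)}{|x-y|^{s-\sigma}}\cdot\frac{1}{|x-y|^n},
\end{equation*}
which is well-defined precisely because $s>\sigma$. Applying Cauchy--Schwarz with respect to the measure $\frac{dx\,dy}{|x-y|^n}$ yields
\begin{equation*}
|\text{div}_s F[\phi]| \leq [F]_{\dot{H}^\sigma(\bigwedge^1_{od}\mathbb{R}^n)}\cdot\left(\int_{\mathbb{R}^n}\int_{\mathbb{R}^n}\frac{|\phi(x)-\phi(y)|^2}{|x-y|^{n+2(s-\sigma)}}\,dx\,dy\right)^{1/2},
\end{equation*}
and the second factor is $[\phi]_{\dot{W}^{s-\sigma,2}(\mathbb{R}^n)}$, which is comparable to $[\phi]_{\dot{H}^{s-\sigma}(\mathbb{R}^n)}$ (again by the preliminaries). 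This shows that $\text{div}_s F$ extends continuously from $C_c^\infty$ to the homogeneous Sobolev space of exponent $s-\sigma$, with operator norm dominated by $C\,[F]_{\dot{H}^\sigma(\bigwedge^1_{od}\mathbb{R}^n)}$, which is what is claimed (identifying the appropriate homogeneous Sobolev dual).

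The only mildly subtle step is the second one: one must check that condition (\ref{eq_def_condition_def_div}) is automatic for $F\in \dot{H}^\sigma(\bigwedge^1_{od}\mathbb{R}^n)$ and $\phi\in C_c^\infty$, so that $\text{div}_s F$ is a bona fide distribution before estimating its action. This follows from the same Cauchy--Schwarz argument applied with absolute values (the two factors being finite because $\phi$ is Schwartz, hence in every $\dot{H}^{s-\sigma}(\mathbb{R}^n)$, and $F$ is in $\dot{H}^\sigma(\bigwedge^1_{od}\mathbb{R}^n)$ by assumption), so no extra work is required. Everything else is algebraic bookkeeping of exponents, and the main conceptual point is that the weight $|x-y|^{-n}$ in the off-diagonal measure combines with the powers coming from $\text{d}_s$ and the Sobolev weight exactly to reproduce the Gagliardo norm --- this is what makes the pair $(\text{d}_s,\text{div}_s)$ behave like a duality between fractional gradients and divergences.
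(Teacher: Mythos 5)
Your proof is correct and follows essentially the same route as the paper's: in part 1 you plug the definition of $\text{d}_s u$ into the off-diagonal Sobolev seminorm and observe the exponents collapse to the Gagliardo weight, and in part 2 you split the integrand as $\frac{F(x,y)}{|x-y|^\sigma}\cdot\frac{\phi(x)-\phi(y)}{|x-y|^{s-\sigma}}$ and apply Cauchy--Schwarz against the measure $\frac{dxdy}{|x-y|^n}$, exactly as in the paper. Your remark that the absolute convergence required by the definition of $\text{div}_s$ follows from the same Cauchy--Schwarz estimate is a small but welcome clarification that the paper leaves implicit.
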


\begin{proof}
For $u\in \dot{H}^\sigma(\mathbb{R}^n)$,
\begin{equation}
[\text{d}_s u]_{\dot{H}^{\sigma-s}(\bigwedge^1_{od}\mathbb{R}^n)}=\left(\int_{\mathbb{R}^n}\int_{\mathbb{R}^n}\frac{\lvert u(x)-u(y)\rvert^2}{\lvert x-y\rvert^{2s}}\frac{dxdy}{\lvert x-y\rvert^{2\sigma-2s+n}}\right)^\frac{1}{2}=[u]_{\dot{W}^{\sigma,2}}.
\end{equation}
For $F\in\dot{H}^{\sigma}(\bigwedge^1_{od}\mathbb{R}^n)$, H\"{o}lder's inequality and (\ref{eq_lem_eq_norm_and_continuity}) implies that for any $\phi\in \dot{H}^s(\mathbb{R}^n)$
\begin{equation}
\begin{split}
\lvert\text{div}_s F [\phi]\rvert&=\left\lvert\int_{\mathbb{R}^n}\int_{\mathbb{R}^n}F(x,y)\text{d}_s \phi(x,y)\frac{dxdy}{\lvert x-y\rvert^n}\right\rvert\\
&\leq \left( \int_{\mathbb{R}^n}\int_{\mathbb{R}^n}\frac{\lvert F(x,y)\rvert^2}{\lvert x-y\rvert^{2\sigma}}\frac{dxdy}{\lvert x-y\rvert^n}\right)^\frac{1}{2}\left(\int_{\mathbb{R}^n}\int_{\mathbb{R}^n}\frac{\lvert \phi(x)-\phi(y)\rvert^2}{\lvert x-y\rvert^{2s-2\sigma}}\frac{dxdy}{\lvert x-y\rvert^n}\right)^\frac{1}{2}\\
&=[F]_{\dot{H}^{\sigma}(\bigwedge^1_{od}\mathbb{R}^n)}[\phi]_{W^{s-\sigma,2}}.
\end{split},
\end{equation}
therefore $F\in \dot{W}^{s-\sigma,2}(\mathbb{R}^n)^\ast\simeq \dot{H}^{s-\sigma}(\mathbb{R}^n)$ with $\lVert F\rVert_{\dot{W}^{s-\sigma,2}(\mathbb{R}^n)^\ast}\leq [F]_{\dot{H}^{\sigma}(\bigwedge^1_{od}\mathbb{R}^n)}$.
\end{proof}

\begin{lem}\label{lem_div_d_=_fraclap}
Let $s\in (0,1)$, let $u, v\in \dot{H}^{s}(\mathbb{R}^n)$. Then
\begin{equation}\label{eq_rem_duality_gradient_fraclap}
\int_{\mathbb{R}^n}(-\Delta)^\frac{s}{2} u(x) (-\Delta)^\frac{s}{2}v(x)dx=\frac{1}{2}C_{n,s}\left\langle \text{d}_s u, \text{d}_s v\right\rangle,
\end{equation}
where $C_{n,s}$ is the constant introduced in (\ref{eq_prel_pointwise_definition_fraclap}).
In particular, if $u\in \dot{H}^s(\mathbb{R}^n)$,
\begin{equation}\label{eq_lem_composition_fracdiv_fracgrad_is_fraclap}
(-\Delta)^s u=\frac{C_{n,s}}{2}\text{div}_s \text{d}_s u
\end{equation}
as distributions.
\end{lem}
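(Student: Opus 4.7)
\medskip
\noindent\textbf{Proof proposal.}
My plan is to prove the first identity for Schwartz functions by symmetrizing the pointwise integral formula for $(-\Delta)^s$, and then extend by density to $\dot H^s(\mathbb{R}^n)$. The second assertion will follow immediately by pairing with a test function.

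First, I would assume $u,v\in\mathscr{S}(\mathbb{R}^n)$. By the self-adjointness of $(-\Delta)^{s/2}$ (Equation (\ref{eq_rem_eq_def_fraclap_for_Schartz}) with exponent $s/2$) we may rewrite
\[
\int_{\mathbb{R}^n}(-\Delta)^{s/2}u\,(-\Delta)^{s/2}v\,dx
=\int_{\mathbb{R}^n}(-\Delta)^s u(x)\,v(x)\,dx.
\]
Plugging the pointwise formula (\ref{eq_prel_pointwise_definition_fraclap}) for $(-\Delta)^s u$ into the right-hand side and symmetrizing the double integral by swapping $x\leftrightarrow y$ (which is legitimate because for Schwartz functions the integrand is integrable on $\{|x-y|>\varepsilon\}$ and the boundary terms vanish after the symmetrization, turning the principal value into an absolutely convergent integral), I get
\[
\int_{\mathbb{R}^n}(-\Delta)^s u\,v\,dx
=\frac{C_{n,s}}{2}\int_{\mathbb{R}^n}\!\!\int_{\mathbb{R}^n}\frac{(u(x)-u(y))(v(x)-v(y))}{|x-y|^{n+2s}}\,dx\,dy
=\frac{C_{n,s}}{2}\langle\mathrm{d}_s u,\mathrm{d}_s v\rangle,
\]
where the last equality is just the definition of $\mathrm{d}_s$ and of the $L^2(\bigwedge^1_{od}\mathbb{R}^n)$ pairing. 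This proves the identity on $\mathscr{S}(\mathbb{R}^n)$.

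Next I extend to $\dot H^s(\mathbb{R}^n)$. Both sides are continuous bilinear forms on $\dot H^s(\mathbb{R}^n)$: the left-hand side is bounded by $\|(-\Delta)^{s/2}u\|_{L^2}\|(-\Delta)^{s/2}v\|_{L^2}$ by Cauchy--Schwarz, and the right-hand side is bounded by $[u]_{\dot H^s}[v]_{\dot H^s}$ by Cauchy--Schwarz in $L^2(\bigwedge^1_{od}\mathbb{R}^n)$ together with the identity $[\mathrm{d}_s w]_{L^2(\bigwedge^1_{od}\mathbb{R}^n)}=[w]_{\dot W^{s,2}}$ from Lemma \ref{lem_cont_grad_div}. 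Invoking the density of $\mathscr{S}(\mathbb{R}^n)$ in $\dot H^s(\mathbb{R}^n)$ (Lemma \ref{lem_approx_sobolev}) and passing to the limit in both sides gives the identity (\ref{eq_rem_duality_gradient_fraclap}) for all $u,v\in\dot H^s(\mathbb{R}^n)$.

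For the ``in particular'' statement, let $u\in\dot H^s(\mathbb{R}^n)$ and pick $\phi\in C^\infty_c(\mathbb{R}^n)\subset\dot H^s(\mathbb{R}^n)$. Condition (\ref{eq_def_condition_def_div}) for $F=\mathrm{d}_s u$ is satisfied by Cauchy--Schwarz as above, so $\mathrm{div}_s\mathrm{d}_s u$ is well defined, and by definition of $\mathrm{div}_s$
\[
\mathrm{div}_s(\mathrm{d}_s u)[\phi]=\langle\mathrm{d}_s u,\mathrm{d}_s\phi\rangle=\frac{2}{C_{n,s}}\int_{\mathbb{R}^n}(-\Delta)^{s/2}u\,(-\Delta)^{s/2}\phi\,dx=\frac{2}{C_{n,s}}\langle(-\Delta)^s u,\phi\rangle,
\]
where the middle equality is (\ref{eq_rem_duality_gradient_fraclap}) and the last is self-adjointness of $(-\Delta)^{s/2}$ applied to the pair $(u,\phi)$. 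This yields (\ref{eq_lem_composition_fracdiv_fracgrad_is_fraclap}) as distributions. The only genuinely delicate step is the symmetrization that turns a principal value into an absolutely convergent double integral; everything else is either a definition unwinding or a density argument.
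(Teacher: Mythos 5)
Your proposal is correct and follows essentially the same route as the paper: establish the identity for Schwartz functions via the pointwise integral formula, handle the principal value by showing symmetrization yields an absolutely convergent double integral, extend to $\dot H^s(\mathbb{R}^n)$ by density and continuity of both bilinear forms, and then derive the distributional identity by pairing with $\phi\in C^\infty_c(\mathbb{R}^n)$. The only difference is that the paper spells out the principal-value symmetrization step in detail (a Taylor estimate gives an integrable dominating function, Lebesgue then removes the cutoff, and the symmetrized integrand is shown to converge absolutely via Cauchy--Schwarz), whereas you flag this as ``the only genuinely delicate step'' but largely leave it as an assertion; a full write-up would want the argument the paper gives there.
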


\begin{proof}
We first show (\ref{eq_rem_duality_gradient_fraclap}) for functions $\phi$, $\psi$ in $\mathscr{S}(\mathbb{R}^n)$. We observe that
\begin{equation}
\begin{split}
\int_{\mathbb{R}^n}(-\Delta)^\frac{s}{2}\phi(x)(-\Delta)^\frac{s}{2}\psi(x)dx=&\int_{\mathbb{R}^n}(-\Delta)^s\phi(x)\psi(x)dx\\ =&\frac{C_{n,s}}{2}\left(\int_{\mathbb{R}^n} \psi(x)\cdot PV\int_{\mathbb{R}^n}\frac{\phi(x)-\phi(y)}{\lvert x-y\rvert^{2s+n}}dydx \right.\\
&- \left. \int_{\mathbb{R}^n}\psi(y)\cdot PV\int_{\mathbb{R}^n}\frac{\phi(x)-\phi(y)}{\lvert x-y\rvert^{2s+n}}dxdy \right).
\end{split}
\end{equation}
Now, by  Taylor's theorem, for any $\varepsilon>0$, $x\in\mathbb{R}^n$,
\begin{equation}
\begin{split}
\left\lvert\int_{B_\varepsilon(x)^c}\frac{\phi(x)-\phi(y)}{\lvert x-y\rvert^{n+2s}}dy\right\rvert=&\left\lvert \frac{1}{2}\int_{B_\varepsilon(0)^c}\frac{2\phi(x)-\phi(x-z)-\phi(x+z)}{\lvert z\rvert^{n+2s}}dz\right\rvert\\
\leq&\int_{\mathbb{R}^n} \frac{\lVert D^2 \phi\rVert_{L^\infty}\lvert x-y\rvert^2}{\lvert z\rvert^{2s+n}}dz,
\end{split}
\end{equation}
and the right side multiplied by $\psi$ is integrable in $x$. Therefore, by Lebesgue's Dominated convergence Theorem,
\begin{equation}
\begin{split}
&\int_{\mathbb{R}^n}\psi(x)PV\int_{\mathbb{R}^n}\frac{\phi(x)-\phi(y)}{\lvert x-y\rvert^{n+2s}}dydx\\
=&\lim_{\varepsilon\to 0}\int_{\mathbb{R}^n}\psi(x)\int_{\mathbb{R}^n}\mathds{1}_{B_\varepsilon(x)^c}(y) \frac{\phi(x)-\phi(y)}{\lvert x-y\rvert^{n+2s}}dydx.
\end{split}
\end{equation}
The same holds if we switch $\phi$ and $\psi$, therefore
\begin{equation}\label{eq_proof_lem_remove_the_limit}
\begin{split}
&\int_{\mathbb{R}^n}(-\Delta)^s\phi(x)\psi(x)dx\\
=&\lim_{\varepsilon\to 0}\frac{C_{n,s}}{2}\int_{\mathbb{R}^n}\int_{\mathbb{R}^n}\mathds{1}_{\lvert x-y\rvert>\varepsilon}\frac{\lvert(\phi(x)-\phi(y))\cdot (\psi(x)-\psi(y))\rvert}{\lvert x-y\rvert^{n+2s}}dydx.
\end{split}
\end{equation}
Finally, by H\"{o}lder's inequality,
\begin{equation}\label{eq_rem_conv_int_hold}
\begin{split}
&\int_{\mathbb{R}^n}\int_{\mathbb{R}^n}\frac{\lvert(\phi(x)-\phi(y))\cdot (\psi(x)-\psi(y))\rvert}{\lvert x-y\rvert^{n+2s}}dxdy\\
\leq &\lVert \text{d}_s \phi \rVert_{L^2(\bigwedge^1_{od}\mathbb{R}^n)}\lVert \text{d}_s\psi\rVert_{L^2(\bigwedge^1_{od}\mathbb{R}^n)}.
\end{split}
\end{equation}
Therefore the limit on the right side of (\ref{eq_proof_lem_remove_the_limit}) exists and is equal to
\begin{equation}
\begin{split}
\frac{C_{n,s}}{2}\int_{\mathbb{R}^n}\int_{\mathbb{R}^n}\frac{(\phi(x)-\phi(y))\cdot (\psi(x)-\psi(y))}{\lvert x-y\rvert^{n+2s}}dydx=&\frac{C_{n,s}}{2}\langle\text{d}_s \phi, \text{d}_{s}\psi\rangle\\
=&\frac{C_{n,s}}{2}\text{div}_s \text{d}_{s}\psi[\phi].
\end{split}
\end{equation}
This concludes the proof of (\ref{eq_rem_duality_gradient_fraclap}) when $\phi,\psi\in \mathscr{S}(\mathbb{R}^n)$.
Now if $u, v\in \dot{H}^s(\mathbb{R}^n)$, by Lemma \ref{lem_approx_sobolev} there exist sequences $(u_n)_n$, $(v_n)_n$ in $\mathscr{S}(\mathbb{R}^n)$ such that $u_n\to u$ and $v_n\to v$ in $\dot{H}^s(\mathbb{R}^n)$ as $n\to \infty$. Then
\begin{equation}
\begin{split}
&(-\Delta)^\frac{s}{2}u_n\to (-\Delta)^\frac{s}{2}u\text{ in }L^2(\mathbb{R}^n)\\
&(-\Delta)^\frac{s}{2}v_n\to (-\Delta)^\frac{s}{2}v\text{ in }L^2(\mathbb{R}^n)
\end{split}
\end{equation}
as $n\to \infty$, and therefore
\begin{equation}
\int_{\mathbb{R}^n}(-\Delta)^\frac{s}{2}u_n(x)(-\Delta)^\frac{s}{2}v_n(x)dx\to\int_{\mathbb{R}^n}(-\Delta)^\frac{s}{2}u(x)(-\Delta)^\frac{s}{2}v(x)dx
\end{equation}
as $n\to \infty$.
On the other hand, by Lemma \ref{lem_cont_grad_div},
\begin{equation}
\begin{split}
&\text{d}_su_n\to \text{d}_su  \text{ in } L^2\left(\bigwedge^1_{od}\mathbb{R}^n\right),\\
&\text{d}_sv_n\to \text{d}_sv  \text{ in } L^2\left(\bigwedge^1_{od}\mathbb{R}^n\right)
\end{split}
\end{equation}
as $n\to\infty$. Therefore
\begin{equation}
\begin{split}
&\int_{\mathbb{R}^n}\int_{\mathbb{R}^n}\frac{(u_n(x)-u_n(y))\cdot (v_n(x)-v_n(y))}{\lvert x-y\rvert^{n+2s}}dydx\\
\to &\int_{\mathbb{R}^n}\int_{\mathbb{R}^n}\frac{(u(x)-u(y))\cdot (v(x)-v(y))}{\lvert x-y\rvert^{n+2s}}dydx
\end{split}
\end{equation}
as $n\to \infty$, or equivalently
\begin{equation}
\langle \text{d}_s u_n, \text{d}_s v_n\rangle\to\langle \text{d}_s u, \text{d}_s v\rangle
\end{equation}
as $n\to \infty$.
Thus, as (\ref{eq_rem_duality_gradient_fraclap}) holds for Schwartz functions, it also holds for functions in $\dot{H}^s(\mathbb{R}^n)$.

Similarly, if $\phi\in \mathscr{S}(\mathbb{R}^n)$ and $\psi\in C^\infty_c(\mathbb{R}^n)$, (\ref{eq_rem_duality_gradient_fraclap}) implies
\begin{equation}\label{eq_proof_lem_computation_fraclap_for_schwartz}
\begin{split}
\langle (-\Delta)^s\phi, \psi\rangle=&\int_{\mathbb{R}^n}(-\Delta)^\frac{s}{2}\phi(x)(-\Delta)^\frac{s}{2}\psi(x)dx=C_{n,s}\langle \text{d}_s \phi,\text{d}_s \psi\rangle\\
=&C_{n,s}\langle\text{div}_s\text{d}_s\phi, \psi\rangle.
\end{split}
\end{equation}
In order to show (\ref{eq_lem_composition_fracdiv_fracgrad_is_fraclap}), for any $u\in \dot{H}^s(\mathbb{R}^n)$, $\phi\in C^\infty_c(\mathbb{R}^n)$ we compute
\begin{equation}
\left\langle (-\Delta)^s u, \phi \right\rangle=\int_{\mathbb{R}^n}(-\Delta)^\frac{s}{2}u(x)(-\Delta)^\frac{s}{2}\phi(x) dx=\frac{C_{n,s}}{2} \langle\text{d}_s u, \text{d}_s v\rangle,
\end{equation}
where in the second step we used identity (\ref{eq_rem_duality_gradient_fraclap}).
As this holds for any $\psi\in C^\infty_c(\mathbb{R}^n)$, we conclude that
\begin{equation}
(-\Delta)^s u=\frac{C_{n,s}}{2}\text{div}_s \text{d}_s u
\end{equation}
as distributions.
\end{proof}
Finally we recall the following two results from \cite{divcurl}. Let $n\in \mathbb{N}_{>0}$.
\begin{thm}\label{thm_div_curl_and_Hardy}
(Theorem 2.1 in \cite{divcurl}) Let $s\in (0,1)$, $p\in (1, \infty)$. Let $F\in L^p\left(\bigwedge^1_{od}\mathbb{R}^n\right)$ and $g\in \dot{W}^{s, p'}(\mathbb{R}^n)$. Assume that $\text{div}_sF=0$. Then $F\cdot \text{d}_sg$ belongs to the Hardy space $\mathcal{H}^1(\mathbb{R}^n)$ and we have the estimate
\begin{equation}
\lVert F\cdot \text{d}_sg\rVert_{\mathcal{H}^1(\mathbb{R}^n)}\leq C \lVert F\rVert_{L^p\left(\bigwedge^1_{od}\mathbb{R}^n\right)}\lVert \text{d}_s g\rVert_{L^{p'}\left(\bigwedge^1_{od}\mathbb{R}^n\right)}.
\end{equation}
\end{thm}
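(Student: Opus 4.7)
The plan is to upgrade the immediate $L^1$ bound
\begin{equation*}
\lVert F\cdot\text{d}_s g\rVert_{L^1(\mathbb{R}^n)}\ \leq\ \lVert F\rVert_{L^p(\bigwedge^1_{od}\mathbb{R}^n)}\,\lVert \text{d}_s g\rVert_{L^{p'}(\bigwedge^1_{od}\mathbb{R}^n)},
\end{equation*}
which follows from H\"{o}lder's inequality applied inside the principal value (\ref{eq_definition_product_in_one_var}), to a Hardy space bound by duality. Using Fefferman's theorem $\mathcal{H}^1(\mathbb{R}^n)^{*}=BMO(\mathbb{R}^n)$, it suffices to prove
\begin{equation*}
\left\lvert\int_{\mathbb{R}^n}(F\cdot\text{d}_s g)(x)\,\phi(x)\,dx\right\rvert\ \leq\ C\,\lVert F\rVert_{L^p}\,\lVert\text{d}_s g\rVert_{L^{p'}}\,\lVert\phi\rVert_{BMO}
\end{equation*}
for $\phi$ ranging over a dense subspace of $BMO$, e.g.\ $C^\infty_c(\mathbb{R}^n)$ modulo constants, after which density and the $L^1$ bound above recover the full estimate.

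The algebraic core is the discrete Leibniz identity
\begin{equation*}
\text{d}_s g(x,y)\,\phi(x)\ =\ \text{d}_s(g\phi)(x,y)\ -\ g(y)\,\text{d}_s\phi(x,y),
\end{equation*}
which, after Fubini, rewrites the pairing as
\begin{equation*}
\int(F\cdot\text{d}_s g)\,\phi\,dx\ =\ \text{div}_s F[g\phi]\ -\ \int\!\!\int F(x,y)\,g(y)\,\text{d}_s\phi(x,y)\,\frac{dx\,dy}{\lvert x-y\rvert^n}.
\end{equation*}
The first term vanishes by the hypothesis $\text{div}_s F=0$, after a truncation/mollification argument that realizes $g\phi$ as a limit in $\dot W^{s,p'}(\mathbb{R}^n)$ and invokes the continuous extension of $\text{div}_s F$ to that space granted by Lemma \ref{lem_cont_grad_div}. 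The remaining integral is the term to estimate.

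The main obstacle is precisely this second integral: a naive H\"{o}lder bound produces only $\lVert F\rVert_{L^p}\lVert g\,\text{d}_s\phi\rVert_{L^{p'}(\bigwedge^1_{od})}$, which for generic BMO $\phi$ is infinite. To import the BMO control I would exploit the freedom, again granted by $\text{div}_s F=0$, to subtract from $\phi$ its average $\phi_B$ on every ball $B$, then perform a Whitney-type dyadic decomposition of $\mathbb{R}^n\times\mathbb{R}^n$. On the near-diagonal pieces a Poincar\'{e}-type comparison converts the factor $g(y)$ into a genuine $\text{d}_s g$ contribution paired against $F$, while John--Nirenberg bounds the resulting $\phi$-oscillations by $\lVert\phi\rVert_{BMO}$; on the off-diagonal pieces the decay of the kernel $\lvert x-y\rvert^{-n}$ must be summed against the logarithmic growth of dyadic averages of $\phi$. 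This compensation step is the fractional analogue of the Calder\'{o}n commutator estimate that powers the classical Coifman--Lions--Meyer--Semmes div-curl lemma, and is where the Hardy improvement genuinely originates.
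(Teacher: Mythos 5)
This theorem is stated in the paper as a cited result of Mazowiecka--Schikorra (Theorem~2.1 in \cite{divcurl}); the paper offers no proof of its own, so your proposal is being judged as a self-contained argument rather than against the paper's exposition.

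Your framework is reasonable: the $L^1$ bound via H\"older in the off-diagonal variables is correct, the duality reduction via $(\mathcal{H}^1)^*=BMO$ on a dense subclass (in effect $(VMO)^*=\mathcal{H}^1$) is legitimate, and the discrete Leibniz identity $\text{d}_s g(x,y)\phi(x)=\text{d}_s(g\phi)(x,y)-g(y)\text{d}_s\phi(x,y)$ checks out. But there is a genuine and large gap. The whole content of the theorem --- the upgrade from $L^1$ to $\mathcal{H}^1$ --- lives in the bound
\begin{equation*}
\left\lvert\int_{\mathbb{R}^n}\int_{\mathbb{R}^n} F(x,y)\,g(y)\,\text{d}_s\phi(x,y)\,\frac{dx\,dy}{\lvert x-y\rvert^n}\right\rvert\ \leq\ C\,\lVert F\rVert_{L^p}\,\lVert\text{d}_s g\rVert_{L^{p'}}\,\lVert\phi\rVert_{BMO},
\end{equation*}
and you explicitly identify it as the obstacle and then replace it with a list of names (Whitney decomposition, Poincar\'e, John--Nirenberg, ``the fractional analogue of the Calder\'on commutator estimate''). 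None of these steps is carried out. In particular, the key mechanism --- how the div-free condition trades the naked factor $g(y)$ for a genuine difference $\text{d}_s g$ so that the Sobolev seminorm can enter, and how the resulting sum of dyadic tails converges against the logarithmic growth of BMO averages --- is exactly what one must prove, not what one may gesture at. As written, the proposal contains the skeleton of a BMO-duality proof but not the proof itself.

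Two smaller technical points also want care. First, $\text{div}_s F=0$ is a statement about the distributional action on $C^\infty_c$; to conclude $\text{div}_s F[g\phi]=0$ you must show that $g\phi\in\dot W^{s,p'}(\mathbb{R}^n)$, that $C^\infty_c$ is dense there modulo constants, and that the H\"older-type bound extends the pairing continuously (the paper's Lemma~\ref{lem_cont_grad_div} is only stated for $p=2$ and needs the obvious $L^p$--$L^{p'}$ adaptation). Second, the Leibniz split separates the absolutely convergent integral $\int (F\cdot\text{d}_s g)\phi$ into two pieces neither of which is manifestly absolutely convergent for $g$ merely in $\dot W^{s,p'}$; the Fubini step should first be justified on a dense class of $g$ (say $C^\infty_c$, for which everything converges absolutely), with the estimate then transported by density since it depends only on $[g]_{\dot W^{s,p'}}$. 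Finally, a historical remark: the original Coifman--Lions--Meyer--Semmes argument does not proceed via Calder\'on commutators but via the grand maximal function characterization of $\mathcal{H}^1$, and indeed that is the more common route for the fractional version as well; citing the commutator literature as the source of the gain is misleading, though a commutator-flavoured BMO-duality proof is certainly possible in principle.
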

From Theorem \ref{thm_div_curl_and_Hardy} follows the following fractional analogous to Wente's Lemma.
\begin{cor}\label{cor_fractional_Wente_Lemma}
(Corollary 2.3 in \cite{divcurl}) Let $s\in (0,1)$, $p\in (1, \infty)$. Let $F\in L^p\left(\bigwedge^1_{od}\mathbb{R}^n\right)$ and $g\in \dot{W}^{s, p'}(\mathbb{R}^n)$. Assume that $\text{div}_s F=0$. Moreover, let $T$ be a linear operator such that for some $\Lambda>0$
\begin{equation}
\left\lvert T[\phi]\right\rvert\leq \Lambda \left\lVert (-\Delta)^\frac{n}{4}\phi\right\rVert_{L^{2,\infty}(\mathbb{R}^n)}\text{ for all } \phi\in C^\infty_c(\mathbb{R}^n).
\end{equation}
Then any distributional solution $u\in \dot{W}^{\frac{n}{2},2}(\mathbb{R}^n)$ to
\begin{equation}
(-\Delta)^\frac{n}{2}u=F\cdot \text{d}_s g+T\text{ in }\mathbb{R}^n
\end{equation}
is continuous.
\end{cor}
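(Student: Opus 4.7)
The plan is to upgrade the regularity of $(-\Delta)^{n/4}u$ from $L^2$ to the Lorentz space $L^{2,1}(\mathbb{R}^n)$, which is known to be critical for continuity via the embedding of $I_{n/2}(L^{2,1})$ into $C^0$. I would split the right-hand side and handle the two contributions separately. For the part $F\cdot\text{d}_s g$, the hypothesis $\text{div}_sF=0$ is exactly what is needed to apply Theorem \ref{thm_div_curl_and_Hardy}, which places $F\cdot\text{d}_s g$ in the real Hardy space $\mathcal{H}^1(\mathbb{R}^n)$. Writing $u=u_1+u_2$ where $(-\Delta)^{n/2}u_1=F\cdot\text{d}_s g$ and $(-\Delta)^{n/2}u_2=T$, the equation $(-\Delta)^{n/4}u_1=I_{n/2}(F\cdot\text{d}_s g)$ together with the $\mathcal{H}^1$-improvement of the Hardy--Littlewood--Sobolev inequality $I_{n/2}:\mathcal{H}^1(\mathbb{R}^n)\to L^{2,1}(\mathbb{R}^n)$ gives $(-\Delta)^{n/4}u_1\in L^{2,1}(\mathbb{R}^n)$.

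The second step concerns the term $T$. The hypothesis on $T$ says precisely that $T$ extends to a bounded linear functional on the space $\{(-\Delta)^{n/4}\phi:\phi\in C^\infty_c(\mathbb{R}^n)\}$ with respect to the $L^{2,\infty}$-norm. Since $L^{2,1}$ is the predual of $L^{2,\infty}$, a Hahn--Banach extension followed by the representation of functionals on $L^{2,\infty}$ produces some $h\in L^{2,1}(\mathbb{R}^n)$ with $T[\phi]=\int_{\mathbb{R}^n}h\,(-\Delta)^{n/4}\phi\,dx$ for every $\phi\in C^\infty_c(\mathbb{R}^n)$. Interpreted distributionally this reads $(-\Delta)^{n/4}u_2=h\in L^{2,1}(\mathbb{R}^n)$, by self-adjointness of $(-\Delta)^{n/4}$ (cf.\ (\ref{eq_rem_eq_def_fraclap_for_Schartz})).

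Combining the two steps, $(-\Delta)^{n/4}u\in L^{2,1}(\mathbb{R}^n)$. Since the Riesz kernel $c_n|x|^{-n/2}$ belongs to the weak Lorentz space $L^{2,\infty}(\mathbb{R}^n)$, the convolution inequality $L^{2,1}\ast L^{2,\infty}\hookrightarrow L^\infty$ together with dominated convergence applied to continuous approximants of the Riesz kernel yields that $u=c_n\,I_{n/2}\!\left((-\Delta)^{n/4}u\right)$ is continuous on $\mathbb{R}^n$, which is the desired conclusion.

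I expect the genuinely delicate step to be the passage in the second paragraph, for two reasons: first, one must verify that the Hahn--Banach extension of $T$ is compatible with the already-known distributional identity $(-\Delta)^{n/2}u_2=T$, so that no spurious constants appear and the representative $h$ is really $(-\Delta)^{n/4}u_2$; second, one must argue that the part of $T$ not coming from $F\cdot\text{d}_s g$ is indeed an object of Sobolev type (so that the splitting $u=u_1+u_2$ can be carried out inside $\dot{W}^{n/2,2}(\mathbb{R}^n)$). Once these two points are settled, the Hardy-to-Lorentz improvement and the Lorentz-Sobolev embedding into $C^0$ are standard and finish the argument.
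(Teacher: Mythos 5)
The paper itself gives no proof of this corollary; it is quoted verbatim from \cite{divcurl}, so there is no in-paper argument to compare against. Evaluating your proposal on its own logic, the overall architecture is sound (split the right-hand side, use the div--curl estimate to land in $\mathcal{H}^1$, upgrade $(-\Delta)^{n/4}u$ to $L^{2,1}$, then use the Riesz potential embedding into $C^0$), and the first and fourth steps are correct; but the step you yourself flag as delicate contains a genuine gap, and it is not the one you anticipated. You invoke ``$L^{2,1}$ is the predual of $L^{2,\infty}$'' and then deduce that a Hahn--Banach extension of $T$ to $L^{2,\infty}$ must be represented by some $h\in L^{2,1}$. This is not valid: $(L^{2,1})^\ast = L^{2,\infty}$, but $(L^{2,\infty})^\ast$ is strictly larger than $L^{2,1}$ (the space $L^{2,\infty}$ is neither reflexive nor order-continuous, so it admits singular continuous functionals that kill every order-continuous element). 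A Hahn--Banach extension of $T$ therefore produces a general element of $(L^{2,\infty})^\ast$, and there is no reason for its singular part to vanish; in particular one cannot conclude that $(-\Delta)^{n/4}u_2$ equals an $L^{2,1}$ function by this route.

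The fix is to avoid Hahn--Banach altogether and exploit the information you already have, namely that $(-\Delta)^{n/4}u_2\in L^2$ (since $u_2=u-u_1\in\dot W^{n/2,2}$, once you check, as you rightly flag, that $u_1 := I_n(F\cdot\text{d}_s g)\in\dot W^{n/2,2}$; this holds because $I_{n/2}$ maps $\mathcal H^1$ into $L^{2,1}\subset L^2$). Writing $f:=(-\Delta)^{n/4}u_2\in L^2$, the duality between $(-\Delta)^{n/4}$ on test functions and the equation for $u_2$ gives $\bigl\lvert\int f\,\psi\,dx\bigr\rvert\le\Lambda\|\psi\|_{L^{2,\infty}}$ for every $\psi=(-\Delta)^{n/4}\phi$, $\phi\in C^\infty_c$. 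One then appeals to the \emph{associate space} (K\"othe dual) identity $(L^{2,\infty})' = L^{2,1}$: the $L^{2,1}$-norm of $f$ is computed by testing against simple functions in the unit ball of $L^{2,\infty}$, and any such simple function $g$ can be approximated in $L^2$ (hence with $\limsup_k\|\psi_k\|_{L^{2,\infty}}\le\|g\|_{L^{2,\infty}}$, since $\|\cdot\|_{L^{2,\infty}}\lesssim\|\cdot\|_{L^2}$) by $\psi_k=(-\Delta)^{n/4}\phi_k$. Passing to the limit using $f\in L^2$ yields $\lvert\int fg\rvert\le\Lambda\|g\|_{L^{2,\infty}}$, hence $f\in L^{2,1}$. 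Incidentally, once this is done, the same computation applied directly to $(-\Delta)^{n/4}u$ (bounding the $F\cdot\text{d}_s g$ term by $\mathcal H^1$--$\mathrm{BMO}$ duality together with $I_{n/2}\colon L^{2,\infty}\to\mathrm{BMO}$) makes the splitting $u=u_1+u_2$ unnecessary. Your final continuity step is correct in conclusion, but the cleanest justification is continuity of translations on the order-continuous space $L^{2,1}$ rather than dominated convergence on approximants of the non-integrable Riesz kernel.
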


\subsection{A Noether Theorem for Lagrangians involving a $\frac{1}{4}$-fractional Laplacian}
For this section, let $\mathscr{M}$ be a smooth closed manifold embedded in $\mathbb{R}^m$.
Let's first consider the energy
\begin{equation}\label{eq_intro_general_energy}
E(u)=\int_{\mathbb{R}^m}L\left(x,u(x), \nabla u(x)\right)dx.
\end{equation}
Here $L$ is a $C^1$ function with at most quadratic growth in the second variable, and $u$ is an element of $H^1\cap L^\infty(\mathbb{R}^n, \mathscr{M})$.
A possible formulation for Noether's theorem for variations in the target reads as follows:
\begin{thm}\label{thm_noether_thm}
(Theorem 1.3.1 in \cite{Helein} ) Let $X$ be a smooth vector field on $\mathscr{M}$, which is an infinitesimal symmetry for $L$, i.e.
\begin{equation}\label{eq_thm_infinitesimal_symmetry_condition}
\frac{\partial L}{\partial y^i}(x, y, A)X^i(y)+\frac{\partial L}{\partial A_\alpha^i}(x, y, A)\frac{\partial X^i}{\partial y^j}(y)A^j_\alpha=0.
\end{equation}

Let $u$ be a critical point in $H^2(\mathbb{R}^n,\mathscr{M})$ of the energy $E$ defined in (\ref{eq_intro_general_energy}). Then
\begin{equation}\label{eq_thm_noether_current}
\text{div}\left(\frac{\partial L}{\partial A}\cdot X(u)\right)=0
\end{equation}
as distributions.
\end{thm}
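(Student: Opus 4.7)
The plan is to reduce the statement to a direct computation combining the Euler--Lagrange equation with the infinitesimal symmetry condition \eqref{eq_thm_infinitesimal_symmetry_condition}. This is the standard classical Noether argument; since the statement only involves ordinary gradients and divergences, no nonlocal machinery is needed.

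First I would write down the Euler--Lagrange equation for $u$: since $u$ is a critical point of $E$ in $H^2(\mathbb{R}^n,\mathscr{M})$, testing the first variation against compactly supported variations tangent to $\mathscr{M}$ yields, in the sense of distributions,
\begin{equation*}
\frac{\partial L}{\partial y^{i}}\bigl(x,u,\nabla u\bigr) \;-\; \partial_{\alpha}\!\left(\frac{\partial L}{\partial A^{i}_{\alpha}}\bigl(x,u,\nabla u\bigr)\right) \;=\; 0
\end{equation*}
for each component $i$ (when $u$ takes values on $\mathscr{M}$, the equation holds up to a multiple of the normal to $\mathscr{M}$, but since we will contract with the vector field $X(u)$ which is tangent to $\mathscr{M}$, the normal part drops out).

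Next I would compute the divergence in \eqref{eq_thm_noether_current} by the product rule:
\begin{equation*}
\partial_{\alpha}\!\left(\frac{\partial L}{\partial A^{i}_{\alpha}}\,X^{i}(u)\right)
\;=\; \partial_{\alpha}\!\left(\frac{\partial L}{\partial A^{i}_{\alpha}}\right) X^{i}(u)
\;+\; \frac{\partial L}{\partial A^{i}_{\alpha}}\,\frac{\partial X^{i}}{\partial y^{j}}(u)\,\partial_{\alpha}u^{j}.
\end{equation*}
Using the Euler--Lagrange equation to replace $\partial_{\alpha}(\partial L/\partial A^{i}_{\alpha})$ by $\partial L/\partial y^{i}$, the right-hand side becomes
\begin{equation*}
\frac{\partial L}{\partial y^{i}}\bigl(x,u,\nabla u\bigr)\,X^{i}(u) \;+\; \frac{\partial L}{\partial A^{i}_{\alpha}}\bigl(x,u,\nabla u\bigr)\,\frac{\partial X^{i}}{\partial y^{j}}(u)\,\partial_{\alpha}u^{j},
\end{equation*}
which is precisely the left-hand side of the infinitesimal symmetry condition \eqref{eq_thm_infinitesimal_symmetry_condition} evaluated at $(x,y,A)=(x,u(x),\nabla u(x))$, hence zero.

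The only delicate point, which I expect to be the main obstacle, is the justification of these pointwise manipulations at the distributional level under the stated regularity: one needs the integrability of $\partial L/\partial A^{i}_{\alpha}(\cdot,u,\nabla u)$ and of the product with $X(u)$, as well as a density argument to test the Euler--Lagrange equation against $X^{i}(u)\,\varphi$ for $\varphi\in C^{\infty}_{c}(\mathbb{R}^{n})$. This is why the assumption $u\in H^{2}$ (together with the $C^{1}$ structure of $L$ and smoothness of $X$) is convenient: it makes $\nabla u$ locally integrable enough for the chain rule to apply and makes $X^{i}(u)\varphi$ an admissible test function. Once these integrability points are checked, the identity above holds pointwise a.e.\ and hence as a distribution, proving \eqref{eq_thm_noether_current}.
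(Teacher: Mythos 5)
Your argument is correct, and it also reaches the conclusion by the dual (``strong form'') route rather than the one the paper sketches. The paper's proof takes a test function $\varphi\in C^{\infty}_{c}(\mathbb{R}^{n})$, solves the ODE $\partial_t u^{\varphi}_{t}=\varphi\,X(u^{\varphi}_{t})$, $u^{\varphi}_{0}=u$ (so that $u^{\varphi}_{t}$ remains $\mathscr{M}$-valued), expands $E(u^{\varphi}_{t})$ to first order in $t$, and uses criticality of $u$ together with the symmetry identity \eqref{eq_thm_infinitesimal_symmetry_condition} to cancel the zeroth-order-in-$\nabla\varphi$ terms, which leaves exactly the weak form of \eqref{eq_thm_noether_current}. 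You instead pass through the strong Euler--Lagrange equation, compute $\operatorname{div}\!\bigl(\partial_{A}L\cdot X(u)\bigr)$ by the Leibniz rule, substitute, and recognize the symmetry identity. These are two faces of the same classical argument. The paper's variational route has a couple of practical advantages: it never needs to write the second-order strong Euler--Lagrange equation, so it only requires $L\in C^{1}$ rather than the additional regularity that $\partial_{\alpha}(\partial_{A_{\alpha}}L(x,u,\nabla u))$ formally demands, and the ODE flow automatically produces tangent variations so there is no normal-component bookkeeping. Your route is more elementary as a pointwise computation, and you correctly flag the two delicate points it creates: the normal part of the Euler--Lagrange operator must be observed to vanish against $X(u)\in T_{u}\mathscr{M}$, and some density/integrability is needed to justify testing against $X^{i}(u)\varphi$. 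Given the stated hypotheses ($u\in H^{2}$, $X$ smooth) these can be handled, so the proof goes through, but the flow argument is the cleaner way to sidestep them, which is presumably why the reference, and the later fractional adaptation in Theorem~\ref{prop_noether_thm_variaton_target_cor}, proceed that way.
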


For the proof, one takes a test function $\phi\in C^\infty_c(\mathbb{R}^n)$ and consider the solution $u^\phi_t$ of the ODE
\begin{equation}
\begin{cases}
u^\phi_0(x)=u(x)\\
\partial_t u^\phi_t(x)=\phi(x)X(u(x)).
\end{cases}
\end{equation}
for any $x\in\mathbb{R}^n$. Plugging $u^\phi_t$ in $L$ and exploiting both equation (\ref{eq_thm_infinitesimal_symmetry_condition}) and the fact that $u$ is a critical point of $L$, one obtain (\ref{eq_thm_noether_current}) in the weak form .\\

An important example of energy of the form (\ref{eq_intro_general_energy}) is given by the Dirichlet energy
\begin{equation}\label{eq_Dirichlet_energy}
E_D(u)=\int_{\mathbb{R}^n}\lvert\nabla u(x)\rvert^2dx,
\end{equation}
for functions $u\in H^1(\mathbb{R}^n,\mathscr{M})$. In particular, if $n=2$ and $\mathscr{M}$ is a sphere (say of dimension $m-1$), F. H\'{e}lein showed by means of Wente's lemma that critical points of $E_D$ in $H^1(\mathbb{R}^n, \mathbb{S}^{m-1})$ are continuous (see \cite{Heleinarticle}).
Motivated by this examples, one could look for analogous for $n=1$. A natural energy to consider in this case would then be
\begin{equation}\label{eq_1_4_Dirichlet_energy}
E(u)=\int_{\mathbb{R}}\lvert (-\Delta)^\frac{1}{4}u(x)\rvert^2dx,
\end{equation}
for functions $u\in H^\frac{1}{2}(\mathbb{R}, \mathbb{S}^{m-1})$. Critical points of (\ref{eq_1_4_Dirichlet_energy}) are called \textbf{half-harmonic maps}. We will sometimes refer to $E$ as half Dirichlet energy.\\
In this case, the argument of Noether theorem for invariance under rotations in the target yields   quantities $\Omega_{ik}$, for $i,k\in \lbrace 1,...m\rbrace$, whose $\frac{1}{2}$-divergence is equal to $0$ (Lemma \ref{lem_direct_proof_divOmega=0}). K. Mazowiecka and A. Schikorra showed in \cite{divcurl} that this fact can be use to obtain a new proof of the continuity of half harmonic maps into spheres (first proved by F. Da Lio and T. Rivière in \cite{3Termscomm}), analogous to the argument used by F. H\'{e}lein for the local case.
In the following, we try to repeat the proof of Noether theorem for energies involving the $\frac{1}{4}$-Laplacian. It turns out, that for critical points of such energies, the $\frac{1}{2}$-divergence of a characteristic quantity vanishes(Theorem \ref{prop_noether_thm_variaton_target_cor}). In the next section we will return to the energy (\ref{eq_1_4_Dirichlet_energy}) and to the problem of the regularity of its critical points.\\
First we introduce the following definitions.

\begin{defn}\noindent
\begin{enumerate}
\item Let $u_t$ be a family of functions in $\dot{H}^\frac{1}{2}(\mathbb{R}^n, \mathscr{M})$, for $t$ in a neighbourhood of $0$. We say that $u_t$ is \textbf{differentiable} in $0$ if there exists $u'\in \dot{H}^\frac{1}{2}(\mathbb{R}^n, \mathbb{R}^m)$ such that
\begin{equation}
u_t=u+tu'+o_{H^\frac{1}{2}}(t)\text{ as }t\to 0.
\end{equation}
We call $u'$ the derivative of $u_t$ in $0$.
\item A function $u\in \dot{H}^\frac{1}{2}(\mathbb{R}^n, \mathscr{M})$ is said to be a \textbf{critical point} of $E$ in $\dot{H}^\frac{1}{2}(\mathbb{R}^m,\mathscr{M})$ with respect to variations in $\mathscr{M}$
if for any family of functions $u_t$ in $\dot{H}^\frac{1}{2}(\mathbb{R}^n, \mathscr{M})$, for $t$ in a neighbourhood of $0$, differentiable in $0$ and such that $u_0=u$, the function $t\mapsto E(u_t)$ defined in a neighbourhood of $0$ is differentiable in $0$ and
\begin{equation}
\frac{d}{dt}\bigg\vert_{t=0} E(u_t)=0.
\end{equation}
\end{enumerate}
\end{defn}
Adapting the argument of Theorem \ref{thm_noether_thm} to Lagrangians involving fractional Laplacians we obtain the following result:
\begin{thm}\label{prop_noether_thm_variaton_target_cor}
Let $L:\mathbb{R}^n\times\mathbb{R}^m\times \mathbb{R}^m\to \mathbb{R}$ be a function in $C^2_{loc}$ such that the second derivatives of $L$ involving the second and third variables are bounded. And assume that there exist constants $C$, $a$, $b$, $c$ such that $\frac{n}{n+\frac{1}{2}}<a\leq b\leq 2\leq c$ and
\begin{equation}\label{eq_prop_noeth_thm_vartar_condition_on_L}
\left\lvert L(x,y,p)-L(x,y,p')\right\rvert\leq C\left( \lvert y-y'\rvert^2+\lvert y-y'\rvert^c+\lvert p-p'\rvert^a+\lvert p-p'\rvert^b \right)
\end{equation}
for any $x\in\mathbb{R}^n$, $y,y', p,p'\in \mathbb{R}^m$. Let $u\in \dot{H}^\frac{1}{2}(\mathbb{R}^n,\mathscr{M})$ and assume that for any $v$ in a $H^\frac{1}{2}(\mathbb{R}^n)$-neighbourhood of $u$ the integral
\begin{equation}\label{eq_prop_definition_of_energy}
E(v):=\int_{\mathbb{R}^n}L\left(x,v(x),(-\Delta)^\frac{1}{4}v(x)\right)dx
\end{equation}
converges absolutely.
Let $X$ be a $C^4$ vector field on $\mathscr{M}$ which is an infinitesimal symmetry for $L$, in the sense that for for its flow $\phi_t$, for $t$ in a neighbourhood of $0$ there holds
\begin{equation}\label{eq_prop_condition_infinitesimal_symmetry}
L\left(x,\phi_t\circ u(x), (-\Delta)^\frac{1}{4}(\phi_t\circ u)(x)\right)=L\left(x,u(x),(-\Delta)^\frac{1}{4}u(x)\right)
\end{equation}
for a.e. $x\in \mathbb{R}^n$.
Assume that the function
\begin{equation}\label{eq_prop_assumption_integrability_of_p_derivative}
L^i_u: \mathbb{R}^n\to \mathbb{R}, \quad x\mapsto D_{p_i}L\left(x,u(x),(-\Delta)^\frac{1}{4}u(x)\right)
\end{equation}
(where $p_i$ denotes the $(n+i)^{th}$ argument of $L$) belongs to $L^2(\mathbb{R}^n)$ for any $i\in \lbrace 1,...,n\rbrace$. Then for any critical point $u$ of the energy $E$ in $H^\frac{1}{2}(\mathbb{R}^n, \mathscr{M})$ there holds
\begin{equation}\label{eq_thm_1_result_Noeththm_vartar_div_free_quantity}
\begin{split}
&(-\Delta)^\frac{1}{4}\left(L_u^i (X^i\circ u)  \right)
-\text{div}_\frac{1}{4}\left(L_u^i(x)\text{d}_\frac{1}{4}(X^i\circ u)(x,y) \right)=0
\end{split}
\end{equation}
(we use implicit summation over repeated indices).
Equivalently
\begin{equation}\label{eq_thm_Noether_thm_vartar_alternative_form}
\text{div}_\frac{1}{2}\left(L_u^i(x)X^i(u(y))-L_u^i(y)X^i(u(x))\right)=0.
\end{equation}
\end{thm}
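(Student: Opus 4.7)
The strategy is to adapt the classical proof of Noether's theorem (Theorem \ref{thm_noether_thm}) to the nonlocal setting by modulating the flow of $X$ with compactly supported test functions. Denoting the flow of $X$ by $\Phi_s$ (to avoid conflict with test functions), for $\psi\in C^\infty_c(\mathbb{R}^n)$ and $t$ in a neighbourhood of $0$ I would consider the variation $u_t(x) := \Phi_{t\psi(x)}(u(x))$, which takes values in $\mathscr{M}$ and whose derivative in $\dot{H}^{\frac{1}{2}}$ at $t=0$ is $\psi(X\circ u)$; the $H^{\frac{1}{2}}$-differentiability of $t\mapsto u_t$ uses the $C^4$-regularity of $X$ together with the compact support of $\psi$. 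The growth hypothesis (\ref{eq_prop_noeth_thm_vartar_condition_on_L}) and the absolute convergence of $E$ near $u$ make $t\mapsto E(u_t)$ differentiable at $0$, so the criticality of $u$ gives
\begin{equation*}
\int_{\mathbb{R}^n}\Bigl[D_yL\cdot\psi(x)X(u(x)) + L_u^i(x)(-\Delta)^{\frac{1}{4}}\bigl(\psi(X^i\circ u)\bigr)(x)\Bigr]dx=0.
\end{equation*}

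Next I would differentiate the symmetry condition (\ref{eq_prop_condition_infinitesimal_symmetry}) at $t=0$ to obtain the pointwise identity $D_yL\cdot X(u(x)) + L_u^i(x)(-\Delta)^{\frac{1}{4}}(X^i\circ u)(x)=0$ for a.e.\ $x$. Multiplying by $\psi(x)$ and subtracting from the previous display kills the $D_yL$ contribution, leaving
\begin{equation*}
\int_{\mathbb{R}^n}L_u^i(x)\Bigl[(-\Delta)^{\frac{1}{4}}\bigl(\psi(X^i\circ u)\bigr)(x) - \psi(x)(-\Delta)^{\frac{1}{4}}(X^i\circ u)(x)\Bigr]dx=0.
\end{equation*}
Applying the Leibniz-type formula (\ref{eq_discussion_introduction_Leibnitz_rule_fraclaps}) rewrites the bracket as $(X^i\circ u)(x)(-\Delta)^{\frac{1}{4}}\psi(x)$ minus the mixed integral $\int d_{\frac{1}{4}}(X^i\circ u)(x,y)\,d_{\frac{1}{4}}\psi(x,y)\,\lvert x-y\rvert^{-n}\,dy$. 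Using $L_u^i\in L^2$ and self-adjointness, the first piece yields $\int(-\Delta)^{\frac{1}{4}}(L_u^i X^i(u))\psi\,dx$; by Fubini, justified via Lemma \ref{lem_cont_grad_div} and Cauchy--Schwarz (both $d_{\frac{1}{4}}$-gradients lie in $L^2(\bigwedge^1_{od}\mathbb{R}^n)$), the second piece is exactly $\text{div}_{\frac{1}{4}}\bigl(L_u^i(x)\,d_{\frac{1}{4}}(X^i\circ u)(x,y)\bigr)[\psi]$. Since $\psi$ is arbitrary, (\ref{eq_thm_1_result_Noeththm_vartar_div_free_quantity}) follows.

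To obtain the equivalent form (\ref{eq_thm_Noether_thm_vartar_alternative_form}), I would test $\text{div}_{\frac{1}{2}}$ of the antisymmetric expression against $\psi$: the integrand is invariant under $x\leftrightarrow y$, so exchanging variables in the second term yields $2\int\!\!\int L_u^i(x)X^i(u(y))(\psi(x)-\psi(y))\lvert x-y\rvert^{-n-\frac{1}{2}}\,dx\,dy$. Decomposing $X^i(u(y))=X^i(u(x))-(X^i(u(x))-X^i(u(y)))$ splits this into a multiple of $\int L_u^i X^i(u)(-\Delta)^{\frac{1}{4}}\psi\,dx$ (via the pointwise formula for $(-\Delta)^{\frac{1}{4}}$) plus a mixed term equal to $2\,\text{div}_{\frac{1}{4}}(L_u^i\,d_{\frac{1}{4}}(X^i\circ u))[\psi]$; matching constants with $C_{n,\frac{1}{4}}$ shows this is a nonzero multiple of (\ref{eq_thm_1_result_Noeththm_vartar_div_free_quantity}). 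The main technical obstacle throughout is the rigorous justification of the $H^{\frac{1}{2}}$-differentiability of $E$ along the modulated flow $u_t$, which is where (\ref{eq_prop_noeth_thm_vartar_condition_on_L}) and the assumption $L_u^i\in L^2$ are used; the remaining manipulations are algebraic rearrangements based on the Leibniz rule and the definitions of $d_s$ and $\text{div}_s$.
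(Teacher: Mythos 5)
Your proposal follows essentially the paper's architecture: the modulated variation $u_t(x)=\Phi_{t\psi(x)}(u(x))$, the $H^{1/2}$-differentiability of $t\mapsto u_t$ at $0$ with derivative $\psi(X\circ u)$, the Leibniz-type identity (\ref{eq_discussion_introduction_Leibnitz_rule_fraclaps}) to split $(-\Delta)^{1/4}(\psi(X\circ u))$, and the passage from (\ref{eq_thm_1_result_Noeththm_vartar_div_free_quantity}) to (\ref{eq_thm_Noether_thm_vartar_alternative_form}) via the pointwise formula for $(-\Delta)^{1/4}$ and an exchange of variables. The one genuine difference is how the symmetry is exploited. You differentiate (\ref{eq_prop_condition_infinitesimal_symmetry}) at $t=0$ (which requires Lemma \ref{lem_derivative_of_composition_fraclap_14} for the $t$-differentiability of $(-\Delta)^{1/4}(\phi_t\circ u)(x)$) to get the pointwise identity $D_yL\cdot X(u) + L_u^i\,(-\Delta)^{1/4}(X^i\circ u)=0$ a.e., then multiply by $\psi$ and subtract. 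The paper instead keeps the Lagrangian intact, writes the ``base'' part of the Taylor expansion as $L(x,\overline{u_t},(-\Delta)^{1/4}\overline{u_t})+t^2 f_u(x)$ with $\overline{u_t}=\phi_t\circ u$, invokes (\ref{eq_prop_condition_infinitesimal_symmetry}) to replace $L(x,\overline{u_t},\cdot)$ by $L(x,u,\cdot)$, and substitutes $t\to t\psi(x)$, so that the $D_yL$-piece and the $\psi L_u^i(-\Delta)^{1/4}(X^i\circ u)$-piece are never separated out. Your subtraction is legitimate --- the $L_u^i(-\Delta)^{1/4}(X^i\circ u)$-piece is in $L^1$ by Cauchy--Schwarz since both factors are $L^2$, and the matching $D_yL$-piece is then automatically integrable --- but it adds an explicit split-form first variation as an intermediate step; the paper's route avoids committing to that split.

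What you compress into ``the growth hypothesis... make $t\mapsto E(u_t)$ differentiable at $0$'' is where the bulk of the paper's work actually lies: the Gronwall estimate for the Gagliardo seminorm of $u_t$ yielding $b_t=o_{H^{1/2}\cap L^\infty}(t)$ with $b_t$ uniformly compactly supported; the tail estimate $\mathds{1}_{\Gamma^c}\,|(-\Delta)^{1/4}b_t|^s=o_{L^1}(t)$ (this is where the threshold $a>n/(n+\frac{1}{2})$ is used, since $(-\Delta)^{1/4}b_t$ is only $L^2$ globally but decays like $\operatorname{dist}(x,\Omega)^{-(n+1/2)}$ outside a compact set); and the absorption of the second-order Taylor remainder via the boundedness of the mixed second derivatives of $L$. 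You correctly flag these as the technical obstacle, so the proposal is a sound sketch; a full proof would have to reproduce those estimates, and the exponent constraints in (\ref{eq_prop_noeth_thm_vartar_condition_on_L}) are not cosmetic --- they are exactly calibrated for the tail estimate.
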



\begin{rem}\label{rem_technical_assumtions_noether_thm}
The assumption (\ref{eq_prop_assumption_integrability_of_p_derivative}) is satisfied in most cases of interest. For example, if $L$ satisfies the estimate
\begin{equation}\label{eq_rem_condition_for_integrability_derivative_p}
\lvert D_{p_i}(x, Y, Z)\rvert\leq C \lvert Z\rvert\text{ for any }(x, Y, Z)\in \mathbb{R}\times\mathbb{R}^n\times\mathbb{R}^n,
\end{equation}
for some constant $C$, for any $i\in \lbrace1,...,n\rbrace$, then $\lvert L_u(x)\rvert\leq (-\Delta)^\frac{1}{4}u(x)$ for any $x\in\mathbb{R}^n$ and therefore $L_u\in L^2(\mathbb{R}^n)$.
For instance, condition (\ref{eq_rem_condition_for_integrability_derivative_p}) is satisfied by the Lagrangian $L(x, Y, Z)=Z^2$, which corresponds to the half Dirichlet energy.
\end{rem}

\begin{proof}
Let $X$ be as in the lemma and let $\psi\in C^\infty_c(\mathbb{R}^n)$.
For any $x\in\mathbb{R}^n$ we now consider the Cauchy problem
\begin{equation}\label{eq_proof_prop_Cauchy_prop_PicLind}
\begin{cases}
\gamma (0)&=u(x)\\
\dot{\gamma}(t)&=\psi(x)X(\gamma(t)),
\end{cases}
\end{equation}
we set $g(x,y)=\psi(x)X(y)$ and we observe that for any $x\in\mathbb{R}^n$, $g$ is Lipschitz as a function of the second variable.
Therefore, given a closed interval $[a,b]$ containing $0$ in its interior, by Picard-Lindel\"{o}f theorem there exist a unique solution $\gamma$ of (\ref{eq_proof_prop_Cauchy_prop_PicLind}), and $\gamma\in C^1([a,b])$. We denote such solution $u_t(x)$.
We claim that
\begin{equation}\label{eq_proof_prop_first_claim_asymptotic_ut}
u_t=u+t\psi (X\circ u)+b_t,
\end{equation}
where $b_t=o_{H^\frac{1}{2}\cap L^\infty}(t)$ as $t\to 0$, and $b_t$ has compact support (in $x$) uniformly for $t$ in a neighbourhood of $0$. This, in particular, will imply that $u_t$ is differentiable in $0$ with derivative $\psi X(u)$.
To prove the claim, we observe that for any $x\in \mathbb{R}^n$
\begin{equation}
\partial_t^2u_t(x)=\psi(x)dX(u_t(x))\partial_t u_t(x),
\end{equation}
and this expression is uniformly bounded (since $\partial_t u$ satisfies (\ref{eq_proof_prop_Cauchy_prop_PicLind})).
This implies that for any $x,y\in\mathbb{R}^n$
\begin{equation}\label{eq_proof_prop_estimate_to_bound_Gagliardo_for_asymptotic}
\begin{split}
\left\lvert \partial_t^2(u_t(x)-u_t(y))\right\rvert\leq & \lVert \psi\rVert_{L^\infty}\lVert dX\rVert_{L^\infty}\lvert \partial_t u(x)-\partial_t u_t(y)\rvert\\
&+\lVert \psi\rVert_{L^\infty}\left\lVert \partial_t u\right\rVert_{L^\infty}[dX]_{Lip}\lvert u_t(x)-u_t(y)\rvert\\
&+\lVert dX\rVert_{L^\infty}\left\lVert \partial_t u\right\rVert_{L^\infty}\lvert \psi(x)-\psi(y)\rvert,
\end{split}
\end{equation}
and by (\ref{eq_proof_prop_Cauchy_prop_PicLind})
\begin{equation}
\lvert \partial_t u(x)-\partial_t u(y)\rvert\leq \lVert \psi\rVert_{L^\infty}[X]_{Lip}\lvert u_t(x)-u_t(y)\rvert+\lVert X\rVert_{L^\infty}\lvert \psi(x)-\psi(y)\rvert.
\end{equation}
Now by Taylor's theorem we have that for $\varepsilon>0$, for any $x,y\in \mathbb{R}^n$
\begin{equation}\label{eq_proof_prop_estimate_to_bound_Gagliardo_seminorm}
\begin{split}
&\left\lvert [u_{t}(x)-u(x)-t\psi(x)X(u(x))]-[u_{t}(y)-u(y)-t\psi(y)X(u(y))]\right\rvert\\
\leq &\frac{t^2}{2}\max_{\lvert s\rvert\leq \varepsilon} \left\lvert \partial_s^2(u_s(x)-u_s(y))\right\rvert
\end{split}
\end{equation}
whenever $\lvert t\rvert\leq \varepsilon$. Therefore, applying estimate (\ref{eq_proof_prop_estimate_to_bound_Gagliardo_seminorm}) to the Gagliardo seminorm of $u_{t}-u-t\psi X(u)$ we obtain for any $t$ with $\lvert t\rvert\leq \varepsilon$
\begin{equation}
\begin{split}
&\left[u_{t}-u-t\psi X(u)\right]_{W^{\frac{1}{2},2}}^2\\
=&\int_{\mathbb{R}^n}\int_{\mathbb{R}^n}\frac{\left\lvert [u_{t}(x)-u(x)-t\psi(x)X(u(x))]-[u_{t}(y)-u(y)-t\psi(y)X(u(y))]\right\rvert^2}{\lvert x-y\rvert^{n+1}}dxdy\\
\leq& C t^4\left[ \int_{\mathbb{R}^n}\int_{\mathbb{R}^n}\frac{\max_{\lvert s\rvert\leq \varepsilon}\lvert u_s(x)-u_s(y)\rvert^2}{\lvert x-y\rvert^{n+1}}dxdy+ \int_{\mathbb{R}^n}\int_{\mathbb{R}^n}\frac{\lvert \psi(x)-\psi(y)\rvert^2}{\lvert x-y\rvert^{n+1}}dxdy\right]\\
\leq&Ct^4\left[ \sup_{\lvert s\rvert\leq \varepsilon}[u_s]_{W^{\frac{1}{2},2}}^2+[\psi]_{W^{\frac{1}{2},2}}^2\right]
\end{split}
\end{equation}
for some constant $C$ depending only on $\psi$ and $X$.
We claim that $[u_t]_{W^{\frac{1}{2},2}}^2$ is bounded for $t$ in a neighbourhood of $0$.
For this, it is enough to show that there exist constants $C_1$, $C_2$ depending only on $\psi$, $X$ such that
\begin{equation}\label{eq_proof_prop_estimate_Gagliardo_norm}
\lvert u_t(x)-u_t(y)\rvert\leq C_1\left(\lvert \psi(x)-\psi(y)\rvert+\lvert u(x)-u(y)\rvert\right) e^{C_2t}.
\end{equation}
Applying estimate (\ref{eq_proof_prop_estimate_Gagliardo_norm}) to the Gagliardo seminorm of $u_t$ one see that it is in fact bounded for $t$ in a neighbourhood of $0$.
To show (\ref{eq_proof_prop_estimate_Gagliardo_norm}) we argue as follows.
For $x,y\in\mathbb{R}^n$, $t\in\mathbb{R}$ define $\delta_{x,y}(t):=u_t(x)-u_t(y)$. Then
\begin{equation}
\delta_{x,y}'(t)=\partial_t u_t(x)- \partial_t u_t(y)=g(x, u_t(x))-g(y, u_t(y)).
\end{equation}
Then, for $t\in [a,b]$
\begin{equation}
\delta_{x,y}(t)=\delta_{x,y}(0)+\int_0^tg(x, u_s(x))-g(y, u_s(y))ds
\end{equation}
and therefore
\begin{equation}
\lvert \delta_{x,y}(t)\rvert \leq \lvert \delta_{x,y}(0)\rvert+\lvert\psi(x)-\psi(y)\rvert \int_0^t\lvert X(u_s(y))\rvert ds+\lVert \psi\rVert_{L^\infty}[X]_{Lip}\int_0^t\lvert \delta_{x,y}(s)\rvert ds.
\end{equation}
Thus, by Gronwall Lemma
\begin{equation}
\lvert \delta_{x,y}\rvert\leq C_1\left(\lvert \delta_{x,y}(0)\rvert+\lvert \psi(x)-\psi(y)\rvert \right)e^{C_2t}
\end{equation}
for some constants $C_1$, $C_2$ depending only on $\psi$ and $X$. This can be rewritten as
\begin{equation}
\lvert u_t(x)-u_t(y)\rvert \leq C_1\left(\lvert u(x)-u(y)\rvert+\lvert \psi(x)-\psi(y)\rvert\right) e^{C_2t}
\end{equation}
as desired. We conclude that $[b_t]_{\dot{W}^{\frac{1}{2},2}}= o_{\dot{H}^\frac{1}{2}}(t)$ as $t\to 0$.\\
On the other hand as we remarked above, $\partial_t^2 u_t(x)$ is bounded uniformly in $x$ and $t$. Therefore, by Taylor's theorem, for $x\in \mathbb{R}^n$
\begin{equation}\label{eq_proof_prop_Linf_estimate}
\lvert u_t(x)-u(x)-t\psi X(u(x))\rvert\leq \frac{t^2}{2}\left\lVert\partial_t^2 u_t\right\rVert_{L^{\infty}(\mathbb{R}^n\times\mathbb{R})}.
\end{equation}
This shows that $b_t=o_{L^\infty}(t)$ as $t\to 0$.
Finally we remark that $b_t=u_t(x)-u(x)-t\psi X(u(x))$ is supported in $\text{supp}(\psi)$ for any $t\in \mathbb{R}$ for which $u_t$ is defined, therefore(\ref{eq_proof_prop_Linf_estimate}) also implies that $b_t=o_{L^2}(t)$ as $t\to 0$. This concludes the proof of (\ref{eq_proof_prop_first_claim_asymptotic_ut}).\\
Next, let $\Omega:=\text{supp}(\psi)$ and let $\Gamma$ be a compact subset of $\mathbb{R}^n$ such that $\Omega\subset \Gamma$ and $\text{dist}(\Omega, \partial \Gamma)>1$.
We claim that for any $t\in (-\varepsilon, \varepsilon)$ , for any $x\in \Gamma^c$ there holds
\begin{equation}\label{eq_proof_thm_estimate_fraclap_outside_support_claim}
\left\lvert (-\Delta)^\frac{1}{4}b_t(x)\right\rvert\leq \lvert \Omega\rvert\frac{\lVert b_t\rVert_{L^\infty}}{\text{dist}(x, \Omega)^{n+\frac{1}{2}}}.
\end{equation}
In fact, for any $t\in (-\varepsilon, \varepsilon)$ , for any $x\in \Gamma^c$
\begin{equation}
(-\Delta)^\frac{1}{4}b_t(x)=\int_{\mathbb{R}^n}\frac{-b_t(y)}{\lvert x-y\rvert^{n+\frac{1}{2}}}dy
\end{equation}
and since, for any $x\in \Omega\supset\text{supp}(b_t)$, there holds $\lvert x-y\rvert\geq \text{dist}(x, \Omega)$, estimate (\ref{eq_proof_thm_estimate_fraclap_outside_support_claim}) follows.
Since $b_t=o_{L^\infty}(t)$ as $t\to 0$ estimate (\ref{eq_proof_thm_estimate_fraclap_outside_support_claim}) implies that for any $s> \frac{n}{n+\frac{1}{2}}$ there holds
\begin{equation}\label{eq_proof_thm_1_estimate_fraclap_rest_outside_Gamma}
\mathds{1}_{\Gamma^c}\left\lvert (-\Delta)^\frac{1}{4}b_t\right\rvert^s=o_{L^1}(t)\quad \text{as }t\to 0.
\end{equation}
We also observe that since $b_t=o_{H^\frac{1}{2}}(t)$, there holds $(-\Delta)^\frac{1}{4}b_t=o_{L^2}(t)$ as $t\to 0$.
Now we compute
\begin{equation}\label{eq_proof_prop_expansion_lagrangian}
\begin{split}
&E\left(u+t\psi X(u)+b_t\right)\\
=&\int_{\mathbb{R}^n}L\left(x,u(x)+t\psi(x)X(u(x))+b_t,\right.\\
 &\qquad\quad\left.(-\Delta)^\frac{1}{4}u(x)+t\psi(x)(-\Delta)^\frac{1}{4}(X\circ u)(x)+(-\Delta)^\frac{1}{4}b_t\right)dx\\
=&\int_{\mathbb{R}^n}L\left(x, u(x)+t\psi(x) X(u(x)), (-\Delta)^\frac{1}{4}u(x)+ t(-\Delta)^\frac{1}{4}(\psi (X\circ u))(x)\right) dx+o(t).
\end{split}
\end{equation}
For the last step, one can use the following argument:
By estimate (\ref{eq_prop_noeth_thm_vartar_condition_on_L}), the distance between the integrands in the second and third expressions can be bounded by
\begin{equation}
C\left(\lvert b_t\rvert^2+\lvert b_t\rvert^c +\left\lvert(-\Delta)^\frac{1}{4}b_t\right\rvert^a+\left\lvert (-\Delta)^\frac{1}{4}b_t\right\rvert^b\right)
\end{equation}
for any $x\in \mathbb{R}^n$. Since $b_t=o_{L^2\cap L^\infty}(t)$, $\lvert b_t\rvert^2+\lvert b_t\rvert^c=o_{L^1}$ as $t\to 0$. Moreover, by estimate (\ref{eq_proof_thm_1_estimate_fraclap_rest_outside_Gamma}) and the fact that $(-\Delta)^\frac{1}{4}b_t=o_{L^2}(t)$, $\left\lvert(-\Delta)^\frac{1}{4}b_t\right\rvert^a+\left\lvert (-\Delta)^\frac{1}{4}b_t\right\rvert^b=o_{L^1}(t)$ as $t\to 0$.\\
Now we observe that by the "Leibnitz rule" (\ref{eq_discussion_introduction_Leibnitz_rule_fraclaps}), for a.e. $x\in\mathbb{R}^n$
\begin{equation}\label{eq_proof_prop_product_fraclap_l2}
\begin{split}
&(-\Delta)^\frac{1}{4}\left(\psi (X\circ u)\right)(x)-(-\Delta)^\frac{1}{4}\psi(x)X(u(x))\\
=&(-\Delta)^\frac{1}{4}(X\circ u)(x)\psi(x)-\int_{\mathbb{R}^n}\text{d}_\frac{1}{4}\psi(x,y)\text{d}_\frac{1}{4}(X\circ u)(x,y)\frac{dy}{\lvert x-y\rvert^n}.
\end{split}
\end{equation}
Therefore, by Taylor's Theorem, we can rewrite (\ref{eq_proof_prop_expansion_lagrangian}) as
\begin{equation}\label{eq_proof_prop_expansion_lagrangian_2}
\begin{split}
&\int_{\mathbb{R}^n}L\left(x,u(x)+t\psi(x)X(u(x)), (-\Delta)^\frac{1}{4}u(x)+t\psi(x)(\Delta)^\frac{1}{4}(X\circ u)(x)\right)dx\\
&+t\int_{\mathbb{R}^n} D_{p}L \left(x, u(x)+t\psi(x)X(u(x)), (-\Delta)^\frac{1}{4}u(x)+t\psi(x)(-\Delta)^\frac{1}{4}(X\circ u)(x)\right)\\
&\phantom{+t\int_{\mathbb{R}^n}}\cdot \left( (-\Delta)^\frac{1}{4}\psi(x)X(u(x))-\int_{\mathbb{R}^n}\text{d}_\frac{1}{4}\psi(x,y)\text{d}_\frac{1}{4}(X\circ u)(x,y)\frac{dy}{\lvert x-y\rvert^n}\right)dx\\
&+\frac{t^2}{2}\int_{\mathbb{R}^n} D_{p}^2L\left(x, u(x)+t\psi(x)X(u(x)),\phantom{(-\Delta)^\frac{1}{4}u(x)}\right.\\
&\phantom{+\frac{t^2}{2}\int_{\mathbb{R}^n} D_{p_i}^2L\bigg(}\left. (-\Delta)^\frac{1}{4}u(x)+t\psi(x)(-\Delta)^\frac{1}{4}(X\circ u)(x)+\xi_x\right)\\
&\phantom{+t\int_{\mathbb{R}^n}}\cdot \left( (-\Delta)^\frac{1}{4}\psi(x)X(u(x))-\int_{\mathbb{R}^n}\text{d}_\frac{1}{4}\psi(x,y)\text{d}_\frac{1}{4}(X\circ u)(x,y)\frac{dy}{\lvert x-y\rvert^n}\right)^2dx\\&+o(t)
\end{split}
\end{equation}
for some vectors $\xi_x$ depending on $x$.
Here, by an abuse of notation, we denoted with a dot the action of the Hessian $D^2_pL$ on two (equal) vectors.
As $D_p^2L$ is assumed to be bounded, and since both $(-\Delta)^\frac{1}{4}\psi (X\circ u)$ and
\begin{equation}
\int_{\mathbb{R}^n}\text{d}_\frac{1}{4}\psi(x,y)\text{d}_\frac{1}{4}(X\circ u)(x,y)\frac{dy}{\lvert x-y\rvert^n}
\end{equation}
lie in $L^2(\mathbb{R}^n)$, the last term in (\ref{eq_proof_prop_expansion_lagrangian_2}) can be absorbed in $o(t)$.\\
Finally we observe that since the second derivatives of $L$ involving the second and the third variables are bounded, for any $x\in \mathbb{R}^n$, $t$ in a neighbourhood of $0$ we obtain
\begin{equation}\label{eq_proof_prop_last_estimate_distance}
\begin{split}
&\left\lvert D_pL\left(x,u(x)+t\psi(x)X(u(x)), (-\Delta)^\frac{1}{4}u(x)+t\psi(x)(-\Delta)^\frac{1}{4}(X\circ u)(x)\right)\right.\\
&\phantom{\vert}\left.-D_pL\left(x,u(x),(-\Delta)^\frac{1}{4}u(x)\right)\right\rvert\\
\leq & t [D_pL]_{Lip}\left(\psi(x)^2\left\lvert X(u(x))\right\rvert^2+\psi(x)^2\left\lvert(-\Delta)^\frac{1}{4}X(u(x))\right\rvert^2\right)^\frac{1}{2}.
\end{split}
\end{equation}
Since the right hand side of (\ref{eq_proof_prop_last_estimate_distance}) lies in $L^2(\mathbb{R}^n)$, we conclude that
\begin{equation}\label{eq_proof_prop_first_term_final_comparison}
\begin{split}
&E\left(u+t\psi (X\circ u)+b_t\right)\\
=&\int_{\mathbb{R}^n}L\left(x,u(x)+t\psi(x)X(u(x)), (-\Delta)^\frac{1}{4}u(x)+t\psi(x)(\Delta)^\frac{1}{4}(X\circ u)(x)\right)dx\\
&+t\int_{\mathbb{R}^n} D_{p_i}L \left(x, u(x), (-\Delta)^\frac{1}{4}u(x)\right)\cdot \left( (-\Delta)^\frac{1}{4}\psi(x)X^i(u(x))\right.\\
&\left.\phantom{+t\int_{\mathbb{R}^n}}-\int_{\mathbb{R}^n}\text{d}_\frac{1}{4}\psi(x,y)\text{d}_\frac{1}{4}(X^i\circ u)(x,y)\frac{dy}{\lvert x-y\rvert^n}\right)dx+o(t).
\end{split}
\end{equation}
On the other hand, let $\phi_t$ be the flow of $X$ and let $\overline{u_t}(x):=\phi_t(u(x))$ for any $x\in \mathbb{R}^n$.
By Lemma \ref{lem_derivative_of_composition_fraclap_14} the map $t\mapsto (-\Delta)^\frac{1}{4}\overline{u_t}(x)$ is of class $C^1$ for a.e. $x\in \mathbb{R}^n$, and
\begin{equation}
\partial_t\bigg\vert_{t=0}\left[(-\Delta)^\frac{1}{4}\overline{u_t}(x)\right]=(-\Delta)^\frac{1}{4}(X\circ u)(x).
\end{equation}

Then the derivative in $t$ at $0$ of
\begin{equation}\label{eq_proof_prop_difference_for_infinitesimal_symmetry}
\begin{split}
L\left(x, u(x)+tX(u)(x), (-\Delta)^\frac{1}{4}(u+tX(u))(x)\right)-L\left(x, \overline{u_t}(x), (-\Delta)^\frac{1}{4}\overline{u_t}(x)\right)\\
\end{split}
\end{equation}
exists and is equal to $0$ for a.e. $x\in\mathbb{R}^n$.
By Taylor's theorem, for a.e. $x\in\mathbb{R}^n$ (\ref{eq_proof_prop_difference_for_infinitesimal_symmetry}) can be rewritten as
\begin{equation}\label{eq_proof_prop_difference_for_infinitesimal_symmetry_rest}
\begin{split}
&\frac{t^2}{2}D^2L \left(x, u(x)+s_{x,t}X(u(x)), (-\Delta)^\frac{1}{4}u(x)+s_{x,t}(-\Delta)^\frac{1}{4}(X\circ u)(x)\right)\\
&\cdot \left(X(u(x)), (-\Delta)^\frac{1}{4}(X\circ u)(x)\right)^2-\frac{t^2}{2}\partial^2_t\bigg\vert_{t=s_{x,t}}L\left(x, \overline{u_t}(x), (-\Delta)^\frac{1}{4}\overline{u_t}(x)\right)
\end{split}
\end{equation}
for some $s_{x,t}$ between $0$ and $t$ depending on $x$ and $t$, where again, by abuse of notation, we denoted with a do the action of the Hessian $D^2L$ on two (equal) vectors. By assumption (\ref{eq_prop_condition_infinitesimal_symmetry}), the last term in (\ref{eq_proof_prop_difference_for_infinitesimal_symmetry_rest}) vanishes. Therefore (\ref{eq_proof_prop_difference_for_infinitesimal_symmetry_rest}) can be bounded by
\begin{equation}\label{eq_proof_prop_difference_for_infinitesimal_symmetry_rest_bound}
\begin{split}
&\frac{t^2}{2}\left\lVert D^2 L\right\rVert_{L^\infty}\left\lvert \left(X(u(x)), (-\Delta)^\frac{1}{4}(X\circ u)(x)\right)\right\rvert^2.
\end{split}
\end{equation}
for any $x\in\mathbb{R}^n$. Since $u$ and $(-\Delta)^\frac{1}{4}(X\circ u)$ are locally square integrable, we can rewrite (\ref{eq_proof_prop_difference_for_infinitesimal_symmetry_rest_bound}) as $t^2f_u(x)$ for any $x\in \mathbb{R}^n$, where $f_u$ is a locally integrable function on $\mathbb{R}^n$.
Therefore, for any $x\in \mathbb{R}^n$
\begin{equation}
\begin{split}
&L\left(x, u(x)+tX(u(x)), (-\Delta)^\frac{1}{4}u(x)+t(-\Delta)^\frac{1}{4}(X\circ u)(x)\right)\\
=&L\left(x, \overline{u_t}(x), (-\Delta)^\frac{1}{4}\overline{u_t}(x)\right)+t^2f_u(x)\\
=&L\left(x, u(x), (-\Delta)^\frac{1}{4}u(x)\right)+t^2f_u(x),
\end{split}
\end{equation}
for $t$ in a neighbourhood of $0$. The last equality follow from the assumption (\ref{eq_prop_condition_infinitesimal_symmetry}).
Thus, substituting $t$ with $t\psi(x)$ for any $x\in\mathbb{R}^n$ we obtain (for $t$ in a possibly smaller neighbourhood of $0$)
\begin{equation}\label{eq_proof_prop_second_term_final_comparison}
\begin{split}
&\int_{\mathbb{R}^n}L(x, u+t\psi(x)X(u(x)), (-\Delta)^\frac{1}{4}u(x)+t\psi(x)(-\Delta)^\frac{1}{4}(X\circ u)(x))dx\\
=&\int_{\mathbb{R}^n}L(x,u(x), (-\Delta)^\frac{1}{4}u(x))dx+t^2\int_{\text{supp}(\psi)}\psi(x)^2 f_u(x)dx\\
=&E(u)+o(t).
\end{split}
\end{equation}
Comparing (\ref{eq_proof_prop_first_term_final_comparison}) and (\ref{eq_proof_prop_second_term_final_comparison}) we obtain
\begin{equation}
\begin{split}
E(u_t)=&E(u+t\psi (X\circ u)+o(t))\\
=&E(u)+t\int_{\mathbb{R}^n} D_{p_i}L(u(x),(-\Delta)^\frac{1}{4}u(x))\left((-\Delta)^\frac{1}{4}\psi(x)X^i(u(x))\right.\\
&\left.-\int_{\mathbb{R}^n}\text{d}_\frac{1}{4}\psi(x,y)\text{d}_\frac{1}{4}(X^i\circ u)(x,y)\frac{dy}{\lvert x-y\rvert^n}\right)dx+o(t)
\end{split}
\end{equation}
Finally, since $u$ is a critical point of $E$ in $H^\frac{1}{2}(\mathbb{R}^n, \mathscr{M})$, and since, by (\ref{eq_proof_prop_first_claim_asymptotic_ut}), $u_t$ is differentiable in $0$, $E(u_t)=E(u)+o(t)$ as $t\to 0$.
Therefore
\begin{equation}\label{eq_proof_prop_statement_without_principal_value}
\begin{split}
0=&\int_{\mathbb{R}^n} D_{p_i}L(x,u(x),(-\Delta)^\frac{1}{4}u(x))\cdot\left((-\Delta)^\frac{1}{4}\psi(x)X^i(u(x))\right.\\
&\left.-\int_{\mathbb{R}^n}\text{d}_\frac{1}{4}\psi(x,y)\text{d}_\frac{1}{4}(X^i\circ u)(x,y)\frac{dy}{\lvert x-y\rvert^n}\right)dx\\
=&\int_{\mathbb{R}^n} L^i_u(x)(-\Delta)^\frac{1}{4}\psi(x)X^i(u(x))dx\\
&-\int_{\mathbb{R}^n}\int_{\mathbb{R}^n}L^i_u(x)\text{d}_\frac{1}{4}\psi(x,y)\text{d}_\frac{1}{4}(X^i\circ u)(x,y)\frac{dxdy}{\lvert x-y\rvert^n}.
\end{split}
\end{equation}
We remark that since $L_u^i\in L^2(\mathbb{R}^n)$, both integrals in (\ref{eq_proof_prop_statement_without_principal_value}) converge absolutely.
As this holds for any $\psi\in C^\infty_c(\mathbb{R}^n)$, in the sense of distributions
\begin{equation}
\begin{split}
&(-\Delta)^\frac{1}{4}\left(D_{p_i}L\left(x, u(x), (-\Delta)^\frac{1}{4}u(x)\right)\cdot X^i(u(x))  \right)\\
&-\text{div}_\frac{1}{4}\left( D_{p_i}L\left(x, u(x), (-\Delta)^\frac{1}{4}u(x)\right)\text{d}_\frac{1}{4}(X^i\circ u)(x,y) \right)=0.
\end{split}
\end{equation}
Finally we observe that rewriting the term $(-\Delta)^\frac{1}{4}\psi$ in (\ref{eq_proof_prop_statement_without_principal_value}) by means of the pointwise formula (\ref{eq_prel_pointwise_definition_fraclap}) we obtain for any $\phi\in C^\infty_c(\mathbb{R}^n, \mathbb{R})$
\begin{equation}
\begin{split}
0=&\int_{\mathbb{R}^n}\int_{\mathbb{R}^n}L_u^i(x)X^i(u(x))\frac{\phi(x)-\phi(y)}{\lvert x-y\rvert^{n+\frac{1}{2}}}\\
&\phantom{\int_{\mathbb{R}^n}\int_{\mathbb{R}^n}}-\frac{L_u^i(x)(X^i(u(x))-X^i(u(y))(\phi(x)-\phi(y))}{\lvert x-y\rvert^{n+\frac{1}{2}}}(\phi(x)-\phi(y))dxdy\\
=&\frac{1}{2}\int_{\mathbb{R}^n}\int_{\mathbb{R}^n}\left(L_u^i(x)X^i(u(y))-L_u^i(y)X^i(u(x))\right)(\phi(x)-\phi(y))dxdy
\end{split}
\end{equation}
Here we observe that also the first integral converges absolutely on $\mathbb{R}^n\times\mathbb{R}^n$. In fact, by H\"{o}lder's inequality, for $i\in \lbrace 1,...n\rbrace$
\begin{equation}\label{eq_proof_prop_principal_value_converges_absolutely}
\begin{split}
&\int_{\mathbb{R}^n}\int_{\mathbb{R}^n}\frac{L_u^i(x)^2(\psi(x)-\psi(y))^2}{\lvert x-y\rvert^{n+\frac{1}{2}}}dxdy\\
\leq &\int_{\mathbb{R}^n} L^i_u(x)^2\left(\int_{B_1(x)}\frac{[\psi]_{Lip}^2}{\lvert x-y\rvert^{n-\frac{3}{2}}}dy +\int_{B_1(x)^c}\frac{4\lVert \psi\rVert_{L^\infty}^2}{\lvert x-y\rvert^{n+\frac{1}{2}}}dy\right)dx.
\end{split}
\end{equation}
Since this is true for any $\psi\in C^\infty_c(\mathbb{R}^n,\mathbb{R})$, we conclude that
\begin{equation}
\text{div}_\frac{1}{2}\left(L_u^i(x)X^i(u(y))-L_u^i(y)X^i(u(x))\right)=0.
\end{equation}
\end{proof}

\begin{lem}\label{lem_derivative_of_composition_fraclap_14}
Let $i\in \mathbb{N}$, let $u\in \dot{H}^\frac{1}{2}(\mathbb{R}^n, \mathscr{M})$, let $X$ be a vector field of class $C^{i+3}$ on $\mathscr{M}$ and let $\phi_t$ denote its flow. Then for a.e. $x\in \mathbb{R}^n$ the function
\begin{equation}\label{eq_lem_derivative_of_composition_fraclap_flow_map}
\mathbb{R}\to \mathbb{R}^m, \quad t\mapsto (-\Delta)^\frac{1}{4}[\phi_t\circ u](x)
\end{equation}
is of class $C^i$, and for any $k\in \lbrace 0,...,i\rbrace$ there holds
\begin{equation}\label{eq_lem_explicit_formula_derivative_fraclap_composed_with_flow}
\partial_t^{k} \left[(-\Delta)^\frac{1}{4}\left( \phi_t\circ u\right) \right](x)=(-\Delta)^\frac{1}{4}\left[ \partial_t^{k}(\phi_t\circ u) \right](x).
\end{equation}
Moreover, given any bounded interval $I$ of $\mathbb{R}$, for any $k\in \lbrace 0,...,i\rbrace$,  $\partial_t^{k} \left[(-\Delta)^\frac{1}{4}\left( \phi_t\circ u\right) \right]$ is bounded a.e., uniformly in $t$ for $t\in I$, by a function of $x$ in $L^2(\mathbb{R}^n)$.\\
In particular, if we assume $X$ to be of class $C^4$, the map (\ref{eq_lem_derivative_of_composition_fraclap_flow_map}) is of class $C^1$ and Equation (\ref{eq_lem_explicit_formula_derivative_fraclap_composed_with_flow}) holds for $k=1$.
\end{lem}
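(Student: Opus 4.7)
Plan:

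Since $\mathscr{M}$ is a closed, hence compact, manifold, $u$ takes values in a bounded set and so $u \in \dot{H}^{1/2}(\mathbb{R}^n) \cap L^\infty(\mathbb{R}^n)$. Because $X$ is tangent to $\mathscr{M}$, its flow $\phi_t$ is defined for all $t \in \mathbb{R}$ and preserves $\mathscr{M}$; by standard ODE theory applied to $X \in C^{i+3}$, each iterated time-derivative $\partial_t^k \phi_t$ for $0 \leq k \leq i+1$ is a $C^{i+3-k}$ map $\mathscr{M} \to \mathscr{M}$, with spatial Lipschitz constant uniformly bounded for $t$ in any fixed bounded interval $I$.

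The first step is to show that $t \mapsto \phi_t \circ u$ is $C^{i+1}$ from $I$ into $\dot{H}^{1/2}(\mathbb{R}^n) \cap L^\infty(\mathbb{R}^n)$, with $\partial_t^k(\phi_t \circ u) = (\partial_t^k \phi_t) \circ u$. This rests on the elementary Gagliardo-level estimate $[F \circ u]_{\dot{W}^{1/2,2}} \leq [F]_{\mathrm{Lip}}\,[u]_{\dot{W}^{1/2,2}}$, combined with the spatially uniform Taylor expansion $\phi_{t+h}(z) - \phi_t(z) - h\,\partial_t\phi_t(z) = O(h^2)$ in the $C^{0,1}(\mathscr{M})$-norm. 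Composing with $u$ turns the latter into an $O(h^2)$ error in $\dot{H}^{1/2} \cap L^\infty$, yielding first-order differentiability in this space; iterating up to order $i+1$ produces the full smoothness. Applying the continuous linear operator $(-\Delta)^{1/4} : \dot{H}^{1/2} \to L^2$ then promotes this to $C^{i+1}$-regularity of $t \mapsto (-\Delta)^{1/4}[\phi_t \circ u]$ as a map into $L^2(\mathbb{R}^n)$, establishing equation (\ref{eq_lem_explicit_formula_derivative_fraclap_composed_with_flow}) in the $L^2$ sense, and the uniform bound required in the Moreover part follows from $\|(-\Delta)^{1/4}[(\partial_t^k\phi_t)\circ u]\|_{L^2} \leq [\partial_t^k\phi_t]_{\mathrm{Lip},I}\,[u]_{\dot{H}^{1/2}}$.

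The remaining step is to upgrade the $L^2$ identification to hold pointwise a.e. in $x$. I would apply the pointwise integral representation (\ref{eq_prel_pointwise_definition_fraclap}) to each $(\partial_t^k\phi_t) \circ u \in \dot{H}^{1/2} \cap L^\infty$, valid for a.e. $x$ by the cut-off and density argument recorded in the Preliminaries. On the intersection of the resulting full-measure sets, I would differentiate in $t$ under the integral sign by dominated convergence: the $h$-difference quotients of the integrand are uniformly dominated (in $h$ and in $t \in I$) by $C\,|u(x)-u(y)|/|x-y|^{n+1/2}$ thanks to the uniform spatial Lipschitz bounds on $\partial_t\phi_t$. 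The main technical obstacle is to verify integrability of this dominating function in $y$ for a.e. $x$: the far part $|x-y|>1$ is controlled by $u \in L^\infty$, while the near part requires a careful interpolation argument exploiting that $u \in \dot{H}^{1/2} \cap L^\infty$ embeds in $\dot{W}^{1/p,p}$ for every $p \geq 2$ (through $|u(x)-u(y)|^p \leq (2\|u\|_\infty)^{p-2}|u(x)-u(y)|^2$) combined with a Hölder estimate; this same pointwise domination, together with Fubini, then also produces the required $L^2$ dominating function of $x$ from the Moreover part.
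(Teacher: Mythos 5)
The first half of your argument — establishing $t\mapsto \phi_t\circ u$ as a $C^{i+1}$ curve in $\dot{H}^{1/2}\cap L^\infty$ via the Lipschitz estimate $[F\circ u]_{\dot W^{1/2,2}}\le[F]_{\mathrm{Lip}}[u]_{\dot W^{1/2,2}}$ and the uniform Taylor expansion of $\phi_t$, then pushing forward through the bounded operator $(-\Delta)^{1/4}:\dot H^{1/2}\to L^2$ — is sound and in fact a clean, somewhat more conceptual route to the $L^2$ version of~(\ref{eq_lem_explicit_formula_derivative_fraclap_composed_with_flow}) and to the final uniform $L^2$ bound than the paper uses. However, the step upgrading the identity from an $L^2$-curve statement to a statement holding pointwise for a.e.\ fixed $x$, which is the part the lemma is really about, has a genuine gap.

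Your proposed dominating function for the difference quotients is $C\,|u(x)-u(y)|/|x-y|^{n+1/2}$. This is the first power of $|u(x)-u(y)|$, and for $u\in\dot H^{1/2}\cap L^\infty$ this is \emph{not} integrable in $y$ near the diagonal for a.e.\ $x$: Fubini applied to the $\dot H^{1/2}$ Gagliardo seminorm gives a.e.\ finiteness of $\int|u(x)-u(y)|^2/|x-y|^{n+1}\,dy$, i.e.\ the \emph{second} power, not the first. The interpolation you sketch does not repair this: the $L^\infty$ trick $|u(x)-u(y)|^p\le(2\|u\|_\infty)^{p-2}|u(x)-u(y)|^2$ puts $u$ in $\dot W^{1/p,p}$ only for $p\ge 2$, and a Hölder split of $\int_{B_1(x)}|u(x)-u(y)|/|x-y|^{n+1/2}\,dy$ against $\dot W^{1/p,p}$ requires the conjugate factor $\int_{B_1(x)}|x-y|^{-[(n+1/2)-(n+1)/p]p'}\,dy$ to converge, which forces $p<2$. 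There is no admissible $p$, and indeed the integral $\int|u(x)-u(y)|/|x-y|^{n+1/2}\,dy$ is precisely the quantity that defines $(-\Delta)^{1/4}u(x)$ only as a principal value, not an absolutely convergent integral, for generic $u\in\dot H^{1/2}\cap L^\infty$. So dominated convergence cannot be applied directly with your majorant. The missing idea, which the paper uses, is to subtract the first-order Taylor term $D\phi_t(u(x))\,(u(x)-u(y))/|x-y|^{n+1/2}$ before estimating: its principal value is the (already understood, a.e.\ defined) quantity $D\phi_t(u(x))\,(-\Delta)^{1/4}u(x)$, and the remaining kernel $f(x,y,t)$ is then second-order in $u(x)-u(y)$, yielding the bound $\|\partial_t^k D^2\phi_t\|_\infty\,|u(x)-u(y)|^2/|x-y|^{n+1/2}$, which \emph{is} absolutely integrable in $y$ for a.e.\ $x$ by Fubini. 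Without this cancellation, the pointwise part of the argument does not close.
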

\begin{proof}
Let $I$ be a bounded interval of $\mathbb{R}$.
First we observe that since $X$ is of class $C^{i+3}$, $\phi_t$ is of class $C^{i+2}$ jointly in $x$ and $t$. Therefore, for any $k\in \lbrace 0,...,i\rbrace$, we can write for a.e. $x\in\mathbb{R}^n$
\begin{equation}\label{eq_lem_derivative_of_fractional_Laplacian_composed_with_flow_i}
\begin{split}
&(-\Delta)^\frac{1}{4}\left[\partial_t^{k}(\phi_t\circ u)\right]=PV\int_{\mathbb{R}^n}\frac{\partial_t^{k}\phi_t(u(x))-\partial_t^{k}\phi_t(u(y))}{\lvert x-y\rvert^{n+\frac{1}{2}}}dy\\
=&\partial_t^{k}D\phi_t(u(x)) PV\int_{\mathbb{R}^n}\frac{u(x)-u(y)}{\lvert x-y\rvert^{n+\frac{1}{2}}}dy+\int_{\mathbb{R}^n}\partial_t^{k}f(x,y,t)dy,
\end{split}
\end{equation}
where for any $x,y\in \mathbb{R}^n$, $t\in \mathbb{R}$,
\begin{equation}
f(x,y,t):=\frac{\phi_t(u(x))-\phi_t(u(y))}{\lvert x-y\rvert^{n+\frac{1}{2}}}-D\phi_t(u(x)) \frac{u(x)-u(y)}{\lvert x-y\rvert^{n+\frac{1}{2}}}.
\end{equation}
Since $\phi_t$ is of class $C^{i+2}$, for any $x,y\in \mathbb{R}^n$ with $x\neq y$, $f(x,y,t)$ defines a function of $t$ of class $C^{i+1}$. Now we claim that for a.e. $x$, $\partial_t^{i}f(x,y,t)$ is uniformly bounded a.e. by a function of $y$ in $L^1(\mathbb{R}^n)$ for $t\in I$ (the bound might depend on $x$). On the one hand, this will imply, by Lebesgue's Dominated convergence Theorem, that the expression in (\ref{eq_lem_derivative_of_fractional_Laplacian_composed_with_flow_i}) is continuous in $t$, and since this will be true for any $k\in \lbrace 0,...,i\rbrace$, that the function defined in (\ref{eq_lem_derivative_of_composition_fraclap_flow_map}) is of class $C^i$. On the other hand, it will imply that the the $k^{th}$ derivatives and the integral in the second expression of the second line of (\ref{eq_lem_derivative_of_fractional_Laplacian_composed_with_flow_i}) can be inverted. Therefore, for any $k\in \lbrace 1,...,i\rbrace$
\begin{equation}\label{eq_lem_derivative_of_fractional_Laplacian_composed_with_flow_i_second_part}
\begin{split}
(-\Delta)^\frac{1}{4}\left[\partial_t^{k}(\phi_t\circ u)\right]=&\partial_t^{k}D\phi_t(u(x)) PV\int_{\mathbb{R}^n}\frac{u(x)-u(y)}{\lvert x-y\rvert^{n+\frac{1}{2}}}dy\\
&+\partial_t\int_{\mathbb{R}^n}\partial_t^{(k-1)}f(x,y,t)dy\\
=&\partial_t(-\Delta)^\frac{1}{4}\left[\partial_t^{(k-1)}(\phi_t\circ u)\right].
\end{split}
\end{equation}
Since Equation (\ref{eq_lem_derivative_of_fractional_Laplacian_composed_with_flow_i_second_part}) will be true for any $k\in \lbrace 1,...,i\rbrace$, we will obtain Equation (\ref{eq_lem_explicit_formula_derivative_fraclap_composed_with_flow}).\\
In order to construct the $L^1$-bound for the integrand of the second term of the second line of (\ref{eq_lem_derivative_of_fractional_Laplacian_composed_with_flow_i}), we first observe that by Taylor's Theorem, for any $x,y\in \mathbb{R}^n$, $t\in \mathbb{R}$
\begin{equation}\label{eq_proof_lemma_estimate_integrand_Linf_Sobolev}
\left\lvert \partial_t^{k}f(x,y,t)\right\rvert\leq \partial_t^{k}D^2\phi_t(\xi_{x,y,t})\cdot\frac{(u(x)-u(y))^2}{\lvert x-y\rvert^{n+\frac{1}{2}}}
\end{equation}
for some $\xi_{x,y,t}$ between $u(x)$ and $u(y)$, depending on $x$, $y$ and $t$. Here, by an abuse of notation, we denote with a dot the action of the quadratic form $\partial_t^{k}D^2\phi_t(\xi_{x,y,t})$ on two (equal) vectors.
Therefore, for any $t\in I$
\begin{equation}\label{eq_proof_lemma_bound_derivative_fraclap_composed_with_flow}
\begin{split}
&\left\lvert\partial_t^{k}D^2\phi_t(\xi_{x,y,t})\cdot\frac{(u(x)-u(y))^2}{\lvert x-y\rvert^{n+\frac{1}{2}}} \right\rvert
\leq \left\lVert  \partial_t^{k}D^2\phi_t\right\rVert_{L^\infty(\mathscr{M}\times I)}\frac{\lvert u(x)-u(x)\rvert^2}{\lvert x-y\rvert^{n+\frac{1}{2}}}.
\end{split}
\end{equation}
Now we observe that since $u\in \dot{H}^\frac{1}{2}(\mathbb{R}^n)$, the term on the right hand side of (\ref{eq_proof_lemma_bound_derivative_fraclap_composed_with_flow}) lies in $L^1(\mathbb{R}^n\times \mathbb{R}^n)$, and thus for a.e. $x\in \mathbb{R}^n$, it defines an integrable function of $y$. This shows the existence of the desired $L^1$-bound.\\
Finally we show that for any $k\in \lbrace 0,...,i\rbrace$, $\partial_t^{k} \left[(-\Delta)^\frac{1}{4}\left( \phi_t\circ u\right) \right]$ is bounded a.e., uniformly in $t$ for $t\in I$, by a function of $x$ in $L^2(\mathbb{R}^n)$. To this end we first observe that since $(-\Delta)^\frac{1}{4}u\in L^2(\mathbb{R}^n)$, the first term in the second line of (\ref{eq_lem_derivative_of_fractional_Laplacian_composed_with_flow_i}) is bounded for any $t\in I$ by the square integrable function $\left\lVert \partial_t^{i}D\phi_t(u(x))\right\rVert_{L^\infty(\mathscr{M}\times I)}\left\lvert (-\Delta)^\frac{1}{4}u(x)\right\rvert$. In order to estimate the second term in the second line of (\ref{eq_lem_derivative_of_fractional_Laplacian_composed_with_flow_i}) we first observe that, by estimate (\ref{eq_proof_lemma_estimate_integrand_Linf_Sobolev}), for a.e. $x\in \mathbb{R}^n$, for any $t\in I$
\begin{equation}\label{eq_proof_lemma_before_applying_Leibnitz}
\left\lvert\int_{\mathbb{R}^n}\partial_t^{k}f(x,y,t)dy\right\rvert\leq \left\lVert \partial_t^{k}D^2\phi_t\right\rVert_{L^\infty(\mathscr{M}\times I)}\int_{\mathbb{R}^n}\frac{\lvert u(x)-u(y)\rvert^2}{\lvert x-y\rvert^{n+\frac{1}{2}}}dy.
\end{equation}
Now by the "Leibnitz rule" (\ref{eq_discussion_introduction_Leibnitz_rule_fraclaps}), for a.e. $x\in\mathbb{R}^n$ we can rewrite the integral on the right hand side of (\ref{eq_proof_lemma_before_applying_Leibnitz}) as
\begin{equation}
\int_{\mathbb{R}^n}\frac{\lvert u(x)-u(x)\rvert^2}{\lvert x-y\rvert^{n+\frac{1}{2}}}dy=2u(x)\cdot (-\Delta)^\frac{1}{4}u(x)-(-\Delta)^\frac{1}{4}u^2(x).
\end{equation}
Since $u\in \dot{H}^\frac{1}{2}\cap L^\infty(\mathbb{R}^n)$, also $u^2\in \dot{H}^\frac{1}{2}\cap L^\infty(\mathbb{R}^n)$ by Lemma \ref{lem_Hs_cap_Linfty_is_an_algebra}, therefore the integral on the right hand side of (\ref{eq_proof_lemma_bound_derivative_fraclap_composed_with_flow}) lies in $L^2(\mathbb{R}^n)$. Therefore we conclude that $\partial_t^{k} \left[(-\Delta)^\frac{1}{4}\left( \phi_t\circ u\right) \right]$ is bounded a.e., uniformly in $t$ for $t\in I$, by a function of $x$ in $L^2(\mathbb{R}^n)$.\\
In the following we give an alternative proof of the fact that the function $t\mapsto (-\Delta)^\frac{1}{4}[\phi_t\circ u](x)$ is differentiable for a.e. $x\in \mathbb{R}^n$, and that Equation (\ref{eq_lem_explicit_formula_derivative_fraclap_composed_with_flow}) holds for $n=1$ whenever $X$ is of class $C^5$.\\
First we observe that by the semigroup properties of the flow, it is enough to show the statement for $t=0$.
Since the vector field $X$ is of class $C^5$, its flow $\phi_t$ is of class $C^4$ jointly in $t$ and $x$.
Applying Taylor's theorem in $t$, we obtain
\begin{equation}
\begin{split}
(-\Delta)^\frac{1}{4}[\phi_t\circ u](x)=&(-\Delta)^\frac{1}{4}u(x)+t(-\Delta)^\frac{1}{4}\left[X\circ u\right](x)\\
&+t^2PV\int_{\mathbb{R}^n}\frac{\int_0^1(1-r)\partial_s^2\vert_{s=rt}(\phi_s(u(x))
-\phi_s(u(y)))dr}{\lvert x-y\rvert^{n+\frac{1}{2}}}dy.
\end{split}
\end{equation}
To estimate the last term, we can apply Taylor's theorem to $\partial_t^2\phi_t $ (in the $x$-argument). We obtain
\begin{equation}
\begin{split}
\partial_t^2(\phi_t(u(x))-\phi_t(u(y)))=&D_x[\partial_t^2\phi_t](u(x))(u(x)-u(y))\\
&+\sum_{\lvert \beta\rvert=2}R_{\beta,t,x}(y)(u(x)-u(y))^\beta,
\end{split}
\end{equation}
where the sum is taken over all $n$-multiindices $\beta$ of degree 2 and for any $\beta$, $t\in \mathbb{R}$, $x\in \mathbb{R}^n$
\begin{equation}
R_{\beta,t,x}(y)=\int_0^1(1-s)D^\beta\partial_t^2\phi_t(u(x)+s(u(y)-u(x)))ds,
\end{equation}
so that
\begin{equation}
\left\lvert R_{\beta,x,t}(y)\right\rvert\leq \lVert D^2\partial_t^2\phi_t\rVert_{L^\infty}.
\end{equation}
Therefore we obtain
\begin{equation}
\begin{split}
&PV\int_{\mathbb{R}^n}\frac{\int_0^1(1-r)\partial_t^2\vert_{t=r}(\phi_t(u(x))-\phi_t(u(y)))dr}{\lvert x-y\rvert^{n+\frac{1}{2}}}dy\\
=&\int_0^1(1-r)D_x[\partial_s^2\vert_{s=rt}\phi_s](u(x))dr(-\Delta)^\frac{1}{4}u(x)+A(t,x),
\end{split}
\end{equation}
where 
\begin{equation}
\begin{split}
\lvert A(t,x)\rvert= &\left\lvert PV\int_{\mathbb{R}^n}\frac{\int_0^1(1-r)\sum_{\lvert \beta\rvert=2}R_{\beta, rt, x}(y)(u(x)-u(y))^\beta}{\lvert x-y\rvert^{n+\frac{1}{2}}} \right\rvert\\
\leq& n^2\lVert D^2\partial_t^2\phi_{rt}\rVert_{L^\infty}U(x)
\end{split}
\end{equation}
for any $x\in \mathbb{R}^n$, $t\in \mathbb{R}$. Here the map $U$ is defined as follows;
\begin{equation}
U: \mathbb{R}^n\to \mathbb{R}, \quad x\mapsto \int_{\mathbb{R}^n}\frac{\lvert u(y)-u(x)\rvert^2}{\lvert x-y\rvert^{n+\frac{1}{2}}}dy
\end{equation}
Since $u\in \dot{H}^\frac{1}{2}(\mathbb{R}^n)$, by Fubini's theorem $U(x)$ is well defined for a.e. $x\in\mathbb{R}^n$, and the map $U$ belongs to $L^1(\mathbb{R}^n)$.
Therefore, for almost every $x\in \mathbb{R}^n$, the map $t\mapsto (-\Delta)^\frac{1}{4}[\phi_t\circ u](x)$ is differentiable in $0$ with
\begin{equation}
\partial_t\vert_{t=0}(-\Delta)^\frac{1}{4}[\phi_t\circ u](x)=(-\Delta)^\frac{1}{4}[\partial_t\vert_{t=0}\phi_t\circ u](x).
\end{equation}

\end{proof}

\begin{rem}
We observe that in some cases of interest, the flow $\phi_t$ of the vector field $X$ is an affine function on $\mathbb{R}^n$ for $t$ in a neighbourhood of $0$, where the linear part is represented by a matrix $A_t$, of class $C^1$ as a function of $t$. This is for instance the case for the generators of  translations, rotations and dilations. In these cases it easier to show that the map $t\mapsto (-\Delta)^\frac{1}{4}[\phi_t\circ u](x)$ is differentiable for a.e. $x\in \mathbb{R}^n$, even if $X$ is only of class $C^1$, and that for any $i\in \mathbb{N}$, $(-\Delta)^\frac{1}{4}\left[\partial_t^{(i)}\phi_t\circ u\right]$ can be bounded by a square integrable function uniformly in $t$ (for $t$ in a neighbourhood of $0$). 
Indeed, assume that there exists $\varepsilon>0$ such that for $t\in (-\varepsilon, \varepsilon)$ the flow $\phi_t$ of $X$ is an affine function represented by a matrix $A_t$ and a vector $b_t$. Then for $i\in \mathbb{N}$, for $x\in \mathbb{R}^n$ and $t\in (-\varepsilon, \varepsilon)$
\begin{equation}
\begin{split}
&\partial_t^{(i)}(-\Delta)^\frac{1}{4}[\phi_t\circ u](x)=\partial_t^{(i)} PV\int_{\mathbb{R}^n}\frac{(A_t \cdot u(x)+b_t)-(A_t \cdot u(y)+b_t)}{\lvert x-y\rvert^{n+\frac{1}{2}}}dy\\
=&\partial_t^{(i)} A_t\cdot PV\int_{\mathbb{R}^n}\frac{u(x)-u(y)}{\lvert x-y\rvert^{n+\frac{1}{2}}}dy= \partial_t^{(i)}A_t(-\Delta)^\frac{1}{4}u(x).
\end{split}
\end{equation}
By the assumptions on $A_t$, the derivative is well defined. Moreover, if $t\in (-\varepsilon, \varepsilon)$,
\begin{equation}
\left\lvert \partial_t^{(i)}(-\Delta)^\frac{1}{4}[\phi_t\circ u]\right\rvert\leq \max_{\lvert t\rvert\leq \varepsilon}\left\lVert \partial_t^{(i)} A_t \right\rVert \left\lvert (-\Delta)^\frac{1}{4}u\right\rvert,
\end{equation}
and the function on the right hand side belongs to $L^2(\mathbb{R}^n)$.\\
In particular, for vector fields of this kind, Theorem \ref{prop_noether_thm_variaton_target_cor} holds even if we assume that $X$ is only of class $C^1$.
\end{rem}

\subsection{Half harmonic maps into spheres}\label{ssec: harmonic maps into spheres}
As observed above, an interesting example of Lagrangian of the form (\ref{eq_prop_definition_of_energy}) is given by the half Dirichlet energy
\begin{equation}\label{eq_definition_fractional_Dirichlet_energy_2}
E(u)=\int_{\mathbb{R}^n}\left\lvert (-\Delta)^\frac{1}{4}u(x)\right\rvert^2dx,
\end{equation}
defined for functions $u\in \dot{H}^\frac{1}{2}(\mathbb{R}^n, \mathbb{S}^{m})$, where $m\in \mathbb{N}$. Here we consider $\mathbb{S}^{m}$ as a submanifold of $\mathbb{R}^{m+1}$.
In \cite{divcurl}, K. Mazowiecka and A. Schikorra showed that critical points of (\ref{eq_definition_fractional_Dirichlet_energy_2}) are continuous, following an argument analogous to the one used in \cite{Heleinarticle} for the local case (for $n=2$). An important step in the proof of the continuity of minimizers consists in showing that for any $i, k\in \lbrace 1,...,m+1 \rbrace$ the quantity $\Omega_{ik}$, defined in (\ref{eq_lem_definition_Omega_i_k}) satisfies
\begin{equation}\label{eq_div_1_2_of_Omega_vanishes_intro_sec}
\text{div}_\frac{1}{2} \Omega_{ik}=0.
\end{equation}
From the analogy with the local case, one might expect (\ref{eq_div_1_2_of_Omega_vanishes_intro_sec}) to be a direct consequence of the fractional analogous to the Noether theorem (Theorem \ref{prop_noether_thm_variaton_target_cor}), but Theorem \ref{prop_noether_thm_variaton_target_cor} yields a different $\frac{1}{2}$-divergence free quantity (Lemma \ref{lem_1_2_unexpected_divergence_free_quantity}). Nevertheless we will show that we can recover (\ref{eq_div_1_2_of_Omega_vanishes_intro_sec}) by means of a further computation (Lemma \ref{lem_1_2_div_of_Omega_vanishes} or Lemma \ref{lem_alternative_argument_div12_Omega_ik=0}). Later we will show how (\ref{eq_div_1_2_of_Omega_vanishes_intro_sec}) implies the continuity of critical points of (\ref{eq_definition_fractional_Dirichlet_energy_2}) (Theorem \ref{prop_critical_points_are_continuous}).\\

\begin{lem}\label{lem_1_2_unexpected_divergence_free_quantity}
Let $n\in\mathbb{N}_{>0}$ and let $u\in \dot{H}^\frac{1}{2}(\mathbb{R}^n,\mathbb{S}^m)$ be a critical point of (\ref{eq_definition_fractional_Dirichlet_energy_2})
in $\dot{H}^\frac{1}{2}(\mathbb{R}^n,\mathbb{S}^m)$.
For any $i, k\in \lbrace 1,...,m+1 \rbrace$, let
\begin{equation}\label{eq_lem_definition_Lambda_ik}
\begin{split}
\Lambda_{ik}(x,y):=& \left((-\Delta)^\frac{1}{4}u^k(x)-(-\Delta)^\frac{1}{4}u^k(y)\right)u^i(y)\\
&-\left((-\Delta)^\frac{1}{4}u^i(x)-(-\Delta)^\frac{1}{4}u^i(y)\right)u^k(y)
\end{split}
\end{equation}
for any $x,y\in\mathbb{R}^n$.
Then
\begin{equation}\label{eq_div_1_2_of_Lambda_vanishes}
\text{div}_\frac{1}{2} \Lambda_{ik}=0.
\end{equation}
\end{lem}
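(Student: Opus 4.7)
The plan is to deduce Lemma \ref{lem_1_2_unexpected_divergence_free_quantity} from Theorem \ref{prop_noether_thm_variaton_target_cor} applied to the half Dirichlet energy, i.e.\ $L(x,y,p)=|p|^2$, together with the rotation vector field on $\mathbb{S}^m$
\begin{equation*}
X_{ik}(y)=y^k e_i - y^i e_k, \qquad X_{ik}^j(y)=y^k\delta^j_i - y^i\delta^j_k.
\end{equation*}
First I would verify the hypotheses of the Noether theorem in this concrete setting. The flow $\phi_t$ of $X_{ik}$ is the one--parameter group of linear rotations $R_t$ in the $(i,k)$-plane; since $(-\Delta)^{1/4}$ is linear it commutes with $R_t$, so $|(-\Delta)^{1/4}(R_t\circ u)|^2=|R_t(-\Delta)^{1/4}u|^2=|(-\Delta)^{1/4}u|^2$ pointwise, giving the infinitesimal symmetry (\ref{eq_prop_condition_infinitesimal_symmetry}). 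Moreover $D_{p_j}L(x,y,p)=2p_j$, so $L_u^j(x)=2(-\Delta)^{1/4}u^j(x)\in L^2(\mathbb{R}^n)$ by $u\in\dot H^{1/2}(\mathbb{R}^n,\mathbb{S}^m)$, which is condition (\ref{eq_prop_assumption_integrability_of_p_derivative}). The growth condition (\ref{eq_prop_noeth_thm_vartar_condition_on_L}) does not literally hold globally for $|p|^2$, but since $u\in L^\infty$ (taking values in $\mathbb{S}^m$) the argument of the theorem works with a straightforward truncation/modification of $L$ outside a neighborhood of the range of $(u,(-\Delta)^{1/4}u)$; this is the kind of routine check that I would expect to be the main technical nuisance, but not a real obstacle.

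Next I would read off the conclusion. Setting $Q_{ik}(x,y):=L_u^j(x)X_{ik}^j(u(y))-L_u^j(y)X_{ik}^j(u(x))$, Theorem \ref{prop_noether_thm_variaton_target_cor} gives $\mathrm{div}_{1/2}Q_{ik}=0$. Substituting the explicit $X_{ik}^j$ and $L_u^j$ and expanding,
\begin{equation*}
\tfrac{1}{2}Q_{ik}(x,y)=\bigl[(-\Delta)^{1/4}u^i(x)u^k(y)-(-\Delta)^{1/4}u^k(x)u^i(y)\bigr]-\bigl[(-\Delta)^{1/4}u^i(y)u^k(x)-(-\Delta)^{1/4}u^k(y)u^i(x)\bigr].
\end{equation*}
On the other hand, a direct expansion of (\ref{eq_lem_definition_Lambda_ik}) shows that the two ``diagonal'' terms $(-\Delta)^{1/4}u^k(y)u^i(y)$ and $(-\Delta)^{1/4}u^i(y)u^k(y)$ cancel, so
\begin{equation*}
\Lambda_{ik}(x,y)=(-\Delta)^{1/4}u^k(x)u^i(y)-(-\Delta)^{1/4}u^i(x)u^k(y).
\end{equation*}
Comparing, one gets the clean identity
\begin{equation*}
-\tfrac{1}{2}Q_{ik}(x,y)=\Lambda_{ik}(x,y)-\Lambda_{ik}(y,x).
\end{equation*}

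The last step is the symmetry trick. Given a kernel $F(x,y)$ for which $\mathrm{div}_{1/2}F$ is defined, a change of variables in Definition \ref{def_fractional_divergence_Rn} immediately yields
\begin{equation*}
\mathrm{div}_{1/2}\bigl[F(y,x)\bigr]=-\mathrm{div}_{1/2}F,
\end{equation*}
because $\mathrm{d}_{1/2}\phi(x,y)$ is antisymmetric and $|x-y|^n$ is symmetric. Applying this to $F=\Lambda_{ik}$ and combining with the identity above,
\begin{equation*}
0=\mathrm{div}_{1/2}Q_{ik}=-2\,\mathrm{div}_{1/2}\bigl[\Lambda_{ik}(x,y)-\Lambda_{ik}(y,x)\bigr]=-4\,\mathrm{div}_{1/2}\Lambda_{ik},
\end{equation*}
so $\mathrm{div}_{1/2}\Lambda_{ik}=0$, which is the desired conclusion. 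The only delicate points along the way are the integrability checks needed both to apply Theorem \ref{prop_noether_thm_variaton_target_cor} to the non-polynomially-bounded Lagrangian $|p|^2$ and to ensure that $\mathrm{div}_{1/2}\Lambda_{ik}$ is well-defined as a distribution on $C^\infty_c(\mathbb{R}^n)$; both are handled by writing the test pairing against $\phi\in C^\infty_c(\mathbb{R}^n)$ and using $(-\Delta)^{1/4}u\in L^2$ together with $u\in L^\infty$ and the Lipschitz/decay estimates for $\phi$ as in (\ref{eq_proof_prop_principal_value_converges_absolutely}).
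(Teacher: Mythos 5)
Your overall route---apply Theorem \ref{prop_noether_thm_variaton_target_cor} to the half Dirichlet Lagrangian with the rotation field on $\mathbb{S}^m$---is the same as the paper's, and the hypothesis checks (infinitesimal symmetry of $|p|^2$ under rotations, $L_u^j=2(-\Delta)^{1/4}u^j\in L^2$ via Remark \ref{rem_technical_assumtions_noether_thm}) are handled in the same spirit as the paper. The gap is in the algebra connecting the Noether current to $\Lambda_{ik}$.

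The step ``a direct expansion of (\ref{eq_lem_definition_Lambda_ik}) shows that the two diagonal terms cancel'' is false. Expanding,
\begin{equation*}
\Lambda_{ik}(x,y)=(-\Delta)^{1/4}u^k(x)u^i(y)-(-\Delta)^{1/4}u^i(x)u^k(y)-\Bigl[(-\Delta)^{1/4}u^k(y)u^i(y)-(-\Delta)^{1/4}u^i(y)u^k(y)\Bigr],
\end{equation*}
and the bracketed quantity $G(y):=(-\Delta)^{1/4}u^k(y)\,u^i(y)-(-\Delta)^{1/4}u^i(y)\,u^k(y)$ is generically nonzero: the two terms you dropped are $-(-\Delta)^{1/4}u^k(y)u^i(y)$ and $+(-\Delta)^{1/4}u^i(y)u^k(y)$, which differ because the fractional Laplacian acts on different components. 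Consequently your identity $-\tfrac{1}{2}Q_{ik}(x,y)=\Lambda_{ik}(x,y)-\Lambda_{ik}(y,x)$ is off: a direct recomputation gives
\begin{equation*}
\Lambda_{ik}(x,y)-\Lambda_{ik}(y,x)=-\tfrac{1}{2}Q_{ik}(x,y)+G(x)-G(y).
\end{equation*}
The extra term $G(x)-G(y)$ is antisymmetric, so it does \emph{not} drop out of the final $\mathrm{div}_{1/2}$ pairing; instead $\mathrm{div}_{1/2}[G(x)-G(y)]=2\,\mathrm{div}_{1/2}[G(x)]$, which by the pointwise formula (\ref{eq_prel_pointwise_definition_fraclap}) for $(-\Delta)^{1/4}$ equals a nonzero multiple of $(-\Delta)^{1/4}G$. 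So what your chain of equalities actually delivers is $\mathrm{div}_{1/2}\Lambda_{ik}=c\,(-\Delta)^{1/4}G$ for a constant $c$, and you must still argue why this vanishes for critical points. Nothing in your proposal addresses this term; it simply disappears because of the cancellation error. The paper's short proof avoids this by reading off the conclusion directly in the form (\ref{eq_thm_1_result_Noeththm_vartar_div_free_quantity}) of Theorem \ref{prop_noether_thm_variaton_target_cor}, whose structure matches $\Lambda_{ik}$ exactly (via the algebraic identity $\mathrm{d}_{1/4}v^k(x,y)u^i(y)=\mathrm{d}_{1/4}(v^ku^i)(x,y)-v^k(x)\mathrm{d}_{1/4}u^i(x,y)$ together with $\mathrm{div}_{1/4}\mathrm{d}_{1/4}=(-\Delta)^{1/4}$ up to constants), rather than the antisymmetrized form (\ref{eq_thm_Noether_thm_vartar_alternative_form}) you use. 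To repair your argument along your chosen lines you would need to track the $G(x)-G(y)$ term and show its $\mathrm{div}_{1/2}$ is absorbed, which is genuinely nontrivial.
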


\begin{proof}
First we observe that the integrand in (\ref{eq_definition_fractional_Dirichlet_energy_2}) is invariant under rotations in the target: for any $R\in SO(m+1)$ and any $u\in \dot{H}^\frac{1}{2}(\mathbb{R}^n, \mathbb{S}^m)$,
$(-\Delta)^\frac{1}{4}R u(x)=R(-\Delta)^\frac{1}{4}u(x)$ for any $x\in \mathbb{R}^n$ by the pointwise definition of fractional Laplacian. Therefore
\begin{equation}\label{Es_rot_inv_integrand} 
\left\lvert(-\Delta)^{\frac{1}{4}}(R u(x))\right\rvert^2=\left\lvert(-\Delta)^{\frac{1}{4}}u(x)\right\rvert^2\text{ for a.e. }x\in\mathbb{R}^n.
\end{equation}
We recall that the Lie algebra $\mathfrak{so}(m+1)$ of $SO(m+1)$ consists of the set of antisymmetric $(m+1)\times(m+1)$-matrices. Thus any $A\in\mathfrak{so}(n+1)$ induces a vector field $X_A$ on $\mathbb{S}^m$ given by
\begin{equation}
X_A:\mathbb{S}^m\to T\mathbb{S}^m, x\mapsto (x, A x),
\end{equation}
whose flow $\phi^A_t$ is a rotation at any $t\in\mathbb{R}$. Explicitly, for any $t\in\mathbb{R}$
\begin{equation}\label{eq_proof_lemma_definition_of_phiA_as_mult_by_exponential_fct}
\phi^A_t(x)=exp(tA) x,
\end{equation}
for $x\in \mathbb{S}^m$, where $exp$ denote the exponential map from $\mathfrak{so}(m+1)$ to $SO(m+1)$.
Equation (\ref{Es_rot_inv_integrand}) implies that for any $A\in\mathfrak{so}(n+1)$ and any $t\in \mathbb{R}$, $x\in\mathbb{R}^n$,
\begin{equation}\label{eq_prel_integ_inv}
\left\lvert(-\Delta)^{\frac{1}{4}}\left(\phi^A_t\circ u\right)(x)\right\rvert^2=\left\lvert(-\Delta)^{\frac{1}{4}} u(x)\right\rvert^2.
\end{equation}
This implies that the Lagrangian
\begin{equation}
L(x, X, Y)= \lvert Y\rvert^2\text{ for }(x, X, Y)\in \mathbb{R}^n\times\mathbb{R}^{m+1}\times\mathbb{R}^{m+1},
\end{equation}
corresponding to the energy (\ref{eq_definition_fractional_Dirichlet_energy_2}), satisfies condition (\ref{eq_prop_condition_infinitesimal_symmetry}) for $X_A$ for any $A\in\mathfrak{so}(m+1)$.
Moreover, by Remark \ref{rem_technical_assumtions_noether_thm}, assumption (\ref{eq_prop_assumption_integrability_of_p_derivative}) is satisfied. Therefore Theorem \ref{prop_noether_thm_variaton_target_cor} applies to (\ref{eq_definition_fractional_Dirichlet_energy_2}) for $X_A$ for any $A\in\mathfrak{so}(m+1)$.
Now for $i,k\in \lbrace 1, 2,..., m+1\rbrace$, $i\neq j$ we can choose $A=(a_{lm})_{lm}$ such that
\begin{equation}\label{eq_proof_prel_matrixdef}
a_{lm} = \begin{cases}
1 &\text{if $(l,m)=(k,i)$}\\
-1 &\text{if $(l,m)=(i,k)$}\\
0 &\text{otherwise.}
\end{cases}
\end{equation}
Then $A\in\mathfrak{so}(m+1)$, and for any $x\in \mathbb{R}^n$, 
\begin{equation}
X_A(u(x))=A u(x)=u^i(x)e^k-u^k(x)e^i,
\end{equation}
where $e^j$ denotes the $j^{th}$ unit vector in $\mathbb{R}^{m+1}$. Then Theorem \ref{prop_noether_thm_variaton_target_cor} for $X_A$ implies
\begin{equation}
\begin{split}
\text{div}_\frac{1}{2}&\left[ \left((-\Delta)^\frac{1}{4}u^k(x)-(-\Delta)^\frac{1}{4}u^k(y)\right)u^i(y)\right.\\
&\left.\phantom{[}-\left((-\Delta)^\frac{1}{4}u^i(x)-(-\Delta)^\frac{1}{4}u^i(y)\right)u^k(y)   \right]=0.
\end{split}
\end{equation}
\end{proof}

We will now see how Equation (\ref{eq_div_1_2_of_Omega_vanishes_intro_sec}) can be deduced from Lemma 
\ref{lem_1_2_unexpected_divergence_free_quantity}.
\begin{lem}\label{lem_alternative_argument_div12_Omega_ik=0}
For any $i, k\in \lbrace 1,...,m+1 \rbrace$, let
\begin{equation}\label{eq_lem_definition_Omega_i_k}
\begin{split}
\Omega_{ik}(x,y):=& u^k(x)\text{d}_\frac{1}{2}u^i(x,y)-u^i(x)\text{d}_\frac{1}{2}u^k(x,y).
\end{split}
\end{equation}
Then
\begin{equation}\label{eq_lem_vartar_Omega=0_ref}
\text{div}_\frac{1}{2}\left(\Omega_{ik}\right)=0.
\end{equation}
\end{lem}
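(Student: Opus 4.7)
The plan is to compute $\text{div}_\frac{1}{2}\Omega_{ik}[\phi]$ for an arbitrary $\phi\in C_c^\infty(\mathbb{R}^n)$ by reducing it to a pointwise Euler--Lagrange expression, and then to show the latter vanishes as a consequence of Lemma \ref{lem_1_2_unexpected_divergence_free_quantity}.

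First I would observe that $\Omega_{ik}(x,y)=(u^i(x)u^k(y)-u^i(y)u^k(x))/|x-y|^{1/2}$ is fully antisymmetric in $(x,y)$, while $\text{d}_\frac{1}{2}\phi$ is also antisymmetric in $(x,y)$; the integrand defining $\text{div}_\frac{1}{2}\Omega_{ik}[\phi]$ is therefore symmetric, so the double integral collapses to twice the integral of $\phi(x)$ against a single principal value in $y$. Decomposing the numerator as $u^k(x)(u^i(x)-u^i(y))-u^i(x)(u^k(x)-u^k(y))$ and applying the pointwise formula (\ref{eq_prel_pointwise_definition_fraclap}) to each of the two resulting principal values would give
\[
\text{div}_\frac{1}{2}\Omega_{ik}[\phi]\;=\;\frac{2}{C_{n,1/2}}\int_{\mathbb{R}^n}\phi(x)\bigl[u^k(-\Delta)^\frac{1}{2}u^i - u^i(-\Delta)^\frac{1}{2}u^k\bigr](x)\,dx.
\]

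It would then remain to show that the bracketed distribution $u^k(-\Delta)^\frac{1}{2}u^i-u^i(-\Delta)^\frac{1}{2}u^k$ (a component of $-u\wedge(-\Delta)^\frac{1}{2}u$) vanishes; this is the Euler--Lagrange equation for a half-harmonic map into $\mathbb{S}^m$. To extract it from Lemma \ref{lem_1_2_unexpected_divergence_free_quantity}, I would expand $\text{div}_\frac{1}{2}\Lambda_{ik}[\phi]$ analogously: using the Leibniz rule (\ref{eq_discussion_introduction_Leibnitz_rule_fraclaps}) applied to $v^k\phi$ and $v^i\phi$, where $v^j:=(-\Delta)^\frac{1}{4}u^j$, and exploiting both the self-adjointness of $(-\Delta)^\frac{1}{4}$ and the identity $(-\Delta)^\frac{1}{4}v^j=(-\Delta)^\frac{1}{2}u^j$, one arrives at an identity expressing $\text{div}_\frac{1}{2}\Lambda_{ik}[\phi]$ as a multiple of $\int\phi\bigl[(-\Delta)^\frac{1}{4}(u^iv^k-u^kv^i)+u^i(-\Delta)^\frac{1}{2}u^k-u^k(-\Delta)^\frac{1}{2}u^i\bigr]dx$. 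Running the same antisymmetric reduction used above for $\Omega_{ik}$ on the fully antisymmetric Noether current $T_{ik}(x,y):=v^k(x)u^i(y)-v^i(x)u^k(y)-v^k(y)u^i(x)+v^i(y)u^k(x)$ produced by form (\ref{eq_thm_Noether_thm_vartar_alternative_form}) of Theorem \ref{prop_noether_thm_variaton_target_cor} applied to the rotation vector field $X_A$ gives $\text{div}_\frac{1}{2}T_{ik}[\phi]=\frac{2}{C_{n,1/4}}\int\phi[u^i(-\Delta)^\frac{1}{2}u^k-u^k(-\Delta)^\frac{1}{2}u^i]dx$, whose vanishing by Noether then delivers the Euler--Lagrange equation and closes the argument.

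The principal technical obstacle will be the rigorous justification of the pointwise manipulations---the exchanges of principal value with the outer integral, Fubini, and the Leibniz rule for $(-\Delta)^\frac{1}{4}$---on the class $\dot H^\frac{1}{2}\cap L^\infty(\mathbb{R}^n,\mathbb{S}^m)$. Since the sphere constraint provides the $L^\infty$ bound, and the pointwise formulas for $(-\Delta)^s$ on $\dot H^{2s}\cap L^\infty$ recalled in the Preliminaries are available, these manipulations can be made rigorous by routine density and dominated convergence arguments.
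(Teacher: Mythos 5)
Your core strategy — comparing $\text{div}_\frac{1}{2}\Omega_{ik}$ with $\text{div}_\frac{1}{2}\Lambda_{ik}$ (or the Noether current) via an intermediate Euler--Lagrange expression — is the same as the paper's. However, your execution contains a genuine gap that the paper is specifically constructed to avoid, and your dismissal of it as ``routine density and dominated convergence'' is too cavalier.

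The gap is in the pointwise reduction. Your first step rewrites $\text{div}_\frac{1}{2}\Omega_{ik}[\phi]$ as $\frac{2}{C_{n,1/2}}\int\phi\bigl[u^k(-\Delta)^\frac{1}{2}u^i-u^i(-\Delta)^\frac{1}{2}u^k\bigr]dx$ by applying the pointwise formula (\ref{eq_prel_pointwise_definition_fraclap}) under the outer integral. But for $u\in\dot{H}^\frac{1}{2}\cap L^\infty(\mathbb{R}^n,\mathbb{S}^m)$, the pointwise representation of $(-\Delta)^\frac{1}{2}u$ is \emph{not} available: the extension recalled in the Preliminaries applies to $\dot{H}^{2s}\cap L^\infty$, which for $s=\frac{1}{2}$ requires $u\in\dot{H}^{1}\cap L^\infty$. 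One only has the pointwise formula for $(-\Delta)^\frac{1}{4}u$, not $(-\Delta)^\frac{1}{2}u$ — you invoke the wrong order. Concretely, for a merely $\dot{H}^\frac{1}{2}\cap L^\infty$ map the inner integral $PV\int\frac{u^i(x)-u^i(y)}{|x-y|^{n+1}}dy$ need not exist for a.e.\ $x$, the Fubini exchange is unjustified, and the product $u^k(-\Delta)^\frac{1}{2}u^i$ (an $L^\infty$ function against a distribution in $\dot{H}^{-\frac{1}{2}}$) is not a priori a well-defined distribution. That $(-\Delta)^\frac{1}{2}u\in L^1$ for critical points is indeed true, but it is established a posteriori in Lemma \ref{lem_1_2_div_of_Omega_vanishes} and cannot be assumed here. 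The paper circumvents all of this by proving the \emph{distributional} identity $\text{div}_\frac{1}{2}\Lambda_{ik}^v = 2\,\text{div}_\frac{1}{2}\Omega_{ik}^v$ only for smooth $v$ (where every intermediate manipulation is legitimate) and then passing the identity itself to the limit. That limiting step is not plain dominated convergence: on the $\Omega$ side the integrand is controlled by Cauchy--Schwarz through the $\dot{W}^{\frac{1}{2},2}$ seminorm, and on the $\Lambda$ side the paper needs that $\phi u_n\rightharpoondown \phi u$ \emph{weakly} in $H^\frac{1}{2}(\mathbb{R}^n)$ (via boundedness in $H^\frac{1}{2}$, Banach--Alaoglu, and identification of the weak limit through the $L^2_{loc}$ convergence). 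Without that structure, your plan of extracting a pointwise Euler--Lagrange identity and plugging it back in does not close. If you instead prove the distributional relation between $\text{div}_\frac{1}{2}\Lambda_{ik}$ and $\text{div}_\frac{1}{2}\Omega_{ik}$ first on smooth approximants and pass that relation to the limit, your argument becomes the paper's.
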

\begin{proof}
Let's first consider a function $v\in\mathscr{S}(\mathbb{R}^n,\mathbb{R}^m)+\mathbb{R}^m$. We observe that for $i, k\in \lbrace 1,...,m+1 \rbrace$ if we denote by $\Omega_{ik}^v$ the expression defined in (\ref{eq_lem_definition_Omega_i_k}) for the function $v$ we obtain for any $\phi\in C^\infty_c(\mathbb{R}^n,\mathbb{R})$
\begin{equation}
\begin{split}
&\text{div}_\frac{1}{2}\left(\Omega_{ik}^v\right)[\phi]=\int_{\mathbb{R}^n}v^i(x)\int_{\mathbb{R}^n}\frac{(v^k(y)-v^k(x))(\phi(x)-\phi(y))}{\lvert x-y\rvert^{n+1}}\\
&\phantom{\text{div}_\frac{1}{2}\left(\Omega_{ik}^v\right)[\phi]=\int_{\mathbb{R}^n}}-v^k(x)\int_{\mathbb{R}^n}\frac{(v^i(y)-v^i(x)))(\phi(x)-\phi(y))}{\lvert x-y\rvert^{n+1}}dydx\\
=&\int_{\mathbb{R}^n}v^i(x)\left( -(-\Delta)^\frac{1}{2}(v^k\phi)+(-\Delta)^\frac{1}{2}v^k(x)\phi(x)+v^k(x)(-\Delta)^\frac{1}{2}\phi(x)\right)\\
&\phantom{\int_{\mathbb{R}^n}}-v^k(x)\left( -(-\Delta)^\frac{1}{2}(v^i\phi)+(-\Delta)^\frac{1}{2}v^i(x)\phi(x)+v^i(x)(-\Delta)^\frac{1}{2}\phi(x)\right)\\
=&2\int_{\mathbb{R}^n}\left(v^i(x)(-\Delta)^\frac{1}{2}v^k(x)-(-\Delta)^\frac{1}{2}v^i(x)v^k(x)\right)\phi(x).
\end{split}
\end{equation}
On the other hand, if denote by $\Lambda_{ik}^v$ the expression defined in (\ref{eq_lem_definition_Lambda_ik}) for the function $v$, we obtain for any $\phi\in C^\infty_c(\mathbb{R}^n,\mathbb{R})$
\begin{equation}
\begin{split}
&\text{div}_\frac{1}{2}\left(\Lambda_{ik}^v\right)\\
=&\int_{\mathbb{R}^n}\int_{\mathbb{R}^n}\left(v^i(x)\left((-\Delta)^\frac{1}{4}v^k(x)-(-\Delta)^\frac{1}{4}v^k(y)\right)-(-\Delta)^\frac{1}{4}v^k(x)(v^i(x)-v^i(y))\right.\\
&\left.+(\Delta)^\frac{1}{4}v^i(x)(v^k(x)-v^k(y))-v^k(x)\left((-\Delta)^\frac{1}{4}v^i(x)-(-\Delta)^\frac{1}{4}v^i(y)\right)\right)\phi(x)\frac{dydx}{\lvert x-y\rvert^{n+\frac{1}{2}}}\\
=&\int_{\mathbb{R}^n}\left(v^i(x)(-\Delta)^\frac{1}{2}v^k(x)-v^k(x)(-\Delta)^\frac{1}{2}v^i(x)\right)\phi(x)dx
\end{split}
\end{equation}
Therefore we conclude that
\begin{equation}\label{eq_proof_lemma_equivalence_Lambda_Omega_alternative_smooth}
\text{div}_\frac{1}{2}\left(\Lambda_{ik}^v\right)=2\text{div}_\frac{1}{2}\left(\Omega_{ik}^v\right)
\end{equation}
Now let $u\in \dot{H}^\frac{1}{2}(\mathbb{R}^n,\mathbb{S}^m)$.

By Lemma \ref{lem_approx_sobolev}, there exists a sequence $(u_n)_n$ in $\mathscr{S}(\mathbb{R}^n,\mathbb{R}^m)+\mathbb{R}^m$, bounded in $L^\infty(\mathbb{R}^n)$, such that $u_n\to u$ in $\dot{H}^\frac{1}{2}(\mathbb{R}^n)$ and locally in $L^2(\mathbb{R}^n)$. We claim that, by approximation through such a sequence, Equation (\ref{eq_proof_lemma_equivalence_Lambda_Omega_alternative_smooth}) remains true for $u$. For the right hand side, the convergence follows form Lebesgue's Dominated convergence Theorem along a subsequence of $(u_n)_n$, for which the convergence takes place a.e.. For the left hand side, it will be enough to show that for any $\phi\in C^\infty_c(\mathbb{R}^n,\mathbb{R})$, $\phi u_n\rightharpoondown\phi u$ weakly in $H^\frac{1}{2}(\mathbb{R}^n)$ along a subsequence. To proof the claim, we observe that by the argument in the proof of Lemma \ref{lem_Hs_cap_Linfty_is_an_algebra}, for any $n\in \mathbb{N}$
\begin{equation}
\lVert u_n \phi\rVert_{H^\frac{1}{2}}\leq 2\left(\lVert u_n\rVert [\phi]_{\dot{H}^\frac{1}{2}}+\lVert \phi\rVert [u_n]_{\dot{H}^\frac{1}{2}}\right)+\lvert \text{supp}(\phi)\rvert^\frac{1}{2}\lVert u_n\rVert_{L^\infty}.
\end{equation}
Thus since the sequence $(u_n)_n$ is bounded in $\dot{H}^\frac{1}{2}(\mathbb{R}^n)$ and $L^\infty(\mathbb{R}^n)$, the sequence $(u_n \phi)_n$ is bounded in $H^\frac{1}{2}(\mathbb{R}^n)$. Therefore, by Banach-Alaoglu Theorem, there exists $G\in H^\frac{1}{2}(\mathbb{R}^n,\mathbb{R}^m)$ and a subsequence of $(u_n \phi)$ converging weakly in $H^\frac{1}{2}(\mathbb{R}^n)$ to $G$. Since $u_n \phi\to u \phi $ in $L^2(\mathbb{R}^n)$, we conclude that $G=u\phi$. This concludes the proof of the claim.
Therefore, for any $H^\frac{1}{2}(\mathbb{R}^n,\mathbb{S}^1)$, Equation (\ref{eq_proof_lemma_equivalence_Lambda_Omega_alternative_smooth}) remains true. In particular, if $u$ is a critical point of (\ref{eq_definition_fractional_Dirichlet_energy_2}) in $H^\frac{1}{2}(\mathbb{R}^n,\mathbb{S}^1)$, by Lemma \ref{lem_1_2_unexpected_divergence_free_quantity} we have
\begin{equation}
\text{div}_\frac{1}{2}\left(\Omega_{ik}\right)=0
\end{equation}

\end{proof}

Next we present a different argument to deduce Equation (\ref{eq_div_1_2_of_Omega_vanishes_intro_sec}) from Lemma \ref{lem_1_2_unexpected_divergence_free_quantity}.
\begin{lem}\label{lem_1_2_div_of_Omega_vanishes}
Let $n\in\mathbb{N}_{>0}$ and let $u\in \dot{H}^\frac{1}{2}(\mathbb{R}^n,\mathbb{S}^m)$ be a critical point of (\ref{eq_definition_fractional_Dirichlet_energy_2})
in $\dot{H}^\frac{1}{2}(\mathbb{R}^n,\mathbb{S}^m)$.
For any $i, k\in \lbrace 1,...,m+1 \rbrace$, let $\Omega_{ik}$ be defined as in (\ref{eq_lem_definition_Omega_i_k}).
Then $(-\Delta)^\frac{1}{2}u$ belongs to $L^1(\mathbb{R}^n)$ and there holds
\begin{equation}\label{eq_lem_explcit_form_fraclap_on_sphere}
(-\Delta)^\frac{1}{2}u=\frac{1}{2}u\int_{\mathbb{R}^n}\lvert \text{d}_\frac{1}{2}u(x,y)\rvert^2\frac{dy}{\lvert x-y\rvert^n}
\end{equation}
almost everywhere in $\mathbb{R}^n$. In particular for a.e. $x\in \mathbb{R}^n$
\begin{equation}\label{eq_lem_Euler_Lagrange_vartar}
(-\Delta)^\frac{1}{2}u\wedge u=0.
\end{equation}
Moreover
\begin{equation}\label{eq_div_1_2_of_Omega_vanishes}
\text{div}_\frac{1}{2} \Omega_{ik}=0
\end{equation}
\end{lem}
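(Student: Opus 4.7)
The plan is to derive the Euler--Lagrange equation for the half Dirichlet energy restricted to the sphere and then combine it with the ``Leibniz rule'' (\ref{eq_discussion_introduction_Leibnitz_rule_fraclaps}) applied to the identity $|u|^2 = 1$. First I would construct admissible variations. For any $\phi \in C^\infty_c(\mathbb{R}^n, \mathbb{R}^{m+1})$ and $t$ small enough that $|u + t\phi| \geq 1/2$ pointwise, set $u_t(x) := (u(x)+t\phi(x))/|u(x)+t\phi(x)|$. Since $|u|=1$ and $\phi$ has compact support, $u_t \in \dot{H}^{1/2}(\mathbb{R}^n,\mathbb{S}^m)$, and a Taylor expansion of the nearest-point projection $\pi_{\mathbb{S}^m}$ at $u$ gives
\[
u_t = u + t\bigl(\phi - (\phi\cdot u)\,u\bigr) + o_{\dot{H}^{1/2}}(t),
\]
so $u_t$ is differentiable at $t=0$ in the sense required by the critical-point definition. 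The error estimate in $\dot{H}^{1/2}$ can be obtained with the Gagliardo-seminorm argument already used in the proof of Theorem \ref{prop_noether_thm_variaton_target_cor}, exploiting the Lipschitz character of $\pi_{\mathbb{S}^m}$ in a neighbourhood of $\mathbb{S}^m$ and the compact support of $\phi$.

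Writing $\psi := \phi - (\phi\cdot u)u$ and using that $u$ is critical together with the duality $\int (-\Delta)^{1/4}u\cdot(-\Delta)^{1/4}\psi\, dx = \langle(-\Delta)^{1/2}u,\psi\rangle$ (justified because $(-\Delta)^{1/4}u\in L^2$ and $\psi\in H^{1/2}\cap L^\infty$ has compact support), we obtain
\[
0 = \tfrac{d}{dt}\bigr|_{t=0}E(u_t) = 2\,\bigl\langle (-\Delta)^{1/2}u,\,\phi - (\phi\cdot u)u\bigr\rangle.
\]
Varying $\phi$ gives the distributional identity $(-\Delta)^{1/2}u = \bigl(u\cdot(-\Delta)^{1/2}u\bigr)u$, and in particular $(-\Delta)^{1/2}u\wedge u = 0$, which is (\ref{eq_lem_Euler_Lagrange_vartar}). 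Applying (\ref{eq_discussion_introduction_Leibnitz_rule_fraclaps}) component by component to the constant function $|u|^2=1$, and using $(-\Delta)^{1/2}(1)=0$, yields for a.e.\ $x\in\mathbb{R}^n$
\[
2\,u(x)\cdot(-\Delta)^{1/2}u(x) = \int_{\mathbb{R}^n}\bigl|\mathrm{d}_{1/2}u(x,y)\bigr|^2\,\frac{dy}{|x-y|^n},
\]
and substituting into the Euler--Lagrange relation gives (\ref{eq_lem_explcit_form_fraclap_on_sphere}). The $L^1$ bound on $(-\Delta)^{1/2}u$ follows immediately: since $|u|=1$, taking absolute values and integrating in $x$ produces, via Fubini, $\|(-\Delta)^{1/2}u\|_{L^1(\mathbb{R}^n)} \leq \tfrac{1}{2}[u]_{\dot{H}^{1/2}}^2 < \infty$.

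For the final statement (\ref{eq_div_1_2_of_Omega_vanishes}), I would reuse the computation already carried out in the proof of Lemma \ref{lem_alternative_argument_div12_Omega_ik=0}: for sufficiently regular $u$ and any $\phi\in C^\infty_c(\mathbb{R}^n,\mathbb{R})$ one has
\[
\mathrm{div}_{1/2}\Omega_{ik}[\phi] = 2\int_{\mathbb{R}^n}\bigl(u^i(-\Delta)^{1/2}u^k - u^k(-\Delta)^{1/2}u^i\bigr)\phi\, dx,
\]
and the Schwartz-approximation argument used there extends this identity to arbitrary $u\in\dot{H}^{1/2}(\mathbb{R}^n,\mathbb{S}^m)$. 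The integrand is the $(i,k)$-component of $u\wedge(-\Delta)^{1/2}u$, which vanishes by (\ref{eq_lem_Euler_Lagrange_vartar}); hence $\mathrm{div}_{1/2}\Omega_{ik}=0$. The main obstacle I expect is the verification that the projected family $u_t$ is admissible in the critical-point sense, namely that $t\mapsto u_t$ is differentiable at $0$ in the $\dot{H}^{1/2}$ topology with the asserted derivative; this is the same type of Gagliardo-seminorm bookkeeping performed in the proof of Theorem \ref{prop_noether_thm_variaton_target_cor}, but here one must additionally exploit $|u|=1$ to control the projection onto the sphere.
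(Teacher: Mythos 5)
Your proposal is correct in substance but follows a genuinely different route from the paper. The paper derives the Euler--Lagrange relation (\ref{eq_lem_Euler_Lagrange_vartar}) by first establishing the algebraic identity (\ref{eq_proof_lem_equality_Lambda_Euler_Lagrange}), which expresses $(-\Delta)^{1/2}u^iu^k - (-\Delta)^{1/2}u^ku^i$ as a $\frac{1}{4}$-divergence, and then invoking Lemma \ref{lem_1_2_unexpected_divergence_free_quantity} (itself an application of the fractional Noether theorem \ref{prop_noether_thm_variaton_target_cor}) to conclude that this $\frac{1}{4}$-divergence vanishes. You instead derive the Euler--Lagrange relation directly by the classical projection argument: deforming $u$ by $u_t = (u+t\phi)/|u+t\phi|$, expanding to first order to get $u_t = u + t(\phi-(\phi\cdot u)u)+o_{\dot H^{1/2}}(t)$, and using criticality to extract $(-\Delta)^{1/2}u = (u\cdot(-\Delta)^{1/2}u)u$. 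Your approach is more elementary and self-contained; the paper's keeps the argument within the Noether-theorem framework that the whole section is built around, which is a deliberate expository choice. For the representation (\ref{eq_lem_explcit_form_fraclap_on_sphere}), both arguments rest on the identity $2\,u\cdot(-\Delta)^{1/2}u(x) = \int|\mathrm{d}_{1/2}u(x,y)|^2\,dy/|x-y|^n$; the paper obtains it by approximating $u$ by Schwartz functions, you obtain it by applying the Leibniz rule (\ref{eq_discussion_introduction_Leibnitz_rule_fraclaps}) to $|u|^2\equiv 1$.

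One point to be careful about: the Leibniz rule (\ref{eq_discussion_introduction_Leibnitz_rule_fraclaps}) for $(-\Delta)^{1/2}$ is stated pointwise, and its pointwise validity requires more regularity than the $\dot H^{1/2}\cap L^\infty$ that $u$ enjoys (the pointwise evaluation of $(-\Delta)^{1/2}u$ is not granted a priori for such $u$). So, strictly, the step ``apply the Leibniz rule component by component to $|u|^2=1$'' must be interpreted distributionally or reduced to smooth approximants, as the paper does. This is a minor gap rather than a wrong idea: the conclusion $2\,u\cdot(-\Delta)^{1/2}u = \int|\mathrm{d}_{1/2}u|^2$ is exactly (\ref{eq_proof_lemma_explixit_formula_product_u_fraclap_12}) and does hold, but the justification requires the approximation bookkeeping you gloss over. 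Apart from this, the admissibility of the projected variation $u_t$ and the final reduction of $\mathrm{div}_{1/2}\Omega_{ik}=0$ to the vanishing of $u\wedge(-\Delta)^{1/2}u$ are both sound, the latter coinciding with the computation already carried out in Lemma \ref{lem_alternative_argument_div12_Omega_ik=0} (the paper instead cites Lemma 3.1 of \cite{divcurl} for this equivalence, but the content is the same).
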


\begin{rem}
In fact, for functions $u\in \dot{H}^\frac{1}{2}(\mathbb{R}^n,\mathbb{S}^m)$, it was shown in \cite{MillotSire} that Equation (\ref{eq_lem_explcit_form_fraclap_on_sphere}) is equivalent to Equation (\ref{eq_lem_Euler_Lagrange_vartar}), and by Lemma 3.1 in \cite{divcurl}, Equation (\ref{eq_lem_Euler_Lagrange_vartar}) is equivalent to Equation (\ref{eq_div_1_2_of_Omega_vanishes_intro_sec}).
\end{rem}

\begin{proof}
For $i\in\lbrace 1,..., m+1\rbrace$ let $v^i:=(-\Delta)^\frac{1}{4}u^i$.
First we claim that
\begin{equation}
\text{div}_\frac{1}{4}\left(\text{d}_\frac{1}{4}v^i(x,y)u^k(x)-\text{d}_\frac{1}{4}v^k(x,y)u^i(x)\right)=(-\Delta)^\frac{1}{2}u^iu^k-(-\Delta)^\frac{1}{2}u^ku^i
\end{equation}
as distributions, for any $i, k\in \lbrace 1,...,m+1\rbrace$.
To prove the claim we first observe that for any $x,y\in\mathbb{R}^n$
\begin{equation}
\begin{split}
\text{d}_\frac{1}{4}v^i(x,y)u^k(x)-\text{d}_\frac{1}{4}v^k(x,y)u^i(x)=&\text{d}_\frac{1}{4}\left(v^iu^k-v^ku^i\right)(x,y)\\
&-v^i(y)\text{d}_\frac{1}{4}u^k(x,y)+v^k(y)\text{d}_\frac{1}{4}u^i(x,y)
\end{split}
\end{equation}
Applying $\text{div}_\frac{1}{4}$ on both sides we obtain
\begin{equation}\label{eq_proof_lem_first_form_fraclap_as_div_of_d}
\begin{split}
&\text{div}_\frac{1}{4}(\text{d}_\frac{1}{4}v^i(x,y) u^k(x)-\text{d}_\frac{1}{4}v^k(x,y) u^i(x))\\
=&(-\Delta)^\frac{1}{4}(v^i u^k-v^k u^i)\\
&-\text{div}_\frac{1}{4}\left(v^i(y) \text{d}_\frac{1}{4}u^k(x,y)-v^k(y) \text{d}_\frac{1}{4} u^i(x,y)\right)
\end{split}
\end{equation}
as distributions.  Now we observe that for any $\phi\in C^\infty_c(\mathbb{R}^n)$, by the "Leibnitz rule" (\ref{eq_discussion_introduction_Leibnitz_rule_fraclaps}) there holds
\begin{equation}
\begin{split}
&\int_{\mathbb{R}^n}v^i(x)(-\Delta)^\frac{1}{4}(u^k\phi)(x)dx\\
=&\int_{\mathbb{R}^n}v^i(x)\left(v^k(x)\phi(x)+u^k(x)(-\Delta)^\frac{1}{4}\phi(x)\right)\\
&-\int_{\mathbb{R}^n}v^i(x)\left(\int_{\mathbb{R}^n}\text{d}_\frac{1}{4}u^k(x,y)\text{d}_\frac{1}{4}\phi(x,y)\frac{dy}{\lvert x-y\rvert^n}\right)dx
\end{split}
\end{equation}
Therefore
\begin{equation}
\begin{split}
&\int_{\mathbb{R}^n} v^i(x)(-\Delta)^\frac{1}{4}(u^k\phi)(x)-v^k(x)(-\Delta)^\frac{1}{4}(u^i\phi)(x)dx\\
=&\int_{\mathbb{R}^n}\left( v^i(x)u^k(x)-v^k(x)u^i(x)\right)(-\Delta)^\frac{1}{4}\phi(x)dx\\
&-\text{div}_\frac{1}{4}\left(v^i(x)\text{d}_\frac{1}{4}u^k(x,y)-v^k(x)\text{d}_\frac{1}{4}u^i(x,y)\right)[\phi].
\end{split}
\end{equation}
As this holds for any $\phi\in C^\infty_c(\mathbb{R}^n)$,
\begin{equation}
\begin{split}
(-\Delta)^\frac{1}{4}v^iu^k-(-\Delta)^\frac{1}{4}v^ku^i=&(-\Delta)^\frac{1}{4}(v^iu^k-v^ku^i)\\
&-\text{div}_\frac{1}{4}\left(v^i(x)\text{d}_\frac{1}{4}u^k(x,y)-v^k(x)\text{d}_\frac{1}{4}u^i(x,y)\right)
\end{split}
\end{equation}
as distributions.
Thus, by (\ref{eq_proof_lem_first_form_fraclap_as_div_of_d}),
\begin{equation}\label{eq_proof_lem_equality_Lambda_Euler_Lagrange}
\begin{split}
\text{div}_\frac{1}{4}(\text{d}_\frac{1}{4}v^i(x,y) u^k(x)-\text{d}_\frac{1}{4}v^k(x,y) u^i(x))=&(-\Delta)^\frac{1}{4}v^iu^k-(-\Delta)^\frac{1}{4}v^ku^i\\
=&(-\Delta)^\frac{1}{2}u^iu^k-(-\Delta)^\frac{1}{2}u^ku^i
\end{split}
\end{equation}
as distributions. This concludes the proof of the claim.
By Lemma \ref{lem_1_2_unexpected_divergence_free_quantity}, if
$u$ is a critical point of (\ref{eq_definition_fractional_Dirichlet_energy_2}) the left side of (\ref{eq_proof_lem_equality_Lambda_Euler_Lagrange}) is equal to zero. Therefore (\ref{eq_lem_Euler_Lagrange_vartar}) holds in the sense of distributions.\\
To show (\ref{eq_lem_explcit_form_fraclap_on_sphere}) let $(u_l)_l$ be a sequence in $\mathscr{S}(\mathbb{R}^n)+\mathbb{R}^m$ as in Lemma \ref{lem_approx_sobolev}, i.e. such that $u_l\to u$ in $L^2(K)$ for any compact $K\subset\mathbb{R}^n$ and $u_l\to u$ in $\dot{H}^\frac{1}{2}(\mathbb{R}^n)$ as $l\to \infty$\footnote{Notice that here, in contrast to the notation of Lemma \ref{lem_approx_sobolev}, we included the constant $c_l$ in the definition of $c_l$.}. Then for any $l\in\mathbb{N}$
\begin{equation}\label{eq_proof_lemma_computation_representation_fraclap_functions_into_spheres}
\begin{split}
u_l(x)\cdot(-\Delta)^\frac{1}{2}u_l(x)=& \int_{\mathbb{R}^n}\frac{u_l(x)-u_l(y)}{\lvert x-y\rvert}\cdot u_l(x)\frac{dy}{\lvert x-y\rvert^n}\\
=& \int_{\mathbb{R}^n}\frac{1-u_l(y)\cdot u_l(x)}{\lvert x-y\rvert}\frac{dy}{\lvert x-y\rvert^n}\\
=& \frac{1}{2}\int_{\mathbb{R}^n}\frac{\lvert u_l(x)-u_l(y)\rvert^2}{\lvert x-y\rvert}\frac{dy}{\lvert x-y\rvert^n}\\
=&\frac{1}{2}\int_{\mathbb{R}^n}\lvert \text{d}_\frac{1}{2}u_l(x,y)\rvert^2\frac{dy}{\lvert x-y\rvert^n}
\end{split}
\end{equation}
for $x\in \mathbb{R}^n$.
Moreover, by choice of $(u_l)_l$,
\begin{equation}
\begin{split}
&u_l\cdot(-\Delta)^\frac{1}{2}u_l\to u\cdot(-\Delta)^\frac{1}{2}u \text{ in }\mathscr{D}'(\mathbb{R}^n),\\
&\frac{1}{2}\int_{\mathbb{R}^n}\lvert \text{d}_\frac{1}{2}u_l(x,y)\rvert^2\frac{dy}{\lvert x-y\rvert^n}\to \frac{1}{2}\int_{\mathbb{R}^n}\lvert \text{d}_\frac{1}{2}u(x,y)\rvert^2\frac{dy}{\lvert x-y\rvert^n}\text{ in }L^1(\mathbb{R}^n)
\end{split}
\end{equation}
as $l\to \infty$. Therefore $u\cdot(-\Delta)^\frac{1}{2}u\in L^1(\mathbb{R}^n)$ and
\begin{equation}\label{eq_proof_lemma_explixit_formula_product_u_fraclap_12}
u\cdot(-\Delta)^\frac{1}{2}u(x)=\frac{1}{2}\int_{\mathbb{R}^n}\lvert \text{d}_\frac{1}{2}u(x,y)\rvert^2\frac{dy}{\lvert x-y\rvert^n}
\end{equation}
for a.e. $x\in \mathbb{R}^n$.
Since (\ref{eq_lem_Euler_Lagrange_vartar}) holds in the sense of distributions,
\begin{equation}
(-\Delta)^\frac{1}{2}u=\left( (-\Delta)^\frac{1}{2}u\cdot u \right)u
\end{equation}
as distributions. Thus, by (\ref{eq_proof_lemma_explixit_formula_product_u_fraclap_12}), $(-\Delta)^\frac{1}{2}u\in L^1(\mathbb{R}^n)$, and for a.e. $x\in \mathbb{R}^n$
\begin{equation}
(-\Delta)^\frac{1}{2}u(x)=\left(u(x)\cdot (-\Delta)^\frac{1}{2}u(x)\right)u(x)=\frac{1}{2}u(x)\int_{\mathbb{R}^n}\lvert \text{d}_\frac{1}{2}u(x,y)\rvert^2\frac{dy}{\lvert x-y\rvert^n}.
\end{equation}
This concludes the proof of (\ref{eq_lem_explcit_form_fraclap_on_sphere}), and thus in particular of (\ref{eq_lem_Euler_Lagrange_vartar}).
For the proof of (\ref{eq_div_1_2_of_Omega_vanishes}) we refer to \cite{divcurl} Lemma 3.1: there it was shown that for a function $u\in \dot{H}^\frac{1}{2}(\mathbb{R}^n, \mathbb{S}^{m})$, (\ref{eq_div_1_2_of_Omega_vanishes}) holds if and only if (\ref{eq_lem_explcit_form_fraclap_on_sphere}) holds.
\end{proof}
\begin{rem}
We observe that a key step in the previous argument consists in the representation
\begin{equation}\label{eq_rem_representation_formula_for_fraclap_into_spheres}
(-\Delta)^\frac{1}{2}u(x)=u(x)\int_{\mathbb{R}^n}\left\lvert \text{d}_\frac{1}{2}u(x,y)\right\rvert^2dy
\end{equation}
for a.e. $x\in \mathbb{R}^n$. This in fact allows to the deduce that $(-\Delta)^\frac{1}{2}u\in L^1(\mathbb{R}^n)$, providing significant informations about the regularity of $u$.
Representation (\ref{eq_rem_representation_formula_for_fraclap_into_spheres}) was obtained by means of computation (\ref{eq_proof_lemma_computation_representation_fraclap_functions_into_spheres}), where we used in an essential way the fact that $u$ takes value in a sphere. It would be interesting to find analogous representations for half harmonic maps taking values in more general manifolds.
\end{rem}

Next we show that considering s different form of the half Dirichlet energy and applying to it the argument of Noether Theorem for rotations in the target, one obtains directly Equation (\ref{eq_div_1_2_of_Omega_vanishes_intro_sec}) for any critical point in $\dot{H}^\frac{1}{2}(\mathbb{R}^n,\mathbb{S}^1)$ of the half Dirichlet energy.\\
For the following result, let
\begin{equation}\label{eq_discussion_other_representation_half_Dirichlet_energy:12}
E(u)=\left\langle (-\Delta)^\frac{1}{2}u, u \right\rangle
\end{equation}
for functions $u\in H^\frac{1}{2}(\mathbb{R}^n, \mathbb{S}^{m})$. Here and in the following, $\langle\cdot, \cdot\rangle$ denotes the action of an element of $\dot{H}^{-\frac{1}{2}}(\mathbb{R}^n)$ on an element of $\dot{H}^{\frac{1}{2}}(\mathbb{R}^n)$, defined as follows: let $a\in \dot{H}^{-\frac{1}{2}}(\mathbb{R}^n)$, $b\in \dot{H}^{\frac{1}{2}}(\mathbb{R}^n)$. Then
\begin{equation}\label{eq_discussion_definition_action_hom_-12_to_12}
\langle a, b\rangle:=\int_{\mathbb{R}^n} \left(\lvert \xi\rvert^{-\frac{1}{2}}\widehat{a}(\xi)\right)\cdot\left(\lvert \xi\rvert^\frac{1}{2}\overline{\widehat{b}(\xi)}\right)d\xi.
\end{equation}
Since $\lvert \xi\rvert^{-\frac{1}{2}}\widehat{a}(\xi)$, $\lvert \xi\rvert^\frac{1}{2}\overline{\widehat{b}(\xi)}\in L^2(\mathbb{R}^n)$, the integral in (\ref{eq_discussion_definition_action_hom_-12_to_12}) is well defined. Moreover, as fractional Laplacians are self-adjoint, the energy defined in (\ref{eq_discussion_other_representation_half_Dirichlet_energy:12}) corresponds to the half Dirichlet energy for any $u\in \dot{H}^\frac{1}{2}(\mathbb{R}^n,\mathbb{S}^m)$.

\begin{lem}\label{lem_direct_proof_divOmega=0}
Let $u\in \dot{H}^\frac{1}{2}(\mathbb{R}^n, \mathbb{S}^{m})$ be a critical point of the energy $E$ in $\dot{H}^\frac{1}{2}(\mathbb{R}^n, \mathbb{S}^{m})$. For any $i, k\in \lbrace 1,..., m+1\rbrace$, $i\neq k$ let $\Omega_{ik}$ be defined as in (\ref{eq_lem_definition_Omega_i_k}). Then
\begin{equation}\label{eq_div_1_2_of_Omega_vanishes2}
\text{div}_\frac{1}{2} \left(\Omega_{ik}\right)=0.
\end{equation}
\end{lem}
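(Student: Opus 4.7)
The plan is to run the Noether-type argument of Theorem \ref{prop_noether_thm_variaton_target_cor} directly on the symmetric representation
\begin{equation*}
E(u) = \langle (-\Delta)^\frac{1}{2} u, u\rangle = \frac{C_{n,\frac{1}{2}}}{2}\int_{\mathbb{R}^n}\int_{\mathbb{R}^n}\frac{|u(x) - u(y)|^2}{|x-y|^{n+1}}\,dx\,dy,
\end{equation*}
whose second equality is Lemma \ref{lem_div_d_=_fraclap}. Fix $i \neq k$ in $\{1,\dots,m+1\}$ and let $A \in \mathfrak{so}(m+1)$ be the rotation generator in the $(i,k)$-plane, so that $Av = v^i e_k - v^k e_i$ for every $v \in \mathbb{R}^{m+1}$. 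For $\psi \in C^\infty_c(\mathbb{R}^n,\mathbb{R})$ consider the variation $u_t(x) := e^{t\psi(x)A} u(x)$. Since $e^{t\psi(x)A}$ is a rotation, $u_t \in \dot{H}^\frac{1}{2}(\mathbb{R}^n,\mathbb{S}^m)$ for each $t$. The admissibility of this family, namely $u_t = u + t\psi(Au) + o_{\dot{H}^\frac{1}{2}}(t)$ as $t \to 0$, follows by repeating the opening estimates of the proof of Theorem \ref{prop_noether_thm_variaton_target_cor} (Cauchy problem, second-order Taylor bound, Gr\"onwall control of the Gagliardo seminorm).

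Next, differentiating the double-integral representation of $E$ yields
\begin{equation*}
\frac{d}{dt}\bigg|_{t=0}E(u_t) = C_{n,\frac{1}{2}}\int_{\mathbb{R}^n}\int_{\mathbb{R}^n}\frac{(u(x) - u(y))\cdot(\psi(x)Au(x) - \psi(y)Au(y))}{|x-y|^{n+1}}\,dx\,dy.
\end{equation*}
Splitting $\psi(x)Au(x) - \psi(y)Au(y) = \psi(x)A(u(x) - u(y)) + (\psi(x) - \psi(y))Au(y)$ and using the antisymmetry of $A$, which gives $(u(x) - u(y))\cdot A(u(x) - u(y)) = 0$, the expression collapses to
\begin{equation*}
C_{n,\frac{1}{2}}\int_{\mathbb{R}^n}\int_{\mathbb{R}^n}\frac{(\psi(x) - \psi(y))\bigl[(u^k(x) - u^k(y))u^i(y) - (u^i(x) - u^i(y))u^k(y)\bigr]}{|x-y|^{n+1}}\,dx\,dy.
\end{equation*}
Symmetrising the integrand under $x \leftrightarrow y$ reduces the bracketed quantity to $u^k(x)u^i(y) - u^i(x)u^k(y)$, and after noting $\Omega_{ik}(x,y) = [u^i(x)u^k(y) - u^k(x)u^i(y)]/|x-y|^\frac{1}{2}$, the whole derivative becomes $-C_{n,\frac{1}{2}}\,\text{div}_\frac{1}{2}\Omega_{ik}[\psi]$. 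Since $u$ is a critical point this vanishes for every $\psi \in C^\infty_c(\mathbb{R}^n,\mathbb{R})$, which is the desired distributional identity.

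The main obstacle is the one familiar from Theorem \ref{prop_noether_thm_variaton_target_cor}: justifying that $u_t$ is an admissible variation in $\dot{H}^\frac{1}{2}$ and that the derivative may be taken under the double integral. Here the verification is strictly easier, because the flow $e^{tA}$ is linear on $\mathbb{R}^{m+1}$, so $\text{d}_\frac{1}{2}(e^{tA}u) = e^{tA}\text{d}_\frac{1}{2}u$ commutes with the difference quotient, and the key cancellation $(u(x)-u(y))\cdot A(u(x)-u(y)) = 0$ is purely algebraic rather than depending on any growth hypothesis on an abstract Lagrangian.
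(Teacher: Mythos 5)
Your proof is correct and reaches the same conclusion as the paper's, but by a genuinely different route. The paper works with the pairing $E(u)=\langle(-\Delta)^\frac{1}{2}u,u\rangle$: it approximates $u$ by Schwartz functions, Taylor-expands $\phi_t^A\circ u_n$ and $(-\Delta)^\frac{1}{2}(\phi_t^A\circ u_n)$, invokes the Leibnitz rule (\ref{eq_proof_rem_lem_equality_as_tempered_distr_also_for_H12}) to split $(-\Delta)^\frac{1}{2}(\psi\, X^A\circ u)$, compares expansions, and then kills the spurious term via the \emph{pointwise} orthogonality $Au(x)\cdot u(x)=0$; the surviving term is $\text{div}_\frac{1}{2}(u\,\text{d}_\frac{1}{2}Au)=\text{div}_\frac{1}{2}\Omega_{ik}$. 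You instead pass immediately to the Gagliardo double-integral form of $E$, differentiate under the integral, decompose $\psi(x)Au(x)-\psi(y)Au(y)$ by peeling off $\psi(x)A(u(x)-u(y))$, and invoke antisymmetry of $A$ acting on the \emph{difference} vector $(u(x)-u(y))$. This avoids the approximation sequence, the Leibnitz rule, and the $\dot{H}^{-\frac12}$--$\dot{H}^\frac12$ pairing machinery entirely — the computation is purely algebraic inside the double integral. Both cancellations are consequences of the antisymmetry of $A$, but yours removes the unwanted term \emph{before} any integration by parts, which is why the bookkeeping is lighter. One small remark: the symmetrisation step you mention is not actually needed — expanding $(u^k(x)-u^k(y))u^i(y)-(u^i(x)-u^i(y))u^k(y)$ already yields $u^k(x)u^i(y)-u^i(x)u^k(y)$ because the $u^i(y)u^k(y)$ cross-terms cancel on the nose. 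Also, your closing remark that $\text{d}_\frac{1}{2}(e^{tA}u)=e^{tA}\text{d}_\frac{1}{2}u$ applies to the constant flow $\phi_t^A$, not directly to the $x$-dependent variation $u_t(x)=e^{t\psi(x)A}u(x)$; the needed quantitative justification for admissibility and differentiation under the integral really does come from the Gr\"onwall-type bounds in the proof of Theorem \ref{prop_noether_thm_variaton_target_cor} (or the subsequent Remark about affine flows), which you correctly cite. This is a presentation nuance, not a gap.
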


\begin{proof}
For any $A\in \mathfrak{so}(m+1)$, let $X_A$ and $\phi^A_t$ be defined as in the proof of Lemma \ref{lem_1_2_unexpected_divergence_free_quantity}.
We remark that for any function $v:\mathbb{R}^n\to\mathbb{R}^m$, for any $R\in SO(m+1)$,
\begin{equation}
R u(x)\cdot (-\Delta)^\frac{1}{2}R u(x)=u(x)\cdot (-\Delta)^\frac{1}{2} u(x)\text{ for any }x\in\mathbb{R}^n.
\end{equation}
In particular, if we extend the domain of $\phi^A_t$ to $\mathbb{R}^m$ identifying $\phi^A_t$ with the multiplication by the matrix $exp(tA)$ (see (\ref{eq_proof_lemma_definition_of_phiA_as_mult_by_exponential_fct})) we obtain that for any function $v:\mathbb{R}^n\to\mathbb{R}^m$
\begin{equation}
\phi^A_t\circ v(x)\cdot (-\Delta)^\frac{1}{2}\left( \phi^A_t\circ v \right)(x)=v(x)\cdot (-\Delta)^\frac{1}{2}u(x)
\end{equation}
for any $x\in\mathbb{R}^n$, $t\in \mathbb{R}$.
Now let $(u_n)_n$ be a sequence in $\mathscr{S}(\mathbb{R}^n)$ approximating $u$ in $\dot{H}^\frac{1}{2}(\mathbb{R}^n)$ as in Lemma \ref{lem_approx_sobolev}. 
Then by Taylor's Theorem, for any $A\in \mathfrak{so}(m+1)$ and for any $n\in\mathbb{N}$
\begin{equation}
\phi^A_t\circ u_n(x)=exp(tA) u_n(x)=u_n(x)+tAu_n(x)+t^2\partial_t\vert_{t=s_{x,t}}exp(tA)u_n(x)
\end{equation}
and
\begin{equation}
\begin{split}
&(-\Delta)^\frac{1}{2}(\phi^A_t\circ u_n)(x)\\
=&exp(tA)(-\Delta)^\frac{1}{2}u_n(x)\\=&(-\Delta)^\frac{1}{2}u_n(x)+tA (-\Delta)^\frac{1}{2}u_n(x)+t^2\partial_t\vert_{t=z_{x,t}}exp(tA)(-\Delta)^\frac{1}{2}u_n(x)
\end{split}
\end{equation}
for any $x\in \mathbb{R}^n$, $t\in \mathbb{R}$, for some $s_{x,t}, z_{x,y}$ between $0$ and $t$, depending on $x$ and $t$.
Since the exponential map is smooth, if we assume that $\lvert t\rvert\leq \delta$ for some $\delta>0$, then $\partial_t\vert_{t=s_{x,t}}exp(tA)$, $\partial_t\vert_{t=z_{x,t}}exp(tA)$ are bounded (in the space of $m$ times $m$ matrices) uniformly in $x$. Therefore
\begin{equation}\label{eq_proof_lem_rem_expansion_invariance_symmetry}
\begin{split}
&\phi^A_t\circ u_n\cdot (-\Delta)^\frac{1}{2}\left( \phi^A_t\circ u_n \right)\\
=&\left(u_n+tAu_n+o_{\dot{H}^\frac{1}{2}}(t)\right)\cdot\left( (-\Delta)^\frac{1}{2}u_n+tA(-\Delta)^\frac{1}{2}u_n +o_{\dot{H}^{-\frac{1}{2}}}(t) \right)
\end{split}
\end{equation}
as $t\to 0$, where the "error terms" $o_{\dot{H}^\frac{1}{2}}(t)$ $o_{\dot{H}^{-\frac{1}{2}}}(t)$ converges uniformly in $n$.
In particular , let $\psi\in C^\infty_c(\mathbb{R}^n)$. Then, substituting $t$ with $t\psi(x)$ in (\ref{eq_proof_lem_rem_expansion_invariance_symmetry}) for $x\in \mathbb{R}^n$ and integrating in $x$ we obtain
\begin{equation}
E(u)=\int_{\mathbb{R}^n}\left(u_n(x)+tAu_n(x)\right)\cdot\left( (-\Delta)^\frac{1}{2}u_n(x)+tA(-\Delta)^\frac{1}{2}u_n (x)\right)dx+o(t),
\end{equation}
as $t\to 0$, where again the "error term" $o(t)$ converges uniformly in $n$. Thus, as $u_n\to u$ in $\dot{H}^\frac{1}{2}(\mathbb{R}^n)$,
\begin{equation}\label{eq_proof_rem_lem_computation_invariance_rotation_expansion}
E(u)=  \left\langle (-\Delta)^\frac{1}{2}u+tA(-\Delta)^\frac{1}{2}u  ,u+ tAu \right\rangle+o(t).
\end{equation}
as $t\to 0$.\\
On the other hand, by the pointwise formula (\ref{eq_prel_pointwise_definition_fraclap}) and the fact that the fractional Laplacian is self-adjoint, we have, in the sense of temperated distributions,
\begin{equation}\label{eq_proof_rem_lem_equality_as_tempered_distr_also_for_H12}
\begin{split}
(-\Delta)^\frac{1}{2}(\psi (X^A\circ u))=&(-\Delta)^\frac{1}{2}\psi (X^A\circ u)+\psi(-\Delta)^\frac{1}{2}\left( X^A\circ u\right)\\&-\int_{\mathbb{R}^n}\text{d}_\frac{1}{2}\psi(x,y)\text{d}_\frac{1}{2}\left(X^A\circ u\right)(x,y)\frac{dy}{\lvert x-y\rvert^n}.
\end{split}
\end{equation}
As $\mathscr{S}(\mathbb{R}^n)$ is dense in $\dot{H}^\frac{1}{2}(\mathbb{R}^n)$ (thought of as quotient space) by Lemma \ref{lem_approx_sobolev}, equality (\ref{eq_proof_rem_lem_equality_as_tempered_distr_also_for_H12}) also holds as equality of linear continuous functionals acting on $\dot{H}^\frac{1}{2}(\mathbb{R}^n)$.
Thus, if we define $u_t$ as in (\ref{eq_proof_prop_Cauchy_prop_PicLind}), 
\begin{equation}\label{eq_proof_rem_lem_computation_expansion}
\begin{split}
E(u_t)=&E(u+t\psi (X^A\circ u)+o_{H^\frac{1}{2}}(t))=E(u+t\psi (X^A\circ u))+o(1)\\
=&\left\langle (-\Delta)^\frac{1}{2}u+t(-\Delta)^\frac{1}{2}(\psi (X^A\circ u)), u+t(\psi (X^A\circ u))\right\rangle+o(t)\\
=&\left\langle (-\Delta)^\frac{1}{2}u+tA(-\Delta)^\frac{1}{2}u  ,u+ tAu \right\rangle\\
&+t\int_{\mathbb{R}^n}(u(x)+t\psi(x)Au(x))\cdot\left(Au(x)(-\Delta)^\frac{1}{2}\psi(x)\right.\\
&\left.-\int_{\mathbb{R}^n}\text{d}_\frac{1}{2}\psi(x,y)\text{d}_\frac{1}{2}Au(x,y)\frac{dy}{\lvert x-y\rvert}\right)dx+o(t)
\end{split}
\end{equation}
where the first step follows from (\ref{eq_proof_prop_first_claim_asymptotic_ut}).\\
Finally, as $u$ is a critical point of $E$,
\begin{equation}\label{eq_proof_rem_lem_u_is_critical_point}
E(u_t)=E(u)+o(t)
\end{equation}
Thus, comparing (\ref{eq_proof_rem_lem_computation_invariance_rotation_expansion}), (\ref{eq_proof_rem_lem_computation_expansion}) and (\ref{eq_proof_rem_lem_u_is_critical_point}) we conclude that
\begin{equation}\label{eq_proof_rem_lem_conclusion}
\begin{split}
\int_{\mathbb{R}^n}u(x)\cdot\left(Au(x)(-\Delta)^\frac{1}{2}\psi(x)
-\int_{\mathbb{R}^n}\text{d}_\frac{1}{2}\psi(x,y)\text{d}_\frac{1}{2}Au(x,y)\frac{dy}{\lvert x-y\rvert}\right)dx=0
\end{split}
\end{equation}
Since $Au(x)\cdot u(x)=0$ for any $x\in \mathbb{R}^n$, the first summand in (\ref{eq_proof_rem_lem_conclusion}) vanishes. Since the equality holds for any $\psi\in C^\infty_0(\mathbb{R}^n)$, there holds
\begin{equation}
\text{div}_\frac{1}{2}\left( u\text{d}_\frac{1}{2}Au\right)=0.
\end{equation}
For $i,k\in \lbrace 1,2, ..., m+1\rbrace$, $i\neq k$, choosing the matrix $A$ as in (\ref{eq_proof_prel_matrixdef}) we obtain
\begin{equation}
\text{div}_\frac{1}{2}\left(\Omega_{ik}\right)=\text{div}_\frac{1}{2}\left( \frac{u^i(x)u^k(y)-u^k(x)u^i(y)}{\lvert x-y\rvert^\frac{1}{2}}\right)=0.
\end{equation}
\end{proof}

Finally we show how Equation (\ref{eq_lem_vartar_Omega=0_ref}) can be used to show that half-harmonic maps are continuous. We follow an approach slightly different from the one used by K. Mazowiecka and A. Schikorra in \cite{divcurl} and closer to the one developed by F. H\'{e}lein in \cite{Heleinarticle} for the local case.

\begin{thm}\label{prop_critical_points_are_continuous}(F. Da Lio, T. Rivi\`{e}re, \cite{3Termscomm})
Let $u\in \dot{H}^\frac{1}{2}(\mathbb{R},\mathbb{S}^m)$ be a critical point of the energy (\ref{eq_definition_fractional_Dirichlet_energy_2}), then $u$ is continuous.
\end{thm}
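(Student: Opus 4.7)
The plan is to adapt Hélein's local argument to the fractional setting, using $\text{div}_\frac{1}{2}\Omega_{ik}=0$ from Lemma \ref{lem_direct_proof_divOmega=0} together with the fractional Wente lemma (Corollary \ref{cor_fractional_Wente_Lemma}) applied with $n=1$, $s=1/2$, $p=p'=2$. The goal is to rewrite the Euler–Lagrange system of a critical point $u$ in the form
\[
(-\Delta)^\frac{1}{2}u^i = \sum_k F_{ik}\cdot_{\bigwedge^1_{od}\mathbb{R}}\text{d}_\frac{1}{2}u^k + T_i,
\]
with $F_{ik}\in L^2(\bigwedge^1_{od}\mathbb{R})$ satisfying $\text{div}_\frac{1}{2} F_{ik}=0$, and with $T_i$ a distributional remainder fulfilling the $L^{2,\infty}$-duality bound required by Corollary \ref{cor_fractional_Wente_Lemma}.

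First I would start from the pointwise Euler–Lagrange equation of Lemma \ref{lem_1_2_div_of_Omega_vanishes},
\[
(-\Delta)^\frac{1}{2}u^i(x)=\tfrac12 u^i(x)\int_{\mathbb{R}}\bigl|\text{d}_\frac{1}{2}u(x,y)\bigr|^2\,\frac{dy}{|x-y|},
\]
and exploit the constraint $|u|=1$, which gives the pointwise identity
\[
\sum_k u^k(x)\,\text{d}_\frac{1}{2}u^k(x,y)=\tfrac12|x-y|^\frac{1}{2}\bigl|\text{d}_\frac{1}{2}u(x,y)\bigr|^2.
\]
Plugging this into $\Omega_{ik}(x,y)=u^k(x)\text{d}_\frac{1}{2}u^i(x,y)-u^i(x)\text{d}_\frac{1}{2}u^k(x,y)$, summing over $k$ and integrating against $\frac{dy}{|x-y|}$, a direct manipulation yields the representation
\[
(-\Delta)^\frac{1}{2}u^i=-\tfrac12\sum_k \Omega_{ik}\cdot_{\bigwedge^1_{od}\mathbb{R}}\text{d}_\frac{1}{2}u^k+\tfrac14 T_i,
\]
where
\[
T_i(x):=\int_{\mathbb{R}}\bigl(u^i(x)-u^i(y)\bigr)\bigl|\text{d}_\frac{1}{2}u(x,y)\bigr|^2\,\frac{dy}{|x-y|}.
\]
The first summand is precisely of the shape $F\cdot \text{d}_s g$ covered by the fractional Wente lemma: $\text{div}_\frac{1}{2}\Omega_{ik}=0$ by Lemma \ref{lem_direct_proof_divOmega=0}, and $\Omega_{ik}\in L^2(\bigwedge^1_{od}\mathbb{R})$ since $u\in L^\infty$ and $\text{d}_\frac{1}{2}u\in L^2(\bigwedge^1_{od}\mathbb{R})$ by Lemma \ref{lem_cont_grad_div}.

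The hard step is to show that $T_i$, tested against $\phi\in C^\infty_c(\mathbb{R})$, satisfies
\[
\bigl|T_i[\phi]\bigr|\leq \Lambda\,\bigl\|(-\Delta)^\frac{1}{4}\phi\bigr\|_{L^{2,\infty}(\mathbb{R})}.
\]
I would attack this by symmetrising $T_i[\phi]$ in $(x,y)$ and splitting the resulting trilinear form so that one factor takes the shape of a $\text{div}_\frac{1}{2}$-free quantity paired with a half-gradient; Theorem \ref{thm_div_curl_and_Hardy} then places such a pairing in the Hardy space $\mathcal{H}^1(\mathbb{R})$, and the dual bound against $(-\Delta)^\frac{1}{4}\phi$ in $L^{2,\infty}$ follows from $\mathcal{H}^1$–BMO duality combined with the Lorentz duality between $L^{2,1}$ and $L^{2,\infty}$. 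Equivalently, one can use the Leibniz rule \eqref{eq_discussion_introduction_Leibnitz_rule_fraclaps} to identify $T_i$ with a three-term commutator of the type studied in \cite{3Termscomm} and \cite{divcurl}, for which the required bound is known.

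Once both ingredients are in place, Corollary \ref{cor_fractional_Wente_Lemma} applied component-wise with $F=-\tfrac12\Omega_{ik}$, $g=u^k$ and $T=\tfrac14 T_i$ yields that every $u^i\in\dot{W}^{1/2,2}(\mathbb{R})$ is continuous, hence $u$ is continuous. The main obstacle is exactly the $L^{2,\infty}$-duality estimate on $T_i$: this is the point where the nonlocality genuinely obstructs a term-by-term translation of Hélein's proof, since in the local case the analogous remainder vanishes identically by the product rule, whereas here it must be absorbed by a refined Hardy/Lorentz argument.
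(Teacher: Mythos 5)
Your proof is correct and takes essentially the same approach as the paper: both produce the representation $(-\Delta)^\frac{1}{2}u^i = -c\sum_k\Omega_{ik}\cdot_{\bigwedge^1_{od}\mathbb{R}}\text{d}_\frac{1}{2}u^k + \tilde{T}_i$ with the identical $\frac{1}{2}$-divergence-free coefficients $\Omega_{ik}$, the identical remainder (the paper's $T$ equals your $\tfrac12 T_i$ componentwise), and the same appeal to the three-term commutator estimate of \cite{3Termscomm}/\cite{divcurl} before invoking Corollary \ref{cor_fractional_Wente_Lemma}. The paper obtains the representation by projecting $\text{d}_\frac{1}{2}u(x,y)$ onto the frame $\lbrace A_{ik}u(x), u(x)\rbrace$ and then applying $\text{div}_\frac{1}{2}$, whereas you expand $\sum_k\Omega_{ik}\cdot_{\bigwedge^1_{od}\mathbb{R}}\text{d}_\frac{1}{2}u^k$ directly via the sphere constraint and the pointwise Euler--Lagrange identity of Lemma \ref{lem_1_2_div_of_Omega_vanishes}; the two computations are algebraically equivalent and yield the same formula, so there is no substantive difference.
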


\begin{proof}
First we claim that for any $v\in \mathbb{R}^{m+1}$,
\begin{equation}\label{eq_proof_prop_representation_identity}
\sum_{i<k} v\cdot \left(A_{ik} u(x)\right) A_{ik} u(x)+(v\cdot u(x))u(x)=v
\end{equation}
for any $x\in \mathbb{R}^n$, where $A_{ik}$ is the matrix defined by (\ref{eq_proof_prel_matrixdef}). As both sides of (\ref{eq_proof_prop_representation_identity}) are linear in $v$, it is enough to show the identity for the canonical basis vectors $e^j$, $j\in \lbrace 1,...m+1\rbrace$: for $l\in \lbrace 1,...,m+1\rbrace$
\begin{equation}
\begin{split}
&\left[\sum_{i<k} e^j\cdot \left(A_{ik} u(x)\right) A_{ik} u(x)\right]_l\\
=&\sum_{i<k}(\delta_{jk}u^i(x)-\delta_{ji}u^k(x))\left[A_{ik} u(x)\right]_l+u^j(x)u^l(x)\\
=&\sum_{i<k}(\delta_{jk}u^i(x)-\delta_{ji}u^k(x))(\delta_{kl}u^i(x)-\delta_{li}u^k(x))+u^j(x)u^l(x)\\
=&\delta_{jl}\sum_{i<l}u^i(x)^2+\delta_{jl}\sum_{k>l}u^k(x)^2-(1+\delta_{jl})u^j(x)u^l(x)+u^j(x)u^l(x)\\
=&\delta_{jl}\sum_{i=1}^{m+1} u^i(x)^2=\delta_{jl},
\end{split}
\end{equation}
as $u(x)\in \mathbb{S}^m$ for any $x\in\mathbb{R}^n$.
Therefore for any $x,y\in\mathbb{R}^n$ we can write
\begin{equation}\label{eq_proof_prop_first_step_for_fraclap_12}
\text{d}_\frac{1}{2}u(x,y)=\sum_{i<k} \text{d}_\frac{1}{2}u(x,y)\cdot \left(A_{ik} u(x)\right) A_{ik} u(x)+\left(\text{d}_\frac{1}{2}u(x,y)\cdot u(x)\right)u(x).
\end{equation}
Now let $\phi\in C^\infty_c(\mathbb{R}^n, \mathbb{R}^{m+1})$, then
\begin{equation}\label{eq_proof_prop_projection_gradient}
\begin{split}
&\text{div}_\frac{1}{2}\left(\sum_{i<k} \text{d}_\frac{1}{2}u(x,y)\cdot \left(A_{ik} u(x)\right) A_{ik} u(x)+(\text{d}_\frac{1}{2}u(x,y)\cdot u(x))u(x)\right)[\phi]\\
=&\int_{\mathbb{R}^n}\int_{\mathbb{R}^n}\left(\sum_{i<k} \text{d}_\frac{1}{2}u(x,y)\cdot \left(A_{ik} u(x)\right) A_{ik} u(x)+(\text{d}_\frac{1}{2}(x,y)\cdot u(x))u(x)\right)\\
&\qquad\quad\cdot\frac{\phi(x)-\phi(y)}{\lvert x-y\rvert^\frac{1}{2}}\frac{dxdy}{\lvert x-y\rvert^n}\\
=&\int_{\mathbb{R}^n}\int_{\mathbb{R}^n}\left(\sum_{i<k} \text{d}_\frac{1}{2}u(x,y)\cdot \left(A_{ik} u(x)\right) \right)\frac{A_{ik} u(x)\cdot\phi(x)-A_{ik} u(y)\cdot\phi(y)}{\lvert x-y\rvert^\frac{1}{2}}\frac{dxdy}{\lvert x-y\rvert^n}\\
&+\int_{\mathbb{R}^n}\int_{\mathbb{R}^n}\sum_{i<k} \text{d}_\frac{1}{2}u(x,y)\cdot \left(A_{ik} u(x)\right)\frac{A_{ik}u(y)-A_{ik}u(x)}{\lvert x-y\rvert^\frac{1}{2}}\cdot\phi(y)\frac{dxdy}{\lvert x-y\rvert^n}\\
&+\int_{\mathbb{R}^n}\int_{\mathbb{R}^n}\left(\text{d}_\frac{1}{2}u(x,y)\cdot u(x)\text{d}_\frac{1}{2}u(x,y)\right)\cdot\phi(x)\frac{dxdy}{\lvert x-y\rvert^n}
\end{split}
\end{equation}
provided the integrals converge absolutely.
If we set
\begin{equation}
T(x):=\int_{\mathbb{R}^n}\text{d}_\frac{1}{2}u(x,y)\cdot u(x)\text{d}_\frac{1}{2}u(x,y)\frac{dy}{\lvert x-y\rvert}
\end{equation}
for $x\in \mathbb{R}$, we can rewrite the last term in (\ref{eq_proof_prop_projection_gradient}) as
\begin{equation}
\int_{\mathbb{R}}T(x)\phi(x)dx.
\end{equation}
By \cite{divcurl}, Section 3.2, $T\in L^1_{loc}(\mathbb{R})$, the last integral in (\ref{eq_proof_prop_projection_gradient}) converges absolutely, and there holds
\begin{equation}\label{eq_proof_prop_estimate_rest_term}
\left\lvert\int_{\mathbb{R}}T(x)\phi(x)dx\right\rvert\leq C \lVert (-\Delta)^\frac{1}{4}\phi\rVert_{L^{2,\infty}}
\end{equation}
for some constant $C$ independent from $\phi$.
Moreover we claim that the first term in the last expression of (\ref{eq_proof_prop_projection_gradient}) can be rewritten as
\begin{equation}
\text{div}_\frac{1}{2}\left(\sum_{i<k} \text{d}_\frac{1}{2}u(x,y)\cdot \left(A_{ik} u(x)\right)  \right) [A_{ik} u\cdot\phi]
\end{equation}
Actually here we used an improper notation, as we defined the operator $\text{div}_\frac{1}{2}$ of a function to be a distribution (if condition (\ref{eq_def_condition_def_div}) is satisfied).
What we mean here is that $\text{div}_\frac{1}{2}\left(\sum_{i<k} \text{d}_\frac{1}{2}u(x,y)\cdot \left(A_{ik} u(x)\right)  \right)$ is a well defined distribution that can be extended continuously to functions in $H^\frac{1}{2}(\mathbb{R})$. Once we checked this, it will follows that the first term in the last expression of (\ref{eq_proof_prop_projection_gradient}) vanishes, since, by (\ref{eq_div_1_2_of_Omega_vanishes}),
\begin{equation}
\text{div}_\frac{1}{2}\left(\sum_{i<k} \text{d}_\frac{1}{2}u(x,y)\cdot \left(A_{ik} u(x)\right)  \right)=0
\end{equation}
as distribution. To prove the claim, we compute for any $v\in H^\frac{1}{2}(\mathbb{R})$
\begin{equation}
\begin{split}
&\int_{\mathbb{R}}\int_{\mathbb{R}} \left\lvert\left(\sum_{i<k} \text{d}_\frac{1}{2}u(x,y)\cdot \left(A_{ik} u(x)\right)  \right) \frac{v(x)-v(y)}{\lvert x-y\rvert^\frac{1}{2}}\right\rvert\frac{dxdy}{\lvert x-y\rvert^n}\\
\leq & \sum_{i<k}\int_{\mathbb{R}}\int_{\mathbb{R}}\left\lvert\frac {u(x)-u(y)}{\lvert x-y\rvert^\frac{1}{2}}\right\rvert\left\lvert\frac{v(x)-v(y)}{\lvert x-y\rvert^\frac{1}{2}}\right\rvert \frac{dxdy}{\lvert x-y\rvert^n}\\
\leq& \binom{m+1}{2} [u]_{W^{\frac{1}{2}, 2}}[v]_{W^{\frac{1}{2}, 2}}.
\end{split}
\end{equation}
A similar computation shows that also the second integral in the last line of (\ref{eq_proof_prop_projection_gradient}) converges absolutely.
Therefore if we apply $\text{div}_\frac{1}{2}$ on both sides of (\ref{eq_proof_prop_first_step_for_fraclap_12}) we obtain the following identity of distributions:
\begin{equation}\label{eq_proof_prop_representation_fraclap_wih_error}
(-\Delta)^\frac{1}{2}u= \sum_{i<k}\left( \text{d}_\frac{1}{2}u(x,y)\cdot \left(A_{ik} u(x)\right)\right)\cdot_{\bigwedge_{od}^1\mathbb{R}} \text{d}_\frac{1}{2}\left(A_{ik}u\right)+T,
\end{equation}
where we used the notation introduced in (\ref{eq_definition_product_in_one_var}).
Now observe that since $u\in \dot{H}^\frac{1}{2}(\mathbb{R}, \mathbb{S}^m)$, $\text{d}_\frac{1}{2}u(x,y)\cdot \left(A_{ik} u(x)\right)\in L^2\left(\bigwedge^1_{od}\mathbb{R}\right)$, and by (\ref{eq_div_1_2_of_Omega_vanishes}), \newline$\text{div}_\frac{1}{2}\left(\text{d}_\frac{1}{2}u(x,y)\cdot \left(A_{ik} u(x)\right)\right)=0$
Therefore, since $T$ satisfy estimate (\ref{eq_proof_prop_estimate_rest_term}), Corollary \ref{cor_fractional_Wente_Lemma} implies that $u$ is continuous.
\end{proof}

\begin{rem}\noindent
\begin{enumerate}
\item The only step where the assumption $n=1$ is needed is estimate (\ref{eq_proof_prop_estimate_rest_term}).

\item The only difference between the proof of Theorem \ref{prop_critical_points_are_continuous}) presented above and the one in \cite{divcurl} is that here we obtained equation (\ref{eq_proof_prop_representation_fraclap_wih_error}) by projecting $\text{d}_\frac{1}{2}u(x,y)$ onto vector fields which are generators of infinitesimal symmetry for the energy (\ref{eq_definition_fractional_Dirichlet_energy_2}) (in order to apply the fractional analogous of Noether theorem) and onto $u(x)$. This allows to interpret the "error term" as the contribution from the normal component of $\text{d}_\frac{1}{2}u(x,y)$ (wrt $T_{u(x)}\mathbb{S}^m$). In the local cases, no such term appears; in fact, the role of $\text{d}_\frac{1}{2}u(x,y)$ is taken by $\nabla u(x)$ (for a.e. $x\in\mathbb{R}^2$), which lies in $T_{u(x)}\mathbb{S}^m$.
\end{enumerate}
\end{rem}

\section{Variations in the domain}

In this section we consider variational problems whose Lagrangians involving a $\frac{1}{4}$-fractional Laplacian and exhibiting symmetries under conformal variations in the domain. The first goal is to derive a Noether theorem for variations in the target for energies involving the fractional Laplacian.\\

\subsection{A Noether Theorem for Lagrangians involving a $\frac{1}{4}$-fractional Laplacian}

Let's first consider energies of the form
\begin{equation}\label{eq_local_energy_vardom}
E(u):=\int_{D^2} L\left(u(x), \nabla u(x)\right)dx,
\end{equation}
defined for functions $u\in W^{1,2}(D^2, \mathbb{R}^m)$. Here $D^2$ denotes the disc of radius $1$ centred at $0$ in $\mathbb{R}^2$, $L\in C^1(\mathbb{R}^m\times\mathbb{R}^{2m})$ for some $m\in\mathbb{N}_{>0}$ and $\lvert L(z,p)\rvert\leq C(1+\lvert z\rvert)$ for some constant $C$ for all $(z, p)\in \mathbb{R}^m\times\mathbb{R}^{2m}$.\\

\begin{defn}\label{def_statpoints_invariant}\noindent
\begin{enumerate}
\item $u\in W^{1,2}(D^2, \mathbb{R}^m)$ is said to be a \textbf{stationary point} of (\ref{eq_local_energy_vardom}) if for any vector field $X\in C^1(D^2, \mathbb{R}^2)$ there holds
\begin{equation}\label{eq_stationary_equation_local}
\frac{d}{dt}\bigg\vert_{t=0} E(u(x+tX(x)))=0.
\end{equation}
\item Let $X$ be a smooth vector field on $D^2$ and let's denote $\phi_t$ its flow.
The energy (\ref{eq_local_energy_vardom}) is said to be \textbf{invariant with respect to the variation generated by $X$} if for any $u\in u\in W^{1,2}(D^2, \mathbb{R}^m)$ there holds
\begin{equation}\label{eq_condition_invariance_vector_field}
L(u\circ\phi, \nabla (u\circ\phi_t))(x)=L(u, \nabla u)\circ \phi_t(x)\lvert\det D\phi_t(x)\rvert
\end{equation}
for a.e. $x\in D^2$, for $t\in\mathbb{R}$.
\end{enumerate}
\end{defn}

In this framework, the following Noether theorem holds:

\begin{thm}\label{thm_Noether_thm_vardom} (Theorem 2.1 in \cite{DaLio})
Let $E$ be defined as in (\ref{eq_local_energy_vardom}) and let $X$ be a vector field on $B_1(0)$. Let $u\in W^{1,2}(B^1(0), \mathbb{R}^m)$ be a stationary point of the energy (\ref{eq_local_energy_vardom}). If $E$ is invariant with respect to the variation generated by $X$. Let
\begin{equation}
J_X[u](x)=\left([\partial_{p^k_j}f\left(u,\nabla u\right)u^j(x)\partial_{x_l}u^j(x)X^l(x)]-f(u, \nabla u)(x)X^k(x)\right)_{k=1,2}
\end{equation}
for $x\in\mathbb{R}^n$ (we use implicit summation over repeated indices).
Then
\begin{equation}\label{eq_thm_div_of_Noether_current_equal_to_0}
\text{div}\left( J_X[u]\right)=0.
\end{equation}
\end{thm}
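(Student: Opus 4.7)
My plan is to derive (\ref{eq_thm_div_of_Noether_current_equal_to_0}) by combining the infinitesimal (at $t=0$) form of the invariance identity (\ref{eq_condition_invariance_vector_field})---which yields a pointwise algebraic identity---with the stationarity (\ref{eq_stationary_equation_local}) tested against the localised vector field $\eta X$ for arbitrary $\eta \in C^\infty_c(D^2)$. The two ingredients conspire via the product rule and a single integration by parts to produce the conservation of the Noether current $J_X[u]$.

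\textbf{Step 1 (infinitesimal invariance).} Writing $\phi_t$ for the flow of $X$ and using $\partial_t|_{t=0}\phi_t(x) = X(x)$, $\partial_t|_{t=0}|\det D\phi_t(x)| = \text{div}\,X(x)$, together with the chain-rule identities $\partial_t|_{t=0}(u\circ\phi_t)^j(x) = X^l\partial_{x_l}u^j(x)$ and $\partial_t|_{t=0}\partial_{x_k}(u\circ\phi_t)^j(x) = \partial_{x_k}(X^l\partial_{x_l}u^j)(x)$, I would differentiate (\ref{eq_condition_invariance_vector_field}) in $t$ at $0$ to obtain, for a.e.\ $x\in D^2$,
\begin{equation*}
\partial_{z^j}L(u,\nabla u)\,X^l\partial_{x_l}u^j + \partial_{p^k_j}L(u,\nabla u)\,\partial_{x_k}(X^l\partial_{x_l}u^j) = \partial_{x_k}\bigl(L(u,\nabla u)\,X^k\bigr),
\end{equation*}
the right-hand side being $X^l\partial_{x_l}[L(u,\nabla u)] + L\,\text{div}\,X = \partial_{x_k}(LX^k)$ by the ordinary product rule.

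\textbf{Step 2 (stationarity against $\eta X$ and integration by parts).} For $\eta \in C^\infty_c(D^2)$, (\ref{eq_stationary_equation_local}) applied to $Y := \eta X$ reads
\begin{equation*}
0 = \int_{D^2}\Bigl[\partial_{z^j}L \cdot \eta X^l\partial_{x_l}u^j + \partial_{p^k_j}L \cdot \partial_{x_k}(\eta X^l\partial_{x_l}u^j)\Bigr]\,dx.
\end{equation*}
Splitting $\partial_{x_k}(\eta X^l\partial_{x_l}u^j) = \eta\,\partial_{x_k}(X^l\partial_{x_l}u^j) + (\partial_{x_k}\eta)\,X^l\partial_{x_l}u^j$, substituting the identity of Step 1 into the $\eta$-multiplied integrand, and integrating $\int_{D^2}\eta\,\partial_{x_k}(LX^k)\,dx$ by parts (legitimate since $\eta$ has compact support), the equation collapses to
\begin{equation*}
\int_{D^2}\bigl[\partial_{p^k_j}L \cdot X^l\partial_{x_l}u^j - L\,X^k\bigr]\partial_{x_k}\eta\,dx = 0,
\end{equation*}
which is precisely $\text{div}(J_X[u]) = 0$ in $\mathscr{D}'(D^2)$ since $\eta$ is arbitrary.

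\textbf{Main obstacle.} The delicate point is justifying the pointwise $t$-differentiation of (\ref{eq_condition_invariance_vector_field}) given only $u\in W^{1,2}$ and $L\in C^1$ with the assumed growth: since $\phi_t$ is a smooth diffeomorphism for small $t$, the composition $u\circ\phi_t$ lies in $W^{1,2}$ with weak gradient $\nabla u(\phi_t)\cdot D\phi_t$, and one must check that $t\mapsto L(u\circ\phi_t,\nabla(u\circ\phi_t))(x)$ is genuinely differentiable in $t$ at $t=0$ for almost every $x$, with the derivative given by the chain rule. This should follow from the joint regularity of $\phi_t$ in $(x,t)$ together with a Lebesgue-point argument. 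Everything else is the classical Noether algebra---the product rule, substitution of the symmetry identity, and one integration by parts---and the same strategy will guide the adaptation to the fractional settings in the paper, with $\partial_{x_k}$ replaced by $\text{div}_{1/4}$ and the product rule replaced by the fractional Leibniz identity (\ref{eq_discussion_introduction_Leibnitz_rule_fraclaps}).
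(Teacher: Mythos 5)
Your proposal follows the classical Noether mechanism, which is exactly the strategy the paper's one-sentence sketch indicates (the paper does not reproduce a full proof here and refers to \cite{DaLio}). The architecture — infinitesimal invariance identity in Step 1, stationarity tested against $\eta X$ in Step 2, one integration by parts — is the right one and would give the conclusion.

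There is, however, a genuine regularity gap, and it is not the $t$-differentiation issue you flag. Both your Step 1 identity and the Step 2 stationarity integrand contain $\partial_{x_k}(X^l\partial_{x_l}u^j)$, which upon expansion produces $X^l\,\partial_{x_k}\partial_{x_l}u^j$; for $u\in W^{1,2}$ this second derivative simply does not exist, so neither identity is well-defined as written, and a Lebesgue-point argument for $t\mapsto L(u\circ\phi_t,\nabla(u\circ\phi_t))(x)$ cannot repair this, because the limit it would supply is not an object available in $W^{1,2}$. The remedy is algebraic. In your Step 1 identity the terms $\partial_{z^j}L\,X^l\partial_{x_l}u^j$ and $\partial_{p^k_j}L\,X^l\partial_{x_k}\partial_{x_l}u^j$ on the left cancel exactly against the corresponding pieces of $\partial_{x_k}(LX^k)=(\partial_{x_k}L)X^k + L\,\operatorname{div}X$ on the right, so the infinitesimal invariance reduces to the first-order pointwise identity
\begin{equation*}
T^k_l\,\partial_{x_k}X^l=0,\qquad T^k_l:=\partial_{p^k_j}L\,\partial_{x_l}u^j - L\,\delta^k_l,
\end{equation*}
which involves only $\nabla u$ and hence makes sense a.e.\ for $u\in W^{1,2}$. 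Likewise, the inner variation should be computed via the change of variables $y = x+tY(x)$, which yields $\frac{d}{dt}\big\vert_{t=0}E(u(\cdot+tY)) = \int_{D^2}T^k_l\,\partial_{x_k}Y^l\,dx$ without ever producing a second derivative of $u$. With $Y=\eta X$, splitting $\partial_{x_k}(\eta X^l)=\eta\,\partial_{x_k}X^l+X^l\,\partial_{x_k}\eta$ and discarding the first contribution by the reduced invariance gives $\int_{D^2}T^k_l X^l\,\partial_{x_k}\eta\,dx=0$, which is $\operatorname{div}(J_X[u])=0$ with $J^k_X=T^k_lX^l$ (the extra $u^j$ appearing inside the paper's display for $J_X$ is a misprint; the definition you use is the intended one).
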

For the proof, one can plug $u\circ\phi_t$ into the energy (\ref{eq_local_energy_vardom}) and take the derivative in $t$ at time $t=0$. Comparing the resulting expression with the stationary equation (\ref{eq_stationary_equation_local}) satisfied by $u$, one obtains (\ref{eq_thm_div_of_Noether_current_equal_to_0}).\\
Theorem \ref{thm_Noether_thm_vardom} finds useful applications to energies of the kind (\ref{eq_local_energy_vardom}) exhibiting invariance under conformal transformations of the domain.
In this document we make use of the following definition for conformal maps:
\begin{defn}
Let $(\mathscr{M}, g)$, $(\mathscr{N}, h)$ be two Riemannian manifolds. A differentiable map $\phi$ from $\mathscr{M}$ to $\mathscr{N}$ is said to be \textbf{conformal} if for any point $p\in\mathscr{M}$, for any $X, Y\in T_p\mathscr{M}$
\begin{equation}\label{eq_def_definition_conformal_map}
h(d\phi_p X, d\phi_p Y)=e^{2\lambda(p)}g(X, Y)
\end{equation}
for some $\lambda(p)\in \mathbb{R}$. $\lambda(p)$ is called \textbf{conformal factor} of $\phi$ at $p$.
\end{defn}
In particular, for any $n\in\mathbb{N}_{>0}$, for any conformal map $\phi$ from $\mathbb{R}^n$ to $\mathbb{R}^n$,
\begin{equation}
D\phi(x)\cdot D\phi(x)^T=e^{2\lambda(x)}\text{Id}
\end{equation}
for any $x\in\mathbb{R}^n$, where $\text{Id}$ denotes the $n\times n$-identity matrix.\\
An important example of energies of the kind (\ref{eq_local_energy_vardom}) invariant under conformal transformations in the domain is given by the Dirichlet energy
\begin{equation}\label{eq_def_Dirichlet_energy_vartar}
E(u)=\int_{D^2}\lvert \nabla u(x)\rvert^2dx
\end{equation}
defined for functions $u\in H^1(D^2)$. To check that the energy (\ref{eq_def_Dirichlet_energy_vartar}) is indeed invariant under conformal variations in the domain, we compute

\begin{equation}
\begin{split}
\lvert \nabla (u\circ\phi)(x)\rvert^2=&\left(D\phi(x)^T\nabla u(\phi(x))\right)\cdot\left(D\phi(x)^T\nabla u(\phi(x))\right)\\
=&\left(D\phi(x)^TD\phi(x)\nabla u(\phi(x))\right)\cdot\left(\nabla u(\phi(x))\right)\\
=&e^{2\lambda(x)}\left\lvert \nabla u(\phi(x))\right\rvert^2
\end{split}
\end{equation}
for a.e. $x\in\mathbb{R}^n$, and since $n=2$ and $\phi$ is conformal
\begin{equation}
\lvert \det D\phi(x)\rvert=\left\lvert\partial_x\phi(x)\wedge\partial_y\phi(x)\right\rvert=e^{2\lambda(x)}.
\end{equation}
Therefore equation (\ref{eq_condition_invariance_vector_field}) is satisfied for every conformal diffeomorphism $\phi$.\\
We observe that the Dirichlet energy is conformal invariant only for $n=2$. Indeed if we take $X(x)=x$ for $x\in \mathbb{R}^n$, the generator of dilations, for its flow $\phi_t(x)=(1+t)x$ there holds
\begin{equation}
\lvert \nabla (u\circ\phi_t)(x)\rvert^2=(1+t)^2\lvert\nabla u((1+t)x)\rvert^2
\end{equation}
and
\begin{equation}
\lvert \det D\phi_t(x)\rvert=(1+t)^n.
\end{equation}
Thus equation (\ref{eq_condition_invariance_vector_field}) only holds for $n=2$.
If we look for analogous energies in dimension $n=1$, it is therefore natural to try to substitute $\nabla u$ with a $\frac{1}{2}$-homogeneous quantity. A natural candidate is given again by the half Dirichlet energy 
\begin{equation}\label{eq_energy_fraclap_1_4_square}
\int_{\mathbb{S}^1}\left\lvert (-\Delta)^\frac{1}{4}u(x)\right\rvert^2(x)dx.
\end{equation}  
Notice that energy (\ref{eq_energy_fraclap_1_4_square}) is defined for functions on $\mathbb{S}^1$. This is because, euristically, we can consider problem (\ref{eq_energy_fraclap_1_4_square}) as  a "trace" of problem (\ref{eq_local_energy_vardom}).
In order to make the minimization problem non-trivial, we will look for minimizers taking values in a given manifold.
In the following, we will try to follow the proof of theorem (\ref{thm_Noether_thm_vardom}) for a class of energies that includes (\ref{eq_energy_fraclap_1_4_square}). In section \ref{ssec: Stationary points of the energy} we will investigate in more details the properties of critical points of the energy (\ref{eq_energy_fraclap_1_4_square}).\\
Let $m\in\mathbb{N}_{>0}$. Let $L\in C^2(\mathbb{R}^m\times\mathbb{R}^m, \mathbb{R})$ so that for any $x,p\in\mathbb{R}^m$
\begin{equation}\label{eq_well_def_energy_vardom}
\lvert L(x,p)\rvert\leq C(1+\lvert p\rvert^2)
\end{equation}
for some $C>0$. For any $u\in H^1(\mathbb{S}^1,\mathbb{R}^m)$ we set
\begin{equation}\label{eq_def_1_4_Dirichlet_energy}
E(u):=\int_{\mathbb{S}^1}L(u(x),(-\Delta)^\frac{1}{4}u(x))dx.
\end{equation}
We observe that $E$ is well defined due to condition (\ref{eq_well_def_energy_vardom}).\\
In analogy to Definition \ref{def_statpoints_invariant}, we define the following concepts.

\begin{defn}\label{defn_stationary_point_invariant_with_respect_to_the_variation}\noindent
\begin{enumerate}
\item $u\in H^\frac{1}{2}(\mathbb{S}^1, \mathbb{R}^m)$ is said to be a \textbf{stationary point} of (\ref{eq_def_1_4_Dirichlet_energy}) if for any vector field $X\in C^1(\mathbb{S}^1, \mathbb{R})$\footnote{Recall that we consider $\mathbb{S}^1$ as a quotient space of $\mathbb{R}$.} there holds
\begin{equation}\label{eq_stationary_equation_nonlocal}
\frac{d}{dt}\bigg\vert_{t=0} E(u(x+tX(x)))=0.
\end{equation}
\item Let $X$ be a smooth vector field on $\mathbb{S}^1$ and let's denote $\phi_t$ its flow.
The energy (\ref{eq_def_1_4_Dirichlet_energy}) is said to be \textbf{invariant with respect to the variation generated by $X$} if for any $u\in H^\frac{1}{2}(\mathbb{S}^1, \mathbb{R}^m)$ there holds
\begin{equation}\label{eq_condition_invariance_vector_field_nonlocal}
f(u\circ\phi, (-\Delta)^\frac{1}{4} (u\circ\phi_t))(x)=f(u, (-\Delta)^\frac{1}{4} u)\circ \phi_t(x)\lvert D\phi_t(x)\rvert
\end{equation}
for a.e. $x\in \mathbb{S}^1$, for $t\in\mathbb{R}$.
\end{enumerate}
\end{defn}
We also introduce the concept of $s$-\textbf{fractional divergence} for functions defined on the circle, for $s\in (0,\frac{1}{2})$.
If $F:\mathbb{S}^1\times \mathbb{S}^1\to \mathbb{R}^m$ is a bounded function, let\footnote{Notice that this definition is not the direct analogous of Definition \ref{def_fractional_divergence_Rn}, as here fractional divergences are not defined as duals of "fractional gradients for functions on $\mathbb{S}^1$". However, this notation seems to be more convenient for the following computations.}
\begin{equation}\label{eq_definition_frac_div_14}
\text{div}_sF[\phi]:=\int_{\mathbb{S}^1}\int_{\mathbb{S}^1}F(x,y)\cdot(\phi(x)-\phi(y))K^s(x-y)dxdy
\end{equation}
for any $\phi\in C^\infty(\mathbb{S}^1,\mathbb{R}^m)$.
Later we will extend the concept of $\frac{1}{4}$-fractional divergence to other $F$s (see Equation (\ref{eq_discussion_extension_definition_fracdiv_14})).

In order to follow the proof of Theorem \ref{thm_Noether_thm_vardom}, we first assume that $u\in C^\infty(\mathbb{S}^1, \mathbb{R}^m)$. Let $X$ be a smooth vector field on $\mathbb{S}^1$ and let $\phi_t$ denote its flow.
First we claim that

\begin{equation}\label{eq_approx_formula_vardom}
u\circ\phi_t=u+tu'X+o_{C^1}(t)
\end{equation}
as $t\to 0$ (in the following, the small $o$ notation will always refer to $t\to 0$). Indeed, by Taylor's Theorem, for any $x\in \mathbb{S}^1$, $t\in \mathbb{R}$, 
\begin{equation}
\begin{split}
u(\phi_t(x))-u(x)-tu'(x)X(x)=t^2&\left[ u''(\phi_{s_x}(x))\partial_r\bigg\vert_{r={s_x}}\phi_r(x)\right.\\
&\left.\phantom{[}+u'(\phi_{s_x}(x))\partial_r^2\bigg\vert_{r={s_x}}\phi_r(x) \right]
\end{split}
\end{equation}
for some $s_x$ between $0$ and $t$, depending on $x$, and
\begin{equation}
\begin{split}
&\partial_x\left[u(\phi_t(x))-u(x)-tu'(x)X(x)\right]\\
=&u'(\phi_t(x))\partial_x\phi_t(x)-u'(x)-tu''(x)X(x)-tu'(x)X'(x)\\
=& \left(u'(\phi_t(x))-u'(x)\right)\partial_x\phi_t(x)-tu''(x)X(x)+u'(x)(\partial_x\phi_t(x)-1)-tu'(x)X'(x)\\=& t\left[ u''(\phi_r(x))\partial_r\phi_r(x)\partial_x\phi_r(x)+u'(x)\partial_x\partial_r\phi_r(x) \right]\bigg\vert^{r=s_x}_{r=0}+t(u'(\phi_t(x))\\
&-u'(x))\partial_r\bigg\vert_{r=s_x}\partial_x\phi_r(x)
\end{split}
\end{equation}
for some $s_x$ between $0$ and $t$, depending on $x$. Since all functions involved are smooth, (\ref{eq_approx_formula_vardom}) holds true.\\
From (\ref{eq_approx_formula_vardom}) we deduce that
\begin{equation}\label{eq_fraclap_comp_diffeo_step14_1}
(-\Delta)^\frac{1}{4} (u\circ\phi_t)=(-\Delta)^\frac{1}{4}u+t(-\Delta)^\frac{1}{4}(u'X)+o_{L^\infty}(t).
\end{equation}
Indeed, if a family $(v_t)_t$ in $C^1(\mathbb{S}^1)$ converges to $0$ in $C^1(\mathbb{S}^1)$ as $t\to 0$, then for any $x\in \mathbb{S}^1$
\begin{equation}
\begin{split}
\left\lvert (-\Delta)^\frac{1}{4}v_t\right\rvert\leq & B_\frac{1}{4}\int_{\mathbb{S}^1}\frac{\lvert v_t(x)-v_t(y)\rvert}{\left\lvert\sin\left(\frac{1}{2}(x-y)\right)\right\rvert^\frac{3}{2}}dy\\
\leq & B_\frac{1}{4}\lVert v_t\rVert_{C^1}\int_{\mathbb{S}^1}\frac{\lvert x-y\rvert}{\left\lvert\sin\left(\frac{1}{2}(x-y)\right)\right\rvert^\frac{3}{2}}dy\\
\leq & B_\frac{1}{4}\frac{\pi}{2} \lVert v_t\rVert_{C^1} \int_{\mathbb{S}^1}\frac{1}{\left\lvert\sin\left(\frac{1}{2}(x-y)\right)\right\rvert^\frac{1}{2}}dy
\end{split}
\end{equation}
where $B_\frac{1}{4}$ is the constant from (\ref{eq_estimates_kernel_fraclap_s}). In the last step we used the fact that if $\lvert x\rvert\leq \pi$, $\frac{2}{\pi}\left\lvert\sin\left(\frac{x}{2}\right)\right\rvert \leq \frac{1}{\pi}\lvert x\rvert \leq \left\lvert \sin\left(\frac{1}{2}x\right)\right\rvert$. Thus $(-\Delta)^\frac{1}{4}u_t$ converges to $0$ in $L^\infty(\mathbb{S}^1)$ as $t\to 0$. This implies that (\ref{eq_fraclap_comp_diffeo_step14_1}) holds true.\\
By the "Leibnitz rule" (\ref{eq_discussion_intro_Leibnitz_rule_fraclap_circle}) we have, for any $x\in \mathbb{S}^1$,
\begin{equation}\label{eq_fraclap_comp_diffeo_step14_2}
\begin{split}
(-\Delta)^\frac{1}{4}(u'X)(x)=&(-\Delta)^\frac{1}{4}u'(x)\ X(x)+(-\Delta)^\frac{1}{4}X(x)\ u'(x)\\&- \int_{\mathbb{S}^1}(u'(x)-u'(y))(X(x)-X(y))K^\frac{1}{4}(x-y)dy.
\end{split}
\end{equation}
Now we observe that since $u\in C^\infty(\mathbb{S}^1)$, $(-\Delta)^\frac{1}{4}u\in C^\infty(\mathbb{S}^1)$. Thus, by Taylor's Theorem,
\begin{equation}\label{eq_fraclap_comp_diffeo_step14_3}
(-\Delta)^\frac{1}{4}u(\phi_t(x))=(-\Delta)^\frac{1}{4}u(x)+t(-\Delta)^\frac{1}{4}u'(x)\ X(x)+o_{L^\infty}(t).
\end{equation}
Therefore, by (\ref{eq_fraclap_comp_diffeo_step14_1}),  (\ref{eq_fraclap_comp_diffeo_step14_2}) and  (\ref{eq_fraclap_comp_diffeo_step14_3}), for $t\in \mathbb{R}$, $x\in \mathbb{S}^1$
\begin{equation}
\begin{split}
(-\Delta)^\frac{1}{4}(u\circ\phi_t)(x)=&(-\Delta)^\frac{1}{4}u(x)+t\left( (-\Delta)^\frac{1}{4}u'(x)\ X(x)+(\Delta)^\frac{1}{4}X(x\ )u'(x)\right.\\&\left.-\int_{\mathbb{S}^1}(u'(x)-u'(y))(X(x)-X(y))K^\frac{1}{4}(x-y)dy \right)+o_{L^\infty}(t)\\
=&(-\Delta)^\frac{1}{4}u(\phi_t(x))+tu'(x)\ (-\Delta)^\frac{1}{4}X(x)\\&- t\int_{\mathbb{S}^1}(u'(x)-u'(y))(X(x)-X(y))K^\frac{1}{4}(x-y)dy+o_{L^\infty}(t).
\end{split}
\end{equation}
Then for any $x\in \mathbb{S}^1$, by the Mean value Theorem
\begin{equation}
\begin{split}
&L\left(u\circ\phi_t(x), (-\Delta)^\frac{1}{4}(u\circ\phi_t)(x)\right)=L\left(u(\phi_t(x)), (-\Delta)^\frac{1}{4}u(\phi_t(x))\right)\\
&\qquad +tD_pL\left(u(\phi_t(x)), s_{x,t}\right)
\cdot\left[u'(x)\ (-\Delta)^\frac{1}{4}X(x)\right.\\
&\qquad -\left. \int_{\mathbb{S}^1}(u'(x)-u'(y))(X(x)-X(y))K^\frac{1}{4}(x-y)dy\right]+o_{L^\infty}(t)
\end{split}
\end{equation}
for some $s_{x,t}$ between $(-\Delta)^\frac{1}{4}(u\circ\phi_t)(x)$ and $(-\Delta)^\frac{1}{4}u(\phi_t(x))$. As $L\in C^2(\mathbb{R}^m\times\mathbb{R}^{m})$ and $(-\Delta)^\frac{1}{4}(u\circ\phi_t)\to (-\Delta)^\frac{1}{4}u$, $(-\Delta)^\frac{1}{4}u(\phi_t(\cdot))\to (-\Delta)^\frac{1}{4}u$ in $L^\infty(\mathbb{S}^1)$ as $t\to 0$, we obtain
\begin{equation}\label{eq_explicit_form_precomposition_vardom14}
\begin{split}
&L\left(u\circ\phi_t(x), (-\Delta)^\frac{1}{4}(u\circ\phi_t)(x)\right)
=L\left(u(\phi_t(x)), (-\Delta)^\frac{1}{4}u(\phi_t(x))\right)\\
&\qquad+tD_pL\left(u(x), (-\Delta)^\frac{1}{4}u(x)\right)\cdot\left[u'(x)\ (-\Delta)^\frac{1}{4}X(x)\right.\\
&\qquad -\left. \int_{\mathbb{S}^1}(u'(x)-u'(y))(X(x)-X(y))K^\frac{1}{4}(x-y)dy\right]+o_{L^\infty}(t).
\end{split}
\end{equation}
Next we claim that
\begin{equation}\label{eq_proof_prop_before_inversion_integral_derivative_in_zero}
\begin{split}
&\frac{d}{dt}\bigg\vert_{t=0}\int_{\mathbb{S}^1}L\left(u\circ\phi_t(x), (-\Delta)^\frac{1}{4}(u\circ\phi_t)(x)\right)dx\\
=&\int_{\mathbb{S}^1}\frac{d}{dt}\bigg\vert_{t=0}L\left(u\circ\phi_t(x), (-\Delta)^\frac{1}{4}(u\circ\phi_t)(x)\right)dx.
\end{split}
\end{equation}
To prove the claim, it is enough to show that $(-\Delta)^\frac{1}{4}(u\circ\phi_t)$ is differentiable in $t$ with
\begin{equation}
\partial_t(-\Delta)^\frac{1}{4}(u\circ\phi_t)=(-\Delta)^\frac{1}{4}\partial_t(u\circ\phi_t)
\end{equation}
and that the function
\begin{equation}
\begin{split}
\partial_t L\left(u\circ\phi_t, (-\Delta)^\frac{1}{4}(u\circ \phi_t)\right)=&D_xL\left(u\circ\phi_t, (-\Delta)^\frac{1}{4}(u\circ \phi_t)\right)u'(X\circ\phi_t)\\
+&D_pL\left(u\circ\phi_t, (-\Delta)^\frac{1}{4}(u\circ \phi_t)\right)\partial_t (-\Delta)^\frac{1}{4}(u\circ\phi_t)
\end{split}
\end{equation}
has an integrable upper bound, uniformly in $t$ for $t$ in a neighbourhood of $0$. The second part of the claim follow directly from the regularity assumption on $u$ and $L$ once we prove the first part.
For that, let $\delta>0$; we define
\begin{equation}
[\phi_t]_{Lip}^\delta:=\sup_{\substack{\lvert s\rvert\leq \delta,\\ x,y\in \mathbb{S}^1}}\frac{\lvert \phi_s(x)-\phi_s(y)\rvert}{\lvert x-y\rvert}
\end{equation}
and
\begin{equation}
\lVert \phi_t\rVert_{L^\infty}^\delta:=\sup_{\lvert s\rvert\leq \delta}\lVert \phi_s\rVert_{L^\infty}.
\end{equation}
We define $[\phi'_t]_{Lip}^\delta$ and $\lVert \phi_t'\rVert_{L^\infty}^\delta$ similarly.
Now we compute, for $\lvert t\rvert\leq \delta$, $x,y\in \mathbb{S}^1$
\begin{equation}
\begin{split}
&\left\lvert \frac{u'(\phi_t(x))X(\phi_t(x))-u'(\phi_t(y))X(\phi_t(y))}{\left\lvert \sin\left(\frac{1}{2}(x-y)\right)\right\rvert^\frac{3}{2}} \right\rvert\\
\leq & \left\lvert \frac{[u'(\phi_t(x))-u'(\phi_t(y))]X(\phi_t(x))+u'(\phi_t(y))[X(\phi_t(x))-X(\phi_t(y))]}{\left\lvert \sin\left(\frac{1}{2}(x-y)\right)\right\rvert^\frac{3}{2}}\right\rvert\\
\leq & \frac{[u']_{Lip}[\phi_t]_{Lip}^\delta\lVert X\rVert_{L^\infty}+\lVert u'\rVert_{L^\infty}[X]_{Lip}[\phi_t]_{Lip}^\delta}{\left\lvert \sin\left(\frac{1}{2}(x-y)\right)\right\rvert^\frac{1}{2}}\frac{\lvert x-y\rvert}{\left\lvert \sin\left(\frac{1}{2}(x-y)\right)\right\rvert}.
\end{split}
\end{equation}
The expression in the last line is an integrable upper bound (up to multiplication by a constant), uniform in $t$, for the integrand in
\begin{equation}
(-\Delta)^\frac{1}{4}\partial_t(u\circ\phi_t)(x)=\int_{\mathbb{S}^1}\left(\partial_t(u\circ\phi_t)(x)-\partial_t(u\circ\phi_t)(y)\right)K^\frac{1}{4}(x-y)dy
\end{equation}
for any $x\in\mathbb{S}^1$ (see Equations (\ref{eq_pointwise_formula_fraclap_circle_s}) and (\ref{eq_estimates_kernel_fraclap_s}). Therefore $\partial_t (-\Delta)^\frac{1}{4}(u\circ\phi_t)$ is well defined and $\partial_t (-\Delta)^\frac{1}{4}(u\circ\phi_t)=(-\Delta)^\frac{1}{4}\partial_t(u\circ\phi_t)$.
This concludes the proof of the claim.\\
By (\ref{eq_explicit_form_precomposition_vardom14}) and (\ref{eq_proof_prop_before_inversion_integral_derivative_in_zero}) we have
\begin{equation}\label{eq_stationary_equation_vardom_testfcs14}
\begin{split}
\frac{d}{dt}\bigg\vert_{t=0}E(u\circ\phi_t)=&\int_{\mathbb{S}^1}\frac{d}{dx} L\left(u(x), (-\Delta)^\frac{1}{4}u (x)\right)X(x)dx\\
&+D_pL\left(u(x), (-\Delta)^\frac{1}{4}u(x)\right)\left[u'(x)(-\Delta)^\frac{1}{4}X(x)\right.\\
&\left.-\int_{\mathbb{S}^1}(u'(x)-u'(y))(X(x)-X(y))K^\frac{1}{4}(x-y)dy\right]dx.
\end{split}
\end{equation}
We compute
\begin{equation}
\begin{split}
&\int_{\mathbb{S}^1} D_pL\left(u(x), (-\Delta)^\frac{1}{4}u(x)\right)\left[u'(x)(-\Delta)^\frac{1}{4}X(x)\right.\\
&\phantom{\int_{\mathbb{S}^1}}\left.-\int_{\mathbb{S}^1}(u'(x)-u'(y))(X(x)-X(y))K^\frac{1}{4}(x-y)\right]\\
=&\int_{\mathbb{S}^1}\int_{\mathbb{S}^1} D_pL\left(u(x), (-\Delta)^\frac{1}{4}u(x)\right)\left[u'(x)(X(x)-X(y))\right.\\
&\phantom{\int_{\mathbb{S}^1}\int_{\mathbb{S}^1}}\left.-(u'(x)-u'(y))(X(x)-X(y))\right]K^\frac{1}{4}(x-y)dxdy\\
=&\int_{\mathbb{S}^1}\int_{\mathbb{S}^1}D_pL\left(u(x), (-\Delta)^\frac{1}{4}u(x)\right)u'(y)(X(x)-X(y))K^\frac{1}{4}(x-y)dxdy\\
=&\text{div}_\frac{1}{4}\left(D_pf\left(u(x), (-\Delta)^\frac{1}{4}u(x)\right)u'(y) \right)[X]
\end{split}
\end{equation}
Now if $u$ is a stationary point, by the previous computations
\begin{equation}\label{eq_stationary_equation_vardom_testfcs214}
\begin{split}
0=&\frac{d}{dt}\bigg\vert_{t=0} E(u\circ\phi_t)=\int_{\mathbb{S}^1}\frac{d}{dx}L\left(u(x),(-\Delta)^\frac{1}{4}u(x)\right)X(x)dx\\
&+\text{div}_\frac{1}{4}\left(D_pL\left(u(x),(-\Delta)^\frac{1}{4}u(x)u'(y)\right)\right)[X].
\end{split}
\end{equation}
Then, since (\ref{eq_stationary_equation_vardom_testfcs214}) holds for any smooth vector field $X$, $u$ is a stationary point of $E$ if and only if it obeys the stationary equation
\begin{equation}\label{eq_stationary_equation_vardom14}
\begin{split}
\frac{d}{dx}L\left(u,(-\Delta)^\frac{1}{4}u\right)+\text{div}_\frac{1}{4}\left(D_p L\left(u(x),(-\Delta)^\frac{1}{4}u(x)\right)u'(y)\right)=0
\end{split}
\end{equation}
in the sense of distributions. Let now assume that for any $x\in \mathbb{S}^1$
\begin{equation}\label{eq_diffeo_is_conformal14}
L\left(u\circ\phi_t(x),(-\Delta)^\frac{1}{4}(u\circ\phi_t)(x)\right)=L\left(u(\phi_t(x)),(-\Delta)^\frac{1}{4}u(\phi_t(x))\right)\phi_t'(x).
\end{equation}
If we derive (\ref{eq_diffeo_is_conformal14}) with respect to $t$ and evaluate in $t=0$ we obtain
\begin{equation}
\begin{split}\label{eq_derivative_diffeo_is_conformal14}
\frac{d}{dt}\bigg\vert_{t=0}L\left(u\circ\phi_t, (-\Delta)^\frac{1}{4}(u\circ\phi_t)\right)(x)=&\frac{d}{dt}\bigg\vert_{t=0}L\left(u(\phi_t(x)),(-\Delta)^\frac{1}{4}u(\phi_t(x))\right)\\
&+L\left(u(x),(-\Delta)^\frac{1}{4}u(x)\right)X'(x)\\
=&\frac{d}{dx}L\left(u, (-\Delta)^\frac{1}{4}u\right)(x)X(x)\\
&+L\left(u(x),(-\Delta)^\frac{1}{4}u(x)\right)X'(x)\\
=&\frac{d}{dx}\left[L\left(u,(-\Delta)^\frac{1}{4}u\right)X\right](x).
\end{split}
\end{equation}
On the other hand, it follows from (\ref{eq_explicit_form_precomposition_vardom14}) that for a.e. $x\in \mathbb{S}^1$
\begin{equation}\label{eq_derivative_of_f14}
\begin{split}
&\frac{d}{dt}\bigg\vert_{t=0}L\left(u\circ\phi_t(x), (-\Delta)^\frac{1}{4}u\circ\phi_t(x)\right)=\frac{d}{dx}L\left(u(x),(-\Delta)^\frac{1}{4}u(x)\right)X(x)\\
&\qquad+D_pL\left(u(x), (-\Delta)^\frac{1}{4}u(x)\right)\left[u'(x)(-\Delta)^\frac{1}{4}X(x)\right.\\
&\qquad\left.-\int_{\mathbb{S}^1}(u'(x)-u'(y))(X(x)-X(y))K^\frac{1}{4}(x-y)dy\right].
\end{split}
\end{equation}
Therefore, combining (\ref{eq_stationary_equation_vardom14}), (\ref{eq_derivative_diffeo_is_conformal14}) and (\ref{eq_derivative_of_f14}) we obtain for any critical point $u\in C^\infty(\mathbb{S}^1, \mathbb{R}^m)$ of $E$, for a.e. $x\in \mathbb{S}^1$,
\begin{equation}\label{eq_proof_prop_before_first_version_frac_Noether_current}
\begin{split}
0=&\frac{d}{dx}L\left(u,(-\Delta)^\frac{1}{4}u\right)X(x)+\text{div}_\frac{1}{4}\left[D_p L\left(u(x),(-\Delta)^\frac{1}{4}u(x)\right) u'(y)\right]X(x)\\
=&\frac{d}{dx}\left[L\left(u, (-\Delta)^\frac{1}{4}u\right)X\right](x)+\text{div}_\frac{1}{4}\left( D_pL\left(u(x),(-\Delta)^\frac{1}{4}u(x)\right)u'(y)\right) X(x)\\
&-D_pL\left(u(x), (-\Delta)^\frac{1}{4}u(x)\right)\left[u'(x)(-\Delta)^\frac{1}{4}X(x)\right.\\
&\left.-\int_{\mathbb{S}^1}(u'(x)-u'(y))(X(x)-X(y))K^\frac{1}{4}(x-y)dy\right].
\end{split}
\end{equation}
Finally we remark that for any $\phi\in C^\infty(\mathbb{S}^1)$
\begin{equation}\label{eq_proof_prop_before_from_EL_to Ncurrent}
\begin{split}
&\text{div}_\frac{1}{4}\left( D_pL\left(u(x),(-\Delta)^\frac{1}{4}u(x)\right)u'(y)\right) [X\phi]-\int_{\mathbb{S}^1}D_pL\left(u(x), (-\Delta)^\frac{1}{4}u(x)\right)\\
&\cdot\left[u'(x)(-\Delta)^\frac{1}{4}X(x)-\int_{\mathbb{S}^1}(u'(x)-u'(y))(X(x)-X(y))K^\frac{1}{4}(x-y)dy\right]\phi(x)dx\\
=&\int_{\mathbb{S}^1}\int_{\mathbb{S}^1}D_pL\left(u(x), (-\Delta)^\frac{1}{4}u(x)\right)u'(y)(X(x)\phi(x)-X(y)\phi(y))K^\frac{1}{4}(x-y)dydx\\
&-\int_{\mathbb{S}^1}\int_{\mathbb{S}^1}D_pL\left(u(x), (-\Delta)^\frac{1}{4}u(x)\right)(u'(x)(X(x)-X(y))-(u'(x)-u'(y))\\
&\cdot(X(x)-X(y)))K^\frac{1}{4}(x-y)\phi(x)dydx\\
=&\int_{\mathbb{S}^1}\int_{\mathbb{S}^1}D_pL\left(u(x), (-\Delta)^\frac{1}{4}u(x)\right)X(y)(\phi(x)-\phi(y))K^\frac{1}{4}(x-y)dxdy\\
=&\text{div}_\frac{1}{4}\left(D_pL\left(u(x), (-\Delta)^\frac{1}{4}u(x)\right)u'(y)u'(y)X(y)\right)[\phi].
\end{split}
\end{equation}
Therefore we can rewrite (\ref{eq_proof_prop_before_first_version_frac_Noether_current}) as follows: for any critical point $u\in C^\infty(\mathbb{S}^1, \mathbb{R}^m)$ of $E$, for a.e. $x\in \mathbb{S}^1$
\begin{equation}\label{eq_discussion_final_equation_argument_Noeththm_for_smooth_functions_14}
0=\frac{d}{dx}\left[L\left(u, (-\Delta)^\frac{1}{4}u\right)X\right](x)+\text{div}_\frac{1}{4}\left(D_pL\left(u(x), (-\Delta)^\frac{1}{4}u(x)\right)u'(y)X(y)\right).
\end{equation}
%
%

We summarize the previous computations in the following proposition.
\begin{prop}\label{prop_first_version_Noether_vardom_14}
Let $L$ and $E$ be defined as in (\ref{eq_well_def_energy_vardom}), (\ref{eq_def_1_4_Dirichlet_energy}). Let $X$ be a vector field on $\mathbb{R}^m$ such that its flow $\phi_t$ obeys condition (\ref{eq_diffeo_is_conformal14}) for $t\in\mathbb{R}$ in a neighbourhood of $0$.
Then, for any $u\in C^\infty(\mathbb{S}^1, \mathbb{R})$, there holds
\begin{equation}\label{eq_prop_fundamental_idea_of_Noether_vardom_for_tests}
\begin{split}
&\text{div}_\frac{1}{4}\left(D_pL\left(u(x),(-\Delta)^\frac{1}{4}u(x)\right)u'(y)X(y)\right)+\frac{d}{dx}\left[L\left(u,(-\Delta)^\frac{1}{4}u\right)X\right]\\
=&\text{div}_\frac{1}{4}\left(D_pL\left(u(x),(-\Delta)^\frac{1}{4}u(x)\right)u'(y)\right)X+\frac{d}{dx}\left[L\left(u,(-\Delta)^\frac{1}{4}u\right)\right]X
\end{split}
\end{equation}
in the sense of distributions.
In particular, if $u\in C^\infty(\mathbb{S}^1, \mathbb{R}^m)$ is a stationary point of $E$, there holds
\begin{equation}\label{eq_prop_conserved_quantity_14_C2}
\begin{split}
\text{div}_\frac{1}{4}\left(D_pL\left(u(x),(-\Delta)^\frac{1}{4}u(x)\right)u'(y)X(y)\right)+\frac{d}{dx}\left[L\left(u,(-\Delta)^\frac{1}{4}u\right)X\right](x)=0
\end{split}
\end{equation}
for any $x\in \mathbb{S}^1$.
\end{prop}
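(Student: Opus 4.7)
The plan is to organize the computations developed in the preceding discussion into two clean stages: first establish the distributional identity (\ref{eq_prop_fundamental_idea_of_Noether_vardom_for_tests}) for an arbitrary smooth $u$ (using only the conformal-type invariance of $L$ under the flow of $X$), and then specialize to stationary $u$ by invoking the Euler--Lagrange equation (\ref{eq_stationary_equation_vardom14}).

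For Step 1, I would fix $u\in C^\infty(\mathbb{S}^1,\mathbb{R})$, differentiate the invariance relation (\ref{eq_diffeo_is_conformal14}) in $t$ at $t=0$, and compare the two resulting expressions. The right-hand side of the derivative yields $\tfrac{d}{dx}[L(u,(-\Delta)^{1/4}u)X]$ as computed in (\ref{eq_derivative_diffeo_is_conformal14}). The left-hand side is obtained from the asymptotic expansion (\ref{eq_explicit_form_precomposition_vardom14}), which combines the Taylor expansion $u\circ\phi_t=u+tu'X+o_{C^1}(t)$ with the commutation formula $(-\Delta)^{1/4}(u\circ\phi_t)=(-\Delta)^{1/4}u+t(-\Delta)^{1/4}(u'X)+o_{L^\infty}(t)$ and the Leibniz rule (\ref{eq_discussion_intro_Leibnitz_rule_fraclap_circle}) applied to $(-\Delta)^{1/4}(u'X)$. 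Equating the two expressions produces the pointwise identity (\ref{eq_derivative_of_f14}) $=$ (\ref{eq_derivative_diffeo_is_conformal14}).

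Next I would pair this pointwise identity with an arbitrary test function $\phi\in C^\infty(\mathbb{S}^1)$ and carry out the reshuffling of the double integral performed in (\ref{eq_proof_prop_before_from_EL_to Ncurrent}). The key algebraic rearrangement is
\begin{equation*}
u'(y)(X(x)\phi(x)-X(y)\phi(y)) = u'(x)(X(x)-X(y))\phi(x)-(u'(x)-u'(y))(X(x)-X(y))\phi(x)
\end{equation*}
plus a symmetric remainder, which converts the Leibniz-rule correction terms into the difference $\operatorname{div}_{1/4}(D_pL\cdot u'(y)X(y))[\phi]-\operatorname{div}_{1/4}(D_pL\cdot u'(y))[X\phi]$. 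Substituting back gives precisely the distributional identity (\ref{eq_prop_fundamental_idea_of_Noether_vardom_for_tests}). For Step 2, observe that the stationarity of $u$, via the Taylor-expansion argument (\ref{eq_stationary_equation_vardom_testfcs14})--(\ref{eq_stationary_equation_vardom_testfcs214}) applied to arbitrary $X$, is equivalent to the distributional Euler--Lagrange equation (\ref{eq_stationary_equation_vardom14}). Multiplying this equation by the test vector field $X$ makes the right-hand side of (\ref{eq_prop_fundamental_idea_of_Noether_vardom_for_tests}) vanish, yielding (\ref{eq_prop_conserved_quantity_14_C2}).

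The main obstacle is Step 1, not Step 2: the identity being proved is, at heart, a nonlocal replacement for the classical fact that $\operatorname{div}(FX)=(\operatorname{div}F)X+F\cdot\nabla X$, but for $\operatorname{div}_{1/4}$ the analogue simply fails as a general Leibniz rule. What makes it true here is the exact cancellation between the "off-diagonal" correction terms produced by the fractional Leibniz rule on $(-\Delta)^{1/4}(u'X)$ and the derivative of the conformal factor appearing on the right-hand side of (\ref{eq_diffeo_is_conformal14}). Keeping careful track of which variable ($x$ or $y$) carries $X$ in each double integral, and verifying that the cross terms reassemble into the correct $\operatorname{div}_{1/4}$ of a function of two variables, is the delicate bookkeeping that needs to be performed.
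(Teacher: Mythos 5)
Your proposal is correct and follows essentially the same two-stage argument as the paper: differentiate the invariance relation at $t=0$, equate the resulting pointwise expression with the expansion (\ref{eq_explicit_form_precomposition_vardom14}), test against $\phi$, and perform the double-integral reshuffling (\ref{eq_proof_prop_before_from_EL_to Ncurrent}); then invoke the stationary equation (\ref{eq_stationary_equation_vardom14}) to kill the right-hand side. One small imprecision: the "symmetric remainder" in your algebraic rearrangement is actually the term $u'(y)X(y)(\phi(x)-\phi(y))$, which is not symmetric in $(x,y)$ — it is precisely the term that, integrated against $D_pL(\cdot)K^{1/4}$, reassembles into $\text{div}_{1/4}(D_pL\cdot u'(y)X(y))[\phi]$ — but this does not affect the correctness of the argument.
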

We now try to weaken the hypothesis that $u$ belongs to $C^\infty(\mathbb{S}^1)$. To this end we will first need to show that (\ref{eq_prop_conserved_quantity_14_C2}) is well defined, in the sense of distributions, even if $u\in H^\frac{1}{2}(\mathbb{S}^1)$. This will be done in Lemma \ref{lemma_approximation_identity_14}.
Let's first introduce the following space of functions.
\begin{defn}
Let $\mathbb{A}(\mathbb{S}^1)$ be the space of all the elements $u$ in $\mathscr{D}'(\mathbb{S}^1)$ such that
\begin{equation}
\lVert u\rVert_{\mathbb{A}}:=\sum_{k\in\mathbb{Z}}\lvert \widehat{u}(k)\rvert<\infty.
\end{equation}
$\mathbb{A}(\mathbb{S}^1)$ is called \textbf{Wiener algebra}.
\end{defn}
\begin{rem}\label{rem_Wiener_algebra}\noindent
\begin{enumerate}
\item $\mathbb{A}(\mathbb{S}^1)$ is a Banach algebra, where the multiplication is given by convolution
\item There is a continuous embedding $\mathbb{A}(\mathbb{S}^1)\hookrightarrow C^0(\mathbb{S}^1)$. Indeed, if $u\in \mathbb{A}(\mathbb{S}^1)$, $\lVert u\rVert_{L^\infty}\leq \lVert u\rVert_{\mathbb{A}}$ and the sequence of continuous functions given by $u_n(x)=\sum_{\lvert k\rvert\leq n}\widehat{u}(n)e^{inx}$ for $x\in \mathbb{S}^1$, $n\in\mathbb{N}$ converges uniformly to $f$.
\item For any $s>\frac{1}{2}$ there is a continuous embedding $H^s(\mathbb{S}^1)\hookrightarrow \mathbb{A}(\mathbb{S}^1)$. Indeed, for any $u\in H^s(\mathbb{S}^1)$,
\begin{equation}\label{eq_rem_embedding_Sobolev_in_Wiener}
\sum_{\substack{k\in\mathbb{Z}\\ k\neq 0}}\lvert \widehat{u}(k)\rvert\leq \left(\sum_{\substack{k\in\mathbb{Z}\\ k\neq 0}}\lvert \widehat{u}(k)\rvert^2\lvert k\rvert^{2s}\right)^\frac{1}{2}\left(\sum_{\substack{k\in\mathbb{Z}\\ k\neq 0}}\lvert k\rvert^{-2s}  \right)^\frac{1}{2}
\end{equation}
by Cauchy-Schwartz inequality. As $s>\frac{1}{2}$, the second factor is finite. Moreover $\lvert \widehat{u}(0)\rvert\lvert \lVert u\rVert_{H^s}$.
\item for any $\alpha\in(0,1]$, $\beta<\alpha$ there is a continuous embedding $C^\alpha(\mathbb{S}^1)\hookrightarrow H^\beta(\mathbb{S}^1)$ (see Theorem 1.13 in \cite{Schlag}). Thus, by the previous point, if $\alpha>\frac{1}{2}$, there is a continuous embedding $C^\alpha(\mathbb{S}^1)\hookrightarrow\mathbb{A}(\mathbb{S}^1)$.
\end{enumerate}
\end{rem}

We also introduce the normed space $\mathbb{A}^1(\mathbb{S}^1)$, consisting of all the elements $u\in \mathscr{D}'(\mathbb{S}^1)$ such that
\begin{equation}\label{eq_discussion_norm_A'}
\lVert u\rVert_{\mathbb{A}^1}:=\lvert \widehat{u}(0)\rvert+\sum_{\substack{k\in\mathbb{Z},\\ k\neq 0}}\lvert k\rvert\lvert \widehat{u}(k)\rvert<\infty.
\end{equation}
By the argument used in (\ref{eq_rem_embedding_Sobolev_in_Wiener}), we see that for any $s>\frac{1}{2}$ 
\begin{equation}\label{eq_embedding_Sobolev_in_Wiener_der}
H^{1+s}(\mathbb{S}^1)\hookrightarrow \mathbb{A}^1(\mathbb{S}^1).
\end{equation}
For any element $u$ of $\mathbb{A}^1(\mathbb{S}^1)$ we also define the seminorm
\begin{equation}
[u]_{\mathbb{A}^1}:=\sum_{\substack{k\in\mathbb{Z}\\ k\neq 0}}\lvert k\rvert\lvert \widehat{u}(k)\rvert.
\end{equation}

\begin{lem}\label{lemma_approximation_identity_14}
Let $w\in L^{2,1}(\mathbb{S}^1)$, $\phi\in \mathbb{A}^1(\mathbb{S}^1)$ and let $K^\frac{1}{4}$ be defined as in (\ref{def_Kernel_Ks_0_12}).
Let
\begin{equation}\label{eq_lem_approximation_identity_14}
G_{w,\phi}(y):=\int_{\mathbb{S}^1}w(x)(\phi(x)-\phi(y))K^\frac{1}{4}(x-y)dx
\end{equation}
for any $y\in \mathbb{S}^1$. The integral in (\ref{eq_lem_approximation_identity_14}) converges absolutely for a.e. $y\in \mathbb{S}^1$ and $G_{w,\phi}$ belongs to $L^2(\mathbb{S}^1)$. Moreover there is a constant $C$ depending only on $X$, such that
\begin{equation}\label{eq_lem_estimate_Gphi}
\lVert G_{w,\phi}\rVert_{L^2}\leq C[ \phi]_{\mathbb{A}^1}[w]_{\dot{H}^{-\frac{1}{2}}}.
\end{equation}
\end{lem}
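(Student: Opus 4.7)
The strategy proceeds in three stages: pointwise absolute convergence of the defining integral, an explicit formula for the Fourier coefficients of $G_{w,\phi}$, and a bilinear $\ell^2$ estimate of those coefficients.

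For absolute convergence, the hypothesis $\phi\in\mathbb{A}^1(\mathbb{S}^1)$ makes the derivative series $\sum_k ik\widehat\phi(k)e^{ikx}$ absolutely convergent, so $\phi\in C^1(\mathbb{S}^1)$ with $\lvert\phi(x)-\phi(y)\rvert\leq [\phi]_{\mathbb{A}^1}\lvert x-y\rvert$. Combined with the kernel bound (\ref{eq_estimates_kernel_fraclap_s}), which gives $K^\frac{1}{4}(x-y)\leq C\lvert x-y\rvert^{-\frac{3}{2}}$ near the diagonal, the integrand in $G_{w,\phi}(y)$ is dominated by $C[\phi]_{\mathbb{A}^1}\lvert w(x)\rvert\lvert x-y\rvert^{-\frac{1}{2}}$. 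Since $x\mapsto\lvert x-y\rvert^{-\frac{1}{2}}$ lies in $L^{2,\infty}(\mathbb{S}^1)$ uniformly in $y$, H\"older's inequality in Lorentz spaces (using $w\in L^{2,1}$) yields absolute convergence for a.e. $y$.

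For the Fourier representation, after the change of variables $z=x-y$ and expansion of $w$ and $\phi$ in Fourier series, Fubini (first for trigonometric-polynomial $w$, then by density) gives
\begin{equation*}
G_{w,\phi}(y) = \sum_{m,k}\widehat w(m)\widehat\phi(k)e^{i(m+k)y}\int_{\mathbb{S}^1}e^{imz}(e^{ikz}-1)K^\frac{1}{4}(z)dz.
\end{equation*}
The inner integral equals $\lvert m\rvert^\frac{1}{2}-\lvert m+k\rvert^\frac{1}{2}$: writing $e^{imz}(e^{ikz}-1) = (1-e^{imz}) - (1-e^{i(m+k)z})$ and using the identity $\int_{\mathbb{S}^1}(1-e^{inz})K^\frac{1}{4}(z)dz = (-\Delta)^\frac{1}{4}e^{in\cdot}(0) = \lvert n\rvert^\frac{1}{2}$, which is a direct consequence of the pointwise formula (\ref{eq_pointwise_formula_fraclap_circle_s}), establishes the claim. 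Setting $n = m+k$ then yields
\begin{equation*}
\widehat{G_{w,\phi}}(n) = \sum_m \widehat w(m)\widehat\phi(n-m)\bigl(\lvert m\rvert^\frac{1}{2} - \lvert n\rvert^\frac{1}{2}\bigr).
\end{equation*}

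The crux is the estimate of this sum, which uses the cancellation $\bigl|\lvert m\rvert^\frac{1}{2} - \lvert n\rvert^\frac{1}{2}\bigr|\leq \lvert n-m\rvert/\lvert m\rvert^\frac{1}{2}$ for $m\neq 0$, obtained by rationalizing $\lvert m\rvert^\frac{1}{2} - \lvert n\rvert^\frac{1}{2} = (\lvert m\rvert-\lvert n\rvert)/(\lvert m\rvert^\frac{1}{2}+\lvert n\rvert^\frac{1}{2})$ together with $\bigl|\lvert m\rvert - \lvert n\rvert\bigr|\leq \lvert n-m\rvert$. Setting $a(m):=\lvert\widehat w(m)\rvert/\lvert m\rvert^\frac{1}{2}$ for $m\neq 0$ (and $a(0):=0$) and $b(k):=\lvert k\rvert\lvert\widehat\phi(k)\rvert$ gives $\bigl|\widehat{G_{w,\phi}}(n)\bigr|\leq (a\ast b)(n) + \lvert\widehat w(0)\rvert\lvert n\rvert^\frac{1}{2}\lvert\widehat\phi(n)\rvert$. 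Young's convolution inequality yields $\|a\ast b\|_{\ell^2}\leq \|a\|_{\ell^2}\|b\|_{\ell^1} = [w]_{\dot H^{-\frac{1}{2}}}[\phi]_{\mathbb{A}^1}$, and Plancherel converts this to the desired $L^2$-bound on the principal term. The $m=0$ boundary contribution equals $-\widehat w(0)(-\Delta)^\frac{1}{4}\phi$; its $L^2$-norm is bounded by $C\lvert\widehat w(0)\rvert[\phi]_{\mathbb{A}^1}$ using $\sum_n\lvert n\rvert\lvert\widehat\phi(n)\rvert^2\leq [\phi]_{\mathbb{A}^1}^2$, and it vanishes in the mean-zero case relevant to the intended applications. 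The main technical obstacle is the rigorous justification of Fubini in the Fourier expansion when $w$ is only in $L^{2,1}$, which I would handle by first establishing the formula for smooth $w$ (where absolute convergence of the double sum is immediate) and then passing to the limit using the continuity of the bilinear map $(w,\phi)\mapsto G_{w,\phi}$ provided by the main estimate itself.
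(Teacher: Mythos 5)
Your proposal is correct and follows the same architecture as the paper's proof: absolute convergence via $L^{2,1}$--$L^{2,\infty}$ duality with the Lipschitz estimate on $\phi$, then a Fourier-side computation of $\widehat{G_{w,\phi}}$ as a weighted convolution of $\widehat{w}$ and $\widehat{\phi}$ with multiplier $\lvert m\rvert^{\frac{1}{2}}-\lvert m+k\rvert^{\frac{1}{2}}$ (the paper phrases this via the auxiliary kernels $H_n(x)=(e^{-inx}-1)K^{\frac{1}{4}}(x)$ and computes $\widehat{H_n}$, but the outcome is the same formula), then an $\ell^2$ bound via Young's inequality, and finally a density argument to dispense with the Fubini issues. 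Where you depart from the paper is in the multiplier estimate: the paper splits into the regimes $\lvert k\rvert\geq 2\lvert n\rvert$ and $\lvert k\rvert<2\lvert n\rvert$, handling the first by a mean-value argument and the second by the trivial bound $\bigl\lvert\lvert m\rvert^{\frac{1}{2}}-\lvert n\rvert^{\frac{1}{2}}\bigr\rvert\leq\lvert n-m\rvert^{\frac{1}{2}}$ together with $\lvert n-m\rvert\leq 3\lvert\cdot\rvert$. Your single rationalizing bound $\bigl\lvert\lvert m\rvert^{\frac{1}{2}}-\lvert n\rvert^{\frac{1}{2}}\bigr\rvert\leq\lvert n-m\rvert/\lvert m\rvert^{\frac{1}{2}}$ ($m\neq 0$) produces the convolution structure $a\ast b$ directly with no case split, which is cleaner. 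Both approaches then invoke Young's inequality $\ell^1\ast\ell^2\to\ell^2$.

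One thing worth noting: your isolation of the $m=0$ contribution $-\widehat{w}(0)(-\Delta)^{\frac{1}{4}}\phi$ is not a cosmetic footnote but a genuine correction. The paper's estimate quietly breaks on the $k=n$ diagonal (which corresponds to $\widehat{w}(0)$ being weighted by $\lvert k-n\rvert^{-\frac{1}{2}}=\infty$), and indeed the stated bound with the homogeneous seminorm $[w]_{\dot{H}^{-\frac{1}{2}}}$ is false for $w$ a nonzero constant. The paper implicitly absorbs this by switching to the inhomogeneous norm $\lVert a\rVert_{H^{-\frac{1}{2}}}$ in Remark~\ref{rem_to_lemma_approximation_identity_14} and in all downstream applications; your explicit handling makes the hypothesis visible. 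Also, you approximate $w$ by trigonometric polynomials while the paper approximates $\phi$; both work since $L^{2,1}\hookrightarrow L^2\hookrightarrow \dot{H}^{-\frac{1}{2}}$ is continuous and the a.e.-convergence estimate identifies the limit, so there is no circularity in using the trig-polynomial estimate to pass to the limit.
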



\begin{proof}
First we claim that the integral in (\ref{eq_lem_approximation_identity_14}) converges absolutely for a.e. $y\in \mathbb{S}^1$ and $G_{w,\phi}$ defines a function in $L^2(\mathbb{S}^1)$ for any $w\in L^{2,1}(\mathbb{S}^1)$, $\phi\in \mathbb{A}^1(\mathbb{S}^1)$. In fact we estimate
\begin{equation}\label{eq_proof_lem_l2_norm_bounded_to_show_integrable_ae}
\begin{split}
&\int_{\mathbb{S}^1}\left(\int_{\mathbb{S}^1}\left\lvert w(x)(\phi(x)-\phi(y))K^\frac{1}{4}(x-y)\right\rvert dx\right)^2dy\\
\leq &\int_{\mathbb{S}^1}\left(\int_{\mathbb{S}^1}\left\lvert w(x)\frac{\pi[\phi]_{Lip}B_\frac{1}{4}}{\left\lvert \sin\left(\frac{1}{2}(x-y)\right)\right\rvert^\frac{1}{2}}\right\rvert dx\right)^2dy\\
\leq & 2\pi\left(\pi[\phi]_{Lip}B_\frac{1}{4} \right)^2\lVert w\rVert_{L^{2,1}}^2 \left\lVert\left\lvert \sin \left(\frac{1}{2}\cdot\right)\right\rvert^{-\frac{1}{2}}\right\rVert_{L^{2,\infty}}^2,
\end{split}
\end{equation}
where in the first step we used estimate (\ref{eq_estimates_kernel_fraclap_s}), while in the last step we used the fact that $L^{2,\infty}(\mathbb{S}^1)$ is the topological dual space of $L^{2,1}(\mathbb{S}^1)$, and therefore the $L^\infty$-norm of the internal integral of the second line can be bounded by the product of the two norms. This concludes the proof of the first claim.\newline
Next we observe that for any $n\in\mathbb{Z}$, $y\in \mathbb{S}^1$,
\begin{equation}
G_{w,{e^{in\cdot}}}(y)=\int_{\mathbb{S}^1}w(x)e^{iny}(e^{in(x-y)}-1)K^\frac{1}{4}(x-y)dx.
\end{equation}
For any $n\in\mathbb{Z}$, $x\in \mathbb{S}^1$ we define
\begin{equation}
H_n(x):=(e^{-inx}-1)K^\frac{1}{4}(x).
\end{equation}
Then, by (\ref{eq_estimates_kernel_fraclap_s}), $H_n\in L^{2,\infty}(\mathbb{S}^1)$ for all $n\in \mathbb{Z}$.
Now we compute for any $n,k\in \mathbb{Z}$
\begin{equation}
\begin{split}
\widehat{H_n}(k)=&\frac{1}{2\pi}\int_{\mathbb{S}^1}e^{-ikz}(e^{-inz}-1)K^\frac{1}{4}(z)dz=\frac{1}{2\pi}\int_{\mathbb{S}^1}(e^{-i(n+k)}z)-e^{-ikz})K^\frac{1}{4}(z)dz\\=&\int_{\mathbb{S}^1}(e^{-i(n+k)}z)-1))K^\frac{1}{4}(z)dz-\int_{\mathbb{S}^1}(e^{-ik}z)-1))K^\frac{1}{4}(z)dz.
\end{split}
\end{equation}
We observe that for any $m\in \mathbb{Z}$
\begin{equation}
\begin{split}
\frac{1}{2\pi}\int_{\mathbb{S}^1}(e^{-imz}-1)K^\frac{1}{4}(z)dz
=&\frac{1}{(2\pi)^2}\int_{\mathbb{S}^1}e^{-imy}\int_{\mathbb{S}^1}(e^{im(y-z)}-e^{imy})K^\frac{1}{4}(z)dzdy\\
=&\frac{1}{(2\pi)^2}\int_{\mathbb{S}^1}e^{-imy}(-\Delta)^\frac{1}{4}e^{im\cdot}(y)dy\\
=&\frac{1}{2\pi}\mathscr{F}\left((-\Delta)^\frac{1}{4}e^{im\cdot}\right)(m)\\
=&\frac{\lvert m\rvert^\frac{1}{2}}{2\pi},
\end{split}
\end{equation}
therefore for any $n,k\in \mathbb{Z}$
\begin{equation}
\widehat{H_n}(k)=\frac{1}{2\pi}\left(\lvert k+n\rvert^\frac{1}{2}-\lvert k\rvert^\frac{1}{2}\right).
\end{equation}
Now assume that $\phi$ is a trigonometric polynomial of degree $N$. Wlog we can assume that $\hat{\phi}(0)=0$. Then for a.e. $y\in \mathbb{S}^1$
\begin{equation}
\begin{split}
G_{w,\phi}(y)=&\sum_{0<\lvert n\rvert\leq N}\widehat{\phi}(n)\int_{\mathbb{S}^1}w(x)(e^{inx}-e^{iny})K^\frac{1}{4}(x-y)dx\\
=&\sum_{0<\lvert n\rvert\leq N}\widehat{\phi}(n)\int_{\mathbb{S}^1}w(x)e^{iny}H_n(y-x)dx=
\sum_{0<\lvert n\rvert\leq N}\hat{\phi}(n)e^{iny}w\ast H_n(y)
\end{split}
\end{equation}
For any $k\in\mathbb{Z}$ we have
\begin{equation}
\begin{split}
\widehat{G_{w,\phi}}(k)=&\sum_{0<\lvert n\rvert \leq N}\widehat{\phi}(n)\mathscr{F}(e^{in\cdot}(w)\ast H_n)(k)=
\sum_{0<\lvert n\rvert \leq N}\widehat{\phi}(n)\mathscr{F}(w\ast H_n)(k-n)\\
=&\sum_{0<\lvert n\rvert \leq N}\widehat{\phi}(n)\widehat{w}(k-n)\widehat{H_n}(k-n).
\end{split}
\end{equation}
Therefore, by Parseval's identity,
\begin{equation}\label{eq_proof_lemma_L2norm_Gphi}
\begin{split}
\lVert G_{u,\phi}\rVert_{L^2}^2=\sum_{k\in\mathbb{Z}}\left(\sum_{0<\lvert n\rvert \leq N}\widehat{\phi}(n)\widehat{w}(k-n)\widehat{H_n}(k-n)\right)^2
\end{split}
\end{equation}
%
Now we observe that if $\lvert k\rvert\geq 2\lvert n\rvert>0$, there holds $\lvert k-n\rvert\leq \frac{3}{2}\lvert k\rvert$ and $\frac{\lvert k\rvert}{2}\leq \lvert k+n\rvert$; therefore
\begin{equation}
\begin{split}
\left\lvert \lvert k+n\rvert^\frac{1}{2}-\lvert k\rvert^\frac{1}{2}\right\rvert\leq& \frac{1}{2}\int_{\lvert k\rvert\wedge\lvert n+k\rvert}^{\lvert k\rvert\vee\lvert n+k\rvert}s^{-\frac{1}{2}}ds\leq \frac{1}{2} \lvert n\rvert\left(\frac{\lvert k\rvert}{2}\right)^{-\frac{1}{2}}\\
\leq& \frac{1}{2}\left(\frac{3}{2}\right)^\frac{1}{2} \lvert n\rvert\lvert k-n\rvert^{-\frac{1}{2}}.
\end{split}
\end{equation}
Therefore Young's Inequality yields
\begin{equation}\label{eq_proof_lemma_L2norm_Gphi_firstsum}
\begin{split}
&\sum_{k\in\mathbb{Z}}\left(\sum_{0<\lvert n\rvert\leq\frac{\lvert k\rvert}{2}}\left\lvert \widehat{\phi}(n)\widehat{w}(k-n)\right\rvert\left\lvert\lvert k+n\rvert^\frac{1}{2}-\lvert k\rvert^\frac{1}{2}\right\rvert\right)^2\\
\leq&\frac{3}{4}\sum_{k\in\mathbb{Z}}\left(\sum_{0<\lvert n\rvert\leq\frac{\lvert k\rvert}{2}}\left\lvert \widehat{\phi}(n) n\right\rvert\left\lvert\widehat{w}(k-n)\right\rvert\lvert k-n\rvert^{-\frac{1}{2}}\right)^2\\
\leq& \frac{3}{4}\left(\sum_{0< \lvert n\rvert\leq N}\left\lvert \widehat{\phi}(n)n\right\rvert\right)^2\left(\sum_{k\in\mathbb{Z}}\left\lvert \widehat{w}(k)\right\rvert^2\lvert k\rvert^{-1}\right)=\frac{3}{4}[\phi]_{\mathbb{A}^1}^2[w]_{\dot{H}^{-\frac{1}{2}}}^2.
\end{split}
\end{equation}


On the other hand, we always have
\begin{equation}
\left\lvert\lvert n+k\rvert^\frac{1}{2}-\lvert k\rvert^\frac{1}{2}\right\rvert\leq \lvert n\rvert^\frac{1}{2},
\end{equation}
and if $\lvert k\rvert<2\lvert n\rvert$, $\lvert k-n\rvert\leq 3\lvert n\rvert$.
Therefore, for any $k\in\mathbb{Z}$,
\begin{equation}\label{eq_proof_lemma_L2norm_Gphi_secondsum}
\begin{split}
&\sum_{k\in\mathbb{Z}}\left(\sum_{\substack{0<\lvert n\rvert\leq N\\ \lvert k\rvert<2\lvert n\rvert}}\left\lvert \widehat{\phi}(n)\widehat{w}(k-n)\right\rvert\left\lvert\lvert k+n\rvert^\frac{1}{2}-\lvert k\rvert^\frac{1}{2}\right\rvert\right)^2\\
\leq&\sum_{k\in\mathbb{Z}}\left(\sum_{\substack{0<\lvert n\rvert\leq N\\ \lvert k\rvert<2\lvert n\rvert}}\left\lvert \widehat{\phi}(n)\right\rvert\lvert n\rvert^\frac{1}{2}\lvert k-n\rvert^{\frac{1}{2}}\left\lvert\widehat{w}(k-n)\right\rvert\lvert k-n\rvert^{-\frac{1}{2}}\right)^2\\
\leq & 9 \sum_{k\in\mathbb{Z}}\left(\sum_{0<\lvert n\rvert\leq N}\left\lvert \widehat{\phi}(n)\right\rvert\lvert n\rvert\left\lvert\widehat{w}(k-n)\right\rvert\lvert\lvert k-n\rvert^{-\frac{1}{2}}\right)^2\\
\leq&9\left(\sum_{0< \lvert n\rvert\leq N}\left\lvert \widehat{\phi}(n)n\right\rvert\right)^2\left(\sum_{k\in\mathbb{Z}}\left\lvert \widehat{w}(k)\right\rvert^2\lvert k\rvert^{-1}\right)=9[ \phi]_{\mathbb{A}^1}^2[ w]_{\dot{H}^{-\frac{1}{2}}}^2,
\end{split}
\end{equation}
where the second-last steps follow again from Young's inequality.
Therefore, combining (\ref{eq_proof_lemma_L2norm_Gphi_firstsum}) and (\ref{eq_proof_lemma_L2norm_Gphi_secondsum}) we obtain
\begin{equation}\label{eq_proof_lemma_intermediate_step_expression_with_seminorm}
\lVert G_{u, \phi}\rVert_{L^2}^2\leq\frac{1}{(2\pi)^2}\left(\frac{3}{4}+9\right)[\phi]_{\mathbb{A}^1}^2[w]_{\dot{H}^\frac{1}{2}}^2
\end{equation}
for any trigonometric polynomial $\phi$.
For the general case, let $\phi\in \mathbb{A}^1(\mathbb{S}^1)$ and for any $n\in \mathbb{N}$ let 
\begin{equation}
\phi_n:=D_n\ast \phi.
\end{equation}
Here $D_n$ denotes the $n^{th}$ Dirichlet kernel, which can be defined through its Fourier coefficients as follows:
\begin{equation}\label{eq_def_Dirichlet_kernel_Fourier}
\widehat{D_n}(k):=
\begin{cases}
1 \text{ if }\lvert k\rvert\leq n\\
0 \text{ if }\lvert k\rvert> n.
\end{cases}
\end{equation}
Then for any $n\in\mathbb{N}$ $\phi_n$ is a trigonometric polynomial, and
\begin{equation}
\lim_{n\to \infty} \phi_n =\phi\text{ in }\mathbb{A}^1(\mathbb{S}^1).
\end{equation}
As estimate (\ref{eq_lem_estimate_Gphi}) holds for any trigonometric polynomial, and since $\phi_n\to \phi$ in $\mathbb{A}^1(\mathbb{S}^1)$, the sequence $(G_{w,\phi_n})_n$ is a Cauchy sequence in $L^2(\mathbb{S}^1)$, and therefore converges towards an element $G_{w, \infty}\in L^2(\mathbb{S}^1)$. We would like to show that $G_{w,\infty}=G_{w,\phi}$. To this end let $\psi\in C^\infty(\mathbb{S}^1)$. Then for $n\in \mathbb{N}$ we consider
\begin{equation}\label{eq_proof_lemma_show_same_limit_Cauchy_limit_coeff}
\begin{split}
&\int_{\mathbb{S}^1}\psi(y)\left[G_{w,\phi}(y)-G_{w,\phi_n}(y)\right]dy\\
=&\int_{\mathbb{S}^1}\psi(y)\int_{\mathbb{S}^1}w(y)(\phi(x)-\phi_n(x)-(\phi(y)-\phi_n(y))K^\frac{1}{4}(x-y)dxdy.
\end{split}
\end{equation}
We remark that $\mathbb{A}^1(\mathbb{S}^1)\hookrightarrow C^1(\mathbb{S}^1)$ with
\begin{equation}\label{eq_proof_lemma_estimate_embedding_A'_C1}
\lVert u'\rVert_{L^\infty}\leq [u]_{\mathbb{A}^1}
\end{equation}
for any $u\in \mathbb{A}^1$.
Therefore the integral in (\ref{eq_proof_lemma_show_same_limit_Cauchy_limit_coeff}) converges absolutely in $\mathbb{S}^1\times\mathbb{S}^1$ by estimate (\ref{eq_estimates_kernel_fraclap_s}). Moreover we observe that, by (\ref{eq_proof_lemma_estimate_embedding_A'_C1}), for $x,y\in \mathbb{S}^1$, $n\in\mathbb{N}$
\begin{equation}
\lvert \phi(x)-\phi_n(x)-(\phi(y)-\phi_n(y))\rvert\leq [\phi-\phi_n]_{\mathbb{A}^1}\lvert x-y\rvert
\end{equation}
and therefore, by Lebesgue's Dominated convergence Theorem, the expression in (\ref{eq_proof_lemma_show_same_limit_Cauchy_limit_coeff}) converges to $0$ as $n\to \infty$.
By the uniqueness of the limit for convergence in $\mathscr{D}'(\mathbb{S}^1)$, we concludes that $G_{w,\infty}=G_{w,\phi}$.
\end{proof}
\begin{rem}\label{rem_to_lemma_approximation_identity_14}\noindent
\begin{enumerate}
\item by Remark \ref{rem_Wiener_algebra}, for $\phi$ to be in $\mathbb{A}^1(\mathbb{S}^1)$ it suffices that $\phi\in H^{1+\alpha}(\mathbb{S}^1)$ for some $\alpha>\frac{1}{2}$. In particular, this is true if $\phi\in C^{1,\alpha}(\mathbb{S}^1)$ for some $\alpha>\frac{1}{2}$.
\item Lemma \ref{lemma_approximation_identity_14} shows that for any $\phi\in C^\infty(\mathbb{S}^1)$, we have a continuous linear operator
\begin{equation}
F:L^{2,1}(\mathbb{S}^1)\to L^2(\mathbb{S}^1),\quad w\mapsto G_{w, \phi}
\end{equation}
By (\ref{eq_lem_estimate_Gphi}), $F$ can be extended to a continuous operator on $H^{-\frac{1}{2}}(\mathbb{S}^1)$ (still denoted by $F$), for which (\ref{eq_lem_estimate_Gphi}) remains valid.
\item We also observe that if $a\in L^{2,1}(\mathbb{S}^1)$, $b\in L^2(\mathbb{S}^1)$, for any $\phi\in C^\infty(\mathbb{S}^1)$ we have
\begin{equation}
\text{div}_\frac{1}{4}\left( a(x) \cdot b(y) \right)[\phi]=\int_{\mathbb{S}^1}G_{a, \phi}(x)\cdot b(x)dx.
\end{equation}
Therefore there holds
\begin{equation}\label{eq_rem_estimate_divergence_for_products}
\begin{split}
\left\lvert {div}_\frac{1}{4}\left( a(x)\cdot b(y)\right)[\phi]\right\rvert\leq &\lVert G_{a,\phi}\rVert_{L^2}\lVert b\rVert_{L^2}\\
\leq & C[\phi]_{\mathbb{A}^1}\lVert a\rVert_{H^{-\frac{1}{2}}}\lVert  b\rVert_{L^2},
\end{split}
\end{equation}
where $C$ is an independent constant.
\end{enumerate}
\end{rem}
By the previous Remark, we may extend the definition of
\begin{equation}
\text{div}_\frac{1}{4}\left[a(x)\cdot b(y)\right]
\end{equation}
to functions $a\in H^{-\frac{1}{2}}(\mathbb{S}^1)$, $b\in L^2(\mathbb{S}^1)$ as follows: for $\phi\in C^\infty(\mathbb{S}^1)$ let
\begin{equation}\label{eq_discussion_extension_definition_fracdiv_14}
\text{div}_\frac{1}{4}\left[a(x)\cdot b(y)\right][\phi]:=\int_{\mathbb{S}^1}G_{a,\phi}(x)\cdot b(x)dx,
\end{equation}
where $G_{a, \phi}:=F(a)$ was defined in the previous Remark.
Therefore the following estimate still holds:
\begin{equation}
\begin{split}
\left\lvert \text{div}_\frac{1}{4}\left( a(x)\cdot b(y)\right)[\phi]\right\rvert\leq & \lVert G_{a,\phi}\rVert_{L^2}\lVert b\rVert_{L^2}\\
\leq & C[\phi]_{\mathbb{A}^1}\lVert a\rVert_{H^{-\frac{1}{2}}} \lVert b\rVert_{L^2}
\end{split}
\end{equation}
where $C$ is an independent constant.\\
In particular, we will be interested in the case where $a=u'X$ and \newline$b=D_pL\left(u, (-\Delta)^\frac{1}{4}u\right)$, for $X$ and $L$ as in Proposition \ref{prop_first_version_Noether_vardom_14} and $u\in H^\frac{1}{2}(\mathbb{S}^1, \mathbb{R}^m)$. To this end, we first check that we can apply the previous Lemma for this choice of $a$ and $b$ (assuming that $D_pf\left(u, (-\Delta)^\frac{1}{4}u\right)\in L^2(\mathbb{S}^1)$).

\begin{lem}\label{lem_verify_condition_approxlem}
Let $u\in H^\frac{1}{2}(\mathbb{S}^1, \mathbb{R}^m)$, let $X$ a smooth vector field on $\mathbb{S}^1$. Then $u'X\in H^{-\frac{1}{2}}(\mathbb{S}^1)$ and
\begin{equation}
\lVert u'X \rVert_{H^{-\frac{1}{2}}}\leq C_X \lVert u\rVert_{H^\frac{1}{2}}
\end{equation}
for some constant $C_X$ depending only on $X$.
\end{lem}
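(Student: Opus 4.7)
The strategy is to prove the lemma via duality. By the Fourier characterisation of Sobolev spaces on $\mathbb{S}^1$, the distributional derivative is a bounded operator $\partial_x\colon H^\frac{1}{2}(\mathbb{S}^1)\to H^{-\frac{1}{2}}(\mathbb{S}^1)$, with $\lVert u'\rVert_{H^{-1/2}}\leq\lVert u\rVert_{H^{1/2}}$; indeed $\widehat{u'}(n)=in\widehat{u}(n)$, and $|n|^2(1+|n|^2)^{-1/2}\leq(1+|n|^2)^{1/2}$. Having realised $u'$ as an element of $H^{-\frac{1}{2}}(\mathbb{S}^1)$, I would define the product $u'X$ against test functions via the natural adjoint formula
\begin{equation*}
\langle u'X,\phi\rangle:=\langle u',X\phi\rangle\quad\text{for all }\phi\in C^\infty(\mathbb{S}^1).
\end{equation*}
To show this extends continuously to $\phi\in H^\frac{1}{2}(\mathbb{S}^1)$ with the desired norm bound, it suffices to show that multiplication by the smooth function $X$ is a bounded operator on $H^\frac{1}{2}(\mathbb{S}^1)$, i.e.\ that there exists $C_X>0$ with $\lVert X\phi\rVert_{H^{1/2}}\leq C_X\lVert\phi\rVert_{H^{1/2}}$ for all $\phi\in C^\infty(\mathbb{S}^1)$.

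The multiplication estimate is the only non-trivial step; I would prove it on the Fourier side. Using $\widehat{X\phi}(n)=\sum_{k\in\mathbb{Z}}\widehat{X}(n-k)\widehat{\phi}(k)$ together with the elementary subadditivity bound
\begin{equation*}
(1+|n|^2)^{\frac{1}{4}}\leq C\bigl((1+|n-k|^2)^{\frac{1}{4}}+(1+|k|^2)^{\frac{1}{4}}\bigr),
\end{equation*}
one gets
\begin{equation*}
(1+|n|^2)^{\frac{1}{4}}|\widehat{X\phi}(n)|\leq C\bigl(a\ast b\bigr)(n)+C\bigl(\widetilde{a}\ast\widetilde{b}\bigr)(n),
\end{equation*}
where $a(k)=(1+|k|^2)^{1/4}|\widehat{X}(k)|$, $b(k)=|\widehat{\phi}(k)|$, $\widetilde{a}(k)=|\widehat{X}(k)|$, $\widetilde{b}(k)=(1+|k|^2)^{1/4}|\widehat{\phi}(k)|$. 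Taking $\ell^2$-norms and invoking Young's inequality $\lVert f\ast g\rVert_{\ell^2}\leq\lVert f\rVert_{\ell^1}\lVert g\rVert_{\ell^2}$, then applying Plancherel, one obtains
\begin{equation*}
\lVert X\phi\rVert_{H^{1/2}}\leq C\bigl(\lVert a\rVert_{\ell^1}+\lVert\widetilde{a}\rVert_{\ell^1}\bigr)\lVert\phi\rVert_{H^{1/2}},
\end{equation*}
and both $\ell^1$-norms are finite since the rapid decay of $\widehat{X}(k)$ (because $X\in C^\infty(\mathbb{S}^1)$) makes $(1+|k|^2)^{1/4}\widehat{X}(k)$ absolutely summable.

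Combining these pieces, for any $\phi\in C^\infty(\mathbb{S}^1)$ one estimates
\begin{equation*}
|\langle u'X,\phi\rangle|=|\langle u',X\phi\rangle|\leq\lVert u'\rVert_{H^{-1/2}}\lVert X\phi\rVert_{H^{1/2}}\leq C_X\lVert u\rVert_{H^{1/2}}\lVert\phi\rVert_{H^{1/2}},
\end{equation*}
where the first inequality uses the duality pairing $H^{-\frac{1}{2}}(\mathbb{S}^1)=H^\frac{1}{2}(\mathbb{S}^1)^\ast$ discussed in the preliminaries. Since $C^\infty(\mathbb{S}^1)$ is dense in $H^\frac{1}{2}(\mathbb{S}^1)$, this proves that $u'X$ extends uniquely to an element of $H^{-\frac{1}{2}}(\mathbb{S}^1)$ with $\lVert u'X\rVert_{H^{-1/2}}\leq C_X\lVert u\rVert_{H^{1/2}}$. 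The main technical point is really the multiplication estimate on $H^\frac{1}{2}$; everything else is a routine application of duality and the Fourier definition of the Sobolev norms.
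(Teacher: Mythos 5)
Your proof is correct and has the same skeleton as the paper's: both realise $u'X$ by duality, pairing $\langle u'X,\phi\rangle=\langle u',X\phi\rangle$ and then reducing the bound to the fact that multiplication by the smooth function $X$ is continuous on $H^{1/2}(\mathbb{S}^1)$. Where you differ is in the proof of that multiplication estimate. The paper invokes its Lemma \ref{lem_multiplication_with_test_function_continuous}, which splits the Fourier convolution $\sum_r\widehat{\psi}(r)\widehat{u}(n-r)$ into the regimes $|r|\leq|n|/2$ and $|r|>|n|/2$, transfers weight onto $\widehat{\psi}$ via $\widehat{\psi^{(k)}}(n)=(in)^k\widehat{\psi}(n)$, and lands on a constant proportional to $\lVert X\rVert_{C^3}$. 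You instead use the (correct) subadditivity of the Japanese bracket at exponent $1/2\leq 1$, $(1+|n|^2)^{1/4}\leq C\bigl((1+|n-k|^2)^{1/4}+(1+|k|^2)^{1/4}\bigr)$, followed by Young's inequality, giving a constant proportional to $\sum_k(1+|k|^2)^{1/4}|\widehat{X}(k)|$. Both are valid for smooth $X$; your version is the standard "$H^s$ is a Banach algebra for $s>n/2$, modulo a weight" argument and is a bit cleaner to state, while the paper's version is more hands-on and records explicitly that only $C^3$-regularity of $X$ is used. Nothing is missing from your argument.
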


\begin{proof}
We compute $\lVert u'X \rVert_{H^{-\frac{1}{2}}}$ exploiting the fact that $H^{-\frac{1}{2}}(\mathbb{S}^1)^\ast\approx H^\frac{1}{2}(\mathbb{S}^1)$:
\begin{equation}\label{eq_proof_lemma_estimate_negative_sobolev_norm_using_duality}
\begin{split}
\lVert u'X\rVert_{H^{-\frac{1}{2}}}\leq &\sup_{\substack{\phi\in H^\frac{1}{2}(\mathbb{S}^1)\\ \lVert \phi \rVert_{H^\frac{1}{2}}\leq 1}} \langle u' X, \phi \rangle
=\sup_{\substack{\phi\in H^\frac{1}{2}(\mathbb{S}^1)\\ \lVert \phi \rVert_{H^\frac{1}{2}}\leq 1}} \langle u' , X\phi \rangle\\
\leq & \sup_{\substack{\phi\in H^\frac{1}{2}(\mathbb{S}^1)\\ \lVert \phi \rVert_{H^\frac{1}{2}}\leq 1}}\lVert u'\rVert_{H^{-\frac{1}{2}}}\lVert X \phi\rVert_{H^\frac{1}{2}}
\leq  C\sup_{\substack{\phi\in H^\frac{1}{2}(\mathbb{S}^1)\\ \lVert \phi \rVert_{H^\frac{1}{2}}\leq 1}} \lVert u\rVert_{H^\frac{1}{2}}\lVert X\rVert_{C^3}\lVert \phi\rVert_{H^\frac{1}{2}}\\ 
\leq & C \lVert u\rVert_{H^\frac{1}{2}}\lVert X\rVert_{C^3}
\end{split}
\end{equation}
for some constant $C$ independent from $u$ and $X$. In the third step we made use of Lemma \ref{lem_multiplication_with_test_function_continuous}.
\end{proof}
Thus, by the previous Lemmas, if $u\in H^\frac{1}{2}(\mathbb{S}^1,\mathbb{R}^m)$ and $L$ and $X$ are like in Proposition \ref{prop_first_version_Noether_vardom_14}, if $D_pL\left(u, (-\Delta)^\frac{1}{4}u\right)\in L^2(\mathbb{S}^1)$,
\begin{equation}\label{eq_discussion_div14_is_well_defined_by_previous_results}
\text{div}_\frac{1}{4}\left( D_pL\left(u(x), (-\Delta)^\frac{1}{4}u(x)\right)X(y)u'(y) \right)
\end{equation}
is a well defined distribution, and there holds the following estimate: for any $\phi\in C^\infty(\mathbb{S}^1)$,
\begin{equation}\label{eq_discussion_estimate_for_extended_fracdiv_14}
\begin{split}
&\left\lvert \text{div}_\frac{1}{4}\left( D_pL\left(u(x),(-\Delta)^\frac{1}{4}u(x)\right)u'(y)X(y)\right)[\phi]\right\rvert\\
\leq &\lVert G_{u'X,\phi}\rVert_{L^2}\left\lVert D_pL\left(u, (-\Delta)^\frac{1}{4}u\right)\right\rVert_{L^2}\\
\leq & C[\phi]_{\mathbb{A}^1}\lVert u\rVert_{H^{\frac{1}{2}}}\lVert X\rVert_{C^3}\left\lVert D_pL\left(u, (-\Delta)^\frac{1}{4}u\right) \right\rVert_{L^2}.
\end{split}
\end{equation}
for some independent constant $C$.\\

We are now able to formulate the analogous of Noether's Theorem (Theorem \ref{thm_Noether_thm_vardom}) for variations in the domain for stationary points of the energy (\ref{eq_def_1_4_Dirichlet_energy}).

\begin{thm}\label{prop_final_version_Noether_thm_vardom}
Let $L\in C^2(\mathbb{R}^m\times\mathbb{R}^m, \mathbb{R})$ so that for any $x,p\in\mathbb{R}^m$
\begin{equation}\label{eq_well_def_energy_vardom_2}
\lvert L(x,p)\rvert\leq C(1+\lvert p\rvert^2)
\end{equation}
for some constant $C>0$. For any $u\in H^\frac{1}{2}(\mathbb{S}^1,\mathbb{R}^m)$ we set
\begin{equation}\label{eq_def_1_4_Dirichlet_energy_2}
E(u):=\int_{\mathbb{S}^1}L\left(u(x),(-\Delta)^\frac{1}{4}u(x)\right)dx.
\end{equation}
Let $X$ be a smooth vector field on $\mathbb{S}^1$ and assume that its flow $\phi_t$ satisfies
\begin{equation}\label{eq_diffeo_is_conformal14_2}
L\left(u\circ\phi_t(x),(-\Delta)^\frac{1}{4}(u\circ\phi_t)(x)\right)=L\left(u(\phi_t(x)),(-\Delta)^\frac{1}{4}u(\phi_t(x))\right)\phi_t'(x)
\end{equation}
for a.e. $x\in \mathbb{S}^1$, for $t$ in a neighbourhood of the origin.
Assume that $u\in H^\frac{1}{2}(\mathbb{S}^1, \mathbb{R}^m)$ satisfy the following stationarity equation:
\begin{equation}\label{eq_prop_stationary_equation_14}
\frac{d}{dx}L\left(u, (-\Delta)^\frac{1}{4}u\right)X+\text{div}_\frac{1}{4}\left[ D_pL\left(u(x), (-\Delta)^\frac{1}{4}u(x) \right)u'(y)\right]X=0
\end{equation}
as distributions,
and assume that there exists a sequence $(u_n)_n$ in $C^\infty(\mathbb{S}^1)$ such that $u_n\to u$ in $H^\frac{1}{2}(\mathbb{S}^1)$ and
\begin{equation}\label{eq_prop_assumption_convergence_weak}
D_pL\left(u_n, (-\Delta)^\frac{1}{4}u_n\right)\rightharpoondown D_pL\left(u, (-\Delta)^\frac{1}{4}u\right)\text{ weakly in }L^2(\mathbb{S}^1).
\end{equation}
Then, in the sense of distributions
\begin{equation}\label{eq_prop_result_Noether_thm_vardom_14}
\frac{d}{dx}\left[L\left(u, (-\Delta)^\frac{1}{4}u\right)X\right](x)+\text{div}_\frac{1}{4}\left[D_pL\left(u(x), (-\Delta)^\frac{1}{4}u(x)\right)u'(y)X(y)\right]=0.
\end{equation}
\end{thm}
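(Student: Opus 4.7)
The plan is to apply Proposition~\ref{prop_first_version_Noether_vardom_14} to the smooth approximating sequence $(u_n)_n$ furnished by the hypothesis, obtaining the identity~(\ref{eq_prop_fundamental_idea_of_Noether_vardom_for_tests}) for each $u_n$, and then to pass to the limit in $\mathscr{D}'(\mathbb{S}^1)$. Write the identity for $u_n$ in the form $A_n+B_n=C_n+D_n$, where $A_n$ and $C_n$ denote the two fractional-divergence terms and $B_n$, $D_n$ the two derivative terms of the left and right sides of~(\ref{eq_prop_fundamental_idea_of_Noether_vardom_for_tests}). If all four sequences converge in $\mathscr{D}'(\mathbb{S}^1)$ to their natural counterparts $A$, $B$, $C$, $D$ for $u$, then $A+B=C+D$, and the stationarity hypothesis~(\ref{eq_prop_stationary_equation_14}) on $u$ gives $C+D=0$, so $A+B=0$, which is precisely~(\ref{eq_prop_result_Noether_thm_vardom_14}).

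For the scalar terms $B_n,D_n$ I would extract a subsequence along which $u_n\to u$ and $(-\Delta)^{1/4}u_n\to(-\Delta)^{1/4}u$ pointwise a.e., so that continuity of $L$ gives $L(u_n,(-\Delta)^{1/4}u_n)\to L(u,(-\Delta)^{1/4}u)$ a.e. The growth bound $|L(x,p)|\le C(1+|p|^2)$ together with $L^2$-convergence of $(-\Delta)^{1/4}u_n$ produces a uniformly integrable majorant, so Vitali's theorem upgrades this to $L^1$-convergence. Since $X$ is smooth, testing $B_n$ and $D_n$ against $\phi\in C^\infty(\mathbb{S}^1)$ amounts to pairing the $L^1$-convergent sequence $L(u_n,(-\Delta)^{1/4}u_n)$ with the bounded functions $X\phi'$ and $(X\phi)'$, yielding $B_n\to B$ and $D_n\to D$ in $\mathscr{D}'(\mathbb{S}^1)$.

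For the fractional-divergence terms I use the extension~(\ref{eq_discussion_extension_definition_fracdiv_14}) to write, for $\phi\in C^\infty(\mathbb{S}^1)$,
\begin{equation*}
A_n[\phi]=\int_{\mathbb{S}^1}G_{u_n'X,\phi}(x)\cdot D_pL\bigl(u_n,(-\Delta)^{1/4}u_n\bigr)(x)\,dx,
\end{equation*}
and the analogous expression for $C_n[\phi]$ with $u_n'X$ replaced by $u_n'$ and $\phi$ replaced by $X\phi\in C^\infty\subset\mathbb{A}^1$. By Lemma~\ref{lem_verify_condition_approxlem}, $u_n'X\to u'X$ and $u_n'\to u'$ in $H^{-1/2}(\mathbb{S}^1)$; by the continuous extension of $w\mapsto G_{w,\psi}$ from $H^{-1/2}(\mathbb{S}^1)$ to $L^2(\mathbb{S}^1)$ provided by Lemma~\ref{lemma_approximation_identity_14} and Remark~\ref{rem_to_lemma_approximation_identity_14}, the factors $G_{u_n'X,\phi}$ and $G_{u_n',X\phi}$ converge strongly in $L^2(\mathbb{S}^1)$. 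Pairing this strong convergence with the weak $L^2$-convergence of $D_pL(u_n,(-\Delta)^{1/4}u_n)$ from hypothesis~(\ref{eq_prop_assumption_convergence_weak}) then yields $A_n[\phi]\to A[\phi]$ and $C_n[\phi]\to C[\phi]$.

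The main obstacle is exactly this weak--strong pairing: $H^{1/2}$-convergence of $u_n$ does not by itself force any strong convergence of $D_pL(u_n,(-\Delta)^{1/4}u_n)$ in $L^2$, which is why the weak-$L^2$ convergence~(\ref{eq_prop_assumption_convergence_weak}) is imposed as a hypothesis. The argument then relies decisively on the fact that $G_{u_n'X,\phi}$ and $G_{u_n',X\phi}$ depend on $u_n$ only through its $H^{1/2}$-norm and therefore convert $H^{1/2}$-convergence into strong $L^2$-convergence, which is exactly the content of Lemma~\ref{lemma_approximation_identity_14}. A standard subsequences argument (every subsequence of $(u_n)$ has a further subsequence along which all four distributional convergences hold, with the same limit) upgrades the subsequential convergence to convergence of the full sequence, concluding the proof.
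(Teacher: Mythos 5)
Your argument is correct and follows essentially the same route as the paper's proof: apply the smooth-case identity~(\ref{eq_prop_fundamental_idea_of_Noether_vardom_for_tests}) to the approximating sequence, pass to the limit in the divergence terms via the strong $L^2$-convergence of $G_{u_n'X,\phi}$ (Lemma~\ref{lemma_approximation_identity_14}, Remark~\ref{rem_to_lemma_approximation_identity_14}, Lemma~\ref{lem_verify_condition_approxlem}) paired with the hypothesized weak $L^2$-convergence~(\ref{eq_prop_assumption_convergence_weak}), and invoke the stationarity equation~(\ref{eq_prop_stationary_equation_14}) for $u$. Your explicit treatment of the $L$-term convergence via the growth bound~(\ref{eq_well_def_energy_vardom_2}) and Vitali's theorem, together with the closing subsequence argument, actually fills in a detail the paper leaves implicit in computation~(\ref{eq_proof_prop_computation_to_pass_to_the_limit_Noether_vardom}).
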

\begin{proof}
Let $(u_n)_n$ be a sequence in $C^\infty_c(\mathbb{S}^1, \mathbb{R}^m)$ such that $u_n\to u$ in $H^\frac{1}{2}(\mathbb{S}^1)$.
By Lemma \ref{lemma_approximation_identity_14} and Remark \ref{rem_to_lemma_approximation_identity_14}, for any $\phi\in C^\infty(\mathbb{S}^1)$
\begin{equation}
G_{u_n'X,\phi}\to G_{u'X,\phi} \text{ in }L^2(\mathbb{S}^1).
\end{equation}
Thus, by the assumptions on $(u_n)_n$,
\begin{equation}
\begin{split}
&\text{div}_\frac{1}{4}\left[D_pL\left(u_n(x), (-\Delta)^\frac{1}{4}u_n(x)\right)u_n'(y)X(y)\right][\phi]\\=&\int_{\mathbb{S}^1}G_{u_n'X,\phi}(x)D_pL\left(u_n(x), (-\Delta)^\frac{1}{4}u_n(x)\right)dx\\
\to&\int_{\mathbb{S}^1}G_{u'X,\phi}(x)D_pL\left(u(x), (-\Delta)^\frac{1}{4}u(x)\right)dx\\
=&\text{div}_\frac{1}{4}\left[D_pL\left(u(x), (-\Delta)^\frac{1}{4}u(x)\right)u'(y)X(y)\right][\phi].
\end{split}
\end{equation}
as $n\to \infty$.
By the same argument with $Y\equiv 1$  we obtain
\begin{equation}
\begin{split}
&\text{div}_\frac{1}{4}\left[D_pL\left(u_n(x), (-\Delta)^\frac{1}{4}u_n(x)\right)u_n'(y)\right][\phi]\\
\to &\text{div}_\frac{1}{4}\left[D_pL\left(u(x), (-\Delta)^\frac{1}{4}u(x)\right)u'(y)\right][\phi].
\end{split}
\end{equation}
Now we observe that for any $n\in \mathbb{N}$, computation (\ref{eq_proof_prop_before_from_EL_to Ncurrent}) implies
\begin{equation}
\begin{split}
&\frac{d}{dx}L\left(u_n,(-\Delta)^\frac{1}{4}u_n\right)X+\text{div}_\frac{1}{4}\left[D_p L\left(u_n(x),(-\Delta)^\frac{1}{4}u_n(x)\right)u_n'(y)\right]X\\
=&\text{div}_\frac{1}{4}\left[D_pL\left(u_n(x),(-\Delta)^\frac{1}{4}u_n(x)\right)u_n'(y)X(y)\right]+\frac{d}{dx}\left[L\left(u_n,(-\Delta)^\frac{1}{4}u_n\right)X\right]
\end{split}
\end{equation}
as distributions.
Therefore
\begin{equation}\label{eq_proof_prop_computation_to_pass_to_the_limit_Noether_vardom}
\begin{split}
0=&-\int_{\mathbb{S}^1}L\left(u(x), (-\Delta)^\frac{1}{4}u(x)\right)\frac{d}{dx}\left(X\phi\right)(x)dx\\
&+\text{div}_\frac{1}{4}\left(D_pL\left(u(x), (-\Delta)^\frac{1}{4}u(x)\right)u'(y)\right)[X\phi]\\
=&\lim_{n\to \infty}\int_{\mathbb{S}^1}L\left(u_n(x), (-\Delta)^\frac{1}{4}u_n(x)\right)\frac{d}{dx}\left(X\phi\right)(x)dx\\
&+\lim_{n\to \infty}\text{div}_\frac{1}{4}\left(D_pL\left(u_n(x), (-\Delta)^\frac{1}{4}u_n(x)\right)u_n'(y)\right)[X\phi]\\
=&\lim_{n\to\infty}\text{div}_\frac{1}{4}\left(D_pL\left(u_n(x), (-\Delta)^\frac{1}{4}u_n(x)\right)u_n'(y)X(y)\right]\\
&-\lim_{n\to \infty}\int_{\mathbb{S}^1}f\left(u_n(x), (-\Delta)^\frac{1}{4}u_n(x)\right)X(x)\phi'(x)dx\\
=& \text{div}_\frac{1}{4}\left(D_pL\left(u(x), (-\Delta)^\frac{1}{4}u(x)\right)u'(y)X(y)\right].
\end{split}
\end{equation}
As (\ref{eq_proof_prop_computation_to_pass_to_the_limit_Noether_vardom}) holds for any $\phi\in C^\infty(\mathbb{S}^1)$ we conclude that
\begin{equation}
\text{div}_\frac{1}{4}\left[D_pL\left(u(x), (-\Delta)^\frac{1}{4}u(x)\right)u'(y)X(y)\right]+\frac{d}{dx}\left[L\left(u, (-\Delta)^\frac{1}{4}u\right)X\right](x)=0.
\end{equation}
in the sense of distributions.
\end{proof}
\begin{rem}
It is possible to rewrite Equation (\ref{eq_prop_result_Noether_thm_vardom_14}) as a unique $\frac{1}{4}$-divergence-free quantity. Indeed we observe that for any integrable function $g$ defined on $\mathbb{S}^1$ there holds, in the sense of distributions,
\begin{equation}
\frac{d}{dx}g=(-\Delta)^\frac{1}{2} Hg=\text{div}_\frac{1}{4}\left[Hg(x)\frac{K^\frac{1}{2}(x-y)}{K^\frac{1}{4}(x-y)}\right],
\end{equation}
where $\text{div}_\frac{1}{4}$ has to be understood as a principal value.\newline
Therefore Equation (\ref{eq_prop_result_Noether_thm_vardom_14}) can be rewritten as
\begin{equation}
\begin{split}
\text{div}_\frac{1}{4}&\left[H\left(L\left(u, (-\Delta)^\frac{1}{4}u\right)X\right)(x)\frac{K^\frac{1}{2}(x-y)}{K^\frac{1}{4}(x-y)}\right.\\
&\phantom{[}\left.+D_pL\left(u(x), (-\Delta)^\frac{1}{4}u(x)\right)u'(y)X(y) \right]=0,
\end{split}
\end{equation}
where $\text{div}_\frac{1}{4}$ acts on the first summand as a principal value.
\end{rem}
By the following result, the technical assumption (\ref{eq_prop_assumption_convergence_weak}) is satisfied in most cases of interest.

\begin{lem}\label{lem_condition_for_prop_def_Noeth_vardom}
Let $L$, $u$ be as in Theorem \ref{prop_final_version_Noether_thm_vardom}.
Assume that there exists a constant $C$ such that for any $x, p\in\mathbb{R}^m$
\begin{equation}\label{eq_lemma_bound_for_f_most_cases_condition}
\lvert D_pL(x,p)\rvert\leq C(1+\lvert p\rvert).
\end{equation}
Then there exists a sequence $(u_n)_n$ in $C^\infty(\mathbb{S}^1)$ such that $u_n\to u$ in $H^\frac{1}{2}(\mathbb{S}^1)$ and such that
\begin{equation}
D_pL\left(u_n, (-\Delta)^\frac{1}{4}u_n\right)\rightharpoondown D_pL\left(u, (-\Delta)^\frac{1}{4}u\right)\text{ weakly in }L^2(\mathbb{S}^1).
\end{equation}
\end{lem}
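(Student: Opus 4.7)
The plan is to take any standard smooth approximation of $u$ in $H^{\frac{1}{2}}(\mathbb{S}^{1})$ and show that the corresponding sequence $D_pL\bigl(u_n,(-\Delta)^{\frac{1}{4}}u_n\bigr)$ automatically converges weakly in $L^{2}$ to the desired limit, using pointwise convergence along a subsequence together with the linear growth bound \eqref{eq_lemma_bound_for_f_most_cases_condition}.

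First I would pick $(u_n)_n\subset C^{\infty}(\mathbb{S}^{1})$ with $u_n\to u$ in $H^{\frac{1}{2}}(\mathbb{S}^{1})$; for instance the partial Fourier sums $u_n=D_n\ast u$ do the job. Since $(-\Delta)^{\frac{1}{4}}\colon H^{\frac{1}{2}}(\mathbb{S}^{1})\to L^{2}(\mathbb{S}^{1})$ is continuous, one has
\begin{equation*}
u_n\to u\text{ in }L^{2}(\mathbb{S}^{1}),\qquad (-\Delta)^{\frac{1}{4}}u_n\to(-\Delta)^{\frac{1}{4}}u\text{ in }L^{2}(\mathbb{S}^{1}).
\end{equation*}
Passing to a subsequence (not relabeled), I can assume the two convergences hold pointwise a.e. on $\mathbb{S}^{1}$. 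Since $D_pL\in C^{1}(\mathbb{R}^{m}\times\mathbb{R}^{m})$, the continuity of $D_pL$ then yields
\begin{equation*}
D_pL\!\left(u_n(x),(-\Delta)^{\frac{1}{4}}u_n(x)\right)\longrightarrow D_pL\!\left(u(x),(-\Delta)^{\frac{1}{4}}u(x)\right)\quad\text{for a.e.\ }x\in\mathbb{S}^{1}.
\end{equation*}

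The growth assumption \eqref{eq_lemma_bound_for_f_most_cases_condition} gives, for every $n$,
\begin{equation*}
\left|D_pL\!\left(u_n,(-\Delta)^{\frac{1}{4}}u_n\right)\right|\leq C\bigl(1+\lvert(-\Delta)^{\frac{1}{4}}u_n\rvert\bigr),
\end{equation*}
and the right-hand side is uniformly bounded in $L^{2}(\mathbb{S}^{1})$ because $(-\Delta)^{\frac{1}{4}}u_n\to(-\Delta)^{\frac{1}{4}}u$ in $L^{2}$. Hence the sequence $\bigl(D_pL(u_n,(-\Delta)^{\frac{1}{4}}u_n)\bigr)_n$ is bounded in $L^{2}(\mathbb{S}^{1})$, and by Fatou's lemma the pointwise limit $D_pL\bigl(u,(-\Delta)^{\frac{1}{4}}u\bigr)$ itself lies in $L^{2}(\mathbb{S}^{1})$.

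The remaining point, which I expect to be the only subtle step, is to upgrade pointwise a.e.\ convergence of a bounded $L^{2}$ sequence to weak convergence in $L^{2}$. For this I would invoke Banach--Alaoglu: every subsequence admits a further subsequence converging weakly in $L^{2}(\mathbb{S}^{1})$ to some $g\in L^{2}(\mathbb{S}^{1})$. To identify $g$ with the a.e.\ limit, I would fix $\phi\in L^{2}(\mathbb{S}^{1})$ and use Egorov's theorem: for any $\varepsilon>0$ there is a measurable set $E_\varepsilon\subset\mathbb{S}^{1}$ with $\lvert E_\varepsilon\rvert<\varepsilon$ such that the convergence $D_pL(u_n,(-\Delta)^{\frac{1}{4}}u_n)\to D_pL(u,(-\Delta)^{\frac{1}{4}}u)$ is uniform on $\mathbb{S}^{1}\setminus E_\varepsilon$. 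On $\mathbb{S}^{1}\setminus E_\varepsilon$ the integrals against $\phi$ pass to the limit, while on $E_\varepsilon$ the remaining contribution is controlled by
\begin{equation*}
\left(\sup_n\lVert D_pL(u_n,(-\Delta)^{\frac{1}{4}}u_n)\rVert_{L^{2}}+\lVert D_pL(u,(-\Delta)^{\frac{1}{4}}u)\rVert_{L^{2}}\right)\lVert\phi\rVert_{L^{2}(E_\varepsilon)},
\end{equation*}
which tends to $0$ as $\varepsilon\to 0$ by absolute continuity of the $L^{2}$-integral. Therefore $g=D_pL(u,(-\Delta)^{\frac{1}{4}}u)$; since this holds for every weakly convergent subsequence, the whole sequence converges weakly to the claimed limit.
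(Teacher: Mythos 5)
Your argument is correct and follows essentially the same route as the paper: both pick the Dirichlet–kernel (or a similar) smooth approximation, use the linear growth bound to get uniform $L^2$-bounds, invoke Banach–Alaoglu for a weak limit along a subsequence, and then use a.e.\ convergence plus Egorov's theorem to identify the weak limit with $D_pL(u,(-\Delta)^{1/4}u)$. Your Egorov step is phrased slightly differently (testing against $\phi\in L^2$ and controlling the contribution on the exceptional set by absolute continuity of the integral, versus the paper's contradiction argument using uniqueness of the weak limit on the good set), but the two are equivalent in content.
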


\begin{proof}
For any $n\in \mathbb{N}$ let 
\begin{equation}
u_n:=D_n\ast u,
\end{equation}
where $D_n$ denotes the Dirichlet kernel introduced in (\ref{eq_def_Dirichlet_kernel_Fourier}). Then $u_n\in C^\infty(\mathbb{S}^1)$ for any $n\in\mathbb{N}$, $u_n\to u$ in $H^\frac{1}{2}(\mathbb{S}^1)$ and in particular $(-\Delta)^\frac{1}{4}u_n\to (-\Delta)^\frac{1}{4}u$ in $L^2(\mathbb{S}^1)$. Thus, by condition (\ref{eq_lemma_bound_for_f_most_cases_condition}), the sequence $\left( D_pL\left(u, (-\Delta)^\frac{1}{4}u\right) \right)_n$ is bounded in $L^2(\mathbb{S}^1)$. Then, by Banach-Alaoglu Theorem, there exists $G\in L^2(\mathbb{S}^1)$ such that
\begin{equation}\label{eq_proof_lemma_weak_L2_convergence_subsequence}
D_pL\left(u_n, (-\Delta)^\frac{1}{4}u_n\right)\rightharpoondown G\text{ weakly in $L^2(\mathbb{S}^1)$ along a subsequence}.
\end{equation}
To complete the proof, it is enough to show that $G=D_pL\left(u, (-\Delta)^\frac{1}{4}u\right)$ a.e..
To this end, we observe that since $u_n\to u$ in $H^\frac{1}{2}(\mathbb{S}^1)$
\begin{equation}
D_pL\left(u_n, (-\Delta)^\frac{1}{4}u_n\right)\rightharpoondown G\text{ a.e. along a subsequence},
\end{equation}
where the subsequence can be chosen to be a subsequence of the subsequence in (\ref{eq_proof_lemma_weak_L2_convergence_subsequence}).
By contradiction, let's assume that $D_pL\left(u, (-\Delta)^\frac{1}{4}u\right)$ differs from $G$ on a set $B\subset \mathbb{S}^1$ of positive Lebesgue measure $b:=\mathscr{L}(B)>0$. Then, by Egorov's Theorem, there exists a set $A\subset \mathbb{S}^1$ such that $\mathscr{L}(\mathbb{S}^1\smallsetminus A)\leq\frac{b}{2}$ such that $D_pL\left(u_n, (-\Delta)^\frac{1}{4}u_n\right)\to D_pL\left(u, (-\Delta)^\frac{1}{4}u\right)$ uniformly in $A$, and therefore also weakly in $L^2(A)$.
By the uniqueness of the limit for weak convergence in $L^2(A)$, $D_pL\left(u, (-\Delta)^\frac{1}{4}u\right)$ and $G$ coincide a.e. in $A$. This contradicts the assumption.
\end{proof}
We observe that Theorem \ref{prop_final_version_Noether_thm_vardom} holds in particular for the half Dirichlet energy (\ref{eq_energy_fraclap_1_4_square}).
Finally, we show that equation (\ref{eq_prop_result_Noether_thm_vardom_14}) can be represented in a different way. To this end we will use Lemma \ref{lem_explicit_form_fracdiv}, which we will proof for general $s\in (0,\frac{1}{2}]$.\\
For this, we first observe that for any $s\in (0,1)$ we the definition of $\text{div}_s(a(x)b(y))$ given in (\ref{eq_definition_frac_div_14}) can be extended to any $a,b\in H^{s}(\mathbb{S}^1,\mathbb{R}^m)$, analogously to (\ref{eq_discussion_extension_definition_fracdiv_14}). For the definition of $s$-fractional divergence when $s=\frac{1}{2}$ we refer to Equations (\ref{eq_def_fractional_divergence_12_circle}) and (\ref{eq_discussion_extension_definition_fracdiv_12}).
\begin{lem}\label{lem_explicit_form_fracdiv}
Let $s\in\left(0,\frac{1}{2}\right]$, $a, b\in H^{s}(\mathbb{S}^1, \mathbb{R}^m)$. Then
\begin{equation}\label{eq_lem_explicit_form_fracdiv}
\text{div}_s(a(x)\cdot b(y))=b\cdot(-\Delta)^s a-a\cdot (-\Delta)^sb
\end{equation}
in the sense of distributions.
\end{lem}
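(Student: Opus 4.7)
The plan is to first establish the identity for smooth $a, b$ via a direct algebraic computation, and then pass to the limit using density of $C^\infty$ in $H^s(\mathbb{S}^1)$.

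For smooth $a, b$ and $\phi \in C^\infty(\mathbb{S}^1)$, I would start from the definition (\ref{eq_definition_frac_div_14}) and split the integrand using the identity
$$a(x)(\phi(x)-\phi(y)) = \bigl[a(x)\phi(x) - a(y)\phi(y)\bigr] - \phi(y)\bigl[a(x) - a(y)\bigr].$$
Integrating in $x$ first, the pointwise formulas (\ref{eq_pointwise_formula_fraclap_circle}) and (\ref{eq_pointwise_formula_fraclap_circle_s}) give
$$\int_{\mathbb{S}^1} a(x)(\phi(x)-\phi(y))K^s(x-y)\,dx = -(-\Delta)^s(a\phi)(y) + \phi(y)(-\Delta)^s a(y).$$
Multiplying by $b(y)$, integrating over $\mathbb{S}^1$, and using the self-adjointness of $(-\Delta)^s$ to move the Laplacian from $a\phi$ onto $b$ in the first summand yields
$$\text{div}_s(a(x)\cdot b(y))[\phi] = \int_{\mathbb{S}^1} \phi(y)\bigl[b(y)(-\Delta)^s a(y) - a(y)(-\Delta)^s b(y)\bigr]\,dy,$$
which is exactly (\ref{eq_lem_explicit_form_fracdiv}) tested against $\phi$.

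For general $a, b \in H^s(\mathbb{S}^1, \mathbb{R}^m)$, I would approximate both by smooth functions $a_n, b_n$ (for instance via Dirichlet-kernel convolutions, as already used in Lemma \ref{lem_condition_for_prop_def_Noeth_vardom}) converging to $a$ and $b$ in $H^s$. On the right-hand side, continuity of $(-\Delta)^s\colon H^s \to H^{-s}$, together with continuity of multiplication by the smooth test function $\phi$ on $H^s$, gives $\langle (-\Delta)^s a_n,\, b_n\phi\rangle \to \langle (-\Delta)^s a,\, b\phi\rangle$, and symmetrically for the other term. On the left-hand side one invokes the extended definition of $\text{div}_{1/4}$ from (\ref{eq_discussion_extension_definition_fracdiv_14}) (together with the analogous extension for $s = 1/2$ that will appear later in the paper) and the continuity estimate (\ref{eq_rem_estimate_divergence_for_products}) to pass to the limit $\text{div}_s(a_n(x)\cdot b_n(y))[\phi] \to \text{div}_s(a(x)\cdot b(y))[\phi]$.

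The main obstacle is the endpoint case $s = \tfrac{1}{2}$: the kernel $K^{1/2}$ is not integrable near the diagonal, so neither Fubini nor the splitting used in the algebraic identity is a priori legal, and one must carry out the manipulations as symmetric principal values, relying on the same cancellations that already underlie the definition of $(-\Delta)^{1/2}$ on $H^{1/2}$. A clean way to sidestep this bookkeeping, and the route I would actually follow, is to establish the identity first for trigonometric polynomials $a, b$ using the Fourier representation of $(-\Delta)^s$ (so that all sums are finite and Fubini is trivial), and only afterwards invoke density together with the continuity estimates just discussed.
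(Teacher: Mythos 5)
Your algebraic computation for smooth $a,b$ is correct and lands on the same pointwise identity as the paper, but via a genuinely different split. The paper first interchanges $x\leftrightarrow y$ to pass to the antisymmetrized integrand $(a(x)\cdot b(y)-a(y)\cdot b(x))\phi(x)K^s(x-y)$, then splits that antisymmetric bracket and integrates in $y$; self-adjointness of $(-\Delta)^s$ is never invoked because the $x$--$y$ symmetrization performs the ``integration by parts'' automatically. Your route integrates in $x$ first via the telescoping identity $a(x)(\phi(x)-\phi(y))=[(a\phi)(x)-(a\phi)(y)]-\phi(y)[a(x)-a(y)]$ and calls on self-adjointness only at the end. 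Both are valid, and --- worth noting --- both inherit the same caveat at $s=\tfrac12$: after either split, each piece has an integrand of order $\lvert x-y\rvert^{-1}$ near the diagonal, so neither converges absolutely and the manipulation must be read as an interchange of an inner principal value with the outer integral (the paper's remark that the split integrals ``converge absolutely'' is in fact only accurate for $s<\tfrac12$). You correctly flag this, though the phrase ``Fubini is trivial'' for trigonometric polynomials is slightly off the mark: for $a=e^{im\cdot}$, $b=e^{il\cdot}$ one still faces a PV, and the cancellation comes from $\operatorname{PV}\int(e^{i\alpha z}-1)K^s(z)\,dz=-\lvert\alpha\rvert^{2s}$, not from Fubini. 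That said, the Fourier-side computation you propose for the endpoint is exactly the second argument the paper itself gives for smooth $a,b$, and it does handle $s=\tfrac12$ cleanly. Your density step --- smooth approximants in $H^s$, continuity of $(-\Delta)^s\colon H^s\to H^{-s}$, multiplication by $\phi$ bounded on $H^s$ via Lemma \ref{lem_multiplication_with_test_function_continuous}, and the extension of $\operatorname{div}_s$ through the $G_{w,\phi}$ estimates, including (\ref{eq_lem_estimate_Gphi_12}) at the endpoint --- matches the paper's. In summary: correct, with a different decomposition in the smooth case; the paper's antisymmetrization is a touch tidier at $s=\tfrac12$ because it avoids the self-adjointness step, but your route is equally viable once the PV bookkeeping is acknowledged rather than deferred to ``Fubini.''
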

\begin{proof}
Assume first that $a, b\in C^\infty(\mathbb{S}^1, \mathbb{R}^m)$.
Then for any $\phi\in C^\infty(\mathbb{S}^1,\mathbb{R})$
\begin{equation}\label{eq_proof_lemma_explicit_form_divergence_first_step_def}
\text{div}_s(a(x)b(y))[\phi]=\int_{\mathbb{S}^1}\int_{\mathbb{S}^1}a(x)\cdot b(y)(\phi(x)-\phi(y))K^s(x-y)dxdy.
\end{equation}
We remark that since $a,b$ are smooth and $K^s$ obeys estimate (\ref{eq_estimates_kernel_fraclap_s}), the integral (\ref{eq_proof_lemma_explicit_form_divergence_first_step_def}) converges absolutely. By interchanging the variables $x$ and $y$ we obtain
\begin{equation}\label{eq_proof_lemma_explicit_form_divergence_second_step_changevar}
\begin{split}
&\text{div}_s(a(x)b(y))[\phi]=\int_{\mathbb{S}^1}\int_{\mathbb{S}^1}(a(x)b(y)-a(y)b(x))\phi(x)K^s(x-y)dxdy
\\=&\int_{\mathbb{S}^1}\int_{\mathbb{S}^1}(a(x)\cdot(b(y)-b(x))+(a(x)-a(y))\cdot b(x))\phi(x)K^s(x-y)dxdy.
\end{split}
\end{equation}
From the last expression we deduce that the integrals in (\ref{eq_proof_lemma_explicit_form_divergence_second_step_changevar}) converge absolutely. Therefore we compute
\begin{equation}
\begin{split}
\text{div}_s(a(x)b(y))[\phi]=&\int_{\mathbb{S}^1}\phi(x)\left( -a(x) \cdot PV\int_{\mathbb{S}^1}(b(x)-b(y))K^s(x-y)dy\right.\\&\left.\phantom{\int_{\mathbb{S}^1}\phi(x)\bigg(}+b(x)\cdot PV\int_{\mathbb{S}^1}(a(x)-a(y))K^s(x-y)dy\right)dx\\
&\int_{\mathbb{S}^1}\phi(x)\left(-a(x)\cdot (-\Delta)^s b(x)+b(x)\cdot (-\Delta)^sa(x) \right)dx.
\end{split}
\end{equation}
Since this is true for any $\phi\in C^\infty(\mathbb{S}^1,\mathbb{R})$, we conclude that whenever $a,b\in C^\infty(\mathbb{S}^1,\mathbb{R}^m)$
\begin{equation}
\text{div}_s(a(x)b(y))=-a\cdot (-\Delta)^s b+b\cdot (-\Delta)^sa
\end{equation}
in the sense of distributions.\\
We observe that identity (\ref{eq_lem_explicit_form_fracdiv}) can be proved also with the help of Fourier series, if we still assume that $a,b\in C^\infty(\mathbb{S}^1,\mathbb{R}^m)$. In order to simplify the notation, let's denote
\begin{equation}
v(x):=\begin{pmatrix}
    a(x)\\
    b(x)
    \end{pmatrix} 
\end{equation}
for any $x\in \mathbb{S}^1$ and
\begin{equation}
R:=\begin{bmatrix}
    0     & I_m  \\
    -I_m      & 0 
\end{bmatrix}.
\end{equation}
If we rewrite the vector valued function $V:=v(\cdot-z)+v(\cdot+z)-2v(\cdot)$ as a Fourier series, we obtain
\begin{equation}
\begin{split}
&\int_{\mathbb{S}^1} (a(x)b(y)-a(y)b(x))K^s(x-y)dy\\
=&\frac{1}{2}\int_{\mathbb{S}^1}v(x)\cdot R\sum_{n\in\mathbb{Z}}(e^{izn}+e^{-izn}-2)\widehat{v}e^{inx}K^s(z)dz\\
=&v(x)\cdot R\int_{\mathbb{S}^1}\sum_{n\in\mathbb{Z}}(\cos(zn)-1)K^s(z)\widehat{v}e^{inx}dz
\end{split}
\end{equation}
Since $V$ is smooth, the decay of its Fourier coefficients allows to invert summation and integration:
\begin{equation}
\begin{split}
&\int_{\mathbb{S}^1} (a(x)b(y)-a(y)b(x))K^s(x-y)dy\\
=&v(x)\cdot R \sum_{n\in\mathbb{Z}}\int_{\mathbb{S}^1}\left(\cos(2\pi zn)-1\right)K^s(z)dz \widehat{v}(n)e^{inx}\\
=&v(x)\cdot R\sum_{n\in\mathbb{Z}}-(\Delta)^s(\cos(zn)-1)(0)\widehat{v}(n)e^{inx}\\
=&-v(x)\cdot R \sum_{n\in\mathbb{Z}}\lvert n\rvert^{2s} \widehat{v}(n)e^{inx}\\
=&-v(x)\cdot R (-\Delta)^sv(x).
\end{split}
\end{equation}
By the first equality in (\ref{eq_proof_lemma_explicit_form_divergence_second_step_changevar}) we deduce Equation (\ref{eq_lem_explicit_form_fracdiv}) whenever $a,b\in C^\infty(\mathbb{S}^1,\mathbb{R}^m)$.\\
Now if $a,b\in H^{s}(\mathbb{S}^1)$ let 
$(a_n)_n$, $(b_n)_n$ be sequences in $C^\infty(\mathbb{S}^1, \mathbb{R}^m)$ converging respectively to $a$ and $b$ in $H^s(\mathbb{S}^1)$. By the previous computation, for any $n\in \mathbb{N}$
\begin{equation}\label{eq_proof_lemma_expform_fracdiv_for_approx}
\text{div}_s(a_n(z)b_n(y))(x)=b_n(x)\cdot(-\Delta)^s a_n(x)-a_n(x)\cdot (-\Delta)^sb_n(x)
\end{equation}
Now for any $\phi\in C^\infty(\mathbb{S}^1,\mathbb{R}^m)$
\begin{equation}
\begin{split}
&\int_{\mathbb{S}^1} \left[b_n(x)\cdot(-\Delta)^s a_n(x)-a_n(x)\cdot (-\Delta)^sb_n(x)\right]\phi(x)dx\\
=&\int_{\mathbb{S}^1} (-\Delta)^\frac{s}{2}(b_n\phi)(x)\cdot(-\Delta)^\frac{s}{2}a_n(x)-(-\Delta)^\frac{s}{2}(a_n\phi)(x)\cdot (-\Delta)^\frac{s}{2}b_n(x)dx.
\end{split}
\end{equation}
By Lemma \ref{lem_multiplication_with_test_function_continuous}, $b_n\phi\to b\phi$ and $a_n\phi\to a\phi$ in $H^s(\mathbb{S}^1)$. Thus
\begin{equation}\label{eq_proof_lemma_expform_fracdiv_for_approx2}
\begin{split}
&\int_{\mathbb{S}^1} [b_n(x)\cdot(-\Delta)^s a_n(x)-a_n(x)\cdot (-\Delta)^sb_n(x)]\phi(x)dx\\
\to &\left\langle b\cdot(-\Delta)^s a-a\cdot (-\Delta)^sb,\phi\right\rangle
\end{split}
\end{equation}
as $n\to\infty$.
On the other hand, by estimate (\ref{eq_rem_estimate_divergence_for_products})\footnote{Estimate (\ref{eq_rem_estimate_divergence_for_products}) was proved for $s=\frac{1}{4}$, but can be proved with similar arguments when $s\in \left(0, \frac{1}{2}\right)$.} (or, if $s=\frac{1}{2}$, by estimate (\ref{eq_lem_estimate_Gphi_12})),
\begin{equation}\label{eq_proof_lemma_expform_fracdiv_for_approx3}
\text{div}_s(a_n(z)b_n(y))[\phi]\to \text{div}_s(a(z)b(y))[\phi]
\end{equation}
as $n\to\infty$.
Combining (\ref{eq_proof_lemma_expform_fracdiv_for_approx}), (\ref{eq_proof_lemma_expform_fracdiv_for_approx2}) and (\ref{eq_proof_lemma_expform_fracdiv_for_approx3}) we obtain the desired result.
\end{proof}
As an application of the previous results, we consider again the energy
\begin{equation}\label{eq_fractional_Dirichlet_energy_for_example_14}
E(u)=\int_{\mathbb{S}^1}\lvert(-\Delta)^\frac{1}{4}u(x)\rvert^2dx
\end{equation}
for $u\in H^\frac{1}{2}(\mathbb{S}^1)$, corresponding to the Lagrangian $f(p)=\lvert p\rvert^2$ for any $p\in\mathbb{R}^m$.
We claim that in this framework, the conditions of Theorem \ref{prop_final_version_Noether_thm_vardom} are satisfied if we choose $X\equiv 1$.
Geometrically, if we identify $\mathbb{S}^1$ with $\partial D^2$ through the map $i$ introduced in (\ref{eq_definition_identification_map_i}), we observe that $X$ corresponds to the generator of rotations.
Its flow (on $\mathbb{S}^1$, here again thought of as a quotient space of $\mathbb{R}$) is given by $\phi_t(x)=x+t$ for any $x\in \mathbb{S}^1$, $t\in\mathbb{R}$. Then for any $u\in H^\frac{1}{2}(\mathbb{S}^1)$, for a.e. $x\in \mathbb{S}^1$,
\begin{equation}
\begin{split}
f(u\circ\phi_t(x),(-\Delta)^\frac{1}{4}(u\circ\phi_t)(x))=&\lvert(-\Delta)^\frac{1}{4}(u+t)(x)\rvert^2=\lvert(-\Delta)^\frac{1}{4}u(x)\rvert^2\\
=&f(u(\phi_t(x)),(-\Delta)^\frac{1}{4}u(\phi_t(x)))\phi_t'(x),
\end{split}
\end{equation}
therefore (\ref{eq_diffeo_is_conformal14_2}) is satisfied. Moreover, $f$ satisfies condition (\ref{eq_lemma_bound_for_f_most_cases_condition}) in Lemma \ref{lem_condition_for_prop_def_Noeth_vardom}, therefore condition (\ref{eq_prop_assumption_convergence_weak}) holds.
Thus Theorem (\ref{prop_final_version_Noether_thm_vardom}) yields that any stationary point $u\in H^\frac{1}{2}(\mathbb{S}^1, \mathbb{R}^m)$ of $E$ satisfies
\begin{equation}\label{eq_prop_result_Noether_thm_vardom_14_3}
\text{div}_\frac{1}{4}\left[2(-\Delta)^\frac{1}{4}u(x)u'(y)\right]+\frac{d}{dx}\left[f\left(u, (-\Delta)^\frac{1}{4}u\right)\right](x)=0
\end{equation}
as distribution. We observe that (\ref{eq_prop_result_Noether_thm_vardom_14_3}) corresponds to the stationary equation of $E$, so that in this case, Proposition \ref{prop_final_version_Noether_thm_vardom} yields no new information. By Lemma \ref{lem_explicit_form_fracdiv}, if $u$ is sufficiently regular, Equation (\ref{eq_prop_result_Noether_thm_vardom_14_3}) can be rewritten as
\begin{equation}\label{eq_discussion_Noether_formula_for_rotations_from_14}
2u'(x)\cdot (-\Delta)^\frac{1}{2}u(x)=0
\end{equation}
for a.e. $x\in\mathbb{S}^1$.\\

In order to find more application for Noether's theorem, we observe that the energy can be rewritten in different ways, in particular, if $u$ is regular enough,
\begin{equation}\label{eq_discussion_half_Dirichlet_energy_integrand_conformally_invariant}
E(u)=\int_{\mathbb{S}^1}(-\Delta)^\frac{1}{2}u(x)u(x)dx.
\end{equation}
In section \ref{ssec: Lagrangians involving 12Laplacians} we will derive an analogous result to the Noether's Theorem derived in this section for some Lagrangian involving a $\frac{1}{2}$-fractional Laplacian, in section \ref{ssec: Traces of conformal maps and half Dirichlet energy} we will show that the integrand of the energy (\ref{eq_discussion_half_Dirichlet_energy_integrand_conformally_invariant}) is conformally invariant and give some applications of the analogous result.

\subsection{Lagrangians involving $\frac{1}{2}$-Laplacians}\label{ssec: Lagrangians involving 12Laplacians}
Let $F$, $B$ be two Lipschitz function from $\mathbb{R}^m$ to $\mathbb{R}^m$ and $\mathbb{R}$ respectively. Let
\begin{equation}\label{eq_discussion_definition_fct_f_in_terms_of_F_G}
L: \mathbb{R}^m\times\mathbb{R}^m\to \mathbb{R}, \quad (x,p)\mapsto p\cdot F(x)+B(x).
\end{equation}
For any $u\in C^\infty(\mathbb{S}^1,\mathbb{R}^m)$, let
\begin{equation}\label{eq_def_12_Dirichlet_energy_conformal}
E(u):=\int_{\mathbb{S}^1}L\left(u(x), (-\Delta)^\frac{1}{2}u(x)\right)dx.
\end{equation}
If $u\in C^\infty(\mathbb{S}^1,\mathbb{R}^m)$, the integrand in (\ref{eq_def_12_Dirichlet_energy_conformal}) is well defined for any $x\in \mathbb{S}^1$ and the integral converges absolutely. For a general $u\in H^\frac{1}{2}(\mathbb{S}^1, \mathbb{R}^m)$, one can generalize the definition of $E$ interpreting the product $(-\Delta)^\frac{1}{2}u\cdot ( F\circ u)$ as the action of a distribution in $H^{-\frac{1}{2}}(\mathbb{S}^1)$ on a function in $H^\frac{1}{2}(\mathbb{S}^1)$, i.e.
\begin{equation}\label{eq_discussion_definition_E(u)_in_distributional_sense}
E(u)=\int_{\mathbb{S}^1} (-\Delta)^\frac{1}{4}(F\circ u)(x)\cdot(-\Delta)^\frac{1}{4}u(x)+B(x)dx.
\end{equation}
For a smooth function $u$, one can shows, similarly as in the previous case, that $u$ is a stationary point of the energy (\ref{eq_def_12_Dirichlet_energy_conformal})\footnote{Stationary points of (\ref{eq_def_12_Dirichlet_energy_conformal}) are defined analogously to the ones of (\ref{eq_def_1_4_Dirichlet_energy}).} if and only if $u$ satisfies the stationarity equation
\begin{equation}\label{eq_stationary_equation_vardom}
\begin{split}
&\frac{d}{dx} L\left(u, (-\Delta)^\frac{1}{2}u\right)+\text{div}_\frac{1}{2}\left[u'(y)F(u(x))\right]=0.
\end{split}
\end{equation}
Here we made use of the $\frac{1}{2}$-fractional divergence, defined as follows: for any functions $a$, $b\in C^1(\mathbb{R}^m, \mathbb{R})$, the \textbf{$\frac{1}{2}$-fractional divergence} of $F$ is the distribution given by
\begin{equation}\label{eq_def_fractional_divergence_12_circle}
\text{div}_\frac{1}{2}F[\phi]:=PV\int_{\mathbb{S}^1}\int_{\mathbb{S}^1}a(x)b(y)\cdot(\phi(x)-\phi(y))K^\frac{1}{2}(x-y)dxdy
\end{equation}
for any $\phi\in C^\infty(\mathbb{S}^1)$.
Now let $X$ be a smooth vector field on $\mathbb{S}^1$ and let $\phi_t$ denote its flow. Assume that $u\in C^\infty(\mathbb{S}^1, \mathbb{R}^m)$ is a critical point of $E$, and that $u$ satisfies
\begin{equation}\label{eq_diffeo_is_conformal14_2_discussion}
L\left(u(\phi_t(x)),(-\Delta)^\frac{1}{2}(u\circ\phi_t)(x)\right)=L\left(u(\phi_t(x)),(-\Delta)^\frac{1}{2}u(\phi_t(x))\right)\phi_t'(x)
\end{equation}
for $x\in \mathbb{S}^1$, for $t$ in a neighbourhood of the origin.
Then, deriving (\ref{eq_diffeo_is_conformal14_2_discussion}) in $t$ and comparing the resulting expression with (\ref{eq_stationary_equation_vardom}) in a similar way as in the previous section, one obtains
\begin{equation}\label{eq_discussion_result_Noether_vardom_smooth_12}
\frac{d}{dx}\left[L\left(u,(-\Delta)^\frac{1}{2}u\right)X\right](x)+\text{div}_\frac{1}{2}\left[F(u(x))u'(y)X(y)\right]=0
\end{equation}
for any $x\in\mathbb{S}^1$.\\
Next we try to show that (\ref{eq_discussion_result_Noether_vardom_smooth_12}) remains true (in the sense of distributions) for any critical points of $E$ satisfying (\ref{eq_diffeo_is_conformal14_2_discussion}) a.e. in $\mathbb{S}^1$. To this end, we first need a result analogous to Lemma \ref{lemma_approximation_identity_14}.\newline
First, in analogy to the definition given in (\ref{eq_discussion_norm_A'}), we introduce the normed space $\mathbb{A}^{\frac{3}{2}}(\mathbb{S}^1)$, consisting of all elements $u\in \mathscr{D}'(\mathbb{S}^1)$ such that
\begin{equation}
\lVert u\rVert_{\mathbb{A}^{\frac{3}{2}}}:=\lvert \widehat{u}(0)\rvert+\sum_{\substack{k\in \mathbb{Z},\\ k\neq 0}}\lvert k\rvert^\frac{3}{2}\lvert \widehat{u}(k)\rvert<\infty.
\end{equation}
By the argument used in (\ref{eq_rem_embedding_Sobolev_in_Wiener}), we see that for any $s>\frac{1}{2}$ 
\begin{equation}\label{eq_embedding_Sobolev_in_Wiener_32}
H^{\frac{3}{2}+s}(\mathbb{S}^1)\hookrightarrow \mathbb{A}^\frac{3}{2}(\mathbb{S}^1).
\end{equation}
For any element $u$ of $\mathbb{A}^\frac{3}{2}(\mathbb{S}^1)$ we define the seminorm
\begin{equation}
[u]_{\mathbb{A}^\frac{3}{2}}:=\sum_{k\in\mathbb{Z},\\ k\neq 0}\lvert k\rvert^\frac{3}{2}\lvert \widehat{u}(k)\rvert.
\end{equation}
We can now proceed to state the result analogous to Lemma \ref{lemma_approximation_identity_14}.
\begin{lem}\label{lemma_approximation_identity_12}
Let $w\in H^\frac{1}{2}(\mathbb{S}^1)$, $\phi\in \mathbb{A}^\frac{3}{2}(\mathbb{S}^1)$ and let $K^\frac{1}{2}$ be defined as in (\ref{eq_explit_formula_Kernel_12}).
Let
\begin{equation}\label{eq_definition_Gwphi_12}
G_{w,\phi}(x):=PV\int_{\mathbb{S}^1}w(y)(\phi(y)-\phi(x))K^\frac{1}{2}(y-x)dy
\end{equation}
for $x\in \mathbb{S}^1$. Then $G_{w,\phi}(x)$ is well defined for a.e. $x\in \mathbb{S}^1$, $G_{w,\phi}$ belongs to $H^{-\frac{1}{2}}(\mathbb{S}^1)$, and there is a constant $C$ depending only on $X$, such that
\begin{equation}\label{eq_lem_estimate_Gphi_12}
\lVert G_{w,\phi}\rVert_{H^{-\frac{1}{2}}}\leq C[ \phi]_{\mathbb{A}^\frac{3}{2}}\lVert w\rVert_{H^{-\frac{1}{2}}}.
\end{equation}
\end{lem}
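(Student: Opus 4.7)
The strategy mirrors the proof of Lemma \ref{lemma_approximation_identity_14}, with the modifications demanded by the more singular kernel $K^\frac{1}{2}(z)\sim|z|^{-2}$ and the dual target space $H^{-\frac{1}{2}}(\mathbb{S}^1)$. First I would reduce to the case where $\phi$ is a trigonometric polynomial (w.l.o.g.\ $\widehat{\phi}(0)=0$) and $w\in C^\infty(\mathbb{S}^1)$, compute Fourier coefficients of $G_{w,\phi}$ directly, and recover the full statement by density. The pointwise a.e.\ definition of $G_{w,\phi}(x)$ is ensured by the embedding $\mathbb{A}^\frac{3}{2}(\mathbb{S}^1)\hookrightarrow C^{1,\alpha}(\mathbb{S}^1)$ (for some $\alpha>\frac{1}{2}$, via (\ref{eq_embedding_Sobolev_in_Wiener_32}) and Remark \ref{rem_Wiener_algebra}): Taylor-expanding $\phi(y)-\phi(x)=\phi'(x)(y-x)+O(|y-x|^{1+\alpha})$, the remainder against $K^\frac{1}{2}$ is $O(|y-x|^{\alpha-1})$ and absolutely integrable against $w\in H^\frac{1}{2}\hookrightarrow L^2$ by Cauchy--Schwarz, while the first-order term reduces to a Hilbert-transform-type PV whose a.e.\ existence is classical. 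For smooth $w$ the cleanest representation is the commutator identity $G_{w,\phi}=\phi\,(-\Delta)^\frac{1}{2}w-(-\Delta)^\frac{1}{2}(\phi w)$, obtained from the Leibniz rule (\ref{eq_discussion_intro_Leibnitz_rule_fraclap_circle}).

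Next I compute the Fourier coefficients. Writing $G_{w,\phi}(x)=\sum_n \widehat{\phi}(n)e^{inx}\int_{\mathbb{S}^1}w(x+z)H_n(z)\,dz$ with $H_n(z):=(e^{inz}-1)K^\frac{1}{2}(z)$, I use the spectral identity $(-\Delta)^\frac{1}{2}e^{im\cdot}(y)=|m|e^{imy}$ to derive $\frac{1}{2\pi}\int(e^{-imz}-1)K^\frac{1}{2}(z)\,dz=-|m|$; hence $\widehat{H_n}(k)=\frac{1}{2\pi}(|k|-|k-n|)$, yielding the uniform bound $|\widehat{H_n}(k-n)|\leq |n|/(2\pi)$. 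This gives, up to constants,
\begin{equation*}
\bigl|\widehat{G_{w,\phi}}(k)\bigr|\leq \sum_n |n|\,|\widehat{\phi}(n)|\,|\widehat{w}(k-n)|.
\end{equation*}
To bound $\lVert G_{w,\phi}\rVert_{H^{-\frac{1}{2}}}^2=\sum_k(1+|k|^2)^{-\frac{1}{2}}|\widehat{G_{w,\phi}}(k)|^2$, I split the $n$-sum at $|n|=|k|/2$ exactly as in the proof of Lemma \ref{lemma_approximation_identity_14}. On $\{|n|\leq |k|/2\}$ one has $|k-n|\asymp|k|$, so $(1+|k|^2)^{-\frac{1}{4}}\leq C(1+|k-n|^2)^{-\frac{1}{4}}$; pairing $|n||\widehat{\phi}(n)|$ with $(1+|k-n|^2)^{-\frac{1}{4}}|\widehat{w}(k-n)|$ and applying Young's inequality produces a bound by $C[\phi]_{\mathbb{A}^1}^2\lVert w\rVert_{H^{-\frac{1}{2}}}^2\leq C[\phi]_{\mathbb{A}^\frac{3}{2}}^2\lVert w\rVert_{H^{-\frac{1}{2}}}^2$. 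On $\{|n|>|k|/2\}$ one has $(1+|k-n|^2)^{\frac{1}{4}}\leq C|n|^{\frac{1}{2}}$, so $|n|\,|\widehat{w}(k-n)|\leq C|n|^{\frac{3}{2}}(1+|k-n|^2)^{-\frac{1}{4}}|\widehat{w}(k-n)|$; Cauchy--Schwarz on the $n$-sum with weights $|n|^{\frac{3}{2}}|\widehat{\phi}(n)|$ pulls out one factor $[\phi]_{\mathbb{A}^\frac{3}{2}}$, and swapping summations absorbs the second into $[\phi]_{\mathbb{A}^\frac{3}{2}}\lVert w\rVert_{H^{-\frac{1}{2}}}^2$.

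Finally, density: for general $\phi\in\mathbb{A}^\frac{3}{2}(\mathbb{S}^1)$, approximate by Dirichlet partial sums $\phi_N:=D_N*\phi$, which converge to $\phi$ in $\mathbb{A}^\frac{3}{2}$; the bound (\ref{eq_lem_estimate_Gphi_12}) propagates, and a dominated-convergence argument, as in the last step of the proof of Lemma \ref{lemma_approximation_identity_14}, identifies the limit as $G_{w,\phi}$. Since $C^\infty(\mathbb{S}^1)$ is dense in $H^{-\frac{1}{2}}(\mathbb{S}^1)$, the linear map $w\mapsto G_{w,\phi}$ extends by continuity from $C^\infty(\mathbb{S}^1)$ to $H^{-\frac{1}{2}}(\mathbb{S}^1)$ with the same bound. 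The main technical obstacle relative to the $s=\frac{1}{4}$ case lies in controlling the PV: the key identity $\int(e^{-imz}-1)K^\frac{1}{2}(z)\,dz=-|m|$ and the a.e.\ existence of $G_{w,\phi}$ both hinge on the parity cancellation in $K^\frac{1}{2}$, and the commutator representation above is the most transparent tool for justifying them rigorously.
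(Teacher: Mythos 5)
Your proposal is correct and follows essentially the same route as the paper: derive the Fourier-side formula $\widehat{G_{w,\phi}}(k)=\sum_n\big(|k|-|k-n|\big)\widehat{\phi}(n)\widehat{w}(k-n)$ (the paper gets this directly from the commutator identity $G_{w,\phi}=(-\Delta)^{1/2}(w\phi)-\phi(-\Delta)^{1/2}w$ rather than via the kernel $H_n$, but the two routes coincide), then split the convolution at $|n|\asymp|k|/2$ and close with Young's inequality using $\big||k|-|k-n|\big|\leq|n|$. The density step at the end is superfluous in the paper's version because its Fourier-side estimate applies directly to all $w\in H^{-1/2}$ and $\phi\in\mathbb{A}^{3/2}$, but it does no harm.
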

\begin{proof}
In order to show that (\ref{eq_definition_Gwphi_12}) is well defined, we observe that for a.e. $x\in \mathbb{S}^1$
\begin{equation}\label{eq_proof_lemma_Gwphi_well_def_12}
\begin{split}
G_{w,\phi}(x)=&\int_{\mathbb{S}^1}(w(y)-w(x))(\phi(y)-\phi(x))K^\frac{1}{2}(x-y)dy\\
&+w(x)PV\int_{\mathbb{S}^1}(\phi(x)-\phi(y))K^\frac{1}{2}(x-y)dy\\
=&(-\Delta)^\frac{1}{2}(w\phi)(x)-w(x)(-\Delta)^\frac{1}{2}\phi(x)-(\Delta)^\frac{1}{2}w(x)\phi(x)+w(x)(-\Delta)^\frac{1}{2}\phi(x) \\
=&(-\Delta)^\frac{1}{2}(w\phi)(x) -(\Delta)^\frac{1}{2}w(x)\phi(x).
\end{split}
\end{equation}
From expression (\ref{eq_proof_lemma_Gwphi_well_def_12}) we see that $G_{w,\phi}$ is a well defined distribution, and that for any $n\in \mathbb{Z}$
\begin{equation}
\begin{split}
\mathscr{F}[G_{w,\phi}](n)=&\lvert n\rvert \widehat{w\phi}(n)-\sum_{k\in\mathbb{Z}}\lvert n-k\rvert \widehat{w}(n-k)\widehat{\phi}(k)\\
=&\lvert n \rvert\sum_{k\in\mathbb{Z}}\widehat{w}(n-k)\widehat{\phi}(k)-\sum_{k\in\mathbb{Z}}\lvert n-k\rvert \widehat{w}(n-k)\widehat{\phi}(k)\\
=&\sum_{k\in\mathbb{Z}}\left[\lvert n\rvert-\lvert n-k\rvert\right]\widehat{\phi}(k)\widehat{w}(n-k)
\end{split}
\end{equation}
Now we observe that if $\lvert n\rvert\geq 2\lvert k\rvert$, $\lvert n-k\rvert\geq \frac{1}{2}\lvert n\rvert$. Therefore
\begin{equation}
\frac{\lvert n\rvert-\lvert n-k\rvert}{(1+\lvert n\rvert^2)^\frac{1}{4}}\leq \frac{2^\frac{1}{2}\lvert k\rvert}{(1+\lvert n-k\rvert^2)^\frac{1}{4}}.
\end{equation}
Thus, by Young's inequality
\begin{equation}\label{eq_proof_lemma_approximation_12_step_1}
\begin{split}
&\sum_{n\in\mathbb{Z}}\left(\sum_{\lvert k\rvert\geq 2\lvert n\rvert}\widehat{\phi}(n)\widehat{w}(k-n)[\lvert k\rvert-\lvert n-k\rvert]\right)^2(1+\lvert k\rvert^2)^{-\frac{1}{2}}\\
\leq& 2\sum_{n\in \mathbb{Z}}\left(\sum_{k\in\mathbb{Z}}\lvert\widehat{\phi}(k)\rvert\lvert k\rvert\lvert\widehat{w}(n-k)\rvert\lvert(1+\lvert n-k\rvert^2)^{-\frac{1}{4}}\rvert\right)^2\leq 2[\phi]_{\mathbb{A}^1}^2\lVert w\rVert_{H^{-\frac{1}{2}}}^2.
\end{split}
\end{equation}
On the other hand, if $\lvert n\rvert<2 \lvert k\rvert$, $\lvert n-k\rvert\leq 3\lvert k\rvert$. Therefore
\begin{equation}
\frac{\lvert n\rvert -\lvert n-k\rvert}{\left(1+\lvert n\rvert^2\right)^\frac{1}{4}}\leq \frac{\lvert k\rvert}{(1+\lvert n-k\rvert^2)^\frac{1}{4}}\left(1+\lvert n-k\rvert^2\right)^\frac{1}{4}\leq \frac{2^\frac{1}{4}3^\frac{1}{2}\lvert k\rvert^\frac{3}{2}}{\left(1+\lvert n-k\rvert^2\right)^\frac{1}{4}}.
\end{equation}
Thus, by Young's inequality
\begin{equation}\label{eq_proof_lemma_approximation_12_step_2}
\begin{split}
&\sum_{n\in\mathbb{Z}}\left(\sum_{\lvert n\rvert<2\lvert k\rvert}\widehat{\phi}(k)\widehat{w}(n-k)[\lvert n\rvert-\lvert n-k\rvert]\right)^2(1+\lvert n\rvert^2)^{-\frac{1}{4}}\\
\leq & 2^\frac{1}{2}3 \sum_{n\in\mathbb{Z}}\left(\sum_{\lvert n\rvert<2\lvert k\rvert}\lvert\widehat{\phi}(k)\rvert\lvert k\rvert^\frac{3}{2}\lvert\widehat{w}(n-k)\rvert\lvert (1+\lvert n-k\rvert^2)^\frac{1}{4}\right)^2\leq 2^\frac{1}{2}3 [\phi]_{\mathbb{A}^\frac{3}{2}}^2\lVert w\rVert_{H^{-\frac{1}{2}}}.
\end{split}
\end{equation}
Then, combining (\ref{eq_proof_lemma_approximation_12_step_1}) and (\ref{eq_proof_lemma_approximation_12_step_2}) we obtain
\begin{equation}
\lVert G_{w,\phi}\rVert_{H^{-\frac{1}{2}}}\leq C[ \phi]_{\mathbb{A}^\frac{3}{2}}\lVert w\rVert_{H^{-\frac{1}{2}}}
\end{equation}
for some independent constant $C$.
\end{proof}

\begin{rem}\label{rem_to_lemma_approximation_identity_12}\noindent
\begin{enumerate}
\item Lemma \ref{lemma_approximation_identity_14} shows that for any $\phi\in C^\infty(\mathbb{S}^1)$, we have a continuous linear operator
\begin{equation}
F:H^\frac{1}{2}(\mathbb{S}^1)\to H^{-\frac{1}{2}}(\mathbb{S}^1),\quad w\mapsto G_{w, \phi}.
\end{equation}
for any $p\in (2, \infty]$. By (\ref{eq_lem_estimate_Gphi}), $F$ can be extended to a continuous operator on $H^{-\frac{1}{2}}(\mathbb{S}^1)$ (still denoted by $F$), for which (\ref{eq_lem_estimate_Gphi_12}) remains valid.
\item We also observe that if $a$, $b\in C^1(\mathbb{S}^1)$, for any $\phi\in C^\infty(\mathbb{S}^1)$ we have
\begin{equation}
\text{div}_\frac{1}{2}\left( a(x)b(y) \right)[\phi]=\langle G_{a, \phi},b\rangle,
\end{equation}
therefore there holds
\begin{equation}\label{eq_rem_estimate_divergence_for_products_12}
\begin{split}
\left\lvert {div}_\frac{1}{2}\left( a(x)b(y)\right)[\phi]\right\rvert\leq &\lVert G_{a,\phi}\rVert_{H^{-\frac{1}{2}}}\lVert b\rVert_{H^\frac{1}{2}}\\
\leq & C[\phi]_{\mathbb{A}^\frac{3}{2}}\lVert a\rVert_{H^{-\frac{1}{2}}}\lVert  b\rVert_{H^\frac{1}{2}},
\end{split}
\end{equation}
where $C$ is an independent constant.
\end{enumerate}
\end{rem}
By the previous Remark, we may extend the definition of
\begin{equation}
\text{div}_\frac{1}{2}\left(a(x)b(y)\right)
\end{equation}
to functions $a\in H^{-\frac{1}{2}}(\mathbb{S}^1)$, $b\in H^\frac{1}{2}(\mathbb{S}^1)$ as follows: for $\phi\in C^\infty(\mathbb{S}^1)$ let
\begin{equation}\label{eq_discussion_extension_definition_fracdiv_12}
\text{div}_\frac{1}{2}\left[a(x)b(y)\right][\phi]:=\langle G_{a,\phi}, b\rangle,
\end{equation}
where $G_{a, \phi}:=F(a)$ was defined in the previous Remark.
Therefore the following estimate still holds:
\begin{equation}
\begin{split}
\left\lvert \text{div}_\frac{1}{2}\left( a(x)b(y)\right)[\phi]\right\rvert\leq & \lVert G_{a,\phi}\rVert_{H^{-\frac{1}{2}}}\lVert b\rVert_{H^\frac{1}{2}}\\
\leq & C[\phi]_{\mathbb{A}^\frac{3}{2}}\lVert a\rVert_{H^{-\frac{1}{2}}} \lVert b\rVert_{H^\frac{1}{2}}
\end{split}
\end{equation}
where $C$ is an independent constant.\\
By Lemma \ref{lem_verify_condition_approxlem}, we may apply Lemma \ref{lemma_approximation_identity_12} to $a=u'X$ and $b=F\circ u$, where $F$ is a Lipschitz function from $\mathbb{R}^m$ to $\mathbb{R}^m$ as in (\ref{eq_discussion_definition_fct_f_in_terms_of_F_G}), $X$ is any smooth vector field on $\mathbb{S}^1$ and $u\in H^\frac{1}{2}(\mathbb{S}^1, \mathbb{R}^m)$.\\
We are now able to formulate the analogous of Proposition \ref{prop_final_version_Noether_thm_vardom} for Lagrangians involving $\frac{1}{2}$-fractional Laplacians.

\begin{thm}\label{prop_final_version_Noether_thm_vardom_12}
Let $L$ and $E$ be defined as in (\ref{eq_discussion_definition_fct_f_in_terms_of_F_G}) and (\ref{eq_discussion_definition_E(u)_in_distributional_sense}).
Let $X$ be a smooth vector field on $\mathbb{S}^1$ and assume that for any $v\in C^\infty(\mathbb{S}^1,\mathbb{R}^m)$ its flow $\phi_t$ satisfies
\begin{equation}\label{eq_diffeo_is_conformal12_2}
L\left(v(\phi_t(x)),(-\Delta)^\frac{1}{2}(v\circ\phi_t)(x)\right)=L\left(v(\phi_t(x)),(-\Delta)^\frac{1}{2}v(\phi_t(x))\right)\phi_t'(x)
\end{equation}
for a.e. $x\in \mathbb{S}^1$, for $t$ in a neighbourhood of the origin.
Assume that $u\in H^\frac{1}{2}(\mathbb{S}^1, \mathbb{R}^m)$ satisfy the following stationarity condition:
\begin{equation}
\begin{split}
&\text{div}_\frac{1}{2}\left[F(u(x))u'(y)\right]+\frac{d}{dx} L\left(u, (-\Delta)^\frac{1}{2}u\right)=0
\end{split}
\end{equation}
as distributions.
Then, in the sense of distributions
\begin{equation}\label{eq_prop_result_Noether_thm_vardom_12}
\text{div}_\frac{1}{2}\left[F(u(x))u'(y)X(y)\right]+\frac{d}{dx}\left[L\left(u, (-\Delta)^\frac{1}{2}u\right)X\right](x)=0.
\end{equation}
\end{thm}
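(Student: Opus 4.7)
The plan is to mirror the proof of Theorem \ref{prop_final_version_Noether_thm_vardom}, adapting the smooth-function identity and approximation argument to the $\frac{1}{2}$-fractional setting via the extended definition of $\text{div}_\frac{1}{2}$ from (\ref{eq_discussion_extension_definition_fracdiv_12}) and the continuity estimate of Lemma \ref{lemma_approximation_identity_12}. First I would establish a Noether-type identity for smooth functions: for any $v \in C^\infty(\mathbb{S}^1,\mathbb{R}^m)$, the invariance (\ref{eq_diffeo_is_conformal12_2}) together with the computation derived in the discussion preceding the theorem yields
\begin{equation*}
\text{div}_\frac{1}{2}\bigl[F(v(x)) v'(y) X(y)\bigr] + \frac{d}{dx}\bigl[L(v,(-\Delta)^\frac{1}{2}v)X\bigr] = \Bigl\{\text{div}_\frac{1}{2}\bigl[F(v(x)) v'(y)\bigr] + \frac{d}{dx} L(v,(-\Delta)^\frac{1}{2}v)\Bigr\} X
\end{equation*}
as distributions. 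This identity is the $\frac{1}{2}$-analogue of (\ref{eq_prop_fundamental_idea_of_Noether_vardom_for_tests}); the key algebraic step is a product rule for $\text{div}_\frac{1}{2}$ analogous to (\ref{eq_proof_prop_before_from_EL_to Ncurrent}), where the discrepancy produced by moving $X$ inside the divergence is matched exactly by the $\partial_x$ Leibniz correction coming from the $t$-derivative of the invariance condition.

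Next I would regularize: set $u_n := D_n \ast u$ with $D_n$ the Dirichlet kernel from (\ref{eq_def_Dirichlet_kernel_Fourier}). Then $u_n \in C^\infty(\mathbb{S}^1,\mathbb{R}^m)$, $u_n \to u$ in $H^\frac{1}{2}$, so $u_n' \to u'$ in $H^{-\frac{1}{2}}$ and $(-\Delta)^\frac{1}{2} u_n \to (-\Delta)^\frac{1}{2} u$ in $H^{-\frac{1}{2}}$. Each $u_n$ is smooth, so the identity from the first paragraph applies. Testing against an arbitrary $\phi \in C^\infty(\mathbb{S}^1)$, I would pass to the limit term by term. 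The linearity of $L$ in $p$ handles the $\frac{d}{dx}[L X]$ piece: $\int L(u_n,(-\Delta)^\frac{1}{2}u_n) X \phi' \,dx = \langle (-\Delta)^\frac{1}{4}u_n, (-\Delta)^\frac{1}{4}(F(u_n) X\phi')\rangle + \int B(u_n) X\phi'$, both of which pass to the limit once one has $F(u_n) X\phi' \to F(u) X\phi'$ in $H^\frac{1}{2}$. For the $\text{div}_\frac{1}{2}$ terms I would use Lemma \ref{lemma_approximation_identity_12} and its extension in Remark \ref{rem_to_lemma_approximation_identity_12}: writing
\begin{equation*}
\text{div}_\frac{1}{2}\bigl[F(u_n(x)) u_n'(y) X(y)\bigr][\phi] = \langle G_{u_n'X,\phi},\, F(u_n)\rangle_{H^{-\frac{1}{2}},H^\frac{1}{2}},
\end{equation*}
and similarly without the $X$ factor, convergence reduces to strong convergence $G_{u_n'X,\phi} \to G_{u'X,\phi}$ in $H^{-\frac{1}{2}}$ (by Lemma \ref{lem_verify_condition_approxlem} applied to the smooth vector field $X$, followed by Lemma \ref{lemma_approximation_identity_12}) paired against convergence of $F(u_n)$ in $H^\frac{1}{2}$.

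At this point the right-hand side of the identity, tested against $\phi$, is exactly $\langle\frac{d}{dx}L(u,(-\Delta)^\frac{1}{2}u) + \text{div}_\frac{1}{2}[F(u(x)) u'(y)],\, X\phi\rangle$, which vanishes by the stationarity assumption (\ref{eq_stationary_equation_vardom}). The left-hand side has converged to $\langle\text{div}_\frac{1}{2}[F(u(x)) u'(y) X(y)] + \frac{d}{dx}[L(u,(-\Delta)^\frac{1}{2}u) X],\, \phi\rangle$, so arbitrariness of $\phi$ yields (\ref{eq_prop_result_Noether_thm_vardom_12}).

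The main obstacle will be justifying the convergence $F(u_n) \to F(u)$ in $H^\frac{1}{2}$. Since $F$ is Lipschitz, $\|F(u_n)\|_{H^\frac{1}{2}}$ is uniformly bounded (pointwise for $L^2$, and for the Gagliardo seminorm via $|F(u_n(x))-F(u_n(y))| \leq \mathrm{Lip}(F)|u_n(x)-u_n(y)|$). Combined with a.e. pointwise convergence along a subsequence and uniqueness of weak limits, Banach--Alaoglu gives $F(u_n) \rightharpoondown F(u)$ weakly in $H^\frac{1}{2}$; this weak convergence suffices when paired against the strongly convergent $G_{u_n'X,\phi}$ in $H^{-\frac{1}{2}}$. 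A subsequence argument, standard since the limit is identified uniquely, upgrades the result to convergence along the full sequence. The same weak-strong pairing also produces the convergence for the $\frac{d}{dx}[L X]$ term after the linearity expansion.
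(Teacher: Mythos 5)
Your proposal is correct and follows essentially the same route as the paper: regularize $u$ by smooth functions, apply the smooth Noether-type identity from the discussion preceding Proposition~\ref{prop_first_version_Noether_vardom_14}, then pass to the limit by pairing the strong $H^{-\frac{1}{2}}$-convergence of $G_{u_n'X,\phi}$ (Lemma~\ref{lemma_approximation_identity_12} and Remark~\ref{rem_to_lemma_approximation_identity_12}) against the weak $H^{\frac{1}{2}}$-convergence $F(u_n)\rightharpoondown F(u)$, and invoke the stationarity hypothesis at the end. The only cosmetic difference is that you re-derive the weak convergence $F(u_n)\rightharpoondown F(u)$ inline via the Lipschitz bound, Banach--Alaoglu and a.e.\ identification, whereas the paper packages this as Lemma~\ref{lem_condition_for_prop_def_Noeth_vardom_2}.
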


\begin{rem}
If $u$ is regular enough, we can rewrite the quantity on the left hand side of  (\ref{eq_prop_result_Noether_thm_vardom_12}) as follows:

\begin{equation}\label{eq_prop_result_Noether_thm_vardom_12_rewritten}
\begin{split}
&\text{div}_\frac{1}{2}\left[F(u(x))u'(y)X(y)\right]+\frac{d}{dx}\left[L\left(u, (-\Delta)^\frac{1}{2}u\right)X\right](x)\\
=&\text{div}_\frac{1}{2}\left[F(u(x))u'(y)X(y)+H\left[L\left(u, (-\Delta)^\frac{1}{2}u\right)X\right]\bigg\vert^{x}_{y}\right].
\end{split}
\end{equation}
in the sense of distributions.
\end{rem}

\begin{proof}
We follow the idea of the proof of Proposition \ref{prop_final_version_Noether_thm_vardom}.
Let $(u_n)_n$ be a sequence in $C^\infty(\mathbb{S}^1,\mathbb{R}^m)$ such that $u_n\to u$ in $H^\frac{1}{2}(\mathbb{S}^1)$ as $n\to\infty$. Then by Lemma \ref{lem_condition_for_prop_def_Noeth_vardom_2} $F(u_n)\rightharpoondown F(u)$ weakly in $H^\frac{1}{2}(\mathbb{S}^1)$ as $n\to \infty$. By Lemma \ref{lemma_approximation_identity_12} and Remark \ref{rem_to_lemma_approximation_identity_12}, for any $\phi\in C^\infty(\mathbb{S}^1)$
\begin{equation}
G_{u_n'X,\phi}\to G_{u'X,\phi} \text{ in }H^{-\frac{1}{2}}(\mathbb{S}^1)
\end{equation}
as $n\to \infty$. Therefore
\begin{equation}
\begin{split}
&\text{div}_\frac{1}{2}\left[F(u_n(x))u_n'(y)X(y)\right][\phi]=\left\langle G_{u_n'X,\phi},F\circ u_n\right\rangle\\
\to &\left\langle G_{u'X,\phi}, F\circ u \right\rangle
=\text{div}_\frac{1}{2}\left[F(u(x))u'(y)X(y)\right][\phi]
\end{split}
\end{equation}
as $n\to \infty$.
By the same argument with $Y\equiv 1$  we obtain
\begin{equation}
\begin{split}
&\text{div}_\frac{1}{2}\left[F(u_n(x))u_n'(y)\right][\phi]\to\text{div}_\frac{1}{2}\left[F(u(x))u'(y)\right][\phi]
\end{split}
\end{equation}
as $n\to \infty$. Now we observe that since assumption (\ref{eq_diffeo_is_conformal12_2}) is satisfied, for any $n\in \mathbb{N}$ the computation preceding Proposition \ref{prop_first_version_Noether_vardom_14} implies
\begin{equation}
\begin{split}
&\frac{d}{dx}L\left(u_n,(-\Delta)^\frac{1}{2}u_n\right)X+\text{div}_\frac{1}{2}\left[F(u_n(x))u_n'(y)\right]X\\
=&\text{div}_\frac{1}{2}\left[F(u_n(x))u_n'(y)X(y)\right]+\frac{d}{dx}\left[L\left(u_n,(-\Delta)^\frac{1}{2}u_n\right)X\right]
\end{split}
\end{equation}
as distributions.
Therefore
\begin{equation}\label{eq_proof_prop_computation_to_pass_to_the_limit_Noether_vardom_12}
\begin{split}
0=&-\int_{\mathbb{S}^1}L\left(u(x), (-\Delta)^\frac{1}{2}u(x)\right)\frac{d}{dx}\left(X\phi\right)(x)dx+\text{div}_\frac{1}{2}\left[F(u(x))u'(y)\right][X\phi]\\
=&\lim_{n\to \infty}\int_{\mathbb{S}^1}L\left(u_n(x), (-\Delta)^\frac{1}{2}u_n(x)\right)\frac{d}{dx}\left(X\phi\right)(x)dx\\&+\text{div}_\frac{1}{2}\left[F(u_n(x))u_n'(y)\right][X\phi]\\
=&\lim_{n\to\infty}\text{div}_\frac{1}{2}\left[D_pL\left(u_n(x), (-\Delta)^\frac{1}{2}u_n(x)\right)u_n'(y)X(y)\right]\\&-\int_{\mathbb{S}^1}L\left(u_n(x), (-\Delta)^\frac{1}{2}u_n(x)\right)X(x)\phi'(x)dx\\
=& \text{div}_\frac{1}{2}\left[F(u(x))u'(y)X(y)\right].
\end{split}
\end{equation}
As (\ref{eq_proof_prop_computation_to_pass_to_the_limit_Noether_vardom_12}) holds for any $\phi\in C^\infty(\mathbb{S}^1)$ we conclude that
\begin{equation}
\text{div}_\frac{1}{2}\left[F(u(x))u'(y)X(y)\right]+\frac{d}{dx}\left[L\left(u, (-\Delta)^\frac{1}{4}u\right)X\right](x)=0.
\end{equation}
in the sense of distributions.
\end{proof}

\begin{lem}\label{lem_condition_for_prop_def_Noeth_vardom_2}
Let $F:\mathbb{R}^m\to\mathbb{R}^m$ be a Lipschitz function and let $u\in H^\frac{1}{2}(\mathbb{S}^1, \mathbb{R}^m)$.
For any sequence $(u_n)_n$ in $C^\infty(\mathbb{S}^1,\mathbb{R}^m)$ such that $u_n\to u$ in $H^\frac{1}{2}(\mathbb{S}^1)$ as $n\to \infty$, there holds
\begin{equation}
F\circ u_n \rightharpoondown F\circ u\text{ weakly in }H^\frac{1}{2}(\mathbb{S}^1)\text{ along a subsequence}
\end{equation}
as $n\to \infty$.
\end{lem}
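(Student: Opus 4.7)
The plan is to show first that $(F\circ u_n)_n$ is bounded in $H^{1/2}(\mathbb{S}^1)$, then extract a weakly convergent subsequence via Banach--Alaoglu and finally identify its limit using strong $L^2$ convergence. Since the reasoning parallels the standard truncation/composition-with-Lipschitz arguments, no delicate commutator estimate is required.

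For the first step, let $L:=\mathrm{Lip}(F)$. The pointwise bound $|F(z)-F(w)|\leq L|z-w|$ gives, for every $n$,
\begin{equation*}
[F\circ u_n]_{\dot H^{1/2}}^2=\int_{\mathbb{S}^1}\!\!\int_{\mathbb{S}^1}\frac{|F(u_n(x))-F(u_n(y))|^2}{|\sin(\tfrac12(x-y))|^{2}}\,dx\,dy\leq L^2[u_n]_{\dot H^{1/2}}^2,
\end{equation*}
using the equivalent Gagliardo-type seminorm on $\mathbb{S}^1$ recalled in the preliminaries. In addition, $|F(u_n(x))|\leq|F(0)|+L|u_n(x)|$, so $\|F\circ u_n\|_{L^2}\leq C(1+\|u_n\|_{L^2})$. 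Since $u_n\to u$ in $H^{1/2}$, the sequence $(u_n)_n$ is bounded in $H^{1/2}(\mathbb{S}^1)$; hence $(F\circ u_n)_n$ is bounded in $H^{1/2}(\mathbb{S}^1)$.

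For the second step, $H^{1/2}(\mathbb{S}^1)$ is a separable Hilbert space, so by Banach--Alaoglu there exist $G\in H^{1/2}(\mathbb{S}^1,\mathbb{R}^m)$ and a subsequence (still denoted $(u_n)_n$) with $F\circ u_n\rightharpoondown G$ weakly in $H^{1/2}(\mathbb{S}^1)$.

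The last step is to show $G=F\circ u$ a.e. Since $u_n\to u$ in $H^{1/2}(\mathbb{S}^1)$, a fortiori $u_n\to u$ in $L^2(\mathbb{S}^1)$; by the Lipschitz property,
\begin{equation*}
\|F\circ u_n-F\circ u\|_{L^2}\leq L\|u_n-u\|_{L^2}\longrightarrow 0,
\end{equation*}
so $F\circ u_n\to F\circ u$ strongly in $L^2(\mathbb{S}^1)$, hence also weakly in $L^2$. Composing with the continuous embedding $H^{1/2}(\mathbb{S}^1)\hookrightarrow L^2(\mathbb{S}^1)$, the weak $H^{1/2}$-limit $G$ coincides with the weak $L^2$-limit $F\circ u$. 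Thus $G=F\circ u$ a.e., and $F\circ u_n\rightharpoondown F\circ u$ weakly in $H^{1/2}(\mathbb{S}^1)$ along the chosen subsequence. No step is genuinely difficult; the only point to keep in mind is that the uniform $H^{1/2}$-bound relies crucially on $F$ being globally Lipschitz so that the Gagliardo seminorm is contracted by composition.
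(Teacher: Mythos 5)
Your proof is correct, and it follows the same broad strategy as the paper's (uniform $H^{1/2}$-bound, then Banach--Alaoglu, then identification of the weak limit). The details differ in two places and both times your route is a bit cleaner. First, for the uniform bound: the paper passes to an a.e.-convergent subsequence of $(u_n)$ to find a point $x_0$ with $|u_n(x_0)|\leq C$ and anchors the $L^2$-estimate on $F(u_n(x_0))$, whereas you use directly the affine growth $|F(z)|\leq|F(0)|+L|z|$ together with the contraction of the Gagliardo seminorm under Lipschitz post-composition, which avoids the subsequence extraction entirely. Second, and more substantively, for the identification of the weak limit $G$: the paper invokes the same argument as in Lemma~\ref{lem_condition_for_prop_def_Noeth_vardom}, i.e.\ Egorov's theorem and a proof by contradiction on a set of positive measure, whereas you observe that $F\circ u_n\to F\circ u$ strongly in $L^2$ (from the Lipschitz bound and $u_n\to u$ in $L^2$) and then use the continuity of the embedding $H^{1/2}(\mathbb{S}^1)\hookrightarrow L^2(\mathbb{S}^1)$ to transfer weak $H^{1/2}$-convergence to weak $L^2$-convergence, concluding by uniqueness of the weak $L^2$-limit. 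That is more direct and self-contained, and one could argue it is the more natural way to close the argument.
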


\begin{proof}
Let $(u_n)_n$ be a sequence in $C^\infty(\mathbb{S}^1,\mathbb{R}^m)$ such that $u_n\to u$ in $H^\frac{1}{2}(\mathbb{S}^1)$ as $n\to \infty$. Then, along a subsequence, $u_n\to u$ a.e. as $n\to \infty$. In particular, there exists $x_0\in \mathbb{S}^1$ and $C>0$ such that, for any element of the subsequence above, $\lvert u_n(x_0)\rvert\leq C$. Since $F$ is Lipschitz, for any $u_n$ in the subsequence, for any $x\in \mathbb{S}^1$ there holds
\begin{equation}
\lvert F\circ u_n(x)\rvert\leq [F]_{Lip}\left(\lvert u_n(x)\rvert+C \right)+\lvert F(u_n(x_0))\rvert,
\end{equation}
and therefore
\begin{equation}
\lVert F\circ u_n\lVert_{H^\frac{1}{2}}\leq [F]_{Lip}\left([u_n]_{\dot{H}^\frac{1}{2}}+\lVert u_n\rVert_{L^2} +(2\pi)^\frac{1}{2}C\right)+(2\pi)^\frac{1}{2}\max_{\lvert y\rvert\leq C}\lvert F(y)\rvert.
\end{equation}
In particular, a subsequence of $(F\circ u_n)_n$ is bounded in $H^\frac{1}{2}(\mathbb{S}^1)$. By Banach-Alaoglu Theorem, there exists $G\in H^\frac{1}{2}(\mathbb{S}^1)$ such that $F\circ u_n\rightharpoondown G\text{ weakly in }H^\frac{1}{2}(\mathbb{S}^1)$ as $n\to \infty$ along a further subsequence. Arguing as in the proof of Lemma \ref{lem_condition_for_prop_def_Noeth_vardom}, one can prove that $G=F\circ u$. Therefore $F\circ u_n\rightharpoondown F\circ u$ weakly in $H^\frac{1}{2}(\mathbb{S}^1)$ along a subsequence, as $n\to\infty$.
\end{proof}
In particular, Theorem \ref{prop_final_version_Noether_thm_vardom_12} holds when $E$ is the half Dirichlet energy. In this case, if $u$ is regular enough, Lemma \ref{lem_explicit_form_fracdiv} implies that Equation (\ref{eq_prop_result_Noether_thm_vardom_12}) can be rewritten as follows:
\begin{equation}\label{eq_discussion_Noether_formula_for_rotations_from_12}
\begin{split}
&(-\Delta)^\frac{1}{2}u(x)\cdot u'(x)X(x)-(-\Delta)^\frac{1}{2}(u'X)(x)u(x)+(-\Delta)^\frac{1}{2}u(x)\cdot u'(x)X(x)\\+&\frac{d}{dx}\left((-\Delta)^\frac{1}{2}uX\right)(x)u(x)
\end{split}
\end{equation}
for a.e. $x\in \mathbb{S}^1$.
When $X$ is the generator of rotations (i.e. $X\equiv 1$ in $\mathbb{S}^1$) we recover Equation (\ref{eq_discussion_Noether_formula_for_rotations_from_14}).

\subsection{Half Dirichlet energy and traces of automorphisms of $D^2$}\label{ssec: Traces of conformal maps and half Dirichlet energy}
In this section we show that the half Dirichlet energy satisfies condition (\ref{eq_diffeo_is_conformal14}) whenever the flow $\phi_t$ consists of traces of automorphisms of $D^2$, i.e. conformal, orientation preserving diffeomorphisms of $D^2$. To this end we first give a geometrical characterization of the $\frac{1}{2}$-fractional Laplacian for functions on $\mathbb{S}^1$. For the following we identify $\mathbb{S}^1$ with $\partial D^2$.\\
Let $u\in H^\frac{1}{2}(\mathbb{S}^1,\mathbb{R}^m)$. We recall that the harmonic extension $\tilde{u}$ is given by
\begin{equation}\label{eq_harmonic_extension_Poisson_kernel}
\tilde{u}(r, \theta)=P_r\ast u (\theta),
\end{equation}
for any $r\in [0,1)$, $\theta\in [0, 2\pi)$, where $P_r$ is the Poisson kernel defined in (\ref{eq_intro_circle_definition_Poisson_kernel}).
Then $\tilde{u}$ is smooth in $D^2$, and
\begin{equation}
\partial_r\tilde{u}(r,\theta)=\sum_{n\in\mathbb{Z}}\lvert n\rvert r^{\lvert n\rvert-1}\widehat{u}(n)e^{in\theta}.
\end{equation}
Therefore, by Lebsgue's Dominated convergence Theorem,
\begin{equation}
\lim_{r\to 1^-}\left\lVert \partial_r \tilde{u}(r,\theta)-\sum_{n\in\mathbb{Z}}\lvert n\rvert \widehat{u}(n)e^{in\theta}\right\rVert_{H^{-\frac{1}{2}}(\mathbb{S}^1)}=0,
\end{equation}
i.e.
\begin{equation}\label{eq_conv_to_frac_lap}
\lim_{r\to 1^-}\left\lVert \partial_r \tilde{u}(r,\theta)-(-\Delta)^\frac{1}{2}u(\theta)\right\rVert_{H^{\frac{1}{2}}(\mathbb{S}^1)}=0.
\end{equation}
Thus we can think of the fractional Laplacian $(-\Delta)^\frac{1}{2}$ as the operator mapping a function $u\in H^\frac{1}{2}(\mathbb{S}^1,\mathbb{R}^m)$ to the radial derivative of its harmonic extension $\tilde{u}$.\\
Let's now fix $u\in C^\infty(\mathbb{S}^1,\mathbb{R}^m)$, let $\tilde{u}$ be its harmonic extension in $D^2$ and let $\phi$ be a biholomorphic map from $D^2$ to $D^2$, i.e. $\phi\in \mathscr{M}(D^2)$, the M\"{o}bius group of the disc.
We recall that any element of $\mathscr{M}(D^2)$ takes the form
\begin{equation}\label{eq_general_form_Mobius2}
D^2\to D^2,\quad z\mapsto\mu\frac{z-a}{\bar{a}z-1}
\end{equation}
for $\mu, a\in\mathbb{C}$ with $\lvert \mu\rvert=1$, $\lvert a\rvert<1$. Thus $\phi$ extend to a diffeomorphism of $\overline{D^2}$, which we still denote $\phi$.
We observe that $\tilde{u}\circ \phi$ is harmonic as composition of an harmonic map with an holomorphic function.
Now, for any $r\in [0,1)$,
\begin{equation}
\partial_r (\tilde{u}\circ\phi)=D\tilde{u}\circ \phi\cdot\partial_r\phi.
\end{equation}
We also observe that since $\phi$ restricts to a diffeomorphism of $\mathbb{S}^1$ and it is conformal, for any $\theta\in \mathbb{S}^1$
\begin{equation}
\partial_r \phi(1,\theta)=\frac{\lvert \partial_r \phi(1,\theta)\rvert}{\lvert \phi(1,\theta)\rvert}\phi(1,\theta)=\lvert \partial_r \phi (1, \theta)\rvert\phi(1, \theta),
\end{equation}
therefore for any $\theta\in \mathbb{S}^1$
\begin{equation}\label{eq_lim_boundary_smooth_case}
(-\Delta)^\frac{1}{2}(u\circ \phi)(\theta)= D\tilde{u}(\phi(1,\theta))\cdot\partial_r\phi(1,\theta)=(-\Delta)^\frac{1}{2}u(\phi(1,\theta))\lvert\partial_r\phi(1,\theta)\rvert,
\end{equation}
Finally, since $\phi$ is orientation preserving and conformal, for any $\theta\in \mathbb{S}^1$
\begin{equation}
\lvert\partial_r\phi(1,\theta)\rvert=e^{\lambda_\phi(1,\theta)},
\end{equation}
where $\lambda_\phi$ is the conformal factor of $\phi$ defined in (\ref{eq_def_definition_conformal_map}).
We conclude that
\begin{equation}\label{eq_fraclap_of_precomposition}
(-\Delta)^\frac{1}{2}(u\circ\phi)=(-\Delta)^\frac{1}{2}u\circ\phi e^{\lambda_\phi}.
\end{equation}
In particular, if we multiply both sides of (\ref{eq_fraclap_of_precomposition}) by $u\circ\phi$ we can conclude that the half Dirichlet energy satisfies condition (\ref{eq_diffeo_is_conformal12_2}) for any flow consisting of traces of biholomorphic maps from $D^2$ to $D^2$.\\

Now we compute the vector field induced by some elements of $\mathscr{M}(D^2)$, and we apply Theorem \ref{prop_final_version_Noether_thm_vardom_12}.\\
First, let's consider rotations of the disc: let $a=1$ and $\mu=e^{it}$ for $t\in \mathbb{R}$. On $\mathbb{S}^1$, thought of as a quotient space of $\mathbb{R}$, this corresponds to the flow $\phi_t(x)=x+t$ for $x\in \mathbb{S}^1$, $t\in\mathbb{R}$. This is the flow generated by $X\equiv1$. Therefore, in this case, equation (\ref{eq_prop_result_Noether_thm_vardom_14}) is just equation (\ref{eq_prop_stationary_equation_14}). Recall that also when we considered the Lagrangian (\ref{eq_fractional_Dirichlet_energy_for_example_14}), applying Noether to the flow corresponding to rotations we obtained the stationary equation (see (\ref{eq_prop_result_Noether_thm_vardom_14_3})).\\
Now let $\mu=1$, $a=e^{i\delta}t$ for some $\delta\in [0,1)$ and $t\in \mathbb{R}$ in a neighbourhood of $0$.
The corresponding flow in $\mathbb{C}$ is given by $\tilde{\phi}_t(e^{i\theta})=\frac{e^{i\theta}-te^{i\delta}}{1-te^{i(\theta-\delta)}}$ for $\theta\in [\theta, 2\pi)$. This is the flow of the vector field 
\begin{equation}
Y(e^{i\theta})=e^{i(2\theta-\delta)}-e^{i\delta}=2ie^{i\theta}\sin(\theta-\delta)
\end{equation}
for any $\theta\in [0,2\pi)$. Thus, on $\mathbb{S}^1$, $Y$ takes the form $Y(x)=2\sin( x-\delta)$ for any $\theta\in \mathbb{S}^1$. Geometrically, $Y$ induces a "dialation on $\mathbb{S}^1$" around the point $\delta$. Then equation (\ref{eq_prop_result_Noether_thm_vardom_12}) becomes
\begin{equation}\label{eq_discussion_traces_of_conformal_maps_conserved_quantity_moving_a}
\begin{split}
&\text{div}_\frac{1}{2}\left(F(u(x))u'(y)\sin( y-\delta)\right)
+\frac{d}{dx}\left[L\left(u,(-\Delta)^\frac{1}{2}u\right)\sin( x-\delta)\right](x)=0
\end{split}
\end{equation}
in the sense of distributions, where $F$ is defined as in (\ref{eq_discussion_definition_fct_f_in_terms_of_F_G}).
In particular, in the case of the half Dirichlet energy, Equation (\ref{eq_discussion_traces_of_conformal_maps_conserved_quantity_moving_a}) becomes
\begin{equation}\label{eq_discussion_traces_of_conformal_maps_conserved_quantity_moving_a_2}
\text{div}_\frac{1}{2}\left(u(x)\cdot u'(y)\sin( y-\delta)\right)
+\frac{d}{dx}\left[u\cdot(-\Delta)^\frac{1}{2}u\sin( x-\delta)\right](x)=0.
\end{equation}

Next we observe that the argument of Noether Theorem can be applied to the half Dirichlet energy to obtain Pohozaev identities. Let $u\in H^1(\mathbb{S}^1,\mathbb{R}^m)$. Following the argument that precedes Proposition \ref{prop_first_version_Noether_vardom_14} for the half Dirichlet energy in the form
\begin{equation}\label{eq_half_Dirichlet_energy_invariant_form_for_Pohozaev}
E(v)=\int_{\mathbb{S}^1}(-\Delta)^\frac{1}{2}v(x)\cdot v(x)dx,
\end{equation}
well defined for any $v\in H^1(\mathbb{S}^1,\mathbb{R}^m)$, and making use of the fact that the energy (\ref{eq_half_Dirichlet_energy_invariant_form_for_Pohozaev}) satisfies condition (\ref{eq_diffeo_is_conformal12_2}) for any flow consisting of traces of biholomorphic maps from $D^2$ to $D^2$, we obtain for any $v\in C^\infty(\mathbb{S}^1,\mathbb{R}^m)$
\begin{equation}\label{eq_discussion_modified_result_proof_Noeththm_for_smooth_14}
\begin{split}
&\frac{d}{dx}\left[\frac{1}{2}v(-\Delta)^\frac{1}{2}v\right]X+\text{div}_\frac{1}{2}\left(\frac{1}{2}v(x)v'(y)\right)X\\
=&\frac{d}{dx}\left[\frac{1}{2}v(-\Delta)^\frac{1}{2}vX\right]+\text{div}_\frac{1}{2}\left(\frac{1}{2}v(x)v'(y)X(y)\right)
\end{split}
\end{equation}
for any smooth vector field $X$ on $\mathbb{S}^1$ whose flow $\phi_t$ consists of traces of automorphisms of $D^2$.
Thus applying Lemma \ref{lem_explicit_form_fracdiv} to (\ref{eq_discussion_modified_result_proof_Noeththm_for_smooth_14}) we obtain
\begin{equation}\label{eq_discussion_Pohozaev_pointwise_circle_simpler}
v\cdot\frac{d}{dx}\left( (-\Delta)^\frac{1}{2}v X\right)=v\cdot (-\Delta)^\frac{1}{2}(v'X)
\end{equation}
for any smooth vector field $X$ on $\mathbb{S}^1$ whose flow $\phi_t$ consists of traces of automorphisms of $D^2$.
Now let $(u_n)_n$ in $C^\infty(\mathbb{S}^1, \mathbb{R})$ such that $u_n\to u $ in $H^\frac{3}{2}(\mathbb{S}^1)$, then Equation (\ref{eq_discussion_Pohozaev_pointwise_circle_simpler}) holds for $u_n$ for any $n\in \mathbb{N}$. By considering the limit $n\to \infty$, we observe that Equation (\ref{eq_discussion_Pohozaev_pointwise_circle_simpler}) remains true in the sense of distributions for $u$ in place of $v$.\\
Applying the distributions on both sides to the smooth function $\phi\equiv 1$ we obtain
\begin{equation}
\int_{\mathbb{S}^1}u'(x)(-\Delta)^\frac{1}{2}u(x)X(x)dx=0.
\end{equation}
In particular, if $X\equiv1$ (the vector field corresponding to rotations), we obtain the Pohozaev identity
\begin{equation}
\int_{\mathbb{S}^1}u'(x)(-\Delta)^\frac{1}{2}u(x)dx=0,
\end{equation}
while if we choose, for some $\delta\in \mathbb{S}^1$, $X(x)=2\sin(x-\delta)$ for every $x\in \mathbb{S}^1$ (the vector field corresponding to "dilations around $\delta$"), we obtain the Pohozaev identity
\begin{equation}
\int_{\mathbb{S}^1}u'(x)(-\Delta)^\frac{1}{2}u(x)\sin(x-\delta)dx=0.
\end{equation}
We summarize the previous computations in the following Proposition:
\begin{prop}\label{prop_Pohozaev_identities_on_the_circle}
Let $u\in H^1(\mathbb{S}^1, \mathbb{R})$. Then, for any vector field $X$ on $\mathbb{S}^1$ whose flow is a family of traces of automorphisms of $D^2$, there holds
\begin{equation}\label{eq_prop_Pohozaev_pointwise_circle}
u\frac{d}{dx}\left[(-\Delta)^\frac{1}{2}uX\right]=u(-\Delta)^\frac{1}{2}\left(u'X\right)
\end{equation}
in the sense of distributions. In particular
\begin{equation}
\int_{\mathbb{S}^1}u'(x)(-\Delta)^\frac{1}{2}u(x)dx=0,
\end{equation}
and for any $\delta\in \mathbb{S}^1$
\begin{equation}\label{eq_prop_Pohozaev_circle_dilation_integral_form}
\int_{\mathbb{S}^1}u'(x)(-\Delta)^\frac{1}{2}u(x)\sin(x-\delta)dx=0.
\end{equation}
\end{prop}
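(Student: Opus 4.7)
The plan is to extract the pointwise identity \eqref{eq_prop_Pohozaev_pointwise_circle} from the invariance of the half Dirichlet energy written in the form
\begin{equation*}
E(v)=\int_{\mathbb{S}^1}v(x)\cdot(-\Delta)^{\frac{1}{2}}v(x)\,dx
\end{equation*}
under precomposition with traces of automorphisms of $D^2$, and then deduce the two integral identities by testing against the constant function. The invariance has already been established earlier in the section: combining \eqref{eq_fraclap_of_precomposition} with $(v\circ\phi)\cdot(-\Delta)^{\frac{1}{2}}(v\circ\phi)=((v\cdot(-\Delta)^{\frac{1}{2}}v)\circ\phi)\, e^{\lambda_\phi}$ shows that condition \eqref{eq_diffeo_is_conformal14_2_discussion} holds, with Lagrangian $L(y,p)=y\cdot p$ and $F(y)=y$.

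First I would run the argument preceding Proposition \ref{prop_first_version_Noether_vardom_14}, but with $(-\Delta)^{\frac{1}{2}}$ in place of $(-\Delta)^{\frac{1}{4}}$ and for the specific Lagrangian above. Since the smooth identity \eqref{eq_prop_fundamental_idea_of_Noether_vardom_for_tests} does not rely on $u$ being a critical point, the same derivation yields for every $v\in C^\infty(\mathbb{S}^1,\mathbb{R})$ and every smooth $X$ whose flow consists of traces of automorphisms of $D^2$ the identity \eqref{eq_discussion_modified_result_proof_Noeththm_for_smooth_14}. Applying Lemma \ref{lem_explicit_form_fracdiv} to expand $\mathrm{div}_{\frac{1}{2}}(v(x)v'(y))$ and $\mathrm{div}_{\frac{1}{2}}(v(x)v'(y)X(y))$ as pointwise products and cancelling the common terms $\tfrac{1}{2}v'(-\Delta)^{\frac{1}{2}}vX$ and $\tfrac{1}{2}v(-\Delta)^{\frac{1}{2}}v\cdot X'$ (which appears when differentiating the Leibniz-type term on the right of \eqref{eq_discussion_modified_result_proof_Noeththm_for_smooth_14}), one is left exactly with the pointwise identity
\begin{equation*}
v\cdot\frac{d}{dx}\bigl((-\Delta)^{\frac{1}{2}}v\,X\bigr)=v\cdot(-\Delta)^{\frac{1}{2}}(v'X),
\end{equation*}
valid for smooth $v$.

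Next, for $u\in H^1(\mathbb{S}^1,\mathbb{R})$, I would approximate $u$ by its Fourier truncations $u_n:=D_n\ast u\in C^\infty(\mathbb{S}^1)$, so that $u_n\to u$ in $H^1$. Both sides of the smooth identity, viewed as distributions on $\mathbb{S}^1$, depend continuously on $v$ in $H^1$: the left side maps $v\mapsto v\cdot\frac{d}{dx}((-\Delta)^{\frac{1}{2}}v\,X)$ into $\mathscr{D}'(\mathbb{S}^1)$ continuously because pointwise multiplication by $v\in H^1\hookrightarrow C^0$ is continuous from $H^{-1}$ to $H^{-1}$ when tested against $C^\infty$ functions (via the pairing rewritten as $-\int(-\Delta)^{\frac{1}{2}}v\cdot X\cdot(v\phi)'\,dx$, which is bilinear continuous in $v\in H^1$); similarly on the right side, the map $v\mapsto v\cdot(-\Delta)^{\frac{1}{2}}(v'X)$ can be tested against $\phi\in C^\infty(\mathbb{S}^1)$ as $\int(-\Delta)^{\frac{1}{2}}(v'X)\cdot v\phi\,dx=\int v'X\cdot(-\Delta)^{\frac{1}{2}}(v\phi)\,dx$ using the self-adjointness of $(-\Delta)^{\frac{1}{2}}$, which is again bilinear continuous in $v\in H^1$. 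Passing to the limit yields \eqref{eq_prop_Pohozaev_pointwise_circle}.

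Finally, I would test the distributional identity against $\phi\equiv 1$. The left-hand side becomes $-\int_{\mathbb{S}^1}(-\Delta)^{\frac{1}{2}}u\cdot X\cdot u'\,dx$, while the right-hand side, via self-adjointness of $(-\Delta)^{\frac{1}{2}}$, equals $\int_{\mathbb{S}^1}u'X\cdot(-\Delta)^{\frac{1}{2}}u\,dx$. Equating and rearranging gives $\int_{\mathbb{S}^1}u'(x)(-\Delta)^{\frac{1}{2}}u(x)X(x)\,dx=0$, from which the cases $X\equiv 1$ (generator of rotations) and $X(x)=2\sin(x-\delta)$ (generator of the one-parameter subgroup of M\"obius maps moving $a=e^{i\delta}t$) give the two stated Pohozaev identities. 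The only nontrivial step I expect is the continuity of both sides of the smooth identity under $H^1$-convergence; this reduces to verifying that the bilinear forms above extend continuously, which follows from the boundedness of pointwise multiplication by $H^1\hookrightarrow C^0$ functions on $H^{\pm 1/2}$ (Lemma \ref{lem_multiplication_with_test_function_continuous}) together with the $L^2$-boundedness of $(-\Delta)^{\frac{1}{2}}$ from $H^1$.
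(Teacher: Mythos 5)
Your proposal follows the same overall strategy as the paper: run the Noether argument for the half Dirichlet energy written as $\int_{\mathbb{S}^1}v(-\Delta)^{1/2}v$, which satisfies the conformal invariance condition for traces of automorphisms of $D^2$, obtain the smooth identity $v\,\frac{d}{dx}\bigl((-\Delta)^{1/2}v\,X\bigr)=v\,(-\Delta)^{1/2}(v'X)$ via Lemma \ref{lem_explicit_form_fracdiv}, pass to the limit along smooth approximations of $u$, and finally test against $\phi\equiv1$ to extract the integral identities.

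The one point where your argument is actually more careful than the paper's: the text before Proposition \ref{prop_Pohozaev_identities_on_the_circle} passes to the limit along a sequence $(u_n)_n$ converging in $H^{3/2}(\mathbb{S}^1)$, but the hypothesis is only $u\in H^1(\mathbb{S}^1,\mathbb{R})$, so such an approximation is not available. Your approach---approximating in $H^1$ by Fourier truncations, then rewriting each side of the distributional identity via integration by parts and self-adjointness so that the pairing against a fixed $\phi\in C^\infty(\mathbb{S}^1)$ becomes an $L^2$-pairing of quantities that depend continuously on $v$ in $H^1$---is the correct way to justify the limiting step under the stated regularity. Everything else (the cancellation of the common terms after applying Lemma \ref{lem_explicit_form_fracdiv}, the choice of test function $\phi\equiv1$, and the specializations $X\equiv1$ and $X(x)=2\sin(x-\delta)$) matches the paper.
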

\begin{rem}\noindent
\begin{enumerate}
\item Proposition \ref{prop_Pohozaev_identities_on_the_circle} can be applied, for instance, whenever a function $u\in H^\frac{1}{2}(\mathbb{S}^1,\mathbb{R}^m)$ satisfies the equation
\begin{equation}\label{eq_eq_discussion_typical_application_Pohozaev_circle}
(-\Delta)^\frac{1}{2}u=f\circ u\quad\text{weakly in }\mathbb{S}^1
\end{equation}
for some Lipschitz function $f:\mathbb{R}^m\to\mathbb{R}^m$. In fact, since $u\in H^\frac{1}{2}(\mathbb{S}^1)$ and $f$ is Lipschitz, $f\circ u\in H^\frac{1}{2}(\mathbb{S}^1)$, thus if $u$ satisfies Equation (\ref{eq_eq_discussion_typical_application_Pohozaev_circle}), $u\in H^1(\mathbb{S}^1)$ and thus the assumption of Proposition \ref{prop_Pohozaev_identities_on_the_circle} is satisfied.
\item We remark that if $X(x)=2\sin(x-\delta)$ for every $x\in \mathbb{S}^1$, identity (\ref{eq_prop_Pohozaev_pointwise_circle}) also follows from the identity of distributions
\begin{equation}\label{eq_discussion_identity_Pohozaev_valid_for_any _distr_circle}
\frac{d}{dx}\left[(-\Delta)^\frac{1}{2}uX\right]=(-\Delta)^\frac{1}{2}\left(u'X\right),
\end{equation}
valid for any $u\in \mathscr{D}'(\mathbb{S}^1)$. Identity (\ref{eq_discussion_identity_Pohozaev_valid_for_any _distr_circle}) can be proved computing the Fourier coefficients of both sides: in fact, for any $n\in \mathbb{Z}$
\begin{equation}
\begin{split}
&\mathscr{F}\left(\frac{d}{dx}\left(\sin(x-\delta))(-\Delta)^\frac{1}{2}u\right)\right)(n)\\=&-in\left(\frac{e^{-i\delta}}{2i}\widehat{(-\Delta)^\frac{1}{2}u}(n-1)-\frac{e^{i\delta}}{2i}\widehat{(-\Delta)^\frac{1}{2}u}(n+1)\right)\\
=&-n\lvert n-1\rvert \frac{e^{-i\delta}}{2}\widehat{u}(n-1)+n\lvert n+1\rvert\frac{e^{i\delta}}{2}\widehat{u}(n+1),
\end{split}
\end{equation}
on the other hand
\begin{equation}
\begin{split}
&\mathscr{F}\left((-\Delta)^\frac{1}{2}(u'\sin(x-\delta)) \right)(n)\\=&\lvert n\rvert\left(\frac{e^{-i\delta}}{2i}\widehat{u'}(n-1)-\frac{e^{i\delta}}{2i}\widehat{u'}(n+1)\right)\\
=&-\lvert n\rvert (n-1)\frac{e^{-i\delta}}{2}\widehat{u}(n-1)+\lvert n\rvert(n+1)\frac{e^{i\delta}}{2}\widehat{u}(n+1).
\end{split}
\end{equation}
\end{enumerate}
\end{rem}

\subsection{Functions on the real line}\label{ssec:Functions on the real line}
In this section we briefly consider variations in the domain for functions defined in $\mathbb{R}$.\newline
First let's consider energies of the form
\begin{equation}\label{eq_discussion_first_energy_vardom_functions_Euclidean}
E(u)=\int_{\mathbb{R}}L\left(u(x), (-\Delta)^\frac{1}{4}u(x)\right)dx,
\end{equation}
where $L:\mathbb{R}^m\times\mathbb{R}^m\to \mathbb{R}$ is a function of class $C^2$ such that for some constants $C$ and $r$ with $r\geq 2$
\begin{equation}
\left\lvert L(x,p)\right\rvert\leq C\left(\lvert x\rvert^2+\lvert x\rvert^r+\lvert p\rvert^2\right).
\end{equation}
Under these assumptions, the integral in (\ref{eq_discussion_first_energy_vardom_functions_Euclidean}) converges absolutely for any $u\in H^\frac{1}{2}(\mathbb{R}, \mathbb{R}^m)$.\newline
Now let $u\in C^\infty_c(\mathbb{R}^n, \mathbb{R}^m)$ and assume that $u$ is a stationary point of $E$.
Also, let $X$ be a smooth vector field on $\mathbb{R}^m$. Assume that for some $\delta>0$, the flow $\phi_t$ of $X$ exists for any $t\in (-\delta, \delta)$.
Then one can repeat the argument of the proof of Proposition \ref{prop_first_version_Noether_vardom_14} (with the obvious adaptations\footnote{In particular the kernel $K^\frac{1}{4}$ has to be substituted by the function $z\mapsto C_{1,\frac{1}{4}}\frac{1}{\lvert z\rvert^\frac{3}{2}}$, where $C_{1,\frac{1}{4}}$ was defined in (\ref{eq_introduction_definition_constant_C1s}).}) to obtain the following result:

\begin{prop}\label{prop_first_noeth_vardom_14_whole_R}
Let $L$, $X$ and $\phi_t$ be defined as above. Let $u\in C^\infty_c(\mathbb{R}, \mathbb{R}^m)$. Assume that
for $t$ in a neighbourhood of $0$, for a.e. $x\in \mathbb{R}$
\begin{equation}\label{eq_prop_condition_symmetry_14_real_line}
L\left(u\circ \phi_t(x), (-\Delta)^\frac{1}{4}(u\circ\phi_t)\right)(x)=L\left(u(\phi_t(x)), (-\Delta)^\frac{1}{4}u(\phi_t(x))\right)\phi'_t(x).
\end{equation}
Then for any $x\in \mathbb{R}$ there holds
\begin{equation}\label{eq_lemma_result_equivalence_Noether_14_whole_R}
\begin{split}
&\text{div}_\frac{1}{4}\left[\frac{D_pL\left(u(x), (-\Delta)^\frac{1}{4}u(x)\right)u'(y)}{\lvert x-y\rvert^\frac{1}{4}}\right]X+\frac{d}{dx}\left[L\left(u, (-\Delta)^\frac{1}{4}u\right)\right]X\\=&\text{div}_\frac{1}{4}\left[\frac{D_pL\left(u(x), (-\Delta)^\frac{1}{4}u(x)\right)u'(y)X(y)}{\lvert x-y\rvert^\frac{1}{4}}\right]+\frac{d}{dx}\left[L\left(u, (-\Delta)^\frac{1}{4}u\right)X\right]
\end{split}
\end{equation}
in the sense of distributions. In particular, if $u\in C^\infty_c(\mathbb{R}, \mathbb{R}^m)$ satisfies the stationarity equation
\begin{equation}
\begin{split}
\text{div}_\frac{1}{4}\left[\frac{D_pL\left(u(x), (-\Delta)^\frac{1}{4}u(x)\right)u'(y)}{\lvert x-y\rvert^\frac{1}{4}}\right]X+\frac{d}{dx}\left[L\left(u, (-\Delta)^\frac{1}{4}u\right)\right]X=0
\end{split}
\end{equation}
in the sense of distributions,
there holds
\begin{equation}
\text{div}_\frac{1}{4}\left[\frac{D_pL\left(u(x), (-\Delta)^\frac{1}{4}u(x)\right)u'(y)X(y)}{\lvert x-y\rvert^\frac{1}{4}}\right]+\frac{d}{dx}\left[L\left(u, (-\Delta)^\frac{1}{4}u\right)X\right]=0
\end{equation}
in the sense of distributions.
\end{prop}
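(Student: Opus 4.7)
The plan is to mimic, essentially line by line, the computation carried out before Proposition \ref{prop_first_version_Noether_vardom_14} in the circle setting, replacing the circle kernel $K^{\frac{1}{4}}$ by its Euclidean counterpart $x \mapsto C_{1,\frac{1}{4}}|x|^{-\frac{3}{2}}$, and exploiting the fact that $u \in C^\infty_c(\mathbb{R},\mathbb{R}^m)$ in place of only $C^\infty$ to handle the non-compactness of $\mathbb{R}$. Throughout, identities will be understood in the distributional sense by testing against arbitrary $\phi \in C^\infty_c(\mathbb{R})$.

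First I would establish the asymptotic expansion $u \circ \phi_t = u + t u' X + o_{C^1}(t)$ as $t \to 0$, using Taylor's theorem and the smoothness of $\phi_t$ jointly in $(t,x)$. Because $u$ has compact support and $\phi_t$ is smooth, both $u \circ \phi_t$ and the remainder $v_t := u \circ \phi_t - u - t u'X$ are smooth with compact support contained in a fixed compact set for $t$ in a neighbourhood of $0$, and $v_t = o_{C^1_c}(t)$. This in particular yields $(-\Delta)^{\frac{1}{4}}(u\circ\phi_t) = (-\Delta)^{\frac{1}{4}}u + t(-\Delta)^{\frac{1}{4}}(u'X) + o_{L^\infty\cap L^1}(t)$; the convergence for the remainder follows by direct pointwise bounds on $(-\Delta)^{\frac{1}{4}}v_t$ using the compact support of $v_t$, as the kernel $|x-y|^{-\frac{3}{2}}$ behaves analogously to $K^{\frac{1}{4}}$ near the diagonal and decays at infinity. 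Applying the ``Leibnitz rule'' (\ref{eq_discussion_introduction_Leibnitz_rule_fraclaps}) for $s=\frac{1}{4}$ to the smooth compactly supported function $u'X$ (writing it as $u'\cdot(X\eta)$ for a cut-off $\eta$ equal to $1$ on a large ball containing $\mathrm{supp}(u)$, if needed, so that both factors lie in $\dot H^{\frac{1}{4}}\cap L^\infty$) gives a pointwise expression for $(-\Delta)^{\frac{1}{4}}(u'X)$ in terms of a principal value double integral. Then a Taylor expansion of $L$ in its last two arguments, of the kind carried out in (\ref{eq_explicit_form_precomposition_vardom14}), produces
\begin{equation*}
\frac{d}{dt}\bigg|_{t=0} L\!\left(u\circ\phi_t(x),(-\Delta)^{\frac{1}{4}}(u\circ\phi_t)(x)\right) \;=\; \frac{d}{dx}L\!\left(u,(-\Delta)^{\frac{1}{4}}u\right)\!(x)\,X(x) + R(x)
\end{equation*}
pointwise, where $R$ is a principal-value double integral involving $D_pL$, $u'$ and $X$.

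Simultaneously, differentiating the symmetry condition (\ref{eq_prop_condition_symmetry_14_real_line}) in $t$ at $t=0$ gives, pointwise,
\begin{equation*}
\frac{d}{dt}\bigg|_{t=0} L\!\left(u\circ\phi_t,(-\Delta)^{\frac{1}{4}}(u\circ\phi_t)\right)\!(x) \;=\; \frac{d}{dx}\!\left[L\!\left(u,(-\Delta)^{\frac{1}{4}}u\right)X\right]\!(x),
\end{equation*}
exactly as in (\ref{eq_derivative_diffeo_is_conformal14}). Equating the two expressions and pairing with an arbitrary $\phi \in C^\infty_c(\mathbb{R})$, I then rearrange the resulting identity exactly as in computation (\ref{eq_proof_prop_before_from_EL_to Ncurrent}): the algebraic identity $u'(y)(X(x)\phi(x)-X(y)\phi(y)) - [u'(x)(X(x)-X(y))-(u'(x)-u'(y))(X(x)-X(y))]\phi(x) = u'(y)X(y)(\phi(x)-\phi(y))$ transforms, after multiplication by $D_pL(u(x),(-\Delta)^{\frac{1}{4}}u(x))/|x-y|^{\frac{3}{2}}$ and integration over $\mathbb{R}\times\mathbb{R}$, the expression $\text{div}_{\frac{1}{4}}[D_pL(u(x),\ldots)u'(y)/|x-y|^{\frac{1}{4}}]\,X$ paired with $\phi$ into $\text{div}_{\frac{1}{4}}[D_pL(u(x),\ldots)u'(y)X(y)/|x-y|^{\frac{1}{4}}]$ paired with $\phi$, up to the boundary term $\int L\, X'\,\phi$ produced by the difference between $\frac{d}{dx}L\cdot X$ and $\frac{d}{dx}[L X]$. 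This yields (\ref{eq_lemma_result_equivalence_Noether_14_whole_R}) as an identity of distributions, and the second assertion follows by observing that if the left-hand side vanishes, so does the right-hand side.

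The main obstacle I anticipate is justifying the pointwise Leibnitz rule and the absolute convergence of the double integrals arising in the rearrangement step, given that $X$ is only a smooth vector field on $\mathbb{R}$ and need not decay or lie in any $\dot H^s$ space. The resolution is to exploit that $u'$ has compact support, so that every double integral appearing in the argument carries a factor of $u'(x)$ or $u'(y)$ (or the compactly supported test function $\phi$) in one variable, confining the integration to a compact set in that variable and reducing the question to local boundedness of $X$; the remaining one-variable integrals then converge by the $|x-y|^{-\frac{3}{2}}$ decay at infinity combined with $C^1$ regularity near the diagonal. All other estimates are local analogues of those in the circle case and require no essentially new ingredient.
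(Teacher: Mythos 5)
Your proposal is correct and takes essentially the same route as the paper: the paper states the result with the remark that "one can repeat the argument of the proof of Proposition \ref{prop_first_version_Noether_vardom_14} (with the obvious adaptations)", replacing $K^{\frac14}$ by the Euclidean kernel $C_{1,\frac14}|z|^{-\frac32}$, and you carry out precisely this plan. You also correctly single out the one genuine new issue introduced by passing from $\mathbb{S}^1$ to $\mathbb{R}$ — that $X$ need not decay, so $(-\Delta)^{\frac14}X$ alone may be ill-defined — and propose a working fix; one may equivalently avoid $(-\Delta)^{\frac14}X$ altogether by keeping the combined expression $(-\Delta)^{\frac14}(u'X)-((-\Delta)^{\frac14}u')X$, which is well-defined since $u'X\in C^\infty_c(\mathbb{R})$, but this is a cosmetic difference.
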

As in the case of $\mathbb{S}^1$, this result can be extended by approximation to a broader class of stationary points.
To this end, we observe that the argument of the proof of Lemma \ref{lem_explicit_form_fracdiv} yields
\begin{lem}\label{lem_explicit_form_fracdiv_realline}
Let $s\in (0,\frac{1}{2})$, $a,b\in C^\infty_c(\mathbb{R}, \mathbb{R}^m)$. Then
\begin{equation}
\text{div}_s\left(\frac{a(x)\cdot b(y)}{\lvert x-y\rvert^s}\right)=b\cdot(-\Delta)^s a-a\cdot (-\Delta)^sb.
\end{equation}
\end{lem}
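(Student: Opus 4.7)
My plan is to mimic the proof of Lemma \ref{lem_explicit_form_fracdiv} (the $\mathbb{S}^1$-version), replacing the kernel $K^s(x-y)$ by its Euclidean counterpart $|x-y|^{-(1+2s)}$ and using the pointwise formula (\ref{eq_prel_pointwise_definition_fraclap}) for $(-\Delta)^s$ in place of (\ref{eq_pointwise_formula_fraclap_circle_s}). Throughout the argument $a,b\in C^\infty_c(\mathbb{R},\mathbb{R}^m)$, so $(-\Delta)^sa, (-\Delta)^sb\in C^\infty\cap L^p(\mathbb{R})$ by Proposition \ref{prop_fraclap_of_Schwartz}, and the right-hand side is a bona fide continuous function on $\mathbb{R}$; the task is to identify its pairing with every $\phi\in C^\infty_c(\mathbb{R})$ with the value of the distribution $\mathrm{div}_s\bigl(\frac{a(x)\cdot b(y)}{|x-y|^s}\bigr)[\phi]$.

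First I would unwind the definition of $\mathrm{div}_s$ from Definition \ref{def_fractional_divergence_Rn}: applied to $F(x,y)=\frac{a(x)\cdot b(y)}{|x-y|^s}$ and a test function $\phi\in C^\infty_c(\mathbb{R})$ it gives the absolutely convergent double integral
\begin{equation*}
\mathrm{div}_s F[\phi]=\int_\mathbb{R}\int_\mathbb{R}\frac{a(x)\cdot b(y)\bigl(\phi(x)-\phi(y)\bigr)}{|x-y|^{1+2s}}\,dx\,dy,
\end{equation*}
where integrability follows from the compact support and Lipschitz nature of $\phi$, together with $s<\frac{1}{2}$, exactly as in (\ref{eq_rem_conv_int_hold}). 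I would then swap the dummy variables $x\leftrightarrow y$, noting that the kernel is symmetric, to symmetrize the integrand in the $a\otimes b$ factor:
\begin{equation*}
\mathrm{div}_s F[\phi]=\tfrac{1}{2}\int_\mathbb{R}\int_\mathbb{R}\frac{\bigl(a(x)\cdot b(y)-a(y)\cdot b(x)\bigr)\bigl(\phi(x)-\phi(y)\bigr)}{|x-y|^{1+2s}}\,dx\,dy,
\end{equation*}
which is precisely the analogue of the step (\ref{eq_proof_lemma_explicit_form_divergence_second_step_changevar}) on the circle. A second symmetrization — this time interchanging the roles of the $\phi(x)$ and $-\phi(y)$ summands via $x\leftrightarrow y$ — collapses the displayed expression to an integral of $\phi(x)$ against the principal value $\mathrm{PV}\int_\mathbb{R}\frac{a(x)\cdot b(y)-a(y)\cdot b(x)}{|x-y|^{1+2s}}\,dy$.

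The key algebraic step is then the identity
\begin{equation*}
a(x)\cdot b(y)-a(y)\cdot b(x)=a(x)\cdot\bigl(b(y)-b(x)\bigr)-b(x)\cdot\bigl(a(y)-a(x)\bigr),
\end{equation*}
which, inserted into the principal value above and using the pointwise formula (\ref{eq_prel_pointwise_definition_fraclap}) for both $(-\Delta)^sa(x)$ and $(-\Delta)^sb(x)$, rewrites that principal value (up to the constant $C_{1,s}$ absorbed in the normalization of $\mathrm{div}_s$ and $(-\Delta)^s$) as $b(x)\cdot(-\Delta)^sa(x)-a(x)\cdot(-\Delta)^sb(x)$. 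Pairing with $\phi$ and appealing to the arbitrariness of $\phi\in C^\infty_c(\mathbb{R})$ yields the claimed identity of distributions.

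The main obstacle I anticipate is the rigorous handling of the principal value: the individual integrals $\mathrm{PV}\int\frac{a(x)b(y)}{|x-y|^{1+2s}}\,dy$ and $\mathrm{PV}\int\frac{a(y)b(x)}{|x-y|^{1+2s}}\,dy$ diverge at infinity (and are only conditionally convergent near $x=y$), so the Fubini manipulation that separates the $\phi(x)$ and $\phi(y)$ contributions must be performed after writing the inner integral as a limit $\int_{|x-y|>\varepsilon}$ and using dominated convergence, exactly as in (\ref{eq_proof_lem_remove_the_limit})--(\ref{eq_rem_conv_int_hold}) of Lemma \ref{lem_div_d_=_fraclap}. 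The compact support of $a$ and $b$ and the smoothness of $\phi$ make each step routine but require care; no approximation/density argument is needed because $a,b\in C^\infty_c$ already.
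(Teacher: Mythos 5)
Your proposal follows the approach the paper intends: the text preceding the lemma explicitly says one should run the argument of Lemma \ref{lem_explicit_form_fracdiv} with $K^s(x-y)$ replaced by the Euclidean kernel. You do exactly that — unwind Definition \ref{def_fractional_divergence_Rn}, antisymmetrize in $x\leftrightarrow y$, apply the algebraic splitting $a(x)\cdot b(y)-a(y)\cdot b(x)=a(x)\cdot(b(y)-b(x))-b(x)\cdot(a(y)-a(x))$, and recognize the pointwise formula (\ref{eq_prel_pointwise_definition_fraclap}). Since $a,b\in C^\infty_c$, no density argument is needed, which you correctly point out.

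Two remarks. First, your double symmetrization is slightly redundant: the paper's circle proof swaps $x\leftrightarrow y$ only in the $-\phi(y)$ half of the split, landing directly on $\int\int(a(x)\cdot b(y)-a(y)\cdot b(x))\phi(x)\,|x-y|^{-(1+2s)}\,dx\,dy$ without the intermediate $\frac{1}{2}$-averaged form; both are correct, yours is just a step longer. You are also right that the split should be justified via the cutoff $|x-y|>\varepsilon$ and dominated convergence, although for $s<\frac{1}{2}$ the final integral needs no principal value since $a(x)\cdot b(y)-a(y)\cdot b(x)$ vanishes to first order on the diagonal, making the integrand $O(|x-y|^{-2s})$ there.

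Second, and more substantively, the constant you describe as ``absorbed in the normalization'' is a genuine mismatch that your parenthetical glosses over rather than resolves. If $\text{div}_s$ is taken literally from Definition \ref{def_fractional_divergence_Rn} and $(-\Delta)^s$ from (\ref{eq_prel_pointwise_definition_fraclap}), your computation produces
\begin{equation*}
\text{div}_s\left(\frac{a(x)\cdot b(y)}{|x-y|^s}\right)=\frac{1}{C_{1,s}}\Bigl(b\cdot(-\Delta)^sa-a\cdot(-\Delta)^sb\Bigr),
\end{equation*}
in analogy with Lemma \ref{lem_div_d_=_fraclap}, where the constant $C_{n,s}/2$ appears explicitly. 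The footnote before Proposition \ref{prop_first_noeth_vardom_14_whole_R} signals that in this section the Euclidean kernel is understood to carry the factor $C_{1,s}$, which is what makes the constants cancel and the lemma hold as stated; if you are working from Definition \ref{def_fractional_divergence_Rn} unmodified, you should either carry the factor $C_{1,s}^{-1}$ through or state up front that $\text{div}_s$ is being renormalized to match the fractional Laplacian's constant. As you stand, the sentence leaves the reader unsure whether you think the two normalizations literally agree (they do not) or whether you are silently redefining $\text{div}_s$.

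Modulo this normalization point, the proof is correct and matches the paper's intended argument.
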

The previous Lemma allows to extend the definition of $\text{div}_s$ to arguments of the form $\frac{a(x)\cdot b(y)}{\lvert x-y\rvert^s}$, where $a,b\in H^s(\mathbb{R}, \mathbb{R}^m)$, for any $s\in (0,1)$.
A result similar to Proposition \ref{prop_first_noeth_vardom_14_whole_R} holds for stationary points of energies of the form
\begin{equation}
E(u)=\int_{\mathbb{R}}L\left(u(x), (-\Delta)^\frac{1}{2}u(x)\right)dx,
\end{equation}
in analogy with Theorem \ref{prop_final_version_Noether_thm_vardom_12}.
In this context we observe that a family of flows on $\mathbb{R}$ satisfying condition (\ref{eq_prop_condition_symmetry_14_real_line}) a.e. in $\mathbb{R}$ and for any $t\in \mathbb{R}$ is given by the traces of families of holomorphic diffeomorphisms of the upper half plane $\mathbb{H}$ that can be extended to $\overline{\mathbb{H}}$, i.e. by affine functions from the real line to itself; for instance one can consider the flow of the vector field $X\equiv1$, corresponding to translations, or the vector field $X(x)=x$ for any $x\in \mathbb{R}$, corresponding to dilations.\\
Next we observe that also in this framework, the argument of Noether Theorem can be applied to the half Dirichlet energy to obtain Pohozaev identities. Let $u\in H^1(\mathbb{R},\mathbb{R}^m)$. Following the argument that precedes Proposition \ref{prop_first_version_Noether_vardom_14} (with the obvious modifications) for the half Dirichlet energy in the form
\begin{equation}\label{eq_half_Dirichlet_energy_invariant_form_for_Pohozaev_line}
E(v)=\int_{\mathbb{R}}(-\Delta)^\frac{1}{2}v(x)\cdot v(x)dx,
\end{equation}
well defined for any $v\in H^1(\mathbb{R},\mathbb{R}^m)$, and making use of the fact that the energy (\ref{eq_half_Dirichlet_energy_invariant_form_for_Pohozaev_line}) satisfies condition (\ref{eq_prop_condition_symmetry_14_real_line}) for any flow consisting of affine functions from the real line to itself, we obtain for any $v\in C^\infty_c(\mathbb{R},\mathbb{R}^m)$
\begin{equation}\label{eq_discussion_fundamental_Noththm_vardom_Pohozaev_12_whole_R}
\begin{split}
&\frac{d}{dx}\left[\frac{1}{2}v(-\Delta)^\frac{1}{2}v\right]X+\text{div}_\frac{1}{2}\left(\frac{1}{2}v(x)v'(y)\right)X\\
=&\frac{d}{dx}\left[\frac{1}{2}v(-\Delta)^\frac{1}{2}vX\right]+\text{div}_\frac{1}{2}\left(\frac{1}{2}v(x)v'(y)X(y)\right)
\end{split}
\end{equation}
for any vector field $X$ on $\mathbb{R}$ for any flow consisting of affine functions from the real line to itself. By Lemma \ref{lem_explicit_form_fracdiv_realline}, Equation (\ref{eq_discussion_fundamental_Noththm_vardom_Pohozaev_12_whole_R}) implies that
\begin{equation}\label{eq_discussion_Pohozaev_local_real_line_whole_simple}
v\frac{d}{dx}\left[(-\Delta)^\frac{1}{2}vX \right]=v(-\Delta)^\frac{1}{2}(v'X)
\end{equation}
for any $v\in C^\infty_c(\mathbb{R},\mathbb{R}^m)$, for any vector field $X$ on $\mathbb{R}$ for any flow consisting of affine functions from the real line to itself. Arguing by approximation, we conclude that
\begin{equation}\label{eq_discussion_final_Pohozaev_for whole_R_pointwise}
u\frac{d}{dx}\left[(-\Delta)^\frac{1}{2}uX\right]=u(-\Delta)^\frac{1}{2}(u'X)
\end{equation}
in the sense of distributions for any vector field $X$ on $\mathbb{R}$ whose flow consists of affine functions from the real line to itself. We can regard Equation (\ref{eq_discussion_final_Pohozaev_for whole_R_pointwise}) as a Pohozaev identity. In fact, heuristically, if the expressions in (\ref{eq_discussion_final_Pohozaev_for whole_R_pointwise}) were integrable, we would obtain, after integrating by parts,
\begin{equation}\label{eq_discussion_heuristic_argument_Pohozaev_whole_line}
\int_{\mathbb{R}}(-\Delta)^\frac{1}{2}u(x)u'(x)X(x)dx=0
\end{equation}
for any vector field $X$ on $\mathbb{R}$ whose flow consists of affine functions from the real line to itself.

We summarize the previous computations in the following Proposition:
\begin{prop}\label{prop_Pohozaev_identity_for_the_whole_R}
Let $u\in H^1(\mathbb{R}, \mathbb{R}^m)$.
Then for any vector field $X$ on $\mathbb{R}$ whose flow consists of affine functions from the real line to itself, there holds
\begin{equation}\label{eq_prop_conclusion_Pohozaev_identity_for_the_whole_R}
u\frac{d}{dx}\left[(-\Delta)^\frac{1}{2}uX\right]=u(-\Delta)^\frac{1}{2}(u'X)
\end{equation}
in the sense of distributions.
\end{prop}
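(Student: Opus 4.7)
My plan is to mirror the strategy used on the circle (Proposition \ref{prop_Pohozaev_identities_on_the_circle}), adapted to the real line. The argument has three stages: establish the identity first for smooth compactly supported test functions via a Noether-type computation on the half Dirichlet energy; convert the divergence-form identity to the pointwise form using the algebraic identity for $\text{div}_{1/2}$; and finally pass to the limit in distributions for $u \in H^1(\mathbb{R}, \mathbb{R}^m)$.

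First I would work with $v \in C^\infty_c(\mathbb{R}, \mathbb{R}^m)$ and consider the half Dirichlet energy written in its self-adjoint form
\begin{equation*}
E(v) = \int_\mathbb{R} (-\Delta)^{1/2} v(x) \cdot v(x)\, dx,
\end{equation*}
which is well defined on $H^1$. The hypothesis on $X$ means $\phi_t(x) = \alpha_t x + \beta_t$ with $\alpha_t, \beta_t$ smooth in $t$ and $\alpha_0 = 1$, $\beta_0 = 0$. The crucial observation is that for such affine $\phi_t$ one has $(-\Delta)^{1/2}(v \circ \phi_t)(x) = \alpha_t \cdot (-\Delta)^{1/2} v(\phi_t(x))$ pointwise, so the integrand transforms as in condition (\ref{eq_prop_condition_symmetry_14_real_line}) for $L(u,p) = u \cdot p$. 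Repeating the computation that led to (\ref{eq_stationary_equation_vardom_testfcs214}) and (\ref{eq_discussion_final_equation_argument_Noeththm_for_smooth_functions_14}) in this setting, and using that the Lagrangian here is symmetric in the two arguments (so both the Euler-Lagrange equation is automatic for smooth functions \emph{after pairing with $v$}), one obtains (\ref{eq_discussion_fundamental_Noththm_vardom_Pohozaev_12_whole_R}) for $v$.

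Next I would apply Lemma \ref{lem_explicit_form_fracdiv_realline} to the fractional divergence term on each side of (\ref{eq_discussion_fundamental_Noththm_vardom_Pohozaev_12_whole_R}). The lemma converts
\begin{equation*}
\text{div}_{1/2}\!\left(\tfrac{v(x) v'(y) X(y)}{|x-y|^{1/2}}\right) - \text{div}_{1/2}\!\left(\tfrac{v(x) v'(y)}{|x-y|^{1/2}}\right) X
\end{equation*}
into pointwise multiplications of $v$ against $(-\Delta)^{1/2}$ of products, and after collecting terms and subtracting the total derivative piece one arrives at the pointwise form (\ref{eq_discussion_Pohozaev_local_real_line_whole_simple}), valid for every $v \in C^\infty_c(\mathbb{R}, \mathbb{R}^m)$.

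Finally I would pass to the limit. Given $u \in H^1(\mathbb{R}, \mathbb{R}^m)$, pick $v_n \in C^\infty_c(\mathbb{R}, \mathbb{R}^m)$ with $v_n \to u$ in $H^1(\mathbb{R})$; then $v_n' \to u'$ in $L^2$ and $(-\Delta)^{1/2} v_n \to (-\Delta)^{1/2} u$ in $L^2$. For any test function $\phi \in C^\infty_c(\mathbb{R})$, both sides of (\ref{eq_discussion_Pohozaev_local_real_line_whole_simple}) are bilinear expressions of the form $\langle v \cdot A(v, X), \phi\rangle$ where $A$ involves $(-\Delta)^{1/2}$ applied to either $v$ or $v' X$. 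The key verification is that $u' X \in L^2(\mathbb{R})$ locally (immediate since $X$ is affine, hence bounded on the support of $\phi$), so $(-\Delta)^{1/2}(u' X)$ and $\tfrac{d}{dx}[(-\Delta)^{1/2} u \cdot X]$ define distributions continuous in the $H^1$ topology of $u$ when paired against $\phi$. The main technical obstacle here is precisely that $X$ is not bounded globally (for the dilation field $X(x) = x$), so one must localise: multiply the pointwise identity by $\phi$, integrate, and note that all factors involving $X$ then only see the compact support of $\phi$, making the passage to the limit routine. This yields (\ref{eq_prop_conclusion_Pohozaev_identity_for_the_whole_R}) as an equality of distributions.
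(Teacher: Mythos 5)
Your proposal follows the paper's strategy precisely: establish the Noether-type identity (\ref{eq_discussion_fundamental_Noththm_vardom_Pohozaev_12_whole_R}) for $v\in C^\infty_c(\mathbb{R},\mathbb{R}^m)$ by adapting the computation that precedes Proposition \ref{prop_first_version_Noether_vardom_14} (using the self-adjoint form $E(v)=\int_\mathbb{R} (-\Delta)^{1/2}v\cdot v$ and the invariance under affine flows), apply Lemma \ref{lem_explicit_form_fracdiv_realline} to obtain the pointwise identity (\ref{eq_discussion_Pohozaev_local_real_line_whole_simple}), and then pass to the limit. The middle stage and the overall architecture are exactly the paper's; the only place you deviate is by attempting to flesh out what the paper dismisses in three words, "Arguing by approximation".

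There is, however, a flaw in how you justify that last step. You claim that after multiplying by $\phi\in C^\infty_c$ and integrating, "all factors involving $X$ then only see the compact support of $\phi$, making the passage to the limit routine." This is not correct for the term $u(-\Delta)^\frac{1}{2}(u'X)$: the operator $(-\Delta)^\frac{1}{2}$ is nonlocal, so $(-\Delta)^\frac{1}{2}(u'X)(x)$ for $x\in\text{supp}(\phi)$ depends on the values of $u'X$ on all of $\mathbb{R}$, not just on $\text{supp}(\phi)$. For the dilation field $X(x)=x$, the function $u'X=xu'(x)$ need not be globally $L^2$ for $u\in H^1(\mathbb{R})$, so $(-\Delta)^\frac{1}{2}(u'X)$ must be interpreted as a tempered distribution rather than an $L^2$ function, and the convergence $v_n'X\to u'X$ fails in $L^2(\mathbb{R})$ in general. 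A careful treatment would either exploit the algebraic identity $\frac{d}{dx}[(-\Delta)^\frac{1}{2}ux]=(-\Delta)^\frac{1}{2}(u'x)$, which the paper observes afterward holds for all tempered distributions, or handle the distributional pairing $\langle(-\Delta)^\frac{1}{2}(u'X),u\phi\rangle$ directly by Fourier arguments. To be fair, the paper itself glosses over precisely this point, so your proposal is at the same level of rigor; but the localisation remark as phrased is not a valid justification and should be removed or replaced.
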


\begin{rem}\noindent
\begin{enumerate}
\item Proposition \ref{prop_Pohozaev_identity_for_the_whole_R} can be applied, for instance, whenever a function $u\in H^\frac{1}{2}(\mathbb{R},\mathbb{R}^m)$ satisfies the equation
\begin{equation}\label{eq_eq_discussion_typical_application_Pohozaev_realline}
(-\Delta)^\frac{1}{2}u=f\circ u\quad\text{weakly in }\mathbb{R}
\end{equation}
for some Lipschitz function $f:\mathbb{R}^m\to\mathbb{R}^m$ with $f(0)=0$. In fact, since $u\in H^\frac{1}{2}(\mathbb{R})$ and $f$ is Lipschitz with $f(0)=0$, $f\circ u\in H^\frac{1}{2}(\mathbb{R})$, thus if $u$ satisfies Equation (\ref{eq_eq_discussion_typical_application_Pohozaev_realline}), $u\in H^1(\mathbb{S}^1)$ and thus the assumption of Proposition \ref{prop_Pohozaev_identity_for_the_whole_R} is satisfied.

\item Let $u\in H^\frac{1}{2}(\mathbb{R},\mathbb{R})$ and assume that $u$ satisfies (\ref{eq_eq_discussion_typical_application_Pohozaev_realline}) for some function $f\in C^1(\mathbb{R},\mathbb{R})$. Then, under some additional condition on $f$, we can deduce from Proposition \ref{prop_Pohozaev_identity_for_the_whole_R} that
\begin{equation}\label{eq_equivalent_Pohozaev_identity_for_the_whole_line}
\frac{d}{dx}\left[(-\Delta)^\frac{1}{2}ux\right]=(-\Delta)^\frac{1}{2}(u'x)
\end{equation}
in the sense of distributions.
In fact, assume that there exists a constant $C>0$ such that
\begin{equation}
\lvert f(x)x\rvert+\lvert f'(x)x^2\rvert\leq C\lvert x\rvert^p
\end{equation}
for some $p>2$, for any $x\in \mathbb{R}$. Then the previous point and Proposition \ref{prop_Pohozaev_identity_for_the_whole_R}, $u$ satisfies
\begin{equation}
u\frac{d}{dx}\left[(-\Delta)^\frac{1}{2}uX\right]=u(-\Delta)^\frac{1}{2}(u'X)
\end{equation}
in the sense of distributions.
Moreover, by the Unique Continuation Principle (Theorem 1.4 in \cite{UCP}), since $u$ satisfies Equation (\ref{eq_eq_discussion_typical_application_Pohozaev_realline}), if $u$ is not the trivial solution $u\equiv0$, then $u(x)$ is different from $0$ for a.e. $x\in \mathbb{R}$, and therefore Equation (\ref{eq_prop_conclusion_Pohozaev_identity_for_the_whole_R}) implies Equation (\ref{eq_equivalent_Pohozaev_identity_for_the_whole_line}). On the other hand, if $u\equiv0$ then Equation (\ref{eq_equivalent_Pohozaev_identity_for_the_whole_line}) is trivially satisfied.

\item We also remark that Equation (\ref{eq_equivalent_Pohozaev_identity_for_the_whole_line}) actually holds in the sense of distribution for any temperate distribution $u\in \mathscr{S}'(\mathbb{R})$. To see this we first claim that it is enough to show the identity for Schwartz functions; in fact, for any $\phi\in \mathscr{S}(\mathbb{R})$, 
\begin{equation}
\left\langle(-\Delta)^\frac{1}{2}u'x,\phi\right\rangle=-\left\langle u, \frac{d}{dx}\left(x(-\Delta)^\frac{1}{2}\phi\right)\right\rangle
\end{equation}
and
\begin{equation}
\left\langle\frac{d}{dx}\left((-\Delta)^\frac{1}{2}ux\right),\phi\right\rangle=-\left\langle u, (-\Delta)^\frac{1}{2}\left(x\frac{d}{dx}\phi\right)\right\rangle.
\end{equation}
In order to show the identity for Schwartz functions, we compute, for any $\phi\in \mathscr{S}(\mathbb{R})$, for a.e. $\xi\in \mathbb{R}$
\begin{equation}
\mathscr{F}\left((-\Delta)^\frac{1}{2}(x\phi')\right)(\xi)=\lvert \xi\rvert\frac{d}{d\xi}(\xi\widehat{\phi})
\end{equation}
and
\begin{equation}
\mathscr{F}\left(\frac{d}{dx}\left(x(-\Delta)^\frac{1}{2}\phi\right)\right)(\xi)=\xi\frac{d}{d\xi}\left(\lvert \xi\rvert\widehat{\phi}\right).
\end{equation}
As the previous expression coincide a.e. in $\mathbb{R}$, we conclude that identity (\ref{eq_equivalent_Pohozaev_identity_for_the_whole_line}) holds for any $u\in \mathscr{S}'(\mathbb{R})$.
\end{enumerate}
\end{rem}
In section \ref{sec:Pohozaev identities} we will exploit the conformal invariance of the half Dirichlet energy to study the following related problem: we will try to derive Pohozaev identities for functions $u\in H^\frac{1}{2}(\mathbb{R},\mathbb{R})$ solving 
\begin{equation}
\begin{cases}
(-\Delta)^\frac{1}{2}u=f(u) &\text{weakly in }\Omega\\
u=0&\text{in}\Omega^c,
\end{cases}
\end{equation}
where $\Omega$ is a bounded domain in $\mathbb{R}$.

\subsection{The fractional Hopf differential}\label{ssec: Stationary points of the energy}
For this section, let $m\in\mathbb{N}$.
We consider the functional
\begin{equation}\label{eq_intro_ssec_half_Dirichlet_energy}
E(v):=\int_{\mathbb{S}^1}\lvert(-\Delta)^\frac{1}{4}v(x)\rvert^2dx
\end{equation}
defined for any $v\in H^\frac{1}{2}(\mathbb{S}^1, \mathbb{R}^m)$.\\
We will use the formalism developed for the fractional Noether theorems discussed above to obtain a characterization of half harmonic maps in terms of their Fourier coefficients (Lemma \ref{lem_indentity_Fourier_coefficients}).
We will then use this characterization to show that a function $u\in H^\frac{1}{2}(\mathbb{S}^1,\mathbb{R}^m)$ is a stationary point of $E$ if and only if its harmonic extension in $D^2$ is conformal (Proposition \ref{prop_harmonic_extension_of_critical_pts_are_conformal_Fourier}), a result first obtained by F. Da Lio in \cite{CompBubble} and V. Millot and Y. Sire in \cite{MillotSire}.
Finally we will introduce a fractional analogous of the Hopf differential (Definition \ref{defn_fractional_Hopf}), and we will show how it can be used to charactezize stationary point of the energy $E$ in a simple way and thus to reformulate in a simple way the proof that a function $u\in H^\frac{1}{2}(\mathbb{S}^1,\mathbb{R}^m)$ is a stationary point of $E$ if and only if its harmonic extension in $D^2$ is conformal.
Throughout this section, we will also try to show the similarities between the problem at hand and its local analogous: the study of stationary point in $H^1(D^2)$ of the energy
\begin{equation}\label{eq_intro_local_energy}
E_{D^2}(v):=\int_{D^2}\lvert \nabla v(z)\rvert^2dz.
\end{equation}
For the next Lemma, recall that, by discussion before equation (\ref{eq_discussion_div14_is_well_defined_by_previous_results}), if $u\in H^\frac{1}{2}(\mathbb{S}^1, \mathbb{R}^m)$,
\begin{equation}
\text{div}_\frac{1}{4}\left[ 2(-\Delta)^\frac{1}{4}u(x)X(y)u'(y) \right]
\end{equation}
is a well defined distribution. By estimate (\ref{eq_discussion_estimate_for_extended_fracdiv_14}), it lies in $\mathbb{A}^1(\mathbb{S}^1)^\ast$, the topological dual of $\mathbb{A}^1(\mathbb{S}^1)$.

\begin{lem}\label{lem_distribution_description_for_derivative}
For any $u\in H^\frac{1}{2}(\mathbb{S}^1, \mathbb{R}^m)$, for any $\phi\in C^\infty(\mathbb{S}^1)$ let
\begin{equation}\label{eq_lem_definition_A[u]}
A_{u}[\phi]:=\text{div}_\frac{1}{4}\left[2(-\Delta)^\frac{1}{4}u(x)\cdot u'(y)\right] [\phi]-\int_{\mathbb{S}^1}\left\lvert (-\Delta)^\frac{1}{4}u(x)\right\rvert^2\phi'(x)dx.
\end{equation}
Then $A_u$ is a well defined distribution, belonging to $\mathbb{A}^1(\mathbb{S}^1)^\ast$ and to $H^{-1-s}(\mathbb{S}^1)$ for any $s>\frac{1}{2}$.
Moreover if $X$ if a smooth vector field on $\mathbb{S}^1$ with flow denoted by $\phi_t$, there holds
\begin{equation}\label{eq_lem_Au_vanishes_as_distr}
A_u[X]=\frac{d}{dt}\bigg\vert_{t=0}E(u\circ \phi_t).
\end{equation}
In particular, $u\in H^\frac{1}{2}(\mathbb{S}^1,\mathbb{R}^m)$ is a stationary point of the energy (\ref{eq_intro_ssec_half_Dirichlet_energy}) if and only if $A_u=0$ as distribution.
\end{lem}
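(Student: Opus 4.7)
My plan is to split the proof into two parts: showing that $A_u$ defines a bounded linear functional on $\mathbb{A}^1(\mathbb{S}^1)$, and then verifying the identity (\ref{eq_lem_Au_vanishes_as_distr}). The second term in the definition of $A_u$ is immediate: since $(-\Delta)^{1/4}u\in L^2(\mathbb{S}^1)$ with norm $[u]_{\dot{H}^{1/2}}$, and since $\lVert\phi'\rVert_{L^\infty}\leq [\phi]_{\mathbb{A}^1}$ by (\ref{eq_proof_lemma_estimate_embedding_A'_C1}), this term is bounded by $[\phi]_{\mathbb{A}^1}[u]_{\dot{H}^{1/2}}^2$. The fractional-divergence term is the delicate one: as written, it would require $(-\Delta)^{1/4}u\in H^{-1/2}$ (true) together with $u'\in L^2$ (false). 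The way around this is to exploit the antisymmetry of $\text{div}_{1/4}$: changing variables $x\leftrightarrow y$ in (\ref{eq_definition_frac_div_14}) and using $K^{1/4}(-z)=K^{1/4}(z)$ shows, for smooth $a,b$, that
\begin{equation*}
\text{div}_{1/4}[a(x)\,b(y)][\phi]=-\text{div}_{1/4}[b(x)\,a(y)][\phi],
\end{equation*}
so $\text{div}_{1/4}[2(-\Delta)^{1/4}u(x)\cdot u'(y)]=-\text{div}_{1/4}[u'(x)\cdot 2(-\Delta)^{1/4}u(y)]$. The right-hand side falls within the extension (\ref{eq_discussion_extension_definition_fracdiv_14}) with $a=u'\in H^{-1/2}$ and $b=2(-\Delta)^{1/4}u\in L^2$, so (\ref{eq_rem_estimate_divergence_for_products}) yields the bound $C[\phi]_{\mathbb{A}^1}\lVert u\rVert_{H^{1/2}}^2$.

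Combining both bounds gives $|A_u[\phi]|\leq C\lVert u\rVert_{H^{1/2}}^2\,[\phi]_{\mathbb{A}^1}$, so $A_u\in\mathbb{A}^1(\mathbb{S}^1)^{\ast}$. The inclusion $A_u\in H^{-1-s}(\mathbb{S}^1)$ for $s>\frac{1}{2}$ is then an immediate consequence of the continuous embedding $H^{1+s}(\mathbb{S}^1)\hookrightarrow\mathbb{A}^1(\mathbb{S}^1)$ recorded in (\ref{eq_embedding_Sobolev_in_Wiener_der}).

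For the identity (\ref{eq_lem_Au_vanishes_as_distr}) I would proceed by approximation. First, for smooth $u$ I read the identity off directly from the computation preceding Proposition \ref{prop_first_version_Noether_vardom_14}: specializing (\ref{eq_stationary_equation_vardom_testfcs14}) to the quadratic Lagrangian $L(x,p)=\lvert p\rvert^2$ (so that $D_pL=2p$) gives
\begin{equation*}
\frac{d}{dt}\bigg\vert_{t=0}E(u\circ\phi_t)=\int_{\mathbb{S}^1}\frac{d}{dx}\lvert(-\Delta)^{1/4}u\rvert^2\,X\,dx+\text{div}_{1/4}\big[2(-\Delta)^{1/4}u(x)u'(y)\big][X],
\end{equation*}
and one integration by parts in the first integral rewrites the right-hand side as $A_u[X]$. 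To pass to a general $u\in H^{1/2}(\mathbb{S}^1,\mathbb{R}^m)$ I take a sequence $u_n\in C^\infty(\mathbb{S}^1,\mathbb{R}^m)$ with $u_n\to u$ in $H^{1/2}$, for instance the partial Fourier sums $u_n=D_n\ast u$ used in the proof of Lemma \ref{lem_condition_for_prop_def_Noeth_vardom}. On one hand, the bilinearity of $A_u$ in $u$ together with the continuity estimate above yields $A_{u_n}[X]\to A_u[X]$. On the other hand, since $\phi_t$ is a smooth diffeomorphism of $\mathbb{S}^1$, precomposition with $\phi_t$ is a bounded operator on $H^{1/2}$ with norm bounded uniformly for $t$ in a compact neighbourhood of $0$; hence $E(u_n\circ\phi_t)\to E(u\circ\phi_t)$ uniformly in such a neighbourhood. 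Combined with the smooth-case identity $\tfrac{d}{dt}\vert_{t=0}E(u_n\circ\phi_t)=A_{u_n}[X]$, this shows both that $t\mapsto E(u\circ\phi_t)$ is differentiable at $0$ and that its derivative equals $A_u[X]$.

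The main obstacle I anticipate is the uniform-in-$t$ control required in the last step. One must check that the convergence $E(u_n\circ\phi_t)\to E(u\circ\phi_t)$ is genuinely uniform in $t$ near $0$, so that the derivatives at $t=0$ may be passed to the limit; this should follow from a change-of-variables estimate giving $\lVert u_n\circ\phi_t-u\circ\phi_t\rVert_{H^{1/2}}\leq C\lVert u_n-u\rVert_{H^{1/2}}$ with constant $C$ uniform in $t$, together with the smoothness of the flow. Once (\ref{eq_lem_Au_vanishes_as_distr}) is established, the final assertion (stationarity $\Leftrightarrow A_u=0$) is immediate from Definition \ref{defn_stationary_point_invariant_with_respect_to_the_variation}.
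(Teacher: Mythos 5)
Your treatment of the well-definedness of $A_u$ is correct and closely mirrors the paper's argument: the paper invokes the extended definition and estimate (\ref{eq_discussion_estimate_for_extended_fracdiv_14}), which rests on precisely the antisymmetry swap you spell out (your formulation is arguably cleaner, since it makes explicit why the factor in $H^{-\frac{1}{2}}$ and the factor in $L^2$ may be placed in either slot). The reading-off of (\ref{eq_lem_Au_vanishes_as_distr}) for smooth $u$ from the Noether computation is also correct and is what the paper uses via (\ref{eq_stationary_equation_vardom_testfcs214}), and the embedding argument for $H^{-1-s}(\mathbb{S}^1)$ is right.

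There is, however, a genuine gap in your passage from smooth $u_n$ to a general $u\in H^{\frac{1}{2}}(\mathbb{S}^1,\mathbb{R}^m)$. You assert that uniform-in-$t$ convergence $E(u_n\circ\phi_t)\to E(u\circ\phi_t)$, together with $\frac{d}{dt}\big\vert_{t=0}E(u_n\circ\phi_t)=A_{u_n}[X]\to A_u[X]$, forces $t\mapsto E(u\circ\phi_t)$ to be differentiable at $0$ with derivative $A_u[X]$. This implication is false: take $g_n(t)=\tfrac{1}{n}\sin(nt)$ and $g\equiv 0$; then $g_n\to g$ uniformly, $g_n'(0)=1$ for every $n$, yet $g'(0)=0$. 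Uniform convergence of the values gives no control on the derivatives. Your proposed repair, a change-of-variables estimate $\lVert u_n\circ\phi_t-u\circ\phi_t\rVert_{H^{\frac{1}{2}}}\leq C\lVert u_n-u\rVert_{H^{\frac{1}{2}}}$ uniform in $t$, only reproduces the uniform convergence of the energies and therefore does not close the gap.

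What is actually needed is a quantitative bound of the form
\begin{equation*}
\left\lvert E(u_n\circ\phi_t)-E(u_n)-\big(E(u\circ\phi_t)-E(u)\big)\right\rvert\leq C_X\,\lvert t\rvert\,\varepsilon_n,\qquad \varepsilon_n\to 0,
\end{equation*}
uniformly for $t$ in a neighbourhood of $0$. Dividing by $\lvert t\rvert$ controls the discrepancy of the difference quotients uniformly in $t$, and a standard $\varepsilon/3$ argument then yields both the differentiability of $t\mapsto E(u\circ\phi_t)$ at $0$ and the convergence of the derivatives. This is exactly what the paper's Lemma \ref{lem_continuity_derivative_energy} establishes: it writes $E$ as a double integral via Lemma \ref{lem_appendix_circle_eqivalent_Sobolev_seminorm}, changes variables under the flow, extracts a factor $\lvert t\rvert$ from the kernel and Jacobian differences, and is left with
\begin{equation*}
\int_{\mathbb{S}^1}\int_{\mathbb{S}^1}\left\lvert\frac{\lvert u_n(x)-u_n(y)\rvert^2-\lvert u(x)-u(y)\rvert^2}{\sin^2\left(\frac{1}{2}(x-y)\right)}\right\rvert\,dxdy,
\end{equation*}
shown to vanish by a Fatou-type argument that uses the a.e.\ convergence of $u_n$. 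This is also why the paper approximates via Lemma \ref{lem_approximation_in_H_12}, which delivers a.e.\ convergence directly, rather than by the Fourier partial sums $D_n\ast u$ you propose, for which one would additionally need to pass to an a.e.\ convergent subsequence. A small wording point: $A_u$ is quadratic, not bilinear, in $u$; it is of the form $A_u=B(u,u)$ for a bilinear form $B$, which is what actually makes the continuity estimate give $A_{u_n}[X]\to A_u[X]$.
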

\begin{proof}
First we observe that by the remark before the Lemma, $A_u$ is a well defined distribution belonging to $\mathbb{A}^1(\mathbb{S}^1)^\ast$. Thus, by embedding (\ref{eq_embedding_Sobolev_in_Wiener_der}), $A_u$ belongs to $H^{-1-s}(\mathbb{S}^1)$ for any $s>\frac{1}{2}$.
Let $(u_n)_n$ be a sequence in $C^\infty(\mathbb{S}^1)$ approximating $u$ in $H^\frac{1}{2}(\mathbb{S}^1)$ and almost everywhere (such a sequence exists by Lemma \ref{lem_approximation_in_H_12}).
By (\ref{eq_stationary_equation_vardom_testfcs214}),
\begin{equation}
A_{u_n}[X]=\frac{d}{dt}\bigg\vert_{t=0} E(u_n\circ \phi_t).
\end{equation}
By Lemma \ref{lem_continuity_derivative_energy}, $\frac{d}{dt}\vert_{t=0}E(u_n\circ\phi_t)\to\frac{d}{dt}\vert_{t=0}E(u_n\circ\phi_t)$ as $n\to \infty$.
Thus, to complete the proof, it is sufficient to show that $A_{u_n}\to A_{u}$ as distributions, as $n\to\infty$.\\
Now we observe that since $u_n\to u$ in $H^\frac{1}{2}(\mathbb{S}^1)$, $(-\Delta)^\frac{1}{4}u_n\to (-\Delta)^\frac{1}{4}u$ in $L^2(\mathbb{S}^1)$ as $n\to\infty$. Therefore
\begin{equation}
\int_{\mathbb{S}^1} \lvert(-\Delta)^\frac{1}{4}u_n\rvert^2\frac{d}{dx}Xdx\to\int_{\mathbb{S}^1} \lvert(-\Delta)^\frac{1}{4}u\rvert^2\frac{d}{dx}Xdx
\end{equation}
as $n\to\infty$.
Moreover, by estimate (\ref{eq_discussion_estimate_for_extended_fracdiv_14}),
\begin{equation}
\text{div}_\frac{1}{4}[2(-\Delta)^\frac{1}{4}u_n(x)\cdot u_n'(y)][X]\to \text{div}_\frac{1}{4}[2(-\Delta)^\frac{1}{4}u(x)\cdot u'(y)][X].
\end{equation}
as $n\to\infty$.
Therefore
\begin{equation}\label{eq_proof_lemma_convergence_An}
A_{u_n}\to A_u
\end{equation}
as $n\to\infty$ in the sense of distributions. This concludes the proof of the Lemma.
\end{proof}

\begin{lem}\label{lem_continuity_derivative_energy}
Let $u\in H^\frac{1}{2}(\mathbb{S}^1,\mathbb{R}^m)$ and let $(u_n)_n$ be a sequence in $C^\infty(\mathbb{S}^1,\mathbb{R}^m)$ approximating $u$ in $H^\frac{1}{2}(\mathbb{S}^1, \mathbb{R}^m)$ and a.e. (such a sequence exists by Lemma \ref{lem_approximation_in_H_12}). Let $X$ be a smooth vector field on $\mathbb{S}^1$ and let $\phi_t$ denote its flow.
Then, $\frac{d}{dt}\big\vert_{t=0} E(u\circ \phi_t)$ and $\frac{d}{dt}\big\vert_{t=0} E(u_n\circ \phi_t)$ are well defined for all $n\in\mathbb{N}$, and
\begin{equation}\label{eq_lem_continuity_derivative_energy}
\frac{d}{dt}\bigg\vert_{t=0} E(u_n\circ \phi_t)\to \frac{d}{dt}\bigg\vert_{t=0} E(u\circ \phi_t)
\end{equation}
as $n\to \infty$.
\end{lem}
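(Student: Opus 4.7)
My plan is to rewrite $E(v\circ\phi_t)$ as a single integral over $\mathbb{S}^1\times\mathbb{S}^1$ whose kernel depends smoothly on $t$, and then differentiate under the integral sign. Combining the self-adjointness of the fractional Laplacian with the pointwise formula (\ref{eq_pointwise_formula_fraclap_circle}), for any $v\in H^{\frac{1}{2}}(\mathbb{S}^1,\mathbb{R}^m)$ one has the Gagliardo representation
\[
E(v)=\frac{1}{4\pi}\int_{\mathbb{S}^1}\int_{\mathbb{S}^1}\frac{\lvert v(x)-v(y)\rvert^2}{\sin^2\!\left(\tfrac{1}{2}(x-y)\right)}\,dx\,dy.
\]
Applying this to $v=u\circ\phi_t$ and performing the change of variables $a=\phi_t(x)$, $b=\phi_t(y)$ would then yield
\[
E(u\circ\phi_t)=\frac{1}{4\pi}\int_{\mathbb{S}^1}\int_{\mathbb{S}^1}K_t(a,b)\,\frac{\lvert u(a)-u(b)\rvert^2}{\sin^2\!\left(\tfrac{1}{2}(a-b)\right)}\,da\,db,
\]
where
\[
K_t(a,b):=(\phi_t^{-1})'(a)(\phi_t^{-1})'(b)\,\frac{\sin^2\!\left(\tfrac{1}{2}(a-b)\right)}{\sin^2\!\left(\tfrac{1}{2}(\phi_t^{-1}(a)-\phi_t^{-1}(b))\right)}.
\]

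Next I would verify that $K_t(a,b)$ extends to a $C^\infty$ function of $(a,b,t)\in\mathbb{S}^1\times\mathbb{S}^1\times(-\varepsilon,\varepsilon)$ with $K_0\equiv 1$, for some $\varepsilon>0$. The only delicate point is the regularity across the diagonal $\{a=b\}$; writing
\[
\frac{\phi_t^{-1}(a)-\phi_t^{-1}(b)}{a-b}=\int_0^1(\phi_t^{-1})'\big(b+r(a-b)\big)\,dr
\]
and using that $\sin(u)/u$ is smooth and bounded away from $0$ on a neighbourhood of $0$, one sees that $K_t$ is indeed smooth and bounded, with $\lvert\partial_t K_t(a,b)\rvert\le C$ uniformly on $\mathbb{S}^1\times\mathbb{S}^1\times(-\varepsilon,\varepsilon)$. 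Since the integrability $\lvert v(a)-v(b)\rvert^2/\sin^2(\tfrac{1}{2}(a-b))\in L^1(\mathbb{S}^1\times\mathbb{S}^1)$ follows from $v\in H^{\frac{1}{2}}$, Lebesgue's dominated convergence theorem applied to the difference quotients of $K_t$ will give, for every $v\in H^{\frac{1}{2}}(\mathbb{S}^1,\mathbb{R}^m)$,
\[
\frac{d}{dt}\bigg\vert_{t=0}E(v\circ\phi_t)=\frac{1}{4\pi}\int_{\mathbb{S}^1}\int_{\mathbb{S}^1}\partial_tK_t(a,b)\big\vert_{t=0}\,\frac{\lvert v(a)-v(b)\rvert^2}{\sin^2\!\left(\tfrac{1}{2}(a-b)\right)}\,da\,db.
\]
This proves that both derivatives appearing in the statement are well defined.

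Finally, to establish the convergence (\ref{eq_lem_continuity_derivative_energy}), I would observe that the right-hand side of the last display is a continuous quadratic form on $H^{\frac{1}{2}}(\mathbb{S}^1,\mathbb{R}^m)$: by polarization it is associated with the bounded bilinear form obtained by replacing $\lvert v(a)-v(b)\rvert^2$ with $(v(a)-v(b))\cdot(w(a)-w(b))$, which by Cauchy--Schwarz satisfies
\[
\lvert B(v,w)\rvert\le\bigl\lVert\partial_tK_t\big\vert_{t=0}\bigr\rVert_{L^\infty}[v]_{\dot H^{\frac{1}{2}}}[w]_{\dot H^{\frac{1}{2}}}.
\]
Hence $u_n\to u$ in $H^{\frac{1}{2}}$ immediately yields the desired limit. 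The main obstacle is the verification of the smoothness and uniform $t$-derivative bounds for $K_t$ near the diagonal; once this is in place, the remainder of the argument is a routine application of dominated convergence and continuity of a quadratic form.
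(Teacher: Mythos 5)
Your proposal is correct, and while it shares the paper's starting point (the Gagliardo representation of $E$) and its core technical estimate (the uniform bound on $\partial_t$ of the transplanted kernel), the overall architecture is genuinely different. You extract the smooth kernel $K_t(a,b)$ explicitly, show it is $C^1$ in $t$ with a uniform bound on $\partial_t K_t$ near $t=0$ (after handling the diagonal via Hadamard's trick and the boundedness of $\sin(s)/s$ near $0$), and then apply dominated convergence to the difference quotients: this directly yields differentiability of $t\mapsto E(v\circ\phi_t)$ at $t=0$ for \emph{every} $v\in H^{1/2}(\mathbb{S}^1,\mathbb{R}^m)$, with the derivative expressed as a continuous quadratic form $B(v,v)$ whose boundedness follows from Cauchy--Schwarz. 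Convergence of the derivatives is then a one-line consequence of polarization. The paper's proof instead estimates the \emph{difference of increments} $\lvert E(u_n\circ\phi_t)-E(u_n)-(E(u\circ\phi_t)-E(u))\rvert\le C_X\lvert t\rvert\cdot\lVert g_{u_n}-g_u\rVert_{L^1}$, proves $g_{u_n}\to g_u$ in $L^1$ by a Fatou/Scheff\'e argument (a.e. convergence plus convergence of $L^1$-norms), and only then recovers differentiability of $E(u\circ\phi_t)$ \emph{indirectly} via a $\liminf$/$\limsup$ sandwich that uses the known differentiability of $E(u_n\circ\phi_t)$ for smooth $u_n$. Your route is somewhat cleaner in that it never needs to pass through the smooth approximants to establish differentiability, and the final convergence step is more transparent. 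The one caveat to record carefully in a full write-up is the verification that $K_t$ is smooth and uniformly bounded (in $t$) across the diagonal of $\mathbb{S}^1\times\mathbb{S}^1$: as you note, Hadamard's lemma reduces this to the boundedness away from $0$ of $\int_0^1(\phi_t^{-1})'(b+r(a-b))\,dr$ for $\lvert t\rvert$ small, and one should be mindful of the $2\pi$-periodicity when choosing representatives of $a-b$ in the Hadamard factorization.
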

\begin{proof}
First we observe that for any $v\in H^\frac{1}{2}(\mathbb{S}^1, \mathbb{R}^m)$, by Plancherel's identity and Lemma \ref{lem_appendix_circle_eqivalent_Sobolev_seminorm},
\begin{equation}
\begin{split}
E(v)=&\int_{\mathbb{S}^1}\lvert (-\Delta)^\frac{1}{4}v(x)\rvert^2dx=\sum_{n\in\mathbb{Z}} \mathscr{F}\left((-\Delta)^\frac{1}{4}v\right)[n]^2=\sum_{n\in\mathbb{Z}}\lvert n\rvert\hat{v}[n]^2=[v]_{\dot{H}^\frac{1}{4(2\pi)^2}}^2\\=&\frac{1}{2}\int_{\mathbb{S}^1}\int_{\mathbb{S}^1}\frac{\lvert v(x)-v(y)\rvert^2}{\sin^2\left(\frac{1}{2}(x-y)\right)}.
\end{split}
\end{equation}
Therefore, for any $t\in\mathbb{R}$,
\begin{equation}
\begin{split}
&\left\lvert E(u_n\circ \phi_t)-E(u_n)-(E(u\circ \phi_t)-E(u))\right\rvert\\
=&\frac{1}{4(2\pi)^2}\left\lvert\int_{\mathbb{S}^1}\int_{\mathbb{S}^1}\frac{\lvert u_n\circ\phi_t(x)-u_n\circ\phi_t(y)\rvert^2-\lvert u_n(x)-u_n(y)\rvert^2}{\sin^2\left(\frac{1}{2}(x-y)\right)}\right.\\
 &\left.-\frac{\lvert u\circ\phi_t(x)-u\circ\phi_t(y)\rvert^2-\lvert u(x)-u(y)\rvert^2}{\sin^2\left(\frac{1}{2}(x-y)\right)}dxdy\right\rvert \\
=&\frac{1}{4(2\pi)^2}\left\lvert\int_{\mathbb{S}^1}\int_{\mathbb{S}^1}\frac{\lvert u_n(x')-u_n(y')\rvert^2-\lvert u(x')-u(y')\rvert^2}{\sin^2\left(\frac{1}{2}(\phi_{-t}(x')-\phi_{-t}(y'))\right)}\lvert D_{x'}\phi_{-t}(x')\rvert\lvert D_{y'}\phi_{-t}(y')\rvert dx'dy'\right.\\
&\left.-\int_{\mathbb{S}^1}\int_{\mathbb{S}^1}\frac{\lvert u_n(x)-u_n(y)\rvert^2-\lvert u(x)-u(y)\rvert^2}{\sin^2\left(\frac{1}{2}(x-y)\right)}dxdy\right\rvert.
\end{split}
\end{equation}
Now we observe that since $\phi_0=Id_{\mathbb{S}^1}$, for any $\delta>0$, $x,y\in \mathbb{S}^1$,
\begin{equation}
\lvert D_{x}\phi_{-t}(x)\rvert\lvert D_{y}\phi_{-t}(y)-1\rvert\leq
\lvert t\rvert\sup_{\substack{x,y\in \mathbb{S}^1,\\ \lvert s\rvert\leq \delta}}\left\lvert \partial_s\lvert D_{x'}\phi_s(x')\rvert\lvert D_{y'}\phi_s(y')\rvert\right\rvert
\end{equation}
for any $t\in \mathbb{R}$ such that $\lvert t\rvert\leq \delta$.\\
Moreover, for any $t\in \mathbb{R}$, for any $x,y\in \mathbb{S}^1$
\begin{equation}\label{eq_proof_lemma_continuity_ derivative_energy}
\begin{split}
&\left\lvert \sin^{-2}(\frac{1}{2}(\phi_{-t}(x)-\phi_{-t}(y)))-\sin^{-2}\left(\frac{1}{2}(x-y)\right)\right\rvert\\
=&\lvert t\rvert\left\lvert \frac{2\sin\left(\frac{1}{2}(\phi_{-s}(x)-\phi_{-s}(y))\right)\cos\left(\frac{1}{2}(\phi_{-s}(x)-\phi_{-s}(y))\right)\frac{1}{2}(\partial_r\vert_{r=s}\phi_{-r}(x)-\partial_r\vert_{r=s}\phi_{-r}(y))}{\sin^4\left(\frac{1}{2}(\phi_{-s}(x)-\phi_{-s}(y))\right)} \right\rvert
\end{split}
\end{equation}
for some $s$ between $0$ and $t$, depending on $x$, $y$ and $t$.
We notice that we can choose $\delta>0$ such that there exists $\lambda>0$ with
\begin{equation}
\inf_{x\in \mathbb{S}^1, \lvert t\rvert\leq \delta}\lvert D_x\phi_t(x)\rvert>\lambda.
\end{equation}
Then for any $x,y\in \mathbb{S}^1$, if $\lvert s\rvert\leq \delta$
\begin{equation}
\left\lvert \sin\left(\frac{1}{2}(\phi_{-s}(x)-\phi_{-s}(y))\right)\right\rvert\geq \frac{\lvert \phi_{-s}(x)-\phi_{-s}(y)\rvert}{\pi}\geq \frac{\lambda}{\pi}\lvert x-y\rvert\geq \frac{2\lambda}{\pi}\left\lvert \sin\left(\frac{1}{2}(x-y)\right)\right\rvert.
\end{equation}
Therefore, for some constant $C_X$ depending on $X$, if $\lvert t\rvert\leq \delta$ the expression in (\ref{eq_proof_lemma_continuity_ derivative_energy}) can be bounded from above by
\begin{equation}
\frac{C_X\lvert t\rvert}{\sin^2\left(\frac{1}{2} (x-y)\right)}.
\end{equation}
Thus, if $\lvert t\rvert\leq \delta$,
\begin{equation}\label{eq_proof_lemma_estimate_continuity of derivative}
\begin{split}
&\lvert E(u_n\circ  \phi_t)-E(u_n)-(E(u\circ \phi_t)-E(u))\rvert\\
\leq & \frac{1}{4(2\pi)^2}\int_{\mathbb{S}^1}\int_{\mathbb{S}^1}\left\lvert \lvert u_n(x)-u_n(y)\rvert^2-\lvert u(x)-u(y)\rvert^2\right\rvert\\
&\cdot\left\lvert \sin^{-2}\left(\frac{1}{2}(\phi_{-t}(x)-\phi_{-t}(y))\right)-\sin^{-2}\left(\frac{1}{2}(x-y)\right)\right\rvert\lvert D_x\phi_{-t}(x)\rvert\lvert D_y\phi_{-t}(y)\rvert dxdy\\
&+\frac{1}{4(2\pi)^2}\int_{\mathbb{S}^1}\int_{\mathbb{S}^1}\left\lvert\frac{ \lvert u_n(x)-u_n(y)\rvert^2-\lvert u(x)-u(y)\rvert^2}{\sin^2\left(\frac{1}{2}(x-y)\right)}\right\rvert\lvert 1-\lvert D_x\phi_{-t}(x)\rvert\lvert D_{y}\phi_{-t}(y)\rvert\rvert dxdy\\
\leq & C_X \lvert t\rvert \int_{\mathbb{S}^1}\int_{\mathbb{S}^1}\left\lvert\frac{\lvert u_n(x)-u_n(y)\rvert^2-\lvert u(x)-u(y)\rvert^2}{\sin^2\left(\frac{1}{2}(x-y)\right)}\right\rvert dxdy,
\end{split}
\end{equation}
for a possibly different constant $C_X$ depending only on $X$.
Now we claim that the integral in the last line of (\ref{eq_proof_lemma_estimate_continuity of derivative}) converges to $0$ as $n\to \infty$.
To this end we define
\begin{equation}
f: \mathbb{S}^1\times \mathbb{S}^1\to \mathbb{R},\quad(x,y)\mapsto \frac{\lvert u(x)-u(y)\rvert^2}{\left\lvert\sin\left(\frac{1}{2}(x-y)\right)\right\rvert^2}
\end{equation}
and for any $n\in\mathbb{N}$
\begin{equation}
f_n: \mathbb{S}^1\times \mathbb{S}^1\to \mathbb{R},\quad(x,y)\mapsto \frac{\lvert u_n(x)-u_n(y)\rvert^2}{\left\lvert\sin\left(\frac{1}{2}(x-y)\right)\right\rvert^2}.
\end{equation}
In order to prove the claim, it is enough to show that $f_n\to f$ in $L^1(\mathbb{S}^1\times \mathbb{S}^1)$ as $n\to \infty$.
To show this, we observe that for any $n\in\mathbb{N}$
\begin{equation}
\lvert f\rvert+\lvert f_n\rvert-\lvert f_n-f\rvert\geq 0.
\end{equation}
Since $f_n\to f$ a.e., by Fatou's Lemma we obtain
\begin{equation}
\int_{\mathbb{S}^1}\int_{\mathbb{S}^1} 2\lvert f\rvert\leq\liminf_{n\to\infty}\int_{\mathbb{S}^1}\int_{\mathbb{S}^1} \lvert f\rvert+\lvert f_n\rvert-\lvert f_n-f\rvert.
\end{equation}
Since $u_n\to u$ in $H^\frac{1}{2}(\mathbb{S}^1)$ , $\lVert f_n\rVert_{L^1}\to\lVert f\rVert_{L^1}$ as $n\to \infty$, and thus we conclude that
\begin{equation}
\limsup_{n\to\infty}\int_{\mathbb{S}^1}\int_{\mathbb{S}^1}\lvert f_n-f\rvert=0.
\end{equation}
This concludes the proof of the claim.
Now we observe that dividing the first and the last line of (\ref{eq_proof_lemma_estimate_continuity of derivative}) by $t\neq 0$ and letting $t$ tend to $0$, we obtain
\begin{equation}
\begin{split}
&\lim_{n\to \infty} \limsup_{t\to 0} \left( \frac{E(u_n\circ  \phi_t)-E(u_n)}{t}-\frac{E(u\circ \phi_t)-E(u)}{t}\right)=0,\\
&\lim_{n\to \infty} \liminf_{t\to 0} \left( \frac{E(u_n\circ  \phi_t)-E(u_n)}{t}-\frac{E(u\circ \phi_t)-E(u)}{t}\right)=0.
\end{split}
\end{equation}
For any $n\in \mathbb{N}$, since $u_n$ is smooth, $E(u\circ\phi_t)$ is differentiable, therefore
\begin{equation}
\begin{split}
&\lim_{n\to \infty} \frac{d}{dt}\bigg\vert_{t=0}E(u_n\circ\phi_t)+\limsup_{t\to 0}-\frac{E(u\circ \phi_t)-E(u)}{t}=0,\\
&\lim_{n\to \infty} \frac{d}{dt}\bigg\vert_{t=0}E(u_n\circ\phi_t)+\liminf_{t\to 0}-\frac{E(u\circ \phi_t)-E(u)}{t}=0.
\end{split}
\end{equation}

Therefore $E(u\circ\phi_t)$ is differentiable in $t$ in $0$, and
\begin{equation}\label{eq_lem_continuity_derivative_energy_2}
\frac{d}{dt}\bigg\vert_{t=0} E(u_n\circ \phi_t)\to \frac{d}{dt}\bigg\vert_{t=0} E(u\circ \phi_t)
\end{equation}
as $n\to \infty$.
\end{proof}

In the next Lemma, we will use the previous results to obtain a characterization of stationary points of the energy (\ref{eq_intro_ssec_half_Dirichlet_energy}) in terms of their Fourier coefficients.

\begin{lem}\label{lem_indentity_Fourier_coefficients}
Let $u\in H^\frac{1}{2}(\mathbb{S}^1,\mathbb{R}^m)$. Then $u$ is a stationary point of $E$ if and only if for any $k\in\mathbb{N}_{>0}$
\begin{equation}\label{eq_lem_indentity_Fourier_coefficients_identity}
\sum_{\substack{a,b \in{\mathbb{N}}_{>0},\\ a+b=k}}a\widehat{u}(a)\cdot_{\mathbb{C}^m} b\widehat{u}(b)=0.
\end{equation}
where $\cdot_{\mathbb{C}^m}$ denotes the $\mathbb{C}$-scalar product in $\mathbb{C}^m$.
\end{lem}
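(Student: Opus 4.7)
My plan is to reduce the statement to a family of scalar conditions by pairing the distributional characterization of stationarity against complex exponentials. By Lemma \ref{lem_distribution_description_for_derivative}, $u$ is stationary if and only if $A_u = 0$ as a distribution; since $A_u \in H^{-s}(\mathbb{S}^1)$ for every $s > 3/2$, this is equivalent to $A_u[e^{ikx}] = 0$ for every $k \in \mathbb{Z}$. Thus the goal reduces to computing $A_u[e^{ikx}]$ explicitly in terms of the Fourier coefficients $\widehat u(n) \in \mathbb{C}^m$, and then reading off when every such coefficient vanishes.

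I would first perform the computation assuming $u \in C^\infty(\mathbb{S}^1, \mathbb{R}^m)$, where both $2(-\Delta)^{1/4}u$ and $u'$ lie in $H^{1/4}$ and so Lemma \ref{lem_explicit_form_fracdiv} with $s = 1/4$ yields
$$\text{div}_{\frac{1}{4}}\bigl[2(-\Delta)^{1/4}u(x)\cdot u'(y)\bigr] \;=\; 2u' \cdot (-\Delta)^{1/2} u \;-\; 2(-\Delta)^{1/4}u \cdot (-\Delta)^{1/4}u'.$$
Substituting the Fourier expansions $(-\Delta)^{1/4}u(x) = \sum_n |n|^{1/2}\widehat u(n) e^{inx}$ and $u'(x) = \sum_n in\,\widehat u(n) e^{inx}$, testing against $\phi = e^{ikx}$, and symmetrizing the resulting sums under the exchange $n \leftrightarrow l$, the term $2(-\Delta)^{1/4}u \cdot (-\Delta)^{1/4}u'$ will cancel exactly against $\int |(-\Delta)^{1/4}u|^2 (e^{ikx})'\,dx$, leaving the compact formula
$$A_u[e^{ikx}] \;=\; 2\pi i \sum_{n+l=-k}\bigl(n|l| + l|n|\bigr)\,\widehat u(n) \cdot_{\mathbb{C}^m} \widehat u(l).$$
A case analysis of the kernel $n|l| + l|n|$ shows that it equals $2nl$ when $n,l > 0$, equals $-2|n||l|$ when $n, l < 0$, and vanishes otherwise. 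Hence for $k < 0$ only the pairs $n, l > 0$ with $n + l = -k$ contribute, and $A_u[e^{ikx}] = 0$ is precisely the claimed identity with $a = n$, $b = l$, $-k$ in the role of $k$; for $k > 0$ the constraint $n + l = -k$ selects $n, l < 0$, and using the reality relation $\widehat u(-a) = \overline{\widehat u(a)}$ one obtains the complex conjugate of the same identity, which is redundant; the case $k = 0$ is automatic since then $n = -l$ so $n|l| + l|n| = 0$.

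To pass from smooth $u$ to general $u \in H^{1/2}(\mathbb{S}^1, \mathbb{R}^m)$, I would pick an approximating sequence $(u_n) \subset C^\infty(\mathbb{S}^1, \mathbb{R}^m)$ with $u_n \to u$ in $H^{1/2}$ (e.g.\ the Dirichlet truncations). By Lemma \ref{lem_distribution_description_for_derivative}, $A_{u_n} \to A_u$ in the sense of distributions, so $A_{u_n}[e^{ikx}] \to A_u[e^{ikx}]$ for every $k$. On the Fourier side, for each fixed $k > 0$ the sum over $a + b = k$ with $a,b > 0$ is a finite polynomial expression in the coefficients $\widehat u(1), \dots, \widehat u(k-1)$, each of which depends continuously on $u \in L^2$; the identity therefore passes to the limit in both directions. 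The main delicate point is precisely the low regularity of the inputs to $\text{div}_{1/4}$ for a general $u \in H^{1/2}$—where $(-\Delta)^{1/4}u \in L^2$ and $u' \in H^{-1/2}$ sit outside the $H^{1/4}$ hypothesis of Lemma \ref{lem_explicit_form_fracdiv}—but routing the pointwise computation through the smooth approximation and then invoking the Fourier-side continuity neatly sidesteps this obstruction.
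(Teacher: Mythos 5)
Your proposal is correct and follows essentially the same route as the paper: reduce to $A_u=0$ via Lemma~\ref{lem_distribution_description_for_derivative}, identify $A_u$ with $2u'\cdot(-\Delta)^{\frac12}u$ for smooth $u$ using Lemma~\ref{lem_explicit_form_fracdiv}, read off Fourier coefficients, and pass to the limit by the same approximation. The only presentational difference is in the Fourier-side algebra: the paper rewrites $2u'\cdot(-\Delta)^{\frac12}u=v\cdot Hv$ and splits $v$ into its positive- and negative-frequency parts $v_\pm$, whereas you expand directly and do a sign analysis of the kernel $n|l|+l|n|$; these are two writings of the same computation, and the positive/negative-frequency decomposition in the paper is exactly what your case analysis of the kernel achieves.
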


\begin{proof}
First assume that $u\in H^\frac{1}{2}(\mathbb{S}^1)$ is a stationary point of $E$. Let $(u_n)_n$ be a sequence in $C^\infty(\mathbb{S}^1,\mathbb{R}^m)$ approximating $u$ in $H^\frac{1}{2}(\mathbb{S}^1)$ and almost everywhere (such a sequence exists by Lemma \ref{lem_approximation_in_H_12}).
For any $n\in\mathbb{N}$ we have, by Lemma \ref{lem_explicit_form_fracdiv},
\begin{equation}\label{eq_proof_lem_dentity_distribution_for_stationary_equation}
\begin{split}
A_{u_n}=&\text{div}_\frac{1}{4}\left[ 2(-\Delta)^\frac{1}{4}u_n(x)u'(y) \right]+\frac{d}{dx}\left[\left\lvert(-\Delta)^\frac{1}{4}u\right\rvert^2\right]\\
=&2(-\Delta)^\frac{1}{2}u_n\cdot u_n'-2(-\Delta)^\frac{1}{4}u_n\cdot (-\Delta)^\frac{1}{4}u_n+2(-\Delta)^\frac{1}{4}u_n\cdot(-\Delta)^\frac{1}{4}u_n'\\
=&2(-\Delta)^\frac{1}{2}u_n\cdot u_n'
\end{split}
\end{equation}
as distributions.
Since $u$ is a stationary point of $E$, Lemma \ref{lem_distribution_description_for_derivative} implies that $A_u[X]=0$ for any smooth vector field $X$ on $\mathbb{S}^1$, and therefore $A_u=0$ as a distribution. By Lemmas \ref{lem_continuity_derivative_energy} and \ref{lem_distribution_description_for_derivative}, $A_{u_n}\to A_u$ in the sense of distributions, therefore
\begin{equation}\label{eq_proof_lemma_product_at_the_boundary}
2(-\Delta)^\frac{1}{2}u_n\cdot u_n'\to 0
\end{equation}
as distributions, as $n\to\infty$.
For any $n\in\mathbb{Z}$, let $v_n:=(-\Delta)^\frac{1}{2}u_n$ and let
\begin{equation}\label{eq_proof_lemma_definition_positive_negative_parts}
{v_n}_+:=\sum_{k>0}\widehat{v_n}(k)e^{ik\cdot}\quad {v_n}_-:=\sum_{k<0}\widehat{v_n}(k)e^{ik\cdot}
\end{equation}
(where the convergence takes place in $L^2(\mathbb{S}^1)$; observe that $\widehat{v_n}(0)=0$).
Since $v_n$ takes values in $\mathbb{R}^m$, for any $k\in\mathbb{Z}$ $\widehat{v_n}(k)=\overline{\widehat{v_n}(-k)}$. Therefore, by (\ref{eq_disc_intro_comparison_H_and_fraclap}),
\begin{equation}\label{eq_proof_lem_identity_for_approx_for_Fourier}
\begin{split}
(-\Delta)^\frac{1}{2}u_n\cdot u_n'=&v_n\cdot H v_n=-i({v_n}_++\overline{{v_n}_+})\cdot({v_n}_+-\overline{{v_n}_+})\\
=&-i\left({v_n}_+\cdot_{\mathbb{C}^m} {v_n}_+-\overline{{v_n}_+}\cdot_{\mathbb{C}^m}\overline{{v_n}_+}\right),
\end{split}
\end{equation}
where $H$ denotes the Hilbert transform. Thus (\ref{eq_proof_lemma_product_at_the_boundary}) implies
\begin{equation}
{v_n}_+\cdot_{\mathbb{C}^m} {v_n}_+-\overline{{v_n}_+}\cdot_{\mathbb{C}^m}\overline{{v_n}_+}\to 0\text{ as distributions}
\end{equation}
as distributions, as $n\to \infty$.
For any $k\in\mathbb{N}_{>0}$, we can take $e^{-ik\cdot}$ as test function; we obtain
\begin{equation}\label{eq_proof_lemma_approximation_sum_of_frequencies}
\int_{\mathbb{S}^1} {v_n}_+\cdot{v_n}_+(x)e^{-ikx}dx-\int_{\mathbb{S}^1} \overline{{v_n}_+\cdot{v_n}_+}(x)e^{-ikx}dx\to 0
\end{equation}
as $n\to \infty$. The second integral in (\ref{eq_proof_lemma_approximation_sum_of_frequencies}) vanishes, as $\overline{{v_n}_+\cdot_{\mathbb{C}^m}{v_n}_+}$ consists only of negative frequencies. The first integral is equal to
\begin{equation}
\begin{split}
\sqrt{2\pi}\mathscr{F}({v_n}_+\cdot_{\mathbb{C}^m} {v_n}_+)(k)=&\sqrt{2\pi}\sum_{\substack{a,b\in\mathbb{N},\\ a+b=k}}\widehat{v_n}(a)\cdot_{\mathbb{C}^m}\widehat{v_n}(b)\\
=&\sqrt{2\pi}\sum_{\substack{a,b\in\mathbb{N},\\ a+b=k}} a\widehat{u_n}(a)\cdot_{\mathbb{C}^m} b\widehat{u_n}(b).
\end{split}
\end{equation}
Since $u_n\to u$ in $H^\frac{1}{2}(\mathbb{S}^1)$, $\widehat{u_n}(k)\to \widehat{u}(k)$ as $n\to \infty$ for all $k\in\mathbb{Z}$.
Therefore, (\ref{eq_proof_lemma_approximation_sum_of_frequencies}) implies that
\begin{equation}
\sum_{\substack{a,b\in{\mathbb{N}}_{>0},\\ a+b=k}}a\widehat{u}(a)\cdot_{\mathbb{C}^m} b\widehat{u}(b)=0.
\end{equation}
This shows the first implication. For the converse, let $u\in H^\frac{1}{2}(\mathbb{S}^1)$ so that (\ref{eq_lem_indentity_Fourier_coefficients_identity}) holds. Let $(u_n)_n$ be a sequence in $C^\infty(\mathbb{S}^1)$ converging to $u$ in $H^\frac{1}{2}(\mathbb{S}^1)$. For any $n\in\mathbb{N}$ let $v_n, {v_n}_+, {v_n}_-$ be defined as before. Then, for any $k\in\mathbb{N}_{>0}$
\begin{equation}\label{eq_proof_lemma_Fouriercoeff_convergence_coeff_product}
\mathscr{F}({v_n}_+\cdot_{\mathbb{C}^m}{v_n}_+)(k)=\sum_{\substack{a,b\in\mathbb{N},\\ a+b=k}}a\widehat{u_n}(a)\cdot_{\mathbb{C}^m}b\widehat{u_n}(b)\to 0
\end{equation}
as $n\to\infty$.
Since for any $n\in\mathbb{N}$ ${v_n}_+\cdot_{\mathbb{C}^m}{v_n}_+$ consists only of positive frequencies, and since for any $k\in\mathbb{N}_{>0}$ $\mathscr{F}(\overline{{v_n}_+}\cdot_{\mathbb{C}^m}\overline{{v_n}_+})(k)=  \mathscr{F}({v_n}_+\cdot_{\mathbb{C}^m}{v_n}_+)(-k)$, it follows from (\ref{eq_proof_lemma_Fouriercoeff_convergence_coeff_product}) that for any $k\in\mathbb{Z}$,
\begin{equation}
\begin{split}
&\mathscr{F}({v_n}_+\cdot_{\mathbb{C}^m} {v_n}_+)(k)-\mathscr{F}(\overline{{v_n}_+}\cdot_{\mathbb{C}^m}\overline{{v_n}_+})(k)\\
=&\mathscr{F}({v_n}_+\cdot_{\mathbb{C}^m} {v_n}_+)(k)-\mathscr{F}({v_n}_+\cdot_{\mathbb{C}^m} {v_n}_+)(-k)\to 0
\end{split}
\end{equation}
as $n\to \infty$ (here we exploit the fact that if $k>0$, the second term vanishes, while if $k<0$, the first one does).
By (\ref{eq_proof_lem_identity_for_approx_for_Fourier}), this implies that for any $k\in\mathbb{Z}$
\begin{equation}
A_{u_n}[e^{-ik\cdot}]=2\mathscr{F}\left((-\Delta)^\frac{1}{2}u_n\cdot u_n'\right)(k)\to 0
\end{equation}
as $n\to \infty$ (here we used the notation introduced in (\ref{eq_lem_definition_A[u]}) and computation  (\ref{eq_proof_lem_dentity_distribution_for_stationary_equation})).
Since, by Lemmas \ref{lem_continuity_derivative_energy} and \ref{lem_distribution_description_for_derivative}, $A_{u_n}\to A_u$ in the sense of distributions, we obtain that
\begin{equation}\label{eq_proof_lem_Au_vanishes_on_ek}
A_u[e^{-ik\cdot}]=0
\end{equation}
for any $k\in \mathbb{Z}$. We claim that this is equivalent to $A_u=0$ in the sense of distributions, which by (\ref{eq_lem_Au_vanishes_as_distr}) implies that $u$ is a stationary point of $E$.
To prove the claim, let $\phi\in C^\infty(\mathbb{S}^1,\mathbb{R}^m)$ and for any $n\in\mathbb{N}$ let $\phi_n:=D_n\ast\phi$, where $D_n$ is the $n^{th}$-Dirichlet kernel. Then, as $A_u$ is linear, by (\ref{eq_proof_lem_Au_vanishes_on_ek}) $A_u[\phi_n]=0$ for any $n\in\mathbb{N}$.
Since $A_u\in \mathbb{A}^1(\mathbb{S}^1)^\ast$ by Lemma \ref{lem_distribution_description_for_derivative}, to prove the claim it is enough to show that $\phi_n\to \phi$ in $\mathbb{A}^1(\mathbb{S}^1)$.
In fact, for any $n\in\mathbb{N}$
\begin{equation}\label{eq_proof_lem_convergence_testfct_in_A'}
\begin{split}
\lVert \phi_n-\phi\rVert_{\mathbb{A}^1}=&\sum_{\lvert k\rvert\geq n+1}\left\lvert \widehat{\phi}(k)\right\rvert\lvert k\rvert=\sum_{\lvert k\rvert\geq n+1}\left\lvert \frac{\widehat{\phi^{(3)}}(k)}{k^3}\right\rvert\lvert k\rvert\\ \leq & \lVert \phi^{(3)}\rVert_{L^1}\sum_{\lvert k\rvert\geq n+1}\frac{1}{\lvert k\rvert^2},
\end{split}
\end{equation}
and the expression on the right of (\ref{eq_proof_lem_convergence_testfct_in_A'}) tends to $0$ as $n\to 0$. This concludes the proof of the claim. 
\end{proof}
\begin{rem}
Since $u$ takes values in $\mathbb{R}^m$, for any $k\in\mathbb{N}$ $\widehat{v_n}(k)=\overline{\widehat{v_n}(-k)}$. Therefore, for any $k\in\mathbb{Z}_{<0}$, (\ref{eq_lem_indentity_Fourier_coefficients_identity}) is equivalent to
\begin{equation}
\sum_{\substack{a,b\in{\mathbb{Z}}_{<0},\\ a+b=k}}a\widehat{u}(a)\cdot_{\mathbb{C}^m} b\widehat{u}(b)=0.
\end{equation}
\end{rem}
\begin{rem}\label{rem_DaLio_Cabre}
Let $u\in H^\frac{1}{2}(\mathbb{S}^1,\mathbb{R}^m$); for any $a\in \mathbb{Z}$ let
\begin{equation}
\alpha_a=Re(\widehat{u}(a))\quad \beta_a:=Im(\widehat{u}(a)).
\end{equation}
Then for any $k\in \mathbb{N}_{>0}$ the left hand side of Equation (\ref{eq_lem_indentity_Fourier_coefficients_identity}) can be rewritten as
\begin{equation}\label{eq_real_vanishing_quantity_DaLio_Cabre}
\sum_{\substack{a,b\in\mathbb{N}_{>0}\\ a+b=k}}ab(\alpha_a\alpha_b-\beta_a\beta_b),
\end{equation}
while its imaginary part can be rewritten as
\begin{equation}\label{eq_imaginary_vanishing_quantity_DaLio_Cabre}
\sum_{\substack{a,b\in\mathbb{N}_{>0}\\ a+b=k}}ab(\alpha_a\beta_b-\alpha_a\beta_b).
\end{equation}
The fact that the quantities in (\ref{eq_real_vanishing_quantity_DaLio_Cabre}) and (\ref{eq_imaginary_vanishing_quantity_DaLio_Cabre}) vanish for any $k\in \mathbb{N}_{>0}$ if $u$ is a stationary point of $E$ was first published by F. Da Lio in \cite{DaLio} (Proposition 1.2), and was also obtained in the work of preparation of \cite{Cabre}.
\end{rem}
In the following, we will show that the harmonic extension in $D^2$ of stationary point of $E$ in $H^\frac{1}{2}(\mathbb{S}^1)$ is conformal in $D^2$. This result was first obtained in \cite{CompBubble} and \cite{MillotSire}. First we will show how the characterization of stationary points o the energy (\ref{eq_intro_ssec_half_Dirichlet_energy}) in terms of their Fourier coefficients (Lemma \ref{lem_indentity_Fourier_coefficients}), combined with an important property of the Hopf differential (Lemma \ref{lem_Hopf_diff_holomorphic_Schoen}), can be used to obtain the desired result (Proposition \ref{prop_harmonic_extension_of_critical_pts_are_conformal_Fourier}). We will then give another proof of the result (Proposition \ref{lem_stationary_points_ have_conformal_harmonic_extension}), based on a variational characterization of conformal maps  (Proposition \ref{prop_variational_criterion_conformality}) and an approximation argument obtained from Lemmas \ref{lem_distribution_description_for_derivative} and \ref{lem_continuity_derivative_energy}.\\
In order to study the properties of harmonic maps in $D^2$, we fix the following conventions:
for any $u\in H^1(D^2)$ let
\begin{equation}
[ u]_{\dot{H}^1}:=\lVert \nabla u\rVert_{L^2(D^2)}
\end{equation}
and
\begin{equation}
\lVert u\rVert_{H^1}:=\lVert u\rVert_{L^2(D^2)}+\lVert \nabla u\rVert_{L^2(D^2)}.
\end{equation}
where $\nabla u$ denotes the distributional gradient of $u$.

\begin{lem}\label{lem_harmonic_extension_conformal_first_condition}
Let $u\in H^\frac{1}{2}(\mathbb{S}^1, \mathbb{R}^m)$. Then $u$ is a stationary point of $E$ in $H^\frac{1}{2}(\mathbb{S}^1)$ if and only if its harmonic extension $\tilde{u}$ obeys
\begin{equation}\label{eq_lemma_orthogonality_of_the_derivatives}
\partial_\theta \tilde{u}\cdot\partial_r\tilde{u}=0\text{ in $D^2$}.
\end{equation}
\end{lem}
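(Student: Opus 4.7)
The plan is to use Lemma \ref{lem_indentity_Fourier_coefficients} to translate the stationarity of $u$ into a Fourier condition, and then recognize that condition as the vanishing of a holomorphic Hopf-type function associated with the harmonic extension $\tilde{u}$. The condition $\partial_\theta\tilde{u}\cdot\partial_r\tilde{u}=0$ will then correspond to only the imaginary part of this holomorphic function being zero, and I will pass to the full vanishing using that a real-valued holomorphic function on a connected open set is constant.

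First I will work in complex coordinates $z=re^{i\theta}$. Using $\widehat{u}(-n)=\overline{\widehat{u}(n)}$ (since $u$ is real-valued) and the Poisson formula (\ref{eq_harmonic_extension_Poisson_kernel}), the absolutely convergent series
\begin{equation*}
\tilde{u}(z,\bar z)=\widehat{u}(0)+\sum_{n\geq 1}\bigl(\widehat{u}(n)\,z^n+\overline{\widehat{u}(n)}\,\bar z^n\bigr)
\end{equation*}
defines a smooth function on $D^2$, and term-by-term differentiation gives $\partial_z\tilde{u}(z)=\sum_{n\geq 1}n\,\widehat{u}(n)\,z^{n-1}$. Consequently
\begin{equation*}
\phi(z):=\partial_z\tilde{u}\cdot_{\mathbb{C}^m}\partial_z\tilde{u}=\sum_{s\geq 2}c_s\,z^{s-2},\qquad c_s=\sum_{\substack{n+m=s\\ n,m\geq 1}}nm\,\widehat{u}(n)\cdot_{\mathbb{C}^m}\widehat{u}(m),
\end{equation*}
is holomorphic on $D^2$ (this is also the content of Lemma \ref{lem_Hopf_diff_holomorphic_Schoen} applied to the harmonic map $\tilde{u}$).

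Second, I will translate the geometric condition on $\tilde{u}$ into a condition on $\phi$. From the identity $\partial_z=\tfrac{e^{-i\theta}}{2}\bigl(\partial_r-\tfrac{i}{r}\partial_\theta\bigr)$ a direct computation gives
\begin{equation*}
z^2\phi(z)=\tfrac{1}{4}\bigl(r^2|\partial_r\tilde{u}|^2-|\partial_\theta\tilde{u}|^2\bigr)-\tfrac{ir}{2}\,\partial_r\tilde{u}\cdot\partial_\theta\tilde{u},
\end{equation*}
so $\partial_\theta\tilde{u}\cdot\partial_r\tilde{u}\equiv 0$ on $D^2$ if and only if $\mathrm{Im}\bigl(z^2\phi(z)\bigr)\equiv 0$ on $D^2$. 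The forward implication is now immediate: if $u$ is stationary then Lemma \ref{lem_indentity_Fourier_coefficients} gives $c_s=0$ for every $s\geq 2$, hence $\phi\equiv 0$ on $D^2$, hence a fortiori $\partial_\theta\tilde{u}\cdot\partial_r\tilde{u}\equiv 0$. For the converse, assume $\partial_\theta\tilde{u}\cdot\partial_r\tilde{u}\equiv 0$; then $z^2\phi(z)$ is a real-valued holomorphic function on the connected open set $D^2$, therefore constant, and since it vanishes at $z=0$ it is identically zero. Hence all $c_s$ vanish and Lemma \ref{lem_indentity_Fourier_coefficients} yields that $u$ is stationary.

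There is no serious technical obstacle: the only delicate point is that all power-series manipulations take place in the open disc, where the Poisson series converges absolutely and uniformly on compact subsets, so differentiation term by term and re-indexing of the double sum by the diagonals $n+m=s$ are both legitimate. The main conceptual step is simply to recognise the Fourier identity of Lemma \ref{lem_indentity_Fourier_coefficients} as the vanishing of the Taylor coefficients of the holomorphic function $z^2\phi(z)$.
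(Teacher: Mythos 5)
Your proof is correct, and it takes a genuinely different route from the paper's own proof of this lemma. The paper works entirely at the level of Fourier coefficients on circles of fixed radius $s\in(0,1)$: it computes $\partial_r\tilde u\cdot\partial_\theta\tilde u(s,\cdot)=\frac{-i}{s}\bigl({v_s}_+\cdot_{\mathbb C^m}{v_s}_+-\overline{{v_s}_+\cdot_{\mathbb C^m}{v_s}_+}\bigr)$ and then reads off directly that $\mathscr F({v_s}_+\cdot_{\mathbb C^m}{v_s}_+)(k)=s^k\sum_{a+b=k}a\widehat u(a)\cdot_{\mathbb C^m}b\widehat u(b)$, so both implications reduce to matching Fourier coefficients at a fixed $s$ (dividing by $s^{k-1}$ for the converse). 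You instead package everything into the holomorphic function $\phi(z)=\partial_z\tilde u\cdot_{\mathbb C^m}\partial_z\tilde u$, identify the sums in Lemma \ref{lem_indentity_Fourier_coefficients} as the Taylor coefficients of $z^2\phi$, and handle the converse via the Cauchy--Riemann argument that a real-valued holomorphic function on $D^2$ vanishing at $0$ is identically zero. Your argument is essentially a merger of the paper's Lemma \ref{lem_harmonic_extension_conformal_first_condition} with the content of Lemmas \ref{lem_Hopf_diff_holomorphic_Schoen} and \ref{lem_equivalent_conditions_conformality_Hopf}, and has the advantage of simultaneously giving (for free, since $\phi\equiv0$ rather than just $\mathrm{Im}(z^2\phi)\equiv0$) the full conformality of $\tilde u$, i.e.\ also $\lvert\partial_\theta\tilde u\rvert=r\lvert\partial_r\tilde u\rvert$, which the paper establishes separately. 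One minor remark: in the converse, once you know $z^2\phi$ is real and holomorphic on the connected open set $D^2$ it is constant; since the product of power series gives $\phi(z)=\sum_{s\geq2}c_sz^{s-2}$ with no pole, $z^2\phi(0)=0$, and then $\phi\equiv0$ follows because $z^2\neq0$ for $z\neq0$ and $\phi$ is continuous at $0$ — worth spelling out, but no gap.
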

\begin{proof}
Assume first that $u$ is a stationary point of $E$. We recall that the harmonic extension of $u$ is given by $\tilde{u}(r,\theta)=P_r\ast u(\theta)$, where $P_r $ is the Poisson kernel introduced in (\ref{eq_intro_circle_definition_Poisson_kernel}).
Let $s\in (0,1)$ and let $u_s:=P_s\ast u$.
As in (\ref{eq_proof_lemma_definition_positive_negative_parts}), let $v_s:=(-\Delta)^\frac{1}{2}u_s$ and
\begin{equation}
{v_s}_+:=\sum_{k>0}\widehat{v_s}(k)e^{ik\cdot}\quad {v_s}_-:=\sum_{k<0}\widehat{v_s}(k)e^{ik\cdot}
\end{equation}
(where the convergence takes place in $L^2(\mathbb{S}^1)$).
Then, for any $\theta\in \mathbb{S}^1$,
\begin{equation}\label{eq_proof_lemma_harmonic_extension_conformal_first_condition_computation}
\begin{split}
&\partial_r \tilde{u}(s, \theta)\cdot \partial_\theta \tilde{u}(s, \theta)=\frac{1}{s}(-\Delta)^\frac{1}{2}u_s(\theta)\cdot \partial_\theta u_s(\theta)=\frac{1}{s}v_s(\theta)\cdot Hv_s(\theta)\\
=&\frac{-1}{s}({v_s}_++\overline{{v_s}_+})(\theta)\cdot  i({v_s}_+-\overline{{v_s}_+})(\theta)=\frac{-i}{s}({v_s}_+\cdot_{\mathbb{C}^m} {v_s}_+-\overline{{v_s}_-\cdot_{\mathbb{C}^m} {v_s}_+})(\theta),
\end{split}
\end{equation} 
where we used the fact that $\widehat{v_s}(0)=0$, and that $u$, and thus $\tilde{u}$, take values in $\mathbb{R}^m$.
Now we observe that for any $k\in \mathbb{Z}_{\leq 0}$, $\mathscr{F}\left(v_+\cdot_{\mathbb{C}^m} v_+\right)(k)=0$, as $v_+$ consists only of positive frequencies. Moreover, for any $k\in\mathbb{Z}_{>0}$,
\begin{equation}\label{eq_proof_lemma_first_condition_conformality_Fcoeff_product_vs}
\begin{split}
\mathscr{F}\left({v_s}_+\cdot_{\mathbb{C}^m} {v_s}_+\right)(k)=&\sum_{\substack{a, b\in \mathbb{N}_{>0},\\ a+b=k}}\widehat{{v_s}}(a)\cdot_{\mathbb{C}^m}  \widehat{{v_s}}(b)\\
=&\sum_{\substack{a, b\in \mathbb{N}_{>0},\\ a+b=k}}a\widehat{P_s}(a)\widehat{u}(a)\cdot_{\mathbb{C}^m}  b\widehat{P_s}(b)\widehat{u}(b)\\
=&\sum_{\substack{a, b\in \mathbb{N}_{>0},\\ a+b=k}}as^a\widehat{u}(a)\cdot_{\mathbb{C}^m}  bs^b\widehat{u}(b)\\
=& s^k\sum_{\substack{a, b\in \mathbb{N}_{>0},\\ a+b=k}}a\widehat{u}(a)\cdot_{\mathbb{C}^m} b\widehat{u}(b)=0,
\end{split}
\end{equation}
where the last equality follows from Lemma \ref{lem_indentity_Fourier_coefficients}. Therefore, for any $s\in(0,1)$,
\begin{equation}
{v_s}_+\cdot_{\mathbb{C}^m} {v_s}_+=0.
\end{equation}
Thus, by (\ref{eq_proof_lemma_harmonic_extension_conformal_first_condition_computation}),
\begin{equation}
\partial_r \tilde{u}\cdot \partial_\theta \tilde{u}=0\text{ in $D^2$}.
\end{equation}
This shows the first implication.
To show the converse, let $u\in H^\frac{1}{2}(\mathbb{S}^1, \mathbb{R}^m)$ so that (\ref{eq_lemma_orthogonality_of_the_derivatives}) holds. We will show that (\ref{eq_lem_indentity_Fourier_coefficients_identity}) holds as well. By Lemma \ref{lem_indentity_Fourier_coefficients}, this will conclude the proof.\\
For any $s\in (0,1)$, $k\in\mathbb{Z}_{>0}$, by (\ref{eq_lemma_orthogonality_of_the_derivatives}), (\ref{eq_proof_lemma_harmonic_extension_conformal_first_condition_computation}) and (\ref{eq_proof_lemma_first_condition_conformality_Fcoeff_product_vs})
\begin{equation}
\begin{split}
0=&\mathscr{F}\left(\partial_\theta \tilde{u}\cdot\partial_r\tilde{u}(s,\cdot)\right)(k)
=\frac{- i}{s}\mathscr{F}\left({v_s}_+\cdot_{\mathbb{C}^m} {v_s}_+\right)(k)\\
=&\frac{- i}{s}\sum_{\substack{a,b\in\mathbb{N}_{>0}\\ a+b=k}}s^a a\widehat{u}(a)\cdot_{\mathbb{C}^m} s^b b\widehat{u}(b)
=- i s^{k-1}\sum_{\substack{a,b\in\mathbb{N}_{>0}\\ a+b=k}}a\widehat{u}(a)\cdot_{\mathbb{C}^m} b\widehat{u}(b),
\end{split}
\end{equation}
therefore (\ref{eq_lem_indentity_Fourier_coefficients_identity}) holds.
\end{proof}

We recall that a weakly harmonic function $u$ in $H^1(D^2)$ satisfies (\ref{eq_lemma_orthogonality_of_the_derivatives}) if and only if 
\begin{equation}\label{eq_discussion_equivalent_condition_conformality}
\lvert\partial_\theta \tilde{u}\rvert=\frac{1}{r^2}\lvert\partial_r \tilde{u}\rvert\text{ in }D^2.
\end{equation}
This can be shown by means of the Hopf differential.

\begin{defn}
Let $u\in H^1(D^2, \mathbb{R}^m)$. The \textbf{Hopf differential} of $u$ is the form
\begin{equation}\label{eq_definition_Hopf_differential_D2}
\mathscr{H}(u)=\frac{\partial u}{\partial z}\cdot_\mathbb{C}\frac{\partial u}{\partial z}dz\otimes dz.
\end{equation}
\end{defn}
We remark that in polar coordinates, the Hopf differential of a function $u\in H^1(D^2)$ is given by
\begin{equation}\label{eq_comm_Hopf_diff_in_polar_coordinates}
\mathscr{H}(u)(z)=\frac{\overline{z}}{4 r^2}\left[\lvert \partial_ru(z)\rvert^2-\frac{1}{r^2}\lvert\partial_\theta u(z)\rvert^2-2i\partial_r u(z)\cdot\frac{1}{r}\partial_\theta u(z)\right]dz\otimes dz
\end{equation}
for any $z\in D^2$, where $z=re^{i\theta}$.
One of the most useful properties of the Hopf differential is given by the following Lemma.

\begin{lem}\label{lem_Hopf_diff_holomorphic_Schoen}(Lemma 1.1 in \cite{SchoenHarmonic})
Assume that $u\in H^1(D^2,\mathbb{R}^m)$ is a stationary point of the energy $E_{D^2}$ introduced in (\ref{eq_intro_local_energy}), in the sense that if $X$ is a compactly supported smooth vector field on $D^2$, and $\phi_t$ is its flow, then
\begin{equation}
\frac{d}{dt}\bigg\vert_{t=0} E_{D^2}(u\circ \phi_t)=0.
\end{equation}
Then $\mathscr{H}(u)$ is holomorphic in $D^2$, in the sense that $h:z\mapsto \frac{\partial u}{\partial z}\cdot_{\mathbb{C}^m}\frac{\partial u}{\partial z}$ is an holomorphic function in $D^2$.
\end{lem}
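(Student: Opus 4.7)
The plan is to derive the classical stress-energy (inner-variation) identity in $D^{2}$ and then identify the resulting divergence-free relations with the Cauchy--Riemann equations for the coefficient $h(z):=\tfrac{\partial u}{\partial z}\cdot_{\mathbb{C}^{m}}\tfrac{\partial u}{\partial z}$ of $\mathscr{H}(u)$.

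First, I would take a compactly supported smooth vector field $X$ on $D^{2}$ with flow $\phi_{t}$, compute $\nabla(u\circ\phi_{t})=(\nabla u\circ\phi_{t})\,D\phi_{t}$, and perform the change of variables $y=\phi_{t}(x)$ in $E_{D^{2}}(u\circ\phi_{t})$. Differentiating at $t=0$, using $\partial_{\beta}\phi_{0}^{\alpha}=\delta^{\alpha}_{\beta}$, $\partial_{t}|_{t=0}\partial_{\beta}\phi_{t}^{\alpha}=\partial_{\beta}X^{\alpha}$, and $\partial_{t}|_{t=0}|\det D\phi_{t}|^{-1}=-\operatorname{div}X$, yields the standard inner-variation formula
\begin{equation*}
\frac{d}{dt}\bigg|_{t=0} E_{D^{2}}(u\circ\phi_{t})=\int_{D^{2}}T_{\alpha\beta}\,\partial_{\alpha}X^{\beta}\,dx,\qquad T_{\alpha\beta}:=2\,\partial_{\alpha}u\cdot\partial_{\beta}u-|\nabla u|^{2}\delta_{\alpha\beta}.
\end{equation*}
Since this vanishes for every such $X$, the stress-energy tensor $T$ is divergence-free in its first index in the sense of distributions, i.e.\ $\partial_{\alpha}T_{\alpha\beta}=0$ in $\mathscr{D}'(D^{2})$ for $\beta=1,2$.

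Next I would write the two components of $\partial_{\alpha}T_{\alpha\beta}=0$ explicitly, using $T_{11}=|\partial_{1}u|^{2}-|\partial_{2}u|^{2}=-T_{22}$ and $T_{12}=T_{21}=2\,\partial_{1}u\cdot\partial_{2}u$. This gives the pair
\begin{equation*}
\partial_{1}T_{11}+\partial_{2}T_{12}=0,\qquad \partial_{1}T_{12}-\partial_{2}T_{11}=0
\end{equation*}
in $\mathscr{D}'(D^{2})$, which are precisely the Cauchy--Riemann equations for the complex-valued $L^{1}_{\mathrm{loc}}$ function $F:=T_{11}-iT_{12}$, namely $\partial_{\bar z}F=0$ in $\mathscr{D}'(D^{2})$. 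A direct computation from $\tfrac{\partial u}{\partial z}=\tfrac{1}{2}(\partial_{1}u-i\partial_{2}u)$ shows $4h=(|\partial_{1}u|^{2}-|\partial_{2}u|^{2})-2i\,\partial_{1}u\cdot\partial_{2}u=F$, so $\partial_{\bar z}h=0$ holds in the distributional sense on $D^{2}$.

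Finally, I would invoke Weyl's lemma (any distributional solution of $\partial_{\bar z}F=0$ has a holomorphic representative) to conclude that $h$ agrees a.e.\ with a genuine holomorphic function on $D^{2}$, which is exactly the statement that $\mathscr{H}(u)$ is holomorphic. The main obstacle in this plan is the inner-variation computation of Step 1: since $u$ is only in $H^{1}$, one cannot simply differentiate under the integral sign, and one must justify the change of variables and differentiation at $t=0$ via an approximation argument (or, equivalently, expand $\phi_{t}=\mathrm{id}+tX+O(t^{2})$ and bound the remainder by $\|X\|_{C^{2}}\|\nabla u\|_{L^{2}}^{2}$), which requires care because the integrand is only in $L^{1}$ and the map $t\mapsto u\circ\phi_{t}$ is not differentiable in $H^{1}$ in general; everything else is a routine algebraic identification.
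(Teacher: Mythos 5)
The paper does not actually prove this lemma: it is quoted as Lemma 1.1 from \cite{SchoenHarmonic} and used as a black box, so there is no internal argument to compare against. Your outline, however, is the standard and correct proof, and it is essentially what Schoen's Lemma 1.1 does: derive the inner-variation (stress-energy) identity
\begin{equation*}
\frac{d}{dt}\bigg|_{t=0}E_{D^{2}}(u\circ\phi_{t})=\int_{D^{2}}T_{\alpha\beta}\,\partial_{\alpha}X^{\beta}\,dx,\qquad
T_{\alpha\beta}=2\,\partial_{\alpha}u\cdot\partial_{\beta}u-|\nabla u|^{2}\delta_{\alpha\beta},
\end{equation*}
use stationarity to get $\partial_{\alpha}T_{\alpha\beta}=0$ distributionally, observe that $T$ is trace-free and symmetric in two dimensions so that the two divergence equations are exactly the Cauchy--Riemann system for $F=T_{11}-iT_{12}=4h$, and conclude via Weyl's lemma that $h$ is holomorphic. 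All the algebra checks out (in particular $4\,\partial_{z}u\cdot_{\mathbb{C}^{m}}\partial_{z}u=(|\partial_{1}u|^{2}-|\partial_{2}u|^{2})-2i\,\partial_{1}u\cdot\partial_{2}u=T_{11}-iT_{12}$), and you correctly flag the one genuine technical point: for $u\in H^{1}$ one should not differentiate $t\mapsto u\circ\phi_{t}$ in $H^{1}$ but instead change variables $y=\phi_{t}(x)$ first, so that the $t$-dependence sits entirely in the smooth factors $D\phi_{t}$ and $|\det D\phi_{t}|^{-1}$, after which differentiation under the integral sign is justified by dominated convergence with the integrable majorant $C\|X\|_{C^{1}}\,|\nabla u|^{2}$. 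Carried out carefully this gives a complete proof.
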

We will use this result to show that (\ref{eq_lemma_orthogonality_of_the_derivatives}) is equivalent to (\ref{eq_discussion_equivalent_condition_conformality}) if $u$ is weakly harmonic. For this, we first recall that if a function $u\in H^1(\mathbb{S}^1)$ is a critical point of $E_{D^2}$ in $H^1(\mathbb{S}^1)$ (or equivalently $u$ solves $\Delta u=0$ in the weak sense), then $u$ is smooth in $D^2$ and therefore is also a stationary point of $E_{D^2}$. Indeed, if $u$ is a smooth function on $D^2$ and $\phi_t$ is the flow of a compactly supported smooth vector field $X$ on $D^2$, by Taylor's Theorem we have for any $t\in \mathbb{R}$, $x\in D^2$

\begin{equation}
\begin{split}
d (u\circ\phi_t)(x)=& d u\circ \phi_t(x)d\phi_t(x)\\
=&d u(x)+td(d u)(x)X(x)+td u(x) dX(x)+\mathscr{O}_{L^\infty}(t^2)\\
=&d (u+tduX)(x)+\mathscr{O}_{L^\infty}(t^2).
\end{split}
\end{equation}
Thus, if $u$ is a critical point of $E_{D^2}$,
\begin{equation}
\frac{d}{dt}\bigg\vert_{t=0}E_{D^2}(u\circ\phi_t)=\frac{d}{dt}\bigg\vert_{t=0}E_{D^2}(u+tduX)=0.
\end{equation}

\begin{lem}\label{lem_equivalent_conditions_conformality_Hopf}
Let $u\in H^1(D^2)$ be weakly harmonic, i.e. assume that $u$ solves $\Delta u=0$ in the weak sense. Then the following conditions are equivalent:
\begin{enumerate}
\item  $h_1:=\partial_\theta {u}\cdot\partial_r{u}\equiv0\text{ in $D^2$}$,
\item $h_2:=\lvert\partial_r {u}\rvert^2-\frac{1}{r^2}\lvert\partial_\theta {u}\rvert^2\equiv0\text{ in }D^2$,
\item $\mathscr{H}(u)\equiv0 \text{ in }D^2.$
\end{enumerate}
\end{lem}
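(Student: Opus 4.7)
The plan is to prove $3\Rightarrow 1,2$ directly from the polar formula and to prove $1\Rightarrow 3$ and $2\Rightarrow 3$ via Lemma \ref{lem_Hopf_diff_holomorphic_Schoen} together with elliptic regularity. Rewriting the polar expression (\ref{eq_comm_Hopf_diff_in_polar_coordinates}) for the coefficient
$$h(z):=\frac{\partial u}{\partial z}\cdot_{\mathbb{C}^m}\frac{\partial u}{\partial z}$$
in terms of $h_1,h_2$, a short computation gives the identity
$$4\,z^{2}h(z)\;=\;r^{2}h_{2}(z)\;-\;2i\,r\,h_{1}(z)\qquad\text{on }D^{2}\setminus\{0\},$$
and both sides extend continuously to $z=0$, where they vanish. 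On the punctured disc, therefore, the vanishing of $h$ is clearly equivalent to the simultaneous vanishing of $h_{1}$ and $h_{2}$, and by continuity this equivalence extends to all of $D^{2}$, yielding at once $3\Rightarrow 1$ and $3\Rightarrow 2$.

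For the two converse implications I would argue as follows. Since $u$ is weakly harmonic, Weyl's lemma (elliptic regularity for the Laplacian) yields $u\in C^{\infty}(D^{2})$, and as recalled in the text immediately preceding the statement every smooth critical point of $E_{D^{2}}$ is in fact a stationary point. Lemma \ref{lem_Hopf_diff_holomorphic_Schoen} therefore applies and the coefficient $h$ is holomorphic on $D^{2}$; in particular $g(z):=z^{2}h(z)$ is holomorphic on $D^{2}$ with $g(0)=0$. To prove $1\Rightarrow 3$, observe that the displayed identity reads $\operatorname{Im} g(z)=-2r\,h_{1}(z)$ on $D^{2}\setminus\{0\}$, so the hypothesis $h_{1}\equiv 0$ forces $\operatorname{Im} g\equiv 0$ there and, by continuity, on all of $D^{2}$. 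A holomorphic function with identically vanishing imaginary part is necessarily constant (the Cauchy--Riemann equations force $g'\equiv 0$; alternatively, invoke the open mapping theorem), and the condition $g(0)=0$ forces this constant to be $0$. Hence $z^{2}h(z)\equiv 0$, which gives $h\equiv 0$ on $D^{2}$ and thus $\mathscr{H}(u)\equiv 0$. The implication $2\Rightarrow 3$ is proved identically with $\operatorname{Re} g=r^{2}h_{2}$ in place of $\operatorname{Im} g$: the hypothesis $h_{2}\equiv 0$ forces $g$ to be a purely imaginary holomorphic function, hence a purely imaginary constant, which must be $0$ since $g(0)=0$.

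The only genuine subtlety is the passage from an identity valid only on the punctured disc $D^{2}\setminus\{0\}$ (where polar coordinates are well defined) to a global statement on $D^{2}$; this is handled by passing to the quantity $z^{2}h(z)$, which extends holomorphically through the origin, and then invoking the identity principle for holomorphic functions to propagate the vanishing of a real or imaginary part from a dense open set to the whole of $D^{2}$. All other steps are direct computations or standard facts about harmonic and holomorphic functions, so I do not anticipate further obstacles.
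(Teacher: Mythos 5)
Your proof is correct and follows essentially the same approach as the paper's: both invoke interior regularity to pass from weak harmonicity to smoothness, both apply Lemma \ref{lem_Hopf_diff_holomorphic_Schoen} to get holomorphicity of the Hopf coefficient, both multiply by a power of $z$ to obtain a holomorphic function vanishing at the origin, and both conclude via the Cauchy--Riemann equations that a holomorphic function with vanishing real (or imaginary) part is a constant, hence zero. In fact your identity $4z^2h(z)=r^2h_2-2irh_1$ (with $h=\partial_z u\cdot_{\mathbb{C}^m}\partial_z u$) is the correct one, whereas the paper's formula (\ref{eq_comm_Hopf_diff_in_polar_coordinates}) has a typo (the factor should be $\overline{z}^2/(4r^2)=e^{-2i\theta}/4$, not $\overline{z}/(4r^2)$); correspondingly the paper multiplies by $z$ rather than by $z^2$, but the structure of the argument is unchanged. (Your stated $\operatorname{Im}g=-2rh_1$ is off by a constant: with $g=z^2h$ one has $\operatorname{Im}g=-\tfrac{1}{2}rh_1$ and $\operatorname{Re}g=\tfrac{1}{4}r^2h_2$, but since you only need the sign pattern and vanishing, this does not affect the conclusion.)
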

\begin{proof}
By the discussion before the Lemma, if $u\in H^1(\mathbb{S}^1)$ is weakly harmonic in $D^2$, then $u$ is smooth in $D^2$ and it is a stationary point of $E_{D^2}$. Therefore by Lemma \ref{lem_Hopf_diff_holomorphic_Schoen} its Hopf differential is holomorphic in $D^2$. By (\ref{eq_comm_Hopf_diff_in_polar_coordinates}), this means that the function $h$ defined by
\begin{equation}
h(z)=\frac{\overline{z}}{4 r^2}\left[\lvert \partial_ru(z)\rvert^2-\frac{1}{r^2}\lvert\partial_\theta u(z)\rvert^2-2i\partial_r u(z)\cdot\frac{1}{r}\partial_\theta u(z)\right]
\end{equation}
for any $z\in D^2$ is an holomorphic function on $D^2$. In particular, $h$ is bounded in $\frac{1}{2}D^2$.
Therefore the function $f(z):=zh(z)$ is still holomorphic in $D^2$ and $f(0)=0$. Now we observe that
\begin{equation}
f(z)=\frac{1}{4}(h_2(z)-\frac{2i}{r} h_1(z)),
\end{equation}
and therefore
\begin{equation}
Re(f(z))=\frac{1}{4}h_2(z), \quad Im(f(z))=-\frac{1}{2r} h_1(z)
\end{equation}
for any $z\in D^2$.
Thus if $h_2\equiv0$, then $Re(f)\equiv0$, and thus, by the Cauchy-Riemann equations, $f$ is constant, but since $f(0)=0$, $f$ must be equal to $0$ in $D^2$, so that $h\equiv0$ in $D^2$ and therefore $\mathscr{H}(u)\equiv0$ in $D^2$.
Similarly, if $h_1\equiv0$, then $Im(f)\equiv0$ so that $f$ must be constant. Thus one can conclude as before that $\mathscr{H}(u)\equiv0$ in $D^2$. Conversely, if $\mathscr{H}(u)\equiv0$ in $D^2$, then $f\equiv0$ on $D^2$. Thus both its real and imaginary part vanish on $D^2$, therefore $h_1\equiv0$ and $h_2\equiv0$ on $D^2$.
\end{proof}
Combining Lemma \ref{lem_harmonic_extension_conformal_first_condition} and \ref{lem_equivalent_conditions_conformality_Hopf} we obtain the desired result:
\begin{prop}\label{prop_harmonic_extension_of_critical_pts_are_conformal_Fourier}
Let $u\in H^\frac{1}{2}(\mathbb{S}^1,\mathbb{R}^m)$. Then $u$ is a stationary point of the energy (\ref{eq_intro_ssec_half_Dirichlet_energy}) if and only if its harmonic extension in $D^2$ is conformal. 
\end{prop}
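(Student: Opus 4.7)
The plan is to assemble the two preceding lemmas into a single equivalence, treating the proposition essentially as their concatenation.

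First, I would observe that for any $u\in H^\frac{1}{2}(\mathbb{S}^1,\mathbb{R}^m)$, its harmonic extension $\tilde{u}=P_r\ast u$ belongs to $H^1(D^2,\mathbb{R}^m)$ and weakly solves $\Delta\tilde{u}=0$ on $D^2$; by standard interior elliptic regularity $\tilde{u}$ is in fact smooth on $D^2$. This puts $\tilde{u}$ into the framework of Lemma \ref{lem_equivalent_conditions_conformality_Hopf}.

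Next, Lemma \ref{lem_harmonic_extension_conformal_first_condition} gives that $u$ is a stationary point of $E$ in $H^\frac{1}{2}(\mathbb{S}^1,\mathbb{R}^m)$ if and only if
\begin{equation*}
\partial_\theta\tilde{u}\cdot\partial_r\tilde{u}\equiv 0\quad\text{in }D^2.
\end{equation*}
Since $\tilde{u}$ is harmonic on $D^2$, Lemma \ref{lem_equivalent_conditions_conformality_Hopf} (applied to $\tilde{u}$) yields that this vanishing is in turn equivalent to the vanishing of the Hopf differential $\mathscr{H}(\tilde{u})$, which by the same lemma is also equivalent to
\begin{equation*}
\left\lvert\partial_r\tilde{u}\right\rvert^2-\frac{1}{r^2}\left\lvert\partial_\theta\tilde{u}\right\rvert^2\equiv 0\quad\text{in }D^2.
\end{equation*}
In particular, if $\tilde{u}$ is stationary at the boundary trace level, \emph{both} of these conditions hold simultaneously.

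Finally, I would interpret the two conditions geometrically: in the orthonormal frame $\{\partial_r,\frac{1}{r}\partial_\theta\}$ of $D^2$ (with its flat metric), the columns of $d\tilde{u}$ are $\partial_r\tilde{u}$ and $\frac{1}{r}\partial_\theta\tilde{u}$. Their orthogonality and equal length is precisely the statement that at every $p\in D^2$ there exists $\lambda(p)\in\mathbb{R}\cup\{-\infty\}$ with $d\tilde{u}_p(X)\cdot d\tilde{u}_p(Y)=e^{2\lambda(p)}\langle X,Y\rangle$ for all $X,Y\in T_pD^2$, which is exactly the definition of $\tilde{u}$ being (weakly) conformal. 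Since each of the two Lemmas gives an if-and-only-if, chaining the equivalences proves the proposition in both directions. There is no real obstacle here: the proposition is essentially the formal concatenation of Lemmas \ref{lem_harmonic_extension_conformal_first_condition} and \ref{lem_equivalent_conditions_conformality_Hopf}, once one notes that smoothness of $\tilde{u}$ in the interior allows us to quote the latter, and that the two scalar conditions are precisely the conformality conditions for a map from a 2-dimensional domain to $\mathbb{R}^m$.
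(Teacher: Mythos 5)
Your proposal is correct and follows exactly the paper's own route: it is the concatenation of Lemma \ref{lem_harmonic_extension_conformal_first_condition} (stationarity iff $\partial_\theta\tilde{u}\cdot\partial_r\tilde{u}\equiv 0$) with Lemma \ref{lem_equivalent_conditions_conformality_Hopf} (equivalence of that condition with equality of lengths and with vanishing of the Hopf differential for harmonic maps). The extra geometric unwrapping of the two scalar conditions into the definition of conformality is a reasonable elaboration but not a different argument.
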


We now gives another proof of Proposition \ref{prop_harmonic_extension_of_critical_pts_are_conformal_Fourier}, based on a variational characterisation of conformal maps (Proposition \ref{prop_variational_criterion_conformality}). We observe that in the proof of Proposition \ref{prop_variational_criterion_conformality} it is used again the fact that the Hopf differential of a stationary point of $E_{D^2}$ is holomorphic.

\begin{prop}\label{lem_stationary_points_ have_conformal_harmonic_extension}
Let $u\in H^\frac{1}{2}(\mathbb{S}^1,\mathbb{R}^m)$. Then $u$ is a stationary point of the energy (\ref{eq_intro_ssec_half_Dirichlet_energy}) if and only if its harmonic extension in $D^2$ is conformal. 
\end{prop}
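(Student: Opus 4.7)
The plan is to transfer the boundary problem on $\mathbb S^1$ to a domain problem on $D^2$ via the harmonic extension, and then invoke the variational criterion for conformality (Proposition \ref{prop_variational_criterion_conformality}). I would first record the identity $E_{D^2}(\tilde u)=E(u)$, which follows from Green's formula and the boundary relation $\partial_r\tilde u|_{r=1}=(-\Delta)^{1/2}u$ established in (\ref{eq_conv_to_frac_lap}): integration by parts yields $\int_{D^2}|\nabla\tilde u|^2=\int_{\mathbb S^1}u(-\Delta)^{1/2}u=\int_{\mathbb S^1}|(-\Delta)^{1/4}u|^2$.

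The key step is the smooth case. Given $u\in C^\infty(\mathbb S^1,\mathbb R^m)$ and a smooth vector field $X$ on $\mathbb S^1$ with flow $\phi_t$, I would extend $X$ to a smooth vector field $\tilde X$ on $\overline{D^2}$ with $\tilde X|_{\partial D^2}=X$; its flow $\Phi_t$ on $\overline{D^2}$ then preserves $\partial D^2$ and restricts there to $\phi_t$, so $\tilde u\circ\Phi_t$ has boundary trace $u\circ\phi_t$. Since $\widetilde{u\circ\phi_t}$ is the Dirichlet-energy minimizer among $H^1(D^2)$-functions with this trace, one obtains the sandwich
\begin{equation*}
E_{D^2}(\widetilde{u\circ\phi_t})\leq E_{D^2}(\tilde u\circ\Phi_t),\qquad\text{equality at }t=0.
\end{equation*}
Both sides are $C^1$ in $t$ near $0$, so the non-positive difference attains its maximum at $t=0$, forcing equality of the derivatives there; combined with the identity of the previous paragraph this gives
\begin{equation*}
\frac{d}{dt}\bigg|_{t=0}E(u\circ\phi_t)=\frac{d}{dt}\bigg|_{t=0}E_{D^2}(\tilde u\circ\Phi_t).
\end{equation*}
Proposition \ref{prop_variational_criterion_conformality} asserts that the harmonic map $\tilde u$ is conformal iff the right-hand side vanishes for every such $\tilde X$; since every smooth vector field $X$ on $\mathbb S^1$ is the boundary restriction of some such $\tilde X$, this yields the equivalence of stationarity of $u$ and conformality of $\tilde u$ in the smooth case.

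For general $u\in H^{1/2}(\mathbb S^1,\mathbb R^m)$ I would approximate by $u_n\in C^\infty$ with $u_n\to u$ in $H^{1/2}$ (Lemma \ref{lem_approximation_in_H_12}). By Lemmas \ref{lem_distribution_description_for_derivative} and \ref{lem_continuity_derivative_energy}, the distributions $A_{u_n}$ converge to $A_u$ in $\mathscr D'(\mathbb S^1)$, so stationarity passes to the limit. Simultaneously the harmonic extensions satisfy $\tilde u_n\to\tilde u$ in $H^1(D^2)$ by continuity of the Poisson integral, and the Hopf function $\partial_z\tilde u\cdot_{\mathbb C^m}\partial_z\tilde u$, being quadratic in $\nabla\tilde u$, converges in $L^1(D^2)$, so conformality is also preserved. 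The principal obstacle is the $C^1$-regularity in $t$ needed for the sandwich argument in the smooth case, which requires uniform estimates on $\Phi_t$ and differentiation under the integral for $E_{D^2}(\tilde u\circ\Phi_t)$; a secondary delicacy is ensuring the equivalence transfers compatibly through both characterizations under the $H^{1/2}$-approximation.
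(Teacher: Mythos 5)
Your sandwich argument for the smooth case is a genuine and appealing alternative to the paper's route. Where the paper computes $\frac{d}{dt}\big|_{t=0}E_{D^2}(\tilde u_n\circ\Phi_t)$ explicitly via Gauss' theorem and harmonicity of $\tilde u_n$, obtaining it as the boundary integral $\int_{\mathbb S^1}(-\Delta)^{1/2}u_n\cdot u_n'\,\tilde X\,dx$ and identifying this with the distribution $A_{u_n}$, you instead use only the Dirichlet-minimality of the harmonic extension and a one-sided comparison of the two energies $E_{D^2}(\widetilde{u\circ\phi_t})\le E_{D^2}(\tilde u\circ\Phi_t)$ with equality at $t=0$. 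Both routes establish the same key identity -- that the first-order variation of $E$ along $\phi_t$ equals the first-order variation of $E_{D^2}(\tilde u\circ\cdot)$ along $\Phi_t$ -- after which Proposition \ref{prop_variational_criterion_conformality} closes the equivalence. Your version has the advantage that the identity, and hence the equivalence, is visibly symmetric in the two directions; the paper's proof of this Proposition, as written, only establishes the forward implication (stationarity $\Rightarrow$ conformality), since (\ref{eq_proof_lemma_product_at_the_boundary}) presupposes that $u$ is stationary.

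Your approximation paragraph, however, does not quite parse. The smooth approximations $u_n$ have no reason to be stationary when $u$ is, so ``stationarity passes to the limit'' is not a statement that can be made about them, nor is there any sequence of conformal extensions $\tilde u_n$ to which $L^1$-convergence of the Hopf functions would apply. What does pass to the limit -- and what Lemmas \ref{lem_continuity_derivative_energy} and \ref{lem_continuity_derivative_of_energy_on_D2} actually provide -- are the two first-order variations $\frac{d}{dt}\big|_{t=0}E(u_n\circ\phi_t)$ and $\frac{d}{dt}\big|_{t=0}E_{D^2}(\tilde u_n\circ\Phi_t)$. But this observation makes the approximation step unnecessary: those two lemmas establish the differentiability at $t=0$ of $t\mapsto E(u\circ\phi_t)$ and $t\mapsto E_{D^2}(\tilde u\circ\Phi_t)$ directly for arbitrary $u\in H^{1/2}(\mathbb S^1,\mathbb R^m)$, which is all the sandwich argument requires (in particular, $C^1$-regularity in $t$ is not needed; one-sided touching at an interior differentiability point suffices). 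You can therefore run the minimality/sandwich argument once, for general $u$, and conclude directly via Proposition \ref{prop_variational_criterion_conformality}, without invoking the smooth case at all.
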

\begin{proof}
We follow the proof of Theorem 2.2 in \cite{BlowupAnalysis}: let $(u_n)_n$ be a sequence in $C^\infty(\mathbb{S}^1, \mathbb{R}^m)$ approximating $u$ in $H^\frac{1}{2}(\mathbb{S}^1)$. We denote $(\tilde{u}_n)_n$ the sequence of the corresponding harmonic extensions. Then, by Lemma \ref{lem_continuity_harmonic_extension_operator}, $\tilde{u}_n$ converges to $\tilde{u}$ in $H^1(D^2)$ and, up to a subsequence, a.e. as $n\to \infty$.
Now we claim that for any smooth vector field $\tilde{X}$ on $\overline{D^2}$ such that $\tilde{X}(z)\cdot z=0$ for $z\in\partial D^2$, there holds
\begin{equation}
\frac{d}{dt}\bigg\vert_{t=0}\int_{D^2}\left\lvert \nabla\tilde{u}(z+t\tilde{X}(z))\right\rvert^2 dz=0.
\end{equation}
Let $n\in\mathbb{N}$ and let $\tilde{X}$ be a smooth vector field on $\overline{D^2}$ such that $\tilde{X}(z)\cdot z=0$ for $z\in\partial D^2$. For all $z\in\partial D^2$, let $Y_n(z):=d\tilde{u}_n(z)\cdot \tilde{X}(z)=du_n(z)\cdot\tilde{X}(z)$, and since $\tilde{u}_n$ is harmonic in $D^2$,
\begin{equation}
\int_{D^2} \nabla \tilde{u}_n(z)\cdot\nabla Y_n(z)-\int_{D^2}\text{div}(\nabla \tilde{u}_n\cdot Y_n)(z)dz=-\int_{D^2}\Delta \tilde{u}(z)Y(z)dz=0
\end{equation}
by Gauss' Theorem. Therefore
\begin{equation}
\begin{split}
\frac{d}{dt}\bigg\vert_{t=0}\int_{D^2}\lvert \nabla\tilde{u}_n(z+t\tilde{X}(z))\rvert^2 dz=&\int_{D^2}\nabla\tilde{u}_n(z)\cdot \nabla Y_n(z)dz\\
=&\int_{D^2}\text{div}(\nabla \tilde{u}_n\cdot Y_n)(z)dz\\
=&\int_{\partial D^2}\partial_r \tilde{u}_n(x)\cdot Y_n(x)dx\\
=&\int_{\mathbb{S}^1}(-\Delta)^\frac{1}{2}u_n (x) u_n'(x) \tilde{X}(x)dx.
\end{split}
\end{equation}
By (\ref{eq_proof_lemma_product_at_the_boundary}), the last expression converges to $0$ as $n\to \infty$. By Lemma \ref{lem_continuity_derivative_of_energy_on_D2},
\begin{equation}
\frac{d}{dt}\bigg\vert_{t=0}\int_{D^2}\lvert \nabla\tilde{u}(z+t\tilde{X}(z))\rvert^2 dz=\lim_{n\to \infty}\frac{d}{dt}\bigg\vert_{t=0}\int_{D^2}\lvert \nabla\tilde{u}_n(z+t\tilde{X}(z))\rvert^2 dz=0.
\end{equation}
This completes the proof of the claim. The Proposition now follows from Proposition \ref{prop_variational_criterion_conformality}.
\end{proof}

\begin{lem}\label{lem_continuity_derivative_of_energy_on_D2}
Let $u\in H^1(D^2,\mathbb{R}^m)$ and let $(u_n)_n$ be a sequence in $C^\infty(D^2,\mathbb{R}^m)$ approximating $u$ in $H^1(D^2)$ and a.e..
Let $X$ be a smooth vector field on $\overline{D^2}$ such that $X(x)\cdot x=0$ for $x\in\partial D^2$ and let $\phi_t$ denote its flow. Then
\begin{equation}
\lim_{n\to\infty}\frac{d}{dt}\bigg\vert_{t=0}\int_{D^2} \lvert \nabla u_n\circ\phi_t\rvert^2=\frac{d}{dt}\bigg\vert_{t=0}\int_{D^2} \lvert \nabla u\circ\phi_t\rvert^2.
\end{equation}
\end{lem}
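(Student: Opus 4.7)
The plan is to reduce both derivatives to integrals of a quadratic form in $\nabla u_n$ (resp.\ $\nabla u$) against a smooth, bounded matrix field, and then invoke the $L^2$-convergence $\nabla u_n\to \nabla u$.

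First, I would observe that because $X$ is tangent to $\partial D^2$ (the condition $X(x)\cdot x=0$ on $\partial D^2$), the flow $\phi_t$ restricts to a diffeomorphism of $\overline{D^2}$ for $t$ in some neighbourhood $I$ of $0$. For $v\in C^\infty(\overline{D^2},\mathbb{R}^m)$ and $t\in I$, the chain rule gives $\nabla(v\circ\phi_t)(z)=D\phi_t(z)^{T}\nabla v(\phi_t(z))$, so the change of variables $y=\phi_t(z)$ yields
\begin{equation*}
\int_{D^2}|\nabla(v\circ\phi_t)(z)|^2\,dz=\int_{D^2}\langle M_t(y)\nabla v(y),\nabla v(y)\rangle\,dy,
\end{equation*}
where
\begin{equation*}
M_t(y):=|\det D\phi_{-t}(y)|\,D\phi_t(\phi_{-t}(y))\,D\phi_t(\phi_{-t}(y))^{T}.
\end{equation*}
Since $\phi_t$ is smooth jointly in $(t,y)$ on $I\times\overline{D^2}$ and $M_0=\mathrm{Id}$, the family $(M_t)_{t\in I}$ is $C^1$ in $t$ with values in the space of $L^{\infty}$ matrix fields on $\overline{D^2}$. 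Writing $N(y):=\partial_t M_t(y)|_{t=0}$, Taylor's theorem gives, uniformly in $y\in\overline{D^2}$,
\begin{equation*}
M_t(y)=\mathrm{Id}+tN(y)+t^2R_t(y),\qquad \sup_{t\in I,\,y\in\overline{D^2}}\lVert R_t(y)\rVert\leq C_X.
\end{equation*}

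Next, I would establish that, for every $v\in H^{1}(D^2,\mathbb{R}^m)$, the map $t\mapsto \int_{D^2}|\nabla(v\circ\phi_t)|^2\,dz$ is differentiable at $t=0$ with derivative
\begin{equation*}
\Phi(v):=\int_{D^2}\langle N(y)\nabla v(y),\nabla v(y)\rangle\,dy.
\end{equation*}
Indeed, the identity above remains valid for $v\in H^1$ by approximation in $H^1$ (both sides being continuous in $\nabla v\in L^2$, using $M_t\in L^\infty$ uniformly), and the Taylor bound gives
\begin{equation*}
\Bigl|\tfrac{1}{t}\Bigl(\int_{D^2}|\nabla(v\circ\phi_t)|^2-\int_{D^2}|\nabla v|^2\Bigr)-\Phi(v)\Bigr|\leq C_X|t|\,\lVert \nabla v\rVert_{L^2}^{2}\xrightarrow[t\to 0]{}0.
\end{equation*}

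Finally, since $N\in L^{\infty}(D^2)$, the quadratic form $\Phi$ is continuous on $H^1(D^2,\mathbb{R}^m)$:
\begin{equation*}
|\Phi(u_n)-\Phi(u)|\leq \lVert N\rVert_{L^\infty}\bigl(\lVert \nabla u_n\rVert_{L^2}+\lVert\nabla u\rVert_{L^2}\bigr)\lVert \nabla u_n-\nabla u\rVert_{L^2},
\end{equation*}
which vanishes since $u_n\to u$ in $H^1(D^2)$. Combining this with the previous step gives the desired convergence
\begin{equation*}
\tfrac{d}{dt}\bigl|_{t=0}\int_{D^2}|\nabla(u_n\circ\phi_t)|^2=\Phi(u_n)\;\longrightarrow\;\Phi(u)=\tfrac{d}{dt}\bigl|_{t=0}\int_{D^2}|\nabla(u\circ\phi_t)|^2.
\end{equation*}

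The only subtle point is the justification that $\phi_t$ is a diffeomorphism of $\overline{D^2}$ for small $t$, which relies on the tangentiality hypothesis $X\cdot x=0$ on $\partial D^2$ and ensures the change of variables above is legitimate; the rest reduces to the fact that $|\nabla v|^2$ transforms into a smooth quadratic form in $\nabla v$ with uniform Taylor remainder, making the whole argument much softer than its $\mathbb S^1$ counterpart in Lemma \ref{lem_continuity_derivative_energy}.
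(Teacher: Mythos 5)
Your proposal is correct, and while the core computation (change of variables $y=\phi_t(z)$, writing the energy as a quadratic form in $\nabla v$ against a smooth matrix field, and a uniform Taylor bound) is essentially the same as the paper's, the organization is genuinely cleaner. The paper estimates the mixed difference quotient $\bigl|E(u_n\circ\phi_t)-E(u_n)-(E(u\circ\phi_t)-E(u))\bigr|\le C|t|\,\lVert u-u_n\rVert_{H^1}$ and then has to run a somewhat delicate $\liminf/\limsup$ argument (imported from Lemma \ref{lem_continuity_derivative_energy}) to deduce differentiability of $t\mapsto E(u\circ\phi_t)$ for non-smooth $u$ and convergence of the derivatives. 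You instead establish differentiability at $t=0$ directly for \emph{every} $v\in H^1(D^2)$, with explicit derivative $\Phi(v)=\int_{D^2}\langle N\nabla v,\nabla v\rangle$, and then the conclusion is a one-line continuity statement for the bounded quadratic form $\Phi$ on $H^1$. This buys you a shorter proof that bypasses the $\liminf/\limsup$ step entirely and gives slightly more information (a closed-form derivative for all $v\in H^1$); the paper's route, by contrast, is structured to mirror its treatment of the $\mathbb{S}^1$ case where no such clean change-of-variables formula is available. Both arguments depend in the same way on the tangentiality hypothesis to ensure $\phi_t$ is a diffeomorphism of $\overline{D^2}$, and both require only $\nabla u_n\to\nabla u$ in $L^2$, not the a.e.\ convergence.
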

\begin{proof}
Let $n\in \mathbb{N}$ and let $X$ and $\phi_t$ be like in the assumptions of the Lemma.
We compute, for any $t\in\mathbb{R}$,
\begin{equation}\label{eq_proof_lem_continuity_derivative_of_energy_on_D2_first_estimate}
\begin{split}
&\lvert E(u_n\circ\phi_t)-E(u_n)-(E(u\circ \phi_t)-E(u))\rvert\\
=&\left\lvert\int_{D^2}\lvert d u_n(\phi_t(x)) d\phi_t(x)\rvert^2-\lvert d u_n(x)\rvert^2-\left(\lvert d u(\phi_t(x))d\phi_t(x)\rvert^2-\lvert du(x)\rvert^2\right)dx\right\rvert\\
=&\left\lvert\int_{D^2}\left(\lvert d u_n(x')d\phi_t(\phi_{-t}(x'))\rvert^2-\lvert d u_n(x')d\phi_t(\phi_{-t}(x'))\rvert^2\right)\lvert \det ( d_{x'}\phi_{-t}(x'))\rvert dx'\right.\\
&\left.-\int_{D^2}\lvert d u_n(x)\rvert^2-\lvert d u(x)\rvert^2dx\right\rvert\\
\leq& \int_{D^2} \left\lvert\lvert d u_n(x)d\phi_t(\phi_{-t}(x))\rvert^2-\lvert d u(x)d\phi_t(\phi_{-t}(x))\rvert^2\right\rvert\left\lvert\lvert \det(d\phi_{-t}(x))\rvert-1\right\rvert dx\\
&+\int_{D^2}\left\lvert\lvert d u_n(x)d\phi_t(\phi_{-t}(x))\rvert^2-\lvert d u(x)d\phi_t(\phi_{-t}(x))\rvert^2\right.\\
&\phantom{+\int_{D^2}}\left.-\left(\lvert d u_n(x)\rvert^2-\lvert d u(x)\rvert^2\right)\right\rvert dx.
\end{split}
\end{equation}
Now we observe that for any fixed $0<\delta\leq 1$ for $\lvert t\rvert\leq \delta$, by the mean value theorem, for any $x\in D^2$
\begin{equation}
\lvert\lvert \det(d\phi_{-t}(x))\rvert-1\rvert\leq \lvert t\rvert\sup_{\lvert t\rvert\leq \delta, x'\in D^2}\left\lvert\partial_t \det(\phi_{-t}(x'))\right\rvert,
\end{equation}
and
\begin{equation}
\lvert d\phi_t(\phi_{-t}(x))-Id(x)\rvert\leq \lvert t\rvert\sup_{\lvert t\rvert\leq \delta, x'\in \mathbb{S}^1} \left\lvert\partial_t d\phi_t(\phi_{-t}(x'))\right\rvert,
\end{equation}
where $Id(x)$ denotes the identity map from $T_x\mathbb{S}^1$ to itself. We set
\begin{equation}
C_1:=\sup_{\lvert t\rvert\leq \delta, x\in D^2} \left\lvert\partial_t \det(\phi_{-t}(x))\right\rvert,\quad C_2:=\sup_{\lvert t\rvert\leq \delta, x\in D^2}\left\lvert\partial_t d\phi_t(\phi_{-t}(x))\right\rvert.
\end{equation}
Moreover, for any $x\in D^2$
\begin{equation}
\begin{split}
&\lvert d u_n(x)d\phi_t(\phi_{-t}(x))\rvert^2-\lvert d u(x)d\phi_t(\phi_{-t}(x))\rvert^2-(\lvert d u_n(x)\rvert^2-\lvert d u(x)\rvert^2)\\
=&du_n(x)(d\phi_t(\phi_{-t}(x))+Id(x))\cdot du_n(x)(d\phi_t(\phi_{-t}(x))-Id(x))\\
&-du(x)(d\phi_t(\phi_{-t}(x))+Id(x))\cdot du(x)(d\phi_t(\phi_{-t}(x))-Id(x))\\
=&(du_n(x)-du(x))(d\phi_t(\phi_{-t}(x))+Id(x))\cdot du_n(x)(d\phi_t(\phi_{-t}(x))-Id(x))\\
&+du(x)(d\phi_t(\phi_{-t}(x))+Id(x))\cdot(du_n(x)-du(x))(d\phi_t(\phi_{-t}(x))-Id(x)),
\end{split}
\end{equation}
Now let $\varepsilon>0$. If $n$ is so large that $\lVert u-u_n\rVert_{H^1}\leq \varepsilon$, if $\lvert t\rvert\leq \delta$, we obtain
\begin{equation}
\begin{split}
&\int_{D^2}\left\lvert\lvert d u_n (x)d\phi_t(\phi_{-t}(x))\rvert^2-\lvert d u(x)d\phi_t(\phi_{-t}(x))\rvert^2-(\lvert d u_n(x)\rvert^2-\lvert d u(x)\rvert^2)\right\rvert dx\\
\leq &\int_{D^2}\lvert d u_n(x) -d u(x)\rvert(\lvert t\rvert C_2+2)\rvert \lvert d u_n(x)\rvert C_2\\
&+\lvert du(x)\rvert(\lvert t\rvert C_2+2)\lvert d u_n(x) -d u(x)\rvert\lvert t\rvert C_2dx\\
\leq & 2\lVert du_n-du\rVert_{L^2}(\lvert t\rvert C_2+2)(\lVert du\rVert_{L^2}+\varepsilon)\lvert t\rvert C_2\leq \lvert t\rvert C_3\lVert u-u_n\rVert_{H^1},
\end{split}
\end{equation}
where $C_3:=2(C_2+2)C_2(\lVert u\rVert_{H^1}^2+\varepsilon)$.\\
Similarly, if $n$ is large enough, if $\lvert t\rvert\leq \delta$ we obtain
\begin{equation}
\begin{split}
&\int_{D^2}\left\lvert \lvert d u_n (x)d\phi_t(\phi_{-t}(x))\rvert^2-\lvert du(x)d\phi_t(\phi_t(x))\rvert^2\right\rvert dx\\
=&\int_{D^2}\lvert(d u_n(x)+d u(x))d\phi_t(\phi_{-t}(x))\cdot(d u_n(x)-du(x))d\phi_t(\phi_{-t}(x))\rvert dx\\
\leq&\lVert d u_n+du\rVert_{L^2}(\lvert t\rvert C_\phi^2+2)\lVert du_n-du\rVert_{L^2}(\lvert t\rvert C_2+2)\leq C_4\lVert u-u_n\rVert_{H^1},
\end{split}
\end{equation}
where $C_4:=(2\lVert u\rVert_{H^1}+\varepsilon)(C_2+2)$, and therefore
\begin{equation}
\begin{split}
&\int_{D^2} \left\lvert d u_n(x)d\phi_t(\phi_{-t}(x))\rvert^2-\lvert d u(x)d\phi_t(\phi_{-t}(x))\rvert^2\right\rvert\lvert\lvert \det(d\phi_{-t}(x))\rvert-1\rvert dx\\
\leq &C_\phi^1C_\phi^4\lvert t \rvert\lVert u-u_n\rVert_{H^1}.
\end{split}
\end{equation}
Thus if $n$ is large enough and $\lvert t\rvert\leq \delta$ there holds
\begin{equation}\label{eq_proof_lemma_estimate_continuity of derivative_D2}
\lvert E_{D^2}(u_n\circ\phi_t)-E_{D^2}(u_n)-(E_{D^2}(u\circ \phi_t)-E_{D^2}(u))\rvert\leq C\lvert t\rvert \lVert u-u_n\rVert_{H^1}.
\end{equation}
for some constant $C$ depending on $X$ and $u$.
Starting from equation (\ref{eq_proof_lemma_estimate_continuity of derivative_D2}) and arguing as in the last part of the proof of Lemma \ref{lem_continuity_derivative_energy}, we conclude that
the map $t\mapsto E_{D^2}(u\circ\phi_t)$ is differentiable in in $0$, and
\begin{equation}
\frac{d}{dt}\bigg\vert_{t=0}E(u_n\circ\phi_t)\to \frac{d}{dt}\bigg\vert_{t=0}E(u\circ\phi_t)
\end{equation}
as $t\to 0$. This concludes the proof of the Lemma.
\end{proof}

\begin{prop}\label{prop_variational_criterion_conformality}(Proposition V.3 in \cite{ConfInv})
Let $\tilde{u}$ in $H^1(D^2,\mathbb{R}^m)$. Then $\tilde{u}$ is conformal in $D^2$ if and only if $\tilde{u}$ satisfies
\begin{equation}
\frac{d}{dt}\bigg\vert_{t=0}\int_{D^2} \lvert \nabla \tilde{u}_t(x)\rvert^2dx=0,
\end{equation}
where $\tilde{u}_t(x):=\tilde{u}(x+tX(x))$ for any $x\in D^2$, for every $X\in C^\infty(\overline{D}^2,\mathbb{R}^2)$ such that $X(z)\cdot z=0$ for $z\in \partial D^2$.
\end{prop}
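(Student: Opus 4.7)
The plan is to identify the derivative $\frac{d}{dt}\vert_{t=0}\int_{D^2}|\nabla\tilde u_t|^2\,dx$ with the natural pairing between a stress--energy tensor and the variation field $X$, and then exploit holomorphicity of the Hopf differential to convert the vanishing condition into a pointwise statement. The forward direction is a direct computation: performing the change of variables $y=x+tX(x)$ (a diffeomorphism for small $|t|$, with Jacobian $1+t\,\mathrm{div}(X)+o(t)$) and Taylor-expanding gives the standard inner-variation formula
\begin{equation*}
\frac{d}{dt}\bigg|_{t=0}\int_{D^2}|\nabla \tilde u_t|^2\,dx = \int_{D^2}\bigl[2\,\partial_l\tilde u\cdot \partial_m\tilde u-\delta_{lm}|\nabla\tilde u|^2\bigr]\partial_l X^m\,dx.
\end{equation*}
The bracketed $2\times 2$ tensor $T_{lm}$ is symmetric and traceless, and conformality of $\tilde u$ (i.e.\ $|\partial_1\tilde u|=|\partial_2\tilde u|$ and $\partial_1\tilde u\cdot\partial_2\tilde u=0$ a.e.\ in $D^2$) is equivalent to $T\equiv 0$; in that case the integral vanishes for every admissible $X$. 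When $\tilde u$ is only in $H^1$, I would first justify the formula above by smooth approximation, combined with a uniform estimate in $t$ in the spirit of Lemma~\ref{lem_continuity_derivative_of_energy_on_D2}.

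For the converse, I pass to complex coordinates $z=x^1+ix^2$ and set $W:=X^1+iX^2$, $h(z):=\partial_z\tilde u\cdot_{\mathbb{C}^m}\partial_z\tilde u\in L^1(D^2)$. A short algebraic rearrangement (using $T_{11}+T_{22}=0$ and $T_{12}=T_{21}$) gives the complex reformulation
\begin{equation*}
\int_{D^2}T_{lm}\,\partial_l X^m\,dx = 8\,\mathrm{Re}\int_{D^2} h(z)\,\partial_{\bar z}W(z)\,dx^1dx^2.
\end{equation*}
Testing with arbitrary $W\in C^\infty_c(D^2,\mathbb{C})$ (which automatically satisfies the tangency condition) and replacing $W$ by $iW$ kills the real part as well, so $\partial_{\bar z}h=0$ in $\mathscr{D}'(D^2)$, hence by Weyl's lemma $h$ is holomorphic in $D^2$.

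To extract information from the boundary condition $X(z)\cdot z=0$ on $\partial D^2$, I use that $h$ is smooth in $D^2$ to apply Stokes' theorem on $\overline{rD^2}$ and let $r\nearrow 1$, obtaining for any admissible $W\in C^\infty(\overline{D^2},\mathbb{C})$
\begin{equation*}
\int_{D^2} h\,\partial_{\bar z}W\,dx^1 dx^2 = \frac{1}{2i}\int_{\partial D^2}h\,W\,dz=\frac{1}{2}\int_0^{2\pi} h(e^{i\theta})\,W(e^{i\theta})\,e^{i\theta}\,d\theta.
\end{equation*}
The condition $X(z)\cdot z=0$ on $\partial D^2$ translates into $W(z)=ia(\theta)z$ with $a$ real-valued, so the vanishing of the real part for every such $a$ yields $\mathrm{Im}\bigl(z^2h(z)\bigr)=0$ on $\partial D^2$. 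Then $z^2h(z)$ is holomorphic in $D^2$, its imaginary part is a harmonic function with zero boundary trace, hence identically zero in $D^2$; a real-valued holomorphic function is constant, and $z^2h(z)$ vanishes at the origin, forcing $h\equiv 0$. This is equivalent to the vanishing of $T$ a.e.\ in $D^2$, i.e.\ $\tilde u$ is conformal.

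I expect the main technical obstacle to be the integration by parts in the third step: since $\tilde u\in H^1(D^2)$ only, $h\in L^1(D^2)$ a priori has no trace on $\partial D^2$, so the boundary integral has to be interpreted via the $r\nearrow 1$ approximation using the smoothness of $h$ in the interior and integrability of $h W$ against the radial slices. A subsidiary difficulty is making the inner-variation formula rigorous for $H^1$ maps, which I would handle by mollification together with the uniform bounds already encountered in Lemma~\ref{lem_continuity_derivative_of_energy_on_D2}.
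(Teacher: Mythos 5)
You should first note that the paper does not actually prove this statement: it cites Proposition V.3 from \cite{ConfInv} without reproducing the argument, so there is no internal proof to compare against. That said, your proposed argument is the standard route (inner-variation identity $\Rightarrow$ stress--energy tensor, compactly supported variations $\Rightarrow$ holomorphic Hopf differential, boundary-tangent variations $\Rightarrow$ vanishing of the Hopf differential), and the forward implication and the holomorphicity step are correct.

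The genuine gap is exactly the one you flag but do not resolve: the passage
\begin{equation*}
\int_{D^2}h\,\partial_{\bar z}W\,dx^1dx^2=\frac{1}{2i}\int_{\partial D^2}hW\,dz
\end{equation*}
and the subsequent conclusion that $\mathrm{Im}(z^2h)$ has zero boundary trace. For $\tilde u\in H^1$ only, $h=\partial_z\tilde u\cdot_{\mathbb{C}^m}\partial_z\tilde u$ is merely in $L^1(D^2)$, and an $L^1$-holomorphic function need not have any boundary trace at all (e.g.\ $h(z)=(1-z)^{-1}$ is in $L^1(D^2)$ but its restriction to $\partial D^2$ is not integrable). ``Integrability of $hW$ against radial slices'' is not enough: the slice integrals $\int_{|z|=r}|h|\,d\theta$ may blow up as $r\nearrow1$, so the boundary line integral, and hence the zero-trace statement for the harmonic function $\mathrm{Im}(z^2h)$, are not justified. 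The claim that a harmonic function with ``zero boundary trace'' vanishes also tacitly uses a regularity that is not available here.

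The step can be repaired without any trace theory by testing with explicit polynomial fields and reading off the Taylor coefficients of $h$. Write $h(z)=\sum_{k\ge0}c_kz^k$ on $D^2$ (convergent in the interior). For $n\ge2$ set $W_1(z)=iz^{n+1}+iz\bar z^n$ and $W_2(z)=z^{n+1}-z\bar z^n$; both are smooth on $\overline{D^2}$ and on $\partial D^2$ take the form $ia(\theta)z$ with $a(\theta)=2\cos n\theta$, respectively $a(\theta)=2\sin n\theta$, so they are admissible. Then $\partial_{\bar z}W_1=inz\bar z^{n-1}$ and $\partial_{\bar z}W_2=-nz\bar z^{n-1}$, and a direct Fourier computation on circles $|z|=\rho<1$ (no interchange of an infinite sum with the $\rho$-integral is needed, only Fubini on each circle and then integration in $\rho$ against a bounded weight) gives
\begin{equation*}
\int_{D^2}h(z)\,z\bar z^{n-1}\,dx^1dx^2=\frac{\pi}{n}\,c_{n-2}.
\end{equation*}
Hence $\mathrm{Re}\int_{D^2}h\,\partial_{\bar z}W_1=-\pi\,\mathrm{Im}\,c_{n-2}$ and $\mathrm{Re}\int_{D^2}h\,\partial_{\bar z}W_2=-\pi\,\mathrm{Re}\,c_{n-2}$, and the hypothesis forces $c_{n-2}=0$ for every $n\ge2$, i.e.\ $h\equiv0$, which is conformality. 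This avoids any appeal to boundary traces of $h$ while keeping the same overall structure as your proposal.
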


\begin{rem}
We recall that if $u\in H^\frac{1}{2}(\mathbb{S}^1, \mathscr{M})$, for some smooth closed manifold $\mathscr{M}$, is a critical point of $E$ in $H^\frac{1}{2}(\mathbb{S}^1, \mathscr{M})$, i.e. if $u$ is an $\frac{1}{2}$-harmonic map in $H^\frac{1}{2}(\mathbb{S}^1, \mathscr{M})$, then $u\in C^\infty(\mathbb{S}^1)$ (see \cite{FreeBoundary}, Appendix E). Therefore any such $u$ is a fortiori also a stationary point of $E$, and all results of this section apply to it.

\end{rem}

In order to simplify the characterization of stationary points $u$ of the energy (\ref{eq_intro_ssec_half_Dirichlet_energy}) in terms of the distribution $A_u$ introduced in Lemma \ref{lem_distribution_description_for_derivative}, we now introduce a fractional analogous of the Hopf differential.
%
%
%
%

\begin{defn}\label{defn_fractional_Hopf}
For $u\in H^\frac{1}{2}(\mathbb{S}^1)$ let $v:=(-\Delta)^\frac{1}{2}u\in H^{-\frac{1}{2}}(\mathbb{S}^1)$ and let $v_+$ the distribution whose Fourier coefficients are given by
\begin{equation}\label{eq_def_fractional_Hopf_Fourier_coeffs_of_v_+}
\widehat{v_+}(k)=\begin{cases}
\widehat{v}(k)&\text{ if }k\geq 1\\
0&\text{ if }k\leq 0
\end{cases}
\end{equation}
We define the \textbf{$\frac{1}{2}$-fractional differential} of $u$ to be
\begin{equation}
\mathscr{H}_\frac{1}{2}(u):=v_+\cdot_{\mathbb{C}^m} v_+,
\end{equation}
where $\cdot_{\mathbb{C}^m}$ denotes the $\mathbb{C}$-scalar product in $\mathbb{C}^m$, corresponding to the convolution of the Fourier coefficients.
\end{defn}
In the following Lemma we prove that $\mathscr{H}_\frac{1}{2}(u)$ is a well defined distribution.
\begin{lem}\label{lem_fractional_Hopf_differential_well_def}
For any $u\in H^\frac{1}{2}(\mathbb{S}^1)$, $\mathscr{H}_\frac{1}{2}(u)$ is a well defined element of $\dot{H}^{-3}(\mathbb{S}^1)$, and the map
\begin{equation}
\mathscr{H}_\frac{1}{2}: H^\frac{1}{2}(\mathbb{S}^1)\to H^{-3}(\mathbb{S}^1),\quad u\mapsto \mathscr{H}_\frac{1}{2}(u)
\end{equation}
is continuous.
\end{lem}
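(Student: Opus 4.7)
The plan is to estimate everything through Fourier series, exploiting the fact that $v_+$ has support on positive frequencies only. Since $u \in H^{\frac{1}{2}}(\mathbb{S}^1)$, we have $\widehat{v_+}(k) = k\,\widehat{u}(k)\mathds{1}_{k\geq 1}$, and the summability condition $\sum_{k\geq 1} k\,|\widehat{u}(k)|^2 < \infty$ may be rewritten as
\begin{equation*}
\sum_{k\geq 1} \frac{|\widehat{v_+}(k)|^2}{k} = \sum_{k\geq 1} k\,|\widehat{u}(k)|^2 \leq [u]_{\dot{H}^{\frac{1}{2}}}^2.
\end{equation*}
I would first candidate-define $\mathscr{H}_{\frac{1}{2}}(u)$ through its Fourier coefficients: since the product in $\mathbb{C}^m$ corresponds to convolution of Fourier series, formally
\begin{equation*}
\mathscr{F}(v_+ \cdot_{\mathbb{C}^m} v_+)(k) = \sum_{\substack{a,b\in\mathbb{N}_{>0}\\ a+b=k}} a\,\widehat{u}(a)\cdot_{\mathbb{C}^m} b\,\widehat{u}(b) \qquad (k\geq 2),
\end{equation*}
and $\mathscr{F}(\mathscr{H}_{\frac{1}{2}}(u))(k) = 0$ for $k\leq 1$. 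The goal is then to verify that these coefficients are square-summable against the weight $(1+k^2)^{-3}$.

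The key step is the estimate. By Cauchy--Schwarz applied to each convolution,
\begin{equation*}
|\mathscr{F}(\mathscr{H}_{\frac{1}{2}}(u))(k)|^2 \leq (k-1)\sum_{\substack{a+b=k\\ a,b\geq 1}} |\widehat{v_+}(a)|^2\,|\widehat{v_+}(b)|^2,
\end{equation*}
so, summing over $k\geq 2$ against $(1+k^2)^{-3}$ and using $(a+b)^5 \geq ab(a+b)^3 \geq ab$ (because $ab\leq (a+b)^2$),
\begin{equation*}
\|\mathscr{H}_{\frac{1}{2}}(u)\|_{H^{-3}}^2 \;\leq\; C\sum_{a,b\geq 1}\frac{|\widehat{v_+}(a)|^2\,|\widehat{v_+}(b)|^2}{(a+b)^5} \;\leq\; C\left(\sum_{a\geq 1}\frac{|\widehat{v_+}(a)|^2}{a}\right)^{\!2} \leq\; C\,[u]_{\dot{H}^{\frac{1}{2}}}^{4}.
\end{equation*}
This shows $\mathscr{H}_{\frac{1}{2}}(u)\in H^{-3}(\mathbb{S}^1)$.

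For continuity, I would exploit the symmetric bilinear structure: setting $v_n := (-\Delta)^{\frac{1}{2}}u_n$ and $v := (-\Delta)^{\frac{1}{2}}u$, one writes
\begin{equation*}
\mathscr{H}_{\frac{1}{2}}(u_n) - \mathscr{H}_{\frac{1}{2}}(u) = \bigl((v_n)_+ - v_+\bigr)\cdot_{\mathbb{C}^m}(v_n)_+ + v_+\cdot_{\mathbb{C}^m}\bigl((v_n)_+ - v_+\bigr),
\end{equation*}
and the exact same Cauchy--Schwarz computation yields
\begin{equation*}
\|\mathscr{H}_{\frac{1}{2}}(u_n)-\mathscr{H}_{\frac{1}{2}}(u)\|_{H^{-3}} \leq C\,[u_n - u]_{\dot{H}^{\frac{1}{2}}}\bigl([u_n]_{\dot{H}^{\frac{1}{2}}} + [u]_{\dot{H}^{\frac{1}{2}}}\bigr),
\end{equation*}
which tends to $0$. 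The main subtlety is not any deep analysis but simply noticing that the positive-frequency truncation collapses the convolution sum to indices $a,b\geq 1$, which is precisely what provides the weights $a,b$ needed to absorb the $\dot{H}^{\frac{1}{2}}$ norm; without this one-sided support, an $H^{-\frac{1}{2}}$ function could not be squared in any reasonable sense.
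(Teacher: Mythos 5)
Your proof is correct, and it reaches the conclusion via a route that differs from the paper in two respects. First, for the norm bound, the paper works from the elementary pointwise estimate $\lvert\widehat{v}(k)\rvert \leq [u]_{\dot{H}^{1/2}}\lvert k\rvert^{1/2}$ (which follows immediately from the definition of the seminorm) and then sums the convolution crudely, whereas you apply Cauchy--Schwarz to the convolution sum and regroup so that the weight $1/(a+b)^5$ dominates $1/(ab)$. Both arguments land at $\|\mathscr{H}_{1/2}(u)\|_{H^{-3}}\leq C[u]_{\dot{H}^{1/2}}^2$; yours is slightly sharper in that it keeps the product structure intact and factors the final double sum. Second, the paper's proof stops at the quadratic norm estimate and does not actually argue continuity of the (non-linear) map $\mathscr{H}_{1/2}$. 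You supply the missing polarization step $\mathscr{H}_{1/2}(u_n)-\mathscr{H}_{1/2}(u)=((v_n)_+-v_+)\cdot_{\mathbb{C}^m}(v_n)_+ + v_+\cdot_{\mathbb{C}^m}((v_n)_+-v_+)$ together with the bilinear version of the estimate, which is the standard and correct way to deduce continuity from a quadratic bound; this makes your argument more complete than the one in the paper.
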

\begin{proof}
For any $u\in H^\frac{1}{2}(\mathbb{S}^1)$, let $v:=(-\Delta)^\frac{1}{2}u$ and let $v_+$ be the distribution defined in (\ref{eq_def_fractional_Hopf_Fourier_coeffs_of_v_+}).
First we observe that for any $k\in \mathbb{Z}$
\begin{equation}
\lvert \widehat{v}(k)\rvert^2=\lvert k\rvert^2\lvert \widehat{u}(k)\rvert^2\leq \lvert k\rvert^2 [u]_{\dot{H}^\frac{1}{2}}^2\lvert k\rvert^{-1}=[u]_{\dot{H}^\frac{1}{2}}^2\lvert k\rvert.
\end{equation}
Therefore for any $k\in \mathbb{Z}_{>0}$
\begin{equation}
\begin{split}
\left\lvert \mathscr{F}(v_+\cdot_{\mathbb{C}^m} v_+)(k)\right\rvert=&\left\lvert\sum_{\substack{ a,b\in\mathbb{N}_{>0}\\ a+b=k}}\widehat{v}(a)\cdot_{\mathbb{C}^m} \widehat{v}(b)\right\rvert\leq
\left\lvert\sum_{\substack{ a,b\in\mathbb{N}_{>0}\\ a+b=k}}[u]_{\dot{H}^\frac{1}{2}}^2\lvert m\rvert^\frac{1}{2}\lvert n\rvert^\frac{1}{2}\right\rvert\\
\leq & k[u]_{\dot{H}^\frac{1}{2}}^2 k^\frac{1}{2}k^\frac{1}{2}\leq [u]_{\dot{H}^\frac{1}{2}}^2 k^2
\end{split}
\end{equation}
We conclude that
\begin{equation}
\begin{split}
\left\lVert \mathscr{H}_\frac{1}{2}(u)\right\rVert_{H^{-3}}^2=&\sum_{k\in\mathbb{Z}_{>0}}\left\lvert \mathscr{F}\left(v_+\cdot v_+\right)(k)\right\rvert^2 (1+\lvert k\rvert^2)^{-3}\leq\sum_{k\in\mathbb{Z}_{>0}}[u]_{\dot{H}^\frac{1}{2}}^4 k^4 (1+\lvert k\rvert^2)^{-3}\\
\leq &[u]_{\dot{H}^\frac{1}{2}}^4\sum_{k\in\mathbb{Z}_{>0}}k^{-2}=\frac{\pi^2}{3}[u]_{\dot{H}^\frac{1}{2}}^4.
\end{split}
\end{equation}
\end{proof}

To see the analogy between the Hopf differential $\mathscr{H}$ and the $\frac{1}{2}$-fractional Hopf differential $\mathscr{H}_\frac{1}{2}$ consider $u\in C^\infty(\mathbb{S}^1)$. In this case, $\mathscr{H}(u)$ (as a function) extends continuously to $\overline{D^2}$ and $\mathscr{H}_\frac{1}{2}(u)$ is a smooth complex-valued function on $\mathbb{S}^1$. We compute
\begin{equation}
\begin{split}
Re\left(\mathscr{H}_\frac{1}{2}(u)\right)=&\frac{1}{4}\left[\lvert v_++\overline{v_+}\rvert^2-\lvert-i(v_+-\overline{v_+})\rvert^2\right]=\frac{1}{4}\left[\lvert v\rvert^2-\lvert H v\rvert^2\right]\\
=&\frac{1}{4}\left[\lvert (-\Delta)^\frac{1}{2}u\rvert^2-\lvert u'\rvert^2\right]=\frac{1}{4}\left[ \lvert\partial_r \tilde{u}\rvert^2-\lvert\partial_\theta \tilde{u}\rvert^2 \right]\bigg\vert_{\partial D^2}.
\end{split}
\end{equation}
and
\begin{equation}
\begin{split}
Im(\mathscr{H}_\frac{1}{2}(u))=&\frac{1}{2i}(v_+\cdot{\mathbb{C}^m} v_+-\overline{v_+}\cdot_{\mathbb{C}^m} \overline{v_+})=\frac{1}{2i}(v_++\overline{v_+})\cdot_{\mathbb{C}^m}(v_+-\overline{v_+})\\
=&\frac{1}{2}v\cdot H v
=\frac{1}{2}(-\Delta)^\frac{1}{2}u\cdot u'=-\frac{1}{2}\partial_r\tilde{u}\cdot \partial_\theta\tilde{u}\bigg\vert_{\partial D^2},
\end{split}
\end{equation}
where "$\cdot$" denotes the $\mathbb{R}$-scalar product in $\mathbb{R}^{m}$.
Therefore, in this case, $\mathscr{H}_\frac{1}{2}(u)$ coincides -up to multiplication by $-\overline{z}$- with the restriction of the Hopf differential $\mathscr{H}(\tilde{u})$ to $\partial D^2$, where $\tilde{u}$ is the harmonic extension of $u$.\\
Next we show how the $\frac{1}{2}$-fractional Hopf differential can be used to characterize stationary points of the energy (\ref{eq_intro_ssec_half_Dirichlet_energy}) in a simple way, and thus to reformulate in a simpler way the proof of the fact that for any $u\in H^\frac{1}{2}(\mathbb{S}^1,\mathbb{R}^m)$, $u$ is a stationary point of the energy (\ref{eq_intro_ssec_half_Dirichlet_energy}) if and only if its harmonic extension in $D^2$ is conformal.
A key observation for the proof of the following result is that $\mathscr{H}_\frac{1}{2}(u)$ consists only of positive frequencies;
therefore $Re[{\mathscr{H}_\frac{1}{2}(u)}]$ vanishes if and only if $Im[{\mathscr{H}_\frac{1}{2}(u)}]$ vanishes.

\begin{thm}\label{lem_equivalent_conditions_vanishing_H(u)}
Let $u\in H^\frac{1}{2}(\mathbb{S}^1)$ and let $\tilde{u}$ be its harmonic extension.
Then the following statements are equivalent:
\begin{enumerate}
\item  $\frac{1}{r}\lvert\partial_\theta \tilde{u}\rvert=\lvert \partial_r\tilde{u}\rvert$ in $D^2$,
\item $\partial_\theta \tilde{u}\cdot \partial_r \tilde{u}=0$ in $D^2$,
\item $\mathscr{H}_\frac{1}{2}(u)=0$,
\item $u$ is a stationary point of $E$.
\end{enumerate}
In particular, $u$ is a stationary point of the energy (\ref{eq_intro_ssec_half_Dirichlet_energy}) if ad only if $\tilde{u}$ is conformal in $D^2$.
\end{thm}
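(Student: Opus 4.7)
The plan is to leverage two facts already in hand: the Fourier characterization of stationary points from Lemma \ref{lem_indentity_Fourier_coefficients}, and the fact that $\mathscr{H}_\frac{1}{2}(u)$ is supported on strictly positive Fourier modes (by construction, its coefficients vanish for $k\leq 0$). The latter means that $\mathscr{H}_\frac{1}{2}(u)=0$ is equivalent to the vanishing of either its real or its imaginary part alone, since $\overline{v_+\cdot_{\mathbb{C}^m}v_+}$ only sees negative frequencies and $v_+\cdot_{\mathbb{C}^m}v_+$ only positive ones.

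First I would prove the equivalence $3.\Leftrightarrow 4.$ directly. By construction
\begin{equation*}
\mathscr{F}\bigl(\mathscr{H}_\frac{1}{2}(u)\bigr)(k)=\sum_{\substack{a,b\in\mathbb{N}_{>0}\\ a+b=k}} a\widehat{u}(a)\cdot_{\mathbb{C}^m} b\widehat{u}(b)\quad\text{for }k>0,
\end{equation*}
and vanishes for $k\leq 0$. So $\mathscr{H}_\frac{1}{2}(u)=0$ in $\mathscr{D}'(\mathbb{S}^1)$ if and only if the right-hand side is zero for all $k\in\mathbb{N}_{>0}$, which by Lemma \ref{lem_indentity_Fourier_coefficients} is exactly the stationarity condition for $E$.

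Next I would prove $1.\Leftrightarrow 2.\Leftrightarrow 3.$ by reduction to interior circles. For each $r\in (0,1)$ set $u_r:=P_r\ast u\in C^\infty(\mathbb{S}^1)$. A direct computation with the Poisson kernel, identical to the one carried out in equation (\ref{eq_proof_lemma_first_condition_conformality_Fcoeff_product_vs}), gives
\begin{equation*}
\mathscr{F}\bigl(\mathscr{H}_\frac{1}{2}(u_r)\bigr)(k)=r^k\,\mathscr{F}\bigl(\mathscr{H}_\frac{1}{2}(u)\bigr)(k)\quad\text{for every }k\in\mathbb{Z},
\end{equation*}
so $\mathscr{H}_\frac{1}{2}(u)=0$ is equivalent to $\mathscr{H}_\frac{1}{2}(u_r)=0$ for every (equivalently, some) $r\in(0,1)$. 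For the smooth function $u_r$ the discussion following Definition \ref{defn_fractional_Hopf} (together with the harmonic-extension identification $(-\Delta)^\frac{1}{2}u_r=\partial_r\tilde u(r,\cdot)$ and $u_r'=\partial_\theta\tilde u(r,\cdot)/1$) gives pointwise on $\mathbb{S}^1$
\begin{equation*}
\operatorname{Re}\mathscr{H}_\frac{1}{2}(u_r)=\tfrac{1}{4}\bigl(|\partial_r\tilde u(r,\cdot)|^2-|\partial_\theta\tilde u(r,\cdot)|^2\bigr),\qquad \operatorname{Im}\mathscr{H}_\frac{1}{2}(u_r)=-\tfrac{1}{2}\,\partial_r\tilde u(r,\cdot)\cdot\partial_\theta\tilde u(r,\cdot),
\end{equation*}
up to the elementary rescaling in $r$. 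Because $\mathscr{H}_\frac{1}{2}(u_r)$ has only positive Fourier modes, the vanishing of its real part on $\mathbb{S}^1$ already forces the full distribution to vanish (and likewise for its imaginary part). Hence on each circle $\{|z|=r\}$ the three conditions $1.$, $2.$, and $\mathscr{H}_\frac{1}{2}(u_r)=0$ are mutually equivalent; letting $r$ run over $(0,1)$ yields the global equivalence in $D^2$.

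The final sentence, that $u$ is a stationary point iff $\tilde u$ is conformal, follows since condition $1.$ is exactly the equality $|\partial_r\tilde u|=r^{-1}|\partial_\theta\tilde u|$, which combined with $2.$ is the statement that $d\tilde u$ maps the orthonormal frame $(\partial_r,r^{-1}\partial_\theta)$ to an orthogonal pair of equal norm. The only real subtlety is the low regularity at the boundary: $\mathscr{H}_\frac{1}{2}(u)$ is only an element of $H^{-3}(\mathbb{S}^1)$ for arbitrary $u\in H^\frac{1}{2}(\mathbb{S}^1)$, so the pointwise identifications above are legitimate only after the Poisson mollification $u\mapsto u_r$, which is why the argument is run on interior circles rather than directly at $r=1$. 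This is the same mechanism already used in Lemma \ref{lem_harmonic_extension_conformal_first_condition}, and it is where the positive-frequency support of $\mathscr{H}_\frac{1}{2}$ does the essential work of tying $1.$ and $2.$ together without any extra regularity assumption.
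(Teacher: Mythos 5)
Your proof is correct and follows essentially the same route as the paper's: the key computational engine in both is the identity $\mathscr{F}(\mathscr{H}_\frac{1}{2}(u_r))(k)=r^k\mathscr{F}(\mathscr{H}_\frac{1}{2}(u))(k)$ for the Poisson mollification $u_r=P_r\ast u$, combined with the observation that a distribution supported only on positive frequencies vanishes iff its real (or imaginary) part does. The only difference is how you close the equivalence cycle: you establish $3.\Leftrightarrow 4.$ directly by comparing the Fourier coefficients of $\mathscr{H}_\frac{1}{2}(u)$ with the stationarity condition of Lemma \ref{lem_indentity_Fourier_coefficients}, whereas the paper instead establishes $2.\Leftrightarrow 4.$ by appealing to the earlier Lemma \ref{lem_harmonic_extension_conformal_first_condition}; your variant is slightly more self-contained since it avoids rerunning that lemma, but both approaches ultimately bottom out in Lemma \ref{lem_indentity_Fourier_coefficients} and the same interior-circle computation.
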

\begin{proof}
First we show that $1.$ is equivalent to $3.$, let's denote $\tilde{u}$ the harmonic extension of $u$, and for any $r\in (0,1)$ let $u_r(\theta):=\tilde{u}(r,\theta)$ for any $\theta\in \mathbb{S}^1$. Also, let $v_r:=(-\Delta)^\frac{1}{2}u_r$ and ${v_r}_+:=\sum_{k\in\mathbb{Z}_{>0}}\widehat{v_r}(k)e^{ik\cdot}$. Then for any $r\in (0,1)$
\begin{equation}
\begin{split}
&\frac{1}{r^2}\lvert\partial_\theta \tilde{u}(\cdot, r)\rvert^2-\lvert \partial_r\tilde{u}(\cdot,r)\rvert^2=\frac{1}{r^2}\left(\lvert u_r'\rvert^2-\lvert (-\Delta)^\frac{1}{2}u_r\rvert^2\right)\\
=&\frac{1}{r^2}\left(\lvert{v_r}_++\overline{{v_r}_+}\rvert^2-\lvert {v_r}_+-\overline{{v_r}_+}\rvert^2\right)=\frac{4}{r^2}Re({v_r}_+\cdot_{\mathbb{C}^m}{v_r}_+).
\end{split}
\end{equation}
Now we observe that for any $k\in \mathbb{Z}$ $\widehat{v_r}(k)=\lvert k\rvert\widehat{u_r}(k)=\lvert k\rvert r^k\widehat{u}(k)$, therefore for any $k\in\mathbb{Z}$
\begin{equation}\label{eq_proof_lemma_Fouriercoeff_product_positive_parts}
\mathscr{F}\left({v_r}_+\cdot_{\mathbb{C}^m}{v_r}_+\right)(k)=\sum_{\substack{a,b\in\mathbb{Z}_{>0}\\ a+b=k}}r^kab\widehat{u}(a)\cdot_{\mathbb{C}^m}\widehat{u}(b)=r^k\mathscr{F}\left(\mathscr{H}_\frac{1}{2}(u)\right)(k).
\end{equation}
As ${v_r}_+\cdot_{\mathbb{C}^m}{v_r}_+$ consists only of positive coefficients, its real part vanishes if and only if all its Fourier coefficients vanish. Therefore $\frac{1}{r}\lvert\partial_\theta \tilde{u}\rvert=\lvert \partial_r\tilde{u}\rvert$ in $D^2$ if and only if $\mathscr{H}_\frac{1}{2}(u)=0$.
Now we show that $2.$ is equivalent to $3.$.
For any $r\in (0,1)$,
\begin{equation}
\begin{split}
\partial_\theta \tilde{u}(\cdot, r)\cdot \partial_r \tilde{u}(\cdot,r)=&\frac{1}{r}u_r'\cdot (-\Delta)^\frac{1}{2}u_r\\
=&\frac{1}{r}Hv_r\cdot v_r=- \frac{i}{r}({v_r}_+-\overline{v_r}_+)\cdot ({v_r}_++\overline{{v_r}_+})\\=&-\frac{i}{r} ({v_r}_+\cdot_{\mathbb{C}^m} {v_r}_+-\overline{{v_r}_+}\cdot_{\mathbb{C}^m}\overline{{v_r}_+})=2 Im({v_r}_+\cdot_{\mathbb{C}^m} {v_r}_+).
\end{split}
\end{equation}
Again, since ${v_r}_+\cdot_{\mathbb{C}^m} {v_r}_+$ consists only of positive frequencies, its imaginary part vanishes if and only if all its Fourier coefficients vanish. Thus, by (\ref{eq_proof_lemma_Fouriercoeff_product_positive_parts}), $\partial_\theta \tilde{u}\cdot \partial_r \tilde{u}=0$ in $D^2$ if and only if $\mathscr{H}_\frac{1}{2}(u)=0$, therefore $2.$ is equivalent to $4.$. This concludes the proof of the Lemma.

Finally, by Lemma \ref{lem_harmonic_extension_conformal_first_condition}, $u$ is a stationary point of $E$ if and only if (\ref{eq_lemma_orthogonality_of_the_derivatives}) holds.
\end{proof}

\begin{cor}\label{cor_characterization_conformal_functions_D2}
Let $u\in H^1(D^2)$ so that
\begin{equation}
\Delta u=0\text{ in the weak sense in }D^2.
\end{equation}
Let $\overline{u}:=Tr (u)$.
Then the following conditions are equivalent:
\begin{enumerate}
\item  $\frac{1}{r}\lvert\partial_\theta u\rvert=\lvert \partial_ru\rvert$ in $D^2$,
\item $\partial_\theta u\cdot \partial_r u=0$ in $D^2$,
\item $\mathscr{H}_\frac{1}{2}(\overline{u})=0$,
\item $\overline{u}$ is a stationary point of $E$.
\end{enumerate}
In particular, if any of these conditions hold, then $u$ is conformal in $D^2$.
\end{cor}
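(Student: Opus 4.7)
The plan is to deduce the corollary as a direct consequence of Theorem \ref{lem_equivalent_conditions_vanishing_H(u)}. First I would observe that, since $u\in H^1(D^2)$ is weakly harmonic, it admits a well-defined trace $\overline{u}\in H^{\frac{1}{2}}(\mathbb{S}^1)$, and by uniqueness for the Dirichlet problem in $H^1(D^2)$ (two $H^1$-harmonic functions with the same trace differ by an $H^1_0$ harmonic function, which must vanish) one has $u=\widetilde{\overline{u}}$, i.e. $u$ is exactly the harmonic extension of its own trace. Under this identification, the four statements of the corollary are literally the four statements of Theorem \ref{lem_equivalent_conditions_vanishing_H(u)} applied to $\overline{u}$, with $\tilde{\overline{u}}$ replaced by $u$; hence the equivalences follow at once.

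For the final assertion, I would use the variational/pointwise characterization of conformality. In polar coordinates the Euclidean metric on $D^2$ reads $ds^2=dr^2+r^2d\theta^2$, so that $\bigl\{\partial_r,\tfrac{1}{r}\partial_\theta\bigr\}$ is an orthonormal frame on $D^2\smallsetminus\{0\}$. By definition, a map $u\colon D^2\to\mathbb{R}^m$ is conformal at $z\neq 0$ precisely when $du_z$ sends this orthonormal frame to a pair of orthogonal vectors of equal length, that is,
\begin{equation*}
\partial_r u\cdot\partial_\theta u=0\qquad\text{and}\qquad |\partial_r u|=\tfrac{1}{r}|\partial_\theta u|,
\end{equation*}
which are precisely conditions (1) and (2). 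Since the corollary asserts these are equivalent, both hold whenever either does, so $u$ is conformal on $D^2\smallsetminus\{0\}$; the conformality extends to the origin by continuity, since $u$ is smooth in $D^2$ by interior elliptic regularity for the Laplace equation and the conformality conditions are closed conditions on $du$.

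There is no substantial obstacle: the only point that requires a small justification is the identification $u=\widetilde{\overline{u}}$, and this is a standard fact either via Fourier series on concentric circles $\{|z|=r\}$ (the Poisson representation (\ref{eq_harmonic_extension_Poisson_kernel}) recovers $u$ from $\overline{u}$) or via the uniqueness statement for the Dirichlet problem. Everything else is a transcription of Theorem \ref{lem_equivalent_conditions_vanishing_H(u)} plus the elementary pointwise characterization of conformality in polar coordinates.
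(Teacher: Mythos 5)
Your proof is correct and is essentially the paper's own proof: the paper also reduces the corollary to Theorem \ref{lem_equivalent_conditions_vanishing_H(u)} by first establishing that $u$ is the harmonic extension of its trace $\overline{u}$ (citing Lemmas \ref{lem_trace_operator} and \ref{lem_continuity_harmonic_extension_operator}, which encode precisely the uniqueness-for-the-Dirichlet-problem argument you sketch). Your additional paragraph spelling out why conditions (1) and (2) together mean $u$ is conformal on $D^2\smallsetminus\{0\}$ and extending to the origin by smoothness is a valid, slightly more explicit rendering of what the last line of Theorem \ref{lem_equivalent_conditions_vanishing_H(u)} already asserts.
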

\begin{proof}
By Lemmas \ref{lem_trace_operator} and \ref{lem_continuity_harmonic_extension_operator}, $u$ is the harmonic extension of $\overline{u}$. Therefore the result follows directly from Theorem \ref{lem_equivalent_conditions_vanishing_H(u)}.
\end{proof}

\section{Pohozaev identities}\label{sec:Pohozaev identities}
In this section, we aim to give an analogous formula to the Pohozaev identity for functions $u\in H^\frac{1}{2}(\mathbb{R})$ satisfying
\begin{equation}\label{eq_intro_Poh_nonlocal_Cauchy}
\begin{cases}
(-\Delta)^\frac{1}{2} u=f(u)&\text{ weakly in }\Omega\\
u=0&\text{ in }\Omega^c.
\end{cases}
\end{equation}
Here, $f:\mathbb{R}\to\mathbb{R}$ is a function in $C^{1,\alpha}(\mathbb{R})$ for some $\alpha>0$, and $\Omega\subset \mathbb{R}$ is an open bounded interval.\\
First we recall the analogous local Cauchy problem in two dimension:
\begin{equation}\label{eq_intro_Poh_local_Cauchy}
\begin{cases}
\Delta u=f(u)&\text{ weakly in }\Omega\\
u=0&\text{ on }\partial \Omega.
\end{cases},
\end{equation}
where $f\in C^1(\mathbb{R},\mathbb{R})$ and $\Omega$ is a bounded domain in $\mathbb{R}^2$ with $C^1$ boundary.
If $u\in C^2(\Omega)\cap C^1(\overline{\Omega})$ satisfies (\ref{eq_intro_Poh_local_Cauchy}), then the following Pohozaev identity holds:
\begin{equation}\label{eq_Pohozaev_local}
\int_{\Omega}x\cdot\nabla F(u)(x)dx=\int_{\partial\Omega}x\cdot\nabla u(x)\partial_{\nu} u(x)dx-\int_{\partial\Omega} x\cdot\nu(x)\frac{\lvert \nabla u(x)\rvert^2}{2}dx,
\end{equation}
where $F$ is the primitive function of $f$ such that $F(0)=0$, $\nu(x)$ is the the unit outward normal to $\partial\Omega$ at $x$ for any $x\in\partial\Omega$, and $\partial_\nu$ denotes the derivative in direction $\nu(x)$.
For convex domains $\Omega$ containing the origin, the Pohozaev identity (\ref{eq_Pohozaev_local}) can be deduce from the conformal invariance of the Dirichlet energy as follows. First we observe that by the divergence theorem
\begin{equation}
\begin{split}
\int_\Omega x\cdot \nabla F(u)(x)dx=&\int_\Omega x\cdot\nabla u(x)\Delta u(x)dx\\=&\frac{d}{d\lambda}\bigg\vert_{\lambda=1^-}\int_{\Omega} u_\lambda(x)\Delta u(x)dx\\
=&\frac{d}{d\lambda}\bigg\vert_{\lambda=1^-}\left[\int_{\partial\Omega}u_\lambda(x)\nabla u(x)\cdot\nu(x)dx-\int_\Omega \nabla u_\lambda(x)\cdot\nabla u(x)dx\right],
\end{split}
\end{equation}
where the derivatives has to be interpreted as a left derivatives.
Next we observe that by Leibnitz rule
\begin{equation}
\frac{d}{d\lambda}\bigg\vert_{\lambda=1^-}\int_\Omega \nabla u_\lambda(x)\cdot\nabla u(x)dx=\frac{1}{2}\frac{d}{d\lambda}\bigg\vert_{\lambda=1^-}\int_{\Omega}\lvert \nabla u_\lambda(x)\rvert^2dx,
\end{equation}
and since the Dirichlet energy is conformally invariant, for any $\lambda>0$
\begin{equation}
\int_{\frac{1}{\lambda}\Omega}\lvert \nabla u_\lambda(x)\rvert^2dx=\int_{\Omega}\lvert \nabla u(x)\rvert^2dx,
\end{equation}
and thus
\begin{equation}
\begin{split}
\frac{d}{d\lambda}\bigg\vert_{\lambda=1^-}\int_\Omega \lvert \nabla u_\lambda(x)\rvert^2dx=&-\lim_{\lambda\to 1^-}\int_{\frac{1}{\lambda}\Omega\smallsetminus \Omega}\frac{\lvert \nabla u_\lambda(x)\rvert^2}{\lambda-1}dx\\
=&\int_{\partial \Omega}x\cdot\nu(x)\lvert \nabla u(x)\rvert^2dx.
\end{split}
\end{equation}
Therefore
\begin{equation}
\int_\Omega x\cdot \nabla F(u)(x)dx=\int_{\partial \Omega} x\cdot \nabla u(x) \partial_\nu u(x)dx-\frac{1}{2}\int_{\partial \Omega}x\cdot\nu(x)\lvert \nabla u(x)\rvert^2dx.
\end{equation}
This concludes the proof of identity (\ref{eq_Pohozaev_local}).
In analogy to the local case and to the Pohozaev identities (\ref{eq_prop_Pohozaev_circle_dilation_integral_form}) and (\ref{eq_prop_conclusion_Pohozaev_identity_for_the_whole_R}), we would like to find an explicit expression for
\begin{equation}\label{eq_discussion_intro_Pohozaev_integral_to_be_evaluated_Pohozaev_interval}
\int_{\Omega} x(-\Delta)^\frac{1}{2}u(x)u'(x)dx,
\end{equation}
i.e. the integral of the product of the "generator of dilations in the domain" $\frac{d}{d\lambda}\big\vert_{\lambda=1}u(\lambda x)=u'(x)x$ with the left hand side of Equation (\ref{eq_intro_Poh_nonlocal_Cauchy}), for functions $u$ solving (\ref{eq_intro_Poh_nonlocal_Cauchy}). To this end we will make use of the invariance of the half Dirichlet energy under conformal variations of the domain, in particular under dilations. In fact, since the half Dirichlet energy satisfies (\ref{eq_prop_condition_symmetry_14_real_line}) with $X(x)=x$ for any $x\in \mathbb{R}$, we have
\begin{equation}
\begin{split}
0=&\frac{d}{d\lambda}\bigg\vert_{\lambda=1}\int_\mathbb{R}\left\lvert (-\Delta)^\frac{1}{4}u_\lambda(x)\right\rvert^2dx=2\frac{d}{d\lambda}\int_{\mathbb{R}}(-\Delta)^\frac{1}{4}u_\lambda(x)(-\Delta)^\frac{1}{4}u(x)dx\\
=&2\frac{d}{d\lambda}\int_\mathbb{R}u_\lambda(x)(-\Delta)^\frac{1}{2}u(x)dx
\end{split}
\end{equation}
if we assume that we can invert the order of differentiation and integration, and therefore
\begin{equation}\label{eq_discussion_heuristic_proof_Pohozaev_2}
\begin{split}
\int_\Omega xu'(x)(-\Delta)^\frac{1}{2}u(x)dx=&-\frac{d}{d\lambda}\bigg\vert_{\lambda=1}\int_{\Omega^c} u_\lambda (x)(-\Delta)^\frac{1}{2}u(x)dx\\=&-\frac{d}{d\lambda}\bigg\vert_{\lambda=1}\int_{( \frac{a}{\lambda}, \frac{b}{\lambda})\smallsetminus (a,b)} u_\lambda(x)(-\Delta)^\frac{1}{2}u(x)dx,
\end{split}
\end{equation}
where we denoted $a$ and $b$ the boundary points of $\Omega$ and where we assumed again that is possible to invert the order of differentiation and integration\footnote{We will see in the proof of Theorem \ref{prop_Pohozaev_identity_for_open_bounded_intervals_final} that it is possible to follow the idea outlined above for the left derivative in $\lambda$. In this case the first Equality in (\ref{eq_discussion_heuristic_proof_Pohozaev_2}) holds up to a multiplicative constant.}.
Thus we see that the quantity in (\ref{eq_discussion_intro_Pohozaev_integral_to_be_evaluated_Pohozaev_interval}) only depends on the values of $u$ and $(-\Delta)^\frac{1}{2}u$ in an arbitrary small neighbourhood of the boundary points $a$ and $b$. For this reason, in the following we will try to compute the asymptotics of $u(x)$ and $(-\Delta)^\frac{1}{2}u(x)$ for $x$ approaching the boundary points $a$ and $b$.\\
A similar argument can be used to show that the quantity
\begin{equation}
\int_\Omega (-\Delta)^\frac{1}{2}u(x)u'(x)dx,
\end{equation}
which we can regard as the integral of the product of the "generator of translation" $\frac{d}{d\lambda}\big\vert_{\lambda=1}u(x+\lambda)=u'(x)$ with the left hand side of Equation (\ref{eq_intro_Poh_nonlocal_Cauchy}),
only depends on the values of $u$ and $(-\Delta)^\frac{1}{2}u$ in an arbitrary small neighbourhood of the boundary points $a$ and $b$. Both results discussed here will be proved in Theorem \ref{prop_Pohozaev_identity_for_open_bounded_intervals_final}.\\
We remark that in the previous arguments we didn't use directly the fact that $u$ satisfies Equation (\ref{eq_intro_Poh_nonlocal_Cauchy}). However, we will see that this assumption ensures that $u$ is regular enough to carry out in a rigorous way the arguments sketched above (see also Remark \ref{rem_regularity_Pohozaev}). 

Before entering into the details of the argument, we make some observations about solutions $u$ of the Cauchy problem (\ref{eq_thm_Poh_nonlocal_Cauchy_general}). For the following, let $n\in\mathbb{N}_{>0}$ and let $\Omega\subset \mathbb{R}^n$ be a bounded open subset. 
\begin{lem}\label{lemma_equivalence minimization}
Let $f:\mathbb{R}\to \mathbb{R}$ be a $C^1$ function. Let $p\geq 1$ such that
\begin{equation}
\frac{1}{2}-\frac{1}{p}\leq \frac{s}{n},
\end{equation}
and assume that for any $x\in \mathbb{R}$
\begin{equation}\label{eq_lemma_bound_on_f}
\lvert f'(x)\rvert\leq C(1+\lvert x\rvert^p)
\end{equation}
for some constant $C$.
Let $s\in (0,1)$. Then $u\in H^s(\mathbb{R}^n)$ solves 
\begin{equation}\label{eq_lemma_fractional_Dirichlet_problem}
\begin{cases}
(-\Delta)^s u&=f(u)\text{ weakly in }\Omega\\
u&=0\text{ in }\Omega^c
\end{cases}
\end{equation}
if and only if $u$ is a critical point of the functional
\begin{equation}
E(u)=\int_\Omega \frac{1}{2}\left\lvert(-\Delta)^\frac{s}{2}u(x)\right\rvert^2-F(u(x))dx,
\end{equation}
where $F$ is the primitive function of $f$ such that $F(0)=0$; i.e. for any $\phi\in C^\infty_0(\Omega)$,
\begin{equation}
\frac{d}{dt}\bigg\vert_{t=0}E(u+t\phi)=0.
\end{equation}
\end{lem}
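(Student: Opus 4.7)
The plan is to show directly that the derivative $\frac{d}{dt}\big|_{t=0}E(u+t\phi)$ exists for every $\phi\in C^\infty_c(\Omega)$ and equals $\langle (-\Delta)^s u-f(u),\phi\rangle$, so that vanishing of the derivative for all test functions is equivalent to the Euler--Lagrange equation (\ref{eq_lemma_fractional_Dirichlet_problem}) holding weakly in $\Omega$ (the boundary condition $u=0$ on $\Omega^c$ being enforced by choosing $u\in H^s(\mathbb{R}^n)$ with support constraints implicit in the problem).

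First I would split $E(u+t\phi)=\frac{1}{2}\int_{\mathbb{R}^n}|(-\Delta)^{s/2}(u+t\phi)|^2\,dx-\int_\Omega F(u+t\phi)\,dx$ (note the quadratic part is naturally defined as an integral over $\mathbb{R}^n$ to make sense of the fractional seminorm). Expanding the quadratic part gives
\begin{equation*}
\tfrac{1}{2}[u+t\phi]_{\dot H^s}^2=\tfrac{1}{2}[u]_{\dot H^s}^2+t\int_{\mathbb{R}^n}(-\Delta)^{s/2}u\,(-\Delta)^{s/2}\phi\,dx+\tfrac{t^2}{2}[\phi]_{\dot H^s}^2,
\end{equation*}
and by Lemma \ref{lem_div_d_=_fraclap} (or equivalently the self-adjointness of $(-\Delta)^s$ together with $\phi\in\mathscr{S}(\mathbb{R}^n)$), the linear term equals $t\,\langle (-\Delta)^s u,\phi\rangle$. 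This part is routine.

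The more delicate step is differentiating the $F$-integral under the integral sign. The assumption $|f'(x)|\le C(1+|x|^p)$ with $\frac{1}{2}-\frac{1}{p}\le \frac{s}{n}$ is precisely what makes the Sobolev embedding $H^s(\mathbb{R}^n)\hookrightarrow L^{p+1}(\mathbb{R}^n)$ (or $L^q$ for any $q$ if $s\geq n/2$) hold, so $F(u),F(u+t\phi)\in L^1(\Omega)$: by Taylor expansion, $|F(y)|\le |F(0)|+|f(0)||y|+\tfrac{C}{2}(|y|^2+|y|^{p+2})$, each summand being integrable on the bounded set $\Omega$ since $u\in L^{p+2}(\Omega)$ (here the embedding above, combined with $\phi\in L^\infty(\Omega)$, gives what we need; if necessary I would rework the argument with the sharper Sobolev exponent to stay within the range allowed by the hypothesis). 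For the derivative I would use the mean value theorem to write
\begin{equation*}
\frac{F(u+t\phi)-F(u)}{t}=f(u+\theta_{x,t} t\phi)\,\phi(x)\quad\text{for some }\theta_{x,t}\in(0,1),
\end{equation*}
bound the right side uniformly in $t\in[-1,1]$ by $C(1+|u(x)|^{p+1}+|\phi(x)|^{p+1})|\phi(x)|$, which is integrable on $\Omega$ by the same Sobolev embedding and the compact support of $\phi$, and apply Lebesgue's dominated convergence theorem to obtain $\frac{d}{dt}\big|_{t=0}\int_\Omega F(u+t\phi)=\int_\Omega f(u)\phi\,dx$.

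Combining the two computations yields
\begin{equation*}
\frac{d}{dt}\bigg|_{t=0}E(u+t\phi)=\int_{\mathbb{R}^n}(-\Delta)^{s/2}u\,(-\Delta)^{s/2}\phi\,dx-\int_\Omega f(u)\phi\,dx=\langle (-\Delta)^s u-f(u),\phi\rangle,
\end{equation*}
where the last equality uses that $\phi$ is supported in $\Omega$ (so the distinction between $\mathbb{R}^n$ and $\Omega$ in the second integral is irrelevant). Since $\phi\in C^\infty_c(\Omega)$ was arbitrary, this vanishes for all $\phi$ if and only if $(-\Delta)^s u=f(u)$ as a distribution on $\Omega$, which is exactly the weak formulation of (\ref{eq_lemma_fractional_Dirichlet_problem}). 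The main obstacle is the integrability check for $F(u+t\phi)$ and the domination used for differentiating under the integral; everything else is formal manipulation with the self-adjointness of the fractional Laplacian.
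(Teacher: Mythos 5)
Your proof is correct and takes essentially the same approach as the paper's: both expand the quadratic part of $E$ exactly, use the Sobolev embedding guaranteed by the hypothesis on $p$ together with the growth bound on $f'$ to control the nonlinear term, and read off the Euler--Lagrange equation from the vanishing of the first variation. The only cosmetic difference is that the paper differentiates the $F$-integral via a second-order Taylor expansion with a Lagrange remainder involving $f'$, while you use the mean value theorem together with dominated convergence; the two are interchangeable, and the integrability caveat you flag concerning $u\in L^{p+1}(\Omega)$ versus $u\in L^p(\Omega)$ is present, unacknowledged, in the paper's proof as well.
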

\begin{proof}
Let $\phi\in C^\infty_0(\Omega)$. Then for any $t\in\mathbb{R}$, $x\in \Omega$,
\begin{equation}
F(u(x)+t\phi(x))=F(u)(x)+tf(u)(x)\phi(x)+t^2f'(u(x)+ts_{x, t}\phi(x))\phi^2(x),
\end{equation}
for some $s_{x, t}\in (0,1)$ depending on $x$ and $t$.
By Sobolev's Embedding, if $p$ is chosen as in the assumptions, $H^s(\mathbb{R}^n)\hookrightarrow L^p_{loc}(\mathbb{R}^n)$. Therefore, since $f$ satisfies (\ref{eq_lemma_bound_on_f}), there holds
\begin{equation}
\int_\Omega F(u(x)+t\phi(x))dx=\int_\Omega [F(u(x))+tf(u(x))\phi(x) ]dx+o(t).
\end{equation}
Thus
\begin{equation}
\begin{split}
E(u+t\phi)=\int_\Omega &\frac{1}{2}\left\lvert (-\Delta)^\frac{s}{2} u(x)\right\rvert^2+t(-\Delta)^\frac{s}{2}u(x)(-\Delta)^\frac{s}{2}\phi(x)+t^2\frac{1}{2}\left\lvert (-\Delta)^\frac{s}{2}\phi(x)\right\rvert^2\\
&-F(u(x))-tf(u(x))\phi(x) dx+o(t)
\end{split}
\end{equation}
This implies that
\begin{equation}\label{eq_proof_lemma_equivalence_critical_and_equation}
\frac{d}{dt}\bigg\vert_{t=0}E(u+t\phi)=\int_\Omega (-\Delta)^\frac{s}{2}u(x)(-\Delta)^\frac{s}{2}\phi(x)-f(u(x))\phi(x)dx.
\end{equation}
Now $u$ solves (\ref{eq_lemma_fractional_Dirichlet_problem}) in the sense of distributions if and only if the right hand side of (\ref{eq_proof_lemma_equivalence_critical_and_equation}) vanishes for any $\phi\in C^\infty_0(\Omega)$ and $u$ is a critical point of $E$ if and only if the left hand side of (\ref{eq_proof_lemma_equivalence_critical_and_equation}) vanishes for any $\phi\in C^\infty_0(\Omega)$. This concludes the proof of the Lemma.
\end{proof}

%
We also recall some regularity results for solutions of the Cauchy problem (\ref{eq_intro_Poh_nonlocal_Cauchy}). These will be useful in the proof of equation (\ref{eq_Poh_simplified}).

\begin{prop}\label{prop_interior_regularity}
(see the proof of Proposition 2.8 in \cite{Silvestre}) Let $s\in (0,1)$, let $u\in H^s\cap L^\infty(\mathbb{R}^n)$ be a weak solution of $(-\Delta)^s u=f(u)$ in $B_1(0)$, where $f\in C^{0,\alpha}(\mathbb{R}^n)$ for some $\alpha\in (0,1]$. Assume that  $\alpha+2s$ is not an integer. Then
\begin{equation}
\lVert u\rVert_{C^{\alpha+2s}(B_\frac{1}{2}(0))}\leq C(\lVert f\rVert_{C^\alpha(B_1(0))}+\lVert u\rVert_{L^\infty(\mathbb{R}^n)}),
\end{equation}
with a constant $C$ depending only on $\alpha$, $s$ and $n$.
\end{prop}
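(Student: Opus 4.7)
The plan is to follow the classical bootstrap argument for the fractional Laplacian developed in \cite{Silvestre}. The two analytic ingredients are: (S1) a Schauder-type interior estimate stating that if $(-\Delta)^s v = g$ in $B_1$ with $v \in L^\infty(\mathbb{R}^n)$ and $g \in C^{0,\gamma}(B_1)$, and $\gamma + 2s \notin \mathbb{N}$, then $v \in C^{\gamma + 2s}(B_{1/2})$ with
\begin{equation*}
\lVert v \rVert_{C^{\gamma+2s}(B_{1/2})} \le C\bigl(\lVert g \rVert_{C^{0,\gamma}(B_1)} + \lVert v \rVert_{L^\infty(\mathbb{R}^n)}\bigr);
\end{equation*}
and (S2) the borderline case $\gamma = 0$, yielding $v \in C^{0,\beta}_{\mathrm{loc}}$ for every $\beta \in (0, 2s)$. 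These are the core estimates of \cite{Silvestre}, already cited earlier in the excerpt in the form of Propositions 2.5--2.6.

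First I would localize. Fix a cutoff $\eta \in C^\infty_c(B_{7/8})$ with $\eta \equiv 1$ on $B_{3/4}$ and split $u = \eta u + (1-\eta)u$. The tail $(-\Delta)^s((1-\eta)u)$ is smooth on $B_{5/8}$ with every seminorm controlled by $\lVert u \rVert_{L^\infty(\mathbb{R}^n)}$, thanks to the pointwise integral representation (\ref{eq_prel_pointwise_definition_fraclap}) and the positive distance between $\mathrm{supp}((1-\eta)u)$ and $B_{5/8}$. Hence on progressively smaller balls $B_r$ with $r \in (1/2, 5/8)$, the localized equation reads $(-\Delta)^s(\eta u) = f(u) - (-\Delta)^s((1-\eta)u)$, where the second term is a smooth perturbation controlled by $\lVert u \rVert_{L^\infty}$.

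Next I would run the iteration. Starting from $u \in L^\infty$ one has $f(u) \in L^\infty$, and (S2) lifts $u$ to $C^{0,\beta_0}$ on a slightly smaller ball for any $\beta_0 \in (0, 2s)$. At the $k$-th stage, given $u \in C^{0,\beta_k}$, the Hölder composition rule
\begin{equation*}
\lvert f(u(x)) - f(u(y)) \rvert \le [f]_{C^{0,\alpha}} \lvert u(x) - u(y) \rvert^\alpha
\end{equation*}
yields $f \circ u \in C^{0, \min(1, \alpha \beta_k)}$, and applying (S1) on a shrunken ball produces $u \in C^{0, \beta_{k+1}}$ with $\beta_{k+1} = \min(1, \alpha \beta_k) + 2s$. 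After finitely many steps one reaches $\beta_k \ge 1$, so that $u$ is Lipschitz and $f \circ u \in C^{0,\alpha}$; one final application of (S1) delivers $u \in C^{\alpha + 2s}(B_{1/2})$ with the claimed estimate. The shrinking radii are chosen to remain bounded below by $1/2$, and the exclusion $\alpha + 2s \notin \mathbb{N}$ guarantees that the terminal Schauder step lies in the admissible range.

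The main obstacle is the bookkeeping of the iteration exponents: the composition $f \circ u$ of a $C^{0,\alpha}$ function with a $C^{0,\beta}$ function only gains the exponent $\alpha \beta$ when $\beta < 1$, so the recursion $\beta_{k+1} = \alpha \beta_k + 2s$ must be shown to cross the Lipschitz threshold $\beta_k \ge 1$ in finitely many steps; this is the case whenever $\alpha + 2s > 1$, while in the complementary regime one exploits directly that each iteration of (S1) improves the exponent by $2s$, so the target $\alpha + 2s$ is reached after finitely many Schauder applications (using the possibility of applying (S1) with $\gamma = 0$ rather than $\gamma = \alpha\beta_k$ when the latter would stall). Uniform control of constants under the decreasing sequence of radii, together with absorption of the tail contribution into $\lVert u \rVert_{L^\infty(\mathbb{R}^n)}$, then yields the stated inequality.
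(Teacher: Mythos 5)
The paper does not actually prove this proposition: it is quoted directly from Silvestre (``see the proof of Proposition 2.8 in [Silvestre]'') and used later in Lemma \ref{lem_interior_regularity_sulution_Poh} as a black box. So there is no in-paper proof to compare against. More importantly, an inspection of how the result is invoked shows that it is intended as a \emph{linear} interior Schauder estimate: in Lemma \ref{lem_interior_regularity_sulution_Poh} the authors first establish the H\"older regularity of the right-hand side $f(u)$ by hand (``since $f$ is Lipschitz, $f(u)\in C^{1/2}$'') and only then apply the proposition, reading ``$(-\Delta)^s u = f(u)$ with $f\in C^{0,\alpha}$'' simply as ``the equation has a right-hand side of class $C^{0,\alpha}(B_1)$.'' Under that reading, your bootstrap is unnecessary: the localization you describe (splitting $u = \eta u + (1-\eta)u$ and bounding the smooth tail via the integral kernel) followed by a single application of the interior Schauder estimate (S1) with $\gamma=\alpha$ already yields $u\in C^{\alpha+2s}(B_{1/2})$.

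If one instead insists on your nonlinear reading (genuine semilinear $f\circ u$ with $f\in C^{0,\alpha}(\mathbb{R})$, $\alpha<1$), then the bootstrap you set up has a real gap. The recursion is $\beta_{k+1}=\alpha\beta_k+2s$ for $\beta_k<1$, whose fixed point is $\beta^*=\tfrac{2s}{1-\alpha}$. A direct computation gives
\begin{equation*}
\beta^*-(\alpha+2s)=\frac{\alpha(\alpha+2s-1)}{1-\alpha},
\end{equation*}
so whenever $\alpha+2s<1$ one has $\beta^*<\alpha+2s$: the iterates increase monotonically to $\beta^*$ and never reach the target exponent, let alone the Lipschitz threshold $1$. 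Your claim that in this regime ``each iteration of (S1) improves the exponent by $2s$'' is not correct: the per-step gain is $\beta_{k+1}-\beta_k = 2s-(1-\alpha)\beta_k$, which shrinks to zero as $\beta_k\to\beta^*$. The suggestion to fall back on (S1) with $\gamma=0$ also cannot help, since that only produces the weaker exponent $2s-\varepsilon$. The bootstrap as written therefore closes only when $\alpha+2s\geq 1$ or $\alpha=1$ (in the latter case $f\circ u$ inherits the full exponent $\min(1,\beta_k)$ and the gain per step is exactly $2s$). For the subcritical regime $\alpha+2s<1$ a different argument would be needed — but, as noted above, this regime does not actually arise here because the statement is really the linear estimate.
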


\begin{prop}\label{prop_bdry_regularity}
(Proposition 1.1 in \cite{RosOtonBdryreg}) Let $s\in (0,1)$, let $\Omega\subset \mathbb{R}^n$ be any bounded $C^2$ domain and let $u\in H^s(\mathbb{R}^n)$ be a weak solution to
\begin{equation}
\begin{cases}
(-\Delta)^s u=f &\text{ in the weak sense in }\Omega\\
u=0 &\text{ in }\Omega^c
\end{cases}
\end{equation}
with $f\in L^\infty(\Omega)$. Then $u\in C^s(\overline{\Omega})$ and
\begin{equation}
\lVert u\rVert_{C^s(\overline{\Omega})}\leq C\lVert f\rVert_{L^\infty(\Omega)},
\end{equation}
with a constant $C$ depending only on $s$ and $n$.
\end{prop}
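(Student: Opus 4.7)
The plan is to prove the statement via a barrier argument combined with interior Hölder estimates and a scaling/rescaling procedure, following the strategy of Ros-Oton--Serra. The proof splits into three main parts: an $L^\infty$-bound refined to a sharp boundary decay estimate, interior Hölder regularity, and a patching argument combining the two.

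First I would establish the $L^\infty$ bound $\lVert u\rVert_{L^\infty(\mathbb{R}^n)}\leq C \lVert f\rVert_{L^\infty(\Omega)}$ by comparison with the torsion function $v_\Omega$ solving $(-\Delta)^s v_\Omega=1$ in $\Omega$, $v_\Omega=0$ in $\Omega^c$, which is bounded since $\Omega$ is bounded and $C^2$. The key refinement is the sharp decay estimate
\begin{equation*}
\lvert u(x)\rvert\leq C \lVert f\rVert_{L^\infty(\Omega)}\, d(x)^s,\qquad d(x):=\text{dist}(x,\Omega^c).
\end{equation*}
This I would obtain by constructing an explicit barrier: for the half-space or for a ball, $\phi(x)=(x_n)_+^s$ (resp.\ $(1-\lvert x\rvert^2)_+^s$) satisfies $(-\Delta)^s\phi=0$ in the half-space (resp.\ constant in the ball), and by $C^2$-regularity of $\partial\Omega$, near each boundary point one can trap $\Omega$ between interior and exterior osculating balls and use these as barriers via the comparison principle for the fractional Laplacian.

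Second, I would invoke Proposition \ref{prop_interior_regularity} (interior Hölder regularity) applied after rescaling. Given $x_0\in\Omega$ with $r:=d(x_0)>0$, consider $\tilde u(y):=u(x_0+ry)-u(x_0)$ for $y\in B_1(0)$. Cutting off the far-field contribution using the $L^\infty$ bound on $u$, one checks that $\tilde u$ satisfies $(-\Delta)^s\tilde u=\tilde f$ in $B_{1/2}(0)$ with $\lVert \tilde f\rVert_{L^\infty}\leq C r^{2s}\lVert f\rVert_{L^\infty(\Omega)}$ plus tail terms controlled by $\lVert u\rVert_{L^\infty}$. Interior regularity (applied with any Hölder exponent $\alpha<s$) combined with rescaling then yields $[u]_{C^s(B_{r/4}(x_0))}\leq C\lVert f\rVert_{L^\infty}$, uniformly in $r$.

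The main obstacle, and the third step, is stitching these two estimates together to get $C^s$-continuity \emph{up to the boundary}. For $x,y\in\Omega$ set $r:=\max(d(x),d(y))$. If $\lvert x-y\rvert\geq r/2$, the boundary decay $\lvert u(x)\rvert+\lvert u(y)\rvert\leq C(d(x)^s+d(y)^s)\leq C r^s \leq C\lvert x-y\rvert^s$ handles the case directly. If instead $\lvert x-y\rvert<r/2$, then both points lie in a ball $B_{r/2}(x_0)\subset\Omega$ for some interior point, and the interior estimate above gives $\lvert u(x)-u(y)\rvert\leq C\lvert x-y\rvert^s\lVert f\rVert_{L^\infty}$. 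Combining the two cases and using that $u\equiv 0$ on $\Omega^c$ (so the case $x\in\Omega$, $y\in\Omega^c$ reduces to the boundary-decay estimate) yields
\begin{equation*}
\lvert u(x)-u(y)\rvert\leq C\lvert x-y\rvert^s\lVert f\rVert_{L^\infty(\Omega)}\qquad\text{for all }x,y\in\mathbb{R}^n,
\end{equation*}
which is the claimed estimate. The delicate point throughout is the coupling between the nonlocal tail terms arising in the rescaled equation and the $L^\infty$ bound; this is why both the $L^\infty$-estimate and the sharp boundary decay have to be established before the interior regularity can be leveraged uniformly in $r$.
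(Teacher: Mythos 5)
The paper does not prove this result; it quotes it directly as Proposition 1.1 of Ros-Oton--Serra (the cited \cite{RosOtonBdryreg}), so there is no in-paper proof to compare against. Your sketch correctly reproduces the structure of the original Ros-Oton--Serra argument: $L^\infty$-bound and sharp boundary decay $\lvert u\rvert\leq C\lVert f\rVert_{L^\infty}d^s$ via barriers built from the explicit one-dimensional and radial $s$-harmonic profiles, interior $C^s$-regularity by rescaling to scale $r=d(x_0)$, then a two-case patching: when $\lvert x-y\rvert\gtrsim\max(d(x),d(y))$ use the decay, otherwise use the interior estimate in $B_{r/2}(x_0)$.

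One point that needs to be tightened is the invocation of Proposition \ref{prop_interior_regularity}. As stated in the paper (and in Silvestre's Propositions 2.5/2.8), that estimate requires $f\in C^{0,\alpha}$, whereas here $f$ is only in $L^\infty(\Omega)$; it therefore does not directly apply. What is needed is the interior Hölder estimate with merely bounded right-hand side, namely $\lVert u\rVert_{C^\beta(B_{1/2})}\leq C\bigl(\lVert f\rVert_{L^\infty(B_1)}+\lVert u\rVert_{L^\infty(\mathbb{R}^n)}\bigr)$ for $\beta<2s$ (truncated at $\beta=1$ when $2s>1$), which follows from the Caffarelli--Silvestre extension or is proved directly in the Ros-Oton--Serra paper. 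Relatedly, the rescaling step deserves a more careful treatment than ``cutting off the far-field contribution'': after undoing the scaling one needs $[\tilde u]_{C^s(B_{1/4})}\leq C r^s\lVert f\rVert_{L^\infty}$, and the interior estimate only gives $C\bigl(r^{2s}\lVert f\rVert_{L^\infty}+\lVert\tilde u\rVert_{L^\infty(\mathbb{R}^n)}\bigr)$. The $d^s$-decay controls $u$ only in a neighborhood of $\partial\Omega$, not globally, so controlling $\lVert\tilde u\rVert_{L^\infty(\mathbb{R}^n)}$ at the right order $r^s$ requires either a localization argument with explicit handling of the nonlocal tail, or the weighted interior estimate that Ros-Oton--Serra use precisely for this purpose. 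You flag this as ``the delicate point,'' which is accurate; it is also where the actual work of the proof lies and should not be left as a remark in a full write-up.
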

If we assume that $f(u)\in L^\infty(\Omega)$ in (\ref{eq_intro_Poh_nonlocal_Cauchy}),  Proposition \ref{prop_bdry_regularity} implies that $u\in C^\frac{1}{2}(\overline{\Omega})$. Then we can apply Proposition \ref{prop_interior_regularity}:

\begin{lem}\label{lem_interior_regularity_sulution_Poh}
Let $u\in H^\frac{1}{2}(\mathbb{R}^n)$ be a solution of
\begin{equation}
\begin{cases}
(-\Delta)^s u=f(u)&\text{ weakly in }\Omega\\
u=0&\text{ in }\Omega^c,
\end{cases}
\end{equation}
where $\Omega\subset \mathbb{R}^n$ is an open bounded set. Assume that $f\in C^{1,\alpha}(\mathbb{R})$ for some $\alpha>0$ and that $f(u)\in L^\infty(\mathbb{R})$. Then for any open subset $\tilde{\Omega}$ compactly contained in $\Omega$, $u\in C^{2,\varepsilon}(\tilde{\Omega})$ for some $\varepsilon>0$.
\end{lem}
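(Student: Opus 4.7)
The plan is a standard bootstrap: first apply the global boundary regularity of Proposition~\ref{prop_bdry_regularity}, and then iterate the interior regularity of Proposition~\ref{prop_interior_regularity} twice, using the chain rule on $f(u)$ at each step.

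\medskip

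\noindent\textbf{Step 1 (boundary regularity).} Since $f(u)\in L^\infty(\mathbb{R})$ and $\Omega$ is bounded, Proposition~\ref{prop_bdry_regularity} with $s=\tfrac12$ gives $u\in C^{1/2}(\overline{\Omega})$ with $\lVert u\rVert_{C^{1/2}(\overline{\Omega})}\le C\lVert f(u)\rVert_{L^\infty}$. Since $u\equiv 0$ in $\Omega^c$, the bound propagates to all of $\mathbb{R}^n$, so $u\in C^{1/2}(\mathbb{R}^n)\cap L^\infty(\mathbb{R}^n)$.

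\medskip

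\noindent\textbf{Step 2 (first interior upgrade).} As $f\in C^{1,\alpha}\subset C^{0,1}_{\mathrm{loc}}$, the composition $f(u)$ inherits the H\"older regularity of $u$, so $f(u)\in C^{0,1/2}(\mathbb{R}^n)$. Fix an open set $\tilde{\Omega}\Subset\Omega$ and choose intermediate open sets $\tilde{\Omega}\Subset\tilde{\Omega}_{2}\Subset\tilde{\Omega}_{1}\Subset\Omega$. Proposition~\ref{prop_interior_regularity}, applied via a covering argument to balls $B_r(x_0)\subset B_{2r}(x_0)\subset\Omega$ centred at points $x_0\in\tilde{\Omega}_1$ (and after rescaling), is applicable with $\alpha=\tfrac12$, $s=\tfrac12$, and $\alpha+2s=\tfrac32\notin\mathbb{Z}$. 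It yields $u\in C^{3/2}(\tilde{\Omega}_1)$, i.e.\ $u\in C^{1,1/2}(\tilde{\Omega}_1)$.

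\medskip

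\noindent\textbf{Step 3 (second interior upgrade).} Set $\beta:=\min(\alpha,\tfrac12)$. On $\tilde{\Omega}_1$, the chain rule gives $\partial_i[f(u)] = f'(u)\partial_i u$. Since $f'\in C^{0,\alpha}(\mathbb{R})$ and $u$ is locally Lipschitz on $\tilde{\Omega}_1$, $f'(u)\in C^{0,\alpha}(\tilde{\Omega}_1)$; combined with $\partial_i u\in C^{0,1/2}(\tilde{\Omega}_1)$ this gives $f(u)\in C^{1,\beta}(\tilde{\Omega}_1)$.
Differentiating the equation in the distributional sense, using a smooth cutoff $\eta$ supported in $\tilde{\Omega}_1$ and equal to $1$ on $\tilde{\Omega}_2$, the function $w:=\partial_i(\eta u)\in H^{1/2}\cap L^\infty(\mathbb{R}^n)$ solves
\begin{equation*}
(-\Delta)^{1/2} w \;=\; \partial_i[f(u)] + R \qquad \text{weakly in } \tilde{\Omega}_{2},
\end{equation*}
where the remainder $R$, coming from the commutator with $\eta$, lies in $C^{0,\beta}(\tilde{\Omega}_2)$ (it is a smooth function of $u$, $\nabla u$ and derivatives of $\eta$, all of which are at least $C^{0,\beta}$). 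Hence the right-hand side belongs to $C^{0,\beta}(\tilde{\Omega}_2)$. A further application of Proposition~\ref{prop_interior_regularity} with $\alpha=\beta$, $s=\tfrac12$, $\alpha+2s=1+\beta\notin\mathbb{Z}$, gives $w\in C^{1+\beta}(\tilde{\Omega})$ for each $i$, and therefore $u\in C^{2+\beta}(\tilde{\Omega})=C^{2,\beta}(\tilde{\Omega})$. Taking $\varepsilon:=\beta$ concludes the proof.

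\medskip

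\noindent\textbf{Main obstacle.} The delicate point is Step~3: Proposition~\ref{prop_interior_regularity} is phrased for H\"older exponents in $(0,1]$, so one cannot feed $f(u)\in C^{1,\beta}$ directly into it. The fix is to differentiate the equation and apply the proposition to $\partial_i u$ instead. Making this rigorous requires localizing with a cutoff and controlling the commutator of $(-\Delta)^{1/2}$ with the cutoff, which is where the bookkeeping of H\"older norms on the nested domains $\tilde{\Omega}\Subset\tilde{\Omega}_2\Subset\tilde{\Omega}_1\Subset\Omega$ is used.
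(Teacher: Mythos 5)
Your bootstrap follows the same route as the paper's proof: boundary regularity from Proposition~\ref{prop_bdry_regularity}, then two passes through the interior Schauder estimate of Proposition~\ref{prop_interior_regularity}, with the chain rule on $f(u)$ and a differentiation of the equation in between. The notable difference is your Step~3: the paper applies Proposition~\ref{prop_interior_regularity} directly to $u'$ and $(f(u))'$ on a nested interior set, without localizing, whereas you first multiply by the cutoff $\eta$ and work with $w=\partial_i(\eta u)$, absorbing the commutator into a remainder $R$. Your version is actually the more careful one, because $u'$ is \emph{not} globally bounded on $\mathbb{R}^n$ (near $\partial\Omega$ one has $u\sim \mathrm{dist}(\cdot,\partial\Omega)^{1/2}$, hence $|u'|\sim \mathrm{dist}^{-1/2}$), so the hypothesis $u'\in L^\infty(\mathbb{R}^n)$ in Proposition~\ref{prop_interior_regularity} fails for the paper's choice, while your $\partial_i(\eta u)$ is compactly supported and $C^{0,1/2}$, hence globally bounded. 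Two small points to tighten: (i) your description of $R$ as ``a smooth function of $u$, $\nabla u$ and derivatives of $\eta$'' is imprecise --- the nonlocal commutator is $\int (1-\eta(y))u(y)|x-y|^{-n-1}\,dy$, a global functional of $u$, which is smooth on $\tilde\Omega_2$ only because $1-\eta$ is supported at positive distance from $\tilde\Omega_2$; and (ii) the claim $\partial_i(\eta u)\in H^{1/2}(\mathbb{R}^n)$ is asserted but not justified --- a compactly supported $C^{0,1/2}$ function is marginally \emph{not} in $H^{1/2}$ (logarithmic divergence of the Gagliardo seminorm), so one should either observe that the Schauder estimate of Proposition~\ref{prop_interior_regularity} really only uses the $L^\infty$ control (as its conclusion suggests), or replace $H^{1/2}$ by $H^{\sigma}$ for some $\sigma<1/2$, which is all that compact $C^{0,1/2}$ support gives. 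These same issues are present, unaddressed, in the paper's version of the argument.
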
 
\begin{proof}
As observed above, since $f(u)\in L^\infty$, $u\in C^{\frac{1}{2}}(\overline{\Omega})$ by Proposition \ref{prop_bdry_regularity}. Thus, since $f$ is Lipschitz, $f(u)\in C^{\frac{1}{2}}$. Then, by Proposition \ref{prop_interior_regularity}, $u\in C^{\frac{3}{2}}(\Omega_1)$ and therefore $f(u)\in C^{1+\beta}(\Omega)$, where $\beta=\min\lbrace\alpha,\frac{1}{2}\rbrace$ and $\Omega_1$ is an open subset compactly contained in $\Omega$. In particular, $u'\in C^\frac{1}{2}(\Omega_1)$ and satisfies $(-\Delta)^\frac{1}{2}u'=(f(u))'$ weakly, where $(f(u))'\in C^{0,\beta}(\Omega_1)$. Thus, by Proposition \ref{prop_interior_regularity}, $u'\in C^{1,\beta}(\Omega_2)$ for any open subset $\Omega_2$ compactly contained in $\Omega_1$. Now one observes that it is possible to choose $\Omega_1$, $\Omega_2$ such that $\tilde{\Omega}\subset\Omega_2$. In this way one obtain that $u\in C^1(\tilde{\Omega})$ and $u'\in C^{1,\beta}(\tilde{\Omega})$. This concludes the proof of the Lemma.
\end{proof}

\subsection{Asymptotics of $u$ and $(-\Delta)^\frac{1}{2}u$ around the boundary points}\label{ssec:Asymptotics}

In this section, we try to study the asymptotic of $u$ and $(-\Delta)^\frac{1}{2}u$ around the boundary points $a$ and $b$ for solutions $u\in H^\frac{1}{2}(\mathbb{R},\mathbb{R})$ of (\ref{eq_intro_Poh_nonlocal_Cauchy}). We will then use them to give a proof of the Pohozaev identity (\ref{eq_Poh_simplified}).
First we make some remarks about the regularity of $u$:

\begin{rem}
Let $u\in H^\frac{1}{2}(\mathbb{R},\mathbb{R})$ be a solution of (\ref{eq_intro_Poh_nonlocal_Cauchy}). By Proposition \ref{prop_bdry_regularity}, $u\in C^\frac{1}{2}(\overline{\Omega})$ and by Lemma \ref{lem_interior_regularity_sulution_Poh}, $u\in C^2(\tilde{\Omega})$ for any domain $\tilde{\Omega}$ compactly contained in $\Omega$.
Also, we claim that $(-\Delta)^\frac{1}{2}u\in L^1\cap L^{2,\infty}(\mathbb{R})$. Indeed $(-\Delta)^\frac{1}{2}u=f(u)$ in $\Omega$ and thus in particular $(-\Delta)^\frac{1}{2}u\in L^2(\Omega)$. Moreover, if $x\in \Omega^c$,
\begin{equation}\label{eq_proof_prop_estimate_derivative}
\begin{split}
\left\lvert(-\Delta)^\frac{1}{2}u(x)\right\rvert&=\left\lvert\frac{1}{\pi} \int_a^b\frac{u(y)}{\lvert x-y\rvert^2}dy\right\rvert\leq\frac{1}{\pi}\int_a^b \frac{[u]_{C^\frac{1}{2}}\lvert y-x\rvert^\frac{1}{2}}{\lvert x-y\rvert^2}dy\\
&\leq \frac{1}{\pi}[u]_{C^\frac{1}{2}}\int_a^b\frac{1}{\lvert y-x\rvert^\frac{3}{2}}dy=\frac{2}{\pi}[u]_{C^\frac{1}{2}}\left\lvert \frac{1}{\lvert b-x\rvert^\frac{1}{2}}-\frac{1}{\lvert a-x\rvert^\frac{1}{2}}\right\rvert,
\end{split}
\end{equation}
where we used the fact that $u\equiv0$ in $\Omega^c$. Now the expression on the right of (\ref{eq_proof_prop_estimate_derivative}) belongs to $L^1\cap L^{2,\infty}(\Omega^c)$. This   completes the proof of the claim.\\
Then $(-\Delta)^\frac{1}{2}u\in L^1\cap L^{2-\varepsilon}(\mathbb{R})$ for any $\varepsilon>0$, and therefore $u'=H(-\Delta)^\frac{1}{2}u\in L^s(\mathbb{R})$ for any $s\in (1,2)$, since the Hilbert transform $H$ is strong $(p,p)$ for $p\in (1,\infty)$. Since $u$ is supported in $\overline{\Omega}$, this implies in particular that $u'\in L^1(\mathbb{R})$.
\end{rem}
In the following we will say that a function $u$  belongs to $C^2([a,b])$
if $(u^2)''$ is well defined on $(a,b)$ and extends to a bounded continuous function on $[a,b]$. Then also $u^2$ and $(u^2)'$ extends to bounded continuous functions on $[a,b]$, and we also denote the extensions by $u^2$, $(u^2)'$ and $(u^2)''$. The previous results show that if $f\in C^{1,\alpha}(\mathbb{R})$ for some $\alpha>0$, any solution $u$ of (\ref{eq_intro_Poh_nonlocal_Cauchy}) is in fact of class $C^2$ inside $(a,b)$. Therefore, if $u\in H^\frac{1}{2}(\mathbb{R},\mathbb{R})$ satisfies Equation (\ref{eq_intro_Poh_nonlocal_Cauchy}), the assumption $u\in C^2([a,b])$ is only an assumption on the regularity of $u$ around the boundary points $a$ and $b$.

\begin{lem}\label{lem_asymptotic_u}
Assume that $u\in H^\frac{1}{2}(\mathbb{R},\mathbb{R})$ is a solution of (\ref{eq_intro_Poh_nonlocal_Cauchy}). Moreover assume that $u^2\in C^2([a,b])$.
Let
\begin{equation}
L_a:=\lim_{x\to a^+}\frac{u^2(x)}{x-a},\quad L_b:=\lim_{x\to b^-}\frac{u^2(x)}{x-b}
\end{equation}
and set
\begin{equation}\label{eq_lem_def_alpha_ab}
\alpha_a:=\sqrt{ L_a},\quad \alpha_b:=\sqrt{- L_b}.
\end{equation}
Then there exists $\varepsilon>0$ and bounded functions $\beta_a$, $\zeta_a$, $\beta_b$, $\zeta_b$ defined respectively on $(a, a+\varepsilon)$ and $(b-\varepsilon,b)$ such that if $(u^2)'(a)\neq 0$ and $x\in (a, a+\varepsilon)$,
\begin{equation}\label{eq_lem_asymptotic_for_u_a}
u(x)=\text{sgn}[u(x)]\alpha_a(x-a)^\frac{1}{2}+\beta_a(x)(x-a)^\frac{3}{2},
\end{equation}
and if $(u^2)'(b)\neq 0$ $x\in (b-\varepsilon, b)$,
\begin{equation}\label{eq_lem_asymptotic_for_u_b}
u(x)=\text{sgn}[u(x)]\alpha_b(b-x)^\frac{1}{2}+\beta_b(x)(b-x)^\frac{3}{2}.
\end{equation}
Moreover, if $(u^2)'(a)= 0$ and $x\in (a, a+\varepsilon)$
\begin{equation}\label{eq_lem_asymptotic_for_u_a_derivative0}
u(x)=\zeta_a(x)(x-a),
\end{equation}
and if $(u^2)'(b)= 0$ and $x\in (b-\varepsilon, b)$
\begin{equation}\label{eq_lem_asymptotic_for_u_b_derivative0}
u(x)=\zeta_b(x)(b-x).
\end{equation}
\end{lem}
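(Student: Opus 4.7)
The plan is to reduce everything to elementary Taylor expansions of the $C^2$ function $u^2$ at the endpoints. First I would record three basic observations: by Proposition \ref{prop_bdry_regularity} the solution $u$ is continuous on $\overline{\Omega}$ and vanishes on $\Omega^c$, so $u^2(a)=u^2(b)=0$; second, $u^2\geq 0$ everywhere; third, by hypothesis $u^2\in C^2([a,b])$, so in particular $(u^2)'$ and $(u^2)''$ extend continuously (and boundedly) to the closed interval. These three facts together force $(u^2)'(a)\geq 0$ and $(u^2)'(b)\leq 0$, hence $L_a\geq 0$ and $L_b\leq 0$, so that $\alpha_a,\alpha_b$ in \eqref{eq_lem_def_alpha_ab} are real and well defined.

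Next, at the endpoint $a$ I would apply Taylor's theorem with integral remainder to $u^2$ at $a$, using $u^2(a)=0$:
\begin{equation*}
u^2(x)=(u^2)'(a)(x-a)+R_a(x)(x-a)^2, \qquad x\in [a,b],
\end{equation*}
where $R_a(x)=\int_0^1(1-t)(u^2)''(a+t(x-a))\,dt$ is bounded on $[a,b]$ by $\tfrac{1}{2}\lVert (u^2)''\rVert_{L^\infty([a,b])}$. In the case $(u^2)'(a)\neq 0$, one has $L_a=(u^2)'(a)>0$, so for $x$ in a sufficiently small interval $(a,a+\varepsilon)$ the bracket in
\begin{equation*}
u^2(x)=L_a(x-a)\Bigl[1+\tfrac{R_a(x)}{L_a}(x-a)\Bigr]
\end{equation*}
lies in $(1/2,3/2)$; taking square roots and using the expansion $\sqrt{1+s}=1+\tfrac{s}{2}+s^2\,\psi(s)$ with $\psi$ bounded in a neighbourhood of $0$, I obtain
\begin{equation*}
|u(x)|=\alpha_a(x-a)^{1/2}+\beta_a(x)(x-a)^{3/2},
\end{equation*}
with $\beta_a$ bounded on $(a,a+\varepsilon)$. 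Multiplying by $\operatorname{sgn}[u(x)]$ yields \eqref{eq_lem_asymptotic_for_u_a}. When instead $(u^2)'(a)=0$, the same Taylor identity gives $u^2(x)=R_a(x)(x-a)^2$ with $R_a$ bounded, so $|u(x)|\leq C(x-a)$ and one may simply set $\zeta_a(x):=u(x)/(x-a)$, which is bounded on $(a,a+\varepsilon)$, giving \eqref{eq_lem_asymptotic_for_u_a_derivative0}.

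The endpoint $b$ is completely symmetric: apply Taylor's theorem to $u^2$ at $b$ with $u^2(b)=0$, observe $-L_b=-(u^2)'(b)\geq 0$, and repeat the square root expansion with $(b-x)$ in place of $(x-a)$, obtaining \eqref{eq_lem_asymptotic_for_u_b} and \eqref{eq_lem_asymptotic_for_u_b_derivative0}. There is no serious analytical obstacle here; the whole proof is really an exercise in Taylor's theorem combined with the regularity assumption on $u^2$. The only subtlety worth stating explicitly is the sign issue justifying $L_a\geq 0\geq L_b$, which is where the hypothesis $u=0$ in $\Omega^c$ (forcing $u^2$ to touch $0$ nontrivially at $a$ and $b$ from inside $\Omega$) is used.
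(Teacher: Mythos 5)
Your proof is correct and follows essentially the same route as the paper's: Taylor's theorem applied to $u^2$ at the endpoints followed by taking square roots (the paper uses the Lagrange remainder and a preliminary observation that $u'\to\pm\infty$ to fix the sign, while you use the integral remainder and work directly with $|u(x)|$ and $\operatorname{sgn}[u(x)]$, and you make explicit the sign facts $L_a\geq 0\geq L_b$ that the paper leaves implicit). These are minor cosmetic differences; the substance is the same.
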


\begin{proof}
First we remark that since $u^2\in C^2([a,b])$, the limits $L_a$ and $L_b$ are well defined.
We will show (\ref{eq_lem_asymptotic_for_u_a}) and (\ref{eq_lem_asymptotic_for_u_a_derivative0}); the proofs of (\ref{eq_lem_asymptotic_for_u_b}) and (\ref{eq_lem_asymptotic_for_u_b_derivative0}) are analogous.
First assume that $(u^2)'(a)\neq 0$. Then, since $(u^2)'(x)=2u(x)u'(x)$ for $x\in(a,b)$ and $u(a)=0$, $\lim_{x\to a^+} u'(x)$ exists and is equal either to $\infty$ or to $-\infty$.
Then we can choose $\varepsilon>0$ such that $u(x)$ has the same sign on $(a,a+\varepsilon)$. By Taylor's Theorem, since $u^2\in C^2([a,b])$ and $u(a)=0$,
\begin{equation}\label{eq_proof_lemma_Taylor_for_square}
u^2(x)=(u^2)'(a)(x-a)+(u^2)''(\xi_x)(x-a)^2
\end{equation}
for $x\in(a,a+\varepsilon)$, for some $\xi_x\in (a,a+\varepsilon)$ depending on $x$. Therefore, if $(u^2)'(a)\neq 0$
\begin{equation}
u(x)=\text{sgn}[u(x)]\sqrt{(u^2)'(a)(x-a)+(u^2)''(\xi_x)(x-a)^2}
\end{equation}
for $x\in (a, a+\varepsilon)$. Then, by Taylor's Theorem
\begin{equation}\label{eq_proof_lem_first_form_asymp}
u(x)=\text{sgn}[u(x)]\left(\sqrt{(u^2)'(a)}(x-a)^\frac{1}{2}+\frac{1}{2}\frac{(u^2)''(\xi_x)(x-a)^2}{\sqrt{(u^2)'(a)(x-a)+z_x}}\right)
\end{equation}
for $x\in(a,a+\varepsilon)$ for some $z_x\in \mathbb{R}$ depending on $x$, such that $\lvert z_x\rvert\leq \lvert (u^2)''(\xi_x)\rvert(x-a)^2$. By taking a smaller $\varepsilon$ if necessary, we can ensure that the argument of the square root remains positive and that $(u^2)'(a)+\frac{z_x}{(x-a)}$ is uniformly bounded below by a positive constant for any $x\in (a, a+\varepsilon)$.
Therefore, if we set
\begin{equation}
\beta_a(x):=\text{sgn}(u(x))\frac{(u^2)''(\xi_x)}{2\sqrt{(u^2)'(a)+\frac{z_x}{(x-a)}}}
\end{equation}
for any $x\in (a, a+\varepsilon)$ , $\beta_a$ is a bounded function and
\begin{equation}
u(x)=\text{sgn}[u(x)]\alpha_a(x-a)^\frac{1}{2}+\beta_a(x)(x-a)^\frac{3}{2},
\end{equation}
for any $x\in (a, a+\varepsilon)$. This shows (\ref{eq_lem_asymptotic_for_u_a}).\\
In order to show (\ref{eq_lem_asymptotic_for_u_a_derivative0}), assume that $(u^2)'(a)=0$. Then (\ref{eq_proof_lemma_Taylor_for_square}) remains true, but the first term is equal to zero. Therefore, if we set
\begin{equation}
\zeta_a(x)=\text{sgn}[u(x)]\sqrt{\lvert(u^2)''(\xi_x)\rvert}
\end{equation}
for any $x\in (a,a+\varepsilon)$, $\zeta_a$ is bounded and we have
\begin{equation}
u(x)=\zeta_a(x)(x-a)
\end{equation}
for any $x\in (a,a+\varepsilon)$, as desired.
\end{proof}

\begin{lem}
Assume that $u\in H^\frac{1}{2}(\mathbb{R},\mathbb{R})$ is a solution of (\ref{eq_intro_Poh_nonlocal_Cauchy}). Moreover assume that $u^2\in C^2([a,b])$.
Then there exists $\varepsilon>0$ and bounded functions $\delta_a$, $\delta_b$ defined respectively on $(a, a+\varepsilon)$ and $(b-\varepsilon,b)$ such that if $(u^2)'(a)\neq 0$, if $x\in (a-\varepsilon, a)$,
\begin{equation}\label{eq_lem_asymptotic_for_fraclap_u_a}
(-\Delta)^\frac{1}{2}u(x)=-\frac{\alpha_a}{2}(a-x)^{-\frac{1}{2}}+\delta_a(x),
\end{equation}
and if $(u^2)'(b)\neq 0$, if $x\in (b, b+\varepsilon)$,
\begin{equation}\label{eq_lem_asymptotic_for_fraclap_u_b}
(-\Delta)^\frac{1}{2}u(x)=-\frac{\alpha_b}{2}(x-b)^{-\frac{1}{2}}+\delta_b(x),
\end{equation}
where $\alpha_a$, $\alpha_b$ were defined in (\ref{eq_lem_def_alpha_ab}).
Moreover, there exist constants $C_a$, $C_b$ such that if $(u^2)'(a)=0$, if $x\in(a-\varepsilon, a)$
\begin{equation}
\left\lvert(-\Delta)^\frac{1}{2}u(x)\right\rvert\leq C_a\left\lvert\log(a-x)\right\rvert+\delta_a(x),
\end{equation}
and if $(u^2)'(b)=0$, if $x\in(b, b+\varepsilon)$
\begin{equation}\label{eq_lem_asymptotic_for_fraclap_u_b_when_derivative_vanishes_at_b}
\left\vert(-\Delta)^\frac{1}{2}u(x)\right\rvert=C_b\left\lvert\log(x-b)\right\rvert+\delta_b(x).
\end{equation}
\end{lem}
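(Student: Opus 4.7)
The plan is to exploit the fact that $u$ vanishes identically on $(a,b)^c$, so that for every $x \notin [a,b]$ the principal value in the pointwise definition of $(-\Delta)^{1/2}$ reduces to an absolutely convergent integral, namely
\begin{equation*}
(-\Delta)^{1/2}u(x) \;=\; -\frac{1}{\pi}\int_a^b \frac{u(y)}{(x-y)^2}\,dy.
\end{equation*}
Fix $x \in (a-\varepsilon, a)$ and set $s := a-x > 0$. I will split the right-hand side as $\int_a^{a+\varepsilon} + \int_{a+\varepsilon}^{b}$. Since $u \in C^{1/2}(\overline{\Omega})$ is bounded and $(x-y)^2 \ge (\varepsilon/2)^2$ on the far piece, that second integral is bounded uniformly in $x$ as $x \to a^-$ and can be absorbed into $\delta_a(x)$.

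For the near piece I would substitute the expansion of the previous lemma. When $(u^2)'(a) \neq 0$, write $u(y) = \operatorname{sgn}(u)\,\alpha_a(y-a)^{1/2} + \beta_a(y)(y-a)^{3/2}$ with $\beta_a$ bounded. The leading term contributes
\begin{equation*}
-\frac{\operatorname{sgn}(u)\,\alpha_a}{\pi}\int_0^{\varepsilon}\frac{t^{1/2}}{(t+s)^2}\,dt,
\end{equation*}
and the change of variables $t = s r$ rewrites this as $s^{-1/2}\int_0^{\varepsilon/s} r^{1/2}(1+r)^{-2}\,dr$. Using the Beta-function evaluation $\int_0^\infty r^{1/2}(1+r)^{-2}\,dr = \pi/2$ together with the tail bound $\int_{\varepsilon/s}^\infty r^{1/2}(1+r)^{-2}\,dr = O(s^{1/2})$ yields exactly $-\operatorname{sgn}(u)\,\alpha_a(a-x)^{-1/2}/2 + O(1)$. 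The subleading piece generated by $\beta_a(y)(y-a)^{3/2}$ gives $\int_0^\varepsilon t^{3/2}(t+s)^{-2}\,dt$, which is dominated by the $s$-independent quantity $\int_0^\varepsilon t^{-1/2}\,dt = 2\sqrt{\varepsilon}$, hence bounded and again absorbed in $\delta_a(x)$. This produces (\ref{eq_lem_asymptotic_for_fraclap_u_a}).

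When $(u^2)'(a) = 0$ I would instead use $u(y) = \zeta_a(y)(y-a)$ with $\zeta_a$ bounded, so the near piece is controlled by $\|\zeta_a\|_\infty \int_0^\varepsilon t(t+s)^{-2}\,dt$. An elementary computation gives
\begin{equation*}
\int_0^\varepsilon \frac{t}{(t+s)^2}\,dt \;=\; \log\!\left(1+\tfrac{\varepsilon}{s}\right) + \tfrac{s}{s+\varepsilon} - 1 \;=\; |\log(a-x)| + O(1)
\end{equation*}
as $x \to a^-$, which is exactly the claimed logarithmic bound. The asymptotics at $b$ are obtained by an entirely symmetric argument, working with $x \in (b, b+\varepsilon)$, $s := x-b$, and the expansions (\ref{eq_lem_asymptotic_for_u_b}) or (\ref{eq_lem_asymptotic_for_u_b_derivative0}). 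The calculations are routine; the only bookkeeping to watch is that $\operatorname{sgn}(u)$ is locally constant near each endpoint (as already ensured in the preceding lemma), so it can be treated as a fixed sign throughout, and that the tail and remainder estimates are genuinely uniform in $s$ as $s \downarrow 0$, which is the one place where the $C^2$-hypothesis on $u^2$ is essential, since it is what produces the $O((y-a)^{3/2})$ remainder in the first expansion.
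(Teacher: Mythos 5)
Your proof is correct and follows the same overall plan as the paper (pointwise formula for $(-\Delta)^{1/2}u$ using the vanishing of $u$ outside $[a,b]$, near/far split, substitution of the asymptotic expansion of $u$ from the preceding lemma), but it simplifies the paper's argument in two respects. First, the paper evaluates the key integral $\int_0^\infty r^{1/2}(1+r)^{-2}\,dr = \pi/2$ by a closed complex contour and the Residue Theorem, whereas you invoke the Beta-function identity $B(3/2,1/2)=\Gamma(3/2)\Gamma(1/2)/\Gamma(2)$ — a purely real-variable route that yields the same constant with less machinery. Second, and more substantially, the paper establishes boundedness of $\delta_b$ indirectly: it introduces the auxiliary function $g(x)=(-\Delta)^{1/2}u(x)/(x-b)^{-1/2}$, shows $g$ has a limit as $x\to b^+$, and then proves the derivative estimate that $g'(x)(x-b)^{1/2}$ is uniformly bounded, which gives $g(x)-g(b)=\mathscr{O}((x-b)^{1/2})$ and hence the bounded remainder. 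You bypass this entirely by bounding each of the far integral, the tail correction $s^{-1/2}\int_{\varepsilon/s}^\infty r^{1/2}(1+r)^{-2}\,dr=\mathscr{O}(1)$, and the $t^{3/2}$-subleading term directly. Since the lemma only asserts that $\delta_a,\delta_b$ are bounded, your shortcut loses nothing; the paper's derivative estimate is stronger than needed here. One small bookkeeping note: your computation naturally produces the factor $\operatorname{sgn}(u)$ in front of $\alpha_a$ (matching what the paper's own proof obtains for $\alpha_b$), whereas the lemma as stated omits this sign. This is a slip in the paper's statement, not a problem in your argument — and since the eventual Pohozaev identity involves only $u^2$, the sign cancels downstream.
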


\begin{proof}
We show (\ref{eq_lem_asymptotic_for_fraclap_u_b}). The proof of (\ref{eq_lem_asymptotic_for_fraclap_u_a}) is analogous. First we assume that $(u^2)'\neq 0$.
Let $\varepsilon>0$. For any $x\in (b, b+\varepsilon)$ we set
\begin{equation}
g(x):=\frac{(-\Delta)^\frac{1}{2}u(x)}{(x-b)^{-\frac{1}{2}}}.
\end{equation}
We claim that $\lim_{x\to b^+}g(x)=\frac{1}{2}\text{sgn}(u(b_-))$, where $\text{sgn}(u(b_-))$ denotes the sign of $u(x)$ as $x$ approaches $b$ from the right. Indeed, let $\tilde{\varepsilon}$, $\alpha_b$, $\beta_b$ as in Lemma \ref{lem_asymptotic_u}, and such that for any $x\in (b-\tilde{\varepsilon},b)$, $u(x)$ has the same sign. Then
\begin{equation}\label{eq_proof_lemma_asymptotics_three_limits}
\begin{split}
\lim_{x\to b^+} g(x)=&-\lim_{x\to b^+}\frac{\frac{1}{\pi}\int_a^{b-\tilde{\varepsilon}}\frac{u(y)}{(x-y)^2}dy}{(x-b)^{-\frac{1}{2}}}-\alpha_b\lim_{x\to b^+}\frac{\frac{1}{\pi}\int_{b-\tilde{\varepsilon}}^b\text{sgn}(u(y))\frac{(b-y)^\frac{1}{2}}{(x-y)^2}dy}{(x-b)^{-\frac{1}{2}}}\\
&-\lim_{x\to b^+}\frac{\frac{1}{\pi}\int_{b-\tilde{\varepsilon}}^b\frac{\beta_b(y)(b-y)^\frac{3}{2}}{(x-y)^2}dy}{(x-b)^{-\frac{1}{2}}}
\end{split}
\end{equation}
provided the limits on the right hand side exist.
In fact, the first term converges to zero (as the numerator is uniformly bounded); the third term converges to zero as well. Indeed, for any $x\in (b, b+\varepsilon)$
\begin{equation}
\left\lvert\frac{\int_{b-\tilde{\varepsilon}}^b\frac{\beta_b(y)(b-y)^\frac{3}{2}}{(x-y)^2}dy}{(x-b)^{-\frac{1}{2}}}\right\rvert\leq\frac{\lVert \beta_b\rVert_{L^\infty}\int_{b-\tilde{\varepsilon}}^b(b-y)^\frac{1}{2}dy}{(x-b)^{-\frac{1}{2}}},
\end{equation}
thus the numerator is uniformly bounded, and the third term converges to $0$.\\
Finally we claim that
\begin{equation}\label{eq_proof_lemma_limit_to_prove_existence_of}
\lim_{x\to b^+}\frac{\frac{1}{\pi}\int_{b-\tilde{\varepsilon}}^b\frac{(b-y)^\frac{1}{2}}{(x-y)^2}dy}{(x-b)^{-\frac{1}{2}}}=\frac{1}{2}.
\end{equation}
Indeed, substituting $x'=x-b$, $y'=b-y$,
\begin{equation}
\lim_{x\to b^+}\frac{\frac{1}{\pi}\int_{b-\tilde{\varepsilon}}^b\frac{(b-y)^\frac{1}{2}}{(x-y)^2}dy}{(x-b)^{-\frac{1}{2}}}
=\lim_{x'\to 0^+} \frac{1}{\pi}{x'}^\frac{1}{2}\int_0^{\tilde{\varepsilon}}\frac{{y'}^\frac{1}{2}}{(x'+y')^2}dy'
\end{equation}
Now we substitute $z=\frac{y'}{x'}$ and we obtain
\begin{equation}
\lim_{x'\to 0^+}\frac{1}{\pi}'{x'}^\frac{1}{2}\int_0^\frac{\tilde{\varepsilon}}{x'}\frac{(x'z)^\frac{1}{2}}{(x'+x'z)^2}x'dz=\frac{1}{\pi}\int_0^\infty\frac{z^\frac{1}{2}}{(1+z)^2}dz.
\end{equation}
To evaluate this integral, we think of it as a complex integral: let $R>0$ and let $\gamma_R$ be the path in $\mathbb{C}$ consisting of the straight line from $0$ to $R$, the anticlockwise circle centered in zero and starting at $R$ and the straight line from $R$ to $0$. Then, if we denote as $I$ the integral we would like to evaluate, we obtain
\begin{equation}\label{eq_proof_lem_integral_through_complex_integration}
2I=\lim_{R\to \infty}\frac{1}{\pi}\int_{\gamma_R}\frac{z^\frac{1}{2}}{(1+z)^2}dz
\end{equation}
(on the segment from $0$ to $R$, $z^\frac{1}{2}$ is evaluated on its principal branch, while on the segment from $R$ to $0$ is evaluated on its other branch).
To compute the integral on the right hand side of (\ref{eq_proof_lem_integral_through_complex_integration}) we use the Residue Theorem:
\begin{equation}
\lim_{R\to \infty}\frac{1}{\pi}\int_{\gamma_R}\frac{z^\frac{1}{2}}{(1+z)^2}dz=2\pi i Res(-1),
\end{equation}
where $Res(-1)$ stays for the residue at $-1$ of $\frac{1}{\pi}\frac{z^\frac{1}{2}}{(1+z)^2}$.
Using the fact that the first terms of the Laurent expansion of $z^\frac{1}{2}$ around $-1$ are $i-\frac{i}{2}(z+1)$, we obtain $Res(-1)=\frac{-i}{2\pi}$. Therefore
\begin{equation}
I=\frac{1}{2}2\pi i\frac{-i}{2\pi}=\frac{1}{2}.
\end{equation}
Thus the limits in (\ref{eq_proof_lemma_asymptotics_three_limits}) are well defined, and $\lim_{x\to b^+}g(x)=\frac{1}{2}\text{sgn}(u(b_-))$.
This concludes the proof of the first claim. Now we claim that $k(x):=g'(x)(x-b)^\frac{1}{2}$ is uniformly bounded for  $x$ in $(b,b+\varepsilon)$. Indeed we can compute
\begin{equation}
\begin{split}
g'(x)(x-b)^\frac{1}{2}=&\int_a^bu(y)\left(\frac{2(x-b)}{(x-y)^3}-\frac{1}{2(x-y)^2}\right)dy\\
=&\int_a^{b-\tilde{\varepsilon}}u(y)\left(\frac{2(x-b)}{(x-y)^3}-\frac{1}{2(x-y)^2}\right)dy\\
&+\int_{b-\tilde{\varepsilon}}^b \text{sgn}(u(y))\alpha_b(b-y)^\frac{1}{2}\left(\frac{2(x-b)}{(x-y)^3}-\frac{1}{2(x-y)^2}\right)dy\\
&+\int_{b-\tilde{\varepsilon}}^b \beta_b(y)(b-y)^\frac{3}{2}\left(\frac{2(x-b)}{(x-y)^3}-\frac{1}{2(x-y)^2}\right)dy.
\end{split}
\end{equation}
Now the first term is clearly bounded. To see that that the third one is bounded as well we compute
\begin{equation}
\left\lvert\frac{(b-y)^\frac{3}{2}(x-b)}{(x-y)^3}\right\rvert\leq \left\lvert\frac{(b-y)^\frac{3}{2}}{(x-y)^2}\right\rvert \leq \frac{1}{\lvert b-y\rvert^\frac{1}{2}};\quad \left\lvert\frac{(b-y)^\frac{3}{2}}{(x-y)^2}\right\rvert\leq \frac{1}{\lvert b-y\rvert^\frac{1}{2}}
\end{equation}
for any $x\in (b, b-\varepsilon)$, $y\in (b-\tilde{\varepsilon}, b)$. As $\frac{1}{\lvert b-y\rvert^\frac{1}{2}}$ is integrable in $(b-\tilde{\varepsilon})$, the third term is bounded.\\
For the second term, we compute
\begin{equation}\label{eq_proof_lemma_evaluation_second_integral}
\begin{split}
&\int_{b-\tilde{\varepsilon}}^b(b-y)^\frac{1}{2}\frac{2(x-b)-\frac{1}{2}(x-y)}{(x-y)^3}dy\\
=&(b-y)^\frac{1}{2}\frac{(x-b)}{(x-y)^2}\bigg\rvert_{y=b-\tilde{\varepsilon}}^{y=b}-\frac{1}{2}\int_{b-\tilde{\varepsilon}}^b\frac{-(b-y)^{-\frac{1}{2}}(x-b)+(b-y)^\frac{1}{2}}{(x-y)^2}dy\\
=&-\tilde{\varepsilon}^\frac{1}{2}\frac{(x-b)}{(x-b+\tilde{\varepsilon})^2}-\frac{1}{2}\int_{b-\tilde{\varepsilon}}^b\frac{(b-y)^{-\frac{1}{2}}[-(x-b)+(b-y)]}{(x-y)^2}dy.
\end{split}
\end{equation}
Now the first term in the last line of (\ref{eq_proof_lemma_evaluation_second_integral}) is uniformly bounded for $x\in (b, b-\varepsilon)$. For the second, we first make the change of variables $y'=b-y$, $x'=x-b$ and then $y'=z^2$ to obtain
\begin{equation}\label{eq_proof_lemma_integral_wolfram}
\begin{split}
-\frac{1}{2}\int_0^{\tilde{\varepsilon}}\frac{{y'}^{-\frac{1}{2}}(y'-x')}{(x'+y')^2}dy'=&-\int_0^{\sqrt{\tilde{\varepsilon}}}\frac{z^2-x'}{(x'+z^2)^2}dz.
\end{split}
\end{equation}
Since $x'$ is positive, $\lvert z^2-x'\rvert\leq z^2+x'$. Therefore the absolute value of the integral is bounded by
\begin{equation}
\int_0^{\sqrt{\tilde{\varepsilon}}}\frac{1}{z^2+x'}dz=\frac{1}{\sqrt{x'}}\arctan\left(\sqrt{\frac{\tilde{\varepsilon}}{x'}}\right),
\end{equation}
and this is uniformly bounded for $x\in (b, b-\varepsilon)$.
Therefore $k$ is uniformly bounded. Thus $g'$ is integrable in $(b, b+\varepsilon)$.
This implies that for $x\in (b, b+\varepsilon)$,
\begin{equation}
g(x)=g(b)+\int_b^xk(y)(y-b)^{-\frac{1}{2}}dy=-\frac{1}{2}\text{sgn}(u(b_-))\alpha_b+\mathscr{O}(\lvert x-b\rvert^\frac{1}{2})
\end{equation}
and thus
\begin{equation}
(-\Delta)^\frac{1}{2}u(x)=-\frac{1}{2}\text{sgn}(u(b_-))\alpha_b(x-b)^{-\frac{1}{2}}+\mathscr{O}(\lvert x-b\rvert^\frac{1}{2})(x-b)^{-\frac{1}{2}}.
\end{equation}
This concludes the proof of (\ref{eq_lem_asymptotic_for_fraclap_u_b}) whenever $(u^2)'(b)\neq 0$ (after taking $\varepsilon$ smaller if necessary).\newline
Now assume that $(u^2)'(b)= 0$. By means of (\ref{eq_lem_asymptotic_for_u_b_derivative0}) we compute for any $x\in (b, b+\varepsilon)$
\begin{equation}
(-\Delta)^\frac{1}{2}u(x)=-\frac{1}{\pi}\int_a^{b-\tilde{\tilde{\varepsilon}}}\frac{u(y)}{(x-y)^2}dy-\frac{1}{\pi}\int_{b-\tilde{\varepsilon}}^b\frac{\zeta_b(y)(b-y)}{(x-y)^2}dy.
\end{equation}
for some bounded function $\zeta_b$.
The first term is clearly bounded. For the second we compute
\begin{equation}
\begin{split}
\int_{b-\tilde{\varepsilon}}^b\frac{b-y}{(x-y)^2}dy=&\int_{b-\tilde{\varepsilon}}^b\frac{b-x}{(x-y)^2}+\frac{1}{x-y} dy\\
=&1-\frac{b-x}{x-b+\tilde{\varepsilon}}+\log(x-b)+\log(x-b+\tilde{\varepsilon}).
\end{split}
\end{equation}
Setting $C_b:= \frac{1}{\pi}\lVert \zeta_b\rVert_{L^\infty}$ and
\begin{equation}
\delta_b(x):=\frac{1}{\pi}\int_a^{b-\tilde{\tilde{\varepsilon}}}\frac{\lvert u(y)\rvert}{(x-y)^2}dy+\frac{1}{\pi}\lVert \zeta_b\rVert_{L^\infty}\left(1-\frac{b-x}{x-b+\tilde{\varepsilon}}+\left\lvert\log(x-b+\tilde{\varepsilon})\right\rvert\right)
\end{equation}
for $x\in (b, b+\varepsilon)$, we obtain the desired equation.
\end{proof}
\subsection{The Pohozaev identity for bounded intervals}\label{ssec:The Pohozaev identity for open, bounded intervals}

Combining the previous estimates we obtain the following result:
\begin{lem}\label{lem_derivative_on_the_boundary}
Assume that $a<0<b$. Assume that $u\in H^\frac{1}{2}(\mathbb{R},\mathbb{R})$ is a solution of (\ref{eq_intro_Poh_nonlocal_Cauchy}). Moreover assume that $u^2\in C^2([a,b])$.
Then the limits
\begin{equation}\label{eq_lem_derivative_on_the_boundary_existence_limit}
\lim_{x\to a^+}\frac{u^2(x)}{a-x},\quad \lim_{x\to b^-}\frac{u^2(x)}{x-b}
\end{equation}
exist and are finite. Moreover
\begin{equation}\label{eq_lem_derivative_boundary_term_for_asymptotic_a}
\frac{d}{d\lambda}\bigg\rvert_{\lambda=1^-} \int_\frac{a}{\lambda}^{a} u(\lambda x)(-\Delta)^\frac{1}{2}u(x)dx=-\frac{\pi}{4}\lim_{x\to a^+}\frac{u^2(x)}{a-x} a
\end{equation}
and 
\begin{equation}\label{eq_lem_derivative_boundary_term_for_asymptotic}
\frac{d}{d\lambda}\bigg\rvert_{\lambda=1^-} \int_{b}^{\frac{b}{\lambda}} u(\lambda x)(-\Delta)^\frac{1}{2}u(x)dx=-\frac{\pi}{4}\lim_{x\to b^-}\frac{u^2(x)}{x-b} b.
\end{equation}
The two derivatives in $\lambda$ have to be understood as left derivatives.
\end{lem}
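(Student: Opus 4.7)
The plan is to compute both left derivatives by substituting the boundary asymptotics from the two preceding lemmas and reducing each to an explicit Beta integral; by symmetry I will describe only the $b$-endpoint in detail. The existence of the two limits in \eqref{eq_lem_derivative_on_the_boundary_existence_limit} is immediate: since $u^2\in C^2([a,b])$ and $u^2(a)=u^2(b)=0$, Taylor expanding $u^2$ at the endpoints gives $\lim_{x\to a^+}u^2(x)/(a-x)=-(u^2)'(a)=-\alpha_a^2$ and $\lim_{x\to b^-}u^2(x)/(x-b)=(u^2)'(b)=-\alpha_b^2$, both finite and nonpositive as required by $u^2\geq 0$.

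Fix $g(\lambda):=\int_b^{b/\lambda}u(\lambda x)(-\Delta)^{1/2}u(x)\,dx$ and suppose first $(u^2)'(b)\neq 0$. For $\lambda$ close to $1$, whenever $x\in (b,b/\lambda)$ one has $\lambda x\in (b-\varepsilon,b)$, so I may substitute the expansion \eqref{eq_lem_asymptotic_for_u_b} for $u(\lambda x)$ and \eqref{eq_lem_asymptotic_for_fraclap_u_b} for $(-\Delta)^{1/2}u(x)$; the two occurrences of $\mathrm{sgn}\,u(b^-)$ square to $+1$ and the principal part of the integrand is $-\tfrac{1}{2}\alpha_b^{2}(b-\lambda x)^{1/2}(x-b)^{-1/2}$. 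The change of variable $x=b+tb(1-\lambda)/\lambda$, $t\in[0,1]$, yields $b-\lambda x=b(1-\lambda)(1-t)$, $x-b=tb(1-\lambda)/\lambda$, $dx=b(1-\lambda)/\lambda\,dt$, so that the leading part of $g(\lambda)$ collapses to
\[
-\frac{\alpha_b^{2}}{2}\cdot\frac{b(1-\lambda)}{\sqrt{\lambda}}\int_0^1(1-t)^{1/2}t^{-1/2}\,dt=-\frac{\pi\alpha_b^{2}b(1-\lambda)}{4\sqrt{\lambda}},
\]
using $B(1/2,3/2)=\pi/2$. Dividing by $\lambda-1<0$ and letting $\lambda\to 1^-$ produces the value $\pi\alpha_b^{2}b/4$, which equals the right-hand side of \eqref{eq_lem_derivative_boundary_term_for_asymptotic} once $\alpha_b^{2}=-\lim_{x\to b^-}u^2(x)/(x-b)$ is inserted.

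The three remaining cross terms in the expansion of $u(\lambda x)(-\Delta)^{1/2}u(x)$ (leading $u$ paired with $\delta_b$, the $\beta_b$-correction paired with the singular part of $(-\Delta)^{1/2}u$, and $\beta_b\cdot\delta_b$) have to be shown to be $o(1-\lambda)$. Under the same substitution each separates into $(1-\lambda)^{\gamma}$ times a finite integral over $[0,1]$ with $\gamma>1$: the $\beta_b$ term picks up an extra factor $b(1-\lambda)$ coming from $(b-\lambda x)^{3/2}/(b-\lambda x)^{1/2}$, while $\delta_b\in L^\infty$ combined with $(b-\lambda x)^{1/2}$ integrates to an explicit $O((1-\lambda)^{3/2})$. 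Hence none of them survives division by $\lambda-1$.

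The degenerate case $(u^2)'(b)=0$ is handled with the sharper estimates \eqref{eq_lem_asymptotic_for_u_b_derivative0} and \eqref{eq_lem_asymptotic_for_fraclap_u_b_when_derivative_vanishes_at_b}: $|u(\lambda x)|\leq Cb(1-\lambda)$ on the interval of length $b(1-\lambda)/\lambda$, and the $|\log(x-b)|$-bound on $(-\Delta)^{1/2}u$ integrates to $O((1-\lambda)|\log(1-\lambda)|)$, producing $g(\lambda)=O((1-\lambda)^{2}|\log(1-\lambda)|)=o(1-\lambda)$; thus $g'(1^-)=0$, in agreement with $\alpha_b=0$. The $a$-endpoint is treated by the mirror change of variable $x=a/\lambda+t(a-a/\lambda)$, $t\in[0,1]$, with the negativity of $a$ absorbed into $|a|=-a$ and the integrand again having the form $(1-\lambda)^{\gamma}\phi(t)$ with $\phi\in L^1([0,1])$. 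The main technical obstacle is exactly this uniform control of error terms through the change of variable: once the separated form is achieved, dominated convergence takes care of passing to the limit $\lambda\to 1^-$.
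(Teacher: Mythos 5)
Your proof is correct and follows essentially the same strategy as the paper's: expand $u(\lambda x)$ and $(-\Delta)^{1/2}u(x)$ using the boundary asymptotics from the two preceding lemmas, separate the leading singular term from the lower-order corrections, and show the latter give a contribution that vanishes after dividing by $\lambda-1$. The minor differences are technical: for the leading term the paper first substitutes $y=x/b$ and then $z=(y-1)\lambda/(1-\lambda)$, evaluating the resulting normalized integral by a trigonometric substitution $x=\cos^2\theta$, whereas you do a single linear rescaling $x = b + tb(1-\lambda)/\lambda$ and read off $B(1/2,3/2)=\pi/2$ directly --- a slightly cleaner route to the same number. For the error terms, the paper squeezes each auxiliary function between explicit $\pm$-bounds that are left-differentiable with derivative $0$, while you instead observe that the same rescaling factors each cross-term as $(1-\lambda)^\gamma$ times an integrable function of $t\in[0,1]$ with $\gamma>1$; both arguments are valid, yours is somewhat more transparent. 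The degenerate case $(u^2)'(b)=0$ is handled in both by the logarithmic bound giving $O\bigl((1-\lambda)^2|\log(1-\lambda)|\bigr)$, and the existence of the limits in the first display follows in both treatments from Taylor expansion of $u^2\in C^2([a,b])$ at the endpoints. Incidentally, your sign bookkeeping is cleaner than the paper's proof text, which contains a sign slip in rewriting the left derivative as $\lim_{\lambda\to1^-}\frac{1}{1-\lambda}(\cdots)$ (missing a factor $-1$) and omits the factor $b$ in its final display; these typos do not affect the stated conclusion, and your computation lands directly on the correct value $\pi\alpha_b^2 b/4$.
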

\begin{proof}
First we observe that since $u^2\in C^2([a,b])$, $u^2$ can be extended to a function of class $C^2$ defined in an open interval containing $[a,b]$. Let's call $\overline{u^2}$ such an extension. Then, in particular, $\overline{u^2}$ is differentiable in $a$ and $b$, and since $u(a)=0$, $u(b)=0$, its derivative in $a$ and $b$ is given by
\begin{equation}
\partial\overline{u^2}(a)=-\lim_{x\to a^+}\frac{u^2(x)}{a-x},\quad \partial\overline{u^2}(b)=\lim_{x\to b^-}\frac{u^2(x)}{x-b}.
\end{equation}
Therefore the limits in (\ref{eq_lem_derivative_on_the_boundary_existence_limit}) exist and are finite.\\
Next we prove (\ref{eq_lem_derivative_boundary_term_for_asymptotic}); the proof of (\ref{eq_lem_derivative_boundary_term_for_asymptotic_a}) is analogous.
First we assume that $(u^2)'(b)\neq 0$. Let $\lambda$ be so that $\lvert \frac{b}{\lambda} -b\rvert$ are smaller than $\varepsilon$ for both previous Lemmas. Then we can rewrite the integral on the left hand side of (\ref{eq_lem_derivative_boundary_term_for_asymptotic}) as
\begin{equation}
\begin{split}
\int_{b}^\frac{b}{\lambda}&\left[\text{sgn}(u(b_-)) \alpha_b(b-\lambda x)^\frac{1}{2}+\beta_b(\lambda x)(b-\lambda x)^\frac{3}{2}\right]\\
&\cdot\left[-\text{sgn}(u(b_-))\frac{\alpha_b}{2}\left(x-b\right)^{-\frac{1}{2}}+\delta_b\left(x\right)\right]dx.
\end{split}
\end{equation}
Now let
\begin{equation}
\phi(\lambda)=\int_b^{\frac{b}{\lambda }} \delta_b(x)(b-\lambda x)^\frac{1}{2}dx.
\end{equation}
for $\lambda\in (1-\tilde{\varepsilon}, 1]$ for some small $\tilde{\varepsilon}>0$. Then $\phi(1)=0$. Moreover $\phi$ is bounded from above by
\begin{equation}
\begin{split}
\phi_+(\lambda)=&\int_b^{\frac{b}{\lambda }}\lVert \delta_b\rVert_{L^\infty}(b-\lambda x)^\frac{1}{2}dx=\frac{2}{3}\lVert \delta_b\rVert_{L^\infty}b^\frac{3}{2}\frac{(1-\lambda)^\frac{3}{2}}{\lambda},
\end{split}
\end{equation}
and from below by $\phi_-:=-\phi_+$.
Now $\phi_+(1)=0$ and $\phi_-(1)=0$. Moreover $\phi_+$ and $\phi_-$ are both left differentiable in $1$ with left derivative equal to $0$. Therefore $\phi$ is also left differentiable in $1$ with left derivative equal to $0$.\\
One can apply a similar argument to see that the function
\begin{equation}
\psi(\lambda)=\int_b^{\frac{b}{\lambda }} \beta_b(\lambda x)(b-\lambda x)^\frac{3}{2}\delta_b\left(x\right)dx
\end{equation}
defined for $\lambda\in (1-\tilde{\varepsilon},1]$ is left differentiable in $1$ with left derivative equal to $0$.
Moreover, let's set
\begin{equation}
\chi(\lambda)=\int_b^{\frac{b}{\lambda }} \beta_b(\lambda x)\frac{(b-\lambda x)^\frac{3}{2}}{(x-b)^\frac{1}{2}}dx
\end{equation}
for $\lambda\in (1-\tilde{\varepsilon}, 1]$. We observe that $\chi(1)=0$.
Then $\chi$ is bounded from above by
\begin{equation}
\chi_+(\lambda)=(b-\lambda b)^\frac{3}{2}\lVert \beta\rVert_{L^\infty}\int_{b}^{\frac{b}{\lambda}}\frac{1}{(x-b)^\frac{1}{2}}dx
\end{equation}
and from below by $\chi_-:=-\chi_+$.
We observe that $\chi_+(1)=\chi_-(1)=0$ and both $\chi_+$ and $\chi_-$ are left differentiable with left derivative equal to $0$. Therefore also $\chi$ is left differentiable with left derivative equal to $0$.
We conclude that
\begin{equation}
\frac{d}{d\lambda}\bigg\rvert_{\lambda=1^-} \int_{ b}^\frac{b}{\lambda} u_{\lambda }(x)(-\Delta)^\frac{1}{2}u(x)dx=-\frac{\alpha_b^2}{2} \frac{d}{d\lambda}\bigg\rvert_{\lambda=1^-}\int_{b}^\frac{b}{\lambda}\frac{(b-\lambda x)^\frac{1}{2}}{(x-b)^\frac{1}{2}}dx,
\end{equation}
provided the left derivative on the right hand side exists. Now we observe that, substituting $y=\frac{x}{b}$,
\begin{equation}
\int_{b}^\frac{b}{\lambda}\frac{(b-\lambda x)^\frac{1}{2}}{(x-b)^\frac{1}{2}}dx=b\int_1^\frac{1}{\lambda}\frac{(b-\lambda y b)^\frac{1}{2}}{(yb-b)^\frac{1}{2}}dy=b\int_1^\frac{1}{\lambda}\frac{(1-\lambda y)^\frac{1}{2}}{(y-1)^\frac{1}{2}}dy.
\end{equation}
One can compute
\begin{equation}
\frac{d}{d\lambda}\bigg\rvert_{\lambda=1^-}\int_1^\frac{1}{\lambda}\frac{(1-\lambda y)^\frac{1}{2}}{(y-1)^\frac{1}{2}}dy=\frac{\pi}{2}.
\end{equation}
Indeed, observing that the integral tends to $0$ as $\lambda\to 1^-$, we can rewrite the derivative as
\begin{equation}
\lim_{\lambda\to 1^-}\frac{1}{1-\lambda}\int_1^\frac{1}{\lambda}\frac{(1-\lambda y)^\frac{1}{2}}{(y-1)^\frac{1}{2}}dy.
\end{equation}
Substituting $x=\frac{(y-1)\lambda}{1-\lambda}$ we obtain that for any $\lambda\in (1-\tilde{\varepsilon}, 1)$ the integral is equal to
\begin{equation}
\begin{split}
\int_0^1\frac{(1-x)^\frac{1}{2}}{x^\frac{1}{2}}dx=&2\int_0^\frac{\pi}{2}\frac{\left(1-\cos^2(\theta)\right)^\frac{1}{2}}{\cos(\theta)}\cos(\theta)\sin(\theta)d\theta\\
=&2\int_0^\frac{\pi}{2}\sin(\theta)^2d\theta
=2\int_0^\frac{\pi}{2}\frac{1-\cos(2\theta)}{2}d\theta=\frac{\pi}{2},
\end{split}
\end{equation}
where we used the substitution $x=\cos^2(\theta)$.
Thus we obtain
\begin{equation}
\frac{d}{d\lambda}\bigg\rvert_{\lambda=1^-} \int_{ b}^\frac{b}{\lambda} u_{\lambda }(x)(-\Delta)^\frac{1}{2}u(x)dx=-\frac{\pi}{4}\lim_{x\to b^-}\frac{u^2(x)}{x-b}.
\end{equation}
This concludes the proof of (\ref{eq_lem_derivative_boundary_term_for_asymptotic}) whenever $(u^2)'\neq 0$.\newline
Finally, if $(u^2)'(b)=0$, we let
\begin{equation}
\sigma(\lambda):=\int_b^\frac{b}{\lambda}u(\lambda x)(-\Delta)^\frac{1}{2}u(x)dx
\end{equation}
and
\begin{equation}
\tau(\lambda):=\int_b^{\frac{b}{\lambda}}\left\lvert\zeta_b(x\lambda)\right\rvert(b-\lambda x)(C_b\left\lvert\log(x-b)\right\rvert+\delta_b(x))dx
\end{equation}
for $\lambda\in (1-\varepsilon, 1]$ for some $\varepsilon>0$.
Then $\sigma(1)=0$ and $\tau(1)=0$, and by estimate (\ref{eq_lem_asymptotic_for_fraclap_u_b_when_derivative_vanishes_at_b}), there holds
\begin{equation}\label{eq_proof_lemma_derivative_of_sigma_estimated_by_derivative_of_tau}
\tau(\lambda)\geq \lvert \sigma(\lambda)\rvert
\end{equation}
for any $\lambda\in (1-\varepsilon, 1]$. Therefore, if $\tau$ is left differentiable in $1$, then also $\sigma$ is left differentiable in $1$, and there holds
\begin{equation}
\left\lvert\frac{d}{d\lambda}\sigma\bigg\rvert_{\lambda=1^-}\right\rvert\leq \frac{d}{d\lambda}\tau\bigg\rvert_{\lambda=1^-}.
\end{equation}
In order to show that $\tau$ is left differentiable in $1$, we observe that since $\tau(1)=0$, if the limits exist
\begin{equation}\label{eq_proof_lemma_computation_limit_if_left_derivative_of_tau_exists}
\begin{split}
\frac{d}{d\lambda}\tau\bigg\rvert_{\lambda=1^-}=&\lim_{\lambda\to 1^-}\frac{1}{1-\lambda}\int_b^{\frac{b}{\lambda}}\left\lvert\zeta_b(x\lambda)\right\rvert(b-\lambda x)(C_b\left\lvert\log(x-b)\right\rvert+\delta_b(x))dx
\end{split}
\end{equation}
Now we observe that for any $\lambda\in (1-\varepsilon, 1]$
\begin{equation}
\begin{split}
&\left\lvert \frac{1}{1-\lambda}\int_b^{\frac{b}{\lambda}}\left\lvert\zeta_b(x\lambda)\right\rvert(b-\lambda x)(C_b\left\lvert\log(x-b)\right\rvert+\delta_b(x))dx\right\rvert\\
\leq &\lVert \zeta_b\rVert_{L^\infty}\lVert \delta_b\rVert_{L^\infty}\frac{b}{\lambda}\frac{1}{\frac{b}{\lambda}-b}\int_b^\frac{b}{\lambda}(b-\lambda x)dx\\
&+\lVert \zeta_b\rVert_{L^\infty}b\int_b^\frac{b}{\lambda}\frac{(b-\lambda x)}{b-\lambda b}\left\lvert\log(x-b)\right\rvert dx\\
\leq &\lVert \zeta_b\rVert_{L^\infty}\left(\lVert \delta_b\rVert_{L^\infty}\frac{b}{\lambda}\frac{b}{2}(1-\lambda)+\int_b^\frac{b}{\lambda}\left\lvert\log(x-b)\right\rvert dx\right)
\end{split}
\end{equation}
%
%
and the expression on the last line tends to $0$ as $\lambda\to 1^-$. Therefore the limit in (\ref{eq_proof_lemma_computation_limit_if_left_derivative_of_tau_exists}) exists and $\frac{d}{d\lambda}\tau\rvert_{\lambda=1^-}=0$.
By (\ref{eq_proof_lemma_derivative_of_sigma_estimated_by_derivative_of_tau}), this implies that $\frac{d}{d\lambda}\sigma\rvert_{\lambda=1^-}=0$.
This concludes the proof.
\end{proof}

We are now able to compute an explicit expression for the integral (\ref{eq_discussion_intro_Pohozaev_integral_to_be_evaluated_Pohozaev_interval}), i.e. a Pohozaev identity for the Cauchy problem (\ref{eq_intro_Poh_nonlocal_Cauchy}).
We will follow the idea outlined at the beginning of this section (before Lemma \ref{lemma_equivalence minimization}).

\begin{thm}\label{prop_Pohozaev_identity_for_open_bounded_intervals_final}
Assume that $u\in H^\frac{1}{2}(\mathbb{R}, \mathbb{R})$ is a solution of (\ref{eq_intro_Poh_nonlocal_Cauchy}). Moreover assume that $u^2\in C^2([a,b])$.
Then
\begin{equation}\label{eq_prop_statement_Pohozaev_proof_asymptotic_translations}
\int_\Omega u'(x)(-\Delta)^\frac{1}{2}u(x)dx=\frac{\pi}{8}\left[ \lim_{x\to a^+}\frac{u^2(x)}{x-a}-\lim_{x\to b^-}\frac{u^2(x)}{b-x}\right]
\end{equation}
and
\begin{equation}\label{eq_prop_statement_Pohozaev_proof_asymptotic}
\int_\Omega u'(x)x(-\Delta)^\frac{1}{2}u(x)dx=\frac{\pi}{8}\left[\lim_{x\to a^+}\frac{u^2(x)}{x-a}a-\lim_{x\to b^-}\frac{u^2(x)}{b-x}b\right].
\end{equation}
\end{thm}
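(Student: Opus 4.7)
My strategy is to exploit the scaling invariance (for the dilation identity \eqref{eq_prop_statement_Pohozaev_proof_asymptotic}) and the translation invariance (for \eqref{eq_prop_statement_Pohozaev_proof_asymptotic_translations}) of the half Dirichlet energy, encoded as a symmetry of an appropriate bilinear functional, and then extract both identities by equating one-sided derivatives of this functional at the critical parameter value, identifying the bulk piece with the integral appearing on the left-hand side of the theorem and the boundary pieces with the right-hand side via the asymptotics of Lemma~\ref{lem_asymptotic_u} and the boundary computations of Lemma~\ref{lem_derivative_on_the_boundary}.

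For \eqref{eq_prop_statement_Pohozaev_proof_asymptotic} I would introduce $B(\lambda):=\int_{\mathbb{R}}u(\lambda x)(-\Delta)^{\frac{1}{2}}u(x)\,dx$ for $\lambda>0$. Combining the self-adjointness of $(-\Delta)^{\frac{1}{2}}$ with its homogeneity $(-\Delta)^{\frac{1}{2}}[u(\lambda\cdot)](x)=\lambda(-\Delta)^{\frac{1}{2}}u(\lambda x)$ and a single change of variables, one verifies $B(\lambda)=B(1/\lambda)$, and the chain rule yields $B'(1^{-})=-B'(1^{+})$. Because $u(\lambda x)$ is supported in $[a/\lambda,b/\lambda]$, for $\lambda>1$ one has $B(\lambda)=\int_{a}^{b}u(\lambda x)(-\Delta)^{\frac{1}{2}}u(x)\,dx$, whereas for $\lambda<1$ the integral splits as $J_{1}(\lambda)+J_{2}(\lambda)+J_{3}(\lambda)$ with $J_{1}=\int_{a}^{b}$, $J_{2}=\int_{a/\lambda}^{a}$, $J_{3}=\int_{b}^{b/\lambda}$. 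Differentiation under the integral sign on $[a,b]$ (legitimate because $(-\Delta)^{\frac{1}{2}}u=f(u)$ is bounded on $(a,b)$ while $xu'(x)$ is integrable by the $\sqrt{x-a}$, $\sqrt{b-x}$ asymptotics of Lemma~\ref{lem_asymptotic_u}) gives $J_{1}'(1)=\int_{a}^{b}xu'(x)(-\Delta)^{\frac{1}{2}}u(x)\,dx$; Lemma~\ref{lem_derivative_on_the_boundary} supplies $J_{2}'(1^{-})$ and $J_{3}'(1^{-})$ in closed form. The symmetry relation $2J_{1}'(1)=-J_{2}'(1^{-})-J_{3}'(1^{-})$ then gives \eqref{eq_prop_statement_Pohozaev_proof_asymptotic} after substitution.

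For the translation identity \eqref{eq_prop_statement_Pohozaev_proof_asymptotic_translations} I would reapply the template with $B_{\mathrm{tr}}(t):=\int_{\mathbb{R}}u(x-t)(-\Delta)^{\frac{1}{2}}u(x)\,dx$. Because $(-\Delta)^{\frac{1}{2}}$ commutes with translations and is self-adjoint, $B_{\mathrm{tr}}$ is even in $t$, so $B_{\mathrm{tr}}'(0^{-})=-B_{\mathrm{tr}}'(0^{+})$. Decomposing as before, the bulk piece contributes $\int_{a}^{b}u'(x)(-\Delta)^{\frac{1}{2}}u(x)\,dx$ to each one-sided derivative, while the two boundary pieces $\int_{b}^{b+t}u(x-t)(-\Delta)^{\frac{1}{2}}u(x)\,dx$ and, for $t<0$, $\int_{a-|t|}^{a}u(x-t)(-\Delta)^{\frac{1}{2}}u(x)\,dx$ are computed by imitating the proof of Lemma~\ref{lem_derivative_on_the_boundary}: the leading asymptotics of $u$ and of $(-\Delta)^{\frac{1}{2}}u$ at the boundary, together with the substitution $x=b+ts$ (respectively $x=a-|t|s$), reduce the boundary integrals to $-\tfrac{\alpha_{b}^{2}}{2}\,t\int_{0}^{1}\sqrt{(1-s)/s}\,ds=-\tfrac{\pi}{4}\alpha_{b}^{2}\,t$ and the analogous expression at $a$. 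Imposing $B_{\mathrm{tr}}'(0^{-})=-B_{\mathrm{tr}}'(0^{+})$ and rearranging then yields \eqref{eq_prop_statement_Pohozaev_proof_asymptotic_translations}.

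The main obstacle I anticipate is the careful bookkeeping of signs in the symmetry identity and in the boundary asymptotics, together with the case distinction between the generic situation and the degenerate one where $(u^{2})'(a)=0$ or $(u^{2})'(b)=0$, in which Lemma~\ref{lem_asymptotic_u} only provides the weaker bounds \eqref{eq_lem_asymptotic_for_u_a_derivative0}, \eqref{eq_lem_asymptotic_for_u_b_derivative0}. In the latter case one has to dominate each boundary piece by an integrable function whose contribution is $o(1-\lambda)$ (respectively $o(t)$), thereby not contributing to the one-sided derivative, consistently with the vanishing of the corresponding terms on the right-hand sides of \eqref{eq_prop_statement_Pohozaev_proof_asymptotic} and \eqref{eq_prop_statement_Pohozaev_proof_asymptotic_translations}. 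Once these technical points are settled, the factor $\pi/8$ in both identities will emerge as half of the $\pi/4$ that appears in Lemma~\ref{lem_derivative_on_the_boundary}, the halving reflecting the symmetric pairing of the two boundary contributions enforced by $B(\lambda)=B(1/\lambda)$ and its translation analogue $B_{\mathrm{tr}}(-t)=B_{\mathrm{tr}}(t)$.
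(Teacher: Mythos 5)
Your proposal is correct in substance and takes a genuinely different route from the paper's. Where the paper exploits the dilation invariance of the half Dirichlet energy through a ``completing the square'' identity,
\[
\int_{\mathbb{R}}(u_\lambda-u)(-\Delta)^{\frac12}u\,dx=-\tfrac12\int_{\mathbb{R}}(u_\lambda-u)(-\Delta)^{\frac12}(u_\lambda-u)\,dx,
\]
followed by a decomposition over $(a,b)$ and $(a,b)^c$ and two separate vanishing estimates (for the diagonal piece $\int_a^b\frac{u_\lambda-u}{\lambda-1}(-\Delta)^{\frac12}(u_\lambda-u)$ and for the $(-\Delta)^{\frac12}u_\lambda$ piece over $(a,b)^c$), you encode exactly the same invariance in the single algebraic relation $B(\lambda)=B(1/\lambda)$ and then extract the content by equating one-sided derivatives at $\lambda=1$. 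This is cleaner: it sidesteps the two extra vanishing lemmas entirely, and it treats the translation identity \eqref{eq_prop_statement_Pohozaev_proof_asymptotic_translations} on the same footing via $B_{\mathrm{tr}}(t)=B_{\mathrm{tr}}(-t)$, whereas the paper proves the translation identity indirectly by applying the dilation identity to the shifted functions $u(\cdot+d)$ and differentiating in $d$.

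The price you pay for the symmetric formulation is that you need two-sided differentiability of $J_1(\lambda)=\int_a^b u(\lambda x)(-\Delta)^{\frac12}u(x)\,dx$ at $\lambda=1$, and in particular $J_1'(1^+)$, which the paper never needs (it only works with left derivatives). For $\lambda>1$ the set where $u(\lambda\cdot)$ is nonzero shrinks \emph{inside} $(a,b)$, and on the thin annuli $(a,a/\lambda)\cup(b/\lambda,b)$ the difference quotient becomes $-u(x)/(\lambda-1)$, which on its support has size of order $(\lambda-1)^{-1/2}$. Your stated justification — ``$(-\Delta)^{\frac12}u=f(u)$ is bounded and $xu'(x)$ is integrable'' — does not by itself give a dominating function for the one-sided limit from the right; the paper's estimate \eqref{eq_proof_prop_integrable_uniform_bound_difference} (which gives $d(\xi_{x,\lambda})^{-1/2}\leq \lambda^{-1/2}d(x)^{-1/2}$) is used specifically for $\lambda\leq1$ and its proof does not carry over to $\lambda>1$ near the endpoints, since then $d(\lambda x)$ can be much smaller than $d(x)$. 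What saves the argument is that these bad regions have measure $O(\lambda-1)$, so their contribution to $\frac{J_1(\lambda)-J_1(1)}{\lambda-1}$ is $O((\lambda-1)^{1/2})$ and vanishes; you should split $I_1=\int_{(a/\lambda,b/\lambda)}$ from $I_2=\int_{(a,a/\lambda)\cup(b/\lambda,b)}$ and show $I_2\to 0$ explicitly rather than invoking dominated convergence uniformly over a two-sided neighbourhood. Finally, as you anticipate yourself, the boundary derivatives $J_2'(1^-)$, $J_3'(1^-)$ need to be recomputed with scrupulous care rather than read off Lemma~\ref{lem_derivative_on_the_boundary} verbatim, since the paper's intermediate manipulations in that lemma carry sign slips (for instance the quantity $\frac{d}{d\lambda}\big|_{\lambda=1^-}\int_1^{1/\lambda}(1-\lambda y)^{1/2}(y-1)^{-1/2}\,dy$ is $-\pi/2$, not $+\pi/2$, since the integral is nonnegative and vanishes at $\lambda=1$); done consistently, your route produces $\frac{\pi}{8}(L_a a - M_b b)$ with $L_a=\lim_{x\to a^+}u^2(x)/(x-a)$, $M_b=\lim_{x\to b^-}u^2(x)/(b-x)$, in agreement with the theorem and with Ros-Oton--Serra.
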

\begin{rem}\noindent
\begin{enumerate}
\item Since $u^2$ is assumed to be in $C^2([a,b])$, we can rewrite the expression on the right hand sides of (\ref{eq_prop_statement_Pohozaev_proof_asymptotic_translations}) and (\ref{eq_prop_statement_Pohozaev_proof_asymptotic}) as
\begin{equation}
\frac{\pi}{8}\left(\partial_+ u^2(a)+\partial_-u^2(b)\right)
\end{equation}
and
\begin{equation}
\frac{\pi}{8}\left(\partial_+ u^2(a)a+\partial_-u^2(b)b \right)
\end{equation} 
respectively, where $\partial_+$ and $\partial_-$ denote right and left derivatives.
\item For functions $u\in H^\frac{1}{2}(\mathbb{R},\mathbb{R})$ satisfying Equation (\ref{eq_intro_Poh_nonlocal_Cauchy}), there holds
\begin{equation}
\int_a^b x u'(x)(-\Delta)^\frac{1}{2}u(x)dx=\int_{\mathbb{R}}(F(u))'(x)dx=0
\end{equation}
by the Fundamental Theorem of Calculus, where $F$ is the primitive function of $f$ such that $F(0)=0$. Thus, if we also assume that $u^2\in C^2([a,b])$, Theorem \ref{prop_Pohozaev_identity_for_open_bounded_intervals_final} implies that also the right hand side in Equation (\ref{eq_prop_statement_Pohozaev_proof_asymptotic_translations}) is equal to $0$.
\end{enumerate}
\end{rem}

\begin{proof}
Let's first assume that $a<0<b$. First we claim that we can rewrite (\ref{eq_prop_statement_Pohozaev_proof_asymptotic}) as
\begin{equation}
\int_\mathbb{R}x u'(x)(-\Delta)^\frac{1}{2}u(x)dx=\frac{d}{d\lambda}\bigg\rvert_{\lambda=1^-} \int_a^b
u(\lambda x)(-\Delta)^\frac{1}{2}u(x)dx.
\end{equation}
The derivative in $\lambda$ is a left derivative. To prove the claim, we compute
\begin{equation}
\frac{d}{d\lambda}\bigg\rvert_{\lambda=1^-} \int_a^b
u(\lambda x)(-\Delta)^\frac{1}{2}u(x)dx=\lim_{\lambda\to 1^-}\frac{1}{\lambda-1}\int_a^b(u(\lambda x)-u(x))(-\Delta)^\frac{1}{2}u(x)dx.
\end{equation}
We observe that for, if $x\neq 0$, $\lambda\neq 1$, by the Mean value Theorem we have
\begin{equation}
\frac{u(\lambda x)-u(x)}{\lambda-1}=u'(\xi_{x, \lambda})
\end{equation}
for some $\xi_{x,\lambda}$ between $x\lambda$ and $x$ depending on $\lambda$ and $x$. 
In order to estimate $u'(\xi_{x,\lambda})$ we remark that if $(u^2)'(a)\neq 0$, $u'(x)=\frac{(u^2)'(x)}{u(x)}$ for $x\in (a, a+\varepsilon)$ for some $\varepsilon>0$. We observe that since $u^2\in C^2([a,b])$, $(u^2)'=2u'u$ can be extended to a $C^1$ function on $[a,b]$. On the other hand, if $(u^2)'(a)=0$, then by (\ref{eq_lem_asymptotic_for_u_a_derivative0}) $u$ is Lipschitz in $x\in (a, a+\varepsilon)$ and therefore $u'$ is bounded on $(a,a+\varepsilon)$. Thus in both cases 
\begin{equation}
\lvert u'(x)\rvert=\mathscr{O}(( x-a)^{-\frac{1}{2}})\text{ as }x\to a^+.
\end{equation}
Similarly, one can show that
\begin{equation}
\lvert u'(x)\rvert=\mathscr{O}((b-x)^{-\frac{1}{2}})\text{ as }x\to b^-.
\end{equation}
In particular, if we set $d(x)=\min\lbrace \lvert x-a\rvert,\vert x-b\rvert\rbrace$ for any $x\in (a,b)$ and $d(x)=0$ for any $x\in (a,b)^c$,
there exists a constant $C$ with
\begin{equation}
\lvert u'(x)\rvert\leq C d(x)^{-\frac{1}{2}}
\end{equation}
in $(a,b)$. Then, since $d^{-\frac{1}{2}}$ is convex
\begin{equation}\label{eq_proof_prop_integrable_uniform_bound_difference}
\left\lvert \frac{u(\lambda x)-u(x)}{\lambda-1}\right\rvert\leq C d(\xi_{x,\lambda})^{-\frac{1}{2}}\leq C\min\lbrace d(x),d(\lambda x)\rbrace^{-\frac{1}{2}}\leq C \lambda^{-\frac{1}{2}}d^{-\frac{1}{2}}(x)
\end{equation}
for $x\in (a,b)$. Since $d^{-\frac{1}{2}}$ is integrable in $(a,b)$, by Lebesgue's Dominated convergence Theorem we obtain
\begin{equation}\label{eq_proof_prop_Pohozaev_first_claim_derivative}
\lim_{\lambda\to 1^-}\int_a^b\frac{u(\lambda x)-u(x)}{\lambda-1}(-\Delta)^\frac{1}{2}u(x)dx=\int_a^b x u'(x)(\Delta)^\frac{1}{2}u(x)dx.
\end{equation}
This concludes the proof of the first claim. Now we observe that for any $\lambda>0$, for a.e. $x\in \mathbb{R}$
\begin{equation}
\begin{split}
\left\lvert (-\Delta)^\frac{1}{4}u_\lambda(x)\right\rvert^2-\left\lvert (-\Delta)^\frac{1}{4}u(x)\right\rvert^2=&
2(-\Delta)^\frac{1}{4}u(x)\left((-\Delta)^\frac{1}{4}u_\lambda(x)-(-\Delta)^\frac{1}{4}u(x)\right)\\&+\left((-\Delta)^\frac{1}{4}u_\lambda(x)-(-\Delta)^\frac{1}{4}u(x)\right)^2,
\end{split}
\end{equation}
and therefore
\begin{equation}\label{eq_proof_Pohozaev_rewriting_derivative_square}
\begin{split}
\int_{\mathbb{R}}(u_\lambda(x)-u(x))(-\Delta)^\frac{1}{2}u(x)dx=&\frac{1}{2}\int_\mathbb{R}\left\lvert (-\Delta)^\frac{1}{4}u_\lambda(x)\right\rvert^2-\left\lvert (-\Delta)^\frac{1}{4}u(x)\right\rvert^2dx\\
&-\frac{1}{2}\int_{\mathbb{R}}\left((-\Delta)^\frac{1}{4}u_\lambda(x)-(-\Delta)^\frac{1}{4}u(x)\right)^2dx.
\end{split}
\end{equation}
Since the half Dirichlet energy is invariant under dilations, the first integral on the right hand side of  (\ref{eq_proof_Pohozaev_rewriting_derivative_square}) vanishes. Thus, by Equation (\ref{eq_proof_prop_Pohozaev_first_claim_derivative}), there holds
\begin{equation}\label{eq_proof_prop_Pohozaev_dividing_limit_in_regions}
\begin{split}
\int_a^b x u'(x)(\Delta)^\frac{1}{2}u(x)dx=&-\frac{1}{2}\lim_{\lambda\to 1^-}\int_{\mathbb{R}}\frac{u_\lambda(x)-u(x)}{\lambda-1}(-\Delta)^\frac{1}{2}(u_\lambda-u)(x)dx\\
&-\lim_{\lambda\to 1^-}\int_{(a,b)^c}\frac{u(\lambda x)-u(x)}{\lambda-1}(-\Delta)^\frac{1}{2}u(x)dx.
\end{split}
\end{equation}
Now we observe that the expression on the right hand side of Equation (\ref{eq_proof_prop_Pohozaev_dividing_limit_in_regions}) can be rewritten as
\begin{equation}\label{eq_proof_prop_Pohozaev_limits_rewritten}
\begin{split}
&-\frac{1}{2}\lim_{\lambda\to 1^-}\int_{(a,b)^c}\frac{u_\lambda(x)-u(x)}{\lambda-1}(-\Delta)^\frac{1}{2}(u+u_\lambda)(x)dx\\
&+\frac{1}{2}\lim_{\lambda\to 1^-}\int_a^b\frac{u_\lambda(x)-u(x)}{\lambda-1}(-\Delta)^\frac{1}{2}(u_\lambda-u)(x)dx.
\end{split}
\end{equation}
Since $u$ satisfies Equation (\ref{eq_intro_Poh_nonlocal_Cauchy}), for any $\lambda>0$, any $x\in \left(\frac{a}{\lambda},\frac{b}{\lambda}\right)$ there holds
\begin{equation}\label{eq_proof_prop_Pohozaev_invariance_and_equation}
(-\Delta)^\frac{1}{2}u_\lambda(x)=\lambda(-\Delta)^\frac{1}{2}u(\lambda x)=\lambda f(u(\lambda x)).
\end{equation}
Since, by Proposition \ref{prop_bdry_regularity}, $u\in  C^\frac{1}{2}(\mathbb{R})$, $(-\Delta)^\frac{1}{2}(u_\lambda-u)(x)$ is bounded uniformly for $\lambda\in (0,1)$, $x\in (a,b)$, and $(-\Delta)^\frac{1}{2}(u_\lambda-u)\to 0$ a.e. in $(a,b)$ as $\lambda\to 1$. Thus, by estimate (\ref{eq_proof_prop_integrable_uniform_bound_difference}) and Lebesgue's Dominated convergence Theorem,
the second integral in (\ref{eq_proof_prop_Pohozaev_limits_rewritten}) tends to $0$ as $\lambda\to 1^-$.
Next we observe that for any $\lambda>0$, $\frac{u_\lambda-u}{\lambda-1}$ is supported in $\left[\frac{a}{\lambda},\frac{b}{\lambda}\right]$. By equation (\ref{eq_proof_prop_Pohozaev_invariance_and_equation}), $(-\Delta)^\frac{1}{2}u_\lambda(x)$ is bounded by a constant uniformly for $\lambda\in \left(\frac{1}{2},1\right)$ and $x\in \left[\frac{a}{\lambda},\frac{b}{\lambda}\right]$. Thus, since $\left\lvert (a,b)^c\smallsetminus \left[\frac{a}{\lambda},\frac{b}{\lambda}\right]\right\rvert\to 0$ as $\lambda\to 1$, we deduce from estimate (\ref{eq_proof_prop_integrable_uniform_bound_difference}) that
\begin{equation}
\lim_{\lambda\to 1^-}\int_{(a,b)^c}\frac{u_\lambda(x)-u(x)}{\lambda-1}(-\Delta)^\frac{1}{2}u_\lambda(x)dx=0.
\end{equation}
Therefore we conclude that
\begin{equation}
\begin{split}
\int_a^b xu'(x)(-\Delta)^\frac{1}{2}u(x)dx=&\frac{1}{2}\lim_{\lambda\to 1^-}\int_{(a,b)^c}\frac{u_\lambda(x)-u(x)}{\lambda-1}(-\Delta)^\frac{1}{2}u(x)dx\\
&\frac{1}{2}\lim_{\lambda\to 1^-}\int_{\left(\frac{a}{\lambda},a\right)\cup\left(b,\frac{b}{\lambda}\right)}\frac{u_\lambda(x)-u(x)}{\lambda-1}(-\Delta)^\frac{1}{2}u(x)dx
\end{split}
\end{equation}

We can thus apply Lemma \ref{lem_derivative_on_the_boundary} to obtain
\begin{equation}\label{eq_proof_prop_final_equation_Pohozaev}
\int_\mathbb{R}x u'(x)(-\Delta)^\frac{1}{2}u(x)dx=-\frac{\pi}{8}\left[\lim_{x\to b^-}\frac{u^2(x)}{b-x}b-\lim_{x\to a^+}\frac{u^2(x)}{x-a}a\right].
\end{equation}
This concludes the proof whenever $a<0<b$.
Now for the general case let $d\in (a,b)$ and let $\tau_d(x):=x+d$ for any $x\in \mathbb{R}$.
We observe that if $u$ solves (\ref{eq_intro_Poh_nonlocal_Cauchy}) weakly for $\Omega=(a,b)$, then $u_d:=u\circ\tau_d$ is a weak solution of
\begin{equation}
\begin{cases}
(-\Delta)^\frac{1}{2}u_d=f(u_d)\text{ in }\Omega_d\\
u_d=0\text{ in }\Omega_d^c,
\end{cases}
\end{equation}
where $\Omega_d:=[a', b']:=[a-d, b-d]$. As $a'<0<b'$, we can apply Equation (\ref{eq_proof_prop_final_equation_Pohozaev}) to $u_d$, $a'$ and $b'$.
Therefore, by the change of variable $y=d+x$
\begin{equation}\label{eq_proof_prop_computation_translation_c}
\begin{split}
\int_{\mathbb{R}}(y-d)u'(y)(-\Delta)^\frac{1}{2}u(y)dy=&\int_{\mathbb{R}}x u_d'(x)(-\Delta)^\frac{1}{2}u_d(x)dx\\
=&-\frac{\pi}{8}\left[\lim_{x\to b'^-}\frac{u_d^2(x)}{b'-x}b'-\lim_{x\to a'^+}\frac{u_d^2(x)}{x-a'}a'\right]\\
=&-\frac{\pi}{8}\left[\lim_{x\to b^-}\frac{u^2(x)}{b-x}(b-d)-\lim_{x\to a^+}\frac{u^2(x)}{x-a}(a-d)\right].
\end{split}
\end{equation}
Since Equation (\ref{eq_proof_prop_computation_translation_c}) holds for any $d\in (a,b)$, we can differentiate the first and the last expressions in (\ref{eq_proof_prop_computation_translation_c}) in $d$. We obtain
\begin{equation}\label{eq_proof_prop_Poozaev_Pohozaev_translation}
-\int_\mathbb{R}u'(y)(-\Delta)^\frac{1}{2}u(y)dy=-\frac{\pi}{8}\left[ \lim_{x\to a^+}\frac{u^2(x)}{x-a}-\lim_{x\to b^-}\frac{u^2(x)}{b-x}\right].
\end{equation}
On the one hand this proves Equation (\ref{eq_prop_statement_Pohozaev_proof_asymptotic_translations}), on the other hand, if we plug Equation (\ref{eq_proof_prop_Poozaev_Pohozaev_translation}) in (\ref{eq_proof_prop_computation_translation_c}) we obtain
\begin{equation}
\int_\mathbb{R}x u'(x)(-\Delta)^\frac{1}{2}u(x)dx=-\frac{\pi}{8}\left[\lim_{x\to b^-}\frac{u^2(x)}{b-x}b-\lim_{x\to a^+}\frac{u^2(x)}{x-a}a\right].
\end{equation}
This concludes the proof of the Proposition.
\end{proof}
\begin{rem}\label{rem_regularity_Pohozaev}
We remark that in the previous arguments, the assumption that $u$ satisfies Equation (\ref{eq_intro_Poh_nonlocal_Cauchy}) was never used directly, but only to deduce regularity properties for $u$ and $(-\Delta)^\frac{1}{2}u$. In fact, the arguments in the proofs in Sections \ref{ssec:Asymptotics} and \ref{ssec:The Pohozaev identity for open, bounded intervals} remain true for any function $u\in C^\frac{1}{2}(\mathbb{R},\mathbb{R})$ such that
\begin{enumerate}
\item $\text{supp}(u)\subset[a,b]$
\item $u^2\in C^2([a,b])$
\item $(-\Delta)^\frac{1}{2}u\in C^0([a,b])$.
\end{enumerate}
In particular, for any function $u\in C^\frac{1}{2}(\mathbb{R},\mathbb{R})$ satisfying the conditions above, Equations (\ref{eq_prop_statement_Pohozaev_proof_asymptotic_translations}) and (\ref{eq_prop_statement_Pohozaev_proof_asymptotic}) hold true.
\end{rem}

The second part of the previous result can be regarded as a special case of the following Pohozaev identity, proved by X. Ros-Oton and J. Serra in \cite{RosOtonSerra}.
\begin{thm}\label{thm_Pohozaev_identity_RosOton_Serra}
(Theorem 1.1 in \cite{RosOtonSerra}) Let $n\in\mathbb{N}_{>0}$. Let $s\in (0,1)$, let $\Omega\subset \mathbb{R}^n$ be a bounded and $C^{1,1}$ domain, $f$ be a locally Lipschitz function, $u$ be a bounded solution of 
\begin{equation}\label{eq_thm_Poh_nonlocal_Cauchy_general}
\begin{cases}
(-\Delta)^su=f(u) &\in \Omega\\
u=0 &\in \Omega^c,
\end{cases}
\end{equation}
and let 
\begin{equation}
\delta(x)=\text{dist}(x,\partial \Omega).
\end{equation}
Then
\begin{equation}
\frac{u}{\delta^s}\bigg\rvert_{\Omega}\in C^\alpha(\overline{\Omega}) \text{ for some }\alpha\in (0,1),
\end{equation}
meaning that $\frac{u}{\delta^s}\bigg\rvert_{\Omega}$ has a continuous extension to $\overline{\Omega}$ which is $C^\alpha(\overline{\Omega})$, and the following identity holds
\begin{equation}\label{eq_thm_Poh_RosOton}
(2s-n)\int_{\Omega}uf(u)dx+2n\int_{\Omega}F(u)dx=\Gamma (1+s)^2\int_{\partial\Omega}\left(\frac{u}{\delta^s}\right)^2(x\cdot\nu)d\sigma,
\end{equation}
where $F(t):=\int_0^t f(x)dx$ and $\nu$ is the unit outward normal to $\partial\Omega$ at $x$.
\end{thm}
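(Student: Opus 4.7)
The plan is to adapt to arbitrary dimension $n$ and order $s\in(0,1)$ the dilation strategy used in the proof of Theorem~\ref{prop_Pohozaev_identity_for_open_bounded_intervals_final} for $(n,s)=(1,\tfrac{1}{2})$. First I would upgrade the boundary regularity: Proposition~\ref{prop_bdry_regularity} gives $u\in C^s(\overline{\Omega})$, but one needs $u/\delta^s\in C^\alpha(\overline{\Omega})$. To obtain this, I would run a blow-up argument at each $x_0\in\partial\Omega$: rescale $u_r(x):=r^{-s}u(x_0+rx)$, use uniform $C^s$-estimates to extract subsequential limits, and classify them via a Liouville-type result as constant multiples of the half-space profile $(x\cdot e)_+^s$. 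The $C^{1,1}$ regularity of $\partial\Omega$ ensures that the local flattening introduces only lower-order corrections, yielding a quantitative Hölder modulus for $u/\delta^s$ on $\overline{\Omega}$ and a well-defined trace on $\partial\Omega$.

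For the identity itself, set, for $\lambda$ near $1$,
\[I(\lambda):=\int_{\mathbb{R}^n}u(\lambda x)\,(-\Delta)^s u(x)\,dx.\]
A change of variables combined with the self-adjointness of $(-\Delta)^s$ and the scaling rule $(-\Delta)^s(u(\lambda\cdot))(x)=\lambda^{2s}((-\Delta)^su)(\lambda x)$ yields the functional identity $I(\lambda)=\lambda^{2s-n}I(1/\lambda)$, so that (provided $I$ is left-differentiable at $1$)
\[I'(1^-)=\tfrac{2s-n}{2}\,I(1)=\tfrac{2s-n}{2}\int_\Omega u\,f(u)\,dx.\]
On the other hand, for $\lambda<1$ close to $1$ I would split
\[I(\lambda)=\int_\Omega u(\lambda x)\,f(u(x))\,dx+\int_{(\Omega/\lambda)\setminus\Omega}u(\lambda x)\,(-\Delta)^s u(x)\,dx,\]
using that $(-\Delta)^s u=f(u)$ inside $\Omega$ and that $u(\lambda\cdot)$ is supported in $\Omega/\lambda\supset\Omega$. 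The first term is an interior integral; the dominated-convergence argument from the proof of Theorem~\ref{prop_Pohozaev_identity_for_open_bounded_intervals_final} (which only used $u\in C^{1/2}$ near $\partial\Omega$; here the analogous bound is $u\in C^s$) shows that its left derivative at $\lambda=1$ equals $\int_\Omega(x\cdot\nabla u)\,f(u)\,dx$, which the divergence theorem, together with $F(u)=0$ on $\partial\Omega$, rewrites as $-n\int_\Omega F(u)\,dx$.

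The main obstacle is the second, genuinely nonlocal, term. I would show that
\[\frac{d}{d\lambda}\bigg|_{\lambda=1^-}\!\!\int_{(\Omega/\lambda)\setminus\Omega}u(\lambda x)\,(-\Delta)^s u(x)\,dx=-\tfrac{1}{2}\Gamma(1+s)^2\!\int_{\partial\Omega}\!\Bigl(\tfrac{u}{\delta^s}\Bigr)^{\!2}\!(x\cdot\nu)\,d\sigma.\]
The ingredients are the boundary asymptotic $u(y)=(u/\delta^s)(y_0)\,\delta(y)^s+o(\delta(y)^s)$ from the regularity step; a local flattening of $\partial\Omega$ (possible since $\partial\Omega\in C^{1,1}$); and the explicit one-dimensional calculation $(-\Delta)^s[y_+^s](-t)=-c_s\,t^{-s}$ for $t>0$, where $c_s$ is a Beta-function constant tracing back to $\Gamma(1+s)\Gamma(s)/\Gamma(2s+1)$. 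After these reductions the integrand becomes, to leading order, a product of two $\delta^{\pm s}$ profiles that is bounded in the normal direction; integrating over an exterior layer of width $\sim(\lambda^{-1}-1)(x_0\cdot\nu(y_0))$ and applying the substitution that generalizes the one producing the factor $\pi/2$ in Lemma~\ref{lem_derivative_on_the_boundary} reduces the computation to a universal Beta integral whose value collapses to $\Gamma(1+s)^2$.

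Equating the two expressions for $I'(1^-)$ gives
\[-n\!\int_\Omega F(u)\,dx-\tfrac{1}{2}\Gamma(1+s)^2\!\int_{\partial\Omega}\Bigl(\tfrac{u}{\delta^s}\Bigr)^{\!2}(x\cdot\nu)\,d\sigma=\tfrac{2s-n}{2}\!\int_\Omega u\,f(u)\,dx,\]
which after multiplication by $-2$ is precisely (\ref{eq_thm_Poh_RosOton}). The hard part is the boundary-layer calculation: the sharp regularity $u/\delta^s\in C^\alpha(\overline{\Omega})$ is what licenses replacing $u$ by its boundary profile with an error integrable against the singular density $\delta^{-s}$, and the constant $\Gamma(1+s)^2$ emerges only after a delicate $n$-dimensional change of variables reducing the computation to its flat-boundary one-dimensional model.
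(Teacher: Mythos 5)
The paper does not prove this theorem: it is explicitly labeled ``(Theorem 1.1 in \cite{RosOtonSerra})'' and is only cited, to put Theorem~\ref{prop_Pohozaev_identity_for_open_bounded_intervals_final} in context. So there is no internal proof to compare against; what you are proposing is an independent proof by extending the paper's one-dimensional, $s=\tfrac12$ argument to general $n$ and $s$. The overall strategy — the scaling functional $I(\lambda)$, the identity $I(\lambda)=\lambda^{2s-n}I(1/\lambda)$ (a slicker packaging of the conformal invariance the paper uses directly via $\|(-\Delta)^{s/2}u_\lambda\|_{L^2}=\|(-\Delta)^{s/2}u\|_{L^2}$), and the split into an interior piece and an exterior boundary layer — is in fact essentially the strategy Ros-Oton and Serra use. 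So the route is sound.

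There is, however, a concrete sign error in your assembly. With your conventions, the interior piece has left derivative $-n\int_\Omega F(u)\,dx$ and $I'(1^-)=\tfrac{2s-n}{2}\int_\Omega u f(u)\,dx$, which forces the boundary-layer derivative to be
\begin{equation*}
n\int_\Omega F(u)\,dx+\tfrac{2s-n}{2}\int_\Omega uf(u)\,dx=+\tfrac12\,\Gamma(1+s)^2\!\int_{\partial\Omega}\Bigl(\tfrac{u}{\delta^s}\Bigr)^{\!2}(x\cdot\nu)\,d\sigma,
\end{equation*}
not $-\tfrac12\,\Gamma(1+s)^2\int_{\partial\Omega}(\cdots)$. (Note that the integrand on the exterior layer, $u(\lambda x)(-\Delta)^s u(x)$, is negative when $u\geq0$, while $\lambda-1<0$, so the quotient — and hence the derivative — is nonnegative, consistent with $x\cdot\nu>0$ near a point of $\partial\Omega$ that $0\in\Omega$ ``sees''.) With the correct sign you then multiply by $2$, not $-2$, and land exactly on (\ref{eq_thm_Poh_RosOton}).

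Beyond the sign, the two genuinely hard steps are only gestured at. First, $u/\delta^s\in C^\alpha(\overline\Omega)$ is itself a substantial theorem (it is proved by Ros-Oton and Serra in a separate companion paper on boundary regularity); the blow-up/Liouville sketch is the right outline but requires a classification of $s$-harmonic functions in a half-space, a compactness argument producing uniform Hölder moduli, and control of the $C^{1,1}$-flattening errors — none of which is a one-line affair. Second, and more seriously, the $n$-dimensional boundary-layer computation is where almost all of the work lies. In one dimension (Lemma~\ref{lem_derivative_on_the_boundary}) the exterior layer consists of two short intervals and the competition between $\delta^s$ and $\delta^{-s}$ can be evaluated by an explicit substitution yielding a Beta integral. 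In dimension $n\geq2$ you must foliate a collar of $\partial\Omega$, flatten, show that tangential variation contributes only $o(1-\lambda)$, reduce to the one-dimensional profile $(t)_+^s$ against $(-\Delta)^s[(t)_+^s]$, and only then does the Beta/Gamma arithmetic producing $\Gamma(1+s)^2$ kick in. You correctly locate this as the crux, but as written it is a placeholder rather than an argument, and the $o(\delta^s)$ error from the boundary regularity must be shown to be integrable against the $\delta^{-s}$ singular density uniformly in the layer width — this is exactly the point where the sharp $C^\alpha$ regularity of $u/\delta^s$ (not just continuity) is used.

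Finally, for completeness: the claim ``$\Omega/\lambda\supset\Omega$'' fails for non-star-shaped $C^{1,1}$ domains, but this does not actually break your decomposition, because on $\Omega\setminus(\Omega/\lambda)$ one has $u(\lambda x)=0$; still, it is worth stating the split as $I(\lambda)=\int_{\Omega\cap(\Omega/\lambda)}u(\lambda x)f(u)\,dx+\int_{(\Omega/\lambda)\setminus\Omega}u(\lambda x)(-\Delta)^s u\,dx$ to avoid the appearance of assuming star-shapedness.
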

In fact, in the case $n=1$, $s=\frac{1}{2}$, with $\Omega=(a,b)$, (\ref{eq_thm_Poh_RosOton}) reduces to
\begin{equation}\label{eq_Poh_simplified}
2\int_a^bF(u)dx=\frac{\pi}{4}\left[\lim_{x\to a^+}\frac{u^2(x)}{x-a}a-\lim_{x\to b^-}\frac{u^2(x)}{b-x}b\right];
\end{equation}
integrating by part the term on the left hand side we obtain Equation (\ref{eq_prop_statement_Pohozaev_proof_asymptotic}).

\appendix

\section{Results about Sobolev spaces and harmonic extensions}

\subsection{Results for function on Euclidean spaces}
In the following, let $n\in \mathbb{N}_{>0}$.

\begin{lem}\label{lem_equivalent_definitions_homogeneous_sobolev}
Let $s\in (0,1)$ and let $\dot{W}^{s,2}(\mathbb{R}^n)$, $\dot{H}^s(\mathbb{R}^n)$ be defined as in (\ref{def_hom_sobolev_w}) and (\ref{def_hom_sobolev_h}), then
\begin{equation}
\dot{W}^{s,2}(\mathbb{R}^n)=\dot{H}^s(\mathbb{R}^n),
\end{equation}
as sets, and the respective seminorms are equivalent, so that the induced normed spaces are isomorphic.
\end{lem}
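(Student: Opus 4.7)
The plan is to establish the identity
\[
[\phi]_{\dot{W}^{s,2}(\mathbb{R}^n)}^2 = C_{n,s}\,[\phi]_{\dot{H}^s(\mathbb{R}^n)}^2 \qquad \forall \phi\in\mathscr{S}(\mathbb{R}^n)
\]
with an explicit positive constant $C_{n,s}$, and then transfer it to the full spaces by an approximation argument. This is exactly the content of Proposition 3.4 in \cite{Hitchhiker} referenced earlier in the preliminaries; here I sketch the proof.

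For the Schwartz case, I would change variables $(x,y)\mapsto(x,z)$ with $z=x-y$ in the Gagliardo double integral, apply Plancherel's identity to the inner $L^2_x$-norm of $\phi(\cdot)-\phi(\cdot-z)$, then exchange the remaining integrals via Fubini:
\[
[\phi]_{\dot{W}^{s,2}}^2 = \int_{\mathbb{R}^n}|\widehat{\phi}(\xi)|^2\left(\int_{\mathbb{R}^n}\frac{|e^{iz\cdot\xi}-1|^2}{|z|^{n+2s}}dz\right)d\xi.
\]
The inner integral, by the substitution $w=|\xi|z$ and invariance under rotations, equals $C_{n,s}|\xi|^{2s}$ for an explicit constant $C_{n,s}>0$ (finite precisely because $s\in(0,1)$, so the integrand has integrable singularities both at the origin and at infinity). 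This yields the desired identity for Schwartz functions.

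To pass from $\dot{H}^s$ to $\dot{W}^{s,2}$, I would take $u\in \dot{H}^s(\mathbb{R}^n)$ and a sequence $(\phi_k)_k\subset\mathscr{S}(\mathbb{R}^n)$ approximating $u$ in $\dot{H}^s$ (such a sequence exists by Lemma \ref{lem_approx_sobolev}, which is used repeatedly in Section \ref{ssec: Fractional gradients and fractional divergences}). By the Schwartz identity the sequence is also Cauchy in $\dot{W}^{s,2}$, and after extracting an a.e.\ pointwise convergent subsequence Fatou's Lemma applied to the Gagliardo integrand yields $[u]_{\dot{W}^{s,2}}\leq C_{n,s}^{1/2}[u]_{\dot{H}^s}$, so that $u\in\dot{W}^{s,2}$. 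Conversely, given $u\in L^1_{loc}$ with $[u]_{\dot{W}^{s,2}}<\infty$, I would mollify and cut off to obtain Schwartz approximations $\phi_k$ with $[\phi_k-u]_{\dot{W}^{s,2}}\to 0$; the corresponding Fourier seminorms are then Cauchy by the Schwartz identity, and $|\xi|^s\widehat{\phi_k}$ converges in $L^2(\mathbb{R}^n)$ to a limit which must agree with $|\xi|^s\widehat{u}$ away from the origin (since $\widehat{\phi_k}\to\widehat{u}$ in $\mathscr{S}'$ up to a distribution supported at $0$). This gives $[u]_{\dot{H}^s}\leq C_{n,s}^{-1/2}[u]_{\dot{W}^{s,2}}$ and completes the equivalence of seminorms; since both quotients identify elements differing by a constant, the induced normed spaces coincide.

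The main obstacle is the second inclusion: one must verify that an element of $\dot{W}^{s,2}$, a priori only in $L^1_{loc}$, can be realised as a tempered distribution of order zero (possibly after subtracting a constant) whose Fourier transform is locally $L^2$ off the origin, so as to even make sense of $[u]_{\dot{H}^s}$. A careful mollification-plus-truncation scheme, combined with the uniform Gagliardo bound to control the added constants, resolves this point.
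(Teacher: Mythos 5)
Your Plancherel computation for Schwartz functions is exactly the paper's, and your Fatou argument for the inclusion $\dot{H}^s\subset\dot{W}^{s,2}$ (via Lemma \ref{lem_approx_sobolev}) is sound. The gap is in the converse inclusion $\dot{W}^{s,2}\subset\dot{H}^s$, which is also the step you flag as the main obstacle: your proposed resolution by mollification-plus-truncation does not work without substantially more effort, and I don't think it can be made to work by the route you sketch. Mollification converges in $\dot{W}^{s,2}$, but truncation does not. If $v$ is the mollified function and $\phi_R:=\chi(\cdot/R)(v-c_R)$ with $c_R$ the average of $v$ on $B_{2R}$, the error $[\phi_R-v]_{\dot{W}^{s,2}}$ contains the term
\begin{equation*}
\int_{\mathbb{R}^n}\int_{\mathbb{R}^n}\frac{\lvert v(y)-c_R\rvert^2\,\lvert\chi_R(x)-\chi_R(y)\rvert^2}{\lvert x-y\rvert^{n+2s}}\,dx\,dy,
\end{equation*}
and the natural bounds on the two factors cancel exactly: the inner $x$-integral scales like $R^{-2s}$ while the fractional Poincar\'e inequality gives $\int_{B_{2R}}\lvert v-c_R\rvert^2\lesssim R^{2s}[v]_{\dot{W}^{s,2}}^2$, so the product is $O([v]_{\dot{W}^{s,2}}^2)$ and does not vanish as $R\to\infty$. (The constant function, which lies in $\dot{W}^{s,2}$ with zero seminorm, already shows that $[\chi_R c - c]_{\dot{W}^{s,2}}^2\sim R^{n-2s}\to\infty$ for $n>2s$, so truncation alone is hopeless without subtracting constants, and even after subtracting them you hit the critical scaling above.) There is a real circularity lurking: approximating a general $u\in\dot{W}^{s,2}$ by Schwartz functions in the Gagliardo seminorm essentially requires the frequency-side characterization you are trying to establish.

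The paper avoids approximation entirely in this direction. It applies Plancherel to the differences $u(\cdot+z)-u(\cdot)$, which lie in $L^2(\mathbb{R}^n)$ for a.e.\ $z$ by finiteness of the Gagliardo double integral, to conclude that $\widehat{u}$ is locally $L^2$ on $\mathbb{R}^n\smallsetminus\{0\}$. It then decomposes $\widehat{u}=f+g$ with $g$ supported at $\{0\}$ (hence a finite sum of derivatives of $\delta_0$ by the structure theorem), and uses the $L^2$-bound $\lvert\langle(e^{iz\cdot\xi}-1)\widehat{u},\phi\rangle\rvert\lesssim_z\lVert\phi\rVert_{L^2}$ tested against Schwartz $\phi$ in Equation (\ref{eq_proof_lem_app_contradiction_higher_order}) to show that only the $\delta_0$-term (not its derivatives) can appear in $g$. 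This yields $\lvert\xi\rvert^s\widehat{u}\in L^2$ directly and sidesteps any need for a density argument in $\dot{W}^{s,2}$. If you want to keep your approach you must either prove the density of $\mathscr{S}(\mathbb{R}^n)$ in $\dot{W}^{s,2}(\mathbb{R}^n)$ modulo constants (a non-trivial theorem in its own right, false for $sp\geq n$ in some formulations and borderline for $n=1$, $s=\tfrac12$), or replace the truncation step by an argument that only uses the differences $u(\cdot+z)-u(\cdot)$, at which point you will essentially have reproduced the paper's proof.
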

\begin{proof}
(Adapted from the proof of Proposition 3.4 in \cite{Hitchhiker}) Let $u\in\dot{W}^{s,2}(\mathbb{R}^n)$, then
\begin{equation}\label{eq_proof_lem_app_explicit_form_norm}
[u]_{\dot{W}^{s,2}}^2=\int_{\mathbb{R}^n}\int_{\mathbb{R}^n}\frac{\lvert u(x)-u(y)\rvert}{\lvert x-y\rvert^{n+2s}}dxdy=\int_{\mathbb{R}^n}\left(\int_{\mathbb{R}^n}\left\lvert\frac{u(y+z)-u(y)}{\lvert z\rvert^{\frac{n}{2}+s}}\right\rvert^2dy\right)dz.
\end{equation}
Thus the internal integral on the right side of (\ref{eq_proof_lem_app_explicit_form_norm}) is finite for a.e. $z\in \mathbb{R}^n$. For any such $z$, by Plancherel's identity
\begin{equation}\label{eq_proof_lem_app_explicit_form_norm2}
\begin{split}
\int_{\mathbb{R}^n}\left\lvert\frac{u(y+z)-u(y)}{\lvert z\rvert^{\frac{n}{2}+s}}\right\rvert^2dy&=\left\lVert \mathscr{F}\left(\frac{u(\cdot+z)-u(\cdot)}{\lvert z\rvert^{\frac{n}{2}+s}}\right)\right\rVert_{L^2}^2\\
&=\left\lVert \frac{e^{iz\cdot\xi}-1}{\lvert z\rvert^{\frac{n}{2}+s}}\mathscr{F}u(\xi)\right\rVert_{L^2}^2.
\end{split}
\end{equation}
Since this quantity is bounded for a.e. $z\in\mathbb{R}^n$, $\mathscr{F}u$ is locally square integrable in $\mathbb{R}^n\smallsetminus\lbrace 0\rbrace$. Therefore $\mathscr{F}u$ can be rewritten as the sum of a  locally square integrable function $f$ with the property that
\begin{equation}
\frac{e^{iz\cdot\xi}-1}{\lvert z\rvert^{\frac{n}{2}+s}}f(\xi)\in L^2(\mathbb{R}^n)
\end{equation}
for a.e. $z\in\mathbb{R}^n$, and a distribution $g$ supported in $\lbrace 0\rbrace$. By Proposition 2.4.1 in \cite{Grafakos1}, there exist $k\in \mathbb{N}$ and complex numbers $c_{\alpha}$ such that
\begin{equation}
g=\sum_{\lvert \alpha\rvert\leq k } c_\alpha\partial^{\alpha}\delta_{0},
\end{equation}
where $\alpha$ runs over all $n$-multiindices of degree smaller or equal to $k$.
Now let $\phi\in\mathscr(\mathbb{R}^n)$. Then for a.e. $z\in \mathbb{R}^n$
\begin{equation}\label{eq_proof_lem_app_contradiction_higher_order}
\begin{split}
\int_{\mathbb{R}^n}\frac{e^{iz\cdot\xi}-1}{\lvert z\rvert^{\frac{n}{2}+s}}\mathscr{F}u(\xi)\phi(\xi)d\xi=&\int_{\mathbb{R}^n}f(\xi)\frac{e^{iz\cdot\xi}-1}{\lvert z\rvert^{\frac{n}{2}+s}}\phi(\xi)d\xi\\
&+(-1)^{\lvert \alpha\rvert}\sum_{\lvert \alpha\rvert\leq k } c_\alpha\partial^\alpha\bigg\vert_{\xi=0}\left(\frac{e^{iz\cdot\xi}-1}{\lvert z\rvert^{\frac{n}{2}+s}}\phi(\xi) \right)
\end{split}
\end{equation}
For a.e. $z\in \mathbb{R}^n$, the left hand side can be bounded by $\lVert\phi\rVert_{L^2}$, where $C$ is a constant independent from $\phi$. The same holds for the first term on the right side, and thus also for the second. This implies that $c_\alpha=0$ for any multiindex $\alpha$ such that $\lvert \alpha\rvert>0$.
Therefore $\mathscr{F}u$ is a tempered distribution of order $0$, and combining (\ref{eq_proof_lem_app_explicit_form_norm}) and (\ref{eq_proof_lem_app_explicit_form_norm2}) we obtain
\begin{equation}
[u]_{\dot{W}^{s,2}}^2=\int_{\mathbb{R}^n}\int_{\mathbb{R}^n}\left\lvert\frac{e^{iz\cdot \xi}-1}{\lvert z\rvert^{\frac{n}{2}-s}}\right\rvert^2\lvert\mathscr{F}u(\xi)\rvert^2d\xi dz
\end{equation}
By (3.12) in \cite{Hitchhiker},
\begin{equation}
\int_{\mathbb{R}^n}\left\lvert\frac{e^{iz\cdot \xi}-1}{\lvert z\rvert^{\frac{n}{2}-s}}\right\rvert^2dz=C_{n,s}\lvert \xi\rvert^{2s}
\end{equation}
for a positive constant $C_{n,s}$ depending on $n$ and $s$. Therefore
\begin{equation}
[u]_{\dot{W}^{s,2}}^2=C_{n,s}\int_{\mathbb{R}^n}\lvert \xi\rvert^{2s}\lvert\mathscr{F}u(\xi)\rvert^2 d\xi 
\end{equation}
Thus $u\in \dot{H}^\frac{1}{2}(\mathbb{R}^n)$ and $[u]_{\dot{H}^s}=C_{n,s}^{\frac{1}{2}}[u]_{W^{s,2}}$.\newline
Conversely, assume that $u\in \dot{H}^s(\mathbb{R}^n)$, then $\lvert\xi\rvert^s\mathscr{F}u(\xi)\in L^2(\mathbb{R}^n)$. First we claim that $u\in L^2_{loc}(\mathbb{R}^n)$. In fact, let $\eta\in C^\infty_c(B_1(0), [0,1])$ and such that $\eta\equiv 1$ in $B_\frac{1}{2}(0)$. Then we can rewrite the distribution $\mathscr{F}u$ as follows
\begin{equation}\label{eq_proof_lemma_appendix_decomposition_Fu_for_local_integrability}
\mathscr{F}u= \eta \mathscr{F}u+(1-\eta)\mathscr{F}u.
\end{equation}
Now the first summand in (\ref{eq_proof_lemma_appendix_decomposition_Fu_for_local_integrability}) is compactly supported, therefore its inverse Fourier transform is smooth (by Theorem 2.3.21 in \cite{Grafakos1}), while the second summand lies in $L^2(\mathbb{R}^n)$, since $\lvert\xi\rvert^s\mathscr{F}u(\xi)\in L^2(\mathbb{R}^n)$ and $(1-\eta)\mathscr{F}u$ is equal to $0$ in $B_\frac{1}{2}(0)$. Therefore the inverse Fourier transform of the second summand lies in $L^2(\mathbb{R}^n)$. We conclude that $u\in L^2_{loc}(\mathbb{R}^n)$.
Moreover, since $\lvert\xi\rvert^s\mathscr{F}u(\xi)\in L^2(\mathbb{R}^n)$, one can use again equation (3.12) in \cite{Hitchhiker} to obtain
\begin{equation}
[u]_{\dot{W}^{s,2}}^2=\int_{\mathbb{R}^n}\left(\lvert \xi\rvert\mathscr{F}u(\xi)\right)^2d\xi=C_{n,s}^{-1}\int_{\mathbb{R}^n}\int_{\mathbb{R}^n}\left(\left\lvert\frac{e^{iz\cdot \xi-1}}{\lvert z\rvert^{\frac{n}{2}+s}}\right\rvert\mathscr{F}u(\xi)\right)^2d\xi dz.
\end{equation}
Thus, by (\ref{eq_proof_lem_app_explicit_form_norm}) and (\ref{eq_proof_lem_app_explicit_form_norm2}), $u\in \dot{W}^{s,2}$ and $[u]_{W^{s,2}}=C_{n,s}^{-\frac{1}{2}}[u]_{\dot{H}^s}$.
\end{proof}

\begin{lem}\label{lem_DaLio_Pigati_locally_L2}
(Adapted from Lemma B.1 in \cite{FreeBoundary}) Given $u\in \dot{H}^\frac{1}{2}(\mathbb{R}^n)$, for any $j\in\mathbb{N}$ there holds
\begin{equation}
\lVert u\rVert_{L^2\left(B_{2^j}(0)\right)}\leq C_{n,s}2^{j(s+\frac{n}{2})}[u]_{\dot{H}^\frac{1}{2}} +C_n2^\frac{jn}{2}\left\lvert (u)_{B_1(0)}\right\rvert
\end{equation}
for some independent constants $C_{n,s}$ and $C_n$ depending respectively on $n,s$ and $n$.
\end{lem}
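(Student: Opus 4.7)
The plan is to estimate $\lVert u\rVert_{L^2(B_{2^j}(0))}$ by comparing $u$ with its mean $(u)_{B_{2^j}(0)}$ on the ball $B_{2^j}(0)$, and then controlling this mean in terms of $(u)_{B_1(0)}$ via a dyadic telescoping argument. The only substantive analytic input needed is the fractional Poincaré inequality on balls, which states that for any radius $R>0$,
\begin{equation*}
\lVert u-(u)_{B_R(0)}\rVert_{L^2(B_R(0))}\leq C_{n,s} R^s [u]_{\dot H^s(B_R(0))}\leq C_{n,s}R^s[u]_{\dot H^s(\mathbb{R}^n)}.
\end{equation*}
(The first inequality follows by writing $u(x)-(u)_{B_R(0)}=\frac{1}{|B_R(0)|}\int_{B_R(0)}(u(x)-u(y))\,dy$, applying Cauchy--Schwarz, and bounding $|x-y|\leq 2R$ inside the Gagliardo kernel.)

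The first step is to apply the triangle inequality
\begin{equation*}
\lVert u\rVert_{L^2(B_{2^j}(0))}\leq \lVert u-(u)_{B_{2^j}(0)}\rVert_{L^2(B_{2^j}(0))}+|B_{2^j}(0)|^{1/2}\bigl|(u)_{B_{2^j}(0)}\bigr|,
\end{equation*}
and then use fractional Poincaré to bound the first term by $C_{n,s}\,2^{js}[u]_{\dot H^s}$. The second step is to control the mean on the large ball in terms of the mean on the unit ball. Writing
\begin{equation*}
\bigl|(u)_{B_{2^j}(0)}\bigr|\leq \bigl|(u)_{B_1(0)}\bigr|+\sum_{k=0}^{j-1}\bigl|(u)_{B_{2^{k+1}}(0)}-(u)_{B_{2^k}(0)}\bigr|,
\end{equation*}
one estimates each consecutive difference using $B_{2^k}(0)\subset B_{2^{k+1}}(0)$, Hölder's inequality, and fractional Poincaré on $B_{2^{k+1}}(0)$:
\begin{equation*}
\bigl|(u)_{B_{2^{k+1}}(0)}-(u)_{B_{2^k}(0)}\bigr|\leq \frac{|B_{2^{k+1}}(0)|^{1/2}}{|B_{2^k}(0)|}\lVert u-(u)_{B_{2^{k+1}}(0)}\rVert_{L^2(B_{2^{k+1}}(0))}\leq C_{n,s}\,2^{k(s-n/2)}[u]_{\dot H^s}.
\end{equation*}

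The third step is to sum up. When $s<n/2$ the geometric series $\sum_{k\geq 0}2^{k(s-n/2)}$ converges, and we simply obtain $|(u)_{B_{2^j}(0)}|\leq |(u)_{B_1(0)}|+C_{n,s}[u]_{\dot H^s}$; multiplying by $|B_{2^j}(0)|^{1/2}\leq C_n 2^{jn/2}$ and combining with the Poincaré term yields the claim, after noting that $2^{jn/2}[u]_{\dot H^s}\leq 2^{j(s+n/2)}[u]_{\dot H^s}$. The main obstacle is the borderline case $s\geq n/2$ (in particular $s=\tfrac{1}{2}$, $n=1$, which is the setting of the paper): the telescoping sum can contribute a factor $j$ or even grow like $2^{j(s-n/2)}$, but this is precisely absorbed by the exponent $s+n/2$ in the desired estimate, since $\sum_{k=0}^{j-1}2^{k(s-n/2)}\leq C\, 2^{j(s-n/2)_+}$ when $s\neq n/2$ and $\leq j\leq C\,2^{j\varepsilon}$ when $s=n/2$; in every case the product with $2^{jn/2}$ is controlled by $C_{n,s}\,2^{j(s+n/2)}$. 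Gathering everything yields the stated inequality with the constant $C_{n,s}$ in front of the seminorm and $C_n$ in front of $|(u)_{B_1(0)}|$.
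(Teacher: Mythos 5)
Your proof is correct and follows essentially the same route as the paper: both establish the ball-wise fractional Poincar\'e inequality via Cauchy--Schwarz plus the pointwise lower bound $\lvert x-y\rvert^{-n-2s}\geq (2R)^{-n-2s}$ on $B_R\times B_R$, then telescope the dyadic averages $(u)_{B_{2^k}(0)}$ via the triangle inequality and sum. Your explicit case distinction according to the sign of $s-n/2$ is slightly more careful than the paper's bookkeeping of the geometric series (and the ``$j\leq C\,2^{j\varepsilon}$'' step would be cleaner stated directly as $j\leq C_s\,2^{js}$), but the underlying argument and all key estimates are identical.
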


\begin{proof}
Let $u\in \dot{H}^\frac{1}{2}(\mathbb{R}^n)$. For the following, denote $\alpha(n)$ the volume of the ball of radius $1$ in $\mathbb{R}^n$.
First we observe that for any $j\in \mathbb{N}$
\begin{equation}
\begin{split}
\left\lVert u-(u)_{B_{2^j}(0)}\right\rVert_{L^2\left(B_{2^j}(0)\right)}^2=&\int_{B_{2^j}(0)}\left( \frac{1}{\left\lvert B_{2^j}(0)\right\rvert}\int_{B_{2^j}(0)}u(x)-u(y)dy\right)^2dx\\
\leq&\int_{B_{2^j}(0)}\frac{1}{\left\lvert B_{2^j}(0)\right\rvert}\int_{B_{2^j}(0)}\lvert u(x)-u(y)\rvert^2 dydx\\
=&\frac{1}{\alpha(n)2^{jn}}\int_{B_{2^j}(0)}\int_{B_{2^j}(0)}\lvert u(x)-u(y)\rvert^2dxdy\\
\leq&\frac{2^{n+2s+2js}}{\alpha(n)}\int_{B_{2^j}(0)}\int_{B_{2^j}(0)}\frac{\lvert u(x)-u(y)\rvert}{\lvert x-y\rvert^{n+2s}}dxdy.
\end{split}
\end{equation}
Therefore
\begin{equation}
\left\lVert u-(u)_{B_{2^j}(0)}\right\rVert_{L^2\left(B_{2^j}(0)\right)}\leq \frac{2^{\frac{n}{2}+s+js}}{\alpha(n)^\frac{1}{2}}[u]_{\dot{H}^\frac{1}{2}(\mathbb{R}^n)}
\end{equation}
Thus for any $j\in \mathbb{N}$
\begin{equation}
\begin{split}
\lvert (u)_{B_{2^{j-1}}(0)}-(u)_{B_{2^{j}}(0)}=& \left\lvert \frac{1}{\left\lvert B_{2^{j-1}}(0)\right\rvert}\int_{B_{2^{j-1}}(0)} u(x)-(u)_{B_{2^{j}}(0)} dx \right\rvert\\
\leq& \frac{1}{\alpha(n)2^{(j-1)n}}\int_{B_{2^{j}}(0)}\lvert u(x)-(u)_{B_{2^{j}}(0)}\rvert dx\\
\leq & \frac{1}{\alpha(n)2^{(j-1)n}} \left\lvert B_{2^j}(0)\right\rvert^\frac{1}{2}\lVert u-(u)_{B_{2^j}(0)}\rVert_{L^2\left( B_{2^j}(0)\right)}\\
\leq & \frac{1}{\alpha(n)2^{(j-1)n}} (\alpha(n)2^{jn})^\frac{1}{2}\frac{2^{\frac{n}{2}+s+js}}{\alpha(n)^\frac{1}{2}}[u]_{\dot{H}^\frac{1}{2}(\mathbb{R}^n)}\\
=&\frac{2^{-\frac{jn}{2}+\frac{3}{2}n+(1+j)s}}{\alpha(n)}[u]_{\dot{H}^\frac{1}{2}(\mathbb{R}^n)}.
\end{split}
\end{equation}
We conclude that
\begin{equation}
\begin{split}
\lVert u\rVert_{L^2\left(B_{2^j}(0)\right)}\leq &\lVert u-(u)_{B_{2^j}(0)}\rVert_{L^2\left(B_{2^j}(0)\right)}+\lVert (u)_{B_1}\rVert_{L^2\left(B_{2^j}(0)\right)}\\
&+\sum_{l=1}^j\lVert (u)_{B_{2^{l-1}}(0)}-(u)_{B_{2^{l}}(0)}\rVert_{L^2\left(B_{2^j}(0)\right)}\\
\leq & \frac{2^{\frac{n}{2}+(1+j)s}}{\alpha(n)^{\frac{1}{2}}}[u]_{\dot{H}^\frac{1}{2}(\mathbb{R}^n)}+\alpha(n)^\frac{1}{2}2^\frac{jn}{2}\left\lvert(u)_{B_1(0)}\right\rvert\\
&+2^\frac{jn}{2}\alpha(n)^\frac{1}{2}\sum_{l=1}^j\left\lvert (u)_{B_{2^{l-1}}(0)}-(u)_{B_{2^l}(0)}\right\rvert\\
\leq&\left( \frac{2^{\frac{n}{2}+(1+j)s}}{\alpha(n)^{\frac{1}{2}}}+2^{\frac{n}{2}+s}\left(\frac{1-2^{(\frac{n}{2}+s)j}}{1-2^{\frac{n}{2}+s}}\right)\frac{2^{\frac{3}{2}n+s}}{\alpha(n)^\frac{1}{2}} \right)[u]_{\dot{H}^\frac{1}{2}(\mathbb{R}^n)}\\
&+\alpha(n)^\frac{1}{2}2^\frac{jn}{2}\left\lvert(u)_{B_1(0)}\right\rvert.
\end{split}
\end{equation}
This concludes the proof of the Lemma.
\end{proof}

\begin{lem}\label{lem_approx_sobolev}
(Adapted from Lemma B.2 in \cite{FreeBoundary}) Let $s\in (0,2)$ and let $u\in \dot{H}^s(\mathbb{R}^n)$. Then there exists a sequence $(u_k)_k$ in $\mathscr{S}(\mathbb{R}^n)$ and a sequence $(c_k)_k$ in $\mathbb{R}^m$ such that
\begin{equation}
\lim_{k\to \infty} [u_k-u]_{\dot{H}^s}=\lim_{k\to \infty} [u_k+c_k-u]_{\dot{H}^s}=0,
\end{equation}
for any compact $K\in \mathbb{R}^n$
\begin{equation}
\lim_{k\to \infty}\lVert u_k+c_k-u\rVert_{L^2(K)}=0
\end{equation}
and for all $k\in \mathbb{N}$, $\widehat{u_k}\equiv 0$ in a neighbourhood of the origin.
\end{lem}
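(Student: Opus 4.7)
The plan is to work on the Fourier side. Writing $f(\xi) := |\xi|^s\widehat{u}(\xi)$, the hypothesis $u \in \dot H^s(\mathbb R^n)$ together with the definition (\ref{def_hom_sobolev_h}) gives $f \in L^2(\mathbb R^n)$. The key observation is that to produce $u_k\in\mathscr{S}(\mathbb R^n)$ with $\widehat{u_k}$ vanishing near $0$, it is enough to find Schwartz functions $h_k$ vanishing in a neighborhood of the origin with $h_k\to f$ in $L^2$; then $\widehat{u_k}(\xi):=h_k(\xi)/|\xi|^s$ will be smooth (the singularity of $|\xi|^{-s}$ is killed by the support condition on $h_k$), compactly supported, and in particular Schwartz, so $u_k=\mathscr{F}^{-1}(h_k/|\xi|^s)\in\mathscr{S}(\mathbb R^n)$.

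For the construction, I would fix $\eta\in C^\infty_c(B_2,[0,1])$ with $\eta\equiv 1$ on $B_1$, set $\chi_k(\xi):=\eta(\xi/k)(1-\eta(k\xi))$ so that $\chi_k\in C^\infty_c(\mathbb R^n)$ is supported in $\{1/k\le|\xi|\le 2k\}$ and $\chi_k\to 1$ pointwise a.e., and pick $g_k\in\mathscr{S}(\mathbb R^n)$ with $\|g_k-f\|_{L^2}\le 1/k$ (possible by density). Setting $h_k:=\chi_k g_k\in C^\infty_c(\mathbb R^n)$, the triangle inequality gives
\begin{equation*}
\|h_k-f\|_{L^2}\le\|\chi_k(g_k-f)\|_{L^2}+\|(1-\chi_k)f\|_{L^2}\to 0
\end{equation*}
by dominated convergence. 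Then $[u_k-u]_{\dot H^s}^2=\int_{\mathbb R^n}|\xi|^{2s}|\widehat{u_k}-\widehat u|^2d\xi=\|h_k-f\|_{L^2}^2\to 0$, since $\widehat{u_k}-\widehat u=(h_k-f)/|\xi|^s$ as measurable functions on $\mathbb R^n\setminus\{0\}$.

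The subtler point is the local $L^2$ convergence after correction by a constant, which is where Lemma~\ref{lem_DaLio_Pigati_locally_L2} enters. The distribution $w_k:=u-u_k$ lies in $\dot H^s(\mathbb R^n)$ with $[w_k]_{\dot H^s}=\|f-h_k\|_{L^2}\to 0$, and (as explained in the discussion after (\ref{def_hom_sobolev_h}) and in the proof of Lemma~\ref{lem_equivalent_definitions_homogeneous_sobolev}) an element of $\dot H^s$ differs from an $L^2_{loc}$ function only by a constant, so $(w_k)_{B_1(0)}$ is well-defined. Setting $c_k:=(u-u_k)_{B_1(0)}$, the translated distribution $w_k-c_k$ has zero average on $B_1(0)$ and identical seminorm $[w_k-c_k]_{\dot H^s}=[w_k]_{\dot H^s}$, so applying Lemma~\ref{lem_DaLio_Pigati_locally_L2} yields
\begin{equation*}
\|u_k+c_k-u\|_{L^2(B_{2^j}(0))}=\|w_k-c_k\|_{L^2(B_{2^j}(0))}\le C_{n,s}\,2^{j(s+n/2)}[w_k]_{\dot H^s}\longrightarrow 0
\end{equation*}
for every fixed $j$, giving $L^2$ convergence on every compact $K\subset\mathbb R^n$. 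The seminorm identity $[u_k+c_k-u]_{\dot H^s}=[u_k-u]_{\dot H^s}\to 0$ is immediate from the fact that constants have Fourier transform supported at the origin, which is outside the domain of integration for the $\dot H^s$ seminorm.

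The main subtlety, and the step I expect to require the most care in a full write-up, is legitimizing the manipulation $\widehat{u_k}-\widehat u=(h_k-f)/|\xi|^s$ and $(w_k)_{B_1}\in\mathbb R^m$ despite the fact that the objects involved are only defined modulo distributions supported at $\{0\}$; this is handled exactly as in the proof of Lemma~\ref{lem_equivalent_definitions_homogeneous_sobolev} (the order-zero condition $u\in\mathscr{S}'_{(0)}$ forces such a distribution to be a multiple of $\delta_0$, hence a constant upon inverse Fourier transform), so the ambiguity is harmless and can be absorbed into the choice of $c_k$.
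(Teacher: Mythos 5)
Your proof is correct, and the overall architecture matches the paper's: construct $u_k$ by smoothing and annular truncation on the Fourier side so that $\widehat{u_k}$ is a $C_c^\infty$ function supported away from the origin, and then deduce local $L^2$ convergence (after correcting by the constant $c_k = (u - u_k)_{B_1(0)}$) from Lemma~\ref{lem_DaLio_Pigati_locally_L2}. The only real difference is how the Schwartz approximants are manufactured. The paper mollifies $\widehat{u}$ directly with a scaled mollifier $\psi_{t_k}$, tuning the mollification scale $t_k$ so that the mollification error, once weighted by $|\xi|^{2s}$ on the truncation annulus, is $O(1/k)$; the resulting $\widehat{u_k}=(\psi_{t_k}*\widehat{u})\phi_k$ then requires a two-term triangle-inequality estimate of the seminorm error. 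You instead pass immediately to $f=|\xi|^s\widehat{u}\in L^2$, invoke density of $\mathscr{S}$ in $L^2$ to get $g_k$ with $\|g_k-f\|_{L^2}\le 1/k$, cut to the annulus, and divide by $|\xi|^s$; the seminorm identity $[u_k-u]_{\dot H^s}=\|h_k-f\|_{L^2}$ is then automatic, with no weighted mollification estimate needed. This is genuinely cleaner: it avoids the paper's bespoke choice of $t_k$ and folds the $|\xi|^{2s}$ weight into the definition of $f$, which is precisely the $L^2$ object the $\dot H^s$ seminorm measures. Your final flag about the $\delta_0$ ambiguity and the use of $c_k$ to absorb it is exactly the right thing to worry about and you handle it correctly.
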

\begin{proof}
For $k\in\mathbb{N}_{\geq 2}$, let $\phi_k\in C_c^\infty(\mathbb{R}^n, [0,1])$ such that
\begin{equation}
\widehat{\phi_k}(\xi)=\begin{cases}
1 &\text{if $\frac{1}{k}\leq \lvert \xi\rvert\leq k$}\\
0 &\text{if $\lvert \xi \rvert\leq \frac{1}{2k}$ or $2k\leq \lvert \xi\rvert$}.
\end{cases}
\end{equation}
Let $\psi\in C^\infty_0(B_1(0))$ such that $\int_{\mathbb{R}^n}\psi(x)dx=1$.
For any $t\in (0,1)$ let $\psi_t(x):=\frac{1}{t}\psi\left(\frac{x}{t}\right)$ for all $x\in \mathbb{R}^n$.
For any $k\in\mathbb{N}_{\geq 2}$ let $t_k\in (0,1)$ such that
\begin{equation}
\left\lVert\psi_{t_k}\ast(\widehat{u_k}\mathds{1}_{B_{2k+1}(0)})-\widehat{u_k}\mathds{1}_{B_{2k+1}(0)}\right\rVert_{L^2}^2\leq \frac{1}{k(2k)^{2s}}
\end{equation}
For any $k\in \mathbb{N}_{\geq 2}$, let $u_k$ be defined by
\begin{equation}
\widehat{u_k}:=\left( \psi_{t_k}\ast\widehat{u} \right)\phi_k.
\end{equation}
Then, by construction, for any $k\in\mathbb{N}_{\geq 2}$ $\widehat{u_k}\in \mathscr{S}(\mathbb{R}^n)$ and thus $u_k\in \mathscr{S}(\mathbb{R}^n)$. Moreover $\widehat{u_k}\equiv 0$ in $B_{\frac{1}{2k}}(0)$.
Now we compute for any $k\in\mathbb{N}_{\geq 2}$
\begin{equation}
[ u_k-u]_{\dot{H}^s}^2\leq 2\int_{\mathbb{R}^n}\lvert\xi\rvert^{2s}(\psi_{t_k}\ast \widehat{u}(\xi)-\widehat{u}(\xi))^2\phi_k(\xi)^2d\xi+2\int_{\mathbb{R}^n}\lvert \xi\rvert^{2s}\widehat{u}(\xi)^2(\phi_k(\xi)-1)^2d\xi.
\end{equation}
We observe that
\begin{equation}\label{eq_proof_lemma_appendix_first_pert_triangular}
\begin{split}
&\left\lvert \int_{\mathbb{R}^n}\lvert\xi\rvert^{2s}(\psi_{t_k}\ast \widehat{u}(\xi)-\widehat{u}(\xi))^2\phi_k(\xi)^2d\xi \right\rvert\\
\leq &\left\lVert\left(\psi_{t_k}\ast(\widehat{u_k}\mathds{1}_{B_{2k+1}(0)})-\widehat{u_k}\mathds{1}_{B_{2k+1}(0)}\right)^2\right\rVert_{L^1} \lVert \lvert \xi\rvert^{2s}\phi_k\rVert_{L^\infty}\\
\leq& C_n \frac{1}{k( 2k)^{2s}}( 2k)^{2s}\leq C_n\frac{1}{k}
\end{split}
\end{equation}
for some constant $C_n$ depending on $n$. On the other hand, since $u\in \dot{H}^\frac{1}{2}$, $\lvert \xi\rvert^{2s}\widehat{u}(\xi)\in L^2(\mathbb{R}^2)$. Therefore, by Lebesgue's Dominated convergence Theorem, 
\begin{equation}\label{eq_proof_lemma_appendix_second_pert_triangular}
\lim_{k\to \infty}\int_{\mathbb{R}^n}\lvert \xi\rvert^{2s}\widehat{u}(\xi)^2(\phi_n(\xi)-1)^2d\xi=0.
\end{equation}
Combining (\ref{eq_proof_lemma_appendix_first_pert_triangular}) and (\ref{eq_proof_lemma_appendix_second_pert_triangular}) we obtain
\begin{equation}\label{eq_proof_lemma_appendix_convergence_Sobolev_seminorm}
\lim_{k\to \infty} [ u_n-u]_{\dot{H}^\frac{1}{2}}=0.
\end{equation}
For any $k\in\mathbb{N}_{\geq 2}$ choose $c_k$ such that $(u_k+c_k)_{B_1(0)}=(u)_{B_1(0)}$. Then, for any fixed $j\in\mathbb{N}$, Lemma \ref{lem_DaLio_Pigati_locally_L2} implies that
\begin{equation}
\lVert u_k+c_k-u\rVert_{L^2(B_{2^j}(0))}\leq C_{n,j}[u_k+c_k-u]_{\dot{H}^s}=C_{n,j}[u_k-u]_{\dot{H}^s}.
\end{equation}
By (\ref{eq_proof_lemma_appendix_convergence_Sobolev_seminorm}) we obtain
\begin{equation}
\lim_{k\to\infty}\lVert u_k+c_k-u\rVert_{L^2(B_{2^j}(0))}=0.
\end{equation}
\end{proof}

\begin{lem}\label{lem_Hs_cap_Linfty_is_an_algebra}
Let $n\in\mathbb{N}_{>0}$ and let $s\in (0,1)$. Then $\dot{W}^{s,2}\cap L^\infty(\mathbb{R}^n)$ and $H^s\cap L^\infty(\mathbb{R}^n)$ are closed under multiplication of functions. In particular, $H^s\cap L^\infty(\mathbb{R}^n)$ is a Banach algebra. 
\end{lem}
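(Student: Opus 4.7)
The plan is to establish the closure under multiplication directly from the Gagliardo characterization of the seminorm, since this avoids dealing with convolutions of Fourier transforms (which would be needed if one worked with the $H^s$ definition in Fourier space).

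First, for $u,v \in \dot{W}^{s,2}\cap L^\infty(\mathbb{R}^n)$, I would use the pointwise identity
\begin{equation*}
u(x)v(x) - u(y)v(y) = u(x)\bigl(v(x)-v(y)\bigr) + v(y)\bigl(u(x)-u(y)\bigr),
\end{equation*}
which via the triangle inequality gives $|uv(x)-uv(y)| \leq \|u\|_{L^\infty}|v(x)-v(y)| + \|v\|_{L^\infty}|u(x)-u(y)|$. Dividing by $|x-y|^{n/2+s}$, taking the $L^2$ norm over $\mathbb{R}^n\times\mathbb{R}^n$, and applying Minkowski's inequality yields
\begin{equation*}
[uv]_{\dot{W}^{s,2}} \leq \|u\|_{L^\infty}[v]_{\dot{W}^{s,2}} + \|v\|_{L^\infty}[u]_{\dot{W}^{s,2}}.
\end{equation*}
Combined with the obvious bound $\|uv\|_{L^\infty}\leq \|u\|_{L^\infty}\|v\|_{L^\infty}$, this shows $uv \in \dot{W}^{s,2}\cap L^\infty(\mathbb{R}^n)$.

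Next, for the case of $H^s\cap L^\infty(\mathbb{R}^n)$, I would invoke Lemma \ref{lem_equivalent_definitions_homogeneous_sobolev} (equivalence of $\dot{W}^{s,2}$ and $\dot{H}^s$ with comparable seminorms up to the multiplicative constant $C_{n,s}^{1/2}$) together with the fact that $H^s(\mathbb{R}^n) = L^2(\mathbb{R}^n)\cap \dot{H}^s(\mathbb{R}^n)$ for $s\in(0,1)$ with $\|u\|_{H^s}^2 \simeq \|u\|_{L^2}^2 + [u]_{\dot{H}^s}^2$. For $u,v\in H^s\cap L^\infty$, the product satisfies $\|uv\|_{L^2}\leq \|u\|_{L^\infty}\|v\|_{L^2}$, and the previous paragraph controls the Gagliardo seminorm, so $uv\in H^s\cap L^\infty$ with the explicit estimate
\begin{equation*}
\|uv\|_{H^s} \leq C_{n,s}\bigl(\|u\|_{L^\infty}\|v\|_{H^s} + \|v\|_{L^\infty}\|u\|_{H^s}\bigr).
\end{equation*}

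Finally, to conclude the Banach algebra statement I would equip $H^s\cap L^\infty(\mathbb{R}^n)$ with the norm $\|u\|:=\|u\|_{H^s}+\|u\|_{L^\infty}$, under which it is complete (as the intersection of two Banach spaces continuously embedded in $L^2_{loc}$). The submultiplicative inequality $\|uv\|\leq C\|u\|\,\|v\|$ for some constant $C=C_{n,s}$ follows by combining the two estimates above. No step here is substantive; the only mild care needed is in the Minkowski estimate for the Gagliardo seminorm of $uv$, but this is straightforward once the pointwise identity is written down.
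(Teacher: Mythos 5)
Your proof is correct and follows essentially the same approach as the paper: the pointwise decomposition $uv(x)-uv(y)=u(x)(v(x)-v(y))+v(y)(u(x)-u(y))$ followed by estimating the Gagliardo seminorm. The only cosmetic difference is that you apply Minkowski's inequality in $L^2(\mathbb{R}^n\times\mathbb{R}^n)$ to obtain the clean first-power algebra estimate $[uv]_{\dot W^{s,2}}\leq \|u\|_{L^\infty}[v]_{\dot W^{s,2}}+\|v\|_{L^\infty}[u]_{\dot W^{s,2}}$, whereas the paper squares first (and in fact has a minor typo with $\|u\|_{L^\infty}$ appearing unsquared), so your version is if anything slightly tidier.
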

\begin{proof}
Let $u, v\in \dot{H}^s\cap L^\infty(\mathbb{R}^n)$. Then
\begin{equation}
\begin{split}
[uv]_{\dot{W}^{s,2}}^2=&\int_{\mathbb{R}^n}\int_{\mathbb{R}^n}\frac{\lvert uv(x)-uv(y)\rvert^2}{\lvert x-y\rvert^{2s+n}}dxdy\\ \leq&\lVert u\rVert_{L^\infty}\int_{\mathbb{R}^n}\int_{\mathbb{R}^n}\frac{\lvert v(x)-v(y)\rvert^2}{\lvert x-y\rvert^{2s+n}}dxdy\\
&+\lVert v\rVert_{L^\infty}\int_{\mathbb{R}^n}\int_{\mathbb{R}^n}\frac{\lvert u(x)-u(y)\rvert^2}{\lvert x-y\rvert^{2s+n}}dxdy\\
=&\lVert u\rVert_{L^\infty}[v]_{\dot{W}^{s,2}}^2+\lVert v\rVert_{L^\infty}[u]_{\dot{W}^{s,2}}^2.
\end{split}
\end{equation}
Moreover,
\begin{equation}
\lVert uv\rVert_{L^\infty}\leq\lVert u\rVert_{L^\infty}\lVert v\rVert_{L^\infty}.
\end{equation}
Therefore $uv\in \dot{W}^{s,2}\cap L^\infty(\mathbb{R}^n)$.
\\Finally, if $u,v \in H^s\cap L^\infty(\mathbb{R}^n)$, by the previous argument $uv\in \dot{W}^{s,2}\cap L^\infty(\mathbb{R}^n)$. Moreover, there holds 
\begin{equation}
\lVert uv\rVert_{L^2}\leq \lVert u\rVert_{L^2}\lVert v\rVert_{L^\infty},
\end{equation}
and therefore $uv\in H^s\cap L^\infty(\mathbb{R}^n)$.
\end{proof}

\subsection{Results for function on $\mathbb{S}^1$}
\begin{lem}\label{lem_appendix_circle_eqivalent_Sobolev_seminorm}
For any measurable function $u\in \mathscr{D}'(\mathbb{S}^1)$, $u\in \dot{H}^\frac{1}{2}(\mathbb{S}^1)$ if and only if
\begin{equation}\label{eq_lem_equivalent_seminrom_H12}
\int_{\mathbb{S}^1}\int_{\mathbb{S}^1}\frac{\lvert u(x)-u(y)\rvert^2}{\sin^2\left(\frac{1}{2}(x-y)\right)}dxdy<\infty,
\end{equation}
and if (\ref{eq_lem_equivalent_seminrom_H12}) holds, then
\begin{equation}
[u]_{\dot{H}^\frac{1}{2}}^2=\frac{1}{4(2\pi)^2}\int_{\mathbb{S}^1}\int_{\mathbb{S}^1}\frac{\lvert u(x)-u(y)\rvert^2}{\sin^2\left(\frac{1}{2}(x-y)\right)}dxdy.
\end{equation}
\end{lem}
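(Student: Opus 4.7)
The plan is to reduce both conditions to the case $u\in L^2(\mathbb S^1)$ and then carry out a direct Fourier computation via a Dirichlet-type kernel identity. First I would check that each of the two finiteness conditions forces $u\in L^2(\mathbb S^1)$. On the one hand, if $\sum_{n\neq 0}|n||\hat u(n)|^2<\infty$, then a fortiori $\sum_{n\neq 0}|\hat u(n)|^2<\infty$, and together with $|\hat u(0)|<\infty$ (which holds for every distribution) Parseval yields $u\in L^2(\mathbb S^1)$. On the other hand, if the double integral in (\ref{eq_lem_equivalent_seminrom_H12}) is finite, Tonelli gives that for a.e.\ $y$ the map $x\mapsto |u(x)-u(y)|/|\sin(\tfrac12(x-y))|$ lies in $L^2(\mathbb S^1)$; since $1/\sin^2(\tfrac12(x-y))$ is bounded below on $\mathbb S^1\setminus B_\varepsilon(y)$, we conclude $u\in L^2(\mathbb S^1\setminus B_\varepsilon(y))$ for every $\varepsilon>0$, and covering $\mathbb S^1$ by finitely many such sets gives $u\in L^2(\mathbb S^1)$.

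Assuming $u\in L^2(\mathbb S^1)$, I would substitute $z=x-y$ and use Tonelli (the integrand is non-negative) together with Plancherel's identity on the inner integral:
\[
\int_{\mathbb S^1}|u(x)-u(x-z)|^2\,dx=2\pi\sum_{n\in\mathbb Z}\lvert 1-e^{-inz}\rvert^2\lvert\widehat{u}(n)\rvert^2=8\pi\sum_{n\in\mathbb Z}\sin^2\!\left(\tfrac{nz}{2}\right)\lvert\widehat{u}(n)\rvert^2.
\]
A second application of Tonelli, exchanging sum and $z$-integral, yields
\[
\int_{\mathbb S^1}\!\!\int_{\mathbb S^1}\frac{\lvert u(x)-u(y)\rvert^2}{\sin^2\!\left(\tfrac12(x-y)\right)}\,dxdy=8\pi\sum_{n\in\mathbb Z}\lvert\widehat{u}(n)\rvert^2\int_{\mathbb S^1}\frac{\sin^2(nz/2)}{\sin^2(z/2)}\,dz.
\]

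The last step is the Dirichlet-kernel computation of $\int_{\mathbb S^1}\sin^2(nz/2)/\sin^2(z/2)\,dz$ for $n\in\mathbb Z$. For $n=0$ it is $0$; for $n\geq 1$ the classical identity
\[
\frac{\sin(nz/2)}{\sin(z/2)}=e^{-i(n-1)z/2}\sum_{k=0}^{n-1}e^{ikz}
\]
combined with $L^2$-orthogonality of $\{e^{ikz}\}_{k\in\mathbb Z}$ gives $\int_{\mathbb S^1}\lvert\sum_{k=0}^{n-1}e^{ikz}\rvert^2\,dz=2\pi n$; for $n<0$ the integrand coincides with the one for $|n|$, so the value is $2\pi|n|$ for every $n\neq 0$. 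Plugging this into the previous display produces
\[
\int_{\mathbb S^1}\!\!\int_{\mathbb S^1}\frac{\lvert u(x)-u(y)\rvert^2}{\sin^2\!\left(\tfrac12(x-y)\right)}\,dxdy=16\pi^2\sum_{n\neq 0}|n|\lvert\widehat{u}(n)\rvert^2=4(2\pi)^2\,[u]_{\dot H^{1/2}}^2,
\]
proving simultaneously the equivalence of the two finiteness conditions and the stated identity of seminorms. There is no real obstacle here: the only non-bookkeeping step is the Dirichlet-kernel identity, which is classical, and all Fubini/Tonelli swaps are justified by non-negativity of the integrands.
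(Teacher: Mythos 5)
Your proof is correct, and it takes a genuinely different route from the paper's at the one step where some work is needed: the evaluation of $\int_{\mathbb S^1}\lvert e^{ikz}-1\rvert^2/\sin^2(z/2)\,dz$. The paper writes this as a contour integral in $w=e^{iz}$, reduces the integrand to $(\sum_{l=0}^{k-1}w^l)^2/w^{k-1}$, and reads off the answer from the residue at $0$; you instead use the Dirichlet-kernel factorisation $\sin(nz/2)/\sin(z/2)=e^{-i(n-1)z/2}\sum_{k=0}^{n-1}e^{ikz}$ and compute the $L^2$ norm by orthogonality. Both lead to $8\pi\lvert n\rvert$ (for you, $4\cdot 2\pi\lvert n\rvert$ after inserting $\lvert 1-e^{-inz}\rvert^2=4\sin^2(nz/2)$), but your version stays inside elementary Fourier analysis while the paper's dips into complex analysis; they are really the same algebraic identity viewed from two angles. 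One thing your write-up does that the paper does not make explicit is the preliminary reduction showing that either finiteness condition already forces $u\in L^2(\mathbb S^1)$ before invoking Plancherel on the inner $x$-integral — the paper applies Plancherel directly without remarking that $u(\cdot+z)-u(\cdot)$ needs to be an $L^2$ function for this, so your version is slightly more careful on that point. All the Tonelli exchanges in your argument are justified by non-negativity, as you note, so there is no gap.
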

\begin{proof}
Let $u$ be a measurable function in $\mathscr{D}'(\mathbb{S}^1)$. Then we compute
\begin{equation}\label{eq_proof_lemma_equivalent_seminorm_H12}
\begin{split}
&\int_{\mathbb{S}^1}\int_{\mathbb{S}^1}\frac{\lvert u(x)-u(y)\rvert^2}{\sin^2\left(\frac{1}{2}(x-y)\right)}dxdy=\int_{\mathbb{S}^1}\int_{\mathbb{S}^1}\frac{\lvert u(z+y)-u(y)\rvert^2}{\sin^2\left(\frac{z}{2}\right)}dzdy\\
=&\int_{\mathbb{S}^1} \frac{\lVert u(\cdot+z)-u(\cdot)\rVert_{L^2}^2}{\sin^2\left(\frac{z}{2}\right)}dz
=2\pi\int_{\mathbb{S}^1}\frac{1}{\sin^2\left(\frac{z}{2}\right)}\sum_{k\in\mathbb{Z}}\lvert e^{ik\cdot z}-1\rvert^2\lvert \widehat{u}(k)\rvert^2dz,
\end{split}
\end{equation}
where in the last step we used Plancherel's Identity. Now we observe that if $k\in\mathbb{Z}_{\geq 0}$, if $w=e^{iz}$,
\begin{equation}\label{eq_proof_lemma_computation_residue}
\begin{split}
\frac{\lvert e^{ikz}-1\rvert^2}{\lvert e^{\frac{i}{2}z}-e^{-\frac{i}{2}z}\rvert^2}=&\frac{2-e^{ikz}-e^{-ikz}}{2-e^{iz}-e^{-iz}}
=\frac{2e^{ikz}-e^{i2kz}-1}{(2e^{iz}-e^{i2z}-1)e^{i(k-1)z}}\\
=&\frac{2w^k-w^{2k}-1}{(2w-w^2-1)w^{k-1}}
=\frac{(w^k-1)^2}{(w-1)^2w^{k-1}}
=\frac{(\sum_{l=0}^{k-1}w^l)^2}{w^{k-1}}.
\end{split}
\end{equation}
Expanding the product in the numerator, we observe that the coefficient corresponding to $w^{k-1}$ is $k$. Thus
\begin{equation}
Res\left[\frac{(\sum_{l=0}^{k-1}w^l)^2}{w^{k}}\right](0)=k.
\end{equation}
Then, by (\ref{eq_proof_lemma_computation_residue}) and the Residue Theorem,
\begin{equation}
\int_{\mathbb{S}^1} \frac{\lvert e^{ikz}-1\rvert^2}{\sin^2(\frac{z}{2})}dx=\frac{4}{i}\int_{\partial D^2}\frac{(\sum_{l=0}^{k-1}w^l)^2}{w^{k}} dz=8\pi k.
\end{equation}
Moreover, if $k\in\mathbb{Z}_{<0}$, $\lvert e^{ikx}-1\rvert^2=\lvert e^{-ikx}-1\rvert^2$, thus for any $k\in \mathbb{Z}$
\begin{equation}\label{eq_proof_lemma_result_computation_residues}
\int_{\mathbb{S}^1} \frac{\lvert e^{ikx}-1\rvert^2}{\sin^2(\frac{x}{2})}dx=8\pi \lvert k\rvert.
\end{equation}
Plugging (\ref{eq_proof_lemma_result_computation_residues}) in (\ref{eq_proof_lemma_equivalent_seminorm_H12}) and applying Tonelli's theorem, we obtain
\begin{equation}
\int_{\mathbb{S}^1}\int_{\mathbb{S}^1}\frac{\lvert u(x)-u(y)\rvert^2}{\sin^2\left(\frac{1}{2}(x-y)\right)}dxdy=4\sum_{k\in\mathbb{Z}}(2\pi)^2\lvert k\rvert\lvert \widehat{u}(k)\rvert^2=4(2\pi)^2[u]_{\dot{H}^\frac{1}{2}}^2.
\end{equation}
This concludes the proof.
\end{proof}

\begin{lem}\label{lem_multiplication_with_test_function_continuous}
Let $u\in H^\frac{1}{2}(\mathbb{S}^1)$, $\psi\in C^3(\mathbb{S}^1)$. Then $\psi u\in H^\frac{1}{2}(\mathbb{S}^1)$,
\begin{equation}
[ u\psi]_{\dot{H}^\frac{1}{2}}\leq C [ u]_{H^\frac{1}{2}}\lVert \psi\rVert_{C^3},
\end{equation}
and
\begin{equation}
\lVert u\psi\rVert_{H^\frac{1}{2}}\leq C \lVert u\rVert_{H^\frac{1}{2}}\lVert \psi\rVert_{C^3},
\end{equation}
for some independent constant $C$.
\end{lem}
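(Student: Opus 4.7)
The plan is to prove the estimate directly from the integral characterization of the $\dot{H}^\frac{1}{2}$-seminorm provided by Lemma \ref{lem_appendix_circle_eqivalent_Sobolev_seminorm}, rather than going through Fourier series. The $L^2$ bound $\lVert u\psi\rVert_{L^2}\leq \lVert\psi\rVert_{L^\infty}\lVert u\rVert_{L^2}$ is immediate from H\"older's inequality, so the real content is the seminorm estimate.

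First I would write the product difference as
\begin{equation*}
u\psi(x)-u\psi(y)=\psi(x)(u(x)-u(y))+u(y)(\psi(x)-\psi(y))
\end{equation*}
and use $(a+b)^2\leq 2a^2+2b^2$. Applying Lemma \ref{lem_appendix_circle_eqivalent_Sobolev_seminorm}, this splits $[u\psi]_{\dot{H}^\frac{1}{2}}^2$ into two terms. The first is controlled by $2\lVert\psi\rVert_{L^\infty}^2[u]_{\dot{H}^\frac{1}{2}}^2$ directly. For the second, I would use $\lvert\psi(x)-\psi(y)\rvert\leq\lVert\psi'\rVert_{L^\infty}\lvert x-y\rvert$ (where $\lvert x-y\rvert$ is the natural distance on $\mathbb{S}^1$) together with the elementary observation that the ratio $\frac{\lvert x-y\rvert^2}{\sin^2\left(\frac{1}{2}(x-y)\right)}$ is uniformly bounded on $\mathbb{S}^1$ by a constant (say $\pi^2$). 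This reduces the second term to a constant multiple of $\lVert\psi'\rVert_{L^\infty}^2\lVert u\rVert_{L^2}^2$.

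Combining these estimates gives
\begin{equation*}
[u\psi]_{\dot{H}^\frac{1}{2}}^2\leq C\lVert\psi\rVert_{L^\infty}^2[u]_{\dot{H}^\frac{1}{2}}^2+C\lVert\psi'\rVert_{L^\infty}^2\lVert u\rVert_{L^2}^2\leq C\lVert\psi\rVert_{C^1}^2\lVert u\rVert_{H^\frac{1}{2}}^2,
\end{equation*}
and together with the $L^2$ bound this yields $\lVert u\psi\rVert_{H^\frac{1}{2}}\leq C\lVert\psi\rVert_{C^1}\lVert u\rVert_{H^\frac{1}{2}}$. Since $\lVert\psi\rVert_{C^1}\leq\lVert\psi\rVert_{C^3}$, this implies both inequalities claimed in the lemma.

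There is no real obstacle here: the proof is a direct computation once the right decomposition is chosen, and in fact $\psi\in C^1(\mathbb{S}^1)$ would already suffice. The assumption $\psi\in C^3$ stated in the lemma is stronger than strictly necessary, presumably for uniformity with the regularity hypotheses on vector fields used elsewhere in the paper (e.g.\ in Lemma \ref{lem_verify_condition_approxlem}).
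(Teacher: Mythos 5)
Your proposal is correct, and it takes a genuinely different route from the paper. You estimate the seminorm directly via the double-integral characterization in Lemma \ref{lem_appendix_circle_eqivalent_Sobolev_seminorm}, split $u\psi(x)-u\psi(y)=\psi(x)(u(x)-u(y))+u(y)(\psi(x)-\psi(y))$, and control the two pieces by $\lVert\psi\rVert_{L^\infty}^2[u]_{\dot{H}^{1/2}}^2$ and $\lVert\psi'\rVert_{L^\infty}^2\lVert u\rVert_{L^2}^2$ respectively, the latter using the uniform bound on $\lvert x-y\rvert^2/\sin^2\bigl(\tfrac{1}{2}(x-y)\bigr)$. The paper instead works entirely on the Fourier side: it expands $\widehat{\psi u}(n)=\sum_r\widehat\psi(r)\widehat u(n-r)$, splits the convolution into low frequencies $\lvert r\rvert\leq\lvert n\rvert/2$ and high frequencies $\lvert r\rvert>\lvert n\rvert/2$, and controls the pieces via the decay $\lvert\widehat\psi(r)\rvert\lesssim\lVert\psi^{(2)}\rVert_{L^1}/r^2$ (low) and $\lvert\widehat\psi(r)\rvert\lesssim\lVert\psi^{(3)}\rVert_{L^1}/\lvert r\rvert^3$ together with $\lvert\widehat u(k)\rvert\leq[u]_{\dot{H}^{1/2}}$ (high). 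The Fourier split as implemented genuinely needs third-derivative decay in the high-frequency tail to make $\sum_n\lvert n\rvert\cdot\lvert n\rvert^{-4}$ converge, so the $C^3$ hypothesis is forced by that method; your real-space argument is shorter, more elementary, and shows (as you correctly note) that $C^1$ already suffices. The only cosmetic point: the statement's left-hand side $[u]_{H^{1/2}}$ is evidently the full inhomogeneous norm $\lVert u\rVert_{H^{1/2}}$, which is indeed what both proofs produce — the $\lVert u\rVert_{L^2}$ contribution from the second piece cannot be absorbed into the homogeneous seminorm alone.
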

\begin{proof}
We compute
\begin{equation}\label{eq_proof_lemma_multiplication_with_test_function_1}
\begin{split}
[ u\psi]_{\dot{H}^\frac{1}{2}}^2
=&\sum_{n\in\mathbb{Z}}\widehat{\psi u}(n)^2\lvert n\rvert=\sum_{n\in\mathbb{Z}}\left(\sum_{r\in\mathbb{Z}}\hat{\psi}(r)\hat{u}(n-r)\right)^2\lvert n\rvert
\\\leq &2\sum_{n\in\mathbb{Z}}\lvert n\rvert\left(\left(\sum_{\lvert r\rvert\leq \frac{\lvert n\rvert}{2}}\widehat{\psi}(r)\widehat{u}(n-r)\right)^2+\left(\sum_{\lvert r\rvert> \frac{\lvert n\rvert}{2}}\widehat{\psi}(r)\widehat{u}(n-r)\right)^2\right),
\end{split}
\end{equation}
We recall that for any $n, k\in\mathbb{N}$,
\begin{equation}
\widehat{\psi^{(k)}}(n)=(in)^k\widehat{\psi}(n).
\end{equation}
Moreover we observe that for any $n\in \mathbb{Z}\smallsetminus\lbrace 0\rbrace$
\begin{equation}
\lvert \hat{u}(n)\rvert^2\leq \frac{[u]_{\dot{H}^\frac{1}{2}}^2}{\lvert n\rvert}\leq [ u]_{\dot{H}^\frac{1}{2}}^2,
\end{equation}
thus for any $n\in\mathbb{Z}$ $\lvert \hat{u}(n)\rvert\leq [ u]_{\dot{H}^\frac{1}{2}}$.\\
Now we observe that if $\lvert r\rvert\leq \frac{\lvert n\rvert}{2}$, then $\lvert n\rvert\leq 2\lvert n-r\rvert$.
Therefore
\begin{equation}\label{eq_proof_lemma_multiplication_with_test_function_2}
\begin{split}
&\sum_{n\in\mathbb{Z}}\lvert n\rvert \left(\sum_{\lvert r\rvert\leq\frac{\lvert n\rvert}{2}}\widehat{\psi}(r)\widehat{u}(n-r)\right)^2\\
\leq& \sum_{n\in\mathbb{Z}} 2\left(\sum_{0<\lvert r\rvert\leq\frac{\lvert n\rvert}{2}}2^\frac{1}{2}\lvert n-r\rvert^\frac{1}{2}\left\lvert\frac{\widehat{\psi^{(2)}}(r)}{(i r)^2}\right\rvert\widehat{u}(n-r)\right)^2+2\sum_{n\in\mathbb{Z}}\lvert n\rvert\left(\widehat{\psi}(0)\widehat{u}(n)\right)^2\\
\leq & 4 \sum_{n\in\mathbb{Z}}\sum_{r\in \mathbb{Z}\smallsetminus\lbrace0\rbrace}\frac{1}{r^2}\sum_{\lvert r\rvert\leq\frac{\lvert n\rvert}{2}}\frac{\lVert \psi^{(2)}\rVert_{L^1}^2}{ r^2}\widehat{u}(n-r)^2\lvert n-r\rvert+2\lVert \psi\rVert_{L^1}^2\sum_{n\in\mathbb{Z}}\hat{u}(n)^2\lvert n\rvert\\
\leq&\frac{2}{3}\pi^2\sum_{j\in\mathbb{Z}}\left(\sum_{\lvert r\rvert\leq \frac{\lvert j+r\rvert}{2}}\frac{\lVert \psi^{(2)}\rVert_{L^1}^2}{r^2}\right)\widehat{u}(j)^2\lvert j\rvert+2\lVert \psi\rVert_{L^1}^2[ u]_{\dot{H}^\frac{1}{2}}^2\leq C_0\lVert \psi\rVert_{C^2}^2[ u]_{\dot{H}^\frac{1}{2}}^2
\end{split}
\end{equation}
for some independent constant $C_0$. Here in the second step we used the Cauchy Schwarz inequality. On the other hand
\begin{equation}\label{eq_proof_lemma_multiplication_with_test_function_3}
\begin{split}
&\sum_{n\in\mathbb{Z}}\lvert n\rvert \left(\sum_{0<\frac{\lvert n\rvert}{2}<\lvert r\rvert }\widehat{\psi}(r)\widehat{u}(n-r)\right)^2
\leq \sum_{n\in \mathbb{Z}\smallsetminus\lbrace 0\rbrace}\lvert n\rvert [u]_{\dot{H}^\frac{1}{2}}^2\left(\sum_{\frac{\lvert n\rvert}{2}<\lvert r\rvert} \left\lvert\frac{\widehat{\psi^{(3)}}(r)}{( i r)^3}\right\rvert \right)^2\\
\leq & [u]_{\dot{H}^\frac{1}{2}}^2\lVert \psi^{(3)}\rVert_{L^1}^2\sum_{n\in \mathbb{Z}\smallsetminus\lbrace 0\rbrace}\lvert n\rvert\left( \sum_{\frac{\lvert n\rvert}{2}<\lvert r\rvert}\frac{1}{\lvert r\rvert^3}\right)^2\\
\leq & [u]_{\dot{H}^\frac{1}{2}}^2\lVert \psi^{(3)}\rVert_{L^1}^2\sum_{n\in\mathbb{Z}\smallsetminus\lbrace 0\rbrace} \lvert n\rvert\left(C_1\frac{1}{\lvert n\rvert^2}\right)^2\leq C_2 [u]_{\dot{H}^\frac{1}{2}}^2\lVert \psi^{(3)}\rVert_{L^1}^2
\end{split}
\end{equation}
for some independent constants $C_1$, $C_2$.\\
Combining (\ref{eq_proof_lemma_multiplication_with_test_function_1}), (\ref{eq_proof_lemma_multiplication_with_test_function_2}), (\ref{eq_proof_lemma_multiplication_with_test_function_3}) we obtain
\begin{equation}
[u\psi]_{\dot{H}^\frac{1}{2}}\leq C[u]_{\dot{H}^\frac{1}{2}}\lVert \psi\rVert_{C^3}.
\end{equation}
for some independent constant $C$.
Moreover
\begin{equation}
\lVert u\psi\rVert_{L^2}\leq \lVert \psi\rVert_{L^\infty}\lVert u\rVert_{L^2}.
\end{equation}
Therefore, by (\ref{eq_discussion_preliminaries_circle_bound_Sobolev_norm_through_seminorm}),
\begin{equation}
\lVert u\psi\rVert_{H^\frac{1}{2}}\leq C \lVert u\rVert_{H^\frac{1}{2}}
\end{equation}
for some independent constant $C$.
\end{proof}

\begin{lem}\label{lem_approximation_in_H_12}
Let $u\in H^\frac{1}{2}(\mathbb{S}^1)$. Let $\phi\in C^\infty(\mathbb{S}^1)$ be a non-negative function such that $\int_{\mathbb{S}^1}\phi(x)dx=1$.
For any $\varepsilon>0$, let $\phi_\varepsilon(x):=\frac{1}{\varepsilon}\phi\left(\frac{x}{\varepsilon}\right)$ for all $x\in \mathbb{S}^1$ and let $u_\varepsilon:=\phi_\varepsilon\ast u$. Then $u_\varepsilon \to u$ in $H^\frac{1}{2}(\mathbb{S}^1)$ as $\varepsilon\to 0$, and almost everywhere along a subsequence.
\end{lem}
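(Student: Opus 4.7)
The plan is to work on the Fourier side. With the conventions of the paper, the Fourier coefficients of the convolution satisfy $\widehat{u_\varepsilon}(k) = 2\pi\,\widehat{\phi_\varepsilon}(k)\,\widehat{u}(k)$ for every $k\in\mathbb{Z}$, and the $H^{1/2}$-norm is the weighted $\ell^2$-norm of the Fourier coefficients. The approximation therefore reduces to showing that the multiplier $m_\varepsilon(k):=2\pi\widehat{\phi_\varepsilon}(k)$ converges pointwise to $1$ on $\mathbb{Z}$ as $\varepsilon\to 0$, while remaining uniformly bounded, so that a dominated convergence argument can be applied to the counting measure on $\mathbb{Z}$.

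First I would verify the pointwise convergence of $m_\varepsilon$. Interpreting $\phi$ as a smooth non-negative function on $\mathbb{S}^1=\mathbb{R}/2\pi\mathbb{Z}$ supported in a small neighbourhood of $0$, so that $\phi_\varepsilon(x)=\frac{1}{\varepsilon}\phi(x/\varepsilon)$ is unambiguously defined on $\mathbb{S}^1$ for $\varepsilon$ small enough, the change of variables $y = x/\varepsilon$ yields
\begin{equation*}
2\pi\widehat{\phi_\varepsilon}(k) \;=\; \int_{\mathbb{S}^1} \frac{1}{\varepsilon}\phi\!\left(\frac{x}{\varepsilon}\right)e^{-ikx}\,dx \;=\; \int_{\mathbb{S}^1} \phi(y)\,e^{-ik\varepsilon y}\,dy.
\end{equation*}
For each fixed $k\in\mathbb{Z}$ this tends to $\int_{\mathbb{S}^1}\phi(y)\,dy = 1$ as $\varepsilon\to 0^+$ by dominated convergence on $\mathbb{S}^1$, while $|m_\varepsilon(k)|\le \int_{\mathbb{S}^1}\phi = 1$ uniformly in $\varepsilon$ and $k$, so that $|m_\varepsilon(k)-1|^2\le 4$. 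Substituting into the expression for the Sobolev norm gives
\begin{equation*}
\|u_\varepsilon - u\|_{H^{1/2}}^2 \;=\; \sum_{k\in\mathbb{Z}}(1+|k|^2)^{1/2}\,|m_\varepsilon(k)-1|^2\,|\widehat{u}(k)|^2,
\end{equation*}
and each summand is dominated by $4(1+|k|^2)^{1/2}|\widehat{u}(k)|^2$, which is summable since $u\in H^{1/2}(\mathbb{S}^1)$. Lebesgue's Dominated convergence Theorem applied to the counting measure on $\mathbb{Z}$ then yields $u_\varepsilon\to u$ in $H^{1/2}(\mathbb{S}^1)$.

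For the almost-everywhere statement I would invoke the continuous embedding $H^{1/2}(\mathbb{S}^1)\hookrightarrow L^2(\mathbb{S}^1)$: the $H^{1/2}$-convergence just obtained implies convergence in $L^2(\mathbb{S}^1)$, from which almost-everywhere convergence along a subsequence follows by the standard subsequence extraction for $L^2$-convergent sequences. There is no substantial obstacle in the argument; the only mildly delicate point is ensuring that the scaling $\phi_\varepsilon(x) = \frac{1}{\varepsilon}\phi(x/\varepsilon)$ on $\mathbb{S}^1$ is well defined, which is handled by taking $\phi$ supported near the origin and restricting to sufficiently small $\varepsilon$.
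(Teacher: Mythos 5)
Your argument is correct and is essentially the same as the one in the paper: both proofs compute the Fourier multiplier of the mollifier, observe that it is bounded by $1$ in absolute value and converges pointwise to $1$ by dominated convergence, apply dominated convergence again to the weighted $\ell^2$-sum defining the $H^{1/2}$-norm, and deduce the a.e.\ subsequence convergence from the resulting $L^2$-convergence.
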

\begin{proof}
For any $n\in\mathbb{Z}$, $\varepsilon>0$ we compute
\begin{equation}\label{eq_proof_lemma_appendix_approximation_Fcoeff_mollifier}
\widehat{\phi_\varepsilon}(n)=\int_0^{2\pi}e^{-inx}\frac{1}{\varepsilon}\phi\left(\frac{x}{\varepsilon}\right)dx=\int_0^\frac{2\pi}{\varepsilon}e^{-iny\varepsilon}\phi(y)dy,
\end{equation}
where $\phi$ is regarded as a function supported on $[0,1]$.
Therefore $\lvert\widehat{\phi_\varepsilon}(n)\rvert\leq 1$ (as $\lVert\phi\rVert_{L^1}=1$), and since the integrand in the right expression of (\ref{eq_proof_lemma_appendix_approximation_Fcoeff_mollifier}) is bounded by $\phi$, by Lebesgue's Dominated convergence Theorem $\widehat{\phi_\varepsilon}(n)\to 1$ as $\varepsilon\to 0$ for any $n\in \mathbb{Z}$.
Now
\begin{equation}\label{eq_proof_lemma_appendix_convergence_1}
[u_\varepsilon-u]_{\dot{H}^\frac{1}{2}}^2=\sum_{n\in\mathbb{Z}}(\widehat{\phi_\varepsilon\ast u}(n)-\widehat{u}(n))^2\lvert n\rvert=\sum_{n\in\mathbb{Z}}(\widehat{\phi_\varepsilon}(n)\widehat{u}(n)-\widehat{u}(n))^2\lvert n\rvert,
\end{equation}
Therefore, by dominated convergence, we conclude that $[u_\varepsilon-u]_{\dot{H}^\frac{1}{2}}^2\to 0$.\\
Similarly,
\begin{equation}\label{eq_proof_lemma_appendix_convergence_2}
\lVert u-u_\varepsilon\rVert_{L^2}^2=\sum_{n\in\mathbb{Z}}(\widehat{\phi_\varepsilon\ast u}(n)-\widehat{u}(n))^2=\sum_{n\in\mathbb{Z}}\left(\widehat{\phi_\varepsilon}(n)\hat{u}(n)-\hat{u}(n)\right)^2.
\end{equation}
Again, by dominated convergence, we conclude that $\lVert u_n-u\rVert_{L^2}^2\to 0$. By (\ref{eq_discussion_preliminaries_circle_bound_Sobolev_norm_through_seminorm}), (\ref{eq_proof_lemma_appendix_convergence_1}) and (\ref{eq_proof_lemma_appendix_convergence_2}) yield
\begin{equation}
\lim_{\varepsilon\to 0^+}\lVert u_\varepsilon-u\rVert_{H^\frac{1}{2}}=0.
\end{equation}
Finally we observe that since $u_\varepsilon\to u$ in $L^2$ as $\varepsilon\to 0$, the convergence also takes place in measure, and therefore there exists a subsequence $(u_{\varepsilon_n})_n$ so that
$\lim_{n\to \infty} u_{\varepsilon_n}=u$ almost everywhere.
\end{proof}

\begin{lem}\label{lem_trace_operator}
There exists a continuous trace operator $Tr: H^1(D^2)\to H^\frac{1}{2}(\mathbb{S}^1)$ that, restricted to $C^1(D^2)\cap C^0(\overline{D^2})$ coincides with the restriction of a function to $\partial D^2$. Moreover, $Tr$ has a continuous right inverse, given by the harmonic extension operator.
\end{lem}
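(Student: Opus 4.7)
The plan is to work throughout with the Fourier-series representation in the angular variable, decomposing any function of $\overline{D^2}$ as $u(r,\theta) = \sum_{n \in \mathbb{Z}} c_n(r)\, e^{in\theta}$, where $c_n(r) := \tfrac{1}{2\pi}\int_0^{2\pi} u(r,\theta)\,e^{-in\theta}\,d\theta$. For $u \in C^1(D^2)\cap C^0(\overline{D^2})$, we define $Tr(u) := u(1,\cdot)$, so $\widehat{Tr(u)}(n) = c_n(1)$. Expressing the Dirichlet integral in polar coordinates,
\begin{equation*}
\int_{D^2} |\nabla u|^2\,dz = \int_0^1 \int_0^{2\pi} \left(r|\partial_r u|^2 + r^{-1}|\partial_\theta u|^2\right)\,d\theta\,dr = 2\pi\int_0^1\sum_{n\in\mathbb{Z}} \left(r|c_n'(r)|^2 + n^2 r^{-1}|c_n(r)|^2\right)dr.
\end{equation*}
The first step is to prove the trace estimate: by applying the identity $c_n(1) = \int_0^1 \frac{d}{dr}\bigl(r^{|n|} c_n(r)\bigr)\,dr$ for $n \neq 0$, splitting into the two summands, and invoking Cauchy--Schwarz, one obtains $|n|\,|c_n(1)|^2 \leq \int_0^1 (n^2 r^{-1}|c_n(r)|^2 + r|c_n'(r)|^2)\,dr$ (after absorbing the factors $r \leq 1 \leq r^{-1}$). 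Summing in $n$ yields $[Tr(u)]_{\dot H^{1/2}}^2 \leq C [u]_{\dot H^1}^2$, and the $L^2$-part of the $H^{1/2}$-norm is handled analogously, without the factor of $|n|$.

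The second step is to verify that $C^1(D^2)\cap C^0(\overline{D^2})$ (or alternatively $C^2(\overline{D^2})$) is dense in $H^1(D^2)$, which is a standard mollification/approximation argument on the disc; the continuous estimate from the first step then extends $Tr$ to all of $H^1(D^2)$ by density, preserving the bound. This gives the trace operator with the stated continuity.

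For the right inverse, I will use the explicit formula $\tilde v(r,\theta) = \sum_n \widehat{v}(n)\,r^{|n|}\,e^{in\theta}$ introduced before Equation~(\ref{eq_harmonic_extension_Poisson_kernel}). A direct computation with the Fourier expansion gives
\begin{equation*}
\int_{D^2}|\nabla \tilde v|^2\,dz \;=\; 2\cdot 2\pi \sum_{n\neq 0} n^2|\widehat{v}(n)|^2 \int_0^1 r^{2|n|-1}\,dr \;=\; 2\pi\,[v]_{\dot H^{1/2}}^2,
\end{equation*}
and a similar computation bounds $\|\tilde v\|_{L^2(D^2)}$ by $\|v\|_{L^2(\mathbb{S}^1)}$, so the harmonic extension is continuous from $H^{1/2}(\mathbb{S}^1)$ to $H^1(D^2)$. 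Finally, $Tr\circ\mathrm{Ext} = \mathrm{id}$ will be checked first on trigonometric polynomials $v_N := \sum_{|n|\leq N}\widehat{v}(n)\,e^{in\theta}$, for which $\widetilde{v_N}$ extends continuously to $\overline{D^2}$ with boundary value exactly $v_N$; the identity then passes to the limit in $H^{1/2}(\mathbb{S}^1)$ by the continuity of both operators, using that $v_N \to v$ in $H^{1/2}(\mathbb{S}^1)$.

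The main obstacle is the Fourier-based trace inequality of the first paragraph: the naive manipulations lose either the factor $|n|$ or the correct weight in $r$, so the key is choosing the right integration-by-parts identity for $c_n(1)$ (namely differentiating $r^{|n|}c_n(r)$, which produces both terms $|n|r^{|n|-1}c_n$ and $r^{|n|}c_n'$ in balanced Cauchy--Schwarz estimates). Once this is set up, the remaining steps are essentially bookkeeping via density and the explicit Poisson series.
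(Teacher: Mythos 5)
Your proposal is correct, and it takes a genuinely different route from the paper. You obtain the $H^{1/2}$-bound on the trace \emph{directly}, mode by mode, from the integration-by-parts identity $c_n(1)=\int_0^1 \frac{d}{dr}\bigl(r^{|n|}c_n(r)\bigr)\,dr$ together with Cauchy--Schwarz on each resulting term (and this does give $|n|\,|c_n(1)|^2\lesssim \int_0^1\bigl(n^2 r^{-1}|c_n|^2+r|c_n'|^2\bigr)dr$, with the first factor of Cauchy--Schwarz integrating to $1$ and the second to $\tfrac{1}{2|n|}$). The paper's proof instead defines the trace via $\overline{u}(\theta)=\int_0^1\partial_r\bigl[u(\theta,r)r^2\bigr]dr$, a single weight that handles all $n$ including $n=0$, establishes only the $L^2(\mathbb{S}^1)$ estimate directly, and then obtains the $\dot H^{1/2}$-bound \emph{indirectly}: by the convexity of the Dirichlet energy, the harmonic extension minimizes $E_{D^2}$ among all $H^1(D^2)$ competitors with the same trace, so $[\overline{u}]_{\dot H^{1/2}}^2\lesssim E_{D^2}(\widetilde{\overline{u}})\leq E_{D^2}(u)=[u]_{\dot H^1}^2$. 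Your approach is the more elementary, self-contained textbook argument; the paper's is slightly slicker in bypassing the weighted Cauchy--Schwarz bookkeeping via the variational characterization of the harmonic extension, and it fits the paper's broader variational theme. One small point worth noting in your write-up: the identity $c_n(1)=\int_0^1\frac{d}{dr}(r^{|n|}c_n)\,dr$ breaks down at $n=0$ (it would produce $c_0(1)-c_0(0)$, and $c_0(0)$ is not controlled by the $H^1$-norm); you flag that the formula is for $n\neq 0$, but for the $L^2$-part of the estimate you should explicitly switch to a weight vanishing at the origin (say $r$ or $r^2$) for the $n=0$ mode, rather than saying it is handled ``analogously''. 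This is a trivial repair, but without it the density step applied to the $n=0$ Fourier coefficient would not close.
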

\begin{proof}
First we claim that there exists a continuous operator $Tr: H^1(D^2)\to L^2(\mathbb{S}^1)$ that coincides with the restriction to $\partial D^2$ on $C^1(D^2)\cap C^0(\overline{D^2})$.
We define, for any $u\in H^1(D^2)$, $\theta\in \mathbb{S}^1$,
\begin{equation}\label{eq_proof_lemma_def_trace}
Tr(u)(e^{i\theta}):=\overline{u}(\theta):=\int_0^1\partial_r [u(\theta,r)r^2]dr,
\end{equation}
where the argument of $u$ is given in polar coordinates.
Since $u\in H^1(D^2)$, $\overline{u}(\theta)$ is well defined for almost all $\theta\in \mathbb{S}^1$. Then for any $k\in\mathbb{Z}$,
\begin{equation}
\begin{split}
&\widehat{\overline{u}}(k)=\int_{\mathbb{S}^1}e^{-ik\theta}\int_0^1\partial_r [u(\theta,r)r^2]drd\theta=\int_0^1\int_{\mathbb{S}^1}e^{-ik\theta}\partial_r [u(\theta,r)r^2]\\
=&\int_0^1\int_{\mathbb{S}^1}e^{-ik\theta}(\partial_r u(\theta,r) r^2+2r u(\theta, r)d\theta dr=\int_0^1r^2\widehat{\partial_r u (\cdot, r)}(k)+2r\widehat{u(\cdot, r)}(k)dr,
\end{split}
\end{equation}
where we used Fubini's Theorem (since $u\in H^1(D^2)$).
Therefore
\begin{equation}
\begin{split}\label{eq_proof_lemma_L2_estimate_for_trace}
\lVert \overline{u}\rVert_{L^2(\mathbb{S}^1)}^2=&2\pi\sum_{k\in\mathbb{Z}}\widehat{\overline{u}}(k)^2
=2\pi\sum_{k\in\mathbb{Z}}\left\lvert \int_0^1 \left( r^2\widehat{\partial_r u(\cdot, r)}(k)+2\widehat{u(\cdot, r)}(k)r\right)dr\right\rvert^2\\
\leq& 4\pi\sum_{k\in \mathbb{Z}}\left(\int_0^1 r^4\left( \widehat{\partial_ru(\cdot, r)}(k) \right)^2dr+4\int_0^1\left( \widehat{u(\cdot, r)}(k) \right)^2r^2dr\right)\\
=&4\pi\int_0^1r^4\lVert \partial_r u(\cdot, r)\rVert_{L^2(\mathbb{S}^1)}^2dr+16\pi\int_0^1\lVert u(\cdot, r)\rVert_{L^2(\mathbb{S}^1)}r^2dr\\
\leq & 16\pi\left(\lVert \partial_r u\rVert_{L^2(D^2)}+\lVert u\rVert_{L^2(D^2)}\right)\leq 32\pi \lVert u\rVert_{H^1(D^2)}.
\end{split}
\end{equation}
where we used Plancherel's Identity, Jensen's Inequality and Tonelli's Theorem.
Moreover it is clear from the definition (\ref{eq_proof_lemma_def_trace}) and the fundamental theorem of calculus that if $u\in C^1(D^2)\cap C^0(\overline{D^2})$, then $\overline{u}=u\vert_{\partial D^2}$. This concludes the proof of the first claim.\\
Now we claim that the composition
\begin{equation}\label{eq_proof_lemma_continuity_trace_from_trace3}
H^\frac{1}{2}(\mathbb{S}^1)\to H^1(D^2)\to L^2, \quad u\mapsto \tilde{u}\mapsto Tr(\tilde{u})
\end{equation}
corresponds to the identity embedding of $H^\frac{1}{2}(\mathbb{S}^1)$ in $L^2(\mathbb{S}^1)$.
We have already seen that the right member of the composition is continuous. By Lemma \ref{lem_continuity_harmonic_extension_operator}, the left member of the composition is well defined and continuous.
Moreover, for any $u\in C^0(\partial D^2)$, the harmonic extension is a function continuous up to the boundary coinciding with $u$ on $\partial D^2$(see \cite{Schlag}, Prop. 1.5 and Ex. 2.3). As $C^0(\partial D^2)$ is dense in $H^\frac{1}{2}(\partial D^2)$, we conclude that the composition in (\ref{eq_proof_lemma_continuity_trace_from_trace3}) is the identity. This concludes the proof of the claim. It follows that for any $v\in H^\frac{1}{2}(\mathbb{S}^1)$, there exists $u\in H^1(D^2)$ such that $Tr(u)=v$.
Now we observe that for any $v\in L^2(\mathbb{S}^1)$ the set
\begin{equation}
\mathscr{C}_v:=\left\lbrace u\in H^1(D^2) \bigg\vert Tr(u)=v\right\rbrace
\end{equation}
is convex (possibly empty), and the functional
\begin{equation}
E_{D^2}(u)=\int_{D^2}\lvert \nabla u(x)\rvert^2dx,
\end{equation}
defined for $u\in H^1(D^2)$ is convex on $\mathscr{C}_v$. Therefore, if $\mathscr{C}_v$ is non-empty, there exists a unique minimizer of $E_{D^2}$ in $\mathscr{C}_v$. One also observe that the unique minimizer satisfies $\Delta u=0$ in $D^2$ in the weak sense. By Lemma \ref{lem_continuity_harmonic_extension_operator}, $u$ is given by $\tilde{\overline{u}}$, the harmonic extension of $\overline{u}$. Therefore, for a.e. $r\in (0,1)$, $\theta\in \mathbb{S}^1$
\begin{equation}
u(\theta, r)=P_r\ast \overline{u} (\theta).
\end{equation}
Then for any $u\in H^1(D^2)$
\begin{equation}
\begin{split}
&\lVert \nabla u\rVert_{L^2(D^2)}^2=E_{D^2}(u)\geq E_{D^2}(\tilde{\overline{u}})=\lVert \nabla \tilde{\overline{u}}\rVert_{L^2(D^2)}\geq \lVert \frac{1}{r}\partial_\theta \tilde{\overline{ u}}\rVert_{L^2(D^2)}^2\\
=&\int_0^1\lVert \partial_\theta \tilde{\overline{u}}(\cdot, r)\rVert_{L^2}dr=\int_0^12\pi\sum_{k\in\mathbb{Z}}\left\lvert \mathscr{F}(\partial_\theta P_r\ast v)(k) \right\rvert^2dr\\
=&2\pi\sum_{k\in\mathbb{Z}}\int_0^1\lvert k\rvert^2r^{2\lvert k\rvert}\lvert \widehat{v}(k)\rvert^2dr
=2\pi\sum_{k\in\mathbb{Z}\smallsetminus\lbrace0\rbrace}\frac{\lvert k\rvert^2}{2\lvert k\rvert+1}\lvert \widehat{v}(k)\rvert^2\\
\geq & 2\pi\sum_{k\in\mathbb{Z}\smallsetminus\lbrace0\rbrace}\frac{\lvert k\rvert}{3}\lvert \widehat{v}(k)\rvert^2=\frac{2\pi}{3}[v]_{\dot{H}^\frac{1}{2}}^2.
\end{split}
\end{equation}
This, together with (\ref{eq_proof_lemma_L2_estimate_for_trace}) and (\ref{eq_discussion_preliminaries_circle_bound_Sobolev_norm_through_seminorm}), shows that $Tr$ actually defines a continuous operator from $H^1(D^2)\to H^\frac{1}{2}(\partial D^2)$ with the required property.
Finally, the fact that the harmonic extension is a right inverse to $Tr$ follows from the fact that, as we have seen, the composition defined in (\ref{eq_proof_lemma_continuity_trace_from_trace3}) is equal to the identity.
\end{proof}

\begin{lem}\label{lem_continuity_harmonic_extension_operator}
The operator
\begin{equation}
F: H^\frac{1}{2}(\partial D^2)\to H^1(D^2),\quad u\mapsto \tilde{u}=P_r\ast u
\end{equation}
is well defined, linear and continuous, i.e. there exists a constant $C$ so that for any $u\in H^\frac{1}{2}(\partial D^2)$,
\begin{equation}\label{eq_lem_continuity_harmonic_extension_operator_estimate}
\lVert \tilde{u}\rVert_{H^1(D^2)}\leq C\lVert  u\rVert_{H^\frac{1}{2}(\partial D^2)}.
\end{equation}
Moreover, for any $u\in H^\frac{1}{2}(\partial D^2)$, $F(u)$ is smooth in $D^2$ and it is the unique solution in $H^1(D^2)$ of
\begin{equation}\label{lem_continuity_harmonic_extension_operator_Dirichlet_problem}
\begin{cases}
\Delta v=0\text{ in the weak sense in }D^2\\
Tr(v)=u,
\end{cases}
\end{equation}
where the trace operator $Tr$ was defined in Lemma \ref{lem_trace_operator}. In particular, $F(u)$ is smooth in $D^2$.
\end{lem}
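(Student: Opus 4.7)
My plan is to express the harmonic extension in Fourier series, use Plancherel on each circle of radius $r$, and integrate in $r$ to obtain the required estimate. Concretely, for $u \in H^{\frac{1}{2}}(\partial D^2)$ with $u = \sum_{n\in\mathbb{Z}} \widehat{u}(n) e^{in\cdot}$, the Fourier expansion of the Poisson kernel (Equation (\ref{eq_intro_circle_definition_Poisson_kernel})) gives
\begin{equation*}
\tilde{u}(r,\theta) = P_r * u(\theta) = \sum_{n\in\mathbb{Z}} r^{|n|} \widehat{u}(n) e^{in\theta},
\end{equation*}
where the convergence holds in $L^2(\partial D^2)$ for each $r \in [0,1)$. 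In polar coordinates $|\nabla \tilde{u}|^2 = |\partial_r \tilde{u}|^2 + r^{-2}|\partial_\theta \tilde{u}|^2$, and a term-by-term differentiation (justified by the decay of $r^{|n|}$ for $r$ bounded away from $1$) yields Fourier coefficients $|n| r^{|n|-1}\widehat{u}(n)$ and $in r^{|n|-1}\widehat{u}(n)$ respectively. Applying Plancherel on $\partial D^2$ and integrating against the polar measure $r\,dr\,d\theta$ gives
\begin{equation*}
\int_{D^2} |\nabla \tilde{u}|^2 = 4\pi \sum_{n \in \mathbb{Z}} |n|^2 |\widehat{u}(n)|^2 \int_0^1 r^{2|n|-1}\,dr = 2\pi \sum_{n\neq 0} |n|\,|\widehat{u}(n)|^2 = 2\pi [u]_{\dot{H}^{\frac{1}{2}}}^2,
\end{equation*}
and a similar computation bounds $\|\tilde{u}\|_{L^2(D^2)}^2$ by $\pi \|u\|_{L^2(\partial D^2)}^2$. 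Combining these estimates with (\ref{eq_discussion_preliminaries_circle_bound_Sobolev_norm_through_seminorm}) gives (\ref{eq_lem_continuity_harmonic_extension_operator_estimate}) for some universal constant $C$, and linearity is clear from the definition.

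For smoothness in $D^2$, I would observe that for any $r_0 \in (0,1)$ the Poisson kernel $P_r(x - y)$ is smooth in $(r,x,y)$ for $r \in [0, r_0]$ and the convolution $P_r * u$ may be differentiated under the integral sign arbitrarily often; alternatively one may directly differentiate the Fourier series term by term, the resulting series converging absolutely and uniformly on $\{|z| \leq r_0\}$ by the rapid decay of $r^{|n|}$. The fact that $\Delta \tilde{u} = 0$ in $D^2$ then follows either from this expansion (each term $r^{|n|}e^{in\theta}$ is harmonic) or directly from properties of the Poisson kernel.

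For the Dirichlet problem: Lemma \ref{lem_trace_operator} (once its proof is completed) asserts that $Tr \circ F$ is the identity, so $Tr(\tilde{u}) = u$; alternatively one verifies convergence $P_r*u \to u$ in $H^{\frac{1}{2}}(\partial D^2)$ as $r \to 1^-$ by dominated convergence on the Fourier side, as in (\ref{eq_conv_to_frac_lap}). The main technical point—and where I would be careful—is uniqueness: if $v \in H^1(D^2)$ satisfies $\Delta v = 0$ weakly with $Tr(v) = 0$, then $v$ lies in the kernel of the trace operator, which by the standard characterization equals $H^1_0(D^2)$. Approximating $v$ in $H^1(D^2)$ by a sequence $(\phi_k) \subset C^\infty_c(D^2)$ and using each $\phi_k$ as test function against the weak equation gives $\int_{D^2} \nabla v \cdot \nabla \phi_k = 0$; passing to the limit yields $\int_{D^2}|\nabla v|^2 = 0$, whence $v$ is a constant, and the trace condition forces $v \equiv 0$. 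The hard part will be invoking this $H^1_0$ characterization rigorously, since in the excerpt the trace operator was only built as a continuous map $H^1(D^2) \to H^{\frac{1}{2}}(\partial D^2)$; one may substitute for it a direct argument, writing $v$ in polar coordinates and Fourier-expanding each circle $v(r, \cdot) = \sum_n c_n(r) e^{in\cdot}$, deducing from weak harmonicity that each $c_n$ satisfies the radial ODE whose $L^2$-solutions regular at $0$ are multiples of $r^{|n|}$, and from $Tr(v) = 0$ that all coefficients vanish.
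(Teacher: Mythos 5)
Your approach to the continuity estimate is the same as the paper's: expand $\tilde u$ in Fourier series, apply Plancherel on each circle of radius $r$, integrate against $r\,dr$, and compare with $\sum_n |n| |\widehat u(n)|^2$. (As an aside, your constant $2\pi$ for $\int_{D^2}|\nabla\tilde u|^2$ is in fact the correct one; the paper's written-out computation of the $r^{-2}|\partial_\theta\tilde u|^2$ term contains a pair of offsetting typos in the powers of $r$ and a sign slip in $\frac{k^2}{2k}$, which produce a spurious $2\pi$ where there should be $\pi$.) Your smoothness and weak harmonicity arguments also coincide with the paper's.

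Where you diverge is the uniqueness of the $H^1$ Dirichlet solution. The paper simply refers back to the proof of its trace lemma, where uniqueness of the Dirichlet minimizer is obtained from convexity of $E_{D^2}$ on the affine constraint set $\mathscr{C}_v$; the paper does not unfold this into a direct argument inside the present lemma. You instead propose two self-contained routes: (a) the $\ker(\operatorname{Tr})=H^1_0$ characterization followed by testing the weak equation with compactly supported approximations of $v$; or (b) a Fourier-in-$\theta$ reduction to the radial Euler ODE, keeping only the $r^{|n|}$ branch by the $H^1$ requirement and concluding from $\operatorname{Tr}(v)=0$. Your (a) is clean but, as you note, invokes a characterization of $\ker(\operatorname{Tr})$ that the paper's bespoke definition of $\operatorname{Tr}$ does not explicitly establish — you would need to identify the paper's trace with the standard one (which holds by density of $C^1(D^2)\cap C^0(\overline{D^2})$ in $H^1(D^2)$ and the agreement of the two operators on that subspace). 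Your (b) stays entirely within the paper's Fourier framework and, once one checks that the paper's trace of $a_n r^{|n|}e^{in\theta}$ is $a_n e^{in\theta}$ (an elementary computation from the defining formula), gives a fully self-contained uniqueness proof; in that sense it is arguably more robust than the paper's deferred convexity argument. Both of your alternatives are correct and would serve as a fine substitute for the paper's reference.
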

\begin{proof}
The linearity of $F$ follows directly from the definition, therefore, to show that $F$ is well defined and continuous, it is enough to show that (\ref{eq_lem_continuity_harmonic_extension_operator_estimate}) holds for any $u\in H^\frac{1}{2}(\partial D^2)$ for some independent constant $C$.\\
First we observe that
\begin{equation}
\begin{split}
\lVert \tilde{u}\rVert_{L^2(D^2)}^2&=\int_0^1\int_{\partial D^2}\lvert P_r\ast u(\theta)\rvert^2 r d\theta dr=2\pi\int_0^1\sum_{k\in\mathbb{Z}}\widehat{P_r}(k)^2\widehat{u}(k)^2rdr\\
=&2\pi\sum_{k\in\mathbb{Z}}\int_0^1r^{2\lvert k\rvert}\widehat{u}(k)^2rdr=2\pi\sum_{k\in\mathbb{Z}}\frac{1}{2\lvert k\rvert +2}\widehat{u}(k)^2\leq 2\pi\lVert u\Vert_{L^2(\partial D^2)}.
\end{split}
\end{equation}
Here we made use of Plancherel's identity and Tonelli theorem.
Moreover we observe that
\begin{equation}
[\tilde{u}]_{\dot{H}^1(D^2)}^2=\int_{D^2}\lvert \nabla \tilde{u}(z)\rvert^2dz=\int_0^1\int_{\partial D^2}(\lvert \partial_r\tilde{u}(\theta, r)\rvert^2+\frac{1}{r^2}\lvert \partial_\theta\tilde{u}(\theta, r)\rvert^2) rd\theta dr.
\end{equation}
We compute
\begin{equation}
\begin{split}
&\int_0^1\int_{\partial D^2}\left\lvert \partial_r\tilde{u}(\theta, r)\right\rvert^2rd\theta dr=\int_0^1\int_{\partial D^2}\left\lvert \partial_rP_r\ast u(\theta)\right\rvert^2rd\theta dr\\
=&\int_0^1 2\pi\sum_{k\in\mathbb{Z}}\left\lvert \widehat{\partial_r P_r}(k)\right\rvert^2\lvert\widehat{u}(k)\rvert^2rdr
=\int_0^1 2\pi\sum_{k\in\mathbb{Z}}\left\lvert\partial_r (r^{\lvert k\rvert})\right\rvert^2\lvert\widehat{u}(k)\rvert^2rdr\\
=&2\pi\sum_{k\in\mathbb{Z}}\int_0^1 k^2r^{2\lvert k\rvert-2}\lvert\widehat{u}(k)\rvert^2rdr
=2\pi\sum_{k\in\mathbb{Z}\smallsetminus\lbrace0\rbrace}\frac{\lvert k\rvert}{2}\lvert\widehat{u}(k)\rvert^2
=\pi[u]_{\dot{H}^\frac{1}{2}(\partial D^2)}^2,
\end{split}
\end{equation}
\begin{equation}
\begin{split}
&\int_0^1\int_{\partial D^2}\frac{1}{r^2}\lvert \partial_\theta\tilde{u}\rvert^2rd\theta dr=\int_0^1\int_{\partial D^2}\frac{1}{r^2}\lvert \partial_\theta P_r\ast u\rvert^2rd\theta dr\\
=&\int_0^1 2\pi\sum_{k\in\mathbb{Z}}k^2 r^{2\lvert k\rvert} \widehat{u}(k)^2rd\theta dr
=2\pi\sum_{k\in\mathbb{Z}\smallsetminus\lbrace0\rbrace} k^2\int_0^1 r^{2\lvert k\rvert-1}dr \widehat{u}(k)^2\\
=&2\pi\sum_{k\in\mathbb{Z}\smallsetminus\lbrace0\rbrace}\frac{k^2}{2k}\widehat{u}(k)^2=2\pi[u]_{\dot{H}^\frac{1}{2}(\partial D^2)}^2.
\end{split}
\end{equation}
Again we made use of Parseval's Identity and Tonelli Theorem.
The previous computations, together with (\ref{eq_discussion_preliminaries_circle_bound_Sobolev_norm_through_seminorm}), show that there exists a constant $C$ such that for any $u\in H^\frac{1}{2}(\mathbb{S}^1)$
\begin{equation}
\lVert \tilde{u}\rVert_{H^1(D^2)}\leq C \lVert u\rVert_{H^\frac{1}{2}(\partial D^2)}.
\end{equation}
For the last statement, we already showed in the proof of Lemma \ref{lem_trace_operator} that the problem (\ref{lem_continuity_harmonic_extension_operator_Dirichlet_problem}) has at most one solution in $H^1(D^2)$. The fact that $F(u)$ is smooth follows from the fact that $P_r(\theta)$ is a smooth function in $D^2$. We are left to show that $\Delta F(u)=0$ in $D^2$ for any $u\in H^\frac{1}{2}(\mathbb{S}^1)$. For this we observe that, in $D^2$, $\Delta u=(\Delta P_r)\ast \overline{u}$ (where $\Delta$ acts on $P_r$ as a function on $D^2$, but the convolution takes place in $\mathbb{S}^1$ for any $r\in (0,1)$). We compute, for $r\in (0,1)$, $\theta\in \mathbb{S}^1$, 
\begin{equation}
\begin{split}
\Delta P_r(\theta)=&\frac{1}{r}\partial_r\left(r\partial_r \left(\sum_{k\in\mathbb{Z}}r^{\lvert k\rvert}e^{ik\theta}\right)\right)+\frac{1}{r^2}\partial_\theta^2 \left(\sum_{k\in\mathbb{Z}}r^{\lvert k\rvert}e^{ik\theta}\right)\\
=&\sum_{k\in\mathbb{Z}}\lvert k\rvert^2r^{\lvert k\rvert-2}e^{ik\theta}-\sum_{k\in\mathbb{Z}}\lvert k\rvert^2r^{\lvert k\rvert-2}e^{ik\theta}=0.
\end{split}
\end{equation}
Here we used that the decay of the coefficients allows to invert summations and derivatives. This completes the proof of the Lemma.
\end{proof}

\bibliographystyle{plain}
\bibliography{biblio/biblio_sum}

\begin{thebibliography}{10}

\bibitem{Cabre}
Xavier Cabr\'{e}, Albert Mas, and Joan Sol\`{a}-Morales.
\newblock Periodic solutions of integro-differential and dispersive equations
  i: variational and {Hamiltonian} structure.
\newblock Work in preparation.

\bibitem{CompBubble}
Francesca Da~Lio.
\newblock Compactness and bubbles analysis for half-harmonic maps into spheres.
\newblock {\em Ann. Inst. H. Poincar ́e Anal. Non Lin ́eaire}, 32:201--224,
  2015.

\bibitem{DaLio}
Francesca Da~Lio.
\newblock Some remarks on {Pohozaev}-type identities.
\newblock {\em Bruno Pini Math. Anal. Semin.}, 9(1):115--136, 2018.

\bibitem{BlowupAnalysis}
Francesca Da~Lio, Luca Martinazzi, and Tristan Rivi\`{e}re.
\newblock Blow-up analysis of a nonlocal {Liouville}-type equation.
\newblock {\em Anal. PDE}, 8(7):1757--1805, 2015.

\bibitem{3Termscomm}
Francesca Da~Lio and Tristan Rivi\`{e}re.
\newblock Three-term commutator estimates and the regularity of 1/2-harmonic
  maps into spheres.
\newblock {\em Anal. PDE}, 4(1):149 -- 190, 2011.

\bibitem{Hitchhiker}
Eleonora Di~Nezza, Giampiero Palatucci, and Enrico Valdinoci.
\newblock Hitchhikerʼs guide to the fractional {Sobolev} spaces.
\newblock {\em B. Sci. Math.}, 136(5):521 -- 573, 2012.

\bibitem{UCP}
Mouhamed~Moustapha Fall and Veronica Felli.
\newblock Unique continuation property and local asymptotics of solutions to
  fractional elliptic equations.
\newblock {\em Comm. Partial Differential Equations}, 39(2):354--397, 2014.

\bibitem{Garofalo}
Nicola Garofalo.
\newblock Fractional thoughts.
\newblock \url{https://arxiv.org/pdf/1712.03347.pdf}, 2017.

\bibitem{Grafakos1}
Loukas Grafakos.
\newblock {\em Classical {Fourier} analysis}.
\newblock Graduate Texts in Mathematics. Springer New York, 2014.

\bibitem{Grafakos2}
Loukas Grafakos.
\newblock {\em Modern {Fourier} analysis}.
\newblock Graduate Texts in Mathematics. Springer New York, 2014.

\bibitem{Heleinarticle}
Fr\'{e}d\'{e}ric H\'{e}lein.
\newblock Sur la r\'{e}gularit\'{e} des applications faiblement harmoniques
  entre une surface et une vari\'{e}t\'{e} riemannienne.
\newblock {\em S\'{e}minaire \'{E}quations aux d\'{e}riv\'{e}es partielles
  (Polytechnique)}, pages 1--6, 1990-1991.

\bibitem{Helein}
Fr\'{e}d\'{e}ric H\'{e}lein.
\newblock {\em Harmonic maps, Conservation Laws and Moving Frames}.
\newblock Cambridge Tracts in Mathematics. Cambridge University Press, 2
  edition, 2002.

\bibitem{ten_eq_def}
Mateusz Kwa\'{s}nicki.
\newblock Ten equivalent definitions of the fractional {Laplace} operator.
\newblock {\em Fract. Calc. Appl. Anal.}, 20, 07 2015.

\bibitem{divcurl}
Katarzyna Mazowiecka and Armin Schikorra.
\newblock Fractional div-curl quantities and applications to nonlocal geometric
  equations.
\newblock {\em J. Funct. Anal.}, 03 2017.

\bibitem{MillotSire}
Vincent Millot and Yannick Sire.
\newblock On a fractional {Ginzburg}-{Landau} equation and 1/2-harmonic maps
  into spheres.
\newblock {\em Arch. Rational Mech. and Anal.}, pages 125--210, 2015.

\bibitem{Schlag}
Camil Muscalu and Wilhelm Schlag.
\newblock {\em Classical and multilinear harmonic analysis}, volume 137 of {\em
  Cambridge studies in advanced mathematics}.
\newblock Cambridge University Press, Cambridge, 2013.

\bibitem{Noether}
Emmy Noether.
\newblock Invariante {Variationsprobleme}.
\newblock {\em Nachrichten von der Gesellschaft der Wissenschaften zu
  Göttingen, Mathematisch-Physikalische Klasse}, 1918:235--257, 1918.

\bibitem{FreeBoundary}
Alessandro Pigati and Francesca Da~Lio.
\newblock Free boundary minimal surfaces: a nonlocal approach.
\newblock Preprint, arXiv:1712.04683, 2017.

\bibitem{ConfInv}
Tristan Rivi\`{e}re.
\newblock Conformally invariant variational problems.
\newblock
  \url{https://people.math.ethz.ch/~riviere/pdf/books/conformal-course.pdf},
  2012.

\bibitem{RoncalStinga}
Luz Roncal and Pablo Raúl~Stinga.
\newblock Fractional {Laplacian} on the torus.
\newblock {\em Commun. Contemp. Math.}, 18, 09 2012.

\bibitem{RosOtonPhD}
Xavier Ros-Oton.
\newblock Integro-differential equations: Regularity theory and {Pohozaev}
  identities.
\newblock PhD dissertation, 2014.

\bibitem{RosOtonBdryreg}
Xavier Ros-Oton and Joaquim Serra.
\newblock The {Dirichlet} problem for the fractional {Laplacian}: Regularity up
  to the boundary.
\newblock {\em J. Math. Pures Appl.}, 101(3):275 -- 302, 2014.

\bibitem{RosOtonSerra}
Xavier Ros-Oton and Joaquim Serra.
\newblock The pohozaev identity for the fractional {Laplacian}.
\newblock {\em Arch. Ration. Mech. An.}, 213(2):587--628, Aug 2014.

\bibitem{SchoenHarmonic}
Richard~M. Schoen.
\newblock Analytic aspects of the harmonic map problem.
\newblock pages 321--358, 1984.

\bibitem{SilvestreHolder}
Luis Silvestre.
\newblock {H\"older estimates for solutions of integro-differential equations
  like the fractional Laplace.}
\newblock {\em {Indiana Univ. Math. J.}}, 55(3):1155--1174, 2006.

\bibitem{Silvestre}
Luis Silvestre.
\newblock Regularity of the obstacle problem for a fractional power of the
  {Laplace} operator.
\newblock {\em Commun. Pure Appl. Math.}, 60(1):67--112, 2007.

\end{thebibliography}

\end{document}